\documentclass[a4paper,10pt, oneside]{amsart}
\usepackage[utf8]{inputenc}
\usepackage{url}
\usepackage{xcolor}
\definecolor{lightgreen}{rgb}{.20,.60,.22}
\usepackage{pdfpages}
\usepackage{geometry}
\usepackage{microtype}
\usepackage{amssymb,amsmath,amsopn,amsxtra,amsthm,amsfonts}
\usepackage{mathtools}
\usepackage[mathcal]{euscript}
\let\mathscr\mathcal
\usepackage[bb=px]{mathalfa}

\usepackage[backend=biber,style=alphabetic,maxnames=99]{biblatex}
\usepackage{hyperref}
\hypersetup{
	colorlinks=true,
	linkcolor=purple,
	citecolor=lightgreen,
	urlcolor=cyan,
}

\usepackage[capitalise,nameinlink]{cleveref}

\usepackage[T1]{fontenc}

\DeclareFontFamily{U}{min}{}
\DeclareFontShape{U}{min}{m}{n}{<-> udmj30}{}

\usepackage{enumitem}
\setlist[enumerate,1]{label={(\arabic*)},itemsep=\parskip} 
\setlist[itemize,1]{itemsep=\parskip} 
\newlist{thmlist}{enumerate}{2}
\setlist[thmlist,1]{label={\em(\roman*)},ref={(\roman*)},%
	itemsep=\parskip,leftmargin=*,align=left}
\setlist[thmlist,2]{label={\em(\alph*)},ref={(\alph*)},%
	itemsep=\parskip,leftmargin=*,align=left,topsep=0.1cm}
\newlist{defnlist}{enumerate}{2}
\setlist[defnlist,1]{label={(\roman*)},ref={(\roman*)},itemsep=\parskip,%
	leftmargin=*,align=left}
\setlist[defnlist,2]{label={(\alph*)},ref={(\alph*)},itemsep=\parskip,%
	leftmargin=*,align=left,topsep=0.1cm}

\newtheorem*{thm*}{Theorem}
\newtheorem{thmX}{Theorem}
 
\newtheorem{corX}[thmX]{Corollary}

\newtheorem{thm}{Theorem}[section] 
\newtheorem{cor}[thm]{Corollary}
\newtheorem{lem}[thm]{Lemma}
\newtheorem{prop}[thm]{Proposition}

\newtheorem{quest}[thm]{Question}

\theoremstyle{definition}

\newtheorem{de}[thm]{Definition}

\newtheorem{rem}[thm]{Remark}
\newtheorem{warning}[thm]{Warning}

\newtheorem{exam}[thm]{Example}

\newtheorem{notation}[thm]{Notation}
\newtheorem{nota}[thm]{Notation}

\crefname{thm}{Theorem}{Theorems}
\crefname{thmX}{Theorem}{Theorems}
\crefname{cor}{Corollary}{Corollaries}
\crefname{corX}{Corollary}{Corollaries}
\crefname{lem}{Lemma}{Lemmas}
\crefname{prop}{Proposition}{Propositions}
\crefname{thmconstr}{Theorem-Construction}{Theorem-Constructions}
\crefname{propconstr}{Proposition-Construction}{Proposition-Constructions}
\crefname{lemconstr}{Lemma-Construction}{Lemma-Constructions}
\crefname{ax}{Axiom}{Axioms}
\crefname{conj}{Conjecture}{Conjectures}
\crefname{qthm}{Quasi-Theorem}{Quasi-Theorems}
\crefname{qlem}{Quasi-Lemma}{Quasi-Lemmas}
\crefname{quest}{Question}{Questions}
\crefname{claim}{Claim}{Claims}
\crefname{defn}{Definition}{Definitions}
\crefname{de}{Definition}{Definitions}
\crefname{obs}{Observation}{Observations}
\crefname{rem}{Remark}{Remarks}
\crefname{rems}{Remarks}{Remarks}
\crefname{warning}{Warning}{Warnings}
\crefname{exam}{Example}{Examples}
\crefname{exams}{Example}{Examples}
\crefname{constr}{Construction}{Constructions}
\crefname{notation}{Notation}{Notations}
\crefname{nota}{Notation}{Notations}
\crefname{exer}{Exercise}{Exercises}

\renewcommand{\eqref}[1]{(\ref{#1})}

\numberwithin{equation}{section}

\usepackage{tikz}
\usetikzlibrary{matrix}
\usepackage{tikz-cd}

\makeatletter
\renewcommand{\tocsubsection}[3]{%
	\indentlabel{\@ifnotempty{#2}{\ignorespaces#1 #2\hspace{1.5em}}}#3}
\usepackage{xspace}
\usepackage{xifthen}
\usepackage{xparse}

\usepackage{mathtools}

\newcommand{\nc}{\newcommand}
\nc{\renc}{\renewcommand}
\nc{\ssec}{\subsection}
\nc{\sssec}{\subsubsection}
\nc{\on}{\operatorname}
\nc{\term}[1]{#1\xspace}

\nc{\sA}{\ensuremath{\mathcal{A}}\xspace}
\nc{\sB}{\ensuremath{\mathcal{B}}\xspace}
\nc{\sC}{\ensuremath{\mathcal{C}}\xspace}
\nc{\sD}{\ensuremath{\mathcal{D}}\xspace}
\nc{\sE}{\ensuremath{\mathcal{E}}\xspace}
\nc{\sF}{\ensuremath{\mathcal{F}}\xspace}
\nc{\sG}{\ensuremath{\mathcal{G}}\xspace}
\nc{\sH}{\ensuremath{\mathcal{H}}\xspace}
\nc{\sI}{\ensuremath{\mathcal{I}}\xspace}
\nc{\sJ}{\ensuremath{\mathcal{J}}\xspace}
\nc{\sK}{\ensuremath{\mathcal{K}}\xspace}
\nc{\sL}{\ensuremath{\mathcal{L}}\xspace}
\nc{\sM}{\ensuremath{\mathcal{M}}\xspace}
\nc{\sN}{\ensuremath{\mathcal{N}}\xspace}
\nc{\sO}{\ensuremath{\mathcal{O}}\xspace}
\nc{\sP}{\ensuremath{\mathcal{P}}\xspace}
\nc{\sQ}{\ensuremath{\mathcal{Q}}\xspace}
\nc{\sR}{\ensuremath{\mathcal{R}}\xspace}
\nc{\sS}{\ensuremath{\mathcal{S}}\xspace}
\nc{\sT}{\ensuremath{\mathcal{T}}\xspace}
\nc{\sU}{\ensuremath{\mathcal{U}}\xspace}
\nc{\sV}{\ensuremath{\mathcal{V}}\xspace}
\nc{\sW}{\ensuremath{\mathcal{W}}\xspace}
\nc{\sX}{\ensuremath{\mathcal{X}}\xspace}
\nc{\sY}{\ensuremath{\mathcal{Y}}\xspace}
\nc{\sZ}{\ensuremath{\mathcal{Z}}\xspace}

\nc{\bA}{\ensuremath{\mathbf{A}}\xspace}
\nc{\bB}{\ensuremath{\mathbf{B}}\xspace}
\nc{\bC}{\ensuremath{\mathbf{C}}\xspace}
\nc{\bD}{\ensuremath{\mathbf{D}}\xspace}
\nc{\bE}{\ensuremath{\mathbf{E}}\xspace}
\nc{\bF}{\ensuremath{\mathbf{F}}\xspace}
\nc{\bG}{\ensuremath{\mathbf{G}}\xspace}
\nc{\bH}{\ensuremath{\mathbf{H}}\xspace}
\nc{\bI}{\ensuremath{\mathbf{I}}\xspace}
\nc{\bJ}{\ensuremath{\mathbf{J}}\xspace}
\nc{\bK}{\ensuremath{\mathbf{K}}\xspace}
\nc{\bL}{\ensuremath{\mathbf{L}}\xspace}
\nc{\bM}{\ensuremath{\mathbf{M}}\xspace}
\nc{\bN}{\ensuremath{\mathbf{N}}\xspace}
\nc{\bO}{\ensuremath{\mathbf{O}}\xspace}
\nc{\bP}{\ensuremath{\mathbf{P}}\xspace}
\nc{\bQ}{\ensuremath{\mathbf{Q}}\xspace}
\nc{\bR}{\ensuremath{\mathbf{R}}\xspace}
\nc{\bS}{\ensuremath{\mathbf{S}}\xspace}
\nc{\bT}{\ensuremath{\mathbf{T}}\xspace}
\nc{\bU}{\ensuremath{\mathbf{U}}\xspace}
\nc{\bV}{\ensuremath{\mathbf{V}}\xspace}
\nc{\bW}{\ensuremath{\mathbf{W}}\xspace}
\nc{\bX}{\ensuremath{\mathbf{X}}\xspace}
\nc{\bY}{\ensuremath{\mathbf{Y}}\xspace}
\nc{\bZ}{\ensuremath{\mathbf{Z}}\xspace}

\nc{\dA}{\ensuremath{\mathds{A}}\xspace}
\nc{\dB}{\ensuremath{\mathds{B}}\xspace}
\nc{\dC}{\ensuremath{\mathds{C}}\xspace}
\nc{\dD}{\ensuremath{\mathds{D}}\xspace}
\nc{\dE}{\ensuremath{\mathds{E}}\xspace}
\nc{\dF}{\ensuremath{\mathds{F}}\xspace}
\nc{\dG}{\ensuremath{\mathds{G}}\xspace}
\nc{\dH}{\ensuremath{\mathds{H}}\xspace}
\nc{\dI}{\ensuremath{\mathds{I}}\xspace}
\nc{\dJ}{\ensuremath{\mathds{J}}\xspace}
\nc{\dK}{\ensuremath{\mathds{K}}\xspace}
\nc{\dL}{\ensuremath{\mathds{L}}\xspace}
\nc{\dM}{\ensuremath{\mathds{M}}\xspace}
\nc{\dN}{\ensuremath{\mathds{N}}\xspace}
\nc{\dO}{\ensuremath{\mathds{O}}\xspace}
\nc{\dP}{\ensuremath{\mathds{P}}\xspace}
\nc{\dQ}{\ensuremath{\mathds{Q}}\xspace}
\nc{\dR}{\ensuremath{\mathds{R}}\xspace}
\nc{\dS}{\ensuremath{\mathds{S}}\xspace}
\nc{\dT}{\ensuremath{\mathds{T}}\xspace}
\nc{\dU}{\ensuremath{\mathds{U}}\xspace}
\nc{\dV}{\ensuremath{\mathds{V}}\xspace}
\nc{\dW}{\ensuremath{\mathds{W}}\xspace}
\nc{\dX}{\ensuremath{\mathds{X}}\xspace}
\nc{\dY}{\ensuremath{\mathds{Y}}\xspace}
\nc{\dZ}{\ensuremath{\mathds{Z}}\xspace}

\nc{\bbA}{\ensuremath{\mathbb{A}}\xspace}
\nc{\bbB}{\ensuremath{\mathbb{B}}\xspace}
\nc{\bbC}{\ensuremath{\mathbb{C}}\xspace}
\nc{\bbD}{\ensuremath{\mathbb{D}}\xspace}
\nc{\bbE}{\ensuremath{\mathbb{E}}\xspace}
\nc{\bbF}{\ensuremath{\mathbb{F}}\xspace}
\nc{\bbG}{\ensuremath{\mathbb{G}}\xspace}
\nc{\bbH}{\ensuremath{\mathbb{H}}\xspace}
\nc{\bbI}{\ensuremath{\mathbb{I}}\xspace}
\nc{\bbJ}{\ensuremath{\mathbb{J}}\xspace}
\nc{\bbK}{\ensuremath{\mathbb{K}}\xspace}
\nc{\bbL}{\ensuremath{\mathbb{L}}\xspace}
\nc{\bbM}{\ensuremath{\mathbb{M}}\xspace}
\nc{\bbN}{\ensuremath{\mathbb{N}}\xspace}
\nc{\bbO}{\ensuremath{\mathbb{O}}\xspace}
\nc{\bbP}{\ensuremath{\mathbb{P}}\xspace}
\nc{\bbQ}{\ensuremath{\mathbb{Q}}\xspace}
\nc{\bbR}{\ensuremath{\mathbb{R}}\xspace}
\nc{\bbS}{\ensuremath{\mathbb{S}}\xspace}
\nc{\bbT}{\ensuremath{\mathbb{T}}\xspace}
\nc{\bbU}{\ensuremath{\mathbb{U}}\xspace}
\nc{\bbV}{\ensuremath{\mathbb{V}}\xspace}
\nc{\bbW}{\ensuremath{\mathbb{W}}\xspace}
\nc{\bbX}{\ensuremath{\mathbb{X}}\xspace}
\nc{\bbY}{\ensuremath{\mathbb{Y}}\xspace}
\nc{\bbZ}{\ensuremath{\mathbb{Z}}\xspace}

\nc{\mrm}[1]{\ensuremath{\mathrm{#1}}\xspace}
\nc{\mit}[1]{\ensuremath{\mathit{#1}}\xspace}
\nc{\mbf}[1]{\ensuremath{\mathbf{#1}}\xspace}
\nc{\mcal}[1]{\ensuremath{\mathcal{#1}}\xspace}
\nc{\msc}[1]{\ensuremath{\mathscr{#1}}\xspace}
\nc{\mfr}[1]{\ensuremath{\mathfrak{#1}}\xspace}

\renc{\bar}[1]{\overline{#1}}
\nc{\sub}{\subset}
\nc{\too}{\longrightarrow}
\nc{\hook}{\hookrightarrow}
\nc*{\hooklongrightarrow}{\ensuremath{\lhook\joinrel\relbar\joinrel\rightarrow}}
\nc{\hooklong}{\hooklongrightarrow}
\nc{\twoheadlongrightarrow}{\relbar\joinrel\twoheadrightarrow}
\nc{\shiso}{\approx}
\nc{\isoto}{\xrightarrow{\sim}}
\nc{\isofrom}{\xleftarrow{\sim}}
\renc{\ge}{\geqslant}
\renc{\le}{\leqslant}

\nc{\id}{\mathrm{id}}

\DeclareMathOperator{\Hom}{\on{Hom}}
\nc{\uHom}{\underline{\smash{\Hom}}}

\DeclareMathOperator{\End}{\on{End}}

\nc{\uEnd}{\underline{\smash{\End}}}

\renc{\lim}{\varprojlim}
\makeatletter
\newcommand{\colim@}[2]{%
  \vtop{\m@th\ialign{##\cr
    \hfil$#1\operator@font colim$\hfil\cr
    \noalign{\nointerlineskip\kern1.5\ex@}#2\cr
    \noalign{\nointerlineskip\kern-\ex@}\cr}}%
}
\newcommand{\colim}{%
  \mathop{\mathpalette\colim@{\rightarrowfill@\textstyle}}\nmlimits@
}
\makeatother

\nc{\Cofib}{\on{Cofib}}
\nc{\Fib}{\on{Fib}}
\nc{\initial}{\varnothing}
\newcommand{\opp}{\mathrm{op}}

\usepackage[normalem]{ulem}
\nc{\Spc}{\mrm{Spc}}
\nc{\Spt}{\mrm{Spt}}
\nc{\Spec}{\on{Spec}}
\nc{\Stk}{\mrm{Stk}}
\nc{\Sch}{\mrm{Sch}}
\nc{\aff}{\mrm{aff}}
\nc{\A}{\mbf{A}}
\renc{\P}{\mbf{P}}
\nc{\cl}{{\mrm{cl}}}
\nc{\bDelta}{\mathbf{\Delta}}
\nc{\un}{\mathbf{1}}
\nc{\Tot}{\on{Tot}}
\nc{\Cech}{\textnormal{\v{C}}}
\nc{\Mod}{\mrm{Mod}}
\nc{\Qcoh}{\on{Qcoh}}
\nc{\free}{\mrm{free}}
\nc{\ex}{\mrm{ex}}
\nc{\perf}{\mrm{perf}}
\nc{\aperf}{\mrm{aperf}}
\nc{\coh}{\mrm{coh}}
\newcommand{\Cat}{\mrm{Cat}}
\nc{\unitm}{\mbf{1}}
\nc{\sphere}{\mbf{S}}
\nc{\Z}{\mbf{Z}}
\nc{\Map}{\mrm{Map}}
\nc{\map}{\mrm{map}}
\nc{\PrL}{\mathcal{P}\mathrm{r}^\mrm{L}}
\nc{\PrLst}{\mathcal{P}\mathrm{r}^\mrm{L}_\mrm{st}}
\nc{\Motnc}{\mathcal{M}_{\mrm{loc}}}
\nc{\Motadd}{\mathcal{M}_{\mrm{add}}}
\nc{\Einfty}{{\sE_\infty}}
\nc{\E}[1]{{\sE_{#1}}}
\nc{\modmod}{/\!\!/}
\nc{\heart}{\heartsuit}
\nc{\proj}{\mrm{proj}}
\nc{\LL}{\on{L}}
\nc{\K}{\on{K}}
\nc{\G}{\on{G}}
\nc{\GL}{\on{GL}}
\nc{\BGL}{\on{BGL}}
\nc{\M}{\on{M}}
\nc{\KH}{\on{KH}}
\nc{\Alg}{\on{Alg}}
\nc{\CAlg}{\on{CAlg}}
\nc{\cn}{\mrm{cn}}
\nc{\hw}{\mrm{Hw}}
\nc{\htt}{\mrm{Ht}}
\nc{\Fun}{\on{Fun}}
\nc{\Funadd}{\on{Fun}_{\mrm{add}}}
\nc{\Funex}{\on{Fun}_{\mrm{ex}}}
\nc{\Ind}{\on{Ind}}
\nc{\Pro}{\on{Pro}}
\nc{\Kar}{\on{Kar}}
\nc{\Obj}{\on{Obj}}
\nc{\cC}{\mathcal{C}}

\nc{\scr}{\term{simplicial commutative ring}}
\nc{\scrs}{\term{simplicial commutative rings}}

\nc{\Einfring}{\term{$\Einfty$-ring}}
\nc{\Einfrings}{\term{$\Einfty$-rings}}

\nc{\Ering}{\term{$\sE_1$-ring}}
\nc{\Erings}{\term{$\sE_1$-rings}}

\nc{\inftyCat}{\term{category}}
\nc{\inftyCats}{\term{categories}}

\nc{\inftyTop}{\term{topos}}
\nc{\inftyTops}{\term{toposes}}

\nc{\inftyGrpd}{\term{groupoid}}
\nc{\inftyGrpds}{\term{groupoids}}

\def\mc{\mathcal}
\def\mb{\mathbf}

\def\op{\mathrm}

\newcommand{\enu}[1]{ \begin{enumerate}[label=(\arabic*),font=\normalfont]
		#1
	\end{enumerate}
}

\newcommand{\alg}{\operatorname{Alg}}
\newcommand{\calg}{\operatorname{CAlg}}
\newcommand{\cmon}{\operatorname{CMon}}
\newcommand{\modu}{\operatorname{Mod}}

\newcommand{\rmodu}{\operatorname{RMod}}
\newcommand{\amodu}{\operatorname{aMod}}
\newcommand{\mapp}{\operatorname{Map}}
\newcommand{\unmap}{\underline{\operatorname{Map}}}
\newcommand{\unat}{\underline{\operatorname{at}}}

\newcommand{\colimit}{\mathrm{colim}}
\newcommand{\funct}{\operatorname{Fun}}

\newcommand{\ageq}{\ensuremath{\mathcal{A}_{\geq0}}\xspace}

\newcommand{\catsift}{\ensuremath{\widehat{\Cat}_\infty^{\op{sift}}}\xspace}
\newcommand{\catadidem}{\ensuremath{\mathrm{Cat}_{\mathrm{ad}}^{\mathrm{idem}}}\xspace}
\newcommand{\catadidemop}{\ensuremath{\mathrm{Cat}_{\mathrm{ad}}^{\mathrm{idem,op}}}\xspace}
\newcommand{\catadidemomega}{\ensuremath{\mathrm{Cat}_{\mathrm{ad}}^{\mathrm{idem},\omega}}\xspace}
\newcommand{\catadidemomegaone}{\ensuremath{\mathrm{Cat}_{\mathrm{ad}}^{\mathrm{idem},\omega_1}}\xspace}
\newcommand{\catadidemkappa}{\ensuremath{\mathrm{Cat}_{\mathrm{ad}}^{\mathrm{idem},\kappa}}\xspace}

\newcommand{\prl}{\ensuremath{\mc{P}\mathrm{r}^\mathrm{L}}\xspace} 
\newcommand{\prlv}{\ensuremath{\mc{P}\mathrm{r}^\mathrm{L}_{\mathcal{V}}}\xspace}

\newcommand{\pr}{\ensuremath{\mc{P}\mathrm{r}}\xspace}
\newcommand{\prad}{\ensuremath{\mc{P}\mathrm{r}_{\op{ad}}}\xspace}
\newcommand{\prlad}{\ensuremath{\mc{P}\mathrm{r}_{\op{ad}}^\mathrm{L}}\xspace}
\newcommand{\praddbl}{\ensuremath{\mc{P}\mathrm{r}_{\op{ad}}^{\op{dbl}}}\xspace}
\newcommand{\pradat}{\ensuremath{\mc{P}\mathrm{r}_{\op{ad}}^{\op{at}}}\xspace}

\newcommand{\praddblop}{\ensuremath{\mc{P}\mathrm{r}_{\op{ad}}^{\op{dbl,op}}}\xspace}

\newcommand{\prlst}{\ensuremath{\mc{P}\mathrm{r}_{\op{st}}^\mathrm{L}}\xspace}
\newcommand{\prlpst}{\ensuremath{\mc{P}\mathrm{r}^\mathrm{L}_{\op{pst}}}\xspace}

\newcommand{\prst}{\ensuremath{\mc{P}\mathrm{r}_{\op{st}}}\xspace}
\newcommand{\prstdbl}{\ensuremath{\mc{P}\mathrm{r}_{\op{st}}^{\op{dbl}}}\xspace}
\newcommand{\prvdbl}{\ensuremath{\mc{P}\mathrm{r}_{\mc{V}}^{\op{dbl}}}\xspace}
\newcommand{\prtrex}{\ensuremath{\mc{P}\mathrm{r}^{t\text{-rex}}_{\op{st}}}\xspace}
\newcommand{\prtil}{\ensuremath{\mc{P}\mathrm{r}^{t\text{-}iL}_{\op{st}}}\xspace}

\newcommand{\motpst}{\ensuremath{\mathcal{M}\mathrm{ot}_{\mathrm{pst}}}\xspace}
\newcommand{\motpstcn}{\ensuremath{\mathcal{M}\mathrm{ot}^{\mathrm{cn}}_{\mathrm{pst}}}\xspace}
\newcommand{\motspl}{\ensuremath{\mathcal{M}\mathrm{ot}_{\mathrm{spl}}}\xspace}
\newcommand{\motomegaonespl}{\ensuremath{\mathcal{M}\mathrm{ot}_{\omega_{1}\text{-}\mathrm{spl}}}\xspace}
\newcommand{\mot}{\ensuremath{\mathcal{M}\mathrm{ot}}\xspace}
\newcommand{\motsplcn}{\ensuremath{\mathcal{M}\mathrm{ot}_{\mathrm{spl}}^{\mathrm{cn}}}\xspace}
\newcommand{\motomegaonesplcn}{\ensuremath{\mathcal{M}\mathrm{ot}_{\omega_{1}\text{-}\mathrm{spl}}^{\mathrm{cn}}}\xspace}

\newcommand{\Loc}{\ensuremath{\operatorname{Loc}}\xspace}
\newcommand{\Split}{\ensuremath{\operatorname{Split}}\xspace}

\newcommand{\spgeq}{\ensuremath{\operatorname{Sp}_{\geq0}}\xspace}
\newcommand{\spgeqcproj}{\ensuremath{\spgeq^{\mathrm{cproj}}}\xspace}

\newcommand{\hatcat}{\ensuremath{\widehat{\op{Cat}}_{\infty}}\xspace}

\newcommand{\ass}{\ensuremath{\mathrm{CAss}}\xspace}
\newcommand{\assad}{\ensuremath{\mathrm{CAss}_{\op{ad}}}\xspace}

\newcommand{\eonering}{\term{$\mathbb E_1$-ring}}
\newcommand{\eonerings}{\term{$\mathbb E_1$-rings}}

\newcommand{\einfring}{\term{$\mathbb E_\infty$-ring}}
\newcommand{\einfrings}{\term{$\mathbb E_\infty$-rings}}

\newcommand{\infcat}{\term{category}}
\newcommand{\infcats}{\term{categories}}
\newcommand{\syminfcat}{\term{symmetric monoidal category}}
\newcommand{\dualaddinfcat}{\term{dualizable additive category}}
\newcommand{\dualaddinfcats}{\term{dualizable additive categories}}

\newcommand{\shv}{\operatorname{Shv}}

\newcommand{\cflat}{\ensuremath{\mathcal{C}^{\op{flat}}}\xspace}
\newcommand{\cflatw}{\ensuremath{\mcc^{\omega_1\text{-}\op{flat}}}\xspace}
\newcommand{\dflat}{\ensuremath{\mathcal{D}^{\op{flat}}}\xspace}
\newcommand{\dflatw}{\ensuremath{\mcd^{\omega_1\text{-}\op{flat}}}\xspace}

\newcommand{\mca}{\ensuremath{\mathcal{A}}\xspace}
\newcommand{\mcb}{\ensuremath{\mathcal{B}}\xspace}
\newcommand{\mcc}{\ensuremath{\mathcal{C}}\xspace}
\newcommand{\mcd}{\ensuremath{\mathcal{D}}\xspace}
\newcommand{\mce}{\ensuremath{\mathcal{E}}\xspace}
\newcommand{\mcf}{\ensuremath{\mathcal{F}}\xspace}

\newcommand{\mci}{\ensuremath{\mathcal{I}}\xspace}

\newcommand{\mck}{\ensuremath{\mathcal{K}}\xspace}

\newcommand{\mcm}{\ensuremath{\mathcal{M}}\xspace}
\newcommand{\mcn}{\ensuremath{\mathcal{N}}\xspace}

\newcommand{\mcp}{\ensuremath{\mathcal{P}}\xspace}

\newcommand{\mcs}{\ensuremath{\mathcal{S}}\xspace}
\newcommand{\mct}{\ensuremath{\mathcal{T}}\xspace}
\newcommand{\mcu}{\ensuremath{\mathcal{U}}\xspace}
\newcommand{\mcv}{\ensuremath{\mathcal{V}}\xspace}
\newcommand{\mcw}{\ensuremath{\mathcal{W}}\xspace}

\newcommand{\coker}{\op{coker}}
\newcommand{\cofib}{\op{cofib}}
\newcommand{\catper}{\ensuremath{\op{Cat}^{\op{perf}}}\xspace}

\newcommand{\opsp}{\ensuremath{\op{Sp}}\xspace}
\newcommand{\kcn}{\ensuremath{K_{\mathrm{cn}}\xspace}}
\newcommand{\stab}{\ensuremath{\op{Stab}}\xspace}
\newcommand{\calk}{\ensuremath{\op{Calk}}\xspace}
\newcommand{\ind}{\ensuremath{\op{Ind}}\xspace}
\newcommand{\fib}{\ensuremath{\op{fib}}\xspace}

\newcommand{\nuc}{\ensuremath{\op{Nuc}}\xspace}

\newcommand{\nuccsr}{\ensuremath{\op{Nuc}^{CS}(R)}\xspace}
\newcommand{\nucr}{\ensuremath{\op{Nuc}(R)}\xspace}

\newcommand{\nucrgeq}{\ensuremath{\op{Nuc}(R)_{\geq 0}}\xspace}
\newcommand{\modrcpl}{\ensuremath{\op{Mod}_{R}^{\op{cpl}}}\xspace}
\newcommand{\modrcplgeq}{\ensuremath{\op{Mod}_{R,\geq 0}^{\op{cpl}}}\xspace}
\newcommand{\closure}[1]{\overline{#1}}
\begin{document}
		\let\mathbb=\mathbf
	
	\title{Dualizable additive categories}
	\author{Ishan Levy, Jiacheng Liang, Vladimir Sosnilo}

	\maketitle

\vspace{-1.5em}
	
	\begin{abstract}
	We develop a comprehensive theory of dualizable additive categories. We provide several equivalent characterizations, notably identifying them as separated Grothendieck prestable categories satisfying the $\mathrm{AB4}^*$ and $\mathrm{AB6}$ axioms. We establish a connection to almost mathematics by demonstrating that they arise precisely as the categories of connective almost modules over connective $\mathbb{E}_1$-rings. 
	
	Furthermore, we prove that dualizable additive categories are generated by flat objects, and that the passage to flat objects yields an equivalence between dualizable additive categories and compactly assembled additive categories. As a primary application within analytic geometry, we characterize the category $\mathrm{Nuc}(R)_{\geq0}$ of connective nuclear modules (in the sense of Clausen--Scholze \cite{scholze2026lecturesanalyticgeometry}) over an adic $\mathbb{E}_\infty$-ring $R$ via a universal property, identifying it as the additive rigidification of the category of connective complete $R$-modules.
	
	Finally, we construct the universal finitary stable localizing invariant for dualizable additive categories, the presentable stable category $\mathcal{M}\mathrm{ot}_{\mathrm{pst}}$ of prestable motives, and demonstrate that its unit corepresents nonconnective algebraic $K$-theory. We prove that the motives of small additive categories and those of dualizable additive categories generate the same presentable stable subcategory.
	\end{abstract}
		\tableofcontents

	\section*{Introduction}


Algebraic $K$-theory shows up when studying algebraic varieties, infinite groups, $C^*$-algebras, ring spectra, and many other 
objects in different areas of mathematics. The main reason behind this ubiquity is the ability to define it in a uniform way as a 
{\it localizing invariant} of small stable \infcats. Moreover, many fundamental properties of algebraic $K$-theory as a cohomology 
theory, such as the projective bundle formula, Zariski excision and the existence of transfers, also follow from it being a 
localizing invariant. 

An influential idea of Efimov \cite{HoyoisEfimov,efimov2024k} pushed this point of view even further: he observed that an 
arbitrary localizing invariant $E$ naturally extends to a localizing invariant $E^\mathrm{cont}$ of {\it dualizable stable 
\infcats}. This turns out to be especially useful in analytic contexts where the natural stable \infcats associated to analytic spaces are not 
small (or compactly generated), but are dualizable. For example, to each rigid analytic space one associates the \infcat $\mathrm{Nuc}(X)$ of nuclear modules in the sense of Clausen--Scholze \cite{scholze2026lecturesanalyticgeometry}, which is often not
compactly generated, and one can define a version of continuous $K$-theory of $X$ as 
$K^{\mathrm{cont}}(\mathrm{Nuc}(X))$. 

Most of the tools from the theory of small stable \infcats and localizing invariants transfer to the dualizable setting thanks to 
the existence of a resolution 
\[
	\sC \to \Ind(\sD) \to \Ind(\sE)
\]
for any dualizable stable \infcat $\sC$. Here $\sD, \sE$ are small stable \infcats and the functors form a Verdier sequence. 
In particular, the aforementioned continuous $K$-theory of a rigid analytic space can be shown to satisfy Zariski descent and the
projective bundle formula. At the same time, dualizable \infcats provide new tools as well. The limits and the internal hom in 
the \infcat of dualizable stable \infcats $\prstdbl$ are very different from the corresponding constructions in $\PrLst$ and 
they are very interesting even when the input comes from small stable \infcats. 
Efimov used these constructions in \cite{efimov2025localizinginvariantsinverselimits} to prove the isomorphisms 
\[
	K^{\mathrm{cont}}\big(\mathrm{Nuc}(R_I^\wedge)\big) \simeq K^{\mathrm{cont}}\big(\lim^{\on{dbl}}_n \mcd(R/I^n)\big) \simeq \lim K(R/I^n)
\]
for any noetherian ring $R$ with an ideal $I$, showing that the continuous $K$-theory defined this way extends the construction 
that has actually been studied in analytic geometry 
\cite{Wagoner1976,Bloch2013,morrow2016historicaloverviewprocdh,kerzsaitotamme}. 

Often, working solely with stable categories is insufficient; remembering  additional data, such as a $t$-structure, provides a finer geometric perspective.
For example, the data of a $t$-structure is necessary to even state Tannaka duality \cite[Theorem~9.2.0.2]{SAG} and the categorical version of 
Dundas--Goodwillie--Mccarthy theorem \cite[Section~5]{levy2025c} (see also \cite[Theorem~1.0.1]{dgm-elden-vova}).
Some other geometric properties can also be formulated in the language of $t$-structures, and, 
in particular, affineness of $X$ corresponds to the property that the $t$-structure on $\op{QCoh}(X)$ comes from a {\it bounded weight structure} on the subcategory of compact objects. 
When it happens for a compactly generated \infcat with a $t$-structure $\sC$, we have an equivalence 
\[
	\sC \simeq \ind(\op{Stab}(\sA))
\]
where $\sA$ is a small additive \infcat. In other words, $\sC$ is freely generated by a small additive \infcat. This situation is central for \cite{levy2025c}, as it turns out 
many more general stable \infcats can be resolved by these. 
In the dualizable setting the role of such categories is played by {\it dualizable additive categories}. 
The goal of this paper is to develop a theory of dualizable prestable \infcats, analogous to the theory of dualizable stable \infcats,
but taking into account the notion of connectivity:

\begin{de}\label{dfn:dual_add}
	A \emph{dualizable additive \infcat} is a dualizable object in the 
	symmetric monoidal \infcat $\prlad$ of presentable additive \infcats.
\end{de}

We use $\praddbl$ to denote the (non full) subcategory of $\prlad$ consisting of dualizable additive \infcats and internal left adjoints.
For instance, connective modules over a connective \eonering naturally form a dualizable additive 
\infcat, as do additive presheaves on any small additive \infcat. However, as we will see below, 
dualizability accommodates a broader class of examples arising in analytic geometry. 
Another interesting family of examples comes from categories of connective 
sheaves on locally profinite spaces (see  \cref{exm:top_exm}).

One of our main results provides several characterizations of dualizable additive \infcats, analogous to those established in the stable setting (see \cite[Proposition D.7.3.1]{SAG}, \cite{efimov2024k} and \cite{ramzi2024dualizable}).

To state this precisely, we define a colimit-preserving functor $p \colon \mcc \to \mcd$ between presentable prestable \infcats to be a \emph{connective internal localization} (\cref{defconnloc}) if it admits a fully faithful, colimit-preserving right adjoint $p^R$ such that the unit map $x \to p^R p(x)$ is a $\pi_0$-epimorphism.
\begin{thmX}[{\cref{main1}, \ref{basicses}, \ref{main2}}]
	Let $\sC$ be a presentable additive \infcat. Then the following are equivalent:
	\begin{enumerate}
		\item $\sC$ is dualizable additive.
		\item $\sC$ is separated Grothendieck prestable and satisfies $\mathrm{AB4}^*$ and $\mathrm{AB6}$.
		\item $\sC$ is the kernel of a connective internal localization $L$:
		\[
		\sC \to  \mc{P}_\Sigma(\sD) \xrightarrow{L}  \mc{P}_\Sigma(\sE),
		\]
		where $\mcd$ and $\mce$ are some small additive \infcats.
		\item $\sC$ is the kernel of an extension of scalars functor
		$
	  \Mod_{R,\geq0} \xrightarrow{S\otimes_R -}  \Mod_{S,\geq0},
		$
		where $R \to S$ is a $\pi_0$-surjective homological epimorphism of connective \eonerings. Equivalently, there exists a connective \eonering $R$ and an idempotent (two-sided) ideal $I\subset \pi_0R$ such that 
		\[
		\mcc\simeq \amodu_{(R,I)}(\spgeq).
		\]
	\end{enumerate}
\end{thmX}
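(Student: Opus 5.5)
The plan is to prove the cycle (1) $\Rightarrow$ (3) $\Rightarrow$ (4) $\Rightarrow$ (2) $\Rightarrow$ (1), modelled on the stable case (Lurie, Efimov, Ramzi) but keeping track of connectivity throughout.

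\emph{(1) $\Rightarrow$ (3).} First I show that in $\prlad$ a presentable additive $\sC$ is dualizable if and only if it is a retract in $\prlad$ of some $\mc{P}_\Sigma(\sD)$. The backward direction holds because $\mc{P}_\Sigma(\sD)$ is dualizable with dual $\mc{P}_\Sigma(\sD^{\opp})$, and retracts of dualizable objects are dualizable. For the forward direction:
\begin{itemize}
\item Dualizability means the colimit functor $\mc{P}_\Sigma(\sC^\kappa)\to\sC$ has an internal left adjoint $\hat y$, a colimit-preserving section.
\item The composite $e=\hat y\circ\mathrm{colim}$ is an idempotent colimit-preserving endofunctor of $\mc{P}_\Sigma(\sD)$, where $\sD=\sC^\kappa$.
\item $\sC$ is the image of $e$, i.e.\ the colimit of the sequence $\mc{P}_\Sigma(\sD)\xrightarrow{e}\mc{P}_\Sigma(\sD)\xrightarrow{e}\cdots$.
\end{itemize}
Next I realize this retract as the kernel of a localization. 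The counit $\hat y\to \id$ lets me set $L=\mathrm{cofib}$, but cofibres only exist in the prestable setting. So I stabilize, take the cofibre in $\Sp(\mc{P}_\Sigma(\sD))$, and define $\sE$ as the additive category generated by the images of representables under $L$. The key check is that the unit $x\to p^Rp(x)$ is a $\pi_0$-epimorphism. I would argue this from the fact that $\hat y(x)\to x$ is a $\pi_0$-epimorphism on representables, which uses that colimit maps in $\sC$ are effective epimorphisms.

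\emph{(3) $\Rightarrow$ (4).} Take a generator $P$ of $\mc{P}_\Sigma(\sD)$, after enlarging $\sD$ to a single object via infinite sums and passing to an idempotent-complete generator. Set $R=\End(P)$ and $S=\End(LP)$. Fully faithfulness and colimit preservation of $p^R$ give that $R\to S$ is a homological epimorphism. The $\pi_0$-epimorphism condition on the unit gives $\pi_0$-surjectivity. The ideal $I$ is the kernel of $\pi_0R\to\pi_0S$. It is idempotent because $S\otimes_R S\simeq S$, and the identification with almost modules is then formal.

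\emph{(4) $\Rightarrow$ (2).} Write the kernel as
\[
\sC=\{M\in\Mod_{R,\ge0} : S\otimes_R M=0\}.
\]
I check the properties directly:
\begin{itemize}
\item $\sC$ is closed under colimits and finite limits.
\item It is Grothendieck prestable, since it is a colocalization of a Grothendieck category via the fibre of $M\to S\otimes_R M$.
\item Products are computed as the colocalization of products in $\Mod_{R,\ge0}$; $\mathrm{AB4}^*$ follows because that colocalization is exact.
\item $\mathrm{AB6}$ and separatedness are inherited because the colocalization commutes with filtered colimits and preserves $\pi_0$-injectivity of maps.
\end{itemize}

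\emph{(2) $\Rightarrow$ (1).} This is the main obstacle: I must construct the left adjoint $\hat y$ of $\mathrm{colim}\colon \mc{P}_\Sigma(\sC^\kappa)\to\sC$ from the axioms. The approach mirrors the proof that a Grothendieck abelian category satisfying $\mathrm{AB4}^*$ and $\mathrm{AB6}$ is dualizable, adapted to prestable categories. The argument would run:
\begin{enumerate}
\item Show that colim preserves all products and (separatedly) limits of towers, using $\mathrm{AB4}^*$ and separatedness.
\item Use $\mathrm{AB6}$ to commute products with filtered colimits.
\item Deduce that colim is accessible and preserves limits, so it has a left adjoint by the adjoint functor theorem.
\item Check that this left adjoint preserves colimits and is therefore an internal left adjoint.
\end{enumerate}
The delicate point is showing that colim preserves limits of all shapes in the prestable sense, not just products. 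I would handle this by reducing to products and finite limits, using separatedness to control $\lim^1$-type issues in towers.
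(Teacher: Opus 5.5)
\textbf{Main gap: (2)$\Rightarrow$(1).} For presentable $\sC$, the colimit functor $c\colon\mathcal{P}_\Sigma(\sC^\kappa)\to\sC$ is always a localization, since its right adjoint (the restricted Yoneda embedding) is fully faithful. So what you must prove is exactly that $c$ preserves small limits, i.e.\ finite limits and products.
\begin{itemize}
\item \emph{Finite limits.} Your plan gives no argument here. The difficulty is not a $\lim^1$ or tower issue: limits are generated by products and fibres, and towers play no role. Left exactness of $c$ is the content of the prestable Gabriel--Popescu theorem (\cref{gplthm}), and that is where separatedness and the Grothendieck condition are used.
\item \emph{Products.} $\mathrm{AB4}^*$ and separatedness give nothing by themselves. $\mathrm{AB6}$ is the essential input, and it applies only after three reductions your plan omits.
\begin{itemize}
\item[(a)] Fix an uncountable regular $\kappa$ and pass to $\sC^\lambda$ for $\lambda\gg\kappa$. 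Then $\sC^\lambda$ is closed under $\kappa$-small products (\cref{compprod}), so $\mathrm{AB6}$ shows $\mathrm{Ind}(\sC^\lambda)\to\sC$ preserves them (\cref{ab6prod}).
\item[(b)] Write every object of $\mathcal{P}_\Sigma(\sC^\lambda)$ as a geometric realization of objects of $\mathrm{Ind}(\sC^\lambda)$ (\cref{propa5}).
\item[(c)] Show that products commute with geometric realizations in $\sC$. This is where separatedness and $\mathrm{AB4}^*$ actually enter: together they give completeness, and then the right exact product functor preserves geometric realizations (\cref{prodcommutegeom}).
\end{itemize}
\end{itemize}

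\textbf{(3)$\Rightarrow$(4).} A single generator $P$ of $\mathcal{P}_\Sigma(\sD)$, such as $\bigoplus_d h_d$, is projective but not compact when $\sD$ has infinitely many isomorphism classes. Hence $\mathcal{P}_\Sigma(\sD)\not\simeq\Mod_{\End(P),\geq0}$, and $\End(P)\to\End(LP)$ does not model $L$. The step is repairable: $\mathcal{P}_\Sigma(\sD)$ is a closed subcategory of $\mathcal{P}_\Sigma(\sD')$, where $\sD'$ is the additive hull of $\sD\cup\{\bigoplus_d h_d\}$ and has a compact projective generator. You then still need that closed subcategories of $\Mod_{R,\geq0}$ are kernels of connective internal localizations (\cref{basicses}, \cref{classsub}). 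As written, though, the Morita step fails. The paper instead goes through (1) and \cref{embedmodcat}, whose proof reuses the Gabriel--Popescu/$\mathrm{AB6}$ argument.

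\textbf{(4)$\Rightarrow$(2).} Your first bullet is false: the kernel need not be closed under finite limits in $\Mod_{R,\geq0}$.
\begin{itemize}
\item Take $R=\mathbb{Z}[x^{1/p^\infty},y^{1/p^\infty}]\to S=\mathbb{Z}$, a $\pi_0$-surjective homological epimorphism by K\"unneth from the one-variable case.
\item Let $M=\mathrm{fib}(R/(x,y)\to S\otimes_R R/(x,y))$, which lies in $\sC$.
\item Since $S\otimes_R R/(x,y)$ is the Koszul complex of $\mathbb{Z}$ on $(0,0)$, one gets $\pi_1M\cong\mathbb{Z}=S$ as $R$-modules.
\item So the loop object $\tau_{\geq0}\Sigma^{-1}M\simeq\mathbb{Z}$, computed in $\Mod_{R,\geq0}$, is not in $\sC$, because $S\otimes_R\mathbb{Z}\neq0$.
\end{itemize}
The properties in (2) still hold, but they must be derived from the colocalization $M\mapsto\mathrm{fib}(M\to S\otimes_RM)$. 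This functor commutes with colimits and preserves connectivity, which gives $\mathrm{AB5}$, $\mathrm{AB4}^*$ and $\mathrm{AB6}$. Separatedness follows from $\Sigma^n\sC\subseteq\Sigma^n\Mod_{R,\geq0}$. The paper avoids all of this, since a closed subcategory is a retract in $\prlad$ of a dualizable category.

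\textbf{(1)$\Rightarrow$(3).} The $\pi_0$-surjectivity of $x\to p^Rp(x)$ is automatic, because its fibre $\hat y c(x)$ is connective. The reason you give, that $\hat y c(x)\to x$ is a $\pi_0$-epimorphism, is false (take $\sC=0$).
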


We note that the final characterization 
exposes a  bridge between dualizable additive \infcats and almost mathematics.
 By a result of Hebestreit and Scholze \cite{hebestreit2024note}, the $\pi_0$-surjective homological epimorphisms out of a connective \eonering $R$ are exactly classified by the idempotent ideals of $\pi_0R$.
 
Dualizable additive categories are generally not compactly generated and often lack 
projective objects entirely. To understand their internal generation, we must shift our focus from projectivity to flatness. 
We demonstrate that these categories are controlled by their flat objects.

\begin{thmX}[{\cref{flatgenerate}, \ref{flcompass}, \ref{main3}}]\label{thmbintro}
	Let $\mcc$ be a dualizable additive \infcat. Then $\mcc$ is generated by $\omega_1$-compact flat objects under small colimits, and the subcategory of flat objects $\mcc^{\op{flat}}$ is a compactly assembled additive \infcat.
	
	 Furthermore, $\mcc$ can be canonically identified with the continuous prestabilization of its flat objects, establishing a categorical equivalence between compactly assembled additive categories and $\praddbl$:
	$$
	\begin{tikzcd}
		\assad\arrow[r, shift left=1ex, "\mcp_{\sqcup,\op{fil}}^{\op{small}}"{name=G}] & \praddbl\arrow[l, shift left=.5ex, "(-)^{\op{flat}}"{name=F}]
		\arrow[phantom, from=F, to=G, , "\scriptscriptstyle\boldsymbol{\sim}"].
	\end{tikzcd}
	$$
\end{thmX}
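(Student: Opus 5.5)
We plan to prove Theorem~\ref{thmbintro} in three stages, taking as input the characterization of dualizable additive categories from the previous theorem. By its fourth characterization we write $\mcc\simeq\amodu_{(R,I)}(\spgeq)$, i.e. $\mcc$ is the kernel of $S\otimes_R-\colon\Mod_{R,\geq0}\to\Mod_{S,\geq0}$, where $R\to S$ is a $\pi_0$-surjective homological epimorphism. Equivalently, $\mcc$ is the essential image of the colimit-preserving idempotent comonad $\tilde I\otimes_R-$, where $\tilde I=\fib(R\to S)$. We call $x\in\mcc$ \emph{flat} if $-\otimes x$ (equivalently, the relevant tensor/pairing with the dual $\mcc^\vee$) preserves finite limits, i.e. is left exact on the prestable category. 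A first step is to check that flatness is detected after the inclusion $\mcc\subset\Mod_{R,\geq0}$: an almost module is flat exactly when it is flat as an $R$-module. Here we use that $\tilde I$ is itself flat, being a filtered colimit of shifts of $R$-modules, via Hebestreit--Scholze's description of $I$ as idempotent.

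\textbf{Generation.} In $\Mod_{R,\geq0}$ every connective module is a geometric realization of free modules, and by Lazard-type arguments flat modules are filtered colimits of finitely generated free modules. Applying the colimit-preserving right adjoint--comonad $\tilde I\otimes_R-$, every object of $\mcc$ is a colimit of objects $\tilde I\otimes_R R^{\oplus n}$. These are flat, but they need not be $\omega_1$-compact. To fix this we choose, using idempotence of $I$, a countable sequence $R\to R\to\cdots$ given by multiplication by elements $a_1,a_2,\dots$ with $a_k\in I\cdot a_{k+1}$. The colimit $J$ of this sequence is flat and $\omega_1$-compact in $\mcc$. We then show that $\tilde I$ is a filtered colimit of such $J$'s indexed by countable data. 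This gives the first statement.

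\textbf{Compact assembly and the equivalence.} We show $\mcc^{\op{flat}}$ is closed under filtered colimits, and that $\colim\colon\Ind(\mcc^{\op{flat}})\to\mcc^{\op{flat}}$ has a left adjoint. That left adjoint is the restriction of the left adjoint $\hat y\colon\mcc\to\Ind(\mcc)$ that exists by dualizability, once we verify that $\hat y$ of a flat object lands in $\Ind$ of flat objects. The sequential presentations $J$ are what make this work, because $\hat y(J)$ is computed by the same sequence. Next we construct $\mcp_{\sqcup,\op{fil}}^{\op{small}}(\mca)$ for $\mca$ compactly assembled additive: it consists of the functors $\mca^{\op{op}}\to\Spc$ preserving products and sending the ``compact'' filtered colimits to limits, and it is a localization of $\mcp_\Sigma$. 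We verify this is dualizable additive by exhibiting it as a retract, in $\prlad$, of $\mcp_\Sigma(\mca_0)$ for a small $\mca_0$. Finally we check the unit $\mca\to\mcp_{\sqcup,\op{fil}}^{\op{small}}(\mca)^{\op{flat}}$ and the counit $\mcp_{\sqcup,\op{fil}}^{\op{small}}(\mcc^{\op{flat}})\to\mcc$ are equivalences. For the counit we use flat generation: the functor is fully faithful on flat generators, preserves colimits, and matches geometric realizations of flat resolutions. For the unit we must show every flat object of the presheaf side is a filtered colimit of representables, which is a Lazard theorem in this setting.

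The main obstacle is this Lazard-type step. We need that in a dualizable additive category flat objects are exactly filtered colimits (in $\mcc$) of objects from the generating set, and that left exactness of $-\otimes x$ forces the category of maps from generators to $x$ to be filtered. We would attack it by reducing to $\Mod_{R,\geq0}$ through the comonad $\tilde I\otimes_R-$ and invoking Lurie's Lazard theorem (HA~7.2.2.15) there, while keeping track of the fact that $\tilde I\otimes_R R^n$ is not compact.
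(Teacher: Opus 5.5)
Your overall architecture (flat generation, then compact assembly of $\mathcal{C}^{\mathrm{flat}}$, then unit and counit, with $\mathcal{P}^{\mathrm{small}}_{\sqcup,\mathrm{fil}}(\mathcal{A})$ shown dualizable as a retract of some $\mathcal{P}_\Sigma(\mathcal{A}_0)$) matches the paper. However, the mechanism driving your generation step is false.

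\textbf{The generation step.} You claim $\tilde I=I^\infty$ is flat, so that the objects $\tilde I\otimes_R R^{\oplus n}$ are flat. You also plan to write $\tilde I$ as a \emph{filtered} colimit of the $\omega_1$-flat objects $J$. Since flat objects are closed under filtered colimits, that plan would itself force $\tilde I$ to be flat. (Hebestreit--Scholze present $I^\infty$ as an inverse limit $\lim_n J_I^{\otimes_R n}$, not as a filtered colimit.)

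In general $\tilde I$ is not flat. Take $A=k[x^{1/2^\infty},y^{1/2^\infty}]$ and $I=(x^{1/2^n},y^{1/2^n}:n\geq 0)$, so $I^2=I$ and $A/I=k$. Write $A=\operatorname{colim}_n k[x^{1/2^n},y^{1/2^n}]$. Each transition map sends the maximal ideal into its square, so it kills $\mathrm{Tor}_{>0}(k,k)$ (an exterior algebra on $\mathrm{Tor}_1$). Hence $\mathrm{Tor}^A_{>0}(k,k)=0$, so $A\to k$ is already the $\pi_0$-surjective localization and $I^\infty=I$. But $x,y$ is a regular sequence in $A$, and the Koszul complex gives $\mathrm{Tor}^A_1(I,A/(x,y))\cong\mathrm{Tor}^A_2(k,A/(x,y))\cong k\neq 0$. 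Your claim that flatness in $\mathcal{C}$ agrees with flatness as an $R$-module is still true, for the formal reason that the inclusion is a fully faithful internal left adjoint. So here $I$ is a non-flat object of $\mathcal{C}$.

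What you actually need is only that $\omega_1$-flats are $0$-generating: every $\omega_1$-compact $x\in\mathcal{C}$ receives an effective epimorphism from one. Separatedness and the prestable Gabriel--Popescu theorem then upgrade this to generation under colimits. The paper builds the cover as follows:
\begin{itemize}
\item take an effective epimorphism $g\colon P\to x$ from an $\omega_1$-projective $P$;
\item lift it through the effective epimorphism $I^\infty\otimes_R P\to I^\infty\otimes_R x\simeq x$, using projectivity of $P$;
\item set $F=\operatorname{colim}(P\to P\to\cdots)$ with transition map $P\to I^\infty\otimes_R P\to P$.
\end{itemize}
Then $F$ is $\omega_1$-flat, lies in $\mathcal{C}$ because every transition map factors through $I^\infty\otimes_R P$, and maps onto $x$. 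Using a countably generated $P$ rather than single-element chains $R\xrightarrow{a_1}R\xrightarrow{a_2}\cdots$ also matters: idempotence only writes an element of $I$ as a finite \emph{sum} of products.

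\textbf{The later steps inherit the problem.} Your sequences defining $J$ have terms $R$, which lie outside $\mathcal{C}$ unless $I=\pi_0R$. Their images under $\tilde I\otimes_R-$ have terms $\tilde I$, which need not be flat. So ``$\hat y(J)$ is computed by the same sequence'' does not show that $\hat y$ of a flat object lies in $\operatorname{Ind}(\mathcal{C}^{\mathrm{flat}})$; the paper explicitly warns about this.

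For compact assembly the paper uses flat generation instead. Since $\mathcal{C}^{\omega_1\text{-}\mathrm{flat}}$ is small, $0$-generating and closed under finite sums, $c\colon\mathcal{P}_\Sigma(\mathcal{C}^{\omega_1\text{-}\mathrm{flat}})\to\mathcal{C}$ has a fully faithful left adjoint $\hat h$, and both $\hat h$ and $c$ preserve flat objects. By Lazard, $\mathcal{P}_\Sigma(\mathcal{C}^{\omega_1\text{-}\mathrm{flat}})^{\mathrm{flat}}\simeq\operatorname{Ind}(\mathcal{C}^{\omega_1\text{-}\mathrm{flat}})$, so $\mathcal{C}^{\mathrm{flat}}$ is a retract of the latter in additive categories with filtered colimits.

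For the equivalence you need more: every $\omega_1$-flat object must admit an exhaustion by $\omega_1$-flat objects of $\mathcal{C}$ with compact projective transition maps. The paper gets this from a matrix lemma. A map $R^m\to R^n$ factors through $\mathcal{C}$ iff its matrix has entries in $I$. Since $\mathbb{M}_n(I)$ is again idempotent, such a map is a composite of two such maps, which lets one refine a presentation by finite free modules into one whose terms lie in $\mathcal{C}$. That exhaustion makes $\hat h$ compatible with $\mathcal{C}^{\mathrm{flat}}\subset\mathcal{C}$, which is what makes $\mathcal{P}^{\mathrm{small}}_{\sqcup,\mathrm{fil}}(\mathcal{C}^{\mathrm{flat}})\to\mathcal{C}$ fully faithful.

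Your counit argument (fully faithful on flat generators, colimit-preserving, compatible with flat resolutions) does not suffice. Flat objects are not projective, and mapping out of them does not commute with geometric realizations.

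Finally, the Lazard-type statement needed for the unit is left open, and a reduction to $\mathrm{Mod}_{R,\geq0}$ does not obviously apply to an abstract $\mathcal{A}$. The paper embeds $\mathcal{P}^{\mathrm{small}}_{\sqcup,\mathrm{fil}}(\mathcal{A})$ into $\mathcal{P}_\Sigma(\mathcal{A}^{\omega_1})$. It then shows that a sequential presentation $\operatorname{colim}_n h_{a_n}$ of an $\omega_1$-flat object, whose transitions factor through the coreflection, is a compact exhaustion in $\mathcal{A}$.
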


Having established some of the basic features of dualizable additive categories, we apply them in the setting of analytic geometry, focusing on connective nuclear modules in the sense of Clausen--Scholze.

In tensor-triangular geometry, the condition of being a \emph{compactly-rigidly generated} $tt$-category—where the tensor unit is compact, compact objects are dualizable, and compact objects generate the category under colimits—serves as the foundational framework where finiteness properties and duality behave exceptionally well. In higher category theory, a modern incarnation of this philosophy was introduced in \cite{arinkin2020stack} and further developed in \cite{ramzi2024locally,efimov2025localizinginvariantsinverselimits} under the name of \emph{rigidity}, generalizing the classical rigidity condition from the compactly generated setting to general presentable categories.

Just as additive dualizability is strictly stronger than stable dualizability, \emph{additive rigidity} (rigidity for $\spgeq$-algebras) strictly implies the \emph{stable rigidity} (rigidity for $\opsp$-algebras) of its stabilization. Crucially, additive rigidity provides a robust framework for higher algebra, leading to $t$-structured tensor-triangular geometry \cite{hattt}.

When a presentably symmetric monoidal category lacks this rigidity, one can consider its \emph{rigidification}, the universal rigid approximation mapping into it. As established by Ramzi \cite[Corollary 4.73]{ramzi2024locally}, such rigidifications always exist. 
 It turns out that this purely categorical machinery interacts remarkably well with analytic geometry, recovering a fundamental construction:
\begin{thmX}[{\cref{main5}}]\label{mmain5}
	Let $R$ be a connective adic \einfring. Then the \infcat $\nucrgeq$ of connective nuclear modules (in the sense of Clausen--Scholze) is additively rigid. Furthermore, it can be canonically identified with the additive rigidification of the \infcat $\modrcplgeq$ of connective complete $R$-modules. 	
\end{thmX}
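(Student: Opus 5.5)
Let me write $R$ for a connective adic $\mathbb E_\infty$-ring with finitely generated ideal of definition $I\subset\pi_0R$. The argument has three parts: first make $\nucrgeq$ concrete, then prove it is additively rigid, then check the universal property of the rigidification.

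\textbf{Step 1: a concrete model of $\nucrgeq$.} Following Clausen--Scholze, $\nucr$ is generated under colimits by the basic nuclear objects, i.e.\ sequential colimits $\colim(P_0\to P_1\to\cdots)$ of completions of compact $R$-modules along trace-class maps. I will take the connective part to be generated by those basic nuclear objects with each $P_n$ connective and perfect, in fact free of finite rank. This gives a description of $\nucrgeq$ as the full subcategory of $\modrcplgeq$ (or of the solid connective modules) generated under colimits by such sequential colimits, with the symmetric monoidal structure given by the completed tensor product restricted to these objects.

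\textbf{Step 2: additive rigidity.} For rigidity over $\spgeq$ I will use the standard criterion (as in Ramzi's treatment, applied to $\spgeq$-modules):
\begin{itemize}
\item[(a)] the category is dualizable additive;
\item[(b)] the unit is compact in a suitable sense, i.e.\ $\Hom(\unitm,-)$ preserves colimits;
\item[(c)] the multiplication $\otimes\colon \mcc\otimes\mcc\to\mcc$ has a colimit-preserving right adjoint that is $\mcc\otimes\mcc$-linear, or equivalently $\mcc$ is generated by objects $x=\colim x_n$ whose transition maps $x_n\to x_{n+1}$ are trace-class.
\end{itemize}
Condition (a) would follow from Theorem~\ref{thmbintro}, since the basic nuclear generators are flat and the flat objects form a compactly assembled category. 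The trace-class transition maps give the ``compact exhaustion'' needed for $\mcc^{\op{flat}}$ to be compactly assembled. Condition (c) is built into the definition of basic nuclear objects. For (b), $\unitm=R$ is nuclear, and $\Hom(R,-)$ is the underlying module. This preserves colimits of nuclear objects, because for connective objects the completion of $R$-modules commutes with the relevant sequential colimits of trace-class systems.

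\textbf{Step 3: identification with the rigidification.} By Ramzi's construction, the rigidification of $\modrcplgeq$ is the terminal rigid category $\mcc^{\op{rig}}$ equipped with a symmetric monoidal colimit-preserving functor to $\modrcplgeq$. Concretely, it is generated by colimits of sequences of trace-class maps between compact-type (dualizable) objects of $\modrcplgeq$. I will proceed in three moves:
\begin{itemize}
\item[(i)] Check that the inclusion $\nucrgeq\to\modrcplgeq$ is symmetric monoidal and colimit-preserving. This gives a comparison map $\nucrgeq\to\mcc^{\op{rig}}$.
\item[(ii)] Identify the dualizable objects of $\modrcplgeq$ with the completions of connective perfect modules.
\item[(iii)] Identify trace-class maps between these dualizable objects in the two categories, which should agree because they are detected on $\Hom$ from the unit.
\end{itemize}
Together these show that both categories are generated by the same basic nuclear objects. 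The comparison functor is then fully faithful on generators, since morphisms out of a sequential trace-class colimit are computed by the same pro-system formula in both categories. Hence it is an equivalence.

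\textbf{Main obstacle.} I expect the hardest part to be the connective bookkeeping. One must show that the rigidification in the additive world, where colimits are computed prestably and one needs $\pi_0$-epimorphism conditions, agrees with the connective part of the stable picture. In particular, one must check that the connective cover of a nuclear module is still generated by basic nuclear objects with connective terms. My first approach would be to use the characterization of dualizable additive categories as separated Grothendieck prestable categories satisfying AB4$^*$ and AB6. I would then show that $\nucr_{\ge0}$ is exactly the connective part of a $t$-structure on $\nucr$ compatible with filtered colimits, reducing via dévissage along $R\to R/I^n$ to the discrete-like case of modules over the $R/I^n$.
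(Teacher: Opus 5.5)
\textbf{There is a genuine gap, and it starts with your Step 1 model of $\nucrgeq$, which is wrong.}

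First, $\nucrgeq$ is not a full subcategory of $\modrcplgeq$, and colimits in it are not computed by completing. Take $R=\mathbb{Z}_p$. The object $\colim(\mathbb{Z}_p\xrightarrow{p}\mathbb{Z}_p\xrightarrow{p}\cdots)\simeq\mathbb{Q}_p$ is a nonzero connective nuclear module, since the unit of $\nucrgeq$ is compact projective. But any colimit-preserving symmetric monoidal functor to $\modrcplgeq$ sends it to the derived $p$-completion of $\mathbb{Q}_p$, which is $0$. Consequences:
\begin{itemize}
\item the ``inclusion'' in your Step 3(i) is not fully faithful;
\item your claim in Step 2(b), that completion commutes with sequential colimits along trace-class maps, already fails here, because every map $R\to R$ is trace class.
\end{itemize}

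Second, your generators are too small. Finite free modules are dualizable with connective duals, so every map between them is trace class. Your ``basic nuclear objects'' are therefore just sequential colimits of finite free modules.
\begin{itemize}
\item Computed in $\modrcplgeq$, they generate all of $\modrcplgeq$, which is not rigid because its unit is not compact projective.
\item Computed in a correct ambient category, they generate only the image of $\modu_{R^\wedge}(\spgeq)$.
\end{itemize}
In the Clausen--Scholze definition the terms are compact objects of an ambient category with compact unit (e.g.\ $\prod_{\mathbb N}\mathbb{Z}_p$ for solid $\mathbb{Z}_p$-modules). These are not dualizable, so trace-classness is a real condition. The paper works inside $\ind(\stab(\op{Proj}_R^{\op{cpl},\omega_1}))\simeq\opsp(\mcp_\Sigma(\op{Proj}_R^{\op{cpl},\omega_1}))$.

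Relatedly, Step 2(a) is circular. \cref{main3} yields dualizability only once you already know $\nucrgeq$ is the continuous prestabilization of a compactly assembled category.

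\textbf{The argument that controls connectivity is also missing.} You rightly flag this as the main obstacle, but the dévissage along $R\to R/I^n$ you propose is not developed. Two inputs are needed.
\begin{itemize}
\item[(i)] The right adjoint of $\nucr\hookrightarrow\ind(\stab(\op{Proj}_R^{\op{cpl},\omega_1}))$ must be right $t$-exact. Then $\nucrgeq$ is a closed subcategory of $\mcp_\Sigma(\op{Proj}_R^{\op{cpl},\omega_1})$, which gives dualizability and compact projectivity of the unit at once. The paper gets this from Efimov's formula $ii^R(c)\simeq c^{\op{tr}}=\unmap(\unitm,(-)^\vee\otimes c)$, together with connectivity of $a^\vee$ for compact projective $a$ (\cref{addrigcriterion}). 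The formula requires the nontrivial nuclearity of $P^{\opp,\vee}$ for $P=(\bigoplus_{\mathbb N}R)^\wedge$ (\cref{popveenuclear}), proved via the identity $\lim_n\bigoplus_{\mathbb N}M_n\simeq\colim_{f}\prod_i I_{f(i)}$ and AB6.
\item[(ii)] Trace-classness must hold in the connective category. A map of connective objects that is trace class in $\nucr$ need not be trace class in $\nucrgeq$, because the witnessing dual can fail to be connective. So your condition (c) is not ``built into'' nuclearity. The paper handles this with \cref{conntrcl}: precomposing a stably trace-class map with a compact projective map makes it trace class in the connective part. It combines this with splitting compact projective maps between flat objects into two (\cref{cprojmapfldecomp}), which yields \cref{addrig}.
\end{itemize}

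\textbf{Finally, Steps 3(ii)--(iii) do not apply.} Since the unit of $\modrcplgeq$ is not compact projective, its additive rigidification is not a full subcategory spanned by trace-class sequences of its dualizable objects. The paper instead:
\begin{itemize}
\item first identifies $\nucrgeq$ with the additive rigidification of $\mcp_\Sigma(\op{Proj}_R^{\op{cpl},\omega_1})$ (\cref{nucadrig});
\item then shows this agrees with $(\modrcplgeq)^{\op{arig}}$, using $\modrcplgeq\simeq\mcp_{\Sigma_{\omega_1}}(\op{Proj}_R^{\op{cpl},\omega_1})$ (\cref{modrcplomega1projgen}) and the factorization of trace-class maps through $\omega_1$-projective objects (\cref{addrigidificationkappaproj}).
\end{itemize}
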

\begin{corX}[{\cref{main6}}]
	For any uniform Tate ring $A$, the \infcat of connective nuclear modules $\nuc(A)_{\geq0}$ is  additively rigid. In particular, $$\nuc(\mathbb{Q}_p)_{\geq0}$$ is  additively rigid.
\end{corX}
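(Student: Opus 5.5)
To prove the corollary, I would reduce it to \cref{mmain5}: I would express $\nuc(A)_{\geq0}$ as a module category over $\nuc(A_0)_{\geq0}$ for a ring of definition $A_0$, and then show that additive rigidity passes to such module categories.

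\textbf{Step 1 (setup).} Choose a ring of definition $A_0\subset A$ with pseudo-uniformizer $\varpi$. Since $A$ is uniform, we may take $A_0=A^\circ$, the ring of power-bounded elements. Then $A=A_0[1/\varpi]$, and $R:=A_0$, viewed with the $\varpi$-adic topology, is a connective adic \einfring. By \cref{mmain5}, $\nuc(R)_{\geq0}$ is additively rigid.

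\textbf{Step 2 (identify the category).} I would identify
\[
\nuc(A)_{\geq0}\simeq \modu_{A}\big(\nuc(R)_{\geq0}\big),
\]
where $A$ is regarded as the idempotent-free commutative algebra $\colim(R\xrightarrow{\varpi}R\xrightarrow{\varpi}\cdots)$ in $\nuc(R)_{\geq0}$. This should follow from the Clausen--Scholze description of nuclear modules over a Tate ring, namely nuclear $R$-modules with $\varpi$ inverted. The connective part is the subtle point. Uniformity is what guarantees that the connectivity condition on the $A$-side matches the condition on the underlying $R$-module, because the colimit above is a filtered colimit of connective objects, and it also guarantees that the relevant $\varpi$-torsion behaves well. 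I expect this comparison to be the main obstacle. I would first check it on the generators $A\otimes_R P$, with $P$ compact projective nuclear (for instance Banach-type objects $\widehat{\bigoplus}R$), and then use that both sides are generated under colimits by these objects.

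\textbf{Step 3 (rigidity for module categories).} Next I would prove that if $\mcc$ is additively rigid and $B\in\calg(\mcc)$, then $\modu_B(\mcc)$ is additively rigid over $\mcc$-linear structures, and hence rigid. The argument follows the stable case (Ramzi \cite{ramzi2024locally}). The unit $B$ is a base change of the unit of $\mcc$. The multiplication $\modu_B\otimes_B\modu_B\to\modu_B$ is the base change of the multiplication of $\mcc$, so its right adjoint is colimit-preserving and bilinear. Finally, $\modu_B(\mcc)$ is generated under colimits by free modules $B\otimes x$ with $x$ in a generating set of $\mcc$; I would verify that these are generated by images of compact morphisms of $\mcc$, which are themselves compact, since compact morphisms are preserved under the colimit-preserving strong monoidal functor $B\otimes-$. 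In the additive setting I also need $\modu_B(\mcc)$ to be dualizable in $\prlad$, and I would obtain this by stability of $\praddbl$ under modules over rigid algebras (equivalently, as a retract via the bar construction).

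\textbf{Step 4 (the $\mathbb{Q}_p$ case).} Take $A=\mathbb{Q}_p$, which is uniform Tate with $A^\circ=\mathbb{Z}_p$ and $\varpi=p$. Applying Steps 1--3 gives that $\nuc(\mathbb{Q}_p)_{\geq0}$ is additively rigid.
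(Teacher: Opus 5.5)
Your architecture matches the paper's. You make $A=R[1/\varpi]$ a commutative algebra in $\nucrgeq$, identify $\nuc(A)_{\geq0}$ with $\modu_A(\nucrgeq)$, and deduce rigidity from that of $\nucrgeq$. The paper gets the algebra structure from the symmetric monoidal embedding $\modu_R(\opsp)\simeq\ind(\modu_R^{\op{cpl},d})\hookrightarrow\nuc(R)$ of \cref{modrcpldsubsetstab}; there $A$ is an idempotent algebra, not an "idempotent-free" one.

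For the last step you take a different, valid route. The paper invokes \cref{modnucaddrig}, i.e.\ \cref{addrig} applied to $\modu_A(\mcc)$. That route needs $\nuc(\mcc)=\mcc^{\op{rig}}$ and strong $t$-continuity, and in return gives the stronger statement that $\nuc(A)_{\geq0}$ is the additive rigidification of $\modu_A(\mcc_{\geq0})$. You instead use that $\modu_A(\mcw)$ is rigid over $\mcw$ for any $A\in\calg(\mcw)$, together with transitivity of rigidity (Ramzi). This is more economical for the bare rigidity claim. Dualizability then comes for free, so your bar-construction remark is unnecessary.

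The genuine gap is Step 2. You flag it correctly, but your proposed proof would not work, for three reasons.
\begin{itemize}
\item The objects $\widehat{\bigoplus}R$ are not nuclear. The object $P=(\bigoplus_{\mathbb N}R)^\wedge$ is compact in $\ind(\stab(\op{Proj}_R^{\op{cpl},\omega_1}))$. If it were nuclear, it would be a compact, hence dualizable, object of the rigid category $\nuc(R)$, forcing it to be perfect over $R$ (cf.\ \cref{modrcpldsubsetstab}). Consistently, \cref{appenc1} shows $ii^RP\neq P$.
\item The genuinely compact projective objects of $\nucrgeq$ are dualizable and, e.g.\ for $R=\mathbb{Z}_p$, do not generate it; that is the whole point of working with dualizable rather than compactly generated categories. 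So there is no set of compact projective generators on which to test the comparison.
\item Even with other generators, the fact that $\nuc(\modu_A(\mcc))$ is generated by objects $A\otimes P$ is exactly what needs proof.
\end{itemize}

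The missing idea is \cref{lemnucAmodule}. With the paper's definition $\nuc(A):=\nuc(\modu_A(\mcc))$, where $\mcc=\ind(\stab(\op{Proj}_R^{\op{cpl},\omega_1}))$, an $A$-module is nuclear if and only if its underlying object is.
\begin{itemize}
\item One direction holds because $A\otimes-$ preserves trace-class maps, hence basic nuclear objects, and these generate nuclear objects under filtered colimits (\cref{basicnucgen}).
\item The other direction is a computation that never mentions generators: for compact $P\in\mcc$,
$\underline{\Hom}_{\mcc}(P,M)\simeq\underline{\Hom}_{\modu_A(\mcc)}(A\otimes P,M)\simeq\underline{\Hom}_{\modu_A(\mcc)}(A\otimes P,A)\otimes_A M\simeq P^\vee\otimes M$.
\end{itemize}

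After this, the connective parts match automatically, because connectivity of an $A$-module is detected on its underlying object. Neither uniformity nor $\varpi$-torsion plays any role there; uniformity is only used to choose $R=A^\circ$. (If you simply take "nuclear $R$-modules with $\varpi$ inverted" as the definition of $\nuc(A)$, Step 2 becomes a tautology. Relative to the paper's definition, its content is exactly \cref{lemnucAmodule}.)
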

We note that because the relevant $t$-structure is right complete (so that its stabilization recovers the full category, i.e., $\opsp(\nucrgeq) \simeq \nucr$), working with the connective part $\nucrgeq$ already captures the whole categorical information of $\nucr$.

These results complement the discussion before \cref{dfn:dual_add} which suggests that stabilizations of dualizable additive \infcats should be thought of as affine. We note that  
$\nuc(X)_{\geq0}$ is not expected to be additively rigid or dualizable unless $X$ is affine. 

Finally, we study the stable localizing motives $\motpst$ of dualizable additive \infcats. We largely follow the approach of \cite{blumberg2013universal}, although certain steps require subtle modifications in the additive context. 
\begin{thmX}[\cref{defmotpst}]
	There exists a  universal finitary localizing invariant to a presentable stable \infcat $$\mcu^{\op{cont}}:\praddbl\to \motpst$$ in the following sense: for any (potentially large) cocomplete stable \infcat \mce, the following functor is an equivalence to the \infcat of finitary localizing invariants
	$$\funct^L(\motpst,\mce)\xrightarrow{\sim}\Loc_\omega(\praddbl,\mce).$$
\end{thmX}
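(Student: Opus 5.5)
We follow the Blumberg--Gepner--Tabuada construction, with the ambient category of small stable categories replaced by small additive categories, and then pass to dualizable ones using the resolution from Theorem A(3). The plan is to first build the universal finitary localizing invariant $\mcu_{\op{loc}}\colon \Cat_{\op{add}}^{\op{idem}}\to \mcm$ on small idempotent-complete additive categories, and then define $\motpst$ as a suitable localization of it.

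\emph{Small additive categories.} The subcategory $\Cat^{\op{idem},\omega}_{\op{add}}$ of compact objects is small, and every small additive category is a filtered colimit of compact ones. Consider the stable presheaf category $\Fun((\Cat^{\op{idem},\omega}_{\op{add}})^{\opp},\opsp)$ and apply the Yoneda embedding followed by stabilization. We then localize at two classes of maps. The first makes the zero category go to zero. The second sends each short exact sequence $\mca\to\mcb\to\mcb/\mca$ among compact objects to a cofiber sequence. Here a short exact sequence means an inclusion of additive categories inducing a connective internal localization $\mcp_\Sigma(\mcb)\to\mcp_\Sigma(\mcb/\mca)$ on $\mcp_\Sigma$, as in Theorem A(3).

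Finitarity is handled the BGT way. One checks that every such sequence is a filtered colimit of sequences with compact terms, so that left Kan extension along filtered colimits identifies finitary localizing invariants with functors on compact objects satisfying the same conditions. This is the step needing the most additive-specific care. Quotients $\mcb/\mca$ are no longer Verdier quotients, and the kernel of $\mcp_\Sigma(\mcb)\to\mcp_\Sigma(\mcb/\mca)$ is generally not $\mcp_\Sigma(\mca)$. So the correct notion of exact sequence must be pinned down, presumably via the connective localizations above, and shown to be stable under filtered colimits and under approximation by compact pieces.

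\emph{Extension to dualizable categories.} For a dualizable additive $\mcc$, Theorem A(3) gives a presentation $\mcc\to\mcp_\Sigma(\mcd)\to\mcp_\Sigma(\mce)$. We set
\[
\mcu^{\op{cont}}(\mcc):=\Fib\bigl(\mcu_{\op{loc}}(\mcd)\to\mcu_{\op{loc}}(\mce)\bigr).
\]
To show this is independent of the presentation and functorial in internal left adjoints, we proceed Efimov-style and use a canonical choice. Take $\mcd$ to be ($\kappa$-)compact objects of $\mcp_\Sigma(\mcc^{\omega_1})$, or, via Theorem B, build it from $\omega_1$-compact flat objects. Then show that any two presentations are connected by a zigzag that induces equivalences, using the localization property.

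Alternatively, and more cleanly, define $\motpst$ directly. Take the localization of $\Fun((\praddbl_{\omega_1})^{\opp},\opsp)$ at the short exact sequences in $\praddbl$ (kernels of connective internal localizations) and at the filtered-colimit conditions. Then prove it agrees with the fiber construction, so that the universal property holds by construction. The key input is that a localizing invariant $E$ on small additive categories extends uniquely to $\praddbl$ with $E^{\op{cont}}(\mcp_\Sigma(\mca))=E(\mca)$. This comes from the fiber formula together with a check that short exact sequences in $\praddbl$ induce fiber sequences. That check uses a $3\times 3$ argument on the resolutions, where we must also use that the unit of a connective internal localization is a $\pi_0$-epimorphism, so that kernels are again presented by such resolutions.

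The main obstacle is this well-definedness and exactness of the continuous extension. It amounts to showing that every exact sequence of dualizable additive categories lifts to a compatible diagram of resolutions $\mcp_\Sigma(\mcd_i)\to\mcp_\Sigma(\mce_i)$. I would attack it by proving a functorial version of Theorem A(3), in which $\mcd=\mcc^{\omega_1}$-type categories are chosen canonically. Then the universal property of $\Fun^L(\motpst,\mce)\simeq \Loc_\omega(\praddbl,\mce)$ follows formally from the presheaf localization description, with the restriction to finitary invariants accounting for the $\omega_1$-accessibility.
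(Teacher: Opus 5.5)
Your main route does not close. Everything rests on the step you yourself call the main obstacle: that $\mcc\mapsto\Fib(\mcu_{\op{loc}}(\mcd)\to\mcu_{\op{loc}}(\mce))$ is independent of the resolution, functorial in internal left adjoints, and carries short exact sequences in \praddbl to fiber sequences. Your proposed attack uses canonical resolutions $\mcc\hookrightarrow\mcp_\Sigma(\mcc^{\omega_1})$ fitting into compatible $3\times 3$ diagrams. That needs $\mcc\mapsto\mcp_\Sigma(\mcc^{\omega_1})$ and $\mcc\mapsto\mcp_\Sigma(\mcc^{\omega_1})/\mcc$ to preserve short exact sequences of dualizable additive categories. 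In the stable case the analogous facts for $\op{Ind}((-)^{\omega_1})$ and the continuous Calkin construction are available. In the additive setting this is precisely what the paper says it does not know (the question closing the subsection on the additive Calkin construction). The paper gets $\Loc_\omega(\praddbl,\mce)\simeq\Loc_\omega(\pradat,\mce)$ by a different mechanism, \cref{smalllargelocinv}: localizing invariants are spherical sheaves for the $\mcf\mcf$-topology, and the covers $\mcc\hookrightarrow\mcp_\Sigma(\mcc^{\omega_1})$ suffice for a comparison of sites, with no compatibility for sequences needed. More to the point, that comparison plays no role in this theorem, so the ``key input'' you name for your alternative route is not the one that matters.

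Your alternative is the paper's construction. $\motpst$ is the localization of $\funct^{\op{cofil},\times}(\praddblop,\opsp)$ at the maps $\Sigma^n\cofib(h_\mca\to h_\mcb)\to\Sigma^n h_{\mcb/\mca}$, after which the universal property is formal as in Blumberg--Gepner--Tabuada. What is missing is the one non-formal input. To stay presentable you may invert only a small set of maps, i.e.\ only sequences $\mca\hookrightarrow\mcb\to\mcb/\mca$ between $\kappa$-compact objects of \praddbl. You then need these to control all short exact sequences.

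This is \cref{filshortext}(2): for $\kappa$ large, every short exact sequence in \praddbl is a $\kappa$-filtered colimit of short exact sequences of $\kappa$-compact objects. One writes $\mcb=\colim_\alpha\mcb_\alpha$ with $\mcb_\alpha$ $\kappa$-compact. One takes $\mca_\alpha$ to be the dualizable pullback of $\mca$ along $\mcb_\alpha\to\mcb$, a closed subcategory of $\mcb_\alpha$ by \cref{dualker1}, and sets $\mcc_\alpha=\mcb_\alpha/\mca_\alpha$. The crux is that $\mca_\alpha$ is again $\kappa$-compact. This holds because closed subcategories are monomorphisms in \praddbl, and for a suitable large $\kappa$ the $\kappa$-compact objects of a presentable category are closed under subobjects (\cref{kcompmono}, \cref{addsubcatkcomp}). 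Since filtered colimits of fiber sequences in a stable $\mce$ are fiber sequences, a finitary functor that is localizing on these sequences is localizing on all of them.

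Note that $\kappa$ is only known to be large, not $\omega_1$. So imposing relations only on sequences of $\omega_1$-compact objects, as your presheaves on $\omega_1$-compacts suggest, is not justified as written. You flagged the analogous approximation for small additive categories; in the dualizable setting it is the heart of the proof.

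A side remark: for a full additive subcategory $\mca\subset\mcb$, the kernel of $\mcp_\Sigma(\mcb)\to\mcp_\Sigma(\mcb/\mca)$ \emph{is} $\mcp_\Sigma(\mca)$, by \cref{addkarseq} and \cref{basicses}. What can fail is compact projective generation of the kernel of an arbitrary additive homological epimorphism.
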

We prove that the motives of small additive \infcats and those of dualizable additive \infcats generate the same presentable stable subcategory. 
Employing a strategy inspired by \cite[Theorem 4.10]{efimov2024k}, we establish the following principle, which relates localizing invariants to the spherical sheaves introduced by Pstrągowski \cite{pstrkagowski2023synthetic} in the context of synthetic spectra:
\[
\{\text{localizing invariants}\} \simeq \{\text{spherical sheaves in the $\mcf\mcf$-topology}\}.
\]
Recall that a spherical sheaf is a functor satisfying descent in a topology and
preserving finite products. The $\mcf\mcf$-topology is the topology generated by singleton
fully faithful functors (see \cref{dfn:fftopology}).
\begin{thmX}[\cref{locinvsphshv}, \ref{smalllargelocinv}]\label{thm:sphericalsheaves}
	Let $\mce$ be a stable \infcat with small colimits, and let 
	$\star \in \{ \op{dbl}, \op{at} \}$. A functor $E \colon \prad^\star \to \mce$ is a 
	localizing invariant if and only if it is a spherical sheaf in the $\mcf\mcf$-topology, 
	i.e. a finite product preserving sheaf in the topology generated by singleton fully 
	faithful functors. This yields a natural equivalence
	\[
	\op{Loc}(\prad^\star,\mce)\simeq \shv_\Sigma(\prad^{\star,\opp};\mce).
	\]
	
	In this topology any object of $\prad^{\op{dbl},\opp}$ admits a cover by of $\prad^{\op{at},\opp}$, 
	so small additive 
	\infcats and dualizable additive \infcats share the same (finitary) localizing invariants:
	\[
	\op{Loc}_{\omega}(\praddbl,\mce)\xrightarrow{\sim} \op{Loc}_{\omega}(\prad^{\op{at}},\mce).
	\]
	Consequently, their motives generate precisely the same presentable stable subcategory of prestable motives.
\end{thmX}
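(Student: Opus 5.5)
The plan has three steps: identify localizing invariants with spherical sheaves, build covers of dualizable categories by compactly generated ones, and then use sheaf theory to compare the two sites.

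\textbf{Step 1: localizing invariants as spherical sheaves.} I will first fix the sieve structure. A covering family of an object $\mcc$ in $\prad^{\star,\opp}$ is a single fully faithful strongly continuous functor $i\colon \mcc\to\mcd$. The Čech nerve of such a cover, computed in $\prad^\star$, is built from the iterated "pushouts" $\mcd\sqcup_\mcc\mcd\sqcup_\mcc\cdots$.

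The key computation is that for a fully faithful $i$ with cofiber $\mcd/\mcc$ (a localization sequence $\mcc\to\mcd\to\mcd/\mcc$), the $n$-th Čech term is $\mcd\oplus(\mcd/\mcc)^{\oplus n}$, up to splitting. I will verify this by exhibiting the pushout $\mcd\sqcup_\mcc\mcd$ as a recollement-type gluing, using the connective internal localization characterization from \cref{basicses}.

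Given this, the descent condition for a finite-product-preserving $E$ unwinds as follows. The cosimplicial object is the Čech conerve of $E(\mcd)\to E(\mcd/\mcc)$ with fiber $E(\mcc)$, so its totalization is $\fib(E(\mcd)\to E(\mcd/\mcc))$. Descent is therefore equivalent to $E(\mcc)\to E(\mcd)\to E(\mcd/\mcc)$ being a fiber sequence, i.e.\ to $E$ being localizing. The converse uses the same identification, together with the fact that every short exact sequence in $\prad^\star$ arises from such a fully faithful map. This gives $\op{Loc}(\prad^\star,\mce)\simeq\shv_\Sigma(\prad^{\star,\opp};\mce)$.

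\textbf{Step 2: covers of dualizable categories.} For dualizable $\mcc$, I will use the resolution of \cref{main2}, $\mcc\to\mcp_\Sigma(\sD)\to\mcp_\Sigma(\sE)$. Its first map is a fully faithful strongly continuous functor into a compactly assembled (indeed atomically generated) category, which I expect is what $\op{at}$ denotes. So every object of $\prad^{\op{dbl},\opp}$ is covered by an object of $\prad^{\op{at},\opp}$.

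\textbf{Step 3: comparison lemma.} The $\op{at}$-subsite is closed under the relevant Čech nerves, since quotients of $\mcp_\Sigma$'s by $\mcp_\Sigma$'s are again of that form, and is a dense full subcategory in the sense of the comparison lemma. Restriction therefore induces an equivalence on hypercomplete (or, granting finitary conditions, ordinary) spherical sheaves. Combined with Step 1, this gives $\op{Loc}(\praddbl,\mce)\simeq\op{Loc}(\prad^{\op{at}},\mce)$.

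The finitary version follows by checking that filtered colimit preservation is detected on the subsite. Every dualizable $\mcc$ is a filtered colimit, in $\praddbl$, of pieces whose resolutions are compatible, via the $\omega_1$-compact flat generation of \cref{flatgenerate}. The statement about motives follows by the Yoneda lemma applied to $\motpst$ and \cref{defmotpst}.

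\textbf{Main obstacle.} I expect the hardest part to be Step 1's identification of the Čech nerve in the additive setting. Pushouts along fully faithful strongly continuous functors in $\praddbl$ need not split as cleanly as in the stable case, because fibers are not available; one must use the $\pi_0$-epimorphism condition on the unit. My first attempt will be to pass to stabilizations and check that the relevant squares stabilize to split Verdier squares, then use separatedness to descend the conclusion back to the prestable setting.
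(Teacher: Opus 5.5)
\textbf{Overall.} Your architecture matches the paper's. The paper uses \cref{dualaddsaddpair} to show that fully faithful functors are stable under pushout in $\prad^\star$. It then uses \cref{sphshvcriterion}, a semi-additive version of the spherical-sheaf criterion, for the \v{C}ech-nerve computation. The dualizable/atomic comparison uses covers $\mcc\hookrightarrow\mcp_\Sigma(\mcc^{\omega_1})$; your resolution $\mcc\to\mcp_\Sigma(\sD)$ works equally well.

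\textbf{Main gap: Step 1.} The \v{C}ech term $\mcd\sqcup_\mcc\mcd$ is \emph{not} $\mcd\times\mcd/\mcc$. It is only a split extension $\mcd\xrightarrow{j_1}\mcd\sqcup_\mcc\mcd\to\mcd/\mcc$, split by the codiagonal, i.e.\ a recollement whose gluing is in general nontrivial. A merely finite-product-preserving $E$ does not see such splittings.

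In the direction localizing $\Rightarrow$ sheaf this is repairable, as in the proof of \cref{sphshvcriterion}. There $E$ carries the split exact sequences $\mcd^{\sqcup_\mcc n}\to\mcd^{\sqcup_\mcc(n+1)}\to\mcd/\mcc$ to fiber sequences, which split in the additive $\mce$. This gives $E(\mcd^{\sqcup_\mcc(n+1)})\simeq E(\mcd)\oplus E(\mcd/\mcc)^{\oplus n}$.

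Your sentence ``the converse uses the same identification'' is circular, however. For a bare spherical sheaf, that splitting is itself an exactness statement about a split sequence. Descent along the corresponding cover says nothing, because a cover admitting a section (here the codiagonal) generates the maximal sieve. So in the sheaf $\Rightarrow$ localizing direction you owe an independent argument that spherical $\mcf\mcf$-sheaves are already splitting invariants, for instance that $E(S_2\mcc)\to E(\mcc\times\mcc)$ is an equivalence (cf.\ \cref{splinvchar}). Only then does your totalization computation apply. Neither your recollement description nor passing to stabilizations supplies this; the obstruction lives on the side of $E$, not of the categories.

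Minor point: the cosimplicial object $E(\mcd)\oplus E(\mcd/\mcc)^{\oplus\bullet}$ is the \v{C}ech conerve of $\fib(E(\mcd)\to E(\mcd/\mcc))\to E(\mcd)$. The \v{C}ech conerve of $E(\mcd)\to E(\mcd/\mcc)$ itself totalizes to $E(\mcd)$ in a stable category.

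\textbf{Finitary statement.} Appealing to flat generation does not address what must be shown. The claim is that for a $\mck$-shaped diagram $\mcm_\bullet$ in $\praddbl$, the extension $E^{\mathrm{cont}}$ commutes with $\colim\mcm_k$ whenever $E$ does on $\pradat$. The missing idea is the paper's resolution-plus-swindle argument:
\begin{itemize}
\item take the natural short exact sequences $\mcm_k\to\mcp_\Sigma(\mcm_k^{\omega_1})\to\mcp_\Sigma(\mcm_k^{\omega_1})/\mcm_k$;
\item colimits of short exact sequences are short exact, and the middle and right colimits are compact projectively generated;
\item the Eilenberg swindle (\cref{swindle}), together with colimit preservation on $\pradat$, kills $E$ on the middle term;
\item hence $E^{\mathrm{cont}}(\colim\mcm_k)\simeq\Omega E(\colim_k\mcp_\Sigma(\mcm_k^{\omega_1})/\mcm_k)$, and stability of $\mce$ moves $\Omega$ through the colimit.
\end{itemize}

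\textbf{Step 3.} Your reason for the \v{C}ech nerve of $\mcc\hookrightarrow\mcn$ (with $\mcn$ atomic) staying in $\pradat$ is off. Its terms are pushouts $\mcn\sqcup_\mcc\cdots\sqcup_\mcc\mcn$ over the non-atomic $\mcc$, not quotients of atomic categories. The correct reason: in $\pr^R$ these are iterated fiber products along the colimit-preserving right adjoint $\mcn\to\mcc$. So the coprojections from $\mcn$ are internal left adjoints with jointly conservative right adjoints, and they carry $\mcn^{\op{cproj}}$ to compact projective generators. With this, the comparison lemma (\cref{basisequi}, \cref{sphshvequi}) gives an equivalence of ordinary \v{C}ech sheaves, and no hypercompleteness hedge is needed.
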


We conclude by demonstrating that the unit of $\motpst$ corepresents nonconnective algebraic $K$-theory.

\begin{thmX}[\cref{main4}]
For any dualizable additive \infcat $\sA$, there is a natural equivalence from the mapping spectrum:
	\[
	\unmap_{\motpst}(\mcu^{\op{cont}}(\spgeq), \mcu^{\op{cont}}(\sA)) \simeq K^{\op{cont}}(\opsp(\sA)).
	\]
\end{thmX}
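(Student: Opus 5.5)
The strategy is to reduce to Efimov's stable statement, that the unit of the stable localizing motives corepresents continuous $K$-theory, by comparing $\motpst$ with the motives of dualizable stable categories via stabilization.

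\emph{Step 1: build the comparison functor.} Stabilization $\sA\mapsto\opsp(\sA)$ sends $\praddbl$ to $\prstdbl$. It preserves the relevant short exact sequences: kernels of connective internal localizations go to Verdier sequences of dualizable stable categories. It also commutes with filtered colimits. Hence for any finitary localizing invariant $E$ of dualizable stable categories, $E\circ\opsp$ is a finitary localizing invariant on $\praddbl$. Applying this to Efimov's universal invariant and using the universal property of $\motpst$ (\cref{defmotpst}), we get a colimit-preserving functor
\[
\Phi\colon \motpst\to \mathcal{M}^{\op{cont}}_{\op{loc}}
\]
with $\Phi\,\mcu^{\op{cont}}(\sA)\simeq\mcu^{\op{cont}}_{\op{loc}}(\opsp(\sA))$. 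Efimov's theorem gives
\[
\unmap(\mcu^{\op{cont}}_{\op{loc}}(\opsp),\mcu^{\op{cont}}_{\op{loc}}(\opsp\sA))\simeq K^{\op{cont}}(\opsp\sA).
\]
So the statement follows once $\Phi$ is fully faithful on maps out of the unit. Equivalently, the functor $\sA\mapsto\unmap_{\motpst}(\mcu^{\op{cont}}(\spgeq),\mcu^{\op{cont}}(\sA))$ must agree with $K^{\op{cont}}\circ\opsp$.

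\emph{Step 2: reduce to small additive categories.} Both sides are finitary localizing invariants of $\sA$. For the left side this holds because $\mcu^{\op{cont}}(\spgeq)$ is compact, being the image of a compact generator in the Blumberg--Gepner--Tabuada style construction. By \cref{thm:sphericalsheaves}, a natural map between finitary localizing invariants on $\praddbl$ is an equivalence once it is an equivalence on $\prad^{\op{at}}$, i.e.\ on $\mc{P}_\Sigma(\sD)$ for small additive $\sD$. In that case $K^{\op{cont}}(\opsp\mc{P}_\Sigma\sD)=K(\op{Stab}(\sD)^{\op{idem}})$, which is nonconnective $K$-theory of the small stable hull $\op{Stab}(\sD)$.

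\emph{Step 3: the corepresentability in the small case.} This follows the argument of \cite{blumberg2013universal}. Write the motive of the unit as the image of $\mcp_\Sigma$ of the free additive category on one object. Mapping spectra out of it in the additive-motives (pre-localized) category compute the additive version of $K$: connective $K$-theory $K^{\op{cn}}(\op{Stab}\sD)$. By the Gillet--Waldhausen / heart-type comparison, $K(\sD)\simeq K(\op{Stab}\sD)$ for the split-exact structure, since weight-structure heart theorems apply. Passing to the localizing quotient produces the nonconnective delooping via the Bass--Thomason/Schlichting suspension. Here we use the additive Calkin construction, the kernel of a localization $\mc{P}_\Sigma(\sD)\to\cdots$, as the suspension in $\motpst$.

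\emph{Main obstacle.} I expect Step 3 to be the hardest part, specifically identifying the suspension. We must check that the additive Calkin sequence $\mc{P}_\Sigma(\sD)\to\mc{P}_\Sigma(\op{Ind}\text{-like}\ \sD)\to \mathrm{Calk}$ has a middle term with vanishing motive (an Eilenberg swindle in the additive setting). We must also check that it stabilizes to Schlichting's sequence. My first attempt is to use countable-sum-closed envelopes, where the swindle is clear, and to verify that stabilization carries this sequence to the standard stable Calkin sequence. Step 1 then gives the comparison with Efimov's result for free.
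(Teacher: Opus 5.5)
Your Steps 2–3 follow the paper's route: reduce to $\pradat$ via \cref{smalllargelocinv}, identify maps out of the unit in splitting motives with $\kcn(\stab(-))$, and deloop using the countable-coproduct Calkin construction and the swindle. Step 1 is not a reduction. ``$\Phi$ is fully faithful on maps out of the unit'' is just the theorem restated, and the comparison map it yields already comes by Yoneda from the invariant $K(\stab(-))$.

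The genuine gap is in Step 3. The sentence ``passing to the localizing quotient produces the nonconnective delooping'' is where the whole theorem lives, and you give no mechanism for it. The sub-problems you single out are not the real obstacles:
\begin{itemize}
\item The swindle for $\sD^{\op{big}}=\mathcal{P}^{\omega_1}_\Sigma(\sD)$ is just \cref{swindle}.
\item Exactness of $\sD\to\sD^{\op{big}}\to\calk(\sD)$ in $\catadidem$ is automatic from \cref{addkarseq}.
\item The planned check that stabilization yields the standard stable Calkin sequence fails as stated: by \cref{calklem1}, $\stab(\sD^{\op{big}})$ is only the weight-bounded part of $\ind(\stab(\sD))^{\omega_1}$. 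It would not help anyway, since what $\Phi$ does to the Calkin sequence gives no control over mapping spectra inside $\motpst$. The functor $\motpst\to\Motnc$ is not known to be fully faithful.
\end{itemize}

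What the argument actually requires is the following.
\begin{itemize}
\item[(a)] $\calk$ must preserve finite products and right splitting sequences, so that it descends to a colimit-preserving endofunctor $\calk_l$ of splitting motives with $L\simeq\Omega L\calk_l$ after localizing (\cref{diagram}). The key point: for a right splitting sequence $\mathcal{A}\to\mathcal{B}\to\mathcal{C}$, a map $x\to y$ in $\mathcal{P}^{\omega_1}_\Sigma(\mathcal{B})$ that is inverted in $\mathcal{P}^{\omega_1}_\Sigma(\mathcal{C})$ is isomorphic to $ii^R(x)\oplus p^Rp(x)\to ii^R(y)\oplus p^Rp(y)$, because $p^Rp(x)$ is projective. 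Hence it lies in the class $W$ of \cref{addquot}.
\item[(b)] Since $\mathcal{P}^{\omega_1}_\Sigma$ is only known to commute with $\omega_1$-filtered colimits, this must be done in the $\omega_1$-finitary splitting motives $\motomegaonespl$. There the unit still corepresents $\kcn(\stab(-))$ (\cref{splmapping}).
\item[(c)] You need a retract argument. Let $h=\unmap_{\motpst}(\mcu(\spgeqcproj),-)$, let $K^{\op{pst}}$ be the functor induced by $K(\stab(-))$, and let $\theta\colon h\to K^{\op{pst}}$ come from Yoneda. The formulas $L^*K^{\op{pst}}\simeq\colim_n\Omega^n\kcn^{\op{spl}}(\calk_l^n(-))$ and $L^*h\simeq\colim_n\Omega^n h(L\calk_l^n(-))$ produce $\gamma$ with $\gamma\circ L^*\theta\simeq\id$. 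Since $\kcn\to K$ is an isomorphism on $\pi_{\geq0}$, $\theta$ is an isomorphism on $\pi_{\geq0}$, and then in all degrees by the Calkin delooping.
\end{itemize}

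This same retract is what proves that $\mcu^{\op{cont}}(\spgeq)$ is compact. So your claim in Step 2 that compactness holds ``by construction'' is unjustified. The localization defining $\motpst$ inverts maps between $\omega_1$-compact objects only, because kernels of exact sequences of compact additive categories need not be compact (\cref{filshortext}). Step 2 does not actually need compactness: $\unmap(\mcu^{\op{cont}}(\spgeq),\mcu^{\op{cont}}(-))$ is a localizing invariant regardless, and the non-finitary equivalence $\Loc(\praddbl,\opsp)\simeq\Loc(\pradat,\opsp)$ of \cref{smalllargelocinv} suffices.
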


There is a natural functor $$\motpst \to \Motnc$$ given by sending a dualizable additive \infcat to its stabilization.
Although the units of both categories corepresent algebraic $K$-theory, we doubt that the functor is fully faithful. 
It is known not to be essentially surjective, see \cite[Theorem 7.11]{levy2025c} or \cite[Theorem 7.1]{efimov2025rigidity}. We note that the main result of \cite{levy2025c} shows that
$c$-categories give rise to objects in $\motpst$ that may not come from dualizable additive \infcats. We intend to 
explore a dualizable version of $c$-categories in future work to put these on the same footing.



\subsection*{Outline}{}
In section \ref{section1}, we recall some of the basics of (presentable) prestable \infcats, since dualizable additive categories are examples of these. This section is largely expositional, and most of the results come from \cite{SAG}.

In section \ref{section2}, we begin studying dualizable additive \infcats in more detail, recalling some of the general theory such as in \cite{ramzi2024dualizable} and characterizing dualizable additive categories as separated Grothendieck prestable categories satisfying
$\mathrm{AB4}^*$ and $\mathrm{AB6}$.

In section \ref{section3}, we study additive short exact sequences, which for dualizable additive categories are preserved by localizing invariants. In particular, we prove that every dualizable additive category can be resolved by those that are compact projectively generated, 
and relate dualizable additive categories to almost mathematics. We also provide some examples of how short exact sequences in the setting of small additive categories behave more poorly than for dualizable additive categories, such as not being closed under pullbacks.

In section \ref{section4}, we study flat objects in prestable categories and prove \Cref{thmbintro}.

In section \ref{section5}, we study additive rigidifications, providing criteria for a map to be an additive rigidification, and 
applying this to prove \Cref{mmain5}.

In section \ref{section6}, we study localizing invariants of dualizable additive categories, and prove \Cref{thm:sphericalsheaves} relating them to spherical sheaves in the $\mcf\mcf$-topology.

In section \ref{sec:prestable_motives}, we begin the study of the category of prestable motives, and show that maps from the unit corepresent non-connective $K$-theory. We also discuss a version of the Calkin construction 
for small additive categories, and discuss the $t$-structure on the category of additive motives. Towards the end of the section, we provide many open questions about prestable motives we are curious about.

We also include several appendices. Appendix \ref{appendixA} provides some examples showing that both
additive homological epimorphisms and additive Karoubi projections are not closed under pullbacks.
Appendix \ref{appendixB} develops some of the basics of compact projectively assembled categories and compact projective morphisms.
Appendix \ref{kappaprojgeneration} studies (non-compact) projective generation and large weight structures.
Appendix \ref{appen3} provides complete proofs of two results of Efimov used in Section \ref{section5}.
	\subsection*{Conventions and notations}

\begin{itemize}
	\item We use the word `\infcat' to mean $(\infty,1)$-category. 

	\item We let $\mathcal{S}$ denote the \infcat of anima and $\mathrm{Sp}$ the \infcat of spectra.
	
	\item For each $n \in \mathbb{N}$, $\mathcal{S}_{\leq n}$ denotes the full subcategory of $\mathcal{S}$ spanned by $n$-truncated anima. For instance, $\mathcal{S}_{\leq -1}$ is the full subcategory spanned by $\varnothing$ and $*$, while $\mathcal{S}_{\leq -2}$ consists solely of $*$.
	
	\item For a functor category $\mathrm{Fun}(\mathcal{C},\mathcal{D})$, we employ the superscripts $\op{lex}$, $\op{rex}$, $\op{ex}$, $R$, and $L$ to indicate the full subcategories consisting of functors that preserve finite limits, finite colimits, both finite limits and finite colimits (exact functors), small limits, and small colimits, respectively.
	
	\item Let $\mathcal{C}$ be a \infcat. A sequence $a \to b \to c$ in $\mcc$ equipped with a nullhomotopy of the composite is called a bifiber sequence (or a short exact sequence) if it is simultaneously a fiber and a cofiber sequence.
	
	\item The symbol $\prl$ denotes the \infcat of presentable \infcats and left adjoint functors. By default, it is endowed with the Lurie tensor product as its symmetric monoidal structure.
	
	\item For a regular cardinal $\kappa$, $\prl_{\kappa} \subset \prl$ denotes the (non-full) subcategory consisting of $\kappa$-presentable \infcats and $\kappa$-compactness preserving functors.

	\item For a \infcat $\mcc$, we denote by $\mcc^\kappa$ the full subcategory of $\kappa$-compact objects, and $\mcc^{\op{cproj}}$ denotes the full subcategory of compact projective objects.
	
	\item We denote by $\catper$ the \infcat of small, idempotent-complete stable \infcats, and by $\catadidem$ the \infcat of small, idempotent-complete additive \infcats.
	
	
	\item A symmetric monoidal presentable \infcat $\mathcal{C}$ is said to be presentably symmetric monoidal if its tensor product functor $- \otimes - \colon \mcc \times \mcc \to \mcc$ preserves colimits in each variable separately.
	
	\item The \infcat of commutative monoids in a symmetric monoidal \infcat $\mcc$ is denoted by $\mathrm{CAlg}(\mathcal{C})$, and its objects are referred to as commutative algebras or $\mathbb{E}_{\infty}$-algebras in $\mathcal{C}$. 
	We use \einfrings to refer to commutative algebras in \opsp with its standard symmetric monoidal structure.
	
	\item We write $\mapp(-,-)$ for mapping anima, $\unmap(-,-)$ for enriched mapping objects (provided the enriched context is clear), and $\underline{\Hom}(-,-)$ for internal hom-objects.

\end{itemize}

\section*{Acknowledgments}

Many of our results were inspired by the work of Sasha Efimov. In 2024, at the conference ``Nearby Cycles and Derived Geometry'' in Regensburg, he gave a lecture describing the concept of dualizable weighted 
categories. 
There he explained that Clausen--Scholze's category of nuclear modules over an $I$-complete noetherian ring $R$ can be expressed as the limit of $\Mod_{R/I^n}$ in the category of dualizable weighted 
categories. At the start of our project, it was clear that the categories of dualizable weighted categories and of dualizable additive categories should be equivalent, so in particular we knew there had to be 
a universal property of $\mathrm{Nuc}(R)$ in terms of dualizable additive categories, and it didn't take us long to guess the statement of \cref{mmain5}. 

We also would like to thank Omer Bojan, David Gepner, Lars Hesselholt, Yifan Jin, Ryo Kanda, Akhil Mathew, Maxime Ramzi, Xiangrui Shen, Germán Stefanich, and Christoph Winges for helpful conversations at various stages of this project. We are especially grateful to Yifan Jin for his detailed feedback on an earlier draft of this manuscript. 
Part of this work was carried out while the authors were attending the 2025 International Workshop on Algebraic Topology (IWOAT). We thank the organizers for their hospitality and for providing a stimulating working environment.

The first author was supported by the Clay Research Fellowship. The last author was supported by JSPS KAKENHI Grant Number JP26K16973.

	\section{Preliminaries: prestable \infcats}\label{section1}
	To establish a rigorous foundation for dualizable additive \infcats, we first recall the theory of prestable \infcats. These categories arise precisely as the connective parts of stable \infcats equipped with $t$-structures.
	
	\begin{de}[{See \cite[Proposition~C.1.2.2]{sag}}]\label{defpst}
		A prestable \infcat is a \infcat $\mathcal{C}$ satisfying the following properties:
		
		\begin{enumerate}[label=(\arabic*),font=\normalfont]
			\item Initial and final objects of $\mathcal{C}$ exist and agree (that is, $\mathcal{C}$ has a zero object).
			\item Every map admits a cofiber and every cofiber sequence in $\mathcal{C}$ is also a fiber sequence.
			\item Every map in $\mathcal{C}$ of the form $f: X \rightarrow \Sigma(Y)$ admits a fiber and $\Sigma(Y)$ is the cofiber of its fiber.
		\end{enumerate}
	
	\end{de}
	\begin{prop}[{\cite[Corollary C.1.2.3]{sag}}]\label{prestcriterion}
		Let $\mathcal{C}$ be a \infcat. The following conditions are equivalent:
		\enu{
			\item  The \infcat $\mathcal{C}$ is prestable.
			\item  There exists a fully faithful embedding $\rho: \mathcal{C} \hookrightarrow \mathcal{D}$, where $\mathcal{D}$ is a stable \infcat and the essential image of $\rho$ is closed under finite colimits and extensions.
		}
		
	\end{prop}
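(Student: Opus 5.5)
The plan is to prove the two implications directly, using the definition of prestability in \cref{defpst}.

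\emph{(2) $\Rightarrow$ (1).} Identify $\mathcal{C}$ with its essential image in $\mathcal{D}$. Since this image is closed under finite colimits, it contains $0$, and $0$ is a zero object of $\mathcal{C}$. Cofibers in $\mathcal{C}$ exist and are computed in $\mathcal{D}$. Every cofiber sequence in $\mathcal{D}$ is a fiber sequence, and fully faithfulness transports this to $\mathcal{C}$, which gives (2) of \cref{defpst}. For (3), take $f\colon X\to \Sigma Y$ in $\mathcal{C}$, where $\Sigma Y$ agrees with the suspension in $\mathcal{D}$ because the image is closed under finite colimits. Let $F=\fib(f)$ in $\mathcal{D}$. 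Then there is a fiber sequence $Y\to F\to X$ in $\mathcal{D}$: the fiber of $X\to\Sigma Y$ is an extension of $X$ by $\Omega\Sigma Y\simeq Y$. Closure under extensions puts $F$ in $\mathcal{C}$, so $F$ is the fiber in $\mathcal{C}$ as well. Stability of $\mathcal{D}$ then gives $\cofib(F\to X)\simeq \Sigma Y$.

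\emph{(1) $\Rightarrow$ (2).} This is the substantial direction, and the natural candidate is $\mathcal{D}=\op{SW}(\mathcal{C})$, the Spanier--Whitehead stabilization $\colim(\mathcal{C}\xrightarrow{\Sigma}\mathcal{C}\xrightarrow{\Sigma}\cdots)$ taken in $\Cat_\infty$. The steps are as follows.
\begin{enumerate}
\item Show $\Sigma\colon\mathcal{C}\to\mathcal{C}$ is fully faithful. Axiom (2) gives $X\simeq\Omega\Sigma X$, computed as the fiber of $0\to\Sigma X$, so $\Map(\Sigma X,\Sigma Y)\simeq\Map(X,\Omega\Sigma Y)\simeq\Map(X,Y)$. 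Here one must check that this fiber computes $\Omega$ as a right adjoint on maps into suspensions; axiom (3) guarantees that such fibers exist.
\item Deduce that the colimit is a filtered colimit along fully faithful functors. Hence $\mathcal{C}\to\mathcal{D}$ is fully faithful, mapping spaces in $\mathcal{D}$ are colimits of mapping spaces in $\mathcal{C}$, and $\Sigma$ is an equivalence on $\mathcal{D}$.
\item Show that $\mathcal{D}$ has finite colimits, that they are preserved by the structure maps, and that $\mathcal{D}$ is stable. Finite diagrams in $\mathcal{D}$ come from some stage, and $\Sigma$ preserves finite colimits, so the cofibers computed in $\mathcal{C}$ give cofibers in $\mathcal{D}$. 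A pointed category with finite colimits in which $\Sigma$ is an equivalence is stable.
\item Show closure under extensions. Let $X\to E\to Z$ be a cofiber sequence in $\mathcal{D}$ with $X,Z\in\mathcal{C}$. Then $E\simeq\fib(Z\to\Sigma X)$, and axiom (3) supplies this fiber inside $\mathcal{C}$. By full faithfulness of $\mathcal{C}\to\mathcal{D}$ together with the cofiber-sequence statement of axiom (3), this object is also the fiber in $\mathcal{D}$.
\end{enumerate}

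The main obstacle is step (4), together with the compatibility used in step (1): we must know that fibers in $\mathcal{C}$ of maps into suspensions agree with fibers in $\mathcal{D}$. The argument is to compare cofiber sequences, since $\mathcal{C}\to\mathcal{D}$ preserves cofibers and in stable $\mathcal{D}$ fiber and cofiber sequences coincide.
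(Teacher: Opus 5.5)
Your proposal is correct and is essentially the standard argument behind \cite[Corollary~C.1.2.3]{sag}. The paper cites that result without proof, and its ingredients reappear in \cref{retractpst}. As there, you prove (2)$\Rightarrow$(1) by checking the axioms inside $\mathcal{D}$, and (1)$\Rightarrow$(2) via $\mathcal{C}\to\op{SW}(\mathcal{C})$, with full faithfulness coming from $\Sigma$ being fully faithful and extension-closure coming from axiom (3) by comparing cofiber sequences.

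One point needs to be made explicit: your step (3) uses that $\mathcal{C}$ has finite colimits preserved by $\Sigma$, which is part of Lurie's definition but not literally part of \cref{defpst}. If you start from \cref{defpst}, first show that $\op{SW}(\mathcal{C})$ is stable using only cofibers: $\Sigma$ preserves cofiber squares, so cofibers exist there; they are fiber squares by axiom (2); and $\Sigma$ is invertible, so fibers exist as desuspended cofibers. Closure of $\mathcal{C}$ under cofibers and extensions then gives finite sums (as split extensions), hence pushouts.
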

	
	\begin{exam}\,
		\enu{
			\item Any stable \infcat is prestable.
			\item Let $\mathcal{C}$ be a stable \infcat equipped with a $t$-structure $\left(\mathcal{C}_{\geq 0}, \mathcal{C}_{\leq 0}\right)$. Then the full subcategory $\mathcal{C}_{\geq 0} \subset \mathcal{C}$ is prestable.
		}
	\end{exam}
	
	Prestable \infcats are closely related to stable \infcats with $t$-structure.
	
	\begin{prop}[{\cite[Proposition C.1.2.9]{sag}}]
		Let $\mathcal{C}$ be a \infcat. The following conditions are equivalent:
		\enu{
			\item The \infcat $\mathcal{C}$ is prestable and admits finite limits.
			\item The \infcat $\mathcal{C}$ is pointed and admits finite colimits, and the canonical map $\rho: \mathcal{C} \rightarrow \mathrm{SW}(\mathcal{C})$ is fully faithful. Moreover, the stable \infcat $\operatorname{SW}(\mathcal{C})$ admits a $t$-structure $\left(\mathrm{SW}(\mathcal{C})_{\geq 0}, \mathrm{SW}(\mathcal{C})_{\leq 0}\right)$ where $\mathrm{SW}(\mathcal{C})_{\geq 0}$ is the essential image of $\rho$.
			\item There exists a stable \infcat $\mathcal{D}$ equipped with a $t$-structure $\left(\mathcal{D}_{\geq 0}, \mathcal{D}_{\leq 0}\right)$ and an equivalence of \infcats $\mathcal{C} \simeq \mathcal{D}_{ \geq 0}$.
		}
		where $\op{SW}(-)$ denotes the Spanier-Whitehead construction.
	\end{prop}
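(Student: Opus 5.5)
We prove the cycle of implications (1)$\Rightarrow$(2)$\Rightarrow$(3)$\Rightarrow$(1), using \cref{prestcriterion} as the basic tool for recognizing prestability.

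\emph{(3)$\Rightarrow$(1).} Take $\mathcal{D}$ to be a stable $\infty$-category with a $t$-structure and an equivalence $\mathcal{C}\simeq\mathcal{D}_{\geq0}$. Then $\mathcal{C}$ is prestable, since $\mathcal{D}_{\geq0}$ is a full subcategory of a stable category closed under finite colimits and extensions, so \cref{prestcriterion} applies. For finite limits, recall that $\mathcal{D}_{\geq0}$ is coreflective, with right adjoint $\tau_{\geq0}$. Given a finite diagram in $\mathcal{D}_{\geq0}$, form its limit in $\mathcal{D}$ and apply $\tau_{\geq0}$; by adjunction this computes the limit in $\mathcal{D}_{\geq0}$.

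\emph{(1)$\Rightarrow$(2).} Assume $\mathcal{C}$ is prestable with finite limits.
\begin{enumerate}
\item $\mathcal{C}$ is pointed and has finite colimits, by \cref{defpst}(1)--(2).
\item Form $\mathrm{SW}(\mathcal{C})=\colim(\mathcal{C}\xrightarrow{\Sigma}\mathcal{C}\xrightarrow{\Sigma}\cdots)$, whose objects are pairs $(X,n)$, thought of as $\Sigma^{-n}X$, with
\[\Map((X,n),(Y,m))=\colim_k\Map(\Sigma^{k-n}X,\Sigma^{k-m}Y).\]
\item To see that $\rho$ is fully faithful, show that $\Sigma\colon\mathcal{C}\to\mathcal{C}$ is fully faithful. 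Since $\mathcal{C}$ has finite limits, $\Omega$ exists. By \cref{defpst}(2), the cofiber sequence $Y\to0\to\Sigma Y$ is a fiber sequence, so the unit $Y\to\Omega\Sigma Y$ is an equivalence. Hence the colimit defining mapping spaces is constant on objects of $\mathcal{C}$.
\item $\mathrm{SW}(\mathcal{C})$ is stable, using the standard fact that Spanier--Whitehead stabilization of a pointed category with finite colimits, in which cofiber sequences are fiber sequences, is stable.
\item Define the $t$-structure by $\mathrm{SW}(\mathcal{C})_{\geq0}=\rho(\mathcal{C})$ and $\mathrm{SW}(\mathcal{C})_{\leq0}$ the right orthogonal of $\Sigma\rho(\mathcal{C})$. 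The closure properties hold because $\rho$ commutes with $\Sigma$.
\item It remains to produce truncation triangles. For $(X,n)$, shift so that the object is $\Sigma^{-n}X$ with $X\in\mathcal{C}$. For $n\le 0$ the object is already connective. For $n>0$, take $\tau_{\geq0}(\Sigma^{-n}X)=\rho(\Omega^nX)$, with $\Omega^n$ computed in $\mathcal{C}$, which exists by finite limits. The cofiber of $\Sigma^n\Omega^nX\to X$ then lies in the coconnective part because, by \cref{defpst}(3), fibers of maps into suspensions are well behaved.
\end{enumerate}

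\emph{(2)$\Rightarrow$(3).} Set $\mathcal{D}=\mathrm{SW}(\mathcal{C})$. The fully faithful $\rho$ identifies $\mathcal{C}$ with $\mathrm{SW}(\mathcal{C})_{\geq0}$.

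The main obstacle is verifying the truncation triangles in (1)$\Rightarrow$(2). This requires showing that the map from $\Sigma^n\Omega^n X$ has cofiber right orthogonal to connective objects. The argument is an induction on $n$, using \cref{defpst}(3) together with the equivalence $Y\simeq\Omega\Sigma Y$.
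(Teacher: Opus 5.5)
Your outer structure is fine. (3)$\Rightarrow$(1) via \cref{prestcriterion} and $\tau_{\geq0}$ of limits works, and (2)$\Rightarrow$(3) is immediate. In (1)$\Rightarrow$(2), full faithfulness of $\rho$ and stability of $\mathrm{SW}(\mcc)$ are correct; the latter needs only that $\mcc$ is pointed with finite colimits, since $\Sigma$ becomes invertible.

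Your candidate truncation is also right, and you should say why: $\mapp(\rho Y,\Sigma^{-n}\rho X)\simeq\mapp_{\mcc}(\Sigma^nY,X)\simeq\mapp_{\mcc}(Y,\Omega^nX)$. Hence $\varepsilon\colon\rho(\Omega^nX)\to\Sigma^{-n}\rho X$ exhibits $\rho(\mcc)\subset\mathrm{SW}(\mcc)$ as coreflective.

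The gap is exactly the step you call the main obstacle. Nothing in the proposal shows that $Q:=\cofib(\varepsilon)$ lies in $\mathrm{SW}(\mcc)_{\leq-1}=\rho(\mcc)^{\perp}$. The phrases ``by (3), fibers of maps into suspensions are well behaved'' and ``an induction on $n$'' are not an argument. Coreflectivity together with $\Sigma$-closure does not suffice on its own. For example, in $\funct(\Delta^1,\modu_k)$ the arrows $V\to W$ of connective $k$-modules that are zero on $\pi_0$ form a $\Sigma$-closed coreflective subcategory. The coreflection of $k\xrightarrow{\id}k$ is $0\to k$, and its cofiber $k\to0$ is not right orthogonal to that subcategory.

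Your formulation is also off by a shift. $\cofib(\Sigma^n\Omega^nX\to X)$ lies in $\mcc$, so it is connective and cannot be right orthogonal to connective objects unless it vanishes. The correct claim concerns $Q\simeq\Sigma^{-n}\rho\bigl(\cofib(\Sigma^n\Omega^nX\to X)\bigr)$.

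The missing ingredient is that $\rho(\mcc)$ is closed under extensions in $\mathrm{SW}(\mcc)$, and this is precisely where \cref{defpst}(3) enters. Given an extension $\rho X\to E\to\rho Y$, we have $E\simeq\fib(\rho Y\to\Sigma\rho X)$. By full faithfulness this map is $\rho(f)$ for some $f\colon Y\to\Sigma X$. By (3), $F=\fib(f)$ exists and $\Sigma X\simeq\cofib(F\to Y)$. Since $\rho$ preserves cofiber sequences and $\mathrm{SW}(\mcc)$ is stable, $E\simeq\rho F$.

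A $\Sigma$-closed, extension-closed, coreflective subcategory of a stable category is the connective part of a $t$-structure (cf.\ \cite[Proposition 1.2.1.16]{ha}). Concretely, write $R=\rho\Omega^nX\xrightarrow{\varepsilon}A=\Sigma^{-n}\rho X\to Q$. By $\Sigma$-closure it suffices to show that every $g\colon\rho Z\to Q$ is null.
\begin{enumerate}
\item The pullback $E=\rho Z\times_QA$ is an extension of $\rho Z$ by $R$, hence lies in $\rho(\mcc)$.
\item Therefore $E\to A$ factors as $\varepsilon\circ s$.
\item Let $i\colon R\to E$ be the inclusion. Then $\varepsilon\circ s\circ i\simeq\varepsilon$, so coreflectivity gives $s\circ i\simeq\id_R$.
\item Since $i$ is split, the boundary map $\rho Z\to\Sigma R$ vanishes. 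This boundary map is $g$ followed by $Q\to\Sigma R$.
\item Hence $g$ lifts to $A$. The lift factors through $\varepsilon$, so $g\simeq0$.
\end{enumerate}
No induction on $n$ is needed.
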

	\begin{rem}[{\cite[Remark C.1.2.10]{sag}}]
		Assume that $\mathcal{C}$ is prestable and admits finite limits. Consider the \infcat $\operatorname{Sp}(\mathcal{C})$ of spectrum objects of $\mathcal{C}$, defined as the homotopy limit of the tower of \infcats
		$$
		\cdots \rightarrow \mathcal{C} \xrightarrow{\Omega} \mathcal{C} \xrightarrow{\Omega} \mathcal{C}
		$$
		which we can identify with
		$$
		\cdots \rightarrow \mathrm{SW}(\mathcal{C})_{\geq-2} \xrightarrow{\tau_{\geq-1}} \mathrm{SW}(\mathcal{C})_{\geq-1} \xrightarrow{\tau_{\geq 0}} \mathrm{SW}(\mathcal{C})_{\geq 0} .
		$$
		From this description, we can identify $\operatorname{Sp}(\mathcal{C})$ with the right completion of the \infcat $\mathrm{SW}(\mathcal{C})$ with respect to its $t$-structure; in particular, the $t$-structure on the \infcat $\operatorname{Sp}(\mathcal{C})$ is right complete.
	\end{rem}
	\begin{notation}
		We use $\prlpst\subset \prl$ to denote the full subcategory of prestable presentable \infcats.
	\end{notation}

\begin{prop}\label{retractpst}
	The full subcategory $\prlpst\subset\prl$ is closed under retracts.
\end{prop}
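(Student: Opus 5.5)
We have to show: if $\mcc$ is presentable prestable, $\mcd$ is presentable, and there are left adjoints $i\colon \mcd\to\mcc$ and $r\colon\mcc\to\mcd$ with $r\circ i\simeq \id_\mcd$ in $\prl$, then $\mcd$ is prestable. The plan is to check the axioms of \cref{defpst} for $\mcd$ directly, transporting each one from $\mcc$ along the retraction.

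The general principle is this. Because $r i\simeq\id$, every diagram in $\mcd$ is a retract of its image under $i$, and colimits in $\mcd$ are computed as $r$ of colimits in $\mcc$. Axiom (1) is immediate: $\mcd$ is presentable, hence has initial and final objects. Both $i$ and $r$ preserve the initial object $\emptyset$. The final object $*_\mcd$ is a retract of $r(*_\mcc)=r(\emptyset_\mcc)=\emptyset_\mcd$, and a retract of an initial object is initial. So $\mcd$ is pointed.

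For axioms (2) and (3) we use the convenient criterion for a pointed presentable category with finite limits: $\mcd$ is prestable if and only if $\Sigma\colon\mcd\to\mcd$ is fully faithful and $\mcd$ is closed under extensions, or, more concretely, if and only if the following two conditions hold.

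(a) Every pushout square in $\mcd$ of the form $X\to Y$, $X\to 0$ is a pullback.

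(b) Pullbacks along maps into $\Sigma Y$ are preserved by the relevant cofibers.

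To avoid fiddling with the criterion, we prefer a cleaner route via stabilization. Since $\mcd$ is presentable, it has finite limits. Consider the retract diagram after applying $\opsp(-)$, which is functorial in left adjoints of presentable categories: $\opsp(\mcd)$ is a retract of $\opsp(\mcc)$ in $\prl$. By \cref{prestcriterion} it suffices to show that $\Sigma^\infty\colon\mcd\to\opsp(\mcd)$ is fully faithful with image closed under finite colimits and extensions. The natural transformations $\Sigma^\infty_\mcd r\simeq \opsp(r)\Sigma^\infty_\mcc$ and $\Sigma^\infty_\mcc i\simeq\opsp(i)\Sigma^\infty_\mcd$ exhibit the map $\Map_\mcd(x,y)\to\Map_{\opsp(\mcd)}(\Sigma^\infty x,\Sigma^\infty y)$ as a retract of the corresponding map for $ix, iy$ in $\mcc$, which is an equivalence because $\mcc$ is prestable. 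A retract of an equivalence is an equivalence, so this settles full faithfulness. Closure under finite colimits is automatic, since $\Sigma^\infty$ preserves colimits.

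The main obstacle is closure under extensions. Given a cofiber sequence $\Sigma^\infty a\to E\to\Sigma^\infty b$ in $\opsp(\mcd)$, we apply $\opsp(i)$. This lands in $\opsp(\mcc)$ as an extension of $\Sigma^\infty ia$ by $\Sigma^\infty ib$, so it lies in the image of $\mcc$, say it is $\Sigma^\infty c$. Then $E\simeq\opsp(r)\opsp(i)E\simeq\Sigma^\infty r c$ lies in the image of $\mcd$. The key point to verify carefully is that $\opsp(r)\opsp(i)\simeq\id$, which follows from functoriality of $\opsp(-)\simeq -\otimes\opsp$ on $\prl$. An alternative route is to check Lurie's criterion that $\Omega\Sigma\simeq\id$ and that $\Sigma$ is compatible with pullbacks; this again follows by writing each relevant unit map in $\mcd$ as a retract of the one in $\mcc$, noting that $\Omega$ commutes with neither $i$ nor $r$, so the stabilization route is the cleaner one.
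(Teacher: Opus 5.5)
Your proof is correct and is essentially the paper's argument: transport full faithfulness along the retraction as a retract of mapping-space maps, then push an extension forward by the stabilized $i$ and pull it back by the stabilized $r$. The paper does this with the Spanier--Whitehead category $\op{SW}(-)$ instead of $\opsp(-)\simeq-\otimes\opsp$, first showing $\Sigma$ is fully faithful and then invoking \cite[Proposition~C.1.2.2]{sag}; your criteria (a), (b) are not needed, and your closing worry is unfounded, since $\Sigma$ (unlike $\Omega$) commutes with $i$ and $r$, which is exactly what the paper uses.
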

	\begin{proof}

		Indeed, suppose that there are colimit-preserving functors
		\[
		\mathcal C
		\xrightarrow{i}
		\mathcal D
		\xrightarrow{r}
		\mathcal C
		\]
		such that \(r\circ i\simeq \id_{\mathcal C}\), and assume that
		\(\mathcal D\) is prestable. 
		
		First, \(\mathcal C\) is pointed, because $\prl_*\subset \prl$ is closed under small limits.
		Moreover, since \(i\) and \(r\) preserve finite colimits, they commute with
		suspension. Thus, for every \(X,Y\in\mathcal C\), the canonical map
		\[
		\Map_{\mathcal C}(X,Y)
		\longrightarrow
		\Map_{\mathcal C}(\Sigma X,\Sigma Y)
		\]
		is a retract of the corresponding map in \(\mathcal D\). Since
		\(\Sigma_{\mathcal D}\) is fully faithful, so is
		\(\Sigma_{\mathcal C}\). It follows from
		\cite[Proposition~C.1.2.2]{sag} that the canonical functor
		\[
		j_{\mathcal C}\colon
		\mathcal C\longrightarrow \op{SW}(\mathcal C)
		\]
		is fully faithful.
		
		It remains to verify that its essential image is closed under extensions.
		The functors \(i\) and \(r\) induce exact functors
		\[
		\op{SW}(i)\colon\op{SW}(\mathcal C)\longrightarrow\op{SW}(\mathcal D),
		\qquad
		\op{SW}(r)\colon\op{SW}(\mathcal D)\longrightarrow\op{SW}(\mathcal C),
		\]
		satisfying
		$
		\op{SW}(r)\circ\op{SW}(i)\simeq\id_{\op{SW}(\mathcal C)}.
		$
		They are compatible with the canonical embeddings in the sense that
		\[
		\op{SW}(i)\circ j_{\mathcal C}\simeq j_{\mathcal D}\circ i,
		\qquad
		\op{SW}(r)\circ j_{\mathcal D}\simeq j_{\mathcal C}\circ r.
		\]
		Now let
		\[
		j_{\mathcal C}(X)\longrightarrow E\longrightarrow j_{\mathcal C}(Z)
		\]
		be a cofiber sequence in \(\op{SW}(\mathcal C)\), where \(X,Z\in\mathcal C\).
		Applying \(\op{SW}(i)\), we obtain a cofiber sequence
		\[
		j_{\mathcal D}(iX)
		\longrightarrow
		\op{SW}(i)(E)
		\longrightarrow
		j_{\mathcal D}(iZ)
		\]
		in \(\op{SW}(\mathcal D)\). Since \(\mathcal D\) is prestable,
		\cite[Proposition~C.1.2.2]{sag} implies that the essential image of
		\(j_{\mathcal D}\) is closed under extensions. Consequently, there exists
		an object \(W\in\mathcal D\) such that
		\[
		\op{SW}(i)(E)\simeq j_{\mathcal D}(W).
		\]
		Applying \(\op{SW}(r)\), we conclude that
		\[
		E
		\simeq
		\op{SW}(r)\op{SW}(i)(E)
		\simeq
		\op{SW}(r)j_{\mathcal D}(W)
		\simeq
		j_{\mathcal C}(rW).
		\]
		Thus the essential image of
		\(j_{\mathcal C}\colon\mathcal C\to\op{SW}(\mathcal C)\) is closed under
		extensions. The
		\cite[Proposition~C.1.2.2]{sag} shows that \(\mathcal C\) is prestable.
	\end{proof}
	\begin{prop}
	Let $\mcd\in\prlpst$ be a presentable prestable \infcat. Then for any presentable \infcat \mcc, $\funct^L(\mcc,\mcd)$ is a presentable prestable \infcat.
\end{prop}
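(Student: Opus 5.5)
The plan is to exhibit $\funct^L(\mcc,\mcd)$ as a retract in $\prl$ of a category already known to be presentable and prestable, and then to apply \cref{retractpst}. Presentability is standard: $\funct^L(\mcc,\mcd)\simeq \mcc^{\vee}\otimes\mcd$ is the internal hom in $\prl$, and is therefore presentable. For prestability, I would instead check the conditions of \cref{prestcriterion}(2) directly, using a stable ambient category.

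First, write $\mcc$ as an accessible localization of a presheaf category: $\mcc\simeq \mcp(\mcc^\kappa)$-local objects for some regular $\kappa$. Then restriction along $\mcc^\kappa\hookrightarrow \mcc$ identifies $\funct^L(\mcc,\mcd)$ with a full subcategory of $\funct(\mcc^{\kappa,\opp}{}^{\opp},\mcd)$. Concretely, this subcategory is $\funct^{\kappa\text{-rex}}(\mcc^\kappa,\mcd)$, the functors preserving $\kappa$-small colimits. This full subcategory is closed under all colimits computed pointwise, since colimits in $\mcd$ commute with colimits. Next, $\mcd$ embeds fully faithfully into its stabilization $\opsp(\mcd)$ as the connective part of a $t$-structure. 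Because $\mcd$ is presentable prestable, the embedding $\mcd\hookrightarrow\opsp(\mcd)$ preserves small colimits, and its image is closed under colimits and extensions. Composing, $\funct^L(\mcc,\mcd)$ becomes a full subcategory of the stable category $\funct(\mcc^\kappa,\opsp(\mcd))$.

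We must check that the image is closed under finite colimits and extensions. Finite colimits are computed pointwise, and they stay inside both $\mcd$-valued functors and $\kappa$-colimit-preserving functors, because colimits commute with colimits. Extensions are the real point.

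Given a cofiber sequence $F\to G\to H$ in $\funct(\mcc^\kappa,\opsp(\mcd))$ with $F,H$ in the image, pointwise closure of $\mcd$ under extensions shows that $G$ is $\mcd$-valued. It remains to show that $G$ preserves $\kappa$-small colimits when viewed as a $\mcd$-valued functor. For a $\kappa$-small diagram $x_i$ in $\mcc^\kappa$, consider the comparison map $\colim G(x_i)\to G(\colim x_i)$. This map sits in a map of cofiber sequences in $\opsp(\mcd)$ between the corresponding comparison maps for $F$ and $H$, which are equivalences. Colimits in $\mcd$ agree with those in $\opsp(\mcd)$ and commute with cofibers, so the five-lemma in the stable category gives the result.

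This extension-closure step is the main obstacle. The subtlety is that $F$ and $H$ preserve colimits valued in $\mcd$, and we need colimit preservation into $\opsp(\mcd)$. That holds precisely because $\mcd\hookrightarrow\opsp(\mcd)$ preserves colimits, which uses presentability of $\mcd$ (the right completion is fine, as colimits in $\opsp(\mcd)_{\ge0}\simeq\mcd$ are computed in $\opsp(\mcd)$). Having verified both closure properties, \cref{prestcriterion} yields that $\funct^L(\mcc,\mcd)$ is prestable.
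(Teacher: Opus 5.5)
Your proof is correct. It follows the same basic strategy as the paper: embed into a stable category through $\mcd\hookrightarrow\opsp(\mcd)$ and apply \cref{prestcriterion}. The difference is your choice of ambient category, which is less convenient.

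The paper composes with the colimit-preserving embedding $\rho\colon\mcd\hookrightarrow\opsp(\mcd)$. This gives a fully faithful functor $\rho_*\colon\funct^L(\mcc,\mcd)\to\funct^L(\mcc,\opsp(\mcd))$ whose target is already stable. The essential image consists of the colimit-preserving functors that land in $\mcd$. It is closed under colimits and extensions pointwise, because $\mcd\subset\opsp(\mcd)$ is. In particular, the middle term of an extension preserves colimits for free, since it lies in $\funct^L(\mcc,\opsp(\mcd))$ from the outset.

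You instead restrict to $\kappa$-compact objects and embed into the category $\funct(\mcc^\kappa,\opsp(\mcd))$ of \emph{all} functors. That forces you to re-prove that an extension of $\kappa$-small-colimit-preserving functors again preserves $\kappa$-small colimits. Your five-lemma argument does this correctly; it is essentially \cref{functorext}(1). But the step you call the main obstacle only arises from this choice, and the detour through $\mcc^\kappa$ gains you nothing.

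Two cosmetic points:
\begin{itemize}
\item Your opening plan, to exhibit $\funct^L(\mcc,\mcd)$ as a retract and invoke \cref{retractpst}, is never carried out and should be deleted.
\item The identification $\funct^L(\mcc,\mcd)\simeq\mcc^{\vee}\otimes\mcd$ only makes sense when $\mcc$ is dualizable in $\prl$, which is not assumed. For presentability, simply cite that $\funct^L(\mcc,\mcd)$ is presentable \cite[Proposition 5.5.3.8]{htt}.
\end{itemize}
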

\begin{proof} 
	It is standard that $\funct^L(\mcc,\mcd)$ is presentable. We prove that it is prestable.  Let \[ \rho\colon \mcd\longrightarrow \operatorname{Sp}(\mcd) \] denote the canonical stabilization functor. By \cite[Corollary C.1.2.3]{sag}, it suffices to observe that the fully faithful embedding \[ \rho_*\colon \funct^L(\mcc,\mcd) \longrightarrow \funct^L\bigl(\mcc,\operatorname{Sp}(\mcd)\bigr). \tag{1} \] is closed under small colimits.
	\end{proof}
	\begin{prop}[See \cite{sag} C.1.4.1]
		Let $\mathcal{C}$ be a presentable \infcat. The following conditions are equivalent:
		\begin{enumerate}[label=(\alph*),font=\normalfont]
			\item The \infcat $\mathcal{C}$ is prestable and filtered colimits in $\mathcal{C}$ are left exact (AB5).
			\item The \infcat $\mathcal{C}$ is prestable and the functor $\Omega: \mathcal{C} \rightarrow \mathcal{C}$ commutes with filtered colimits.
			\item The \infcat $\mathcal{C}$ is prestable and the functor $\Omega^{\infty}: \operatorname{Sp}(\mathcal{C}) \rightarrow \mathcal{C}$ commutes with filtered colimits.
			\item There exists a presentable stable \infcat $\mathcal{D}$, a $t$-structure $\left(\mathcal{D}_{\geq 0}, \mathcal{D}_{\leq 0}\right)$ on $\mathcal{D}$ which is compatible with filtered colimits, and an equivalence $\mathcal{C} \simeq \mathcal{D}_{\geq 0}$.
			\item The suspension functor $\Sigma_+^\infty:\mathcal{C}\to\operatorname{Sp}(\mathcal{C})$ is fully faithful and its essential image $\operatorname{Sp}(\mathcal{C})_{\geq 0}$ is the connective part of a $t$-structure on $\operatorname{Sp}(\mathcal{C})$ which is compatible with filtered colimits.
		\end{enumerate}
	\end{prop}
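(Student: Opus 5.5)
The plan is to arrange the conditions in a cycle, with the two conditions in the middle doing most of the work. I would prove (a)$\Leftrightarrow$(b), (b)$\Rightarrow$(c), (c)$\Rightarrow$(d), (d)$\Rightarrow$(e) and (e)$\Rightarrow$(a). The tools are the earlier results: the characterization of prestability via fully faithful embeddings into stable categories (\cref{prestcriterion}), and the description of $\operatorname{Sp}(\mcc)$ as the right completion of $\mathrm{SW}(\mcc)$.

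For (a)$\Leftrightarrow$(b), write $\Omega$ on a prestable $\mcc$ as the fiber of $0\to X$. Filtered colimits are left exact exactly when they commute with finite limits. In a prestable category with finite limits, every finite limit is built from pullbacks over the terminal object, and such pullbacks reduce to fibers of maps $X\to \Sigma Y$ together with $\Omega$. So AB5 is equivalent to $\Omega$ commuting with filtered colimits.

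For (b)$\Rightarrow$(c), write $\operatorname{Sp}(\mcc)=\lim(\cdots\xrightarrow{\Omega}\mcc\xrightarrow{\Omega}\mcc)$. If $\Omega$ preserves filtered colimits, the limit can be taken in the category of presentable categories and filtered-colimit-preserving functors. The projection $\Omega^\infty$ then preserves filtered colimits, because filtered colimits in the limit are computed levelwise. Conversely, (c)$\Rightarrow$(b) follows from $\Omega\simeq\Omega^\infty\Sigma^{-1}\Sigma^\infty$ restricted appropriately. Concretely, $\Omega X\simeq \Omega^\infty(\Sigma^{-1}\Sigma^\infty_+ X)$, since $\Sigma^\infty_+$ is fully faithful and lands in connective objects, and $\Sigma^{-1}$ and $\Sigma^\infty_+$ preserve colimits.

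For (c)$\Rightarrow$(d), take $\mcd=\operatorname{Sp}(\mcc)$. By the remark on right completion, the $t$-structure has $\mcd_{\geq0}\simeq\mcc$ via $\Sigma^\infty_+$. Compatibility with filtered colimits means $\mcd_{\leq0}$ is closed under filtered colimits. To see this, note that $Y\in\mcd_{\leq 0}$ iff $\Omega^\infty\Sigma Y$ vanishes up to the appropriate truncation, i.e. $\tau_{\geq1}Y=0$, and $\tau_{\geq 0}$ is computed through $\Omega^\infty$, which commutes with filtered colimits. The implication (d)$\Rightarrow$(e) is nearly formal: the presentable $t$-structure on $\mcd$ induces one on its right completion $\operatorname{Sp}(\mcd_{\geq0})\simeq\operatorname{Sp}(\mcc)$. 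The connective part of this induced $t$-structure is $\mcc$, and compatibility with filtered colimits transfers because right completion is a limit along $\tau_{\geq -n}$, and these truncations commute with filtered colimits when $\mcd_{\leq0}$ is closed under them. Finally, (e)$\Rightarrow$(a) holds because $\mcc$ is then the connective part of a $t$-structure, hence prestable. Moreover, finite limits in $\mcc$ are $\tau_{\geq0}$ of limits in $\operatorname{Sp}(\mcc)$, and both $\tau_{\geq0}$ and finite limits commute with filtered colimits in the stable category.

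The main obstacle is the (d)$\Rightarrow$(e) step. There one has to check carefully that the truncation functors of the induced $t$-structure on $\operatorname{Sp}(\mcc)$ commute with filtered colimits. This requires knowing that filtered colimits in the inverse limit $\operatorname{Sp}(\mcc)$ are computed levelwise, which in turn needs $\Omega$ to preserve filtered colimits. I would first establish that $\Omega=\tau_{\geq0}\Sigma^{-1}$ on $\mcd_{\geq0}$ preserves filtered colimits using compatibility in $\mcd$, and then deduce the claim.
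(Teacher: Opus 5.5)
The paper gives no proof of its own here and defers to \cite[Proposition C.1.4.1]{sag}. Your cycle is essentially Lurie's argument there, and it is correct: (a)$\Rightarrow$(b) is trivial, (b)$\Rightarrow$(c) goes via the $\Omega$-tower, (c)$\Rightarrow$(d) takes $\mathcal{D}=\operatorname{Sp}(\mathcal{C})$, and the return to (a) writes finite limits in $\mathcal{C}$ as $\tau_{\geq 0}$ of finite limits in the stable category, with (e) inserted by the same choice of $\mathcal{D}$.

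Two small repairs. First, your direct (b)$\Rightarrow$(a) sketch is redundant given the cycle, and as written it is misstated: finite limits are built from pullbacks and the terminal object, and what actually makes the reduction work is $X\times_Z Y\simeq\mathrm{fib}(X\oplus Y\to Z)$ together with $\mathrm{fib}(f)\simeq\Omega\,\mathrm{cofib}(f)$ in a prestable category with finite limits. Second, in (c)$\Rightarrow$(d) the criterion should read $Y\in\mathcal{D}_{\leq 0}$ if and only if $\Omega^\infty\Sigma^{-1}Y\simeq 0$, not $\Omega^\infty\Sigma Y\simeq 0$.
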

	\begin{de}\label{def:AB}
		
		Let $\mcc$ be a presentable prestable \infcat. 
		\enu{
			\item We say $\mcc$ is Grothendieck prestable if it satisfies AB5: filtered colimits in $\mathcal{C}$ are left exact.
			 We let $\op{Groth}\subset \prl$ denote the full subcategory whose objects are Grothendieck prestable \infcats.
			\item We say $\mcc$ satisfies $\mathrm{AB4}^*$ if the inclusion $\mathcal{C}\hookrightarrow \opsp(\mcc)$ is closed under small products.
			\item We say $\mcc$ satisfies $\mathrm{AB6}$ if for any collection of filtered \infcats $J_i, i \in I$, and for any functors $J_i \rightarrow \mathcal{C}, j_i \mapsto x_{j_i}$, the map
			$$
			\varinjlim_{\left(j_i\right)_i \in \prod_i J_i} \prod_{i} x_{j_i} \rightarrow \prod_{i \in I} \varinjlim_{j_i \in J_i} x_{j_i}
			$$
			is an isomorphism.
			
		}
		
	\end{de}
\begin{rem}\label{ab4staradjointable}
	A presentable prestable \infcat $\mcc$ satisfies $\mathrm{AB4}^*$ if and only if the suspension functor $\mathcal{C} \xrightarrow{\Sigma} \mcc$ preserves small products. Equivalently, the commutative square of right adjoints
	$$\begin{tikzcd}
		\prod_i \mcc \arrow[r, "\prod"] & \mcc \\
		\prod_i \mcc \arrow[u, "\Omega"] \arrow[r, "\prod"] & \mcc \arrow[u, "\Omega"]
	\end{tikzcd}$$
	is vertically left adjointable.
	
	Indeed, the horizontal left adjointability of the square
	$$\begin{tikzcd}
		\mcc \arrow[d, Rightarrow, no head] \arrow[r, "\Sigma^{n+1}", dashed, shift left] & \mcc \arrow[l, "\Omega^{n+1}", shift left] \arrow[d, "\Omega"] \\
		\mcc \arrow[r, "\Sigma^{n}", dashed, shift left] & \mcc \arrow[l, "\Omega^n", shift left]
	\end{tikzcd}$$
	implies, upon passing to limits, that the stabilization functor $\Sigma^\infty \colon \mcc \to \opsp(\mcc)$ can be identified with $\lim_n \Sigma^n$.
\end{rem}
	\begin{rem}\label{ab6adjointable}
		In fact, the $\mathrm{AB6}$ condition makes sense for any \infcat admitting small filtered colimits and products, which is equivalent to the vertically left adjointability of the following diagram:
		$$\begin{tikzcd}
			{\prod_i \funct(J_i,\mcc)} \arrow[r, "\prod"]    & {\funct(\prod_i J_i,\mcc)} \\
			\prod_i \mcc \arrow[u, "\delta"] \arrow[r, "\prod"] & \mcc \arrow[u, "\delta"]
		\end{tikzcd} .$$

	\end{rem}
	\begin{lem}\label{bccondconserv}
		Suppose we are given a commutative diagram of \infcats as follows.
		$$\begin{tikzcd}[row sep=scriptsize, column sep=scriptsize]
			& A_1 \arrow[dl, "u_A"'] \arrow[rr, "h_1"] \arrow[dd, "v_{A_1}"' near end] & & B_1 \arrow[dl, "u_B"'] \arrow[dd, "v_{B_1}"] \\
			A_2 \arrow[rr, crossing over, "h_2" near start] \arrow[dd, "v_{A_2}"'] & & B_2 \\
			& C_1 \arrow[dl, "u_C"'] \arrow[rr, "k_1" near start] & & D_1 \arrow[dl, "u_D"] \\
			C_2 \arrow[rr, "k_2"'] & & D_2 \arrow[from=uu, crossing over, "v_{B_2}" near start]\\
		\end{tikzcd}$$
		If $u_B: B_1\to B_2$ is conservative and the left $(A_1A_2C_1C_2)$, right $(B_1B_2D_1D_2)$ and front $(A_2B_2C_2D_2)$ faces are vertically left adjointable, then the back face $(A_1B_1C_1D_1)$ is vertically left adjointable too.
		
	\end{lem}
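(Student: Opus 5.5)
The plan is to show that the Beck--Chevalley transformation of the back face becomes an equivalence after applying $u_B$, and then to use that $u_B$ is conservative. Recall that a square with horizontal functors $h,k$ and vertical functors $v,v'$ is vertically left adjointable when the vertical functors admit left adjoints and the mate transformation $v'^L k \to h v^L$ is an equivalence.

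For the back face $(A_1B_1C_1D_1)$, the vertical functors $v_{A_1}$ and $v_{B_1}$ already admit left adjoints, since the left and right faces are vertically left adjointable. So there is a mate
\[
\beta_1\colon v_{B_1}^L k_1 \longrightarrow h_1 v_{A_1}^L
\]
of functors $C_1\to B_1$, and the task is to show $\beta_1$ is an equivalence. Because $u_B$ is conservative, equivalences in $\Fun(C_1,B_1)$ are detected objectwise after applying $u_B$. It therefore suffices to show that
\[
u_B\beta_1\colon u_B v_{B_1}^L k_1\to u_B h_1 v_{A_1}^L
\]
is an equivalence.

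Next, consider the composite square with corners $A_1, B_2, C_1, D_2$. Since the cube commutes, this square can be obtained by pasting in two ways: the back face followed by the right face, or the left face followed by the front face. The key input is that Beck--Chevalley mates are compatible with pasting of squares; this is standard, via the functoriality of passing to mates in \cite{HA}-style adjointability arguments, e.g.\ in the $(\infty,2)$-category of categories. Pasting gives two identifications of the mate of the composite square.

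\emph{First pasting (back, then right).} The mate is $u_B\beta_1$ precomposed with the mate of the right face whiskered by $k_1$, namely
\[
v_{B_2}^L u_D k_1 \xrightarrow{\;\sim\;} u_B v_{B_1}^L k_1 \xrightarrow{\;u_B\beta_1\;} u_B h_1 v_{A_1}^L,
\]
where the first map is an equivalence because the right face is adjointable.

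\emph{Second pasting (left, then front).} The mate is the composite
\[
v_{B_2}^L k_2 u_C \xrightarrow{\;\sim\;} h_2 v_{A_2}^L u_C \xrightarrow{\;\sim\;} h_2 u_A v_{A_1}^L \simeq u_B h_1 v_{A_1}^L.
\]
The first map is the front face's mate whiskered by $u_C$, and the second is $h_2$ applied to the left face's mate. Both are equivalences by hypothesis.

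Since the two composite squares agree as commutative squares, their mates agree. Hence $u_B\beta_1$ is equivalent to a composite of equivalences after precomposing with an equivalence, so $u_B\beta_1$ is an equivalence, and conservativity of $u_B$ yields that $\beta_1$ is an equivalence.

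The main obstacle is the coherence step: making precise that the cube's commutativity data identifies the two pastings as the same square, and that taking mates is compatible with horizontal and vertical pasting, so that the two descriptions really compute the same natural transformation rather than merely two equivalent functors. I would handle this by appealing to the functoriality of mates, i.e.\ the equivalence between squares of right adjoints and the corresponding squares of left adjoints, rather than checking the identification by hand.
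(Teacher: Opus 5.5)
Your proposal is correct and follows the same route as the paper: you use conservativity of $u_B$ to reduce to showing $u_B\beta_1$ is an equivalence, then use compatibility of mates with pasting to identify it, up to the right-face equivalence, with the composite of the front-face and left-face mates. Framing this via the single diagonal square $A_1B_2C_1D_2$ and its two pasting decompositions makes explicit the ``2-categorical coherence of pasting'' step that the paper only invokes.
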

	\begin{proof}
		The proof is a basic diagram chase. Let $v_{A_1}: A_1 \to C_1$ denote the vertical functor, and let $L_{A_1}: C_1 \to A_1$ be its left adjoint. We use similar notation ($v_{B_1}, L_{B_1}$, etc.) for the other vertical edges. Let $h_1: A_1 \to B_1$ and $k_1: C_1 \to D_1$ denote the horizontal functors on the back face. Let $u_A: A_1 \to A_2$, $u_B: B_1 \to B_2$, etc., denote the depth functors.

		To show that the back face is vertically left adjointable, we must prove that the canonical Beck-Chevalley transformation
		$$ \alpha: L_{B_1} \circ k_1 \to h_1 \circ L_{A_1} $$
		is an equivalence. Since the functor $u_B: B_1 \to B_2$ is conservative, it suffices to show that the post-composed natural transformation $u_B(\alpha)$ is an equivalence.
		
		We can rewrite $u_B \circ L_{B_1} \circ k_1$ through a sequence of natural equivalences obtained by pasting the Beck-Chevalley equivalences of the other faces and the commutativity of the top and bottom faces:
		\begin{align*}
			u_B \circ L_{B_1} \circ k_1 &\simeq L_{B_2} \circ u_D \circ k_1 \quad (\text{by vertically left adjointability of the right face}) \\
			&\simeq L_{B_2} \circ k_2 \circ u_C \quad (\text{by commutativity of the bottom face}) \\
			&\simeq h_2 \circ L_{A_2} \circ u_C \quad (\text{by vertically left adjointability of the front face}) \\
			&\simeq h_2 \circ u_A \circ L_{A_1} \quad (\text{by vertically left adjointability of the left face}) \\
			&\simeq u_B \circ h_1 \circ L_{A_1} \quad (\text{by commutativity of the top face}).
		\end{align*}
		By the 2-categorical coherence of pasting Beck-Chevalley squares, this composite of equivalences canonically identifies with $u_B(\alpha)$. Therefore, $u_B(\alpha)$ is an equivalence. Since $u_B$ is conservative, $\alpha$ is an equivalence, which means the back face is vertically left adjointable.
	\end{proof}
	\begin{prop}\label{conserab6}
		Let $\mcc,\mcd$ be \infcats which admit small filtered colimits and products and $\mcc\to\mcd$ be a conservative functor which preserves small filtered colimits and products. If $\mcd$ satisfies $\mathrm{AB6}$, then so does $\mcc$.
	\end{prop}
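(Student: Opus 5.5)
The plan is to reduce the statement to \cref{bccondconserv}, using the reformulation of $\mathrm{AB6}$ in \cref{ab6adjointable} as a vertical left adjointability condition. Fix a set $I$, filtered \infcats $J_i$, and write $F\colon\mcc\to\mcd$ for the given functor. Form the cube whose back face is the square of \cref{ab6adjointable} for $\mcc$:
\[
\prod_i\funct(J_i,\mcc)\to\funct(\textstyle\prod_i J_i,\mcc)\quad\text{over}\quad \prod_i\mcc\to\mcc ,
\]
with vertical maps the diagonals $\delta$ and horizontal maps the products. The front face is the same square for $\mcd$, and the depth maps are induced by postcomposition with $F$, namely $\prod_i F_*$, $F_*$, $\prod_i F$ and $F$.

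First I would check that the top and bottom faces commute. The bottom face commutes because $F$ preserves small products. The top face commutes because the horizontal functor sends $(x_{j_i})_i$ to $(j_i)_i\mapsto\prod_i x_{j_i}$, a pointwise product, and $F_*$ is postcomposition with a product-preserving functor. The side faces commute strictly, since $F_*\circ\delta=\delta\circ F$.

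Next I would verify the adjointability hypotheses of \cref{bccondconserv}. The front face is vertically left adjointable because $\mcd$ satisfies $\mathrm{AB6}$. The left and right faces have vertical functors $\delta$ whose left adjoints are the colimit functors $\varinjlim_{J_i}$, applied factorwise, and $\varinjlim_{\prod J_i}$. Note that $\prod_i J_i$ is filtered, being a product of filtered \infcats. Vertical left adjointability of these faces is exactly the statement that $F$ commutes with filtered colimits over $J_i$ (factorwise) and over $\prod_i J_i$, which holds by hypothesis.

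Finally, the depth functor $u_B=F_*\colon\funct(\prod_i J_i,\mcc)\to\funct(\prod_i J_i,\mcd)$ must be conservative. This follows because equivalences in functor categories are detected pointwise and $F$ is conservative. \cref{bccondconserv} then shows the back face is vertically left adjointable, which by \cref{ab6adjointable} is $\mathrm{AB6}$ for $\mcc$. The only real care needed is matching the Beck--Chevalley map of the back face with the canonical comparison map in the definition of $\mathrm{AB6}$, and orienting the cube correctly. I expect no serious obstacle; the pointwise description of the functors involved makes this identification routine.
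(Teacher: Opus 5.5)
Your proposal is correct and follows the paper's proof. You build the cube from the two squares of \cref{ab6adjointable}, joined by $F$, with top and bottom faces commuting because $F$ preserves products and products in functor categories are pointwise, and then apply \cref{bccondconserv}.

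There is one small orientation slip. In the lemma the conservative depth functor $u_B$ sits at the source of the right adjoint $\delta$, so it is $F\colon\mcc\to\mcd$ itself, because the Beck--Chevalley map $\varinjlim_{\prod_i J_i}\prod_i x_{j_i}\to\prod_i\varinjlim_{J_i}x_{j_i}$ lives in $\mcc$. It is not $F_*$ on $\funct(\prod_i J_i,\mcc)$. Since $F$ is conservative by hypothesis, nothing else in your argument changes.
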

	\begin{proof}
		We construct a commutative cube by bridging the $\mathrm{AB6}$ diagrams of $\mcc$ and $\mcd$ via the functor $F$. Let $F_*$ denote the post-composition functor induced by $F$. Consider the following diagram:
		
		$$\begin{tikzcd}[row sep=1.5em, column sep=1em]
			& \prod_i \mcc \arrow[dl, "\prod_i F"'] \arrow[rr, "\prod"] \arrow[dd, "\delta_\mcc"' near end] & & \mcc \arrow[dl, "F"'] \arrow[dd, "\delta_\mcc"] \\
			\prod_i \mcd \arrow[rr, crossing over, "\prod" near start] \arrow[dd, "\delta_\mcd"'] & & \mcd \\
			& \prod_i \funct(J_i, \mcc) \arrow[dl, "\prod_i F_*"'] \arrow[rr, "\prod" near start] & & \funct(\prod_i J_i, \mcc) \arrow[dl, "F_*"] \\
			\prod_i \funct(J_i, \mcd) \arrow[rr, "\prod"'] & & \funct(\prod_i J_i, \mcd) \arrow[from=uu, crossing over, "\delta_\mcd" near start]\\
		\end{tikzcd}$$
		The top and bottom faces commute because limits (in particular, products) in functor categories are computed pointwise, and $F$ explicitly preserves small products. Thus, the diagram is a well-defined commutative cube.  By \cref{ab6adjointable}, it suffices to show the back square is vertically left adjointable.
		Since all conditions of \cref{bccondconserv} are satisfied, we conclude that the back face is vertically left adjointable, as desired. 
	\end{proof}
	\begin{prop}
		The \infcat $\mcs$ of anima satisfies $\mathrm{AB6}$.
	\end{prop}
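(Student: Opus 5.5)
We prove that the canonical map
\[
\varinjlim_{(j_i)_i \in \prod_i J_i} \prod_i x_{j_i} \longrightarrow \prod_i \varinjlim_{j_i \in J_i} x_{j_i}
\]
is an equivalence in $\mcs$. The plan is to reduce to a statement about sets, namely homotopy groups at each basepoint, using that filtered colimits and products in $\mcs$ are computed in an especially simple way.

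\textbf{Reduction to homotopy groups.} A map of anima $f\colon X\to Y$ is an equivalence iff it induces a bijection on $\pi_0$ and isomorphisms $\pi_n(X,x)\to\pi_n(Y,fx)$ for all $n\ge1$ and all basepoints $x$. We use two standard facts:
\begin{itemize}
\item $\pi_0$ and $\pi_n$ commute with filtered colimits. A point of a filtered colimit is represented at some stage, so basepoints can be chosen at a finite stage.
\item $\pi_0$ and $\pi_n$ commute with arbitrary products, computed componentwise at a basepoint $(x_i)_i$.
\end{itemize}
Thus $\pi_n$ of either side is given by the same formula, but in the category of sets (or groups). It therefore suffices to prove the $\mathrm{AB6}$ property for $\mathrm{Set}$, compatibly with the basepoints.

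\textbf{Basepoint bookkeeping.} This is the step requiring care. A basepoint of $\prod_i \varinjlim x_{j_i}$ is a family $(p_i)_i$ with $p_i$ represented at some stage $j_i\in J_i$. Together these give an object $(j_i)_i\in\prod_i J_i$, so $(p_i)_i$ lifts to a point of $\prod_i x_{j_i}$.

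Now fix such a basepoint at $(j_i^0)_i$. Replace each $J_i$ by the under-category $(J_i)_{j_i^0/}$. This is still filtered, and because $J_i$ is filtered it is cofinal in $J_i$. The product $\prod_i (J_i)_{j_i^0/}$ is the under-category $(\prod_i J_i)_{(j_i^0)/}$, which is cofinal in $\prod_i J_i$. After this replacement, each $\pi_n(x_{j_i},-)$ becomes a functor on the filtered category $(J_i)_{j_i^0/}$ to groups. The claim then reduces to the $\mathrm{AB6}$ property in $\mathrm{Set}$ (and hence in groups, since the forgetful functor creates filtered colimits and products).

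\textbf{The $\mathrm{AB6}$ property for sets.} Let $S_{j_i}$ be sets indexed by filtered categories $J_i$, and consider
\[
\varinjlim_{(j_i)} \prod_i S_{j_i} \to \prod_i \varinjlim S_{j_i}.
\]
\begin{itemize}
\item \emph{Surjectivity.} An element of the target is a family $(s_i)_i$, each $s_i$ represented at some $j_i$. The family $(j_i)_i$ is a single object of $\prod_i J_i$, so $(s_i)_i$ comes from $\prod_i S_{j_i}$.
\item \emph{Injectivity.} Suppose two elements of $\prod_i S_{j_i}$ and $\prod_i S_{j'_i}$ become equal in the target. Using filteredness of each $J_i$, we may first move both to a common object $(k_i)_i$. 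For each $i$, the two images in $S_{k_i}$ are equalized by some map $k_i\to l_i$, again by filteredness of $J_i$. These maps together give a single morphism $(k_i)\to(l_i)$ in $\prod_i J_i$ that equalizes the two elements.
\end{itemize}

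The only subtlety is that the product over $I$ may be infinite. This is harmless because we never need a finite bound: the choices $j_i$ (and $l_i$) are made independently for each $i$ using the axiom of choice, and $\prod_i J_i$ accepts arbitrary families of objects and morphisms. This independence is the main point, and it is exactly where $\mathrm{AB6}$ fails for non-"pointwise" categories.
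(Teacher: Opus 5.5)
Your argument is correct, but it takes a different route from the paper. The paper observes that $\mathrm{AB6}$ holds in $\mathrm{Set}$ and then says that filtered colimits and products in $\mathcal{S}$ can be computed in simplicial sets. There the comparison map is levelwise the $\mathrm{AB6}$ map for sets, hence an isomorphism.

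That proof is very short, but it silently relies on rectification. One has to replace each filtered $J_i$ by a cofinal filtered poset and check that the product of these maps stays cofinal in $\prod_i J_i$. One also has to strictify the diagrams to diagrams of Kan complexes, and use that filtered colimits and products of Kan complexes compute the $\infty$-categorical ones.

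Your proof avoids point-set models entirely. You only use:
\begin{itemize}
\item Whitehead's theorem;
\item that $\pi_n$ commutes with filtered colimits and arbitrary products;
\item that $\prod_i J_i$ is again filtered, with $(\prod_i J_i)_{(j_i^0)/}\simeq\prod_i (J_i)_{j_i^0/}$ cofinal;
\item the elementwise description of filtered colimits of sets.
\end{itemize}
The last point remains valid for filtered $\infty$-categories $J_i$, because set-valued functors factor through the (filtered) homotopy categories $hJ_i$.

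The price is the basepoint bookkeeping, which you handle correctly. Whitehead requires basepoints in the source, whereas you fix them in the target. This does not matter: every point of the source also comes from some stage $(j_i^0)$, i.e.\ a family $p_i\in x_{j_i^0}$, so the same computation applies verbatim.

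In spirit, your argument is a hands-on version of the conservativity principle of \cref{conserab6}, applied to the pointed functors $\pi_n$. These do not form a conservative family of functors on $\mathcal{S}$ itself, which is exactly why the under-category trick is needed.
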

	\begin{proof}
	It is not hard to verify that the (1-)category $\mcs\mathrm{et}$ of sets satisfies $\mathrm{AB6}$. However, filtered colimits and products in \mcs can be calculated at the level of simplicial sets, so $\mcs$ satisfies $\mathrm{AB6}$.
	\end{proof}
	\begin{cor}\label{compab6}
		Let $\mcc$ be an $\omega$-presentable \infcat. Then it satisfies $\mathrm{AB6}$.
	\end{cor}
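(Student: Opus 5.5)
An $\omega$-presentable $\infty$-category $\mcc$ is, by definition, compactly generated, so I would identify it with $\ind(\mcc^\omega)$. I would then deduce $\mathrm{AB6}$ from the case of anima via \cref{conserab6}. The natural conservative functor to use is the restricted Yoneda embedding
\[
F\colon \mcc \longrightarrow \funct\bigl((\mcc^{\omega})^{\opp},\mcs\bigr) = \prod_{c\in \Obj(\mcc^\omega)}\mcs \quad(\text{up to functoriality}), \qquad x\mapsto \mapp(-,x)|_{\mcc^\omega}.
\]
Its target should be thought of simply as a functor category with values in $\mcs$.

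First, I would check the hypotheses of \cref{conserab6}.

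\emph{Small limits.} $\mcc$ is presentable, so it has small products and small filtered colimits. $F$ preserves all small limits, in particular products, because each $\mapp(c,-)$ does.

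\emph{Filtered colimits.} $F$ preserves filtered colimits because each $c\in\mcc^\omega$ is compact.

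\emph{Conservativity.} $F$ is conservative because $\mcc^\omega$ generates $\mcc$ under colimits. Concretely, if $f\colon x\to y$ induces equivalences on $\mapp(c,-)$ for all compact $c$, then it induces equivalences on $\mapp(z,-)$ for all $z$, since every $z$ is a colimit of compacts. Hence $f$ is an equivalence by Yoneda.

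Second, I would verify that the target $\funct((\mcc^\omega)^{\opp},\mcs)$ satisfies $\mathrm{AB6}$. Products and filtered colimits in a functor category are computed pointwise, so the $\mathrm{AB6}$ comparison map is computed objectwise in $\mcs$. There it is an equivalence by the preceding proposition that $\mcs$ satisfies $\mathrm{AB6}$. Formally, $\mathrm{AB6}$ passes to $\funct(K,\mcs)$ for any small $K$, since evaluation at each $k\in K$ is jointly conservative and commutes with both operations. If one prefers to avoid that formulation, one can equally apply \cref{conserab6} to the functor $\mcc\to\prod_{c}\mcs$, $x\mapsto(\mapp(c,x))_c$, using the fact that a product of $\mathrm{AB6}$ categories is $\mathrm{AB6}$ (again because everything is computed componentwise).

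Then \cref{conserab6} gives the conclusion directly. There is no real obstacle here. The only point requiring care is a size issue: $\mcc^\omega$ must be essentially small so that the product over its objects makes sense. That holds because $\mcc$ is presentable, and in any case one may replace $\mcc^\omega$ by a small set of compact generators, since conservativity only needs a generating set.
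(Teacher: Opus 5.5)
Your proposal is correct and is essentially the paper's proof. The paper applies \cref{conserab6} to the jointly conservative family of evaluations $ev_x$ at compact objects on $\mcc\simeq\funct^{\op{lex}}(\mcc^{\omega,\opp},\mcs)$, which is exactly your restricted Yoneda functor, and it uses the same reduction to $\mathrm{AB6}$ for $\mcs$.
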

	\begin{proof}
		By \cref{conserab6}, it suffices to observe that $$\{\mcc\simeq\funct^{\op{lex}}(\mcc^{\omega,\opp},\mcs)\xrightarrow{ev_x} \mcs | x\in\mcc^\omega\}$$ is jointly conservative and that each functor preserves small filtered colimits and products.
	\end{proof}

	\begin{thm}[See \cite{sag} C.4.2.1]
		The full subcategory $\op{Groth}\subset \prlad$ contains the unit object $\op{Sp}_{\geq0}$ and is closed under Lurie tensor products. Consequently, $\op{Groth}$ inherits a symmetric monoidal structure for which the inclusion $\op{Groth} \hookrightarrow \prlad$ is symmetric monoidal.
	\end{thm}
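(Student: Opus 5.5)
The plan is to reduce the statement to Lurie's description of the tensor product of Grothendieck prestable categories, after first checking that $\op{Groth}$ really sits inside $\prlad$. A presentable prestable \infcat is additive: it embeds in a stable \infcat closed under finite colimits, and there finite products agree with finite coproducts. So $\op{Groth}\subset\prlpst\subset\prlad$.

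\textbf{Unit.} The unit $\spgeq$ is the connective part of the standard $t$-structure on $\opsp$. That $t$-structure is compatible with filtered colimits, since homotopy groups commute with filtered colimits. By criterion (d) of the characterization above, $\spgeq$ is Grothendieck prestable.

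\textbf{Tensor products.} This is the main step. Let $\mcc,\mcd$ be Grothendieck prestable. I will use the presentation $\mcc\otimes\mcd\simeq\funct^R(\mcc^{\opp},\mcd)$. First I show that $\mcc\otimes\mcd$ is prestable. Since $\mcc$ is a left exact localization-type retract of $\spgeq$-modules, it suffices to realize $\mcc\otimes\mcd$ as an accessible localization of $\mcp(\mcc_0)\otimes\mcd\simeq\funct(\mcc_0^{\opp},\mcd)$. Here $\mcc_0$ is a small subcategory of $\kappa$-compact objects. The functor category $\funct(\mcc_0^{\opp},\mcd)$ is Grothendieck prestable, because limits and colimits are computed pointwise, so prestability and AB5 pass from $\mcd$.

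Next I use Lurie's result that $\mcc$ is a left exact localization of some $\funct^{\times}(\mcc_0^{\opp},\spgeq)$, that is, of a presheaf-type Grothendieck prestable category. Tensoring with $\mcd$ then exhibits $\mcc\otimes\mcd$ as a localization of $\funct^\times(\mcc_0^{\opp},\mcd)$. I expect this localization to again be left exact: tensoring with $\mcd$ preserves left exactness of the localization functor, because in Grothendieck prestable categories finite limits commute with filtered colimits. Then AB5 and prestability descend to a left exact localization that is closed under filtered colimits, by checking criterion (b), that $\Omega$ commutes with filtered colimits.

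The obstacle is exactly this left exactness of $L\otimes\mcd$. I would handle it by writing $\mcc\otimes\mcd$ as the category of $\mcd$-valued sheaves on $\mcc_0$ satisfying descent for the topology defining $\mcc$, as in \cite[C.4]{sag}. I would then argue that sheafification is computed by a transfinite filtered colimit of finite limits of the plus construction, and that this commutes with finite limits in $\mcd$ by AB5 for $\mcd$.

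\textbf{Monoidal structure.} Once the full subcategory $\op{Groth}$ contains the unit and is closed under tensor products, the standard result on symmetric monoidal full subcategories gives a unique symmetric monoidal structure on $\op{Groth}$ making the inclusion into $\prlad$ symmetric monoidal (\cite[2.2.1.1]{HA}).
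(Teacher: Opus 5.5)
The paper gives no proof of its own here; it quotes \cite[Theorem C.4.2.1]{sag}. Your points on additivity, on the unit $\mathrm{Sp}_{\geq 0}$, and on getting the symmetric monoidal structure from closure under $\otimes$ are fine. The tensor-product step, however, is the whole content of the theorem, and it has a genuine gap.

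Everything rests on left exactness of the localization $L\otimes\mathcal{D}\colon \operatorname{Fun}^{\times}(\mathcal{C}_0^{\mathrm{op}},\mathcal{D})\to\mathcal{C}\otimes\mathcal{D}$. Without it you do not even get prestability, since an accessible localization of a prestable category need not be prestable (think of the reflection of $\mathrm{Sp}_{\geq 0}$ onto its discrete objects).

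Your reason for left exactness, that finite limits commute with filtered colimits in $\mathcal{D}$, does not apply. $L\otimes\mathcal{D}$ is not $L$ applied pointwise, nor any formula built from finite limits and filtered colimits in $\mathcal{D}$. Its right adjoint is the inclusion of those $X$ for which every $\operatorname{Map}_{\mathcal{D}}(d,X(-))$ lies in the essential image of the right adjoint $G$ of $L$, and the reflector onto that subcategory has no pointwise description. Nor is there a formal transfer along $-\otimes\mathcal{D}$. Since $G$ does not preserve colimits, $\mathcal{C}$ is not a retract of $\mathcal{P}_\Sigma(\mathcal{C}_0)$ in $\mathcal{P}\mathrm{r}^{\mathrm{L}}$. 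Such retracts are exactly the dualizable additive categories (\cref{ram149}, \cref{main1}), and a general Grothendieck prestable category is not one.

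The repair you sketch does not close the gap. It assumes that $\mathcal{P}_\Sigma(\mathcal{C}_0)\to\mathcal{C}$ imposes descent along covering sieves on $\mathcal{C}_0$, so that the reflector is an iterated plus construction. A general left exact localization need not be of this kind. Killing the $\infty$-connective objects of a non-separated category is the prestable analogue of hypercompletion, which for $\infty$-topoi is cotopological rather than given by covering sieves. You give no reason why $\mathcal{C}_0$ and the localization can be chosen to be topological.

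Even in the topological case two things remain unproved. You would need to identify $\mathcal{C}\otimes\mathcal{D}$ with $\mathcal{D}$-valued sheaves for the same sieves. You would also need to show that the transfinite plus construction converges to the reflector in the $\infty$-setting. Pointing to \cite[C.4]{sag} for this points to the very result being proved.

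Finally, the Gabriel--Popescu theorem you invoke (\cref{gplthm}) assumes $\mathcal{C}$ is separated, and the statement does not. For non-separated $\mathcal{C}$ you would first need a separate argument that some $\mathcal{P}_\Sigma(\mathcal{C}^{\kappa})\to\mathcal{C}$ is a left exact localization.

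In short, what is missing is a proof that $-\otimes\mathcal{D}$ carries left exact localizations of Grothendieck prestable categories to left exact localizations when $\mathcal{D}$ is Grothendieck prestable. That is exactly the non-formal input the paper imports from Lurie.
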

	
	\begin{de}[See \cite{sag} C.3.1.3]
		Let $\mathcal{C}$ be a presentable stable \infcat. We define a full subcategory $\mathcal{C}_{\geq 0} \subset \mathcal{C}$ to be a \emph{core} if it is closed under small colimits and extensions. 
		
		We will refer to $\pr^{+}_{\op{st}}$ as the \infcat of cored stable \infcats. The objects of $\pr^{+}_{\op{st}}$ are pairs $(\mathcal{C}, \mathcal{C}_{ \geq 0})$, where $\mathcal{C}$ is a presentable stable \infcat and $\mathcal{C}_{\geq 0} \subset \mathcal{C}$ is a core. A morphism from $(\mathcal{C}, \mathcal{C}_{ \geq 0})$ to $(\mathcal{D}, \mathcal{D}_{ \geq 0})$ is given by a colimit-preserving functor $f: \mathcal{C} \rightarrow \mathcal{D}$ satisfying $f\left(\mathcal{C}_{\geq 0}\right) \subset \mathcal{D}_{\geq 0}$.
	\end{de}
	
	\begin{rem}\,
		\enu{	\item Our $\pr^{+}_{\op{st}}$ actually refers to $\op{Groth}^{+}$ in \textup{\cite[Remark C.3.1.3]{sag}}.
			\item \textbf{Warning:} The $\mathcal{C}_{\geq 0}$ in a cored stable \infcat is not necessarily the connective part 
			of an \textit{accessible} $t$-structure unless it is presentable. 
			
			\item If $\mathcal{C}_{\geq 0}$ is presentable, then we call the pair $(\mathcal{C}, \mathcal{C}_{ \geq 0})$ 
			a presentable $t$-category. In fact, we only care about the full subcategory 
			$\pr^{t\text{-rex}}_{\op{st}}\subset\pr^{+}_{\op{st}}$ spanned by those presentable $t$-categories (with right $t$-exact 
			colimit-preserving functors). However, the technical advantage of $\pr^{+}_{\op{st}}$ is that colimits and  limits in 
			it can be calculated easily \textup{\cite[Remark C.3.1.7]{sag}}.
			
		}
	\end{rem}
	\begin{prop}\,\label{colimt}
		\enu{\item The forgetful functor $\pr^{+}_{\op{st}}\to \prlst$ is a bicartesian fibration and the inclusion $\prtrex\subset \pr^{+}_{\op{st}}$ is closed under cocartesian and cartesian liftings and hence induces a sub bicartesian fibration.
			\item $\pr^{+}_{\op{st}}$ admits small colimits and limits and $\pr^{+}_{\op{st}}\to \prlst$ preserves small colimits and limits.
			\item $\prtrex\subset \pr^{+}_{\op{st}}$ is closed under small colimits and limits. In particular, $\prtrex$ admits small colimits and limits, which are preserved by the forgetful functor  $\prtrex\to \prlst$.
		}
	\end{prop}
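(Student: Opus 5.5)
The plan is to reduce everything to Lurie's description of $\pr^{+}_{\op{st}}$ in \cite[Remark C.3.1.7]{sag} and then check that presentability of the core is stable under the relevant constructions.

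\emph{Part (1).} For a colimit-preserving functor $f\colon\mcc\to\mcd$ and a core $\mcc_{\geq0}\subset\mcc$, the cocartesian lift should be $(\mcd,\mcd'_{\geq0})$. Here $\mcd'_{\geq0}$ is the smallest core containing $f(\mcc_{\geq0})$, that is, its closure under colimits and extensions. For a core $\mcd_{\geq0}\subset\mcd$, the cartesian lift should be $(\mcc,f^{-1}(\mcd_{\geq0}))$; this preimage is closed under colimits and extensions because $f$ is exact and preserves colimits. The universal properties are immediate, since a morphism in $\pr^{+}_{\op{st}}$ is just a functor satisfying a containment condition. This gives the bicartesian fibration.

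For the claim about $\prtrex$, I would show that presentability of the core is preserved by both lifts.
\begin{itemize}
\item Cocartesian lifts. If $\mcc_{\geq0}$ is presentable, it is generated under colimits by a small set $S$. Then the core generated by $f(S)$ in $\mcd$ is presentable by \cite[Proposition 1.4.4.11]{HA}, since it is the connective part of the accessible $t$-structure generated by $f(S)$. This core agrees with the closure of $f(\mcc_{\geq0})$.
\item Cartesian lifts. The preimage $f^{-1}(\mcd_{\geq0})$ is the pullback $\mcc\times_{\mcd}\mcd_{\geq0}$ in $\prl$, because the inclusion of the core is colimit-preserving. It is therefore presentable, as $\prl$ is closed under limits in $\hatcat$.
\end{itemize}

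\emph{Part (2).} Since $\prlst$ has small limits and colimits, and the projection is a bicartesian fibration with fibers the posets of cores, limits and colimits in $\pr^{+}_{\op{st}}$ exist and are preserved. For limits, take the limit $\mcc=\lim\mcc_i$ in $\prlst$ and equip it with the core $\bigcap_i \pi_i^{-1}(\mcc_{i,\geq0})$. For colimits, take $\mcc=\colim\mcc_i$ and the core generated by the images $\iota_i(\mcc_{i,\geq0})$. I would verify this using the standard criterion for (co)limits in a bicartesian fibration whose fibers have all (co)limits, namely that the fibers are complete lattices of cores, and whose transport functors preserve them: pullbacks preserve intersections, and pushforwards preserve joins as left adjoints. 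This is essentially \cite[Remark C.3.1.7]{sag}.

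\emph{Part (3).} The key remaining claim is that these formulas preserve presentability of cores.
\begin{itemize}
\item Colimits. The core generated by the union of the small generating sets $\iota_i(S_i)$ is presentable by the same argument as in (1), and it equals the colimit core.
\item Limits. The limit core is the limit of the diagram of presentable categories $\mcc_{i,\geq0}$ fibered over $\mcc$. Equivalently, it is $\mcc\times_{\prod\mcc_i}\prod\mcc_{i,\geq0}$, an iterated pullback in $\prl$ along colimit-preserving inclusions, and hence presentable.
\end{itemize}
The main point to be careful about is that the inclusions $\mcc_{i,\geq0}\hookrightarrow\mcc_i$ are genuinely morphisms in $\prl$, so that limits in $\prl$ agree with the set-theoretic intersection computed in $\hatcat$. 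This holds because cores are closed under colimits and limits in $\prl$ are computed in $\hatcat$. Fullness of $\prtrex\subset\pr^{+}_{\op{st}}$ then gives the closure statements and the preservation by the forgetful functor.
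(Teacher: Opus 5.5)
Your proposal is correct and follows essentially the same route as the paper. You construct the cartesian and cocartesian lifts as the preimage core and the generated core, treat the fibers as complete lattices of cores to which HTT's (co)limit criterion for bicartesian fibrations applies, and check that presentability of cores survives these operations. Your arguments for presentability (pullbacks in \prl\ for preimages and intersections, and \cite[Proposition 1.4.4.11]{ha} for cores generated by a small set) simply spell out the paper's brief remark that such cores remain accessible.
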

	\begin{proof}
		(1) Let $U: \pr^{+}_{\op{st}} \to \prlst$ denote the forgetful functor. Given a colimit-preserving functor $f: \mathcal{C} \to \mathcal{D}$ in $\prlst$ and a core $\mathcal{D}_{\geq 0} \subset \mathcal{D}$, the cartesian lift of $\mathcal{D}_{\geq 0}$ along $f$ is given by the preimage $f^{-1}(\mathcal{D}_{\geq 0})$.  Conversely, given a core $\mathcal{C}_{\geq 0} \subset \mathcal{C}$, the cocartesian lift along $f$ is the core in $\mathcal{D}$ generated by $f(\mathcal{C}_{\geq 0})$, which is simply the closure of $f(\mathcal{C}_{\geq 0})$ under small colimits and extensions in $\mathcal{D}$. Thus, $U$ is a bicartesian fibration.
		
		Now we show $\prtrex$ is closed under these liftings. If $\mathcal{D}_{\geq 0}$ 
		is presentable, it is the connective part of an accessible $t$-structure on 
		$\mathcal{D}$. In this case $f^{-1}(\mathcal{D}_{\geq 0})$ is again an accessible 
		(presentable) full subcategory of \mcc and hence forms an accessible $t$-structure 
		on $\mathcal{C}$. Conversely, given an accessible $t$-structure 
		$\mathcal{C}_{\geq 0}$. The core generated by $f(\mathcal{C}_{\geq 0})$ in the 
		presentable \infcat $\mathcal{D}$ is generated by a small set of objects, so its 
		closure under small colimits and extensions remains accessible and therefore 
		presentable. 
		
		(2) It is a standard fact that $\prlst$ admits small limits and colimits. Since $U$ is a bicartesian fibration, we can compute limits and colimits in the total \infcat $\pr^{+}_{\op{st}}$ if the fibers admit them. For any $\mathcal{C} \in \prlst$, the fiber $U^{-1}(\mathcal{C})$ is a poset of cores in $\mathcal{C}$. This poset admits small limits (given by arbitrary intersections of cores) and small colimits (given by the core generated by the union). By the general theory of fibrations (e.g., \cite[Proposition 4.3.1.10]{htt}), $\pr^{+}_{\op{st}}$ admits small limits and colimits, and the forgetful functor $U$  preserves them.
		
		(3) By (2), limits and colimits in $\pr^{+}_{\op{st}}$ are computed by first computing the corresponding limit/colimit of the underlying stable \infcats in $\prlst$, and then taking the appropriate cartesian/cocartesian liftings combined with fiberwise intersections/generations. Since we have established in (1) that $\prtrex$ is closed under both cartesian and cocartesian liftings, and the intersection (resp. colimit closure) of accessible cores remains accessible (hence presentable), $\prtrex \subset \pr^{+}_{\op{st}}$ is closed under small limits and colimits.
	\end{proof}
	\begin{de}
		There is a natural symmetric monoidal structure on $\pr^{+}_{\op{st}}$ given by the construction:
		$$
		\left(\mathcal{C}, \mathcal{C}_{\geq 0}\right) \otimes\left(\mathcal{D}, \mathcal{D}_{\geq 0}\right)=\left(\mathcal{C} \otimes \mathcal{D}, m_{!}\left(\mathcal{C}_{\geq 0}, \mathcal{D}_{\geq 0}\right)\right)
		$$
		where $\mathcal{C} \otimes \mathcal{D}$ is the Lurie tensor product and $m_{!}\left(\mathcal{C}_{\geq 0}, \mathcal{D}_{\geq 0}\right)$ is the smallest full subcategory of $\mathcal{C} \otimes \mathcal{D}$ which is closed under colimits and extensions and contains the objects $m(C, D)$ for each $C \in \mathcal{C}_{\geq 0}$ and $D \in \mathcal{D}_{\geq 0}$.
	\end{de}
	
	\begin{rem}\label{prt-rex}\,
		\enu{
			\item The full subcategory $\pr^{t\text{-rex}}_{\op{st}}\subset\pr^{+}_{\op{st}}$ of presentable $t$-categories is closed under tensor products since $m_{!}\left(\mathcal{C}_{\geq 0}, \mathcal{D}_{\geq 0}\right)$ is presentable if both $\mathcal{C}_{\geq 0}$ and $\mathcal{D}_{\geq 0}$ are presentable.
			\item An object in $\op{CAlg}(\pr^{t\text{-rex}}_{\op{st}})$ can be identified with a $ttt$-\infcat (cf.  \cite{hattt}).
		}
	\end{rem}
	
	\begin{prop}[See {\cite[Proposition C.3.1.1]{sag}}]\label{prestemb}
		Let $\mathcal{C}$ and $\mathcal{D}$ be presentable prestable \infcats. Then the canonical map
		$$
		\opsp\otimes-: \operatorname{Fun}^L(\mathcal{C}, \mathcal{D}) \rightarrow \operatorname{Fun}^L(\operatorname{Sp}(\mathcal{C}), \operatorname{Sp}(\mathcal{D}))
		$$
		is a fully faithful embedding, whose essential image consists of those functors $\operatorname{Sp}(\mathcal{C}) \rightarrow \operatorname{Sp}(\mathcal{D})$ which preserve small colimits and are right $t$-exact (with respect to the canonical $t$-structure).
	\end{prop}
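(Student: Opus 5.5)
Since $\mathcal{C}$ and $\mathcal{D}$ are presentable prestable, the canonical functors $\mathcal{C}\to\opsp(\mathcal{C})$ and $\mathcal{D}\to\opsp(\mathcal{D})$ are fully faithful and identify $\mathcal{C}$, $\mathcal{D}$ with the connective parts of accessible $t$-structures. Moreover $\opsp(\mathcal{C})\simeq\opsp\otimes\mathcal{C}$ in $\prl$, and under this identification the functor $\opsp\otimes F$ is the unique colimit-preserving functor extending $F$, i.e.\ the one satisfying $(\opsp\otimes F)\circ\Sigma^\infty\simeq\Sigma^\infty\circ F$. Since the left adjoint $\Sigma^\infty\colon\mathcal{C}\to\opsp(\mathcal{C})$ is the universal colimit-preserving functor to a presentable stable category, restriction along it gives an equivalence
\[
\funct^L(\opsp(\mathcal{C}),\opsp(\mathcal{D}))\simeq\funct^L(\mathcal{C},\opsp(\mathcal{D})).
\]
Through this equivalence, $\opsp\otimes-$ becomes postcomposition with $\Sigma^\infty_{\mathcal{D}}\colon\mathcal{D}\hookrightarrow\opsp(\mathcal{D})$. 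The proof therefore reduces to two checks about this postcomposition functor.

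\textbf{Full faithfulness.} Postcomposition with a fully faithful functor is fully faithful on functor categories. The only point to verify is that $\funct^L(\mathcal{C},\mathcal{D})$ lands in $\funct^L(\mathcal{C},\opsp(\mathcal{D}))$, i.e.\ that $\Sigma^\infty_{\mathcal{D}}$ preserves colimits. This holds because $\Sigma^\infty_{\mathcal{D}}$ is a left adjoint, and its image is the core $\opsp(\mathcal{D})_{\geq0}$, which is closed under colimits.

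\textbf{Essential image.} A colimit-preserving $G\colon\opsp(\mathcal{C})\to\opsp(\mathcal{D})$ lies in the image exactly when $G\circ\Sigma^\infty_{\mathcal{C}}$ factors through $\mathcal{D}$, i.e.\ when $G(\mathcal{C})\subset\opsp(\mathcal{D})_{\geq0}$. This is precisely right $t$-exactness: the connective part of $\opsp(\mathcal{C})$ is the essential image of $\mathcal{C}$. If one works with all colimits and extensions rather than just the image, note that $\opsp(\mathcal{C})_{\geq0}$ equals the image of $\mathcal{C}$, so no closure step is needed. In that case $G$ is recovered as $\opsp\otimes(G|_{\mathcal{C}})$ by the universal property.

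\textbf{Main obstacle.} The step needing care is identifying $\opsp\otimes F$ with the extension of $\Sigma^\infty\circ F$. This requires the naturality of the equivalence $\opsp\otimes\mathcal{C}\simeq\opsp(\mathcal{C})$ together with the fact that $\Sigma^\infty$ corresponds to $\sphere\otimes-$. I would cite the standard identification of $\opsp\otimes-$ as the left adjoint to the inclusion $\prlst\subset\prl$ (stabilization as a smashing localization), which makes this naturality formal.
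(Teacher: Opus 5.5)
Your argument is correct and is essentially the proof of \cite[Proposition C.3.1.1]{sag} that the paper cites. As there, you use the universal property of $\opsp\otimes\mathcal{C}\simeq\opsp(\mathcal{C})$ to reduce to postcomposition with $\Sigma^\infty_{\mathcal{D}}$, which is fully faithful with colimit-closed image $\opsp(\mathcal{D})_{\geq0}$. The one input that needs a citation is that $\Sigma^\infty$ is fully faithful onto the connective part for an arbitrary presentable prestable category; this follows from the Spanier--Whitehead description together with right completion, rather than from the Grothendieck-case Proposition C.1.4.1, and that is exactly why the paper notes the Grothendieck hypothesis is not needed.
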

	\begin{rem}
		The original \cite[Proposition C.3.1.1]{sag} requires the Grothendieck condition, but that is actually not used in the proof.
	\end{rem}
	\begin{prop}[See {\cite[Proposition C.3.2.1]{sag}}]\label{t-ex}
		Let $\mathcal{C}$ and $\mathcal{D}$ be Grothendieck prestable \infcats and let $f: \mathcal{C} \rightarrow \mathcal{D}$ be a colimit-preserving functor. Then the following conditions are equivalent:
		\enu{
			\item The functor $f$ is left exact.
			\item The functor $f$ carries $0$-truncated objects of $\mathcal{C}$ to $0$-truncated objects of $\mathcal{D}$.
			\item The induced map $	\opsp\otimes f: \operatorname{Sp}(\mathcal{C}) \rightarrow \operatorname{Sp}(\mathcal{D})$ is left $t$-exact.
		}
	\end{prop}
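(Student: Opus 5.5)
We prove the equivalences in the order (1)$\Leftrightarrow$(2)$\Leftrightarrow$(3). Let $\rho_\mcc\colon\mcc\to\opsp(\mcc)$ and $\rho_\mcd\colon\mcd\to\opsp(\mcd)$ be the stabilizations. By \cref{prestemb}, composing with $\rho_\mcd$ identifies colimit-preserving functors $\mcc\to\mcd$ with colimit-preserving right $t$-exact functors $\opsp(\mcc)\to\opsp(\mcd)$. Left exactness of $f$ is a statement about finite limits, and $\rho_\mcc,\rho_\mcd$ preserve finite limits. So everything comes down to comparing $f$ with $F=\opsp\otimes f$ on the connective parts.

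\textbf{(1)$\Rightarrow$(2).} A left exact functor preserves $0$-truncated objects. In a prestable category, $X$ is $0$-truncated if and only if $\Omega X\simeq 0$, because $\Map(Y,\Omega X)\simeq \Omega\Map(Y,X)$. Left exactness gives $\Omega f(X)\simeq f(\Omega X)\simeq 0$, so $f(X)$ is $0$-truncated.

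\textbf{(2)$\Rightarrow$(3).} We must show $F(\opsp(\mcc)_{\le 0})\subset \opsp(\mcd)_{\le 0}$.
\begin{enumerate}
\item For $X\in\opsp(\mcc)_{\le 0}$, use left completeness of Grothendieck prestable categories in the form available here: write $X\simeq \lim_n \tau_{\ge -n}X$, each term bounded.
\item Each bounded term is a finite extension of shifts $\Sigma^{-k}\heartsuit$-objects with $k\ge 0$.
\item For $A\in\mcc^\heartsuit$ and $k\ge0$, $F(\Sigma^{-k}A)=\Sigma^{-k}f(A)$, which lies in $\opsp(\mcd)_{\le 0}$ by (2).
\item Coconnective objects are closed under extensions, so $F$ sends bounded coconnective objects to $\opsp(\mcd)_{\le0}$.
\end{enumerate}
The main obstacle is passing to the limit, since $F$ need not commute with the limit $\lim_n\tau_{\ge -n}X$ and $\opsp(\mcc)$ need not be left complete.

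The first remedy is to avoid limits altogether. It suffices to show $\Map(\Sigma Y, F X)=0$ for $Y\in\mcd$. Rewrite this as maps in $\mcd$ via the presentation $\opsp(\mcd)=\lim(\cdots\xrightarrow{\Omega}\mcd)$, and use that $X$ is described by the spectrum object $(\Omega^{\infty-n}X)_n$ with each $\Omega^{\infty-n}X$ $n$-truncated in $\mcc$.

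Alternatively, use Lurie's argument with Grothendieck-ness. $FX$ is computed as $\colim_n \Sigma^{-n}\rho f(\Omega^{\infty-n}X)$, a filtered colimit. In a Grothendieck prestable category, $\opsp(\mcd)_{\le 0}$ is closed under filtered colimits, because the $t$-structure is compatible with filtered colimits. So we reduce to showing that $f$ sends $n$-truncated objects to $n$-truncated objects. This follows by induction on Postnikov towers in $\mcc$, using that $n$-truncated objects of a Grothendieck prestable category are built from $0$-truncated ones by finitely many fiber sequences of the form $\tau_{\ge1}X\to X\to\tau_{\le0}X$ together with the identification $\tau_{\ge1}X=\Sigma(\cdot)$. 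For that induction we use that $f$ commutes with $\Sigma$, and that $0$-truncated objects are closed under extensions by $\Sigma$-shifts.

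\textbf{(3)$\Rightarrow$(1).} $F$ is exact and right $t$-exact, and by (3) it is left $t$-exact, so $F$ preserves finite limits and $\opsp(\cdot)_{\ge0}$. Finite limits in $\mcc$ are computed as $\tau_{\ge0}$ of limits in $\opsp(\mcc)$. A $t$-exact functor commutes with $\tau_{\ge0}$, so $f$ preserves finite limits.
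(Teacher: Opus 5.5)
Your argument is correct. Its working route, the ``Alternatively'' paragraph, is essentially Lurie's proof of \cite[Proposition C.3.2.1]{sag}; the paper gives no proof of its own and simply cites that result. The route runs as follows: write $X\in\opsp(\mcc)_{\le 0}$ as the sequential colimit of $\tau_{\ge -n}X\simeq\Sigma^{-n}\rho\,\Omega^{\infty-n}X$; use that $F$ preserves colimits and that $\opsp(\mcd)_{\le 0}$ is closed under filtered colimits because $\mcd$ is Grothendieck; and reduce the bounded (equivalently, $n$-truncated) pieces to (2) by d\'evissage.

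The one thing to fix is your step 1. The tower $\tau_{\ge 0}X\to\tau_{\ge -1}X\to\cdots$ is a \emph{colimit} diagram, not a limit. The equivalence $X\simeq\colim_n\tau_{\ge -n}X$ is \emph{right} completeness of $\opsp(\mcc)$, which holds automatically \cite[Remark C.1.2.10]{sag}; it has nothing to do with left completeness. So the ``obstacle'' you raise does not exist, your steps 1--4 already conclude the proof once the colimit is read correctly, and the vague first remedy can be dropped.
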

	
	\begin{cor}\label{pret}\,
		\enu{
			\item The construction $\mathcal{C} \mapsto\left(\operatorname{Sp}(\mathcal{C}),
			 \operatorname{Sp}(\mathcal{C})_{\geq 0}\right)$ determines a fully faithful embedding
			 $$  \prlpst\xhookrightarrow{} \pr^{+}_{\op{st}}$$ from the \infcat of
			   presentable prestable \infcats to the \infcat of cored stable 
			   \infcats.
			\item A pair $(\mathcal{C}, \mathcal{C}_{ \geq 0})$ belongs to the essential image of $\prlpst\hookrightarrow \pr^{+}_{\op{st}}$ if and only if it forms an accessible $t$-structure $\left(\mathcal{C}_{\geq 0}, \mathcal{C}_{\leq 0}\right)$ which is right complete. 
			\item A pair $(\mathcal{C}, \mathcal{C}_{ \geq 0})$ belongs to the essential image of $\op{Groth}\hookrightarrow \pr^{+}_{\op{st}}$ if and only if it forms an accessible $t$-structure $\left(\mathcal{C}_{\geq 0}, \mathcal{C}_{\leq 0}\right)$ which is compatible with filtered colimits and is right complete.
			\item Furthermore, the embedding $\op{Groth}\hookrightarrow \pr^{+}_{\op{st}}$ is symmetric monoidal, hence induces a fully faithful embedding $$\op{CAlg}(\op{Groth})\hookrightarrow \op{CAlg}(\pr^{+}_{\op{st}}).$$
		}
	\end{cor}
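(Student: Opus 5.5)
The corollary is assembled from \cref{prestemb}, \cref{t-ex}, and the standard correspondence between presentable $t$-structures and prestable categories; we treat the four parts in turn.

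For (1), the functor $\prlpst\to\pr^{+}_{\op{st}}$ sends $\mcc\mapsto(\opsp(\mcc),\opsp(\mcc)_{\geq0})$ and a colimit-preserving $f$ to $\opsp\otimes f$. The core $\opsp(\mcc)_{\geq0}$ is the essential image of $\Sigma^\infty_+$. It is closed under colimits and extensions by \cref{prestcriterion} together with the identification of $\opsp(\mcc)$ with the right completion of $\op{SW}(\mcc)$. Hence it is indeed a core, and $\opsp\otimes f$ preserves it. Full faithfulness then amounts to two facts. Morphisms in $\pr^{+}_{\op{st}}$ between such pairs are exactly the colimit-preserving right $t$-exact functors. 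By \cref{prestemb}, these correspond bijectively, and in fact as mapping spaces, to $\funct^L(\mcc,\mcd)$. Both mapping spaces are subspaces, because $\pr^{+}_{\op{st}}\to\prlst$ is faithful on the level of the core condition, so \cref{prestemb} gives the equivalence of mapping anima.

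For (2), one direction is clear. The image pair is presentable, so by \cite[Prop.~1.4.4.11]{HA}-style arguments it is the connective part of an accessible $t$-structure, and it is right complete by the remark identifying $\opsp(\mcc)$ with the right completion. Conversely, suppose $(\mcc,\mcc_{\geq0})$ is accessible and right complete. Then $\mcc_{\geq0}$ is presentable prestable. Right completeness means $\mcc\simeq\lim(\cdots\xrightarrow{\tau_{\geq-1}}\mcc_{\geq-1}\xrightarrow{\tau_{\geq0}}\mcc_{\geq0})\simeq\opsp(\mcc_{\geq0})$, compatibly with the cores.

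Part (3) follows from (2) and the characterization of Grothendieck prestable categories (\cite{sag} C.1.4.1, item (d)/(e)). Given right completeness, the $t$-structure is compatible with filtered colimits exactly when $\mcc_{\geq0}$ is AB5.

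For (4), we must check that $\opsp(\mcc\otimes\mcd)$ with the core $m_!(\opsp(\mcc)_{\geq0},\opsp(\mcd)_{\geq0})$ agrees with $\opsp(\mcc)\otimes\opsp(\mcd)$ with its core. We use $\opsp(\mcc\otimes\mcd)\simeq\opsp\otimes\mcc\otimes\mcd\simeq\opsp(\mcc)\otimes\opsp(\mcd)$, since $\opsp$ is idempotent in $\prl$. The connective part of the left side is the essential image of $\mcc\otimes\mcd$, which is generated under colimits and extensions by the images of $c\otimes d$. This matches $m_!$, so the embedding is strong monoidal on objects. The unit $\spgeq$ goes to $(\opsp,\opsp_{\geq0})$. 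To promote this to a symmetric monoidal structure, we observe that $\prlpst\to\pr^{+}_{\op{st}}$ is the composite of the symmetric monoidal functor $\opsp\otimes-$ with the lifting of cores. Since the embedding is fully faithful and the comparison maps are equivalences, it is symmetric monoidal, and applying $\op{CAlg}$ preserves full faithfulness.

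The main obstacle is this last coherence step. We handle it by exhibiting the functor as a lax monoidal functor, namely induced from the monoidal localization $\opsp\otimes-$, whose lax structure maps are the equivalences computed above.
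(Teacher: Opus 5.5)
Your proposal is correct and follows essentially the paper's route. The paper states this corollary without a separate proof, deriving it from \cref{prestemb} (with the unnecessary Grothendieck hypothesis removed), the identification of $\opsp(\mcc)$ with the right completion of $\op{SW}(\mcc)$, Lurie's characterization of Grothendieck prestable categories, and Lurie's theorem that $\op{Groth}$ is closed under tensor products — the same inputs you use. Your coherence sketch for (4) is adequate: multimorphisms in $\pr^{+}_{\op{st}}$ form a union of components of those in $\prlst$, so $\opsp\otimes-$ lifts once the relevant multilinear functors are seen to preserve connective objects, and your computation of $m_!$ shows the lift is strong monoidal.
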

	
	\begin{de}[Grothendieck $t$-categories]\label{gro}
		We say a presentable $t$-category $(\mathcal{C},\mathcal{C}_{\geq0})\in \pr^{t\text{-rex}}_{\op{st}}$ is Grothendieck if it lies in the essential image of the embedding $\op{Groth}\hookrightarrow \pr^{t\text{-rex}}_{\op{st}}$.
	\end{de}
	
	\begin{exam}
		If $(\mathcal{A},\mathcal{A}_{\geq0})$ is a  monoidal Grothendieck $t$-category, then for any $R\in \alg(\mathcal{A}_{\geq0})$, the pair $(\modu_R(\mathcal{A}),\modu_R(\mathcal{A})_{\geq0})$ is also a Grothendieck $t$-category (see \cite[Proposition 2.7]{hattt}).
	\end{exam}
	
	\begin{de}[See {\cite[Definition C.1.2.12]{sag}}]
		Let $\mathcal{C}$ be a prestable \infcat which admits finite limits. We say that an object $X \in \mathcal{C}$ is $\infty$-connective if $\tau_{\leq n} X \simeq 0$ for every integer $n$. \\
		We say $\mathcal{C}$ is separated if every $\infty$-connective object of $\mathcal{C}$ is a zero object. \\
		We say $\mathcal{C}$ is complete if it is a homotopy limit of the tower of \infcats:
		$$
		\cdots \rightarrow \tau_{\leq 2} \mathcal{C} \xrightarrow{\tau_{\leq 1}} \tau_{\leq 1} \mathcal{C} \xrightarrow{\tau_{\leq 0}} \tau_{\leq 0} \mathcal{C}=\mathcal{C}^{\heartsuit} .
		$$
		In other words, $\mathcal{C}$ is complete if it is Postnikov complete.
	\end{de}
	
	\begin{rem}\,
		\enu{
			\item If a prestable \infcat $\mathcal{C}$ is complete, then it is separated.
			\item Let $\mathcal{C}$ be a stable \infcat. Then $\mathcal{C}$ is separated if and only if $\mathcal{C} \simeq *$.
		}
	\end{rem}
	
	\begin{prop}
		A product of separated (complete) prestable \infcats is still separated (complete).
	\end{prop}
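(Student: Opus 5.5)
The plan is to compute everything in the product factorwise. Let $\mcc=\prod_{i\in I}\mcc_i$ with each $\mcc_i$ prestable with finite limits. First I will note that $\mcc$ is again prestable with finite limits, by checking the axioms of \cref{defpst} factorwise. This works because finite limits, finite colimits, $\Sigma$ and $\Omega$ in a product are all computed coordinatewise. For the same reason the truncation functors are computed factorwise, $\tau_{\leq n}(x_i)_i\simeq(\tau_{\leq n}x_i)_i$. One way to see this is that $\tau_{\leq n}\mcc=\prod_i\tau_{\leq n}\mcc_i$ as full subcategories: an object of the product is $n$-truncated exactly when each mapping anima $\Map_{\mcc}((y_i),(x_i))=\prod_i\Map_{\mcc_i}(y_i,x_i)$ is $n$-truncated for all $(y_i)$, and testing with objects concentrated in one coordinate shows this happens iff each $x_i$ is $n$-truncated. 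The left adjoint to the inclusion of a product of reflective subcategories is then the product of the left adjoints.

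\textbf{Separated case.} Suppose $x=(x_i)$ is $\infty$-connective in $\mcc$. Then $0\simeq\tau_{\leq n}x\simeq(\tau_{\leq n}x_i)_i$ for all $n$. Since the zero object of a product is coordinatewise zero, each $x_i$ is $\infty$-connective, hence zero by separatedness of $\mcc_i$. So $x\simeq0$.

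\textbf{Complete case.} The functor $\mcc\to\lim_n\tau_{\leq n}\mcc$ is identified, via the factorwise description above, with
\[
\prod_i\mcc_i\too\lim_n\prod_i\tau_{\leq n}\mcc_i\simeq\prod_i\lim_n\tau_{\leq n}\mcc_i .
\]
The equivalence on the right holds because limits commute with limits in $\hatcat$. The composite is the product of the comparison functors $\mcc_i\to\lim_n\tau_{\leq n}\mcc_i$, each of which is an equivalence by completeness. A product of equivalences is an equivalence, so $\mcc$ is complete.

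The only step needing care is the coherence claim that the tower $\cdots\to\tau_{\leq 1}\mcc\to\tau_{\leq0}\mcc$, with transition functors $\tau_{\leq n}$, is the product of the towers for the $\mcc_i$. This is the main point to check, but it follows because the factorwise identifications of the truncation localizations are natural in $n$: the functor $\prod_i\tau_{\leq n}$ is left adjoint to the product of the inclusions, and left adjoints are unique. Everything else is routine.
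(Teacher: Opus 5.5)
Your proposal is correct and takes essentially the paper's approach. The paper's proof is the single remark that the claim follows immediately from the definitions, and your argument spells that remark out: you compute truncations factorwise, deduce separatedness coordinatewise, and obtain completeness from $\varprojlim_n\prod_i\tau_{\leq n}\mathcal{C}_i\simeq\prod_i\varprojlim_n\tau_{\leq n}\mathcal{C}_i$.
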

	\begin{proof}
		It immediately follows from the definition.
	\end{proof}
	\begin{prop}\label{comple}
		Let $\mathcal{C}$ be a prestable \infcat with finite limits. Then:
		\enu{
			\item The canonical $t$-structure $(\op{Sp}(\mathcal{C})_{\geq0},\op{Sp}(\mathcal{C})_{\leq0})$ is hypercomplete if and only if $\mathcal{C}$ is separated.
			\item The canonical $t$-structure $(\op{Sp}(\mathcal{C})_{\geq0},\op{Sp}(\mathcal{C})_{\leq0})$ is left complete if and only if $\mathcal{C}$ is complete.
		}
	\end{prop}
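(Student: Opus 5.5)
The plan is to reduce both statements to the identification of $\mathcal{C}$ with the connective part $\op{SW}(\mathcal{C})_{\geq 0}$ of the $t$-structure on the Spanier–Whitehead category. Since $\op{Sp}(\mathcal{C})$ is the right completion of $\op{SW}(\mathcal{C})$, this is harmless for statements about truncations. We use the standard dictionary: an object $X$ of a stable $\infty$-category with $t$-structure lies in $\bigcap_n \mathcal{D}_{\geq n}$ if and only if $\tau_{\leq n}X=0$ for all $n$. Recall that hypercompleteness of a $t$-structure means exactly that $\bigcap_n \op{Sp}(\mathcal{C})_{\geq n}=0$. Left completeness means that $\op{Sp}(\mathcal{C})\to\lim_n \op{Sp}(\mathcal{C})_{\leq n}$ is an equivalence.

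For (1), we compare $\infty$-connective objects on both sides. If $X\in\mathcal{C}$ is $\infty$-connective, then $\Sigma^\infty X$ satisfies $\tau_{\leq n}\Sigma^\infty X=\Sigma^\infty\tau_{\leq n}X=0$ for $n\geq 0$. Here we use that truncation functors on $\mathcal{C}$ agree with those of $\op{Sp}(\mathcal{C})$ restricted to the connective part. So $\Sigma^\infty X$ lies in every $\op{Sp}(\mathcal{C})_{\geq n}$, and hypercompleteness forces $X=0$. Conversely, let $Y\in\bigcap_n\op{Sp}(\mathcal{C})_{\geq n}$. Then $Y\in \op{Sp}(\mathcal{C})_{\geq 0}\simeq\mathcal{C}$, and its image $X\in\mathcal{C}$ has all truncations zero. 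Separatedness then gives $X=0$, hence $Y=0$.

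For (2), we use that the tower $\tau_{\leq n}\mathcal{C}$ identifies with $\op{Sp}(\mathcal{C})_{[0,n]}$. Left completeness of $\op{Sp}(\mathcal{C})$ is equivalent to the statement that its connective part is the limit of its truncations, $\op{Sp}(\mathcal{C})_{\geq 0}\simeq\lim_n \op{Sp}(\mathcal{C})_{[0,n]}$. Indeed, $\op{Sp}(\mathcal{C})_{\leq n}\simeq\lim_m \op{Sp}(\mathcal{C})_{[-m,n]}$ by right completeness, and we shift this; this is Lurie's criterion relating left completeness of $\mathcal{D}$ to $\mathcal{D}_{\geq 0}$, cf. \cite[Proposition 1.2.1.17]{HA} style arguments. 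Combined with $\mathcal{C}\simeq\op{Sp}(\mathcal{C})_{\geq 0}$, this gives exactly Postnikov completeness of $\mathcal{C}$. I would cite \cite[Proposition C.1.2.13 / C.1.2.5]{sag}, where this is essentially proved.

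The main obstacle is (2): carefully justifying that left completeness of the whole stable category is detected on the connective part. That is, we must show $\lim_n \op{Sp}(\mathcal{C})_{\leq n}\simeq\op{Sp}(\mathcal{C})$ if and only if $\lim_n \op{Sp}(\mathcal{C})_{[0,n]}\simeq\op{Sp}(\mathcal{C})_{\geq 0}$. I would do this by shifting: both sides are compatible with $\Sigma$, and $\op{Sp}(\mathcal{C})=\bigcup_k\op{Sp}(\mathcal{C})_{\geq -k}$ by right completeness.
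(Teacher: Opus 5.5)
Part (1) is correct. So is the implication ``left complete $\Rightarrow$ $\mathcal{C}$ complete'' in (2): restricting the equivalence $\op{Sp}(\mathcal{C})\simeq\lim_n\op{Sp}(\mathcal{C})_{\leq n}$ to towers with connective terms gives $\op{Sp}(\mathcal{C})_{\geq0}\simeq\lim_n\op{Sp}(\mathcal{C})_{[0,n]}$, because $X$ is connective once all $\tau_{\leq n}X$ are.

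The gap is in the converse, exactly at the step you flag as the main obstacle. You propose to glue along $\op{Sp}(\mathcal{C})=\bigcup_k\op{Sp}(\mathcal{C})_{\geq -k}$ ``by right completeness'', and that is false. Right completeness says $\op{Sp}(\mathcal{C})\simeq\lim_k\op{Sp}(\mathcal{C})_{\geq -k}$ along the functors $\tau_{\geq -k}$. It does not say objects are bounded below: $\op{Sp}$ is right complete, yet $\bigoplus_{k\geq0}\Sigma^{-k}H\mathbb{Z}$ is not bounded below. An exhaustion argument only gives that the comparison functor $X\mapsto(\tau_{\leq n}X)_n$ restricts to equivalences $\op{Sp}(\mathcal{C})_{\geq -k}\simeq\lim_n\op{Sp}(\mathcal{C})_{[-k,n]}$. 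That says nothing about unbounded-below objects of $\op{Sp}(\mathcal{C})$, nor about unbounded-below towers in $\lim_n\op{Sp}(\mathcal{C})_{\leq n}$.

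The repair uses the formula you already wrote, $\op{Sp}(\mathcal{C})_{\leq n}\simeq\lim_k\op{Sp}(\mathcal{C})_{[-k,n]}$, together with an interchange of limits:
\[
\lim_n\op{Sp}(\mathcal{C})_{\leq n}\simeq\lim_n\lim_k\op{Sp}(\mathcal{C})_{[-k,n]}\simeq\lim_k\lim_n\op{Sp}(\mathcal{C})_{[-k,n]}\simeq\lim_k\op{Sp}(\mathcal{C})_{\geq -k}\simeq\op{Sp}(\mathcal{C}).
\]
The steps are justified as follows.
\begin{itemize}
\item The second equivalence holds because the transition functors $\tau_{\leq n}$ and $\tau_{\geq -k}$ commute.
\item The third is completeness of $\mathcal{C}\simeq\op{Sp}(\mathcal{C})_{\geq0}$, transported by $\Sigma^{-k}$ and compatible with the maps $\tau_{\geq -k}$ since truncations commute.
\item The last is right completeness again.
\end{itemize}
Every step is induced by truncation functors, so read from right to left the composite is the canonical functor $X\mapsto(\tau_{\leq n}X)_n$. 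This gives left completeness, not merely an abstract equivalence.

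With this fix your argument is a complete, self-contained proof. The paper gives no argument of its own here and simply cites \cite[Proposition 1.19]{hattt}.
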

	
	\begin{proof}
		See \cite[Proposition 1.19]{hattt}.
	\end{proof}
	\begin{cor}\label{sepab4*iscomp}
		Let $\mcc$ be a separated presentable prestable \infcat satisfying $\mathrm{AB4}^*$. Then $\mcc$ is complete.
	\end{cor}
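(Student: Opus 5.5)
Our strategy is to pass through \cref{comple}: by part (2) it suffices to show that the canonical $t$-structure on $\opsp(\mcc)$ is left complete. By part (1) and separatedness, this $t$-structure is already hypercomplete, i.e.\ an object $X\in\opsp(\mcc)$ with $\tau_{\leq n}X\simeq 0$ for all $n$ vanishes. The $\mathrm{AB4}^*$ hypothesis will then supply the remaining input: that the Postnikov limit of a tower of truncations is well behaved.

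\textbf{Step 1.} For $X\in\opsp(\mcc)$, consider the canonical map $\theta\colon X\to\lim_n\tau_{\leq n}X$, with the limit formed in $\opsp(\mcc)$. It suffices to show $\theta$ is an equivalence for all $X$ and that every compatible tower $(Y_n)$ with $Y_n\in\opsp(\mcc)_{\leq n}$ and $\tau_{\leq n-1}Y_n\simeq Y_{n-1}$ is recovered from its limit. We express the sequential limit as the fiber of $\prod_n\tau_{\leq n}X\xrightarrow{1-\mathrm{shift}}\prod_n\tau_{\leq n}X$.

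\textbf{Step 2.} We use $\mathrm{AB4}^*$ to control connectivity. Since $\mcc\subset\opsp(\mcc)$ is closed under products, and the same holds after shifting, products of $k$-connective objects are $k$-connective. Hence for each $k$ we have $\tau_{\geq k}\prod_n Z_n\simeq\prod_n\tau_{\geq k}Z_n$. In addition, products of coconnective objects are coconnective, because the right orthogonal is always closed under limits. It follows that products are $t$-exact, and so $\pi_i$ of a product is the product of the $\pi_i$ in the heart.

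From this we derive a Milnor-type exact sequence for $\pi_i\lim_n Y_n$. Since the towers $\pi_i Y_n$ stabilize for $n\ge i$, the $\lim^1$ terms vanish, and we obtain
\[
\pi_i\lim_n Y_n\simeq\pi_iY_m \quad\text{for } m\ge i.
\]
Consequently, $\tau_{\leq m}\lim_n Y_n\simeq Y_m$, which is the second half of left completeness.

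\textbf{Step 3.} Apply Step 2 to the tower $Y_n=\tau_{\leq n}X$. The fiber $F$ of $\theta$ then satisfies $\tau_{\leq m}F\simeq 0$ for all $m$, since $\theta$ induces isomorphisms on $\tau_{\leq m}$. By hypercompleteness (separatedness via \cref{comple}(1)), $F\simeq 0$, so $\theta$ is an equivalence. Hence the $t$-structure is left complete, and \cref{comple}(2) gives that $\mcc$ is complete.

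The main obstacle is Step 2: making precise that $\mathrm{AB4}^*$ (closure of $\mcc$ under products in $\opsp(\mcc)$) gives exactness of countable products on homotopy objects, so that the Milnor sequence argument applies and the stable towers contribute no $\lim^1$. This needs care about whether homotopy objects and truncations commute with products in $\opsp(\mcc)$. We would first prove that $\tau_{\geq k}$ and $\tau_{\leq k}$ commute with products, using closure of both halves of the $t$-structure under products.
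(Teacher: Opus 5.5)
Your proposal is correct and follows the paper's route. The paper also reduces via \cref{comple} to left completeness of $\opsp(\mcc)$, and then cites \cite[Proposition 1.2.1.19]{ha}: a left separated $t$-structure whose connective part is closed under countable products is left complete. Your Steps 1--3 are a direct Milnor-sequence proof of that proposition.

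The only step to make explicit is the deduction of $\tau_{\leq m}\lim_n Y_n\simeq Y_m$ from a $\pi_*$-isomorphism between $m$-truncated objects. This uses right separatedness of $\opsp(\mcc)$, which holds because that $t$-structure is right complete. Alternatively, you can avoid it by showing directly that the fiber of $\lim_n Y_n\to Y_m$ is $(m+1)$-connective.
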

	\begin{proof}
		It follows directly by combining \cref{comple} and \cite[Proposition 1.2.1.19]{ha}.
	\end{proof}

	\subsection{Prestable short exact sequences}
	Unlike the stable setting, short exact sequences of prestable \infcats present subtle asymmetries. In this subsection, we analyze how short exact sequences behave when subjected to prestable quotients.
	\begin{lem}Let $F:\mcc\to \mcd$ be a right exact functor between prestable \infcats. Then $F(f)$ is an equivalence if and only if $F(\cofib(f))=0$.
	\end{lem}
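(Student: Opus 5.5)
Since $F$ is right exact, it preserves the zero object and cofiber sequences. So applying $F$ to the cofiber sequence $x \xrightarrow{f} y \to \cofib(f)$ in $\mcc$ gives a cofiber sequence
\[
F(x)\xrightarrow{F(f)} F(y)\to F(\cofib(f))
\]
in $\mcd$. This canonically identifies $F(\cofib(f))\simeq \cofib(F(f))$, and reduces the lemma to a statement about a single morphism $g=F(f)$ in the prestable \infcat $\mcd$: we must show that $g$ is an equivalence if and only if $\cofib(g)\simeq 0$.

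The forward direction is immediate. The cofiber of an equivalence is the cofiber of an identity map, which is the zero object. Equivalently, $F$ sends the equivalence $f$ to an equivalence, and then $\cofib(F(f))\simeq 0$.

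For the converse, use \cref{prestcriterion} to choose a fully faithful embedding $\rho\colon\mcd\hookrightarrow\sE$ into a stable \infcat $\sE$ whose essential image is closed under finite colimits. Because the image is closed under finite colimits, $\rho$ preserves cofibers, so $\cofib(\rho(g))\simeq\rho(\cofib(g))\simeq 0$. In the stable \infcat $\sE$, a map with vanishing cofiber is an equivalence, since its fiber $\Omega\cofib$ also vanishes. Thus $\rho(g)$ is an equivalence, and since $\rho$ is fully faithful, hence conservative, $g$ is an equivalence.

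The only point needing care is that right exactness of $F$ gives $F(0)\simeq 0$ and preserves pushouts, so that cofiber sequences go to cofiber sequences; this is essentially the definition. One could avoid the stable embedding by using the prestable axiom that cofiber sequences are fiber sequences, but the embedding argument is cleaner.
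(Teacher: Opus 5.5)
Your proof is correct and follows the paper's route. The paper simply invokes the fact that a map in a prestable \infcat is an equivalence if and only if its cofiber vanishes, applied to $F(f)$ using $F(\cofib(f))\simeq\cofib(F(f))$; you additionally justify that fact via the stable embedding of \cref{prestcriterion}, which is a complete and valid way to do it.
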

	\begin{proof}
		This follows from the fact that a map in a prestable \infcat is an equivalence if and only if its cofiber is zero.
	\end{proof}
	\begin{prop}
		The inclusion $\prlpst\to \prl$ creates fiber sequences.
	\end{prop}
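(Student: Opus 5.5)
Here ``creates fiber sequences'' means the following. Let $F\colon \mcd\to\mce$ be a colimit-preserving functor between presentable prestable \infcats, and let $\mcc=\mcd\times_{\mce}0$ be its fiber computed in $\prl$. We must show that $\mcc$ is again prestable. Since $\prlpst\subset\prl$ is a full subcategory, the pullback square in $\prl$ will then automatically be a pullback in $\prlpst$, and the claim follows.

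First we identify the fiber. Limits in $\prl$ are computed on underlying \infcats, and $0\to\mce$ is the inclusion of the zero object. So $\mcc$ is the full subcategory of $\mcd$ spanned by the objects $x$ with $F(x)\simeq 0$. This subcategory is closed under small colimits in $\mcd$, because $F$ preserves them. In particular $\mcc$ contains the zero object of $\mcd$ and is closed under cofibers.

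Next we verify prestability via \cref{prestcriterion}, using the composite fully faithful embedding $\mcc\hookrightarrow\mcd\hookrightarrow\opsp(\mcd)$.
\begin{itemize}
\item \emph{Finite colimits.} $\mcd$ is closed under finite colimits in $\opsp(\mcd)$, and $\mcc$ is closed under finite colimits in $\mcd$. Hence the essential image of $\mcc$ is closed under finite colimits in $\opsp(\mcd)$.
\item \emph{Extensions, setup.} Let $x\to e\to z$ be a cofiber sequence in $\opsp(\mcd)$ with $x,z\in\mcc$. Since $\mcd$ is closed under extensions in $\opsp(\mcd)$, we get $e\in\mcd$. Since $\mcd$ is closed under cofibers in $\opsp(\mcd)$, the sequence $x\to e\to z$ is also a cofiber sequence in $\mcd$.
\item \emph{Extensions, conclusion.} Because $F$ is right exact, $F(x)\to F(e)\to F(z)$ is a cofiber sequence in $\mce$. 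Here $F(x)\simeq 0$, so this says the map $0\to F(e)$ has cofiber $F(z)\simeq 0$. By the preceding lemma (a map in a prestable \infcat is an equivalence iff its cofiber vanishes), $0\to F(e)$ is an equivalence. Thus $F(e)\simeq 0$, i.e.\ $e\in\mcc$.
\end{itemize}
By \cref{prestcriterion}, $\mcc$ is prestable. It is presentable as a limit in $\prl$, so $\mcc\in\prlpst$.

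The only step that needs care is the extension-closure. The point is to apply right exactness of $F$ to a cofiber sequence inside $\mcd$ rather than in $\opsp(\mcd)$, which is justified because $\mcd$ is closed under cofibers in its stabilization. Everything else is formal.
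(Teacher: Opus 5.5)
Your argument is correct and matches the paper's proof: the fiber in $\prl$ is the full subcategory $\ker(F)\subset\mcd$, which is closed under small colimits and extensions, so it is prestable by \cref{prestcriterion}. You spell out the extension-closure step that the paper leaves implicit; there the appeal to the lemma is unnecessary, since the cofiber of $0\to F(e)$ is $F(e)$ itself.
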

	\begin{proof}
		Let $F:\mcc\to\mcd \in \prlpst$.  Since the inclusion $\ker(F)\hookrightarrow\mcc$ is closed under small colimits and extensions, $\ker(F)$ is prestable too by \cref{prestcriterion}.
	\end{proof}
	
	However, $\prlpst\to \prl$ does not create cofiber sequences in general. That is what we will discuss in the following.
	
	Lurie's \cite[Definition C.2.3.3]{sag} only considers localizing subcategories of a Grothendieck prestable \infcat. But it actually makes sense for any presentable prestable \infcat, as follows.
	\begin{de}
		Let $\mathcal{C}$ be a  presentable prestable \infcat. We will say that a full subcategory $\mathcal{C}_0 \subset \mathcal{C}$ is localizing if it satisfies the following conditions:
		\enu{
			
			\item  The \infcat $\mathcal{C}_0$ is accessible and closed under small colimits in $\mathcal{C}$.
			\item Given a cofiber sequence $C^{\prime} \rightarrow C \rightarrow C^{\prime \prime}$ in $\mathcal{C}$, if any two of the objects $C, C^{\prime}, C^{\prime \prime}$ belong to $\mathcal{C}_0$, then so does the third.
			\item Given a cofiber sequence $C^{\prime} \rightarrow C \rightarrow C^{\prime \prime}$ in $\mathcal{C}$ where $C \in \mathcal{C}_0$ and $C^{\prime \prime} \in \mathcal{C}^{\heartsuit}$, we have $C^{\prime} \in \mathcal{C}_0$.
		}
	\end{de}
	\begin{rem}\,
		\enu{

			\item Any localizing subcategory is prestable too.
			\item The condition (3) in the definition means the full subcategory $\mcc_0$ is closed under subobjects.
		}
	\end{rem}
	\begin{prop}\label{quotientlocalizing}
		Let $\mathcal{C}$ be a presentable prestable \infcat and let $\mathcal{C}_0 \subset \mathcal{C}$ be a full subcategory. The following conditions are equivalent:
		\enu{
			\item The full subcategory $\mathcal{C}_0 \subset \mathcal{C}$ is localizing.
			\item There exists an accessible left exact presentable localization $L: \mathcal{C} \rightarrow \mathcal{D}$ such that $\mathcal{C}_0$ is the full subcategory of $\mathcal{C}$ spanned by the $L$-acyclic objects. (Note \mcd is automatically prestable by \cite[Proposition C.2.3.1]{sag}).
		}
		
	\end{prop}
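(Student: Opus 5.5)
The plan is to reduce to Lurie's argument in the Grothendieck case \cite[Proposition C.2.3.4]{sag}, checking that AB5 is never used in an essential way.

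\textbf{(2)$\Rightarrow$(1).} Let $L\colon \mcc\to\mcd$ be an accessible left exact localization with fully faithful right adjoint $i$, and set $\mcc_0=\ker L$. Since $L$ preserves colimits and $\mcc_0=L^{-1}(0)$, the subcategory $\mcc_0$ is closed under small colimits. It is accessible because it is the fiber of a functor in $\prl$. For condition (2), take a cofiber sequence $C'\to C\to C''$. Since $L$ is exact, $LC'\to LC\to LC''$ is a cofiber sequence in the prestable category $\mcd$, so two terms vanish only if the third does. For the fiber case, note that a map is an equivalence if its cofiber vanishes, and $C'=\fib(C\to C'')$ because cofiber sequences are fiber sequences. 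For condition (3), a left exact $L$ preserves the fiber $C'\to C\to C''$, so $LC'\simeq\fib(0\to LC'')=\Omega LC''$. Since $L$ is left exact it preserves $0$-truncated objects, so $LC''$ is discrete and $\Omega LC''=0$.

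\textbf{(1)$\Rightarrow$(2).} This direction has three steps.

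\emph{Step 1: construct the localization.} The subcategory $\mcc_0$ is presentable and closed under colimits, so the inclusion has a right adjoint $G$. Let $W$ be the class of maps $f$ with $\cofib(f)\in\mcc_0$. I expect $W$ to be strongly saturated and of small generation. Define $\mcd=W^{-1}\mcc$, realized as the full subcategory of $W$-local objects, namely those $Y$ with $\Map(C_0,Y)=*$ for every $C_0\in\mcc_0$. Equivalently, one can take $\mcd$ to be the Verdier-type quotient in $\prl$, the cofiber of $\mcc_0\hookrightarrow\mcc$. The localization $L$ is then accessible.

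\emph{Step 2: the acyclics are exactly $\mcc_0$.} This should come from the 2-out-of-3 property together with closure under colimits.

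\emph{Step 3: $L$ is left exact.} This is the main obstacle. Following Lurie, I would first show that $W$ is stable under pullback. Let $f\colon X\to Y$ have $\cofib(f)\in\mcc_0$, and let $g\colon Y'\to Y$ be arbitrary. Compare $\cofib(f')$, where $f'$ is the pullback of $f$ along $g$, with $\cofib(f)$ by working in $\opsp(\mcc)$. There the fiber $F$ of $\cofib(f')\to\cofib(f)$ is controlled by $\pi_0$-kernels. Concretely, in $\opsp(\mcc)$ we have $\cofib(f')\simeq\tau_{\ge0}$ of the stable cofiber, and the difference between the two is a discrete object of the form $\ker(\pi_0\ldots)$. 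Axiom (3), closure under subobjects of discrete quotients, together with the 2-out-of-3 property, then gives $\cofib(f')\in\mcc_0$. This is the one place where Lurie uses the heart, and I expect his argument to go through with only finite limits and cofiber sequences in $\mcc$, so AB5 should not be needed.

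Once $W$ is stable under pullback, the standard criterion for left exactness of localizations applies: a localization whose local equivalences are stable under pullback preserves finite limits \cite[Proposition 6.2.1.1 analogue]{htt}. For this I would use that finite limits in a prestable category with finite limits are generated by pullbacks over the terminal (zero) object. Finally, \cite[Proposition C.2.3.1]{sag} gives that $\mcd$ is prestable.
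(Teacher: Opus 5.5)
Your strategy matches the paper's, whose proof just says that Lurie's argument for the Grothendieck case goes through without AB5, and your (2)$\Rightarrow$(1) is fine. The genuine error is in Step 1 of (1)$\Rightarrow$(2). You make two claims there, and both are false:
\begin{itemize}
\item the $W$-local objects are the right orthogonal $\{Y : \mathrm{Map}(C_0,Y)\simeq *\ \text{for all } C_0\in\mathcal{C}_0\}$;
\item one may equivalently take the cofiber of $\mathcal{C}_0\hookrightarrow\mathcal{C}$ in $\mathcal{P}\mathrm{r}^{\mathrm{L}}$.
\end{itemize}
The paper itself warns about exactly this, in its remark on $\mathcal{D}(\mathbf{Z})^{p\text{-nil}}_{\geq 0}\subset\mathcal{D}(\mathbf{Z})_{\geq 0}$.

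Here is why. If $Y$ is $W$-local and $Y\to E\to C_0$ is an extension with $C_0\in\mathcal{C}_0$, then $Y\to E$ lies in $W$, so the extension splits. The correct condition is therefore $\mathrm{Map}(C_0,\Sigma Y)\simeq *$, not $\mathrm{Map}(C_0,Y)\simeq *$. Concretely, take $\mathcal{C}=\mathcal{D}(\mathbf{Z})_{\geq 0}$ and $\mathcal{C}_0$ the $p$-power torsion objects. Then $\mathbf{Z}$ is right orthogonal to $\mathcal{C}_0$, yet $p\colon\mathbf{Z}\to\mathbf{Z}$ lies in $W$ and is not inverted. The localization onto the right orthogonal inverts only the saturation of $\{0\to C_0\}$, which is strictly smaller than $W$. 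It is also not left exact: it sends the fiber sequence $\mathbf{Z}\xrightarrow{p}\mathbf{Z}\to\mathbf{Z}/p$ to $\mathbf{Z}\xrightarrow{p}\mathbf{Z}\to 0$. So with $\mathcal{D}$ as you describe it, Steps 2 and 3 would prove something false. Your left-exactness criterion applies only once you know that the $L$-equivalences are exactly $W$.

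This is also why small generation of $W$, which you only ``expect'', needs a real argument: the obvious generators $0\to C_0$ give the wrong localization. One argument:
\begin{itemize}
\item $W\subseteq\mathcal{C}^{\Delta^1}$ is the pullback of $\mathcal{C}_0\hookrightarrow\mathcal{C}$ along the colimit-preserving functor $\mathrm{cofib}$. So it is presentable, closed under colimits, and generated under colimits by a small set $S$.
\item $W$ is strongly saturated. The 2-out-of-3 property comes from the cofiber sequence $\mathrm{cofib}(f)\to\mathrm{cofib}(gf)\to\mathrm{cofib}(g)$ together with axiom (2).
\item Strongly saturated classes are closed under colimits in $\mathcal{C}^{\Delta^1}$, so the saturation of $S$ is $W$.
\end{itemize}
Hence the $L$-equivalences for $L=W^{-1}$ are exactly $W$. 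Step 2 is then immediate, since $0\to C$ lies in $W$ iff $C\in\mathcal{C}_0$.

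In Step 3 your mechanism is right but the details are off. Let $P$ be the pullback in $\mathrm{Sp}(\mathcal{C})$, so that $X'=\tau_{\geq0}P$. The stable cofiber of $P\to Y'$ is already $\mathrm{cofib}(f)$, so $\mathrm{cofib}(f')$ is not $\tau_{\geq0}$ of a stable cofiber. Instead, $\pi_{-1}P=K:=\mathrm{coker}(\pi_0Y'\to\pi_0\mathrm{cofib}(f))$. The octahedral axiom then gives a cofiber sequence $\mathrm{cofib}(f')\to\mathrm{cofib}(f)\to K$ in $\mathcal{C}$ with $K\in\mathcal{C}^{\heartsuit}$; note $K$ is a cokernel, not a kernel. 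Axiom (3) alone now gives $\mathrm{cofib}(f')\in\mathcal{C}_0$; the 2-out-of-3 property is not needed here.
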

	\begin{proof}
		This follows from the same argument as the proof of \cite[Proposition C.2.3.8]{sag}; the Grothendieck assumption is not used and can be removed.
	\end{proof}
	We also obtain the following non-obvious statement.
	\begin{cor}
		For any localizing subcategory $\mcc_0$ of a presentable prestable \infcat \mcc, the inclusion $\mcc_0\to\mcc$ is left exact.
	\end{cor}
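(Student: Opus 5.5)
By \cref{quotientlocalizing}, $\mcc_0$ is the full subcategory of $L$-acyclic objects for some accessible left exact localization $L\colon \mcc\to\mcd$. Write $\iota\colon \mcc_0\hookrightarrow\mcc$ for the inclusion. Since $\mcc_0$ is closed under colimits, $\iota$ preserves the zero object. It therefore suffices to show that $\iota$ preserves pullbacks. More precisely: given a cospan $X\to Z\leftarrow Y$ in $\mcc_0$, I will show that its pullback $P=X\times_Z Y$ computed in $\mcc$ already lies in $\mcc_0$. Once this is known, $P$ is also the pullback in $\mcc_0$, because $\mcc_0\subset\mcc$ is full.

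The argument uses only that $L$ is left exact and that $\mcc_0=\ker(L)$. Since $\mcc$ is presentable prestable, it has finite limits, so $P$ exists in $\mcc$. Left exactness of $L$ gives
\[
L(P)\simeq L(X)\times_{L(Z)}L(Y)\simeq 0\times_0 0\simeq 0 .
\]
Hence $P$ is $L$-acyclic, that is, $P\in\mcc_0$. Likewise a terminal object of $\mcc_0$ is the zero object of $\mcc$, which lies in $\mcc_0$. So $\iota$ preserves finite limits.

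There is one point to check. The statement "$L$ is left exact" in \cref{quotientlocalizing} should be read as "$L$ preserves finite limits computed in $\mcc$ and $\mcd$". The real content sits in the proof of \cref{quotientlocalizing}, adapted from \cite[Proposition C.2.3.8]{sag}: there one builds a left exact $L$ out of conditions (2) and (3) of the definition of a localizing subcategory. I take that result as given. It is worth noting where the argument would break if $L$ were merely a colimit-preserving localization with kernel $\mcc_0$. Then the fiber of a map in $\mcc_0$, computed in $\mcc$, need not be acyclic. Condition (3) on closure under subobjects is exactly what rules this out, and that is why the corollary is described as non-obvious. With left exactness of $L$ assumed, the step above has no remaining obstacle.
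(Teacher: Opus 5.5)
Your proof is correct and matches the paper's intended argument: the paper states this corollary as an immediate consequence of \cref{quotientlocalizing}, and since $\mcc_0=\ker(L)$ for a left exact localization $L$, finite limits in $\mcc$ of diagrams in $\mcc_0$ are sent by $L$ to finite limits of zero objects, so $\mcc_0$ is closed under finite limits in $\mcc$. Your closing remarks about what would fail without left exactness are reasonable but not needed for the proof.
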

	\begin{prop}\label{limlex}
		Let $\pr_{\op{pst}}^{\mathrm{L},\op{lex}}\hookrightarrow\prlpst$ denote the (non-full) subcategory of presentable prestable \infcats with left exact left adjoints as morphisms. Then $\pr_{\op{pst}}^{\mathrm{L},\op{lex}}$ admits small limits, which are preserved by the inclusion $\pr_{\op{pst}}^{\mathrm{L},\op{lex}}\hookrightarrow\prl$.
	\end{prop}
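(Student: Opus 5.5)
We will compute the limit in $\prl$ and then check that it lies in the subcategory $\pr_{\op{pst}}^{\mathrm{L},\op{lex}}$ and has the right universal property there. Let $\mcc_\bullet\colon I\to \pr_{\op{pst}}^{\mathrm{L},\op{lex}}$ be a small diagram, and let $\mcc=\lim_i \mcc_i$ be its limit in $\prl$. Recall that this limit is computed in $\hatcat$, equivalently as the limit in $\pr^{\mathrm R}$ of the right adjoints. Each transition functor $f_\alpha\colon \mcc_i\to\mcc_j$ preserves both small colimits and finite limits. Hence $\mcc$, computed in $\hatcat$, has finite limits and small colimits computed pointwise, and each projection $p_i\colon\mcc\to\mcc_i$ preserves them.

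The first step is to show that $\mcc$ is prestable. An object of $\mcc$ is a compatible family $(x_i)$. Since finite limits and colimits are detected and computed pointwise, the conditions of \cref{defpst} can be checked pointwise. There is a zero object, every cofiber sequence is a fiber sequence, and every map $X\to\Sigma Y$ has a fiber whose cofiber recovers $\Sigma Y$, because both cofibers and fibers are formed levelwise and each $\mcc_i$ satisfies these conditions. Pointwise computation of fibers uses left exactness of the transition functors in an essential way; this is exactly where the hypothesis on morphisms enters. Thus $\mcc$ is presentable and prestable, and each $p_i$ is a left exact left adjoint.

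The second step is the universal property. Let $\mcd$ be presentable prestable and let $g_i\colon\mcd\to\mcc_i$ be a compatible cone of left exact left adjoints. The induced functor $g\colon \mcd\to\mcc$ in $\prl$ exists by the universal property in $\prl$. Since finite limits in $\mcc$ are computed pointwise and the family $(p_i)$ is jointly conservative, $g$ is left exact. Hence $g$ lies in $\pr_{\op{pst}}^{\mathrm{L},\op{lex}}$. The mapping spaces $\Map^{\op{lex}}(\mcd,\mcc)$ are the union of components of $\Map_{\prl}(\mcd,\mcc)$ consisting of left exact functors. Left exactness of $g$ is detected after composing with all the $p_i$, so
\[\Map^{\op{lex}}(\mcd,\mcc)\simeq\lim_i\Map^{\op{lex}}(\mcd,\mcc_i).\]

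The main obstacle is making precise that finite limits in $\lim_i\mcc_i$ are computed pointwise, with the limit taken in $\hatcat$. I would invoke the standard fact (HTT 5.5.3.13 and its analogue for limits) that a limit of categories admitting $K$-indexed limits, along functors preserving them, admits $K$-indexed limits detected by the projections. I would apply this with $K$ finite.
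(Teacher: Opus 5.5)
Your proposal is correct and follows essentially the same route as the paper. Both take the limit in $\prl$ (computed in $\hatcat$) and use that, because the transition functors are left exact left adjoints, the projections preserve and jointly detect finite limits and small colimits; this gives prestability of the limit via \cref{defpst} and left exactness of the functor induced by any cone. You simply spell out the axiom-by-axiom check and the mapping-space comparison that the paper compresses into its appeal to joint conservativity.
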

	\begin{proof}
		Let $\left\{\mathcal{C}_\alpha\right\}$ be a small diagram of prestable \infcats where the transition maps preserve small colimits and finite limits, and let $\mathcal{C}$ denote a limit of $\left\{\mathcal{C}_\alpha\right\}$ in \prl. It follows that for any presentable \infcat $\mathcal{D}$, a functor $f: \mathcal{D} \rightarrow \mathcal{C}$ preserves small colimits and finite limits if and only if each of the maps $\mathcal{D} \rightarrow \mathcal{C} \rightarrow \mathcal{C}_\alpha$ preserves small colimits and finite limits. 
		
		Now it suffices to show $\mcc$ is prestable. It suffices to observe that \cref{defpst} only involves finite colimits and limits. However, $\{\mcc\to\mcc_\alpha\}$ is jointly conservative and preserves all finite colimits and finite limits, so $\mcc$ is prestable too.
	\end{proof}
	\begin{prop}
		The stabilization functor $\pr_{\op{pst}}^{\mathrm{L},\op{lex}}\xrightarrow{\opsp(-)}\prlst$ preserves small limits.
	\end{prop}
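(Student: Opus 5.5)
The plan is to describe the stabilization of a limit explicitly as a limit of stabilizations. Let $\{\mcc_\alpha\}$ be a small diagram in $\pr_{\op{pst}}^{\mathrm{L},\op{lex}}$ with limit $\mcc$. By \cref{limlex}, $\mcc$ is computed in $\prl$, hence as the limit of underlying \infcats along the colimit-preserving, left exact transition functors. I will use the presentation $\opsp(\mcd)\simeq \lim(\cdots\xrightarrow{\Omega}\mcd\xrightarrow{\Omega}\mcd)$, the limit of a tower in $\widehat{\Cat}_\infty$, valid for any presentable $\mcd$ with finite limits.

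First I check that a left exact colimit-preserving functor $f\colon\mcc_\alpha\to\mcc_\beta$ commutes with $\Omega$. Consequently $\opsp(f)$ is computed levelwise on the $\Omega$-tower, and this levelwise map agrees with $\opsp\otimes f$, because $\Sigma^\infty$ intertwines $f$ with the induced map on towers. This gives a functorial identification of $\opsp(-)$, restricted to $\pr_{\op{pst}}^{\mathrm{L},\op{lex}}$, with the composite
\[
\mcd\mapsto \lim\bigl(\cdots\xrightarrow{\Omega}\mcd\xrightarrow{\Omega}\mcd\bigr)
\]
computed in $\widehat{\Cat}_\infty$, with transition maps the levelwise $f$.

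Next, limits in $\prl$ (and in $\prlst$) are computed in $\widehat{\Cat}_\infty$. So
\[
\opsp(\mcc)\simeq \lim_n\lim_\alpha \mcc_\alpha\simeq \lim_\alpha\lim_n\mcc_\alpha\simeq \lim_\alpha\opsp(\mcc_\alpha),
\]
by commuting limits. This step requires the $\Omega$ on $\mcc$ to be computed componentwise, which holds because the projections $\mcc\to\mcc_\alpha$ are left exact. I then need the resulting equivalence to be the canonical comparison map. It should also be a limit in $\prlst$; this is automatic since $\prlst\subset\prl$ is closed under limits.

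I expect the main obstacle to be coherence: making the identification of $\opsp(-)$ with the $\Omega$-tower functorial in left exact left adjoints, so that the interchange is really the comparison map. To handle it, I would note that for a left exact left adjoint $f$ the square with $\Omega$ commutes on the nose as right adjoints, using that $f$ preserves finite limits. Then $\opsp(f)$ equals $f$ applied termwise, and the statement follows from the universal property that $\opsp(\mcd)$ is the \infcat of reduced excisive functors $\mcs^{\op{fin}}_*\to\mcd$. That description is manifestly functorial in left exact functors and commutes with limits in $\mcd$, since $\Fun(\mcs^{\op{fin}}_*,-)$ preserves limits and the excisive condition is checked componentwise.
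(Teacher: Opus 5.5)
Your proof is correct. Once you settle coherence with the reduced excisive model $\Fun^{\op{exc}}_*(\mcs^{\op{fin}}_*,-)$, it is exactly the paper's argument, and the $\Omega$-tower interchange becomes redundant. The one point you make explicit that the paper leaves implicit is also right: for a left exact left adjoint $f$, postcomposition $f\circ-$ is left adjoint to $f^R\circ-\simeq\opsp(f^R)$ on excisive functors, so it agrees with $\opsp\otimes f$.
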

	\begin{proof}
		One can identify $\opsp(-)$ with $\funct_*^{\op{exc}}(\mcs_*^{\op{fin}}, -)$, see \cite[Definition 1.4.2.8]{ha}. But $\funct_*^{\op{exc}}(\mcs_*^{\op{fin}}, -)$ obviously preserves small limits in $\pr_{\op{pst}}^{\mathrm{L},\op{lex}}$.
	\end{proof}
	\begin{prop}\,
		\enu{
			\item Let $i:\mcc\to \mcd \in \prl_{\op{pst}}$ be a fully faithful functor between presentable prestable \infcats such that \mcc is a localizing subcategory. Then $\mcd[W^{-1}]$ is presentable prestable, where $W=\{f|\cofib(f)\in \mcc\}$.
			Furthermore, $$\mcc\to\mcd\to\mcd[W^{-1}]$$ is a bifiber sequence in $\prl_{\op{pst}}$.
		
			\item 	Let $L:\mcd\to\mce \in \prl_{\op{pst}}$ be a localization functor between presentable prestable \infcats. Then the kernel $\ker(L)$ in $\prl$ is prestable too and $\mce\simeq \mcd[W^{-1}]$, where $W=\{f|\cofib(f)\in \ker(L)\}$. Therefore $$\ker(L)\to\mcd\to\mce$$ is a bifiber sequence in $\prl_{\op{pst}}$.
			
		}
	\end{prop}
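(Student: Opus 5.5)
For part (1), I will construct $\mcd[W^{-1}]$ directly as a presentable localization of $\mcd$ and then show that it is prestable. By \cref{quotientlocalizing}, since $\mcc\subset\mcd$ is localizing, there is an accessible left exact localization $L:\mcd\to\mce$ whose $L$-acyclic objects are exactly $\mcc$. Moreover, $\mce$ is prestable by \cite[Proposition C.2.3.1]{sag}. I will identify $\mce$ with $\mcd[W^{-1}]$. A map $f$ is inverted by $L$ if and only if $L(\cofib f)\simeq 0$, because $L$ is right exact between prestable categories (this is the lemma preceding these results). That condition says $\cofib f\in\mcc$, i.e.\ $f\in W$. Since $L$ is a Bousfield localization, $\mce$ is the localization of $\mcd$ at exactly the class of $L$-equivalences, so $\mce\simeq\mcd[W^{-1}]$. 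Presentability of $\mce$ is automatic because the localization is accessible.

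Next I check that $\mcc\to\mcd\to\mcd[W^{-1}]$ is a bifiber sequence in $\prlpst$. The fiber part holds because the kernel of $L$ in $\prl$ is the full subcategory of $L$-acyclics, which is $\mcc$, and $\prlpst\to\prl$ creates fiber sequences. For the cofiber part, let $G:\mcd\to\mcf$ be a colimit-preserving functor to a presentable prestable $\mcf$ with $G|_{\mcc}\simeq 0$. For $f\in W$ we get $G(\cofib f)=0$, so $G(f)$ is an equivalence, again by the lemma. Hence $G$ factors uniquely through $\mcd[W^{-1}]$. The factorization is colimit-preserving because $\Fun^L(\mcd[W^{-1}],\mcf)\subset\Fun^L(\mcd,\mcf)$ is the full subcategory of functors inverting $W$ (by \cite[5.5.4.20]{htt}). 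So the cofiber in $\prl$, and hence in $\prlpst$, is $\mcd[W^{-1}]$.

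For part (2), $L:\mcd\to\mce$ is a localization with fully faithful right adjoint $L^R$, so $\mce$ is identified with the localization of $\mcd$ at the $L$-equivalences. As before, these are precisely the maps $f$ with $L(\cofib f)=0$, i.e.\ $W$. This gives $\mce\simeq\mcd[W^{-1}]$. The kernel $\ker(L)$ is closed under small colimits and extensions in $\mcd$, since $L$ is right exact and $\mce$ is prestable, so an extension of zeros is zero. It is therefore prestable by \cref{prestcriterion}, and it is presentable as a kernel in $\prl$. The fiber part of the bifiber sequence is immediate. The cofiber part follows by the same factorization argument as in (1): a colimit-preserving $G$ killing $\ker(L)$ inverts every $f\in W$.

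The main obstacle is the identification in (1) of the localizing subcategory with the acyclics of a localization whose local equivalences are exactly $W$. This step rests on \cref{quotientlocalizing} and the cofiber criterion. It is where the localizing axioms are used: they are needed to ensure that $W$ is strongly saturated and generated by a small set, which is what makes $\mcd[W^{-1}]$ presentable. Note that (2) does not require $L$ to be left exact, since presentability of $\mcd[W^{-1}]$ is already supplied by $\mce$.
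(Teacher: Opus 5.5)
This is essentially the paper's proof. Part (1) comes from \cref{quotientlocalizing} together with the fact that a right exact functor between prestable categories inverts $f$ if and only if it kills $\cofib(f)$. Part (2) comes from \cref{prestcriterion} plus the same fact. However, one claim in your (1) is false and should be deleted: the cofiber of $\mcc\to\mcd$ in $\prl$ is in general \emph{not} $\mcd[W^{-1}]$. So you cannot obtain the $\prlpst$-cofiber ``from'' the $\prl$-cofiber.

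The cofiber criterion you invoke needs the target $\mcf$ to be prestable. For an arbitrary presentable $\mcf$, a colimit-preserving $G$ with $G|_{\mcc}\simeq 0$ need not invert $W$. The $\prl$-cofiber is the reflective subcategory $\ker(i^R)\subset\mcd$.

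Here is a concrete counterexample. Take $\mcd=\mcd(\mathbb{Z})_{\geq0}$ and let $\mcc$ be its $p$-nilpotent objects. Then $i^R(x)\simeq\tau_{\geq0}\fib(x\to x[1/p])$, so both $\mathbb{Z}$ and $\mathbb{Z}[1/p]$ lie in $\ker(i^R)$. The map $\mathbb{Z}\to\mathbb{Z}[1/p]$ has cofiber $\mathbb{Z}/p^\infty\in\mcc$, so it lies in $W$. Yet it is not inverted by the reflection $\mcd\to\ker(i^R)$, which is the identity on $\ker(i^R)$. This is the phenomenon noted in the remark after this proposition.

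Your factorization argument only ever tests against prestable $\mcf$. So it already proves the universal property of the cofiber in $\prlpst$ directly, which is exactly what the statement claims. Phrase the conclusion that way in both (1) and (2).
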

	\begin{proof}
		(1) This follows from \cref{quotientlocalizing}.	
		\\
		(2) Since the inclusion $\ker(L)\to\mcd$ is closed under small colimits and extensions, $\mcc=\ker(L)$ is prestable too by \cref{prestcriterion}. Applying the fact $L(f)$ is an equivalence if and only if $\cofib(f)\in \mcc$ again, we are done.
	\end{proof}
	\begin{rem}
	Let $i:\mcc\hookrightarrow \mcd \in \prl_{\op{pst}}$ be a fully faithful functor between presentable prestable \infcats such that \mcc is a localizing subcategory.	We have a natural comparison $$\mcd/\mcc\to \mcd[W^{-1}],$$ where $\mcd/\mcc$ denotes the cofiber in $\prl$. But beware that this is not necessarily an equivalence. 
		
			For example, consider the sequence $$\mcd(\mathbb{Z})_{\geq 0}^{p\text{-}\op{nil}}\xhookrightarrow{i} \mcd(\mathbb{Z})_{\geq 0} \xrightarrow{L} \mcd(\mathbb{Z}[1/p])_{\geq 0}.$$
			We have $$\mcd(\mathbb{Z}[1/p])_{\geq 0}\simeq\mcd(\mathbb{Z})_{\geq 0}[W^{-1}]\neq \mcd(\mathbb{Z})_{\geq 0}/ \mcd(\mathbb{Z})_{\geq 0}^{p\text{-}\op{nil}}.$$
			Indeed, one can see that $i^R(\mathbb{Z}[1/p]/\mathbb{Z})=0$, but $\mathbb{Z}[1/p]/\mathbb{Z}$ does not lie in $\mcd(\mathbb{Z}[1/p])_{\geq 0}.$
	\end{rem}

	However, in the following case, we do have $\mcd/\mcc\simeq \mcd[W^{-1}].$
	\begin{prop}
			 Let $i:\mcc\to \mcd \in \prl_{\op{pst}}$ be a fully faithful internal left adjoint functor between presentable prestable \infcats. 
		Then the cofiber $\mce:=\coker(i)$ in \prl   is prestable too and $\mce\simeq \mcd[W^{-1}]$. Furthermore, we have $\mcc\simeq \ker(L)$, where $L:\mcd\to \mce$ denotes the projection and $W=\{f|\cofib(f)\in \mcc\}$. Hence $$\mcc\to\mcd\to\mce$$ is a bifiber sequence in $\prl_{\op{pst}}$.
	\end{prop}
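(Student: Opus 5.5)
Since $i$ is an internal left adjoint, its right adjoint $i^R\colon \mcd\to\mcc$ preserves small colimits; hence $i^R$ is itself a morphism of $\prl$. The plan is to present the cofiber $\mce=\coker(i)$ in $\prl$ as a kernel. Concretely, I would identify it with $\ker(i^R)\subset\mcd$, the full subcategory of objects $d$ with $i^Rd\simeq 0$.

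First, the cofiber in $\prl$ is computed as a limit in $\prl^R$ of right adjoints. So $\coker(i)$ is the fiber of $i^R\colon \mcd\to\mcc$ over $0$, taken in $\widehat{\Cat}_\infty$. This is exactly the full subcategory $\ker(i^R)\subset\mcd$, and the projection $L\colon \mcd\to\mce$ is left adjoint to the inclusion $j\colon\ker(i^R)\hookrightarrow\mcd$. Because $i^R$ preserves all colimits and finite limits, $\ker(i^R)$ is closed in $\mcd$ under small colimits and extensions. Here the extension statement uses that $i^R$, being a right adjoint between prestable categories, sends cofiber sequences that are fiber sequences to fiber sequences, and also preserves cofibers. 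By \cref{prestcriterion}, $\mce\simeq\ker(i^R)$ is therefore prestable. Moreover $j$ preserves colimits, so $L$ is a localization in $\prl_{\op{pst}}$ whose right adjoint is fully faithful.

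Next, I would identify $\ker(L)$ with $\mcc$. Since $i$ is fully faithful, $i^Ri\simeq\id$. For any $d$ there is a counit map $ii^Rd\to d$; let $c$ be its cofiber. Applying the colimit-preserving functor $i^R$ gives $i^Rc\simeq\cofib(i^Rd\xrightarrow{\sim}i^Rd)=0$, so $c\in\ker(i^R)$. I then claim $Ld\simeq c$. To see this, note that for $e\in\ker(i^R)$ we have $\Map(ii^Rd,e)=\Map(i^Rd,i^Re)=\ast$. Also $\Map(\Sigma ii^Rd,e)=\ast$, since $\Sigma$ commutes with $i$. The cofiber sequence, which is also a fiber sequence in the prestable $\mcd$ after mapping out, then yields $\Map(c,e)\simeq\Map(d,e)$.

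With this formula, $Ld=0$ if and only if $ii^Rd\to d$ is an equivalence, that is, if and only if $d$ lies in the essential image of $i$. Hence $\ker(L)\simeq\mcc$. Similarly, $L(f)$ is an equivalence if and only if $\cofib(f)\in\ker(L)=\mcc$: one uses that $L$ is right exact, together with the earlier lemma that a right exact functor inverts $f$ exactly when it kills $\cofib(f)$. Since $L$ is a localization, this gives $\mce\simeq\mcd[W^{-1}]$. Finally, $\mcc\to\mcd\to\mce$ is a cofiber sequence by construction and a fiber sequence by the kernel computation, so it is a bifiber sequence in $\prl_{\op{pst}}$.

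The main obstacle is the step identifying $L$ with the cofiber of the counit. Mapping a cofiber sequence into $e$ gives a fiber sequence of mapping anima, but this only covers the relevant components. One must check that $\Map(d,e)\to\Map(ii^Rd,e)=\ast$ being trivial forces $\Map(c,e)\to\Map(d,e)$ to be an equivalence. This holds because $\Map(-,e)$ sends cofiber sequences to fiber sequences of anima with basepoint, and $\Map(\Sigma ii^Rd,e)$ is contractible, which handles $\pi_0$. I would verify this carefully, or pass to $\opsp(\mcd)$ and use \cref{prestemb} to reduce to the stable case, where the argument is standard.
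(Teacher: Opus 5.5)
Your proof is correct, but it takes a different route from the paper's.

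The paper argues as follows:
\begin{itemize}
\item Prestability of $\mce\simeq\ker(i^R)$ comes from \cref{limlex}, viewing $\mce$ as a limit of left exact left adjoints.
\item For $\ker(L)\subset\mcc$ it stabilizes: $\opsp(\mcc)\to\opsp(\mcd)\to\opsp(\mce)$ is a Verdier sequence, so any $x\in\ker(L)$ is $\opsp(i)(x')$ for some $x'$.
\item Then $x'\simeq\opsp(i^R)(x)$ is connective because $\opsp(i^R)$ is $t$-exact.
\end{itemize}

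You instead stay inside $\mcd$:
\begin{itemize}
\item Prestability comes from closure of $\ker(i^R)$ under colimits and extensions. To apply \cref{prestcriterion} literally, regard everything inside the stable category $\opsp(\mcd)$, in which $\mcd$ is closed under finite colimits and extensions.
\item The kernel comes from the explicit formula $jL(d)\simeq\cofib(ii^Rd\to d)$.
\end{itemize}

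Your route is more elementary: it uses no stabilization or Verdier sequences. It also produces the recollement cofiber sequence $ii^R\to\id_{\mcd}\to jL$ directly in the prestable setting, which is the sequence the paper later invokes in \cref{basicses}. The paper's route is shorter once the stable theory is taken as given.

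The step you flag as the main obstacle is not actually an obstacle. Since $c\simeq d\sqcup_{ii^Rd}0$, one has $\Map(c,e)\simeq\Map(d,e)\times_{\Map(ii^Rd,e)}\Map(0,e)$. For $e\in\ker(i^R)$, both $\Map(ii^Rd,e)\simeq\Map(i^Rd,i^Re)$ and $\Map(0,e)$ are contractible. Hence $\Map(c,e)\simeq\Map(d,e)$ outright, with no $\pi_0$ bookkeeping, no suspension, and no passage to $\opsp(\mcd)$.
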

	\begin{proof}
		Since \mce can be identified with the fiber of $i^R$, it follows from \cref{limlex} that \mce is prestable. The inclusion $\mcc\subset \ker(L)$ is obvious and we need to show the converse inclusion. Let $x\in\ker(L)$. Since $$\opsp(\mcc)\xrightarrow{\opsp(i)} \opsp(\mcd)\xrightarrow{\opsp(L)}\opsp(\mce)$$ is a Verdier sequence of stable \infcats, we see there exists $x'\in \opsp(\mcc)$ such that $\opsp(i)(x')=x$. It suffices to show $x'$ is connective, but that follows from $t$-exactness of $\opsp(i^R)$ and $x'=\opsp(i^R)\circ \opsp(i)(x')=\opsp(i^R)(x)$. 
		
		Now note that $L:\mcd\to \mce$ is a localization; then by the prestability of \mce, we have $L(f)$ is an equivalence if and only if $\cofib(f)\in \mcc$.
	\end{proof}
	
	\section{Dualizable additive \infcats}\label{section2}
	Having established the necessary prestable foundations, we introduce the central objects of our study: dualizable additive \infcats. We provide several equivalent characterizations of these categories, notably relating them to Grothendieck's  axioms.
	\begin{de}
		Let $\prad^{\mathrm{L}}$ denote the symmetric monoidal \infcat of presentable additive \infcats with colimit-preserving morphisms and Lurie tensor product. We say a presentable additive \infcat $\mcc$ is \textbf{dualizable additive} if it is dualizable under Lurie tensor product. 
		
		We denote  $\prad^{\op{dbl}}$ as the (non-full) subcategory of $\prad^L$ spanned by dualizable additive \infcats and internal left adjoints, i.e. those left adjoints such that their right adjoints are colimit-preserving.
	\end{de}

	One perspective on additive presentable categories is that they are $\spgeq$-modules:
	\begin{lem}
		$\prlad$ is equivalent to modules over $\spgeq$ in $\prl$, which is an idempotent algebra.
	\end{lem}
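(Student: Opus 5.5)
The plan is to prove two things in turn: that $\spgeq$ is an idempotent algebra in $\prl$, and that its modules are exactly the presentable additive \infcats. For the first, the multiplication map is $\spgeq\otimes\spgeq\to\spgeq$, and we must show it is an equivalence. We identify $\spgeq\otimes\mcc$ with the \infcat of grouplike commutative monoids in $\mcc$, that is, connective spectrum objects, for any presentable $\mcc$. More precisely, $\spgeq\otimes\mcc\simeq\funct^R(\spgeq^\opp,\mcc)$, and we compare it with $\op{CMon}^{\op{gp}}(\mcc)$ via the standard description of $\spgeq$ as the free presentable \infcat with grouplike $\Einfty$-structure. This description is \cite[Section 4.8.2]{ha}, which exhibits $\spgeq$, and $\op{CMon}$, as idempotent objects of $\prl$ and identifies $\spgeq\otimes\mcc$ with $\op{CMon}^{\op{gp}}(\mcc)$.

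For the second point, the key step is to identify which presentable \infcats $\mcc$ have the unit map $\mcc\simeq\mcs\otimes\mcc\to\spgeq\otimes\mcc$ an equivalence. By general theory of idempotent algebras \cite[Proposition 4.8.2.9]{ha}, $\Mod_{\spgeq}(\prl)$ is the full subcategory of $\prl$ on exactly these $\mcc$. We then check that this condition is the same as additivity.
\begin{itemize}
\item If $\mcc$ is additive, every object carries an essentially unique grouplike commutative monoid structure. So the forgetful functor $\op{CMon}^{\op{gp}}(\mcc)\to\mcc$, which is right adjoint to the unit map, is an equivalence.
\item Conversely, $\op{CMon}^{\op{gp}}(\mcc)$ is always additive, since it is semiadditive and its objects have inverses. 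Hence if the unit map is an equivalence, $\mcc$ is additive.
\end{itemize}
It also follows that the symmetric monoidal structure on $\prlad$ is the restricted Lurie tensor product with unit $\spgeq$. This is compatible with the definition above, because the inclusion of modules over an idempotent algebra is a symmetric monoidal localization.

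The main obstacle is justifying $\spgeq\otimes\mcc\simeq\op{CMon}^{\op{gp}}(\mcc)$ together with the idempotence of $\spgeq$. We would obtain this by factoring $\mcs\to\op{CMon}(\mcs)\to\op{CMon}^{\op{gp}}(\mcs)\simeq\spgeq$. This exhibits $\spgeq$ as the localization of the idempotent $\op{CMon}(\mcs)$ (semiadditivity, \cite[Section 4.8.2]{ha}) at the grouplike condition, which is again a property-type localization, hence idempotent. The remaining check, that $\op{CMon}^{\op{gp}}(\mcs)\simeq\spgeq$, is the recognition principle for connective spectra.
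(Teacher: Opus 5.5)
Your argument is correct and is essentially the proof behind the paper's one-line citation of \cite[Corollary 4.8]{ggn}: identify $\spgeq\otimes\mcc$ with $\op{CMon}^{\op{gp}}(\mcc)$ so that the unit map becomes the free functor, then observe that its right adjoint, the forgetful functor, is an equivalence exactly when $\mcc$ is additive (this identification is the content of \cite{ggn}, not of \cite[Section 4.8.2]{ha}, which supplies the general theory of idempotent objects). Idempotence of $\spgeq$ also follows at once from your first bullet applied to the additive category $\mcc=\spgeq$, so the ``property-type localization'' argument in your last paragraph, which is only heuristic as stated, is not needed.
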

	\begin{proof}
		See \cite[Corollary 4.8]{ggn}.
	\end{proof}
Under this equivalence, the general theory of $\mathcal{V}$-linear presentable categories can be specialized to the additive setting by taking $\mathcal{V}=\spgeq$. 
\begin{prop}
	For any \dualaddinfcat $\mcm$, its stabilization $\opsp(\mcm)$ is  dualizable stable.
\end{prop}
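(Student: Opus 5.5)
The plan is to realize stabilization as a symmetric monoidal functor and use that such functors carry dualizable objects to dualizable objects. For a presentable additive category $\mcm$, the stabilization can be written as $\opsp(\mcm)\simeq \opsp\otimes\mcm$, with the Lurie tensor product taken in $\prl$. Since $\mcm$ is an $\spgeq$-module, this is the same as $\opsp\otimes_{\spgeq}\mcm$, because $\spgeq$ is an idempotent algebra in $\prl$. So the functor
\[
\opsp\otimes_{\spgeq}-\colon \prlad\simeq \modu_{\spgeq}(\prl)\too \modu_{\opsp}(\prl)\simeq\prlst
\]
is base change along the map of commutative algebras $\spgeq\to\opsp$ in $\prl$. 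Base change along a map of commutative algebras is symmetric monoidal. This gives identifications
\[
(\opsp\otimes_{\spgeq}\mcm)\otimes_{\opsp}(\opsp\otimes_{\spgeq}\mcn)\simeq \opsp\otimes_{\spgeq}(\mcm\otimes_{\spgeq}\mcn),
\]
compatible with units.

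Next, suppose $\mcm$ is dualizable in $\prlad$, with dual $\mcm^\vee$, evaluation $\mcm^\vee\otimes\mcm\to\spgeq$ and coevaluation $\spgeq\to\mcm\otimes\mcm^\vee$ satisfying the triangle identities. Applying the symmetric monoidal functor $\opsp\otimes_{\spgeq}-$ produces evaluation and coevaluation maps for $\opsp(\mcm)$ and $\opsp(\mcm^\vee)$ in $\prlst$. These still satisfy the triangle identities, since a symmetric monoidal functor preserves composites and the associativity and unit constraints. Hence $\opsp(\mcm)$ is dualizable in $\prlst$, i.e.\ dualizable stable, with dual $\opsp(\mcm^\vee)$.

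The only point needing care is the identification $\opsp(\mcm)\simeq\opsp\otimes\mcm$ for a general presentable (additive) $\mcm$. This is standard (Lurie, HA 4.8.1.23, stabilization as tensoring with $\opsp$). I will also check that the relative tensor product over $\spgeq$ agrees with the absolute one on additive categories. This follows from idempotence of $\spgeq$ (the lemma citing \cite{ggn}): $\opsp\otimes_{\spgeq}\mcm\simeq \opsp\otimes_{\spgeq}\spgeq\otimes\mcm\simeq\opsp\otimes\mcm$, using $\opsp\otimes\spgeq\simeq\opsp$. I do not expect any genuine obstacle beyond assembling these standard facts.
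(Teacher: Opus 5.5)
Your proposal is correct and follows the same route as the paper: the paper's proof simply observes that base change $\opsp\otimes-\colon\prlad\to\prlst$ is symmetric monoidal and hence preserves dualizable objects. Your extra checks (that $\opsp\otimes_{\spgeq}\mcm\simeq\opsp\otimes\mcm\simeq\opsp(\mcm)$ via idempotence of $\spgeq$, and that the triangle identities transport) just spell out what the paper leaves implicit.
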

\begin{proof}
	It suffices to observe that the base-change functor $\prlad \xrightarrow{\opsp \otimes -} \prlst$ is symmetric monoidal and thus preserves dualizable objects.
\end{proof}
\begin{rem}
	As we will demonstrate in \cref{counterexamdualstnotdualadd}, the converse fails in general, i.e. $\prlad \xrightarrow{\opsp \otimes -} \prlst$ does not reflect dualizable objects.
\end{rem}

We next recall the notion of atomic objects in a \mcv-module.

	\begin{de}
		Let $\mc{V}\in \calg(\prl)$ and $\mc{M}\in\prlv=\modu_{\mc{V}}(\prl)$. 
		
		\enu{\item An object $x \in \mathcal{M}$ is called \textbf{$\mathcal{V}$-atomic}, or simply atomic if the base $\mathcal{V}$ is
			understood, if the $\mathcal{V}$-linear functor $\mathcal{V} \xrightarrow{-\otimes x} \mathcal{M}$ classifies an internal left adjoint, i.e. if $$\unmap_{\mathcal{M}}(x,-): \mathcal{M} \rightarrow \mathcal{V}$$ preserves small colimits and the canonical map $$v \otimes \unmap_{\mathcal{M}}(x, y) \rightarrow \unmap_{\mathcal{M}}(x, v \otimes y)$$ is an equivalence for all $v \in \mathcal{V}, y \in \mathcal{M}$.
			\item The $\mathcal{M}$ is said to be \textbf{$\mathcal{V}$-atomically generated} if the smallest full $\mathcal{V}$-submodule of $\mathcal{M}$ closed under colimits and containing the atomics of $\mathcal{M}$ is $\mathcal{M}$ itself.
		} 
	\end{de}
	
Recall also the following definition from \cite{htt}. 	
\begin{de}
Let $\sC$ be a presentable category. We say that an object $x\in \sC$ is {\bf compact projective} if 
$\Map_{\sC}(x,-)$ preserves sifted colimits. 
\end{de}

	\begin{prop}\label{atcproj}
		Let $\mc{M}\in\modu_{\spgeq}(\prl)\simeq\prlad$. Then an object 
		$x \in \mathcal{M}$ is $\spgeq$-atomic if and only if it is compact projective in $\mathcal{M}$.
	\end{prop}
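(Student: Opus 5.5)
We compare the two conditions through the enriched mapping object. Since $\mcm$ is a $\spgeq$-module in $\prl$, i.e.\ presentable additive, the enriched hom $\unmap_{\mcm}(x,-)\colon \mcm\to\spgeq$ is the right adjoint of $-\otimes x\colon \spgeq\to\mcm$. Its composite with $\Omega^\infty\colon\spgeq\to\mcs$ is $\Map_{\mcm}(x,-)$. The plan is to translate each clause of atomicity into a statement about $\Map_{\mcm}(x,-)$.

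\emph{Atomic $\Rightarrow$ compact projective.} If $\unmap_{\mcm}(x,-)$ preserves small colimits, then in particular it preserves sifted colimits. The functor $\Omega^\infty\colon \spgeq\to\mcs$ preserves sifted colimits, since it is the forgetful functor from grouplike $\sE_\infty$-monoids, i.e.\ from $\mcp_\Sigma$ of finitely generated free abelian groups. Hence $\Map_{\mcm}(x,-)=\Omega^\infty\unmap_{\mcm}(x,-)$ preserves sifted colimits.

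\emph{Compact projective $\Rightarrow$ atomic.} This direction has two steps.
\begin{enumerate}
\item Show that $\unmap_{\mcm}(x,-)$ preserves small colimits. Because $\mcm$ and $\spgeq$ are additive, $\unmap_{\mcm}(x,-)$ automatically preserves finite coproducts, which are biproducts and hence preserved by any right adjoint. Every small colimit is built from finite coproducts and sifted colimits. For sifted colimits, $\Omega^\infty\colon\spgeq\to\mcs$ is conservative and commutes with sifted colimits, so sifted colimits in $\spgeq$ are detected after applying $\Omega^\infty$. Therefore preservation of sifted colimits by $\unmap_{\mcm}(x,-)$ follows from the same property of $\Map_{\mcm}(x,-)$.
\item Check the projection formula $v\otimes\unmap_{\mcm}(x,y)\simeq\unmap_{\mcm}(x,v\otimes y)$. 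Both sides are colimit-preserving in $v\in\spgeq$, using step (1) for the right-hand side. Both agree at $v=\sphere$, and $\sphere$ generates $\spgeq$ under colimits, since $\spgeq\simeq\mcp_\Sigma(\text{finite free})$ and every object is a colimit of copies of $\sphere$. The canonical map is compatible with these colimits, so it is an equivalence for all $v$.
\end{enumerate}

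The step needing most care is the reduction in (1) to sifted colimits plus finite coproducts. One must verify that a functor between additive presentable categories which preserves finite biproducts and sifted colimits preserves all small colimits. The cleanest route is to note that every small colimit decomposes as a sifted colimit of finite coproducts, so this is immediate. Preservation of the zero object and of finite coproducts holds simply because $\unmap_{\mcm}(x,-)$ is a right adjoint between additive categories.
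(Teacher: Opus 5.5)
Your proposal is correct. Its main line is the paper's argument. Additivity reduces colimit-preservation of $\unmap_{\mcm}(x,-)$ to sifted colimits, since finite biproducts are automatic for a right adjoint. Then conservativity of $\Omega^\infty\colon\spgeq\to\mcs$, together with its preservation of sifted colimits, transfers the condition between $\unmap_{\mcm}(x,-)$ and $\Map_{\mcm}(x,-)$.

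You diverge only on the projection-formula clause of atomicity.
\begin{itemize}
\item The paper disposes of it by noting that $\spgeq$ is a mode, so by \cite[Example 1.24]{ramzi2024dualizable} only colimit-preservation of the enriched hom needs checking.
\item You verify it directly. Both $v\mapsto v\otimes\unmap_{\mcm}(x,y)$ and $v\mapsto\unmap_{\mcm}(x,v\otimes y)$ preserve colimits (the latter by your step (1)), the comparison map is an equivalence at the unit $\sphere$, and $\sphere$ generates $\spgeq$ under colimits.
\end{itemize}
Your route is self-contained and avoids the citation. It also only uses that the base is generated under colimits by its unit, not that it is idempotent in $\prl$. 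The paper's route is shorter because it outsources this to the general theory of modes.

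One slip in your atomic $\Rightarrow$ compact projective step: $\spgeq$ is not $\mcp_\Sigma$ of finitely generated free abelian groups. That category is connective $H\mathbb{Z}$-modules. The fact you need, that $\Omega^\infty\colon\spgeq\to\mcs$ preserves sifted colimits, is \cite[Proposition 1.4.3.9]{ha}, exactly as the paper uses it, so your conclusion stands.
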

	
	\begin{proof}
		Because $\spgeq$ is a mode \cite[see][\textsection5]{carmeli2021ambidexterity}, it suffices to check the colimit-preserving property of the internal hom functor by \cite[Example 1.24]{ramzi2024dualizable}. Now we note that the corepresentable functor $\operatorname{Map}_{\mathcal{M}}(x,-): \mathcal{M} \rightarrow \mc{S}$ is the composition of  $\unmap_{\mathcal{M}}(x,-): \mathcal{M} \rightarrow \spgeq$ and $\Omega^{\infty}:\spgeq \rightarrow \mc{S}$. Since both $\mc{M}$ and $\spgeq$ are additive, the connective mapping spectrum functor $$\unmap_{\mathcal{M}}(x,-): \mathcal{M} \rightarrow \spgeq$$ preserves small colimits if and only if it preserves small sifted colimits. Therefore the result follows immediately from  \textup{\cite[ Proposition 1.4.3.9]{ha}} that $\Omega^{\infty}:\spgeq \rightarrow \mc{S}$ is conservative and preserves small sifted colimits.
	\end{proof}
	
	\begin{rem}
		In the additive situation, atomically generated $\spgeq$-modules are precisely compact projectively generated presentable additive \infcats. 
	\end{rem}
	The following theorem of Ramzi gives several useful characterizations of dualizable additive \infcats.
	\begin{thm}[{\cite[Theorem 1.49]{ramzi2024dualizable}}]\label{ram149}
		Let $\mathcal{M} \in \operatorname{Mod}_{\spgeq}(\prl)$ be $\lambda$-presentable for some $\lambda$. The following are equivalent:
		\enu{
			
			\item  $\mathcal{M}$ is a dualizable additive \infcat;
			\item $\mathcal{M}$ is a retract of a compact projectively generated presentable additive \infcat;
			\item There is an internally left adjoint fully faithful embedding $\mathcal{M} \rightarrow \mathcal{N}$ for compact projectively generated presentable additive \infcat $\mathcal{N}$;
			\item The canonical functor $\mathcal{P}_\Sigma(\mathcal{M}^\mu) \rightarrow \mathcal{M}$ admits a left adjoint $\hat{h}$ for some $\mu \geq \lambda$.
		}
	\end{thm}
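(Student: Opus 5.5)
The plan is to specialize Ramzi's general theory of $\mathcal V$-linear dualizability to the mode $\mathcal V=\spgeq$, using \cref{atcproj} to translate $\spgeq$-atomic objects into compact projective ones. The needed dictionary is as follows. For a small additive \infcat $\mathcal A$, the free $\spgeq$-module on $\mathcal A$ is the category of $\spgeq$-linear presheaves $\mathcal P_\Sigma(\mathcal A)=\funct^\times(\mathcal A^{\opp},\mcs)$. Its atomics are the retracts of representables. Atomically generated modules are therefore exactly the compact projectively generated presentable additive \infcats. I will prove the cycle (2)$\Rightarrow$(1)$\Rightarrow$(4)$\Rightarrow$(3)$\Rightarrow$(2).

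For (2)$\Rightarrow$(1), I first show that $\mathcal P_\Sigma(\mathcal A)$ is dualizable with dual $\mathcal P_\Sigma(\mathcal A^{\opp})$. The evaluation is induced by $\mathcal A^{\opp}\otimes\mathcal A\to\spgeq$, $(a,b)\mapsto\unmap(b,a)$, extended in each variable. The coevaluation is the bimodule $\unmap_{\mathcal A}$ itself, viewed in $\mathcal P_\Sigma(\mathcal A\otimes\mathcal A^{\opp})\simeq\mathcal P_\Sigma(\mathcal A)\otimes\mathcal P_\Sigma(\mathcal A^{\opp})$. The triangle identities reduce to the Yoneda lemma. Dualizable objects in any symmetric monoidal \infcat are closed under retracts, so (1) follows.

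For (3)$\Rightarrow$(2): let $i\colon\mathcal M\to\mathcal N$ be the internally left adjoint fully faithful embedding. Then $i^R$ preserves colimits and $i^Ri\simeq\id$, so $\mathcal M$ is a retract of $\mathcal N$ in $\prlad$. For (4)$\Rightarrow$(3), I take $\mathcal N=\mathcal P_\Sigma(\mathcal M^\mu)$ and $i=\hat h$. The colimit functor $\mathcal P_\Sigma(\mathcal M^\mu)\to\mathcal M$ preserves small colimits. It is also a Bousfield localization, since its restriction along the Yoneda embedding is the inclusion $\mathcal M^\mu\subset\mathcal M$ and $\mathcal M$ is $\mu$-presentable, so its right adjoint, the restricted Yoneda embedding, is fully faithful. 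Its left adjoint $\hat h$ is then fully faithful, because the counit of $\hat h\dashv\operatorname{colim}$ is inverted. Moreover $\hat h$ is an internal left adjoint, since its right adjoint $\operatorname{colim}$ preserves colimits.

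The main obstacle is (1)$\Rightarrow$(4). Here I would follow Lurie's argument in \cite[Proposition D.7.3.1]{SAG}, made $\spgeq$-linear. Since $\mathcal M$ is dualizable, $\funct^L(\mathcal M,-)\simeq\mathcal M^\vee\otimes-$ preserves colimits in $\prlad$. Hence the $\spgeq$-linear localization $\operatorname{colim}\colon\mathcal P_\Sigma(\mathcal M^\mu)\to\mathcal M$ induces a localization
\[
\funct^L(\mathcal M,\mathcal P_\Sigma(\mathcal M^\mu))\to\funct^L(\mathcal M,\mathcal M).
\]
I first try to show that this localization has a section on the object $\id_{\mathcal M}$; this is the delicate point. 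The approach is to choose $\mu\ge\lambda$ large enough that the coevaluation $\spgeq\to\mathcal M^\vee\otimes\mathcal M$ lies in the image of $\mathcal M^\vee\otimes\mathcal P_\Sigma(\mathcal M^\mu)$ and is compatible with the evaluation. Equivalently, one writes $\id_{\mathcal M}$ as a $\mu$-small colimit of functors of the form $\unmap(-,\ldots)$ and lifts it termwise to $\mathcal P_\Sigma(\mathcal M^\mu)$. The lift is a colimit-preserving $\hat h$ with $\operatorname{colim}\circ\hat h\simeq\id$. One then checks that the resulting unit and counit exhibit $\hat h$ as left adjoint to $\operatorname{colim}$. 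I would reduce this adjunction check to representables $y(m)$, $m\in\mathcal M^\mu$, where it becomes the statement that $\hat h(x)\to y(m)$ corresponds to $x\to m$. That statement follows from the defining property of the coevaluation. If this direct argument becomes unwieldy, I would instead cite Ramzi's general $\mathcal V$-linear statement, which applies here because $\spgeq$ is a mode (so $\mathcal V$-linear presheaves on $\mathcal M^\mu$ are exactly $\mathcal P_\Sigma(\mathcal M^\mu)$), and only verify that this identification matches his hypotheses.
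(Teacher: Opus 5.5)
Your outer strategy matches the paper's. The paper simply quotes Ramzi's $\mathcal V$-linear Theorem~1.49 and uses \cref{atcproj} to identify $\spgeq$-atomics with compact projectives, so your fallback is exactly what the paper does. Your arguments for (2)$\Rightarrow$(1), (3)$\Rightarrow$(2) and (4)$\Rightarrow$(3) are fine. One minor slip: in the adjoint triple $\hat h\dashv c\dashv j$, where $j$ is the restricted Yoneda embedding, full faithfulness of $j$ makes the \emph{unit} $\id\to c\hat h$ invertible, not the counit of $\hat h\dashv c$. The sketch of (1)$\Rightarrow$(4), however, has a genuine gap.

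\textbf{The step you call delicate is automatic.} Put $\mcn=\mathcal P_\Sigma(\mcm^\mu)$. The functor $c_*\simeq\mcm^\vee\otimes c\colon\funct^L(\mcm,\mcn)\to\funct^L(\mcm,\mcm)$ is a localization, so its right adjoint $G$ is fully faithful. Hence $\hat h:=G(\id_{\mcm})$ satisfies $c\hat h\simeq\id$ for every $\mu\ge\lambda$. No enlargement of $\mu$ and no termwise lifting is needed. (Also, objects of $\mcm^\vee\otimes\mcm$ are colimits of $x\mapsto\phi(x)\otimes m$ with $\phi\in\mcm^\vee$, not of functors $\unmap(-,\ldots)$.) A section only proves (2).

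\textbf{Your adjunction check does not show the section is a left adjoint.} For $m\in\mcm^\mu$ we have $y(m)\simeq j(m)$. So $\mapp(sx,y(m))\simeq\mapp(csx,m)\simeq\mapp(x,m)$ holds for \emph{every} colimit-preserving section $s$ of $c$, with no input from the coevaluation. It also does not propagate to general $Y$, because neither $\mapp(\hat hx,-)$ nor $\mapp(x,c(-))$ commutes with colimits in $Y$. Sections genuinely need not be left adjoints. For $\mcm=\spgeq$, let $K=\cofib\bigl(\Sigma y(\mathbb S)\to y(\Sigma\mathbb S)\bigr)$. Then $c(K)=0$ but $K\neq0$, since its value at $\Sigma\mathbb S$ is $\tau_{\le0}\mathbb S$. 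So $X\mapsto X\otimes(y(\mathbb S)\oplus K)$ is a section of $c$ different from the left adjoint $X\mapsto X\otimes y(\mathbb S)$. (Indeed, $y(\mathbb S)\oplus K\simeq y(\mathbb S)$ would make $K$ a retract of a representable, hence representable, hence zero since $c(K)=0$.)

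\textbf{What is missing is the universal property of the canonical lift.} Concretely:
\begin{itemize}
\item $\hat h=G(\id)$ is the colimit-preserving coreflection of $j$. It comes with $\theta\colon\hat h\Rightarrow j$ whose adjoint $\varepsilon\colon c\hat h\Rightarrow\id_{\mcm}$ is invertible by dualizability.
\item Maps $F\to\hat h$ in $\funct^L(\mcm,\mcn)$ are determined by their images under $c$.
\item Put $\eta:=\varepsilon^{-1}$. Let $\epsilon\colon\hat hc\Rightarrow\id_{\mcn}$ be the colimit-preserving extension of $\theta|_{\mcm^\mu}\colon\hat h|_{\mcm^\mu}\Rightarrow j|_{\mcm^\mu}\simeq y$.
\item The triangle identity $c\epsilon\circ\eta c=\id_c$ can be checked on representables. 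This is the legitimate form of your reduction, and there it reads $\varepsilon\circ\varepsilon^{-1}=\id$.
\item For the other identity, $c(\epsilon\hat h\circ\hat h\eta)=(c\epsilon)\hat h\circ(c\hat h)\eta=(\eta c\hat h)^{-1}\circ\eta c\hat h=\id_{c\hat h}$. This uses the first identity and $(c\hat h)\eta=\eta(c\hat h)$, which holds because $\eta$ is invertible. By the universal property, $\epsilon\hat h\circ\hat h\eta=\id_{\hat h}$, so $\hat h\dashv c$.
\end{itemize}
Alternatively, you can regard your lift as a proof of (1)$\Rightarrow$(2) and deduce (4) from the retract, for example via the Grothendieck-axiom argument in the proof of \cref{main1}.
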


	\begin{rem}\label{rmkcptassemb}
		Any dualizable additive \infcat $\mcm$ is compactly assembled, and consequently $\omega_1$-presentable; see \cite[Corollary 2.33]{ramzi2024dualizable}. 
		
		Furthermore, by \cite[Remark 1.51]{ramzi2024dualizable}, the left adjoint $\hat{h} \colon \mathcal{M} \rightarrow \mathcal{P}_\Sigma(\mathcal{M}^\mu)$ factors through $\mathcal{P}_\Sigma(\mathcal{M}^{\omega_1})$. This allows us to sharpen condition (4) of \cref{ram149} to the statement that the canonical functor $\mathcal{P}_\Sigma(\mathcal{M}^{\omega_1}) \rightarrow \mathcal{M}$ admits a left adjoint.
	\end{rem}

	We now turn to a different characterization of dualizable additive \infcats, formulated in terms of Grothendieck's axioms.  The next few results prepare the comparison between these axioms on a prestable category and on its stabilization.
	
	\begin{prop}\label{ab6eq}
		Let $\mcc \in \prlpst$ be a presentable  prestable \infcat satisfying $\mathrm{AB4}^*$  (\Cref{def:AB}). Then \mcc satisfies $\mathrm{AB6}$ if and only if $\opsp(\mcc)$ satisfies $\mathrm{AB6}$.
	\end{prop}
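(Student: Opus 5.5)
The plan is to compare the two $\mathrm{AB6}$ conditions through the canonical embedding $\Sigma^\infty_+\colon \mcc\hookrightarrow \opsp(\mcc)$. For a presentable prestable $\mcc$ this functor is fully faithful (\cite[Proposition C.1.2.9]{sag}) and preserves small colimits, in particular filtered colimits. By $\mathrm{AB4}^*$ its essential image $\opsp(\mcc)_{\geq0}$ is closed under small products. We treat the two implications separately.

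\emph{($\Leftarrow$)} Suppose $\opsp(\mcc)$ satisfies $\mathrm{AB6}$. The inclusion $\mcc\hookrightarrow\opsp(\mcc)$ is conservative, being fully faithful. It preserves small filtered colimits and, by $\mathrm{AB4}^*$, small products. So \cref{conserab6} applies directly and shows that $\mcc$ satisfies $\mathrm{AB6}$.

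\emph{($\Rightarrow$)} Suppose $\mcc$ satisfies $\mathrm{AB6}$, and fix filtered $J_i$ and diagrams $j_i\mapsto x_{j_i}$ in $\opsp(\mcc)$. We must show that the comparison map
\[
\phi\colon \varinjlim_{(j_i)\in\prod_i J_i}\prod_i x_{j_i}\longrightarrow \prod_i\varinjlim_{j_i} x_{j_i}
\]
is an equivalence. Both sides commute with $\Sigma^{\pm1}$, since $\opsp(\mcc)$ is stable and $\Sigma$ is an equivalence. If all $x_{j_i}$ lie in $\opsp(\mcc)_{\geq -n}$ for a single $n$, we shift by $\Sigma^n$. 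After the shift everything lies in the image of $\mcc$, which is closed under filtered colimits and under products by $\mathrm{AB4}^*$. Then $\phi$ is the image of the $\mathrm{AB6}$ comparison map in $\mcc$, hence an equivalence.

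In general we use right completeness of the $t$-structure on $\opsp(\mcc)$ (it is the right completion of $\op{SW}(\mcc)$). This writes each $x=\varinjlim_n \tau_{\geq -n}x$. We then replace each $J_i$ by the filtered category $J_i\times\mathbb{N}$ with the diagram $y_{(j_i,n)}=\tau_{\geq -n}x_{j_i}$. This does not change the colimits over $J_i$. On the left-hand side it turns the index category into $\prod_i (J_i\times \mathbb{N})$.

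The main obstacle is that the degrees $n_i$ may be unbounded across $i\in I$. The diagonal $\mathbb{N}\to\mathbb{N}^I$ is not cofinal for infinite $I$, so $\phi$ cannot be reduced immediately to the uniformly bounded case. To handle this I would test $\phi$ on homotopy objects. Since the $t$-structure is right complete, a map is an equivalence if $\tau_{\geq -m}$ of its cofiber vanishes for every $m$, equivalently if it induces equivalences on $\tau_{\geq -m}$-truncations. Fix $m$ and split the index set: it suffices to check the part of the product indexed by $n_i\geq m$ uniformly. Modifying $x_{j_i}$ below degree $-m-1$ changes $\prod_i x_{j_i}$ only below degree $-m$, because products are computed degreewise under $\mathrm{AB4}^*$ ($\Sigma$ commutes with products). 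I expect the delicate point to be checking that $\tau_{\geq -m}$ interacts correctly with the filtered colimit on the left. Here I would use that cofibers of $\tau_{\geq -n}x\to\tau_{\geq -n-1}x$ are concentrated in a single degree. With that, the comparison reduces to the bounded case for each $m$, and hence $\phi$ is an equivalence.
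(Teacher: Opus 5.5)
Your ($\Leftarrow$) direction is the paper's argument and is fine, and so is your treatment of uniformly bounded-below families. The gap is in ($\Rightarrow$), at exactly the step you flag as delicate.

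To reduce $\phi$ to the bounded case degree by degree, you need $\tau_{\geq -m}\bigl(\varinjlim_{(j_i)}\prod_i x_{j_i}\bigr)\simeq\varinjlim_{(j_i)}\tau_{\geq -m}\prod_i x_{j_i}$. In other words, $\tau_{\geq -m}$ must commute with filtered colimits in $\opsp(\mcc)$. That says the $t$-structure is compatible with filtered colimits, which is equivalent to $\mcc$ being Grothendieck (\cite[Proposition C.1.4.1]{sag}). This is not among the hypotheses and does not follow from them.

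Your remark that $\operatorname{cofib}(\tau_{\geq -n}x\to\tau_{\geq -n-1}x)$ is concentrated in one degree does not help, because that property is not preserved by filtered colimits. For example, take $\mcc=\operatorname{Mod}^{\mathrm{cpl}}_{\mathbb{Z}_p,\geq 0}$. It is presentable prestable and satisfies $\mathrm{AB4}^*$. It also satisfies $\mathrm{AB6}$, since it sits, compatibly with filtered colimits and products, inside the compactly generated category $\operatorname{Mod}^{\mathrm{cpl}}_{\mathbb{Z}_p}$; see \cref{compab6} and \cref{conserab6}. Yet along the inclusions $\mathbb{Z}/p^n\hookrightarrow\mathbb{Z}/p^{n+1}$ one has $\varinjlim_n\mathbb{Z}/p^n\simeq(\mathbb{Q}_p/\mathbb{Z}_p)^{\wedge}_p\simeq\Sigma\mathbb{Z}_p$. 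So a filtered colimit of heart objects acquires $\pi_1$, and $\tau_{\geq 1}$ does not commute with it. The proposition is not contradicted there; it is your reduction step that fails.

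Passing to $J_i\times\mathbb{N}$ and splitting the index set do not get around this. Comparing the new left-hand side over $\prod_i(J_i\times\mathbb{N})$ with the old one is the same unbounded-degree problem.

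Once you have compatibility of the $t$-structure with filtered colimits, the detour is unnecessary and you recover the paper's argument. The functors $\tau_{\geq -n}\colon\opsp(\mcc)\to\opsp(\mcc)_{\geq -n}\simeq\mcc$ are jointly conservative by right completeness. They preserve products by $\mathrm{AB4}^*$ and filtered colimits by $\mathrm{AB5}$. So \cref{conserab6}, applied to $\opsp(\mcc)\to\prod_n\mcc$, gives $\mathrm{AB6}$ for $\opsp(\mcc)$. Equivalently, $\tau_{\geq -m}\phi$ is the comparison map for the uniformly bounded family $\tau_{\geq -m}x_{j_i}$, which your first step already handles.

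The paper's proof uses this same input when it asserts that each $\tau_{\geq -n}$ preserves filtered colimits. That input is available because the proposition is only applied to Grothendieck $\mcc$ (\cref{dualadst}). So you should either invoke the Grothendieck condition explicitly, or find a genuinely new way to control how filtered colimits in $\opsp(\mcc)$ raise homotopical degree. Your current sketch does neither.
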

	\begin{proof}
		Since the inclusion $\mcc\hookrightarrow \opsp(\mcc)$ is closed under small filtered colimits and products, the ``if'' direction follows from \cref{conserab6}.
		
		For the ``only if'' direction, using \cref{conserab6} again, it suffices to observe that the collection $$\{\tau_{\geq -n}:\opsp(\mcc)\to \opsp(\mcc)_{\geq -n}\simeq \mcc\}$$ is jointly conservative and each $\tau_{\geq -n}$ preserves small filtered colimits and products (due to $\mathrm{AB4}^*$).
	\end{proof}
	
	In order to use $\mathrm{AB4}^*$ effectively, we will need to know that products interact well with geometric realizations. This follows from a standard truncation argument for functors between connective parts of stable \infcats. We recall the following stronger version of \cite[Lemma 1.3.3.11]{ha}. The original statement requires that both $\mc{C}$ and $\mc{C}^{'}$ are left complete. However the left completeness condition on $\mc{C}$ is removable.
	
	\begin{lem}\label{lemma4.3}
		Let $\mc{C}$ and $\mc{C}'$ be stable \infcats equipped with $t$-structures. Then:
		\enu{
			
			\item If $F:\mc{C}_{\geq0}\to\mc{C}_{\geq0}^{'}$ is a functor that preserves finite colimits, then $\tau_{\leq n}\circ F\xrightarrow{\sim}\tau_{\leq n}\circ F\circ\tau_{\leq n}$ is a natural equivalence in $\op{Fun}(\mc{C}_{\geq0},\mc{C}_{[0,n]}^{'})$ for any $n\geq0$.
			\item If $\mc{C}_{\geq0}$ admits geometric realizations and $\mc{C}'$ is left complete, then a functor $F: \mc{C}_{\geq0}\to\mc{C}_{\geq0}^{'}$ preserves finite colimits if and only if it preserves finite coproducts and geometric realizations.
		}
	\end{lem}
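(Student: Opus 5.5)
For part (1), I would argue by a Postnikov-type induction using only that $F$ preserves finite colimits. The map $\tau_{\leq n}F(X)\to \tau_{\leq n}F(\tau_{\leq n}X)$ is induced by $X\to\tau_{\leq n}X$. Its fiber in $\mc{C}$ is $\tau_{\geq n+1}X$, which is $(n+1)$-connective, so $\tau_{\geq n+1}X\simeq \Sigma^{n+1}Y$ for some $Y\in\mc{C}_{\geq0}$. The sequence
\[
\tau_{\geq n+1}X\to X\to \tau_{\leq n}X
\]
is a cofiber sequence in $\mc{C}_{\geq0}$, and $F$ preserves it. Hence $F(X)\to F(\tau_{\leq n}X)$ has cofiber $\Sigma F(\tau_{\geq n+1}X)\simeq \Sigma^{n+2}F(Y)$, using that $F$ commutes with $\Sigma$ because it preserves finite colimits. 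This cofiber is $(n+2)$-connective. In $\mc{C}'$ the fiber of $F(X)\to F(\tau_{\leq n}X)$ is therefore $(n+1)$-connective, and applying $\tau_{\leq n}$ kills it. So $\tau_{\leq n}F(X)\simeq\tau_{\leq n}F(\tau_{\leq n}X)$, naturally in $X$. No completeness of $\mc{C}$ is used here, which is the point of the strengthening.

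For part (2), the forward direction is formal. Finite coproducts are finite colimits. Geometric realizations in $\mc{C}_{\geq0}$ are computed as colimits of simplicial objects, and a finite-colimit-preserving functor between prestable categories preserves them. Concretely, I would follow the argument of \cite[Lemma 1.3.3.11]{ha}: after applying $\tau_{\leq n}$, a geometric realization in a prestable category agrees with the finite colimit over the $(n+1)$-skeleton. Combining this with part (1) gives $\tau_{\leq n}F(|X_\bullet|)\simeq \tau_{\leq n}|F(X_\bullet)|$ for every $n$. Since $\mc{C}'$ is left complete, these truncations detect equivalences, so $F(|X_\bullet|)\simeq|F(X_\bullet)|$. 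Here only the left completeness of $\mc{C}'$ is needed.

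For the converse, suppose $F$ preserves finite coproducts and geometric realizations. It then preserves the zero object and suspensions, since $\Sigma X$ is the geometric realization of the bar construction with terms $X^{\oplus k}$. More generally, any cofiber $\cofib(A\to B)$ in $\mc{C}_{\geq0}$ is the geometric realization of the simplicial object $[k]\mapsto B\oplus A^{\oplus k}$, whose face maps are built from finite coproducts and the map $A\to B$. Since $F$ preserves both finite coproducts and geometric realizations, it preserves cofibers, and together with finite coproducts this gives all finite colimits. This step needs no completeness assumption.

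The main obstacle is the forward direction of (2). One must verify carefully that the truncated comparison $\tau_{\leq n}|X_\bullet|\simeq\tau_{\leq n}|\mathrm{sk}_{n+1}X_\bullet|$ holds in $\mc{C}_{\geq0}$ and in $\mc{C}'_{\geq0}$. This is exactly where Lurie's original proof invokes left completeness. I would check that his argument only uses the connectivity estimate for skeleta, which is valid in any prestable category, and that left completeness enters only to reassemble $F(|X_\bullet|)$ from its truncations in the target $\mc{C}'$.
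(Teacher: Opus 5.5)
Your proposal is correct and follows essentially the same route as the paper, which defers to \cite[Lemma 3.6]{hattt}, i.e.\ Lurie's argument for \cite[Lemma 1.3.3.11]{ha} with the left completeness of $\mathcal{C}$ dropped: the cofiber sequence $\tau_{\geq n+1}X\to X\to\tau_{\leq n}X$ for (1), the skeleton comparison $\tau_{\leq n}|X_\bullet|\simeq\tau_{\leq n}|\mathrm{sk}_{n+1}X_\bullet|$ combined with (1) and left completeness of $\mathcal{C}'$ for the forward direction of (2), and the bar construction for the converse. The forward direction is not ``formal'' as you first say, and you leave implicit that $|F(X_\bullet)|$ exists in $\mathcal{C}'_{\geq 0}$; this also follows from left completeness of $\mathcal{C}'$, or can be avoided by checking the universal property directly against $W\simeq\lim_n\tau_{\leq n}W$.
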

	\begin{proof}
		See \cite[Lemma 3.6]{hattt}.
	\end{proof}
	\begin{cor}\label{prodcommutegeom}
		Let \mcc be a separated presentable prestable \infcat satisfying $\mathrm{AB4}^*$. Then geometric realizations  commute with small products in \mcc.
	\end{cor}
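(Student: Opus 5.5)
We will reduce to \cref{lemma4.3}(2), applied to the product functor. Fix a set $I$ and consider
\[
F=\prod_{i\in I}\colon \textstyle\prod_{i\in I}\mcc\too \mcc .
\]
Both sides are connective parts of stable \infcats with $t$-structures. For the source we use $\prod_i\opsp(\mcc)$ with the product $t$-structure, whose connective part is $\prod_i\mcc$. For the target we use $\opsp(\mcc)$ with its canonical $t$-structure. The source admits geometric realizations, computed componentwise.

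We need the target $t$-structure to be left complete. By \cref{sepab4*iscomp}, $\mcc$ is complete because it is separated and satisfies $\mathrm{AB4}^*$, and then \cref{comple}(2) shows that the canonical $t$-structure on $\opsp(\mcc)$ is left complete. So \cref{lemma4.3}(2) applies: $F$ preserves geometric realizations as soon as it preserves finite colimits (it obviously preserves finite coproducts).

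Thus the main step is to show that $\prod_i$ preserves finite colimits in $\mcc$. The zero object and finite direct sums are preserved, since finite coproducts agree with finite products in the additive \infcat $\mcc$ and products commute with products. It then remains to treat cofibers. Given maps $f_i\colon x_i\to y_i$, the cofiber of $f_i$ in $\mcc$ agrees with its cofiber in $\opsp(\mcc)$, because $\mcc\subset\opsp(\mcc)$ is closed under finite colimits. In the stable \infcat $\opsp(\mcc)$, products are exact, so $\prod_i\cofib(f_i)\simeq\cofib(\prod_i f_i)$ there. By $\mathrm{AB4}^*$, the inclusion $\mcc\hookrightarrow\opsp(\mcc)$ preserves products, so all these products are computed in $\opsp(\mcc)$ and we obtain the same identification inside $\mcc$. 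Since pushouts are built from direct sums and cofibers, $F$ is right exact.

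The only delicate point is bookkeeping: \cref{lemma4.3} is stated for connective parts of $t$-structures, so one must check that $\prod_i\mcc$ is the connective part of the product $t$-structure on $\prod_i\opsp(\mcc)$ (clear), and that left completeness is required only on the target (as the cited version allows). With this checked, \cref{lemma4.3}(2) shows that products commute with geometric realizations in $\mcc$.
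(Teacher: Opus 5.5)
Your proof is correct and follows the paper's argument. Completeness of $\mcc$ (\cref{sepab4*iscomp}) gives left completeness of $\opsp(\mcc)$ (\cref{comple}); $\mathrm{AB4}^*$ makes the product functor $t$-exact, so it restricts to a right exact functor $\prod_{i\in I}\mcc\to\mcc$; and \cref{lemma4.3}(2) then yields preservation of geometric realizations. Your explicit check of right exactness via cofibers computed in $\opsp(\mcc)$ just spells out the paper's one-line ``$t$-exact, hence right exact on connective parts'' step.
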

	\begin{proof}
		By \cref{sepab4*iscomp} we know \mcc is  complete. Let $I$ be a set. Since \mcc satisfies $\mathrm{AB4}^*$, the product functor $\prod_{i\in I}\opsp(\mcc)\xrightarrow{\prod} \opsp(\mcc)$ is $t$-exact and hence induces a right exact functor $\prod_{i\in I}\mcc\xrightarrow{\prod}\mcc$ of  connective parts. Then by \cref{lemma4.3}(2), $\prod_{i\in I}\mcc\xrightarrow{\prod}\mcc$ preserves geometric realizations.
	\end{proof}
	We next recall the prestable form of the Gabriel--Popescu theorem. For this purpose, one needs a notion of generators adapted to the connective setting. The relevant condition only asks for generation on $\pi_0$, rather than generation under all colimits.
	\begin{de}\label{generasub}
		Let $\mathcal{C}$ be a Grothendieck prestable \infcat. A $0$-generating 
		subcategory for $\mathcal{C}$ is a full subcategory $\mathcal{C}_0 \subset \mathcal{C}$ with the 
		following property: for every object $X \in \mathcal{C}$, there exists a collection of maps 
		$\rho_\alpha: C_\alpha \rightarrow X$, where each $C_\alpha$ belongs to $\mathcal{C}_0$ and 
		the induced map $\oplus_\alpha \pi_0 C_\alpha \rightarrow \pi_0 X$ is an epimorphism in the 
		abelian category $\mathcal{C}^{\heartsuit}$.\footnote{In 
		\cite[Definition C.2.1.1]{SAG}, Lurie, calls a similar notion `generating', but 
		this conflicts with the usual notion of objects generating a category under 
		colimits, so we have chosen $0$-generating instead.}
	\end{de}
	\begin{rem}
		Note that if $\mathcal{C}$ is stable, then any full subcategory $\mathcal{C}_0 \subset \mathcal{C}$ is a $0$-generating subcategory, because $\mcc^\heartsuit\simeq0$ in this case.
	\end{rem}
	\begin{rem}
	Let $\mathcal{C}$ be a Grothendieck prestable \infcat. If $\mathcal{C}_0 \subset \mathcal{C}$ is a full subcategory that generates $\mathcal{C}$ under small colimits, then $\mathcal{C}_0$ is a $0$-generating subcategory of $\mathcal{C}$ in the sense of \cref{generasub}. While the converse does not hold in general, the following theorem shows that it is indeed true whenever $\mathcal{C}$ is separated.
	\end{rem}
	
	\begin{thm}[{\cite[Gabriel--Popescu, Lurie Theorem C.2.1.6]{sag}}]\label{gplthm}
		Let $\mathcal{C}$ be a Grothendieck prestable \infcat and let $\mathcal{C}_0 \subset \mathcal{C}$ be a small $0$-generating subcategory. Assume that $\mathcal{C}$ is separated and that $\mathcal{C}_0$ is closed under finite coproducts in $\mathcal{C}$. Then:
		\enu{
			\item  The inclusion functor $\mathcal{C}_0 \rightarrow \mathcal{C}$ extends to a left exact functor $F: \mathcal{P}_{\Sigma}(\mathcal{C}_0) \rightarrow \mathcal{C}$ which commutes with small colimits.
			\item  The functor $F$ admits a fully faithful right adjoint $G: \mathcal{C} \rightarrow \mathcal{P}_{\Sigma}(\mathcal{C}_0)$.
		}
	\end{thm}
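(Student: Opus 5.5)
This is Lurie's prestable Gabriel--Popescu theorem, and I would follow the classical proof structure. First, define $F$ as the unique sifted-colimit-preserving extension of the inclusion $\mathcal{C}_0\hookrightarrow\mathcal{C}$. Because $\mathcal{C}_0$ is closed under finite coproducts, the inclusion preserves finite coproducts, so $F$ preserves all small colimits. Its right adjoint is $G(X)=\Map_{\mathcal{C}}(-,X)|_{\mathcal{C}_0}$, which lands in $\mathcal{P}_\Sigma(\mathcal{C}_0)$ because it sends finite coproducts to products. It remains to show two things: that $F$ is left exact, and that the counit $FG(X)\to X$ is an equivalence. The latter is equivalent to $G$ being fully faithful.

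For left exactness, I would pass to stabilizations. By \cref{prestemb}, $F$ corresponds to a colimit-preserving right $t$-exact functor $\opsp(F)\colon \opsp(\mathcal{P}_\Sigma(\mathcal{C}_0))\to\opsp(\mathcal{C})$. Both categories are Grothendieck, since $\mathcal{P}_\Sigma$ of a small additive category is compactly generated with filtered colimits left exact. So by \cref{t-ex} it suffices to show that $F$ carries discrete objects to discrete objects. Every discrete object of $\mathcal{P}_\Sigma(\mathcal{C}_0)$ is an additive presheaf $M$ of abelian groups on $h\mathcal{C}_0$. Write $M$ as a filtered colimit of finitely presented discrete presheaves $\pi_0\cofib(\bigoplus h_{C_j}\to\bigoplus h_{C_i})$. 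Since $F$ commutes with filtered colimits and $\mathcal{C}$ is Grothendieck (so $\mathcal{C}^{\heartsuit}$ is closed under filtered colimits), I reduce to the case $M=\operatorname{coker}(h_{C'}\to h_C)$, using that $\mathcal{C}_0$ is closed under finite sums.

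This reduction is the main obstacle. The difficulty is that $F$ of $\pi_0$ of a cofiber is not obviously discrete: $F(\cofib)$ is $\cofib(C'\to C)$, whose $\pi_1$ is $\ker(\pi_0C'\to\pi_0C)$, and one must show this $\pi_1$ is killed when passing from $\cofib(h_{C'}\to h_C)$ to its truncation. The truncation is computed by a resolution of the kernel presheaf $K=\ker(h_{C'}\to h_C)$ in $\pi_0$, which is a colimit of representables. The key input is the classical Gabriel--Popescu lemma: for any object $X$ and any map from $\pi_0$ of a sum of generators, the image of $\pi_0$ of the kernel presheaf covers the actual kernel in $\mathcal{C}^{\heartsuit}$. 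This follows from $0$-generation: choose $\bigoplus D_\beta\to$ the fiber of $C'\to C$ surjective on $\pi_0$. Concretely, I would proceed by induction on homotopy degree, showing $\pi_n F(M)=0$ for $n\ge1$ via a Postnikov/bar resolution of $M$ by representables. At each stage, $0$-generation lets me cover the relevant $\pi_0$-kernels by objects of $\mathcal{C}_0$, so the induced map on $\pi_n$ is surjective onto zero after the next stage. I expect this to mirror the proof of \cite[Theorem C.2.1.6]{SAG} closely, and in a careful write-up I would simply follow it there.

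For full faithfulness of $G$, I show that the counit $\epsilon\colon FG(X)\to X$ is an equivalence. First, $\pi_0\epsilon$ is an epimorphism by $0$-generation: every map $C_\alpha\to X$ factors through $FG(X)$. Let $Y=\fib(\epsilon)$, which exists because $\mathcal{C}$ is Grothendieck and $F$ is left exact. Applying $G$, the map $G(\epsilon)$ is an equivalence by the triangle identity, since $GFG\to G$ is split by the unit. Since $G$ is left exact as a right adjoint, $G(Y)=0$. So $\Map(C,Y)$ is contractible for every $C\in\mathcal{C}_0$, and by $0$-generation $\pi_0Y=0$. Replacing $Y$ by $\Omega^nY$, using that $G$ commutes with $\Omega$, gives $\pi_nY=0$ for all $n$, so $Y$ is $\infty$-connective and hence zero by separatedness. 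The same argument applied to $\cofib(\epsilon)$, which is connected with $\pi_0=0$ by surjectivity, uses that $G(\cofib\epsilon)$ vanishes by left exactness of $F$ and the fiber sequence. Thus $\epsilon$ is an equivalence.
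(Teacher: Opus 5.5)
The paper does not reprove this statement; it is quoted from \cite[Theorem C.2.1.6]{sag}. Your construction of $F$ and $G$ is fine. So is your reduction of left exactness to $F$ preserving discrete objects (via \cref{prestemb}, \cref{t-ex}, AB5, and writing $M$ as a filtered colimit of finitely presented modules).

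\textbf{Part (2).} The argument breaks at ``$G(\epsilon)$ is an equivalence by the triangle identity''. The identity $G(\epsilon)\circ\eta_{G}\simeq\mathrm{id}_{G}$ only makes $G(\epsilon)$ a split epimorphism, and that holds for every adjunction. For instance, in the free--forgetful adjunction $\mathrm{Sp}\rightleftarrows\mathrm{Mod}_R$, the underlying map of $R\otimes M\to M$ is split by the unit and is rarely an equivalence. Here $G$ is conservative, by $0$-generation and separatedness. So $G(\epsilon)$ being an equivalence is \emph{equivalent} to $\epsilon$ being one: this step assumes what you want to prove, and everything after it is bookkeeping.

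What your argument does give, applied to $\Omega^nX$ instead of $X$, is that $\pi_n(\epsilon)$ is surjective for every $n\ge 0$, since every map $\Sigma^nD\to X$ with $D\in\mathcal{C}_0$ factors through $FG(X)$. By separatedness, (2) then reduces to injectivity of $\pi_n(\epsilon)$ in $\mathcal{C}^\heartsuit$. That injectivity is the real content of Gabriel--Popescu. It needs an analogue, on all homotopy objects, of the key lemma of the classical proof: the kernel of a map from a finite sum of objects of $\mathcal{C}_0$ to $X$ is covered by maps from $\mathcal{C}_0$ whose composite to $X$ is zero. That lemma uses $G(C)\simeq h_C$ for $C\in\mathcal{C}_0$, and AB5 is then needed to pass to infinite sums. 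The step you wrote uses no property of $\mathcal{C}$ at all.

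\textbf{Part (1).} This is not established either: you defer it to Lurie, and the induction you sketch does not close as stated. When you kill $\pi_k$ of an intermediate stage $Z$ inside $\mathcal{P}_\Sigma(\mathcal{C}_0)$, you can only attach cells along classes in $\pi_kZ(D)$, i.e.\ maps $\Sigma^kh_D\to Z$. But $0$-generation of $\mathcal{C}$ covers $\pi_kF(Z)$ by arbitrary maps $\Sigma^kD\to F(Z)$ in $\mathcal{C}$, i.e.\ by classes in $\pi_k\,GF(Z)(D)$. You need these to lift along the unit $Z\to GF(Z)$. That is not automatic once infinite sums of cells have been attached: on $\bigoplus_\alpha\Sigma h_{D_\alpha}$ the unit induces $\bigoplus_\alpha\pi_0\Map(D,D_\alpha)\to\pi_0\Map(D,\bigoplus_\alpha D_\alpha)$ on $\pi_1$, which need not be surjective for non-compact $D$. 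Controlling this comparison is again the same covering lemma. So both halves of your proof rest on the one ingredient you have not supplied; without it, the statement has to be cited, as the paper does.
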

	To prove dualizability from the Grothendieck axioms, we will need to show that the localization supplied by Gabriel--Popescu preserves limits. The main point is preservation of products. We therefore collect two technical lemmas controlling products among sufficiently compact objects and their Ind-completions.
	
	\begin{lem}[{\cite[Proposition 5.4.7.4]{htt}}]\label{compprod}
		Let $\kappa$ be a regular cardinal and $\mcc$ be a $\kappa$-presentable \infcat. If $\lambda\gg \kappa$ is an uncountable regular cardinal such that $\mcc^\kappa$
		is essentially $\lambda$-small, then the full subcategory $\mcc^\lambda\subset \mcc$ is closed under all $\kappa$-small limits in $\mcc$.
	\end{lem}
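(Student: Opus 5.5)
The plan is to reduce the statement to the standard description of $\kappa$-compact objects as retracts of $\kappa$-small colimits of generators, and then to check that $\lambda$-compact objects are stable under $\kappa$-small limits by working in a model in which limits are computed pointwise.

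First, I would fix a small set $G$ of $\kappa$-compact generators, so that $\mcc\simeq\ind_\kappa(\mcc^\kappa)$. This lets me realize $\mcc$ as the full subcategory
\[
\funct^{\kappa\text{-lex}}(\mcc^{\kappa,\opp},\mcs)\subset\funct(\mcc^{\kappa,\opp},\mcs)
\]
of functors sending $\kappa$-small colimits to $\kappa$-small limits. This subcategory is closed under all small limits, and in particular limits in $\mcc$ are computed pointwise.

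Next, I would characterize $\lambda$-compact objects. Since $\mcc^\kappa$ is essentially $\lambda$-small and $\lambda\gg\kappa$ (so $\kappa\ll\lambda$ in the sense of HTT 5.4.2.8), the $\lambda$-compact objects of $\mcc$ should be exactly those functors $F$ that are $\lambda$-small. That is, $F(c)$ is $\lambda$-small for every $c\in\mcc^\kappa$; this is the criterion of HTT 5.4.7 / 5.3.5.12-style. The harder direction, and the main obstacle, is showing that a pointwise $\lambda$-small $\kappa$-lex functor is a $\lambda$-small $\kappa$-filtered colimit of representables, hence $\lambda$-compact. I would argue that the category of elements of $F$ is $\lambda$-small and $\kappa$-filtered (the $\kappa$-lex condition gives $\kappa$-filteredness), so $F$ is a $\lambda$-small $\kappa$-filtered colimit of $\kappa$-compacts. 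This uses that $\lambda$-small colimits of $\kappa$-compact objects are $\lambda$-compact, which requires $\lambda\geq\kappa$. Conversely, a $\lambda$-compact object is a retract of a $\lambda$-small colimit of objects of $\mcc^\kappa$. Evaluating at $c$ gives a $\lambda$-small colimit of the $\lambda$-small anima $\Map(c,c')$. These are $\lambda$-small because $\mcc^\kappa$ is essentially $\lambda$-small, and $\lambda$-small colimits of $\lambda$-small anima are $\lambda$-small for $\lambda$ regular.

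Finally, given a $\kappa$-small diagram of $\lambda$-compact objects, I would compute its limit pointwise. Each value is then a $\kappa$-small limit of $\lambda$-small anima. This is $\lambda$-small precisely when $\lambda$ is large relative to $\kappa$: one needs $\mu^{<\kappa}<\lambda$ for $\mu<\lambda$, which is what $\lambda\gg\kappa$ guarantees, and uncountability handles the $\omega$ case of finite-limit loops. By the characterization above, the limit is therefore $\lambda$-compact. Since this is exactly HTT 5.4.7.4, I would present the argument as a citation plus this sketch.
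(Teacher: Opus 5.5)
The paper gives no argument of its own here; it simply cites \cite[Proposition 5.4.7.4]{htt}. Your outline is the standard argument behind that result: model $\mathcal{C}$ as $\kappa$-lex presheaves on $\mathcal{C}^\kappa$, identify $\lambda$-compact objects with pointwise $\lambda$-small functors, then use that $\lambda$-small anima are closed under $\kappa$-small limits when $\kappa\ll\lambda$. Your direction ``pointwise $\lambda$-small $\Rightarrow$ $\lambda$-compact'' and your final limit step are fine. The converse direction, however, does not work as written. You present a $\lambda$-compact object as a retract of a $\lambda$-small colimit $\operatorname{colim}_i c_i$ with $c_i\in\mathcal{C}^\kappa$, and then evaluate at $c$ to get $\operatorname{colim}_i\operatorname{Map}(c,c_i)$. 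But colimits in $\operatorname{Fun}^{\kappa\text{-lex}}(\mathcal{C}^{\kappa,\mathrm{op}},\mathcal{S})$ are not computed pointwise. They are obtained by applying the left adjoint of the inclusion into presheaves to the pointwise colimit. Evaluation at $c$ is $\operatorname{Map}_{\mathcal{C}}(c,-)$, which commutes only with $\kappa$-filtered colimits. For instance, when $\mathcal{C}=\mathrm{Sp}$, $\kappa=\omega$ and $c$ is the sphere spectrum, evaluation is $\Omega^\infty$, which does not preserve pushouts. So you have not yet shown that $\lambda$-compact objects are pointwise $\lambda$-small. Your last step needs exactly this, to know that the values of your $\kappa$-small diagram are $\lambda$-small.

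The repair is to use a $\kappa$-filtered presentation, which costs a second application of $\kappa\ll\lambda$. Every $\lambda$-compact object is a retract of the colimit of a $\lambda$-small $\kappa$-filtered diagram in $\mathcal{C}^\kappa$ (cf.\ \cite[\S5.4.2]{htt}). You can also extract this from your own presentation. Take $p\colon K\to\mathcal{C}^\kappa$ with $K$ a $\lambda$-small simplicial set, and let $P$ be the poset of $\kappa$-small simplicial subsets $K'\subseteq K$. Then $\operatorname{colim}_K p\simeq\operatorname{colim}_{K'\in P}\operatorname{colim}(p|_{K'})$. Here $P$ is $\kappa$-filtered, and each $\operatorname{colim}(p|_{K'})$ is $\kappa$-compact. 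Moreover $P$ has cardinality at most $\mu^{<\kappa}$, where $\mu<\lambda$ is the number of simplices of $K$, and $\mu^{<\kappa}<\lambda$ precisely because $\kappa\ll\lambda$. Since $c\in\mathcal{C}^\kappa$ is $\kappa$-compact, $\operatorname{Map}_{\mathcal{C}}(c,-)$ commutes with such a colimit. As $\lambda$ is uncountable and regular, $\lambda$-small colimits and retracts of $\lambda$-small anima are again $\lambda$-small. With this fix your characterization of $\mathcal{C}^\lambda$, and hence the whole argument, goes through.
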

	\begin{lem}\label{ab6prod}
		Let $\mcb$ be a small \infcat admitting $\kappa$-small products, where $\kappa$ is a regular cardinal. Then:
		\enu{
			\item $\op{Ind}(\mcb)$ admits $\kappa$-small products too.
			\item Let  $\mcc$ be a presentable \infcat satisfying $\mathrm{AB6}$. If $\mcb \to \mcc$ is a functor preserving $\kappa$-small products, then the induced functor $\op{Ind}(\mcb)\xrightarrow{}\mcc$ preserves $\kappa$-small products.
		}
	\end{lem}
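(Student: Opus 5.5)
For (1), I would produce products in $\op{Ind}(\mcb)$ by hand. Given a $\kappa$-small family $(X_i)_{i\in I}$ with $X_i\simeq \varinjlim_{j_i\in J_i} b_{j_i}$, where each $J_i$ is filtered and $b_{j_i}\in\mcb$, I would take the candidate
\[
P:=\varinjlim_{(j_i)\in\prod_i J_i}\ \prod_i b_{j_i}\in\op{Ind}(\mcb).
\]
Here the inner product is formed in $\mcb$, and $\prod_i J_i$ is filtered because a product of filtered categories is filtered. To see that $P$ is a product, I would map into it from a representable $y(c)$ with $c\in\mcb$, using that $y(c)$ is compact in $\op{Ind}(\mcb)$:
\[
\Map(c,P)\simeq \varinjlim_{(j_i)}\Map_{\mcb}\Bigl(c,\prod_i b_{j_i}\Bigr)\simeq\varinjlim_{(j_i)}\prod_i\Map(c,b_{j_i}).
\]
On the other side,
\[
\prod_i\Map(c,X_i)\simeq\prod_i\varinjlim_{j_i}\Map(c,b_{j_i}).
\]
These two agree by $\mathrm{AB6}$ for $\mcs$, which was proved above. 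Since every object of $\op{Ind}(\mcb)$ is a filtered colimit of representables and $\Map(-,Y)$ turns colimits into limits, the identification extends to all sources. This gives (1).

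Moreover, (1) gives a formula: every $\kappa$-small product in $\op{Ind}(\mcb)$ is computed as $\varinjlim_{\prod J_i}\prod_i b_{j_i}$.

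For (2), let $F\colon\op{Ind}(\mcb)\to\mcc$ be the filtered-colimit-preserving extension. Applying $F$ to the formula from (1) gives
\[
F\Bigl(\prod_i X_i\Bigr)\simeq\varinjlim_{(j_i)}F\Bigl(\prod_i b_{j_i}\Bigr)\simeq\varinjlim_{(j_i)}\prod_i F(b_{j_i}),
\]
where the second step uses that $\mcb\to\mcc$ preserves $\kappa$-small products. By $\mathrm{AB6}$ for $\mcc$, applied to the diagrams $J_i\to\mcc$, $j_i\mapsto F(b_{j_i})$, the right-hand side is equivalent to
\[
\prod_i\varinjlim_{j_i}F(b_{j_i})\simeq\prod_i F(X_i).
\]

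The main subtlety is checking that this composite equivalence is the canonical comparison map $F(\prod X_i)\to\prod F(X_i)$. I would handle it by observing that every map in the chain is natural and compatible with the projections to each factor. The other point to watch is that the presentation $X_i=\varinjlim_{J_i} b_{j_i}$ can be chosen functorially, for example by taking $J_i=\mcb_{/X_i}$.
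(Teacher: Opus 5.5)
Your proof is correct and follows essentially the same route as the argument the paper cites (the proof of Efimov's Proposition~1.52). In both, a $\kappa$-small product in $\mathrm{Ind}(\mathcal{B})$ is computed as $\varinjlim_{(j_i)\in\prod_i J_i}\prod_i b_{j_i}$, using $\mathrm{AB6}$ for anima; part (2) then follows by combining preservation of products on $\mathcal{B}$ with $\mathrm{AB6}$ in $\mathcal{C}$.
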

	\begin{proof}
		See the proof of \cite[Proposition 1.52]{efimov2024k}.
	\end{proof}
	We will also use stability of the Grothendieck axioms under retracts. The following elementary categorical observation is a convenient way to express this stability in terms of left adjointable squares.
	
	\begin{lem}[{\cite[Lemma 1.48]{ramzi2024dualizable}}]\label{ramzi148}
		In $\Cat^\square:=\funct(\Delta^1\times\Delta^1,\Cat)$, retracts of vertically left adjointable squares are vertically left adjointable.
	\end{lem}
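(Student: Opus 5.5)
The plan is to unpack vertical left adjointability into two conditions and check that each passes to retracts. Write a square $\sigma$ in $\Cat^\square$ as top functor $h\colon A\to B$, bottom functor $k\colon C\to D$, vertical functors $v_A\colon A\to C$ and $v_B\colon B\to D$, and a commutativity witness $v_B h\simeq k v_A$. It is vertically left adjointable if (a) $v_A$ and $v_B$ admit left adjoints $L_A, L_B$, and (b) the Beck--Chevalley transformation $\alpha_\sigma\colon L_B k\to h L_A$ is an equivalence. Suppose $\sigma$ is a retract of a vertically left adjointable square $\tau$ (with vertices $A',B',C',D'$), via maps $i\colon\sigma\to\tau$ and $r\colon\tau\to\sigma$ with $r\circ i\simeq\id$. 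Each vertical edge of $\sigma$ is then a retract of the corresponding vertical edge of $\tau$ in $\funct(\Delta^1,\Cat)$.

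\textbf{Step 1: left adjoints.} I would use the criterion that $v_A$ admits a left adjoint iff, for every $c\in C$, the comma \infcat $A_{c/}:=C_{c/}\times_C A$ has an initial object. Functoriality of comma categories makes $A_{c/}$ a retract of $A'_{ic/}$, via $(c\to v_Aa)\mapsto(ic\to v_{A'}ia)\mapsto(ric\to v_A ria)=(c\to v_Aa)$. The natural candidate for the initial object is $r$ of the initial object $y=(ic\to v_{A'}L'ic)$, i.e.\ $x_0=(c\to v_A rL'ic)$.

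A diagram chase shows that $\Map(x_0,-)$ is always nonempty and that every map out of $x_0$ factors through the idempotent $e=r(f)$, where $f\colon y\to iry$ is the unique map. So $x_0$ is initial once $e\simeq\id$, or at least once $e$ splits, in which case its image is the initial object. This is the step I expect to be the main obstacle: a retract of a corepresentable functor is only corepresented after an idempotent splits.

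What I would try first is to avoid the issue entirely by passing to the $(\infty,2)$-categorical description. Vertically left adjointable squares are exactly those arising from commutative squares in the $(\infty,2)$-category of categories and left adjoints, equivalently from functors out of the walking adjunction, and ``being a right adjoint'' is detected by a map of $(\infty,2)$-functor categories. A retract of such data, taken inside the full subcategory of $\funct(\Delta^1,\Cat)$ on right adjoints, should stay there, because Lurie's adjunction theory shows this inclusion is cut out by a 2-categorical condition stable under retracts. If this fails, I would note that in the intended applications (and plausibly under \cite{ramzi2024dualizable}'s convention) the vertical functors are already known to be right adjoints, as for $\delta$ and $\Omega$ in \cref{ab6adjointable} and \cref{ab4staradjointable}. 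In that case only condition (b) needs proof.

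\textbf{Step 2: Beck--Chevalley.} Given left adjoints on both squares, the mate construction is functorial in maps of squares whose vertical edges commute with the adjoints up to the canonical mate 2-cells. Concretely, $i$ and $r$ induce transformations $L_Bk\Rightarrow r(L_{B'}k')i$ and back, and $\alpha_\sigma$ is a retract, in the arrow category of $\funct(C,B)$, of the whiskered transformation $r\,\alpha_\tau\, i$. Since $\alpha_\tau$ is an equivalence, so is $r\alpha_\tau i$. Retracts of equivalences are equivalences, so $\alpha_\sigma$ is an equivalence, which completes the argument modulo Step 1.
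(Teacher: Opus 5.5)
Your Step 2 is essentially right, but Step 1 has a genuine gap, and your suspicion about it is correct. The $(\infty,2)$-categorical bypass you propose cannot work, because being a right adjoint is \emph{not} closed under retracts in $\funct(\Delta^1,\Cat)$. Here is a counterexample. Let $A$ be the category with one object $x$ and $\End(x)=\{1,e\}$, $e^2=e$. Let $A^{\triangleleft}$ be $A$ with an initial object $-\infty$ adjoined. Send $-\infty\mapsto x$ and the unique map $-\infty\to x$ to $e$; this defines a functor $r\colon A^{\triangleleft}\to A$, since functoriality only requires $1\circ e=e\circ e=e$. It satisfies $r\circ i=\id_A$ for the inclusion $i$. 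Hence $(A\to *)$ is a retract of $(A^{\triangleleft}\to *)$. The latter has a left adjoint; the former does not, since $\End(x)$ is not a point and so $A$ has no initial object. The squares with these vertical edges and identity horizontal edges give a retract in $\Cat^\square$ of a vertically left adjointable square which is not vertically left adjointable.

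So, read literally over all of $\Cat$, the statement is false. What your argument is missing is a hypothesis, not a cleverer argument. You need either that the vertical functors of the retract are right adjoints (your fallback) or that the categories are idempotent complete. Either one covers every use in the paper, where the squares live in $\mathcal{P}\mathrm{r}^R$ ($\delta$, $\prod$, $\Omega$ between presentable categories). You should state that restriction explicitly rather than present it as a convenience.

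Under idempotent completeness, your own observation closes Step 1, provided you phrase it in a functor category. Otherwise you would have to upgrade the homotopy idempotent $e=r(f)$ to a coherent one before splitting it. For $c\in C$, let $\eta\colon ic\to v_{A'}L'ic$ be the unit. Then $\Map_C(c,v_A(-))$ is a retract in $\funct(A,\mathcal S)$ of the corepresentable $\Map_A(rL'ic,-)$. One map sends $\phi$ to $r\big((i\phi)^\flat\big)$; the other sends $g$ to $v_A(g)\circ r(\eta)$. Their composite is $r\big(v_{A'}(i\phi)^\flat\circ\eta\big)=r(i\phi)=\phi$. If $A$ is idempotent complete, retracts of corepresentables are corepresentable, so $L_A$ exists, and likewise $L_B$.

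For Step 2, make one point explicit. The connecting faces of $i$ and $r$ are only laxly compatible with the left adjoints, so the comparison maps you use are their mates $\beta^r$ and $\beta^i$. The retraction identity $r\beta^i\circ\beta^r i\simeq\id$ is the mate of the face $ri=\id$. The commutativity of these maps with $\alpha_\sigma$ and $r\alpha_\tau i$ is compatibility of mates with pasting around the cube. With that spelled out, your conclusion that a retract of an equivalence is an equivalence goes through.
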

	\begin{prop}\label{retractab6}
		Let \mcc be a presentable \infcat.
		\enu{\item If $\mcc$ is a retraction of \mcd in \prl such that \mcd is prestable and satisfies $\mathrm{AB4}^*$, then \mcc is prestable and satisfies $\mathrm{AB4}^*$ too.
			
			\item If $\mcc$ is a retraction of \mcd in \prl where \mcd satisfies $\mathrm{AB6}$, then \mcc satisfies $\mathrm{AB6}$ too.
			\item If \mcc is prestable, then it is complete if and only if tensoring with the following Postnikov limit diagram is still a limit in \prl.
		}
		$$\begin{tikzcd}
			& \vdots \arrow[d] \\
			& \opsp_{[0,1]} \arrow[d] \\
			\spgeq \arrow[r] \arrow[ru] \arrow[ruu] & \opsp_{=0}
		\end{tikzcd}$$
	\end{prop}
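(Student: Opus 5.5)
For (1) and (2), I will express each axiom as the vertical left adjointability of a square in $\Cat$ that depends functorially on $\mcc$, and then apply \cref{ramzi148}. Suppose $\mcc\xrightarrow{i}\mcd\xrightarrow{r}\mcc$ in \prl with $r\circ i\simeq\id$.

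For (1), prestability of $\mcc$ is exactly \cref{retractpst}. For $\mathrm{AB4}^*$, I use \cref{ab4staradjointable}: it is equivalent to the vertical left adjointability of the square of right adjoints formed by $\prod\colon\prod_I\mcc\to\mcc$ and $\Omega$. The difficulty is that this square is built from right adjoints, which are not obviously functorial in left adjoints like $i$ and $r$. I will therefore pass to the left adjoints and use the equivalent form: $\Sigma$ commutes with products. This is the horizontal adjointability of the square with horizontal arrows $\Sigma$ and vertical arrows the diagonals $\delta\colon\mcc\to\prod_I\mcc$. Since $i,r$ preserve colimits, they commute with $\delta$ and with $\Sigma$, so the assignment $\mcc\mapsto(\text{this square})$ is functorial in \prl. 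Hence the square for $\mcc$ is a retract of the square for $\mcd$, and \cref{ramzi148} gives adjointability for $\mcc$. Stated concretely, the right adjoints of the diagonals are $\prod$, and the Beck--Chevalley map $\Sigma\prod x_i\to\prod\Sigma x_i$ for $\mcc$ is a retract of the corresponding map for $\mcd$. For this last identification I must check that $i$ and $r$ commute with products up to the retraction. This is the point I expect to be the main obstacle, since left adjoints need not preserve products. To avoid it, I will phrase the retract purely in $\Cat^\square$, with horizontal arrows $\Sigma$ and vertical arrows $\delta$ as in \cref{ab6adjointable}, and let \cref{ramzi148} handle the coherence.

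For (2), I proceed in the same way using \cref{ab6adjointable}. I rewrite $\mathrm{AB6}$ as the horizontal left adjointability of the square whose horizontal arrows are $\op{colim}\colon\prod_i\funct(J_i,\mcc)\to\prod_i\mcc$ and $\op{colim}\colon\funct(\prod_iJ_i,\mcc)\to\mcc$, and whose vertical arrows are the restriction functors (the left adjoints of the $\prod$'s). Both are built only from colimits and precomposition, so $i$ and $r$ act on the square naturally, making $\mcc$'s square a retract of $\mcd$'s. \cref{ramzi148}, in its horizontal form obtained by transposing, finishes the argument.

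For (3), recall that $\mcc$ is complete if $\mcc\simeq\lim_n\tau_{\le n}\mcc$. I will identify $\tau_{\le n}\mcc\simeq\opsp_{[0,n]}\otimes\mcc$, which holds because $\opsp_{[0,n]}=\tau_{\le n}\spgeq$ is an idempotent $\spgeq$-algebra whose modules are the $n$-truncated objects \cite[C.4]{sag}. Under this identification, the maps $\tau_{\le n}\mcc\to\tau_{\le n-1}\mcc$ correspond to tensoring the transition maps. The canonical comparison $\mcc\to\lim_n\tau_{\le n}\mcc$ is then exactly $\mcc\otimes(-)$ applied to the cone of the Postnikov diagram. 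So completeness of $\mcc$ is equivalent to this tensored cone being a limit in \prl, where the limit is computed in \prl along left adjoints, as in the definition.
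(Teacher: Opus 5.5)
Your proposal is correct. Part (3) is exactly the paper's argument, namely the identification $\tau_{\le n}\mcc\simeq\mcc\otimes\opsp_{[0,n]}$. For (1) and (2) you take a different route.

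The paper removes the obstacle you flag (that the squares of \cref{ab4staradjointable} and \cref{ab6adjointable} consist of right adjoints) in one line. Since $\prl\simeq(\pr^R)^{\opp}$, the retraction $\mcc\xrightarrow{i}\mcd\xrightarrow{r}\mcc$ gives a retraction $\mcc\xrightarrow{r^R}\mcd\xrightarrow{i^R}\mcc$ in $\pr^R$. Right adjoints commute with $\prod$, $\Omega$ and the diagonals, so those squares are functorial and \cref{ramzi148} applies verbatim.

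You instead stay in $\prl$ and pass to the mate squares built from left adjoints, which are functorial for colimit-preserving functors. This is valid. It also shows why $i$ and $r$ need not preserve products: the Beck--Chevalley map for $\mcc$ is a retract of that of $\mcd$ whiskered by $i$ and $r$, via the mates of the side faces. The cost is that you must compute the mate squares and use the dual form of the lemma, and your write-up gets those details wrong in ways you should fix.

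\emph{Part (1).} ``$\Sigma$ preserves $I$-indexed products'' is the vertical \emph{right} adjointability of the square $\delta\circ\Sigma\simeq(\prod_I\Sigma)\circ\delta$. Its Beck--Chevalley map, built from the right adjoints $\prod$ of the vertical arrows $\delta$, is $\Sigma\prod_I x_i\to\prod_I\Sigma x_i$. It is not a horizontal adjointability: adjointability with respect to $\Sigma$, using $\Omega$, holds trivially.

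\emph{Part (2).} The left adjoint of $\prod\colon\prod_i\funct(J_i,\mcc)\to\funct(\prod_iJ_i,\mcc)$ is not a restriction functor. It is $G\mapsto(\pi_{i!}G)_i$, left Kan extension along the projections $\pi_i\colon\prod_kJ_k\to J_i$. It still commutes with $i$ and $r$, because left Kan extensions are computed pointwise by colimits.
\begin{itemize}
\item The resulting square $\delta\circ\op{colim}\simeq\op{colim}\circ(\pi_{i!})_i$ is functorial in $\prl$.
\item $\mathrm{AB6}$ is its vertical right adjointability. The Beck--Chevalley map, formed with the two product functors, is exactly $\op{colim}_{(j_i)\in\prod_iJ_i}\prod_i x_{j_i}\to\prod_i\op{colim}_{J_i}x_{j_i}$.
\item ``Horizontal left adjointability'' is not defined here, since $\op{colim}$ has no left adjoint in general. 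Horizontal right adjointability holds automatically, because products of filtered categories are filtered, hence weakly contractible. So the label you chose would either be meaningless or vacuous.
\end{itemize}

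\emph{The retract lemma.} \cref{ramzi148} concerns left adjointability. You obtain the right-adjointable version by applying it to the opposite squares, replacing every category and functor by its opposite. Transposing the square does not give it.
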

	\begin{proof}
		Part (3) follows, since $\mcc_{\leq n} \simeq \mcc \otimes \opsp_{[0,n]}$. 
		
		For (2), since \mcc is a retract of \mcd in \prl, it is also a retract of \mcd in $\pr^R$. Now the following diagram, which appeared in \cref{ab6adjointable},
		$$\begin{tikzcd}
			{\prod_i \funct(J_i,\mcc)} \arrow[r, "\prod"]    & {\funct(\prod_i J_i,\mcc)} \\
			\prod_i \mcc \arrow[u, "\delta"] \arrow[r, "\prod"] & \mcc \arrow[u, "\delta"]     
		\end{tikzcd} $$ 
		gives a functor $\pr^R\to\pr^{R,\square} $. Therefore  \cref{ramzi148} implies that presentable \infcats satisfying $\mathrm{AB6}$ are closed under retractions in $\pr^R$.
		
		For (1), we first note that \mcc is prestable by \cref{retractpst}. It therefore follows by the same argument as (2), due to \cref{ab4staradjointable}.
	\end{proof}
	We now combine the preceding ingredients to obtain the promised intrinsic characterization of dualizable additive \infcats. One direction follows from the stability of $\mathrm{AB4}^*$ and $\mathrm{AB6}$ under retracts. The converse uses Gabriel--Popescu together with $\mathrm{AB6}$ to show that the resulting localization preserves products, hence admits the internal adjoint required for dualizability.
	
	\begin{thm}\label{main1}
		Let $\mcc\in\prlad$. Then $\mcc$ is dualizable additive if and only if $\mcc$ is a separated Grothendieck prestable \infcat satisfying $\mathrm{AB4}^*$ and $\mathrm{AB6}$.
	\end{thm}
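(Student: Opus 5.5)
For the forward direction I will use \cref{ram149}(2): a dualizable additive $\mcc$ is a retract in $\prlad$ of some $\mathcal{P}_\Sigma(\mca)$ with $\mca$ small additive. I first check that $\mcd=\mathcal{P}_\Sigma(\mca)$ has all four properties.
\begin{enumerate}
\item It is Grothendieck prestable and separated, being the connective part of $\Fun^{\op{add}}(\mca^{\opp},\opsp)$. There the $t$-structure is defined pointwise, is compatible with filtered colimits, and is left complete.
\item It satisfies $\mathrm{AB4}^*$, since products are computed pointwise and connective spectra are closed under products.
\item It satisfies $\mathrm{AB6}$ by \cref{compab6}, as it is $\omega$-presentable.
\end{enumerate}
Then \cref{retractab6}(1),(2) gives that $\mcc$ is prestable with $\mathrm{AB4}^*$ and $\mathrm{AB6}$.

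The remaining properties are inherited along the retraction $\mcc\xrightarrow{i}\mcd\xrightarrow{r}\mcc$ as follows.
\begin{itemize}
\item \emph{AB5.} Both $i$ and $r$ are internal left adjoints, so they preserve finite limits and filtered colimits. Hence the map $\colim\lim\to\lim\colim$ in $\mcc$ (filtered colimits of finite limits) is a retract of the corresponding map in $\mcd$, which is an equivalence.
\item \emph{Separatedness.} Since $i$ and $r$ are exact and colimit preserving, $i$ commutes with truncations. So if $x$ is $\infty$-connective then so is $ix$, hence $ix=0$ and $x\simeq rix=0$.
\end{itemize}

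For the converse, let $\mcc$ be separated Grothendieck prestable with $\mathrm{AB4}^*$ and $\mathrm{AB6}$. It is presentable, so it is $\kappa$-presentable for some $\kappa$. Using \cref{compprod}, I choose a regular $\lambda\gg\kappa$ such that $\mcc_0:=\mcc^\lambda$ is closed under finite coproducts and $\kappa$-small products; more generally I want closure under products of any prescribed size below $\lambda$. The subcategory $\mcc_0$ generates under colimits, so it is $0$-generating. \cref{gplthm} then gives a left exact, colimit-preserving localization $F\colon\mathcal{P}_\Sigma(\mcc_0)\to\mcc$ with fully faithful right adjoint $G$.

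By \cref{ram149}(3), or (4), it suffices to show that $F$ has a colimit-preserving right adjoint after a modification, or equivalently to exhibit $\mcc$ as a retract of a compactly projectively generated category in $\prl$. The plan is to show that $F$ preserves small limits. Then $F$ has a left adjoint $F^L$ with $FF^L\simeq\id$, since $G$ is fully faithful and $F^L$ is a section. That is precisely the desired retraction, by \cref{ram149}(2).

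Because $F$ is left exact, preservation of limits reduces to preservation of small products. To prove it, I will use the factorization
\[
\mathcal{P}_\Sigma(\mcc_0)\simeq\Ind(\mcc_0')\text{-built by geometric realizations}.
\]
Concretely, every object of $\mathcal{P}_\Sigma(\mcc_0)$ is a geometric realization of filtered colimits of representables. Products in $\mathcal{P}_\Sigma(\mcc_0)$ commute with filtered colimits, by $\mathrm{AB6}$ on the presheaf side via \cref{compab6}. They also commute with geometric realizations, by \cref{prodcommutegeom} applied to $\mathcal{P}_\Sigma(\mcc_0)$. On the $\mcc$ side, products likewise commute with filtered colimits by $\mathrm{AB6}$, and with geometric realizations by \cref{prodcommutegeom}, which uses separatedness and $\mathrm{AB4}^*$. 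So it suffices to check that $F$ preserves products of representables. This would follow if the Yoneda embedding $\mcc_0\to\mathcal{P}_\Sigma(\mcc_0)$ preserved the relevant products, as in \cref{ab6prod}(2).

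\textbf{Main obstacle.} I expect the hard step to be handling products of arbitrary, not just $\kappa$-small, cardinality, together with the fact that $\mcc_0$ is only closed under $\kappa$-small products. I would first try to write an arbitrary product as a filtered colimit, via $\mathrm{AB6}$, of $\kappa$-small subproducts of $\lambda$-compact pieces, and then run the above argument with $\Ind$ and \cref{ab6prod}. If that fails, the fallback is to treat $\mathcal{P}_\Sigma(\mcc_0)\simeq\Ind$ of its finite-colimit-free part, and reduce to the $\kappa$-small case by an enlargement of $\lambda$ depending on the index set. Accessibility then gives a single $\lambda$ that works for each cardinality, and taking a union over $\lambda$ is compatible with the retraction.
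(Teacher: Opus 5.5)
Your converse is sound for a fixed cardinal and is essentially the paper's argument. Fix $\lambda\gg\kappa$ and resolve objects of $\mcp_\Sigma(\mcc^\lambda)$ by geometric realizations of ind-objects. Commuting products past geometric realizations (\cref{prodcommutegeom}) and past filtered colimits ($\mathrm{AB6}$) on both sides, and using \cref{compprod} for representables, shows that $F_\lambda\colon\mcp_\Sigma(\mcc^\lambda)\to\mcc$ preserves $\kappa$-small products.

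\textbf{The gap in the converse.} The genuine gap is the step you flag yourself: to get a left adjoint you need a \emph{single} functor preserving \emph{all} small products, and neither remedy supplies one. The first route cannot work. $\mathrm{AB6}$ only trades products of filtered colimits for filtered colimits of products over the same index set, so it never lowers the cardinality. Moreover, an $I$-indexed product is a limit, not a colimit, of its $\kappa$-small subproducts. The fallback is not an argument: no single $\mcc^\lambda$ is closed under all small products (unless $\mcc=0$), and $\bigcup_\lambda\mcc^\lambda=\mcc$ is not small, so \cref{gplthm} does not apply to it.

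\textbf{The missing idea: a transfer between cardinals.} Fix once and for all a small $0$-generating $\mcc_0$ closed under finite coproducts, with its localization $F\colon\mcp_\Sigma(\mcc_0)\to\mcc$. The paper takes $\mcc^{\omega_1}$, using \cite[Remark 1.54]{efimov2024k}; see also \cref{embedmodcat}.
\begin{itemize}
\item Given $\kappa$, choose $\lambda\gg\kappa$ with $\mcc_0\subset\mcc^\lambda$ and $\mcc^\lambda$ closed under $\kappa$-small products.
\item Then $F_\lambda\simeq F\circ\mathrm{res}$, where $\mathrm{res}\colon\mcp_\Sigma(\mcc^\lambda)\to\mcp_\Sigma(\mcc_0)$ is restriction. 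Both sides preserve colimits, and they agree on representables because $F(\Map(-,x)|_{\mcc_0})\simeq x$ by full faithfulness of $G$.
\item $\mathrm{res}$ preserves limits and has a fully faithful left adjoint $\iota_!$. Hence for a $\kappa$-small family $y_i\in\mcp_\Sigma(\mcc_0)$ you get $F(\prod_i y_i)\simeq F_\lambda(\prod_i\iota_!y_i)\simeq\prod_i F(y_i)$.
\end{itemize}
Since $F$ does not depend on $\kappa$, it preserves all small products.

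\textbf{Forward direction.} The justifications here are false, though the conclusions are recoverable.
\begin{itemize}
\item The maps in a retraction from \cref{ram149}(2) are merely colimit-preserving. For the canonical retraction, $r=c$ has right adjoint $x\mapsto\Map(-,x)|_{\mcc^{\omega_1}}$, which in general does not preserve sequential colimits.
\item Internal left adjoints need not be left exact anyway, e.g.\ $-\otimes_{\mathbf Z}\mathbf F_p\colon\modu_{\mathbf Z}(\spgeq)\to\modu_{\mathbf F_p}(\spgeq)$.
\item For $\mathrm{AB5}$ you only need $r$ to commute with finite limits and filtered colimits, since the comparison map for $x$ is $r$ applied to the comparison map for $ix$. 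This holds for $r=c$ from \cref{ram149}(4), which is a right adjoint.
\item For separatedness, \cref{lemma4.3}(1) shows that any right exact functor between prestable categories carries $\infty$-connective objects to $\infty$-connective objects, so exactness of $i$ is not needed.
\end{itemize}
The paper instead inherits completeness via \cref{retractab6}(3).
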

	\begin{proof}
		Let $\mcc$ be a \dualaddinfcat. By \cref{ram149}, $\mcc$ is a retract of $\mcp_\Sigma(\mca)$ in \prlad, where $\mca$ is a small additive \infcat. Note that $\mcp_\Sigma(\mca)$ is complete Grothendieck prestable and satisfies both $\mathrm{AB4}^*$ (since the inclusion $\funct^\times(\mca^\opp,\spgeq) \hookrightarrow \funct^\times(\mca^\opp,\opsp)$ is closed under products) and $\mathrm{AB6}$ (\cref{compab6}). Consequently, $\mcc$ is also complete Grothendieck prestable and satisfies $\mathrm{AB4}^*$ and $\mathrm{AB6}$ by \cref{retractab6}.
		
		For the converse direction, let $\mcc$ be a separated Grothendieck prestable
		 \infcat satisfying $\mathrm{AB4}^*$ and $\mathrm{AB6}$. By \cite[Remark 1.54]{efimov2024k}, $\mcc$ is compactly
		 assembled, and so in particular $\omega_1$-compactly generated.
		 We wish to show the functor
		  $c:\mcp_\Sigma(\mcc^{\omega_1})\to\mcc$ admits a fully faithful left adjoint, 
		  or equivalently show $c$ is limit-preserving. By \cref{gplthm}, we see that 
		  $c$ is a left exact localization, so it suffices to show that $c$ preserves small products.
		
		Let $\kappa$ be an arbitrary uncountable regular cardinal. We want to show that 
		the functor $c:\mcp_\Sigma(\mcc^{\omega_1})\to\mcc$ preserves $\kappa$-small products.
		 Choose a regular cardinal $\lambda$ such that $\lambda\gg  \kappa$ and $\mcc^\kappa$
		is essentially $\lambda$-small. Then $c_\lambda:\mcp_\Sigma(\mcc^{\lambda})\to\mcc$ factors through $c$ via restriction $\mcp_\Sigma(\mcc^{\lambda})\to\mcp_\Sigma(\mcc^{\omega_1})$ (which has a fully faithful left adjoint, and is in particular essentially surjective). So it suffices to show that $c_\lambda:\mcp_\Sigma(\mcc^{\lambda})\to\mcc$ preserves $\kappa$-small products. Firstly, we decompose $c_\lambda$ as the following adjunctions:
		$$\mcp_\Sigma(\mcc^{\lambda})\underset{i}{\stackrel{l}{\rightleftarrows}}\op{Ind}(\mcc^{\lambda})\underset{h}{\stackrel{p}{\rightleftarrows}}\mcc .$$
		Now given a $\kappa$-small set of objects $\{y_i\}$ in $\mcp_\Sigma(\mcc^{\lambda})$, then by \cref{propa5} each $y_i$ can be written as the geometric realization $|z_{i,\bullet}|$ for which every $z_{i,n}$ lies in $\op{Ind}(\mcc^{\lambda})$. By \cref{prodcommutegeom}, we are reduced to showing the functor $\op{Ind}(\mcc^{\lambda})\xrightarrow{p}\mcc$ preserves $\kappa$-small products. That is implied by \cref{ab6prod} due to facts that $\mcc$ satisfies $\mathrm{AB6}$ and that $\mcc^\lambda\subset\mcc$ is closed under $\kappa$-small products by \cref{compprod}.
	\end{proof}
	\begin{rem}\label{rem:Roos}
		Theorem~\ref{main1} is a prestable version of a very classical result about abelian categories. 
		Let $\sA$ be a Grothendieck abelian category with a generator $G$. Gabriel--Popescu theorem provides a localization functor $\mathrm{Mod}_{\op{End}_{\sA}(G)}(\op{Ab}) \stackrel{Q}\to \sA$. 
		Roos's theorem \cite{Roos1965} states that $\sA$ satisfies $\mathrm{AB4}^*$ and $\mathrm{AB6}$ if and only if $Q$ admits a left adjoint (cf. Theorem~\ref{ram149}(4)). 
		
		Moreover, Grothendieck abelian categories satisfying $\mathrm{AB4}^*$ and $\mathrm{AB6}$ are exactly dualizable Grothendieck abelian categories 
		\cite[Corollary~3.7]{kanda2024moduletheoreticapproachdualizablegrothendieck}.
	\end{rem}

	\begin{rem}\label{embedmodcat}
	The argument in Theorem~\ref{main1} also shows that if $\mcc$ is a dualizable additive $\infty$-category and $\mcc_0 \subset \mcc$ is a small $0$-generating subcategory closed under finite coproducts, then the canonical map $\mcp_\Sigma(\mcc_0) \to \mcc$ is a left exact localization that preserves all small limits and colimits, and thus admits a left adjoint.
		
		Indeed, we first choose a large enough $\lambda$ such that $\mcc_0\subset\mcc^\lambda$. We also observe that $c:\mcp_\Sigma(\mcc^{\lambda})\to\mcc$ preserves small products (by the argument above) and factors through  $\mcp_\Sigma(\mcc_0)\to\mcc$ via the restriction functor $\mcp_\Sigma(\mcc^{\lambda})\to \mcp_\Sigma(\mcc_0)$ (which is a limit-preserving localization).
		
		In particular, let $x\in \mcc$ be a generator (which always exists because we can take the direct sum). Then the induced functor\footnote{$\op{End}(x)$ here denotes the mapping \emph{connective} spectrum instead of the mapping spectrum.} $$l_x: \modu_{\op{End}(x)}(\spgeq)\to \mcc$$  is a left exact localization  because $\modu_{\op{End}(x)}(\spgeq)\simeq \mcp_\Sigma(\mcc_x)$ where $\mcc_x$ denotes the full subcategory of $\mcc$ spanned by $\{x^{\oplus^n}|n\geq 0\}$, \cite[see][Remark C.2.1.9]{sag}. Now  $l_x$ admits a left adjoint by the same argument, which implies that any dualizable additive \infcat \mcc admits a strongly continuous embedding $$\mcc\hookrightarrow\modu_{R}(\spgeq)$$ for some connective \eonering $R$.
	\end{rem}
	We next translate the intrinsic characterization into the language of $t$-structures. This identifies precisely which $t$-categories arise from dualizable additive categories.
\begin{de}\label{de:strongly_cocont}
	We denote by $\prtil$ the (non-full) subcategory of $\prtrex$ with the same objects and whose morphisms are the \textbf{strongly $t$-continuous} functors (i.e., right $t$-exact, colimit-preserving functors that admit a $t$-exact, colimit-preserving right adjoint). Equivalently, these morphisms are precisely the internal left adjoints in $\prtrex$.
\end{de}

\begin{cor}\label{dualaddembst}
	The fully faithful embedding $\op{Groth} \hookrightarrow \prtrex$ from \cref{pret} restricts to a fully faithful embedding 
	\[ \praddbl \hookrightarrow \prtil. \]
	Its essential image consists of those separated Grothendieck $t$-categories that satisfy both $\mathrm{AB4}^*$ and $\mathrm{AB6}$.
\end{cor}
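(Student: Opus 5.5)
The plan is to combine \cref{main1} with \cref{pret}, after checking that the morphisms match. By \cref{pret}(3) the functor $\mcc\mapsto(\opsp(\mcc),\opsp(\mcc)_{\geq0})$ is a fully faithful embedding $\op{Groth}\hookrightarrow\prtrex$. Its essential image is the Grothendieck $t$-categories: accessible, right complete, and compatible with filtered colimits. On objects, \cref{main1} says that the dualizable additive categories inside $\op{Groth}$ are exactly the separated ones satisfying $\mathrm{AB4}^*$ and $\mathrm{AB6}$. Transporting this along the embedding gives the claimed essential image. Here separatedness, $\mathrm{AB4}^*$ and $\mathrm{AB6}$ of a $t$-category are read on its connective part, or equivalently on the stabilization via \cref{comple} and \cref{ab6eq}.

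For morphisms, let $\mcc,\mcd$ be dualizable additive, so both are Grothendieck prestable by \cref{main1}. I must show that $f\colon\mcc\to\mcd$ in $\prlad$ is an internal left adjoint exactly when $\opsp\otimes f$ is strongly $t$-continuous. By \cref{prestemb}, colimit-preserving functors $\mcc\to\mcd$ correspond bijectively (fully faithfully) to right $t$-exact colimit-preserving functors $\opsp(\mcc)\to\opsp(\mcd)$.

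Suppose $f^R$ preserves colimits. Then $f^R$ lies in $\prlad$ and induces $\opsp\otimes f^R$, which is right $t$-exact. The adjunction $f\dashv f^R$ is a 2-categorical datum in $\prlad$, and $\opsp\otimes-$ is a 2-functor, so it carries it to an adjunction $\opsp\otimes f\dashv\opsp\otimes f^R$. The right adjoint $\opsp\otimes f^R$ preserves colimits and is right $t$-exact. It is also left $t$-exact, because a right adjoint of a right $t$-exact functor is automatically left $t$-exact. Hence it is $t$-exact, and $\opsp\otimes f$ is strongly $t$-continuous.

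Conversely, suppose $G$ is a $t$-exact colimit-preserving right adjoint of $\opsp\otimes f$. Then $G$ restricts to a colimit-preserving functor $g\colon\mcd\to\mcc$ on connective parts. For connective $x$ and $y$, $\Map(f x,y)\simeq\Map(x,Gy)=\Map(x,gy)$, so $g$ is right adjoint to $f$. Thus $f$ is an internal left adjoint.

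Composition and identities are preserved because $\opsp\otimes-$ is a functor, so the embedding restricts to a fully faithful functor $\praddbl\hookrightarrow\prtil$. Faithfulness and fullness on mapping spaces follow from \cref{prestemb}: the spaces of internal left adjoints are unions of components of the full mapping spaces, and these components correspond on both sides.

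I expect the main obstacle to be the bookkeeping in the morphism step, namely verifying that the $\opsp\otimes-$ image of the adjunction is the given adjunction. Checking on connective objects as above avoids that coherence issue, because adjointness is a property of the pair of functors.
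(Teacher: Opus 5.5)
Your proposal is correct and follows the paper's own route. Objects are handled by combining \cref{main1} with \cref{pret}. Morphisms are handled by \cref{prestemb} together with the identification $(\opsp\otimes f)^R\simeq\opsp\otimes f^R$ when $f^R$ preserves colimits, which the paper records in the remarks after the corollary via a horizontal adjointability check. Where the paper uses that check, you use $2$-functoriality of $\opsp\otimes-$ and a direct restriction to connective parts. Both versions give the same criterion: $f$ is an internal left adjoint if and only if the right adjoint of $\opsp\otimes f$ is colimit-preserving and right $t$-exact, with left $t$-exactness automatic.
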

	
	\begin{rem}
		The description of the morphisms in this embedding is also compatible with stabilization. In particular,  for a left adjoint functor $F:\mcc\to\mcd$ between dualizable additive \infcats, it is an internal left adjoint if and only if the functor $(\opsp\otimes F)^R\simeq\opsp(F^R)$, i.e. the right adjoint of $\opsp\otimes F :\opsp(\mcc)\to\opsp(\mcd)$, is right $t$-exact and preserves colimits.
	\end{rem}

	\begin{rem}
		The identification $(\opsp\otimes F)^R\simeq\opsp(F^R)$ used above is a special case of the following general observation.	Let $\mcc,\mcd\in\prl$. Let $\funct^{LR}(\mcd,\mcc)$ denote the full subcategory of those functors preserving both colimits and limits. Then the following diagram is commutative.
		$$\begin{tikzcd}
			{\funct^{LR}(\mcd,\mcc)} \arrow[rr, hook] \arrow[d, hook] &  & {\funct^{R}(\mcd,\mcc)} \arrow[d, "\opsp(-)"] \\
			{\funct^{L}(\mcd,\mcc)} \arrow[rr, "\opsp\otimes-"]       &  & {\funct(\op{Sp}(\mcd),\op{Sp}(\mcc))}        
		\end{tikzcd}$$
		To see this, take an arbitrary $g\in\funct^{LR}(\mcd,\mcc)$. Due to the natural equivalence $(\opsp\otimes g)^R\simeq \opsp(g^R)$ from the Lurie tensor product, it suffices to check that the following diagram is horizontally right adjointable:
		$$\begin{tikzcd}
			\mcc \arrow[r, "g^R"', dashed, shift right]                             & \mcd \arrow[l, "g"', shift right]                             \\
			\op{Sp}(\mcc) \arrow[u] \arrow[r, "\op{Sp}(g^R)"', dashed, shift right] & \op{Sp}(\mcd) \arrow[u] \arrow[l, "\op{Sp}(g)"', shift right]
		\end{tikzcd}$$
		which is implied by the fact that $g$ is a limit-preserving left adjoint.
	\end{rem}

	\begin{cor}\label{dualadst}
		Let $\mcc$ be a separated Grothendieck  prestable \infcat satisfying $\mathrm{AB4}^*$. Then \mcc is dualizable additive if and only if $\opsp(\mcc)$ is dualizable stable.
	\end{cor}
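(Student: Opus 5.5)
The forward direction is formal. If $\mcc$ is dualizable additive, then $\opsp(\mcc)\simeq\opsp\otimes\mcc$ is the image of a dualizable object under the symmetric monoidal functor $\opsp\otimes-\colon\prlad\to\prlst$, so it is dualizable stable. This was already observed in the proposition following the definition of dualizable additive categories.

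For the converse I would use \cref{main1}. Since $\mcc$ is assumed separated, Grothendieck prestable and $\mathrm{AB4}^*$, it only remains to show that $\mcc$ satisfies $\mathrm{AB6}$. By \cref{ab6eq}, which applies because $\mcc$ satisfies $\mathrm{AB4}^*$, this is equivalent to $\opsp(\mcc)$ satisfying $\mathrm{AB6}$. So the proof reduces to the claim that a dualizable presentable stable \infcat satisfies $\mathrm{AB6}$.

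To prove that claim I would mirror the first half of the proof of \cref{main1}. By the stable analogue of \cref{ram149} (Lurie's \cite[Proposition D.7.3.1]{SAG}, or \cite{ramzi2024dualizable} with $\mathcal V=\opsp$), a dualizable stable \infcat $\opsp(\mcc)$ is a retract in $\prl$ of a compactly generated stable \infcat $\op{Ind}(\mcd)$. The latter is $\omega$-presentable, so it satisfies $\mathrm{AB6}$ by \cref{compab6}. Then \cref{retractab6}(2) transfers $\mathrm{AB6}$ to the retract $\opsp(\mcc)$. (Alternatively, one could cite \cite[Proposition 1.52]{efimov2024k} directly.)

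The main obstacle is making sure the retraction is in $\prl$ with left adjoints, so that it is also a retract in $\pr^R$ as \cref{retractab6}(2) requires. This holds because the standard retraction $\hat y\colon\opsp(\mcc)\to\op{Ind}(\opsp(\mcc)^{\omega_1})$, $\op{colim}\colon\op{Ind}(\opsp(\mcc)^{\omega_1})\to\opsp(\mcc)$ consists of left adjoints. With that in place, combining with \cref{main1} finishes the proof.
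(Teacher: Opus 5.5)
Your proof is correct and follows essentially the paper's route: \cref{main1} reduces the question to $\mathrm{AB6}$, and \cref{ab6eq} transfers $\mathrm{AB6}$ between $\mcc$ and $\opsp(\mcc)$. The differences are cosmetic. The paper cites \cite[Proposition 1.53]{efimov2024k} for ``dualizable stable $\Leftrightarrow$ $\mathrm{AB6}$'' and so handles both directions at once. You instead obtain the forward direction from the symmetric monoidality of $\opsp\otimes -$, and reprove the needed implication with \cref{compab6} and \cref{retractab6}(2).
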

	\begin{proof}
	Combining \cref{main1} and \cite[Proposition 1.53]{efimov2024k}, it suffices to show that $\mcc$ satisfies $\mathrm{AB6}$ if and only if $\opsp(\mcc)$ does.	That follows directly from  \cref{ab6eq}.
	\end{proof}
	We conclude the section with some examples and non-examples illustrating the scope of the characterization.

		\begin{exam}\label{exm:top_exm}\,
		\enu{
			\item If $X$ is a locally profinite space, then the \infcat $$\shv(X;\spgeq)$$ of connective sheaves is dualizable additive. Indeed, it is a retract of $\shv(X^+;\spgeq)$ in \prlad, where $X^+$ denotes the one-point compactification of $X$. By assumption, $X^+$ is profinite and hence $\shv(X^+;\spgeq)\simeq \modu_{\mathbb{S}^{X^+}}(\spgeq)$ is dualizable additive.
			\item 
			Let $G$ be an acyclic group, i.e. satisfying $\widetilde{\mathrm{H}}_*(BG;\mathbb{Z}) = 0$. Let $R$ be a connective ring spectrum. In this case $R[G]\to R$ is a 
			$\pi_0$-surjective homological epimorphism, so in particular by \cref{classsub} the kernel 
			\[
			\amodu^{\op{cn}}_{(R[G],I)}=\ker(\modu_{R[G]}^{\op{cn}} \to \modu_{R}^{\op{cn}})
			\]
			is a dualizable additive \infcat. 
		}
	\end{exam}

\begin{rem}\label{counterexamdualstnotdualadd}
	The hypotheses in \cref{main1} are genuinely restrictive. In particular, stable dualizability alone does not imply additive dualizability at the connective level.
	
	For example, for an arbitrary compact Hausdorff space $X$, the category $\shv(X;\spgeq)$ is not necessarily  dualizable additive, as it need not be hypercomplete (see, e.g., \cite[Counterexample 6.5.4.8]{htt}); nevertheless, its stabilization is  dualizable stable. This demonstrates that additive dualizability is a strictly stronger condition than stable dualizability.
\end{rem}

	\begin{rem}
		When $R$ is a regular ring, the Farrell--Jones conjecture \cite[Section~1.6]{FarrellJones1993} predicts that for any torsion-free acyclic group $G$ the map $K(R) \to K(R[G])$ is an equivalence. 
		In particular, if it holds, the map $K(R[G]) \to K(R)$ should also be an equivalence, and we should have $$K^{\op{cont}}(\amodu_{(\mathbb{S}[G],I)})=0.$$
		Moreover, the main results of \cite{BunkeKasprowskiWinges2026} show that the analog of the Farrell--Jones conjecture for an arbitrary lax monoidal finitary localizing invariant $E$ holds in many
		cases as well, which means that $E^{\op{cont}}(\amodu_{(\mathbb{S}[G],I)})$ vanishes. We wonder whether the methods of dualizable additive categories and prestable motives 
		(see \cref{sec:prestable_motives}) can be used to analyze the conjecture in the unknown cases.
	\end{rem}
	\section{Additive short exact sequences}\label{section3}
	In this section, we analyze short exact sequences of $t$-categories, investigating how properties of 
	$t$-structures, such as right completeness and Grothendieck's axioms, behave with respect to fibers, cofibers, and extensions. Building on these results, we then analyze short exact sequences of \dualaddinfcats.

	\subsection{Strongly $t$-continuous short exact sequences}
	We begin by isolating a specific class of localizations that interact harmoniously with $t$-structures. The 
	assumption of a localization being strongly $t$-continuous (see \cref{de:strongly_cocont}) 
	guarantees that connective objects are strictly preserved and reflected under stabilization and quotient operations. 
	
	The key additional condition is a connectivity requirement for the unit of the adjunction; this ensures that the induced localization on connective parts behaves like an internal localization of prestable categories.
	\begin{de}
	Let $F:\mcb\to\mcc \in\prtil$ be a strongly $t$-continuous functor between presentable $t$-categories. We say $F$ is a \textbf{connective internal localization} if $F$ is a localization such that for any 
	$x\in \mcb_{\geq 0}$, the unit map $x\to F^RF(x)$ has connective fiber in \mcb.
	\end{de}
	The following proposition shows that this notion is precisely what is needed to make kernels and cokernels in the strongly $t$-continuous world agree with the expected Verdier constructions. In particular, short exact sequences in $\prtil$ can be described either by fully faithful inclusions or by connective internal localizations.
	
	\begin{prop}\label{shortexactt}
	
		\begin{enumerate}
			\item Let $i:\mca\hookrightarrow \mcb$ be a fully faithful strongly $t$-continuous functor between presentable $t$-categories, i.e. a fully faithful morphism in \prtil. The cofiber $\mcc:=\coker(i)$ in \prtil is created in \prtrex and $p:\mcb\to\mcc$ is a connective internal localization. Furthermore, the induced sequence
			$$\mca\xrightarrow{i}\mcb \xrightarrow{p}\mcc$$ is a bifiber sequence in \prtil.
			\item Let $p:\mcb\xrightarrow{} \mcc$ be a  connective internal localization in \prtil. The fiber $\mca:=\ker(p)$ in \prtil is created in \prtrex and the induced sequence
			$$\mca\xrightarrow{i}\mcb \xrightarrow{p}\mcc$$ is a bifiber sequence in \prtil.
			
		\end{enumerate}
		
	\end{prop}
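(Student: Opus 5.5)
The plan is to reduce everything to the stable Verdier sequence on underlying categories and then identify the connective parts. Recall that morphisms in \prtil are colimit-preserving, right $t$-exact functors whose right adjoints are colimit-preserving and $t$-exact.

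For (1), let $i\colon\mca\hookrightarrow\mcb$ be fully faithful in \prtil. Then $i^R$ preserves colimits, so $i$ is an internal left adjoint. Hence the cofiber $\mcc=\mcb/\mca$ in \prlst is computed as $\ker(i^R)$, with $p\colon\mcb\to\mcc$ a localization whose right adjoint $p^R$ is the inclusion of $\ker(i^R)$. In particular $p^R$ preserves colimits.

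I would equip $\mcc$ with the cocartesian (pushforward) core $\mcc_{\geq0}$, generated by $p(\mcb_{\geq0})$; by \cref{colimt} this is the cofiber in \prtrex. The key claim is that $\mcc_{\geq0}$ is exactly $p(\mcb_{\geq0})$ and that $p^R$ is $t$-exact.

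\emph{Right $t$-exactness of $p^R$.} For $x\in\mcb_{\geq0}$ there is a fiber sequence $ii^Rx\to x\to p^Rpx$. Since $i^R$ is $t$-exact and $i$ is right $t$-exact, $ii^Rx$ is connective, so $p^Rpx$ is a cofiber of connective objects and hence connective. This gives $p^R(p(\mcb_{\geq0}))\subset\mcb_{\geq0}$. The subcategory $\{c : p^Rc\in\mcb_{\geq0}\}$ is closed under colimits and extensions because $p^R$ is exact and colimit-preserving, so $p^R(\mcc_{\geq0})\subset\mcb_{\geq0}$. It follows that $\mcc_{\geq0}=p(\mcb_{\geq0})$ exactly, since any $c\in\mcc_{\geq0}$ satisfies $c\simeq pp^Rc$ with $p^Rc$ connective.

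\emph{Left $t$-exactness of $p^R$.} This is adjunction: $\Map(\mcb_{\geq1},p^Rc)=\Map(p\mcb_{\geq1},c)=0$ for $c\in\mcc_{\leq0}$, using that $p$ is right $t$-exact by construction.

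\emph{Connectivity of the unit.} The fiber of $x\to p^Rpx$ is $ii^Rx$, which is connective as shown above. So $p$ is a connective internal localization.

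For the bifiber property, I would check the two universal properties. Given a morphism $F\colon\mcb\to\mcd$ in \prtil killing $\mca$, it factors through $\mcc$ via $Fp^R$. This factorization is right $t$-exact because $\mcc_{\geq0}=p(\mcb_{\geq0})$, and its right adjoint $pF^R$ is $t$-exact and colimit-preserving. For the kernel, the kernel of $p$ is $\mca$, and the connective objects there are recovered because $i^R$ is $t$-exact.

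For (2), let $p$ be a connective internal localization. Set $\mca=\ker(p)$, with inclusion $i$ and right adjoint $i^R=\fib(\id\to p^Rp)$, which is colimit-preserving. I would give $\mca$ the cartesian core $\mca_{\geq0}=\mca\cap\mcb_{\geq0}$, which by \cref{colimt} is the fiber in \prtrex. The main point, and the expected obstacle, is showing that $i^R$ is $t$-exact.

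\emph{Right $t$-exactness of $i^R$.} This is exactly the hypothesis that the fiber of the unit is connective.

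\emph{Left $t$-exactness of $i^R$.} For $x\in\mcb_{\leq0}$, the object $p^Rpx$ lies in $\mcb_{\leq0}$ because $p^R$ is $t$-exact. So $i^Rx$, the fiber of a map between coconnective objects, lies in $\mcb_{\leq1}$. To get it into $\mcb_{\leq0}$ I would test against $a\in\mca_{\geq1}$: $\Map(a,i^Rx)=\Map(ia,x)=0$. I also need the truncations of $\mca$ to be computed in $\mcb$. This holds because $\mca_{\geq0}$ is the cartesian core: $\mca$ is closed under $\tau_{\geq0}$, since $p$ is $t$-exact. Indeed $p$ is left $t$-exact because its right adjoint $p^R$ is $t$-exact and fully faithful, so $p\,\tau_{\geq0}a=\tau_{\geq0}pa=0$.

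Once $i^R$ is $t$-exact, $i$ is a fully faithful morphism in \prtil. Part (1) then identifies its cofiber with $\mcb\to\mcc$, because the localizations agree on underlying categories and the core on $\mcc$ is $p(\mcb_{\geq0})$, using essential surjectivity of $p$ on connectives via $p^R$. This gives the bifiber sequence.
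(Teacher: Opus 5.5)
Your part (1) is correct and is essentially the paper's argument. The problem is in part (2), at exactly the step you flagged as the obstacle. You claim that $p$ is left $t$-exact because $p^R$ is $t$-exact and fully faithful. From this you deduce that $p^Rpx\in\mcb_{\leq0}$ for $x\in\mcb_{\leq0}$ and that $\mca$ is closed under $\tau_{\geq0}$ in $\mcb$, aiming to show $ii^Rx\in\mcb_{\leq0}$. All of these statements are false in general.

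Take $A=k[t^{\mathbb Q_{\geq0}}]$ with its augmentation $A\to k$. This is a $\pi_0$-surjective homological epimorphism, since the augmentation ideal is idempotent and flat (a union of the free submodules $t^{1/n!}A$). So $p=k\otimes_A-$ is a connective internal localization, but $p(A/t)\simeq k\oplus\Sigma k$; hence $p$ is not left $t$-exact.

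Worse, your intermediate target fails. Take $R=A\otimes_kA=k[s^{\mathbb Q_{\geq0}},t^{\mathbb Q_{\geq0}}]\to k$, again a homological epimorphism by K\"unneth. Consider the discrete module $x=Q\otimes_kQ$ with $Q=A[t^{-1}]/A$. Writing $Q=\colim_n A/t^n$ along multiplication by $t$, which is $0$ on $\mathrm{Tor}_0(k,-)$ and an isomorphism on $\mathrm{Tor}_1(k,-)$, gives $k\otimes_AQ\simeq\Sigma k$. Hence $k\otimes_Rx\simeq\Sigma^2k$, so $ii^Rx\simeq\fib(x\to\Sigma^2k)$ has $\pi_1=k$. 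Consequently:
\begin{itemize}
\item $ii^Rx\notin\mcb_{\leq0}$;
\item $\tau_{\geq1}(ii^Rx)\simeq\Sigma k\notin\mca$, so $\mca$ is not closed under truncation in $\mcb$;
\item for the $t$-structure with $\mca_{\geq0}=\mca\cap\mcb_{\geq0}$, one has $\mca_{\leq0}\not\subset\mcb_{\leq0}$.
\end{itemize}

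None of this is needed. Left $t$-exactness of $i^R\colon\mcb\to\mca$ means $i^R(\mcb_{\leq0})\subset\mca_{\leq0}$. Here $\mca_{\leq0}$ is by definition the right orthogonal of $\mca_{\geq1}$ inside $\mca$, not $\mca\cap\mcb_{\leq0}$. Your own computation $\Map(a,i^Rx)\simeq\Map(ia,x)=0$ for $a\in\mca_{\geq1}$ (using $i(\mca_{\geq1})\subset\mcb_{\geq1}$) already proves this. It is just the general fact that the right adjoint of a right $t$-exact functor is left $t$-exact.

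So discard the detour through $\mcb_{\leq1}$ and truncations computed in $\mcb$. Right $t$-exactness of $i^R$ comes from the connectivity of the unit fiber, so $i$ is a fully faithful morphism of \prtil. Your reduction of (2) to (1) then goes through: identify $\mcb/\mca\simeq\mcc$ with $\mcc_{\geq0}=p(\mcb_{\geq0})$ via $c\simeq pp^Rc$. Likewise, in (1) the left $t$-exactness of $pF^R$ should be justified the same way, as the right adjoint of the right $t$-exact functor $Fp^R$, and not via any $t$-exactness of $p$.
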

	\begin{proof}
		(1) Denote by $\mcc:=\coker(i)$ the quotient in \prlst. Then $\mca\xrightarrow{i}\mcb \xrightarrow{p}\mcc$ is a short exact sequence in $(\prlst)^{iL}$. We endow \mcc with a $t$-structure obtained through $\mcc_{\geq 0}$ being generated by $p(\mcb_{\geq 0})$ under small colimits and extensions. Then $(\mcc,\mcc_{\geq 0})$ can be identified with $\coker(i)$ in \prtrex by \cref{colimt}. By the recollement decomposition for arbitrary $x\in\mcb$ $$ii^R(x)\to x \to p^Rp(x),$$ 
		we conclude that $x\in \mcb_{\geq 0} \implies p^Rp(x)\in \mcb_{\geq 0}$. Since $\mcc_{\geq 0}$ is generated by $p(\mcb_{\geq 0})$ under small colimits and extensions, we get $p^R(\mcc_{\geq 0})\subset \mcb_{\geq 0}$. Therefore $p\dashv p^R$ restricts to an internal localization in \prlad
		$$\begin{tikzcd}
			\mcb_{\geq0} \arrow[r, "p|_{\geq0}", shift left=1ex]   & \arrow[l,"p^R|_{\geq0}","\perp"', hook', shift left=1ex] \mcc_{\geq0}
		\end{tikzcd}.$$
		By the recollement decomposition again, $p$ is a connective internal localization as desired.
		We wish to show that $\mcc$ is also the cofiber in \prtil. 
		Let $f:\mcb\to\mce \in \prtil$ be a morphism such that the precomposition with \mca is zero. We need to show that the induced functor $\mcc\to \mce$ is strongly $t$-continuous. That follows from the following factorization and the fact that $p^R$ reflects connective objects.
		$$\begin{tikzcd}
			& \mce \arrow[ld, "f^R"'] \arrow[d, dashed] \\
			\mcb & \mcc \arrow[l, "p^R"]                   
		\end{tikzcd}$$
		Now we wish to show that $\mca\xrightarrow{i}\mcb \xrightarrow{p}\mcc$ is also a fiber sequence in \prtil. By the recollement and the fact that $i$ reflects connective objects, we conclude that $$\mca_{\geq0}\xrightarrow{i}\mcb_{\geq0} \xrightarrow{p}\mcc_{\geq0}$$ is a fiber sequence and hence $\mca\xrightarrow{i}\mcb \xrightarrow{p}\mcc$ is a fiber sequence in \prtrex by \cref{colimt}. Given a morphism $g:\mce \to \mcb \in\prtil$ such that the composition with $\mcc$ is zero, we wish to show that the factorization $\mce \to \mca$ is strongly $t$-continuous. But that follows from the following factorization and the fact that $i^R(\mcb_{\geq 0})=\mca_{\geq 0}$.
		$$\begin{tikzcd}
			\mce                   &                                          \\
			\mca \arrow[u, dashed] & \mcb \arrow[l, "i^R"] \arrow[lu, "g^R"']
		\end{tikzcd}$$
		The argument for (2) is similar to (1) and we leave it to the reader.
	\end{proof}
	\begin{rem}
		It follows from the proof of \cref{shortexactt} that both forgetful functors \begin{center}
			$\prtil\xrightarrow{(\mca,\ageq)\mapsto \mca} \prlst$ \, and \, $\prtil \xrightarrow{(\mca,\ageq)\mapsto \mca_{\geq0}}\prlad$
		\end{center} preserve short exact sequences.
	\end{rem}
	We next record two basic permanence properties for such short exact sequences.
	
\begin{prop}\label{propertycoffib}
		Let $\mca\xhookrightarrow{i}\mcb\xrightarrow{p}\mcc$ be a short exact sequence in \prtil. Then:\enu{\item 
			If $\mcb$ is right complete, then so are \mca and \mcc.
		\item If  $\mcb$ lies in the image of embedding $\praddbl\xhookrightarrow{}\prtil$, then so do \mca and \mcc.	}
\end{prop}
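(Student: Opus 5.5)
The plan is to work with the restricted adjunctions on connective parts and to reduce both statements to formal retract arguments. By definition of $\prtil$, the functors $i$, $p$ are colimit-preserving and right $t$-exact, and their right adjoints $i^R$, $p^R$ are colimit-preserving and $t$-exact. Since the sequence is short exact, $i$ and $p^R$ are fully faithful and $i^R i\simeq \id$, $p p^R\simeq\id$.

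The first step is to check that $i$ and $p$ are in fact $t$-exact, not merely right $t$-exact. For $x\in\mca_{\leq 0}$ and $y\in\mcb_{\geq 1}$ we have $\Map(y,ix)\simeq\Map(i^Ry,x)=0$, because $i^R$ is right $t$-exact; hence $ix\in\mcb_{\leq0}$. The same argument applied to $p\dashv p^R$ shows that $p$ is left $t$-exact as well. Consequently $i$, $i^R$, $p$, $p^R$ all commute with the truncations $\tau_{\geq -n}$.

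For (1), let $x\in\mca$. Right completeness of $\mcb$ gives $ix\simeq\colim_n \tau_{\geq -n}(ix)\simeq \colim_n i(\tau_{\geq -n}x)$. Applying the colimit-preserving functor $i^R$ and using $i^Ri\simeq\id$ yields $x\simeq\colim_n\tau_{\geq-n}x$. Together with the obvious fact that $\bigcap_n\mca_{\leq -n}=0$ (transported from $\mcb$ along the fully faithful $t$-exact $i$), this gives right completeness of $\mca$. For $\mcc$ one argues identically: write $p^Rx\simeq\colim_n p^R\tau_{\geq-n}x$ and apply $p$.

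For (2), the adjunctions restrict to connective parts, since all four functors preserve connective objects. This exhibits $\mca_{\geq0}$ as a retract of $\mcb_{\geq0}$ in $\prlad$ via $i$ and $i^R$, both of which preserve colimits. Likewise $\mcc_{\geq0}$ is a retract of $\mcb_{\geq0}$ via $p^R$ and $p$. Dualizable objects of a symmetric monoidal category are closed under retracts, so $\mca_{\geq0}$ and $\mcc_{\geq0}$ are dualizable additive. By \cref{main1} they are then Grothendieck prestable.

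It remains to see that $(\mca,\mca_{\geq0})$ and $(\mcc,\mcc_{\geq0})$ are the stabilizations of their connective parts. Every object in the image of $\praddbl$ is right complete by \cref{pret}(2), so part (1) applies and $\mca$, $\mcc$ are right complete with accessible $t$-structures. \cref{pret}(2) then identifies $\mca\simeq\opsp(\mca_{\geq0})$ and $\mcc\simeq\opsp(\mcc_{\geq0})$ as $t$-categories, and \cref{dualaddembst} places them in the image.

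I expect the only mildly delicate point to be the $t$-exactness of $i$ and $p$, which is what lets truncations commute with the adjoints. The rest is formal.
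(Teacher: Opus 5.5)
Your first step is false, and the argument for it uses the adjunction in the wrong direction. From $i\dashv i^R$ you only get $\Map(ia,b)\simeq\Map(a,i^Rb)$; the formula $\Map(y,ix)\simeq\Map(i^Ry,x)$ would need $i^R\dashv i$. The same slip occurs for $p$.

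In fact neither $i$ nor $p$ need be left $t$-exact. Take a field $k$, $B=k[x]$, $M=k[x]/(x)$ viewed as a $(k,B)$-bimodule, $\Lambda=\begin{pmatrix} k & M\\ 0 & B\end{pmatrix}$ and $e=e_{22}$. Since $e\Lambda\cong B$ is free over $e\Lambda e=B$, the multiplication $\Lambda e\otimes^{L}_{B}e\Lambda\to\Lambda e\Lambda$ is an isomorphism. Hence $\Lambda\to\Lambda/\Lambda e\Lambda\cong k$ is a $\pi_0$-surjective homological epimorphism. So $\modu_B\xrightarrow{i}\modu_\Lambda\xrightarrow{p}\modu_k$, with $i=\Lambda e\otimes_B-$ and $i^R=e(-)$, is short exact in \prtil. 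The induced $t$-structure on $\modu_B$ is the standard one, because $e(-)$ is $t$-exact and $i^Ri\simeq\id$.

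Now $\Lambda e\cong M\oplus B$ as a right $B$-module, so $\pi_1 i(B/x)\cong\operatorname{Tor}_1^{k[x]}(k,k)\neq 0$ even though $B/x$ is discrete. Likewise, $p$ sends the discrete $\Lambda$-module $B$ (acted on through the lower-right corner) to $\Sigma k$. So your claims that $i$ and $p$ commute with $\tau_{\geq -n}$, and that $\bigcap_n\mca_{\leq -n}=0$ can be transported along a $t$-exact $i$, are unjustified.

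The repair is easy, because you only ever need $t$-exactness of $i^R$ and $p^R$, which holds by the definition of \prtil. For $x\in\mca$, right completeness of $\mcb$ gives $ix\simeq\colim_n\tau_{\geq-n}(ix)$. Applying the colimit-preserving $t$-exact functor $i^R$, which does commute with $\tau_{\geq -n}$, gives $x\simeq i^Rix\simeq\colim_n\tau_{\geq -n}x$. This already forces $\bigcap_n\mca_{\leq -n}=0$, since for such $x$ every $\tau_{\geq -n}x$ vanishes. By the dual of \cite[Proposition~1.2.1.19]{ha}, right separatedness is equivalent to right completeness here.

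Your argument for $\mcc$ uses only $p^R$ and $pp^R\simeq\id$, so it is fine; it is essentially the paper's pullback-square argument. For $\mca$ the paper argues differently: it compares the Verdier sequences $\opsp(\mca_{\geq0})\to\opsp(\mcb_{\geq0})\to\opsp(\mcc_{\geq0})$ and $\mca\to\mcb\to\mcc$, and uses right completeness of $\mcb$ and $\mcc$ to conclude on kernels.

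Part (2) is correct as written, since restricting the adjunctions to connective parts uses only right $t$-exactness of $i,p$ and $t$-exactness of $i^R,p^R$. It is the same retract argument as in the paper.
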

\begin{proof}
	(1) Consider the following diagram.
	$$\begin{tikzcd}
		\mcc \arrow[d, "p^R", hook] \arrow[r] & \lim_n \mcc_{\geq -n} \arrow[d, "\lim_n p^R|_{\geq -n}"] \\
		\mcb \arrow[r]                        & \lim_n \mcb_{\geq -n}                                  
	\end{tikzcd}$$
	Since $p^R$ is  fully faithful $t$-exact and preserves filtered colimits, the diagram above is a pullback square and $\mcc$ is right complete.
	
	The right completeness of \mca follows from the following comparison between Verdier sequences.
		$$\begin{tikzcd}
		\opsp(\mca_{\geq0}) \arrow[d] \arrow[r] & \opsp(\mcb_{\geq0}) \arrow[d] \arrow[r] & \opsp(\mcc_{\geq0}) \arrow[d] \\
		\mca \arrow[r]                                  & \mcb \arrow[r]                          & \mcc                                 
	\end{tikzcd}$$
	(2) Suppose $\mcb$ lies in the image of the embedding $\praddbl\xhookrightarrow{}\prtil$; we wish to prove that both $\mca$ and $\mcc$ are right complete and that both $\mca_{\geq 0}$ and $\mcc_{\geq 0}$ are dualizable additive. The former follows from (1). The latter follows because $\mca_{\geq 0}\hookrightarrow \mcb_{\geq 0}$ is a fully faithful internal left adjoint and $\mcb_{\geq0}\to \mcc_{\geq 0}$ is an internal left adjoint localization, making both of them retracts of $\mcb_{\geq 0}$ in \prlad.
\end{proof}
In order to prove converse permanence statements, we will need a five-lemma type criterion for comparisons of Verdier sequences. The following lemma says that, under suitable adjointability hypotheses, equivalences on the kernel and quotient force an equivalence on the middle term.

	\begin{lem}\label{fivelemst}
		Consider a comparison between Verdier sequences of stable \infcats.
		$$\begin{tikzcd}
			\mathcal{K}_0 \arrow[r, "i_0"] \arrow[d, "f"'] 
			& \mathcal{A}_0 \arrow[r, "p_0"] \arrow[d, "g"'] 
			& \mathcal{B}_0 \arrow[d, "h"'] \\
			\mathcal{K}_1 \arrow[r, "i_1"'] 
			& \mathcal{A}_1 \arrow[r, "p_1"'] 
			& \mathcal{B}_1
		\end{tikzcd}
		$$
		Suppose both squares are horizontally and vertically right adjointable\footnote{One square suffices, see \cite[Proposition A.16]{ramzi2024dualizable}.}. If $f,h$ are equivalences, then so is $g$.
	\end{lem}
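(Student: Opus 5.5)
We use the recollement description of the middle terms. Since both rows are Verdier sequences of stable \infcats and the squares are horizontally right adjointable, $i_0,i_1$ admit right adjoints $i_0^R,i_1^R$ and $p_0,p_1$ admit fully faithful right adjoints $p_0^R,p_1^R$. For every $x\in\mca_0$ this gives a natural cofiber sequence
\[
i_0i_0^R(x)\to x\to p_0^Rp_0(x),
\]
and similarly in $\mca_1$. Vertical right adjointability supplies right adjoints $f^R,g^R,h^R$. The Beck--Chevalley conditions then say that $g$ intertwines these functors, namely $g\,i_0^R\simeq$, more precisely $i_1^R g\simeq f i_0^R$ and $p_1^R h\simeq g p_0^R$, together with the commutativity $g i_0\simeq i_1 f$ and $h p_0\simeq p_1 g$.

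First, essential surjectivity. Take $y\in\mca_1$ and write it as an extension $i_1i_1^R y\to y\to p_1^Rp_1 y$. Since $f$ and $h$ are equivalences, we can write $i_1^Ry\simeq f(a)$ and $p_1y\simeq h(b)$. Then
\[
i_1i_1^Ry\simeq g\,i_0(a), \qquad p_1^Rp_1y\simeq p_1^Rh(b)\simeq g\,p_0^R(b).
\]
So $y$ is the fiber of a map $g p_0^R(b)\to \Sigma g i_0(a)$. It therefore suffices to show that
\[
\Map(p_0^R b,\Sigma i_0 a)\to \Map(g p_0^R b, g\Sigma i_0 a)
\]
is an equivalence. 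Once we have that, the classifying map lifts to $\mca_0$, and $g$ of its fiber is $y$ because $g$ is exact. This reduces essential surjectivity to full faithfulness on mixed pairs.

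Second, full faithfulness. Because $g$ is exact and every object is an extension of an object in the image of $i_0$ by one in the image of $p_0^R$, it is enough to check that $g$ is fully faithful on pairs $(u,v)$ with $u,v$ ranging over these two images. There are four cases.
\begin{itemize}
\item Maps $i_0a\to i_0a'$: we have $g i_0\simeq i_1f$, and $i_1$, $f$ are fully faithful.
\item Maps $p_0^Rb\to p_0^Rb'$: we have $g p_0^R\simeq p_1^R h$, and $p_1^R$, $h$ are fully faithful.
\item Maps $i_0a\to p_0^Rb$: both sides vanish, since $p_0 i_0=0$ and $p_1 i_1=0$.
\item Maps $p_0^Rb\to i_0a$: this is the key case. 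By adjunction $\Map(p_0^Rb,i_0a)\simeq\Map(i_0^Rp_0^Rb,a)$, and likewise in $\mca_1$. Using $i_1^Rg\simeq f i_0^R$ and $g p_0^R\simeq p_1^R h$, the target becomes $\Map(f\,i_0^Rp_0^Rb, f a)$. This agrees with the source because $f$ is fully faithful.
\end{itemize}

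The main obstacle is coherence. We must check that the map induced by $g$ on mapping spaces really is identified with the composite of these adjunction and Beck--Chevalley equivalences, as opposed to merely having abstractly equivalent source and target. I would handle this by writing the comparison map $\Map(u,v)\to\Map(gu,gv)$ as adjoint to the unit $v\to g^Rgv$. Full faithfulness is then equivalent to the unit $\id\to g^Rg$ being an equivalence. The adjointability hypotheses say that $g^R$ commutes with $i^R,p^R$ and with $i,p$ (passing to the conjugate squares), so $g^Rg$ preserves the recollement cofiber sequence. On its outer terms the unit is identified with the units of $f$ and $h$, which are equivalences, and the unit of $g$ is an equivalence by the five lemma.

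This unit-based argument subsumes the case analysis above. Essential surjectivity then follows as described, or dually from the counit $g g^R\to\id$ by the same recollement argument.
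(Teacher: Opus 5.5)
Your final argument is correct and is essentially the paper's proof. Like the paper, you map the cofiber sequence $i_0i_0^R\to\id\to p_0^Rp_0$ to $g^R$ applied to the corresponding sequence for $g(-)$. The adjointability hypotheses make the outer comparison maps $i_0$ and $p_0^R$ applied to the units of $f$ and $h$, so the unit $\id\to g^Rg$ is an equivalence, and the counit $gg^R\to\id$ is handled the same way.

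Discard the preliminary case analysis, though, because its fourth case is false as written. The adjunction $i_0\dashv i_0^R$ computes maps out of $i_0a$, not into it. In fact $i_0^Rp_0^R\simeq 0$, whereas $\Map(p_0^Rb,i_0a)$ is the generally nonzero gluing datum of the recollement; for example, it equals $\Map(b,a)$ for $\mathrm{ev}_1\colon\mathrm{Fun}(\Delta^1,\mathrm{Sp})\to\mathrm{Sp}$. Full faithfulness, including the mixed case $p_0^Rb\to\Sigma i_0a$ that your essential-surjectivity step needs, must therefore come from the unit argument alone.
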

	\begin{proof}
		By adjointability, we have the following diagram:
		$$\begin{tikzcd}
			i_0i_0^R \arrow[r] \arrow[d, "\sim"] & \op{Id}_{\mca_0} \arrow[r] \arrow[d] & p_0^Rp_0 \arrow[d, "\sim"] \\
			g^Ri_1i_1^Rg\simeq g^Rgi_0i_0^R \arrow[r]               & g^Rg \arrow[r]                       & g^Rgp_0^Rp_0 \simeq g^R   p_1^Rp_1g           
		\end{tikzcd}$$
		where both the top and bottom are cofiber sequences.
		Thus the unit transformation $\op{Id}_{\mca_0}\to g^Rg$ is an equivalence. Similarly, we can prove that the counit transformation $ gg^R\to\op{Id}_{\mca_0}$ is an equivalence too.
	\end{proof}

\begin{rem}
	
	Note that the vertical adjointability condition in \cref{fivelemst} cannot be removed. For example, given a Verdier sequence of stable \infcats $\mathcal{A} \xrightarrow{i} \mathcal{B} \xrightarrow{p} \mathcal{C}$ such that both $i$ and $p$ admit a right adjoint. Consider the following diagram:
	\[
	\begin{tikzcd}
		\mca \arrow[r] \arrow[d, "\mathrm{id}_{\mathcal{A}}"'] & \mathcal{A} \oplus \mathcal{C} \arrow[r] \arrow[d, "{(i, p^R)}"] & \mathcal{C} \arrow[d, "\mathrm{id}_{\mathcal{C}}"] \\
		\mathcal{A} \arrow[r, "i"']                            & \mcb \arrow[r, "p"']                                             & \mathcal{C}                                       
	\end{tikzcd}
	\]
We see that $(i, p^R)$ is not necessarily an equivalence, despite both squares being horizontally right adjointable.\footnote{We thank Maxime Ramzi for this counterexample.}
\end{rem}
We now apply this five-lemma criterion to right completeness. 

	\begin{prop}\label{fivelemrightcompl}
		Let $\mca\xhookrightarrow{i}\mcb\xrightarrow{p}\mcc$ be a short exact sequence in \prtil. If $\mca,\mcc$ are right complete and the natural functor $\opsp(\mcb_{\geq0})\to\mcb$ is strongly continuous, then $\mcb$ is right complete.
	\end{prop}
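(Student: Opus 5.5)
We apply \cref{fivelemst} to the comparison of Verdier sequences already used in the proof of \cref{propertycoffib}(1):
$$\begin{tikzcd}
	\opsp(\mca_{\geq0}) \arrow[d, "f"] \arrow[r, "i'"] & \opsp(\mcb_{\geq0}) \arrow[d, "g"] \arrow[r, "p'"] & \opsp(\mcc_{\geq0}) \arrow[d, "h"] \\
	\mca \arrow[r, "i"] & \mcb \arrow[r, "p"] & \mcc
\end{tikzcd}$$
The top row is obtained by applying $\opsp\otimes-$ to the short exact sequence $\mca_{\geq0}\to\mcb_{\geq0}\to\mcc_{\geq0}$ of presentable prestable categories. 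By the remark after \cref{shortexactt}, this connective sequence is short exact in \prlad with internal adjoints. Tensoring with $\opsp$ preserves fully faithful internal left adjoints and their cofibers, so the top row is a Verdier sequence whose functors admit colimit-preserving right adjoints $\opsp(i^R)$ and $\opsp(p^R)$. The vertical functors are the canonical comparison maps. Right completeness of $\mca$ and $\mcc$ says precisely that $f$ and $h$ are equivalences. Hence, once the squares are adjointable, \cref{fivelemst} will show that $g$ is an equivalence, i.e.\ that $\mcb$ is right complete.

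The key step is checking horizontal and vertical right adjointability of both squares. Since $f$ and $h$ are equivalences, their right adjoints are their inverses. By hypothesis, $g$ is strongly continuous, so $g^R$ exists and preserves colimits. Horizontal adjointability of the left square means $g^R i \simeq i' f^{-1}$, equivalently $\opsp(i^R)\,g^R\simeq f^{-1}i^R$ after passing to mates. I would verify this on the generating subcategory of connective objects. Here $g$ restricted to $\opsp(\mcb_{\geq0})_{\geq0}$ is the identity of $\mcb_{\geq0}$, and $i^R$ is $t$-exact, being the right adjoint of a morphism in \prtil. So on connective objects both composites compute $i^R|_{\mcb_{\geq0}}$.

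To pass from connective objects to all objects, write an arbitrary object of $\opsp(\mcb_{\geq0})$ (resp.\ of $\mcb$) as a colimit of desuspensions of connective objects. All functors involved preserve colimits and shifts: $g^R$ by hypothesis, and $i^R$, $p^R$ by strong $t$-continuity. For objects of the form $\Sigma^{-n}x$ with $x$ connective, the Beck--Chevalley maps are equivalences, because everything commutes with $\Sigma^{-n}$ and we reduce to the connective case. Since the right completion $\opsp(\mcb_{\geq0})$ is generated under colimits by such objects, this finishes the left square. The right square is handled identically using $p^R$, which is $t$-exact and preserves colimits. Vertical adjointability is the same identity of mates read in the other direction, so it follows by the same argument.

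I expect the main obstacle to be the generation argument for $\mcb$ itself: $\mcb$ need not be generated under colimits by its bounded-below objects, which is exactly what we are trying to prove. Two remedies are available. First, one can check only the adjointability statements whose source is $\opsp(\mcb_{\geq0})$ or the stabilizations, where generation holds. Second, one can invoke the footnote to \cref{fivelemst}, which says one square suffices, choosing the square in which only maps out of the top row need testing. If an identity on all of $\mcb$ is genuinely needed, I would instead argue directly: for $x\in\mcb$, the cofiber sequence $ii^Rx\to x\to p^Rpx$ exhibits $x$ as an extension of objects in the essential images of $i$ and $p^R$. Those essential images are right complete, hence lie in the image of $g$, and $g$ is fully faithful with colimit-preserving right adjoint. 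Combining these facts gives essential surjectivity of $g$ together with its full faithfulness.
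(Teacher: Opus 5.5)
Your last paragraph is a correct proof, and it takes a genuinely different route from the paper's. The paper goes through \cref{fivelemst}. Horizontal right adjointability of both squares comes from \cref{shortexactt}. Vertical right adjointability of the left square, i.e.\ $i'\circ f^{-1}\xrightarrow{\sim} g^R\circ i$ on $\mca$, is checked on $\mca_{\geq0}$ using that $g^R$ is the identity on $\mcb_{\geq0}$ (dual of \cite[Proposition 1.2.1.17]{ha}). It then extends to all of $\mca$ because $g^R$ preserves colimits and $\mca$ is right complete. The right square is handled by \cite[Proposition A.16]{ramzi2024dualizable}.

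Your direct argument avoids the five lemma and A.16 altogether. The essential image of $g$ is a stable subcategory closed under colimits and extensions. It contains $i(\mca)=g\,i'(\opsp(\mca_{\geq0}))$ and $p^R(\mcc)$, since $p^R$ is $t$-exact and colimit-preserving and $\mcc$ is generated under colimits by bounded-below objects. The recollement $ii^Rx\to x\to p^Rpx$ then gives essential surjectivity. This is shorter and shows exactly which inputs are used.

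The one step you assert without proof is that $g$ is fully faithful. This is precisely where the hypothesis enters, so write it out. The unit $\id\to g^Rg$ is a transformation of colimit-preserving functors because $g^R$ preserves colimits. It is an equivalence on each $\Sigma^{-n}\mcb_{\geq0}\subset\opsp(\mcb_{\geq0})$: test against $\Sigma^{-m}\mcb_{\geq0}$ and use that $g$ is fully faithful on connective objects. These objects generate $\opsp(\mcb_{\geq0})$ under colimits. Without the hypothesis, $g$ need not be fully faithful.

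Your main line, by contrast, should not be kept as written, because it conflates the two Beck--Chevalley conditions.

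For the left square, the horizontal condition is $f\circ\opsp(i^R)\xrightarrow{\sim} i^R\circ g$ on $\opsp(\mcb_{\geq0})$. On $\mcb_{\geq0}$ both sides are $i^R|_{\mcb_{\geq0}}$, and this is what your connective check actually verifies. The identity $\opsp(i^R)\circ g^R\simeq f^{-1}\circ i^R$, which you call equivalent to it, is automatic: it is just the square of right adjoints, so it carries no information.

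The vertical condition $i'\circ f^{-1}\xrightarrow{\sim}g^R\circ i$ is not a mate of the horizontal one and does not follow ``by the same argument''. It needs the behaviour of $g^R$ on connective objects. To extend from $\mca_{\geq0}$ to $\mca$, it needs the hypothesis that $g^R$ preserves colimits.

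For the right square, the vertical condition $p'\circ g^R\xrightarrow{\sim}h^{-1}\circ p$ lives on $\mcb$, where no generation argument is available. That is why one must invoke \cite[Proposition A.16]{ramzi2024dualizable}, as your second remedy anticipates. Sorted out this way, the main line becomes exactly the paper's proof.
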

	\begin{proof}
		Consider the following diagram:
		$$\begin{tikzcd}
			\opsp(\mca_{\geq0}) \arrow[d, "\sim"] \arrow[r] & \opsp(\mcb_{\geq0}) \arrow[d,"g"] \arrow[r] & \opsp(\mcc_{\geq0}) \arrow[d, "\sim"] \\
			\mca \arrow[r]                                  & \mcb \arrow[r]                        & \mcc.
		\end{tikzcd}$$
		By \cref{shortexactt}, it is a comparison between Verdier sequences and both squares are horizontally right adjointable. 
		
		We wish to prove both squares are vertically right adjointable. 
		By \cite[Proposition A.16]{ramzi2024dualizable}, it suffices to prove it for the left square. Since the 
		adjunction $g\dashv g^R$  restricts to the identity functor on the connective part $\mcb_{\geq0}$ by the 
		dual of \cite[Proposition 1.2.1.17]{ha}, the Beck-Chevalley comparison for the left square becomes an 
		equivalence on objects of $\mca_{\geq0}$. 
		 Since $g^R$ is colimit-preserving by assumption, we conclude that the left square is vertically right adjointable.
		Now invoking \cref{fivelemst}, $g$ is an equivalence and hence \mcb is right complete.
	\end{proof}
	
	We next turn from kernels and cokernels to pullbacks. Since extensions of short exact sequences will later be expressed as pullbacks of localizations, we first recall the following  statement in the stable setting.
	
	\begin{lem}\label{pullbackloc}
		Let $G:\mcb\to\mcc\in (\prlst)^{iL}$ be a localization. Then for any morphism $F:\mca \to\mcc \in (\prlst)^{iL}$, the following diagram 
		$$
		\begin{tikzcd}
			\mca\times_{\mcc}\mcb \arrow[d, "p_{\mcb}"] \arrow[r, "p_{\mca}"] & \mca \arrow[d, "F"] \\
			\mcb \arrow[r, "G"]                                               & \mcc               
		\end{tikzcd}
		$$ 
		is a pullback diagram in $(\prlst)^{iL}$.
	\end{lem}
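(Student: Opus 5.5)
The plan is to compute the pullback $\mcp:=\mca\times_{\mcc}\mcb$ in $\prlst$ and then check two things: that $p_\mca,p_\mcb$ are internal left adjoints, and that the pullback has the universal property among internal left adjoints. Limits in $\prl$ are computed in $\hatcat$, so an object of $\mcp$ is a triple $(a,b,\varphi\colon Fa\simeq Gb)$. Mapping spectra are computed as pullbacks:
\[
\unmap_\mcp\big((a',b',\varphi'),(a,b,\varphi)\big)\simeq \unmap_\mca(a',a)\times_{\unmap_\mcc(Fa',Fa)}\unmap_\mcb(b',b),
\]
where the map from $\unmap_\mcb(b',b)$ goes through $G$ and is transported along $\varphi,\varphi'$. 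Both projections preserve colimits and therefore have right adjoints. The real content is that these right adjoints, and the ones in the universal property, preserve colimits.

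\textbf{Universal property.} Let $\mcd\in\prlst$, and let $u\colon\mcd\to\mca$ and $v\colon\mcd\to\mcb$ be internal left adjoints with $Fu\simeq Gv$. Let $w\colon\mcd\to\mcp$ be the induced functor. Since $F$ and $G$ are internal left adjoints, $Fu\simeq Gv$ is too, and there is a canonical equivalence $u^RF^R\simeq v^RG^R$. I would define $w^R(a,b,\varphi)$ as the fiber product in $\mcd$
\[
w^R(a,b,\varphi):=u^R a\times_{u^RF^RFa}v^R b.
\]
Here the two structure maps are the unit $a\to F^RFa$ followed by $u^R$, and the unit $b\to G^RGb$, then $\varphi^{-1}$, then the identification $v^RG^R\simeq u^RF^R$. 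Applying $\Map_\mcd(d,-)$ and using the adjunctions identifies $\Map_\mcd(d,w^R(a,b,\varphi))$ with the pullback description of $\Map_\mcp(wd,(a,b,\varphi))$. So $w^R$ is right adjoint to $w$. It is a finite limit of colimit-preserving functors in a stable category, hence preserves colimits, and so $w$ is an internal left adjoint.

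\textbf{Projections.} Here I use that $G$ is a localization, i.e. $G^R$ is fully faithful and $GG^R\simeq\id$. Set
\[
p_\mca^R(a):=(a,\;G^RFa,\;\text{counit } GG^RFa\simeq Fa).
\]
The mapping-space formula gives $\Map_\mca(a',a)\times_{\Map(Fa',Fa)}\Map(Gb',Fa)\simeq\Map_\mca(a',a)$, since $Gb'\simeq Fa'$. Hence this is the right adjoint, and it preserves colimits because $F$ and $G^R$ do. For $p_\mcb$ I would take
\[
p_\mcb^R(b):=\big(F^RGb,\;b\times_{G^RGb}G^RFF^RGb,\;\psi\big),
\]
where $\psi$ comes from $G$ applied to the pullback, using $GG^R\simeq\id$ (so $G$ of the fiber product is $FF^RGb$). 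Checking the adjunction is the same mapping-space computation, and colimit preservation holds because everything is built from colimit-preserving functors and finite limits.

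\textbf{Main obstacle.} Constructing these functors is easy. The delicate step is the coherence: one has to verify that the structure maps defining $w^R$ and $p_\mcb^R$ really induce the adjunction equivalence naturally, and not only objectwise. I would handle this by checking that the canonical mate, namely the Beck--Chevalley comparison from the pullback of right adjoints, is the map produced by the adjunction, and then conclude via Yoneda as above.
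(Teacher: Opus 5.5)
Your proposal is correct, but it is organized differently from the paper's proof.

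The paper does not verify the universal property by hand. It imports from Ramzi's Lemma~4.1 the reduction to showing that $p_{\mca}$ and $p_{\mcb}$ are strongly continuous, and it uses the same formula as you for $p_{\mca}^R$. It then handles $p_{\mcb}^R$ only indirectly, through the recollement $i_0i_0^R\to\id_{\mcb}\to G^RG$ for $\mck=\ker G$. There it suffices to check $p_{\mcb}^RG^R\simeq p_{\mca}^RF^R$ (composition of right adjoints) and $p_{\mcb}^Ri_0\simeq i_1$. The latter is verified by showing that $(0,k,0)$ has the required mapping property when $Gk\simeq 0$.

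Your fiber-product formula for $w^R$ is in effect a self-contained proof of that imported reduction in the stable case. It also shows that this half of the argument uses only stability and the internal-left-adjoint hypotheses on $u,v,F,G$, not that $G$ is a localization.

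Your global formula $p_{\mcb}^R(b)=(F^RGb,\,b\times_{G^RGb}G^RFF^RGb,\,\psi)$ is also correct. In the iterated pullback computing maps from $(a',b',\varphi')$, the two copies of $\Map_{\mcc}(Gb',FF^RGb)$ cancel. The remaining term $\Map_{\mca}(a',F^RGb)$ is identified with $\Map_{\mcc}(Gb',Gb)$ by adjunction and $\varphi'$, leaving $\Map_{\mcb}(b',b)$. The formula specializes to both of the paper's identifications: $Gk\simeq0$ gives $(0,k,0)$, and $b=G^Rc$ gives $p_{\mca}^RF^Rc$. So nothing is lost. The paper's decomposition is simply phrased so that it directly records the right adjointability of the squares in its diagram $(*)$. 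That adjointability is reused later, e.g.\ $i_1\simeq p_{\mcb}^Ri_0$ and $i_1^{RR}\simeq p_{\mcb}^Ri_0^{RR}$ in \cref{pullbackt} and \cref{propertyext}.

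Finally, the coherence issue you flag is milder than it looks, at least for $w^R$. The pullback formula for mapping spaces in $\mca\times_{\mcc}\mcb$ is natural in both variables. Yoneda therefore already exhibits $w^R$ as a pullback of the colimit-preserving functors $u^Rp_{\mca}$, $u^RF^RFp_{\mca}$ and $v^Rp_{\mcb}$ along \emph{some} natural transformations. That is all colimit preservation needs; you never have to identify those transformations as units.
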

	\begin{proof}
		Let $\mcd$ denote $\mca\times_{\mcc}\mcb$. By \cite[Lemma 4.1]{ramzi2024dualizable}, it suffices to show that both $p_{\mca}$ and $p_{\mcb}$ are strongly continuous. Because Bousfield localizations are closed under pullback, $p_{\mca}$ admits a fully faithful right adjoint $p_{\mca}^R$, which is given by
		$$p_{\mca}^R(a) = (a,\, G^R F(a),\, \beta).$$ Therefore $p_{\mca}^R$ preserves small colimits and $p_{\mca}$ is strongly continuous. To see that $p_{\mcb}^R$ preserves small colimits, we consider the following diagram of fiber sequences.
	\begin{equation*}
		\begin{tikzcd}
			\mck \arrow[r, "i_1"] \arrow[d, Rightarrow, no head] & \mca\times_{\mcc}\mcb \arrow[d, "p_{\mcb}"] \arrow[r, "p_{\mca}"] & \mca \arrow[d, "F"] \\
			\mck \arrow[r, "i_0"]                                & \mcb \arrow[r, "G"]                                                & \mcc               
		\end{tikzcd}
		\tag{*}
	\end{equation*}
		By the recollement $i_0i_0^R\to \op{Id}_{\mcb}\to G^RG$, it suffices to show that both $p_{\mcb}^RG^R$ and $p_{\mcb}^Ri_0$ preserve small colimits. The former is obvious because $p_{\mcb}^RG^R\simeq p_{\mca}^RF^R$. The latter is more tricky, and it suffices to show $p_{\mcb}^Ri_0\simeq i_1$. We aim to prove that for $k \in i_0(\mathcal{K})$,
		$$p_{\mathcal{B}}^R(k) \simeq (0_{\mathcal{A}}, k, 0).$$
		We verify that the object $X_k = (0_{\mathcal{A}}, k, 0)$ satisfies the universal property of the right adjoint. That is, for any object $Y = (a, b, \phi) \in \mcd$, there is a natural equivalence
		$$\mathrm{Map}_{\mathcal{D}}(Y, X_k) \simeq \mathrm{Map}_{\mathcal{B}}(p_{\mathcal{B}}(Y), k) = \mathrm{Map}_{\mathcal{B}}(b, k).$$
		The left-hand side is given by
		$$\mathrm{Map}_{\mathcal{D}}((a, b, \phi), (0_{\mathcal{A}}, k, 0))
		\simeq \mathrm{Map}_{\mathcal{A}}(a, 0_{\mathcal{A}}) \times_{\mathrm{Map}_{\mathcal{C}}(F a, G k)} \mathrm{Map}_{\mathcal{B}}(b, k).$$
		Because $k \in i_0(\mathcal{K})$, we have $G(k) \simeq 0_{\mathcal{C}}$. Thus, $\mathrm{Map}_{\mathcal{C}}(F a, G k) \simeq \mathrm{Map}_{\mathcal{C}}(F a, 0_{\mathcal{C}})$ is  contractible. The map $\mathrm{Map}_{\mathcal{B}}(b, k) \to \mathrm{Map}_{\mathcal{C}}(F a, 0_{\mathcal{C}})$ is the unique zero map. Therefore the fiber product reduces to
		$$\ast \times_{\ast} \mathrm{Map}_{\mathcal{B}}(b, k) \simeq \mathrm{Map}_{\mathcal{B}}(b, k).$$
	\end{proof}

	\begin{rem}
		The proof also shows that two squares in the diagram $(*)$ are both horizontally and vertically right adjointable by combining with \cite[Proposition 1.62]{ramzi2024dualizable}.
	\end{rem}
	
	\begin{rem}
	The argument of \cref{pullbackloc} also works for any stable base $\mcv\in \calg(\prlst)$. Namely, if $G:\mcb\to\mcc\in (\prlv)^{iL}$ is a localization that is an internal \mcv-left adjoint of \mcv-modules, then for any internal \mcv-left adjoint $F:\mca \to\mcc \in (\prlv)^{iL}$ of \mcv-modules, the following diagram 
	$$
	\begin{tikzcd}
		\mca\times_{\mcc}\mcb \arrow[d, "p_{\mcb}"] \arrow[r, "p_{\mca}"] & \mca \arrow[d, "F"] \\
		\mcb \arrow[r, "G"]                                               & \mcc               
	\end{tikzcd}
	$$ 
	is a pullback diagram in $(\prlv)^{iL}$.
	\end{rem}
	We now refine the preceding stable base-change statement to the setting of $t$-categories. The additional connectivity assumption on the localization ensures that the pullback inherits the expected connective part and that the base-changed localization remains connective.
	\begin{prop}\label{pullbackt}
		Let $G:\mcb\to\mcc\in \prtil$ be a connective internal localization and $F:\mca \to\mcc \in\prtil$ be a morphism. Then $\mca\times_{\mcc}\mcb\to \mca$ is a connective internal localization and the following diagram 
		$$
		\begin{tikzcd}
			\mca\times_{\mcc}\mcb \arrow[d, "p_{\mcb}"] \arrow[r, "p_{\mca}"] & \mca \arrow[d, "F"] \\
			\mcb \arrow[r, "G"]                                               & \mcc               
		\end{tikzcd}
		$$ 
		is a pullback diagram in \prtil, where the connective part on	$\mca\times_{\mcc}\mcb$ is given by $\mca_{\geq0}\times_{\mcc_{\geq0}}\mcb_{\geq0}$. 
	\end{prop}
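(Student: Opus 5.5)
The plan is to reduce to the stable statement \cref{pullbackloc} and then track connective parts. Let $\mcd:=\mca\times_{\mcc}\mcb$ be the pullback in $\prlst$. By \cref{pullbackloc}, it is also the pullback in $(\prlst)^{iL}$, and both projections $p_\mca,p_\mcb$ are strongly continuous. Moreover, $p_\mca$ is a localization with right adjoint $p_\mca^R(a)=(a,G^RF(a),\beta)$, and $p_\mcb^R$ restricted to $\ker(G)$ is $k\mapsto(0,k,0)$. We equip $\mcd$ with $\mcd_{\geq0}:=\mca_{\geq0}\times_{\mcc_{\geq0}}\mcb_{\geq0}$, i.e.\ the objects $(a,b,\phi)$ with $a,b$ connective. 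By \cref{colimt}, this is exactly the core computed by the cartesian/limit construction in $\pr^+_{\op{st}}$. It is presentable because it is a limit of presentable categories along colimit-preserving functors, so $(\mcd,\mcd_{\geq0})$ is the pullback in $\prtrex$.

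Next I check that the projections are morphisms in $\prtil$, i.e.\ that their right adjoints are $t$-exact (they are already colimit-preserving).
\begin{itemize}
\item For $p_\mca^R$: if $a$ is connective, then $G^RF(a)$ is connective because $F$ is right $t$-exact and $G^R$ is $t$-exact. Conversely, coconnective objects go to coconnective objects by left $t$-exactness of $F$, and $p_\mca$ is $t$-exact by construction.
\item For $p_\mcb^R$: use the recollement $i_0i_0^R\to\op{Id}\to G^RG$ on $\mcb$. The connectivity assumption on the unit $b\to G^RG(b)$ for $b\in\mcb_{\geq0}$ gives that $i_0i_0^R b$ is connective. Since $p_\mcb^R$ preserves cofiber sequences, $p_\mcb^R(b)$ is an extension of $p_\mca^RF^RG(b)$ (connective) and $(0,i_0i_0^Rb,0)$ (connective), hence is connective. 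Right $t$-exactness of $p_\mcb$ is clear.
\end{itemize}

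Then I show that $p_\mca$ is a connective internal localization. For $(a,b,\phi)\in\mcd_{\geq0}$, the fiber of the unit $(a,b,\phi)\to(a,G^RF a,\beta)$ is $(0,\fib(b\to G^RGb),0)$, using $\phi\colon Gb\simeq Fa$. This fiber is $(0,i_0i_0^Rb,0)$, which is connective by the previous step.

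Finally comes the universal property in $\prtil$, which I expect to be the main subtle step. Given $\mce$ with strongly $t$-continuous $u\colon\mce\to\mca$ and $v\colon\mce\to\mcb$ and an identification $Fu\simeq Gv$, the induced $w\colon\mce\to\mcd$ is an internal left adjoint by \cref{pullbackloc} and is right $t$-exact by the definition of $\mcd_{\geq0}$. It remains to show that $w^R$ is right $t$-exact. Using the recollement on $\mcd$ given by $\ker(p_\mca)=\ker(G)\to\mcd\to\mca$, any connective $d$ is an extension of $p_\mca^Rp_\mca d$ and $(0,i_0i_0^R p_\mcb d,0)$. Their images under $w^R$ are $u^R(p_\mca d)$ and $v^R(i_0i_0^Rp_\mcb d)$ respectively, which I would verify by adjunction exactly as in the proof of \cref{pullbackloc}. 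Both are connective, so $w^R(d)$ is connective, which completes the argument.
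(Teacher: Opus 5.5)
Your proposal is correct and follows essentially the paper's proof. You reduce to \cref{pullbackloc}, obtain connectivity of $p_{\mcb}^R$ from the recollement $i_0i_0^R\to\op{Id}\to G^RG$ together with $p_{\mcb}^RG^R\simeq p_{\mca}^RF^R$ and $p_{\mcb}^Ri_0\simeq i_1$, and show that $w^R$ preserves connective objects; your identities $w^Rp_{\mca}^R\simeq u^R$ and $w^Ri_1\simeq v^Ri_0$ are a repackaging of the paper's pullback formula for $H^R$, whose fiber over $H_A^R(a)$ is $H_B^R(\op{fib}(u_b))$.

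One small slip: $F$ need not be left $t$-exact (e.g.\ base change along a non-flat map of connective rings), and only right $t$-exactness of $p_{\mca}$ holds ``by construction''. Neither is needed, because left $t$-exactness of $p_{\mca}^R$, $p_{\mcb}^R$ and $w^R$ follows formally by adjunction from right $t$-exactness of $p_{\mca}$, $p_{\mcb}$ and $w$.
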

	\begin{proof}
		Let $\mcd:=\mca\times_{\mcc}\mcb$. We first prove that $p_{\mca}: \mathcal{D} \to \mathcal{A}$ is a connective internal localization. By the proof of \cref{pullbackloc}, $p_{\mca}$ admits a fully faithful right adjoint $p_{\mca}^R$ preserving colimits, which is given by
		$$p_{\mca}^R(a) = (a,\, G^R F(a),\, \beta).$$
		For the connectivity, we need to show that the unit transformation $\mathrm{Id}_{\mathcal{D}_{\geq 0}} \to p_{\mca}^R\, p_{\mca}|_{\mcd_{\geq 0}}$ is pointwise $\pi_0$-surjective. Let $X = (a, b, \alpha) \in \mathcal{D}_{\geq 0}$.  It is equivalent to show that the  fiber $\op{fib}(X\to p_{\mca}^R p_{\mca}(X))$ lies in $\mathcal{D}_{\geq 0}$.
		Because $G$ is a connective internal localization, we have that $$\op{fib}(X\to p_{\mca}^R p_{\mca}(X))\simeq (0,\op{fib}(u_b),0)\in \mcd_{\geq 0},$$ where $u_b: b \to G^R G(b)$ is the unit for $G$.
		
		Now we start to prove the diagram 
		$$
		\begin{tikzcd}
			\mca\times_{\mcc}\mcb \arrow[d, "p_{\mcb}"] \arrow[r, "p_{\mca}"] & \mca \arrow[d, "F"] \\
			\mcb \arrow[r, "G"]                                               & \mcc               
		\end{tikzcd}
		$$ is a pullback in \prtil. By \cref{pullbackloc} it is already a pullback in $(\prlst)^{iL}$. Therefore  it suffices to show the following:
		\begin{enumerate}
			\item Both $p_{\mca}^R$ and $p_{\mcb}^R$ preserve the connective part.
			\item For any presentable $t$-category $\mce$ and a cocone in \prtil, 
			$$\begin{tikzcd}
				\mce \arrow[d, ] \arrow[r, ] & \mca \arrow[d, "F"] \\
				\mcb \arrow[r, "G"]                                               & \mcc               
			\end{tikzcd}$$
			the induced functor $\mce\to\mca\times_{\mcc}\mcb$ admits a right $t$-exact right adjoint.
		\end{enumerate}
		By the previous argument $p_{\mca}^R$ preserves  the connective part. To see $p_{\mcb}^R$ preserves too, we consider the following diagram of fiber sequences.
		$$
		\begin{tikzcd}
			\mck \arrow[r, "i_1"] \arrow[d, Rightarrow, no head] & \mca\times_{\mcc}\mcb \arrow[d, "p_{\mcb}"] \arrow[r, "p_{\mca}"] & \mca \arrow[d, "F"] \\
			\mck \arrow[r, "i_0"]                                & \mcb \arrow[r, "G"]                                               & \mcc               
		\end{tikzcd}
		$$
		By the connective recollement $i_0i_0^R\to \op{Id}_{\mcb}\to G^RG$, it suffices to show that both $p_{\mcb}^RG^R$ and $p_{\mcb}^Ri_0$ preserve  the connective part. The former is obvious because $p_{\mcb}^RG^R\simeq p_{\mca}^RF^R$. For the latter it suffices to show $p_{\mcb}^Ri_0\simeq i_1$, which follows from the proof of \cref{pullbackloc}.
		
		It remains to show (2). Let 
		\[
		H: \mathcal{E} \to \mathcal{D}, \qquad e \mapsto \big(H_A(e),\, H_B(e),\, \gamma\big)
		\] denote the induced functor.
		Since \(H_A\) and \(H_B\) satisfy the strongly $t$-continuous condition (they and their right adjoints \(H_A^R, H_B^R\) preserve colimits and the connective part), by \cref{pullbackloc}, $H$ is strongly continuous  and preserves the connective part.
		Therefore it suffices to show that the right adjoint \(H^R\) preserves the connective part.
		We give an explicit formula for \(H^R\). For \((a,b) \in \mathcal{D}\), using the pullback description of mapping anima, one obtains that \(H^R\) is given in \(\mathcal{E}\) by the homotopy pullback
		\[
		H^R(a,b) \simeq H_A^R(a) \times_{H_B^R G^R G(b)} H_B^R(b),
		\]
		fitting into the diagram
		\[
		\begin{tikzcd}
			H^R(a,b) \arrow[r] \arrow[d] & H_B^R(b) \arrow[d, "H_B^R(u_b)"] \\
			H_A^R(a) \arrow[r] & H_B^R G^R G(b)
		\end{tikzcd}
		\]
		(Here the bottom map arises from the structure morphism \(\alpha:F(a) \simeq G(b)\) in \(\mathcal{D}\) together with the adjoint compatibility \(H_A^R F^R \simeq H_B^R G^R\)).  
		
		Now let \((a,b) \in \mathcal{D}_{\geq 0}\), i.e. \(a \in \mathcal{A}_{\geq 0}\) and \(b \in \mathcal{B}_{\geq 0}\). We must show that
		\[
		H^R(a,b) \in \mathcal{E}_{\geq 0}.
		\]
		To ensure \(H^R(a,b) \in \mathcal{E}_{\geq 0}\), it suffices to show that the fiber $Z=\op{fib}(	H^R(a,b)\to H_A^R(a))$ is connective. Since $Z$ is also the fiber of $H_B^R(u_b)$ by the pullback diagram above, it suffices to observe that $H_B^R(u_b)$ is the image under \(H_B^R\) of a $\pi_0$-surjective map \(u_b: b \to G^R G(b)\) and hence $\pi_0$-surjective.
	\end{proof}
	Having established that connective internal localizations are stable under pullback in $\prtil$, we can now study how structural properties behave under such extensions. The next theorem is the pullback analogue of \cref{propertycoffib}: it shows that right completeness, compatibility with filtered colimits, $\mathrm{AB4}^*$, and dualizable additivity are inherited by the pullback from the two input terms.
	\begin{thm}\label{propertyext}
			Let $G:\mcb\to\mcc\in \prtil$ be a connective internal localization and $F:\mca \to\mcc \in\prtil$. Then:
			\enu{
				\item If $\mca,\mcb$ are right complete, then so is $\mca\times_{\mcc}\mcb$.
					\item If the $t$-structures on $\mca,\mcb$ are compatible with filtered colimits, then so is the $t$-structure on $\mca\times_{\mcc}\mcb$.
				\item If $\mca,\mcb$ satisfy $\mathrm{AB4}^*$, then so does $\mca\times_{\mcc}\mcb$.
				\item If $\mca,\mcb$ 
				lie in the image of the embedding $\praddbl\xhookrightarrow{}\prtil$, then so does $\mca\times_{\mcc}\mcb$.
			} 
	\end{thm}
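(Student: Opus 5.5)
Throughout, write $\mcd:=\mca\times_{\mcc}\mcb$ with connective part $\mcd_{\geq0}=\mca_{\geq0}\times_{\mcc_{\geq0}}\mcb_{\geq0}$, as in \cref{pullbackt}. By that proposition, $p_\mca\colon\mcd\to\mca$ is a connective internal localization whose kernel is $\mck=\ker(G)$, embedded via $i_1$. So we have a short exact sequence $\mck\xrightarrow{i_1}\mcd\xrightarrow{p_\mca}\mca$ in \prtil, and $i_1,p_\mca$ are compatible with $i_0,G$. The plan is to exploit two descriptions of $\mcd$. The pullback description computes limits, filtered colimits and products componentwise, since the projections are strongly $t$-continuous and jointly conservative. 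The short exact sequence description lets us use the five-lemma results above.

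For (2) and (3), work in the connective part. Filtered colimits and products in $\mcd$ are computed componentwise, because $p_\mca$ and $p_\mcb$ preserve colimits and limits; the limits are preserved since both admit left adjoints by \cref{pullbackloc}. The same holds for $\mcd_{\geq0}$, as the right adjoints preserve connectivity by the proof of \cref{pullbackt}. Compatibility with filtered colimits then follows because $\mcd_{\leq0}$ is detected componentwise: a $t$-exact conservative family of projections detects coconnectivity. $\mathrm{AB4}^*$ follows because the product of connective objects $(a_i,b_i,\alpha_i)$ is $(\prod a_i,\prod b_i,\prod\alpha_i)$, which is connective since $\mca,\mcb$ satisfy $\mathrm{AB4}^*$. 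To identify $\mcd_{\leq0}$ as the componentwise coconnective objects, I use that $p_\mca^R$ and $p_\mcb^R$ are $t$-exact. Then $\Map(\mcd_{\geq1},(a,b))$ vanishes iff it vanishes after applying the projections, using the pullback formula for mapping anima together with $\mcc$'s $t$-structure.

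For (1), first try the naive approach: $\lim_n\mcd_{\geq-n}$ is the pullback of the corresponding limits for $\mca,\mcb,\mcc$, since $\mcd_{\geq -n}$ is componentwise. Then $\Sp(\mcd_{\geq0})\simeq\Sp(\mca_{\geq0})\times_{\Sp(\mcc_{\geq0})}\Sp(\mcb_{\geq0})$, as stabilization commutes with these limits. The issue is that $\mcc$ need not be right complete unless $\mcb$ is, but (1) assumes $\mcb$ is right complete, so $\mcc$ is right complete by \cref{propertycoffib}. Hence all three are right complete, and $\mcd\simeq\lim_n\mcd_{\geq-n}$ follows by commuting the pullback with the sequential limit. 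If the componentwise description of $\mcd_{\geq-n}$ causes trouble, the fallback is \cref{fivelemrightcompl} applied to $\mck\to\mcd\to\mca$. Here $\mck$ is right complete by \cref{propertycoffib}, and one must check that $\Sp(\mcd_{\geq0})\to\mcd$ is strongly continuous.

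For (4), by \cref{dualaddembst} it suffices to show that $\mcd_{\geq0}$ is a separated Grothendieck prestable category satisfying $\mathrm{AB4}^*$ and $\mathrm{AB6}$, plus right completeness, which is (1). Grothendieck and $\mathrm{AB4}^*$ come from (2) and (3). Separatedness follows because $\infty$-connective objects are detected componentwise. $\mathrm{AB6}$ follows from \cref{conserab6}, applied to the conservative functor $\mcd_{\geq0}\to\mca_{\geq0}\times\mcb_{\geq0}$, which preserves filtered colimits and products; the target satisfies $\mathrm{AB6}$ since each factor does by \cref{main1}. Alternatively, $\mcd_{\geq0}$ is the pullback in \praddbl, since dualizable objects are closed under pullbacks along internal-left-adjoint localizations, but the axiomatic route above is more direct.

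The main obstacle I expect is the step that truncations, and hence $\mcd_{\leq0}$, are computed componentwise in $\mcd$. This requires care because $F$ and $G$ are only right $t$-exact. I would try to argue with the explicit formula for $p_\mcb^R$ and $p_\mca^R$ from \cref{pullbackt}, together with the fiber sequence $i_1i_1^R\to\mathrm{Id}\to p_\mca^Rp_\mca$, which consists of $t$-exact functors.
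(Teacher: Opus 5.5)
Your proposal has a genuine gap. Its load-bearing principle is that products, coconnective parts and truncations in $\mcd=\mca\times_{\mcc}\mcb$ are computed componentwise via $p_\mca,p_\mcb$, and this is false in general, not merely delicate.

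\cref{pullbackloc} gives $p_\mca,p_\mcb$ colimit-preserving \emph{right} adjoints, not left adjoints, and the projections are only right $t$-exact. Likewise, in $i_1i_1^R\to\mathrm{Id}\to p_\mca^Rp_\mca$ only $i_1^R$ and $p_\mca^R$ are $t$-exact. Already for $F=\mathrm{id}_\mcc$ one has $\mcd\simeq\mcb$, $\mcd_{\geq0}=\mcb_{\geq0}$ and $p_\mca\simeq G$. A connective internal localization need not preserve products: take $\modu_V(\spgeq)\to\modu_{V/\mathfrak m}(\spgeq)$ for a valuation ring with $\mathfrak m=\mathfrak m^2$, such as $\mathcal{O}_{\mathbb{C}_p}$, where $V/\mathfrak m\otimes_V\prod_{\mathbb N}V\to\prod_{\mathbb N}V/\mathfrak m$ is not injective. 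Nor need it be left $t$-exact: take $\Lambda\to\Lambda/J$ of \cref{exa2}, where $\Lambda/J$ is not flat.

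Hence your arguments for (2) and (3) fail, and so does the $\mathrm{AB6}$ step in (4) via \cref{conserab6} applied to $\mcd_{\geq0}\to\mca_{\geq0}\times\mcb_{\geq0}$. Separatedness and filtered colimits are fine, since connectivity and colimits really are componentwise.

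The missing idea is to decompose via the recollements and move each operation onto the right adjoints $p_\mca^R,p_\mcb^R$, which are $t$-exact and preserve limits and colimits.
\begin{itemize}
\item For (3), take products of $i_1i_1^RX_\alpha\to X_\alpha\to p_\mca^Rp_\mca X_\alpha$ and use $i_1\simeq p_\mcb^Ri_0$ from the proof of \cref{pullbackloc}. The outer terms become $p_\mcb^R(\prod i_0i_1^RX_\alpha)$ and $p_\mca^R(\prod p_\mca X_\alpha)$, which are connective by $\mathrm{AB4}^*$ for $\mcb$ and $\mca$.
\item For (2), use the other recollement $p_\mca^Rp_\mca^{RR}\to\mathrm{Id}\to i_1^{RR}i_1^R$, whose terms preserve coconnectivity, together with $i_1^{RR}\simeq p_\mcb^Ri_0^{RR}$. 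The correct detection statement is that $X\in\mcd_{\leq0}$ iff $p_\mca^{RR}X\in\mca_{\leq0}$ and $i_1^RX\in\mck_{\leq0}$.
\item For $\mathrm{AB6}$ in (4), show instead that $\mcd$ is dualizable stable, as a pullback along a strongly continuous localization (Efimov), and apply \cref{dualadst}.
\end{itemize}

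For (1), the same problem undermines your naive route. The tower $\{\mcd_{\geq-n},\tau_{\geq-n}\}$ and the $\Omega$-tower of $\mcd_{\geq0}$ are not pullbacks of the corresponding towers for $\mca,\mcb,\mcc$. This is because $F$ and $G$ commute neither with $\tau_{\geq-n}$ nor with $\Omega$ on connective parts, and there is no formal reason for $\opsp\otimes-$ to commute with this pullback. Once $\mca,\mcb,\mcc$ are right complete, the asserted equivalence $\opsp(\mcd_{\geq0})\simeq\opsp(\mca_{\geq0})\times_{\opsp(\mcc_{\geq0})}\opsp(\mcb_{\geq0})$ is literally the statement that $\mcd$ is right complete, so that step proves nothing.

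Your fallback is the right argument, and the step you left open is its real content. Since $\mca$ and $\mcb$ are right complete, the functors $\opsp(\mcd_{\geq0})\to\opsp(\mca_{\geq0})\simeq\mca$ and $\opsp(\mcd_{\geq0})\to\opsp(\mcb_{\geq0})\simeq\mcb$ lie in $\prtil$. The universal property in \cref{pullbackt} then makes $\opsp(\mcd_{\geq0})\to\mcd$ strongly $t$-continuous. Finally, \cref{fivelemrightcompl} applied to $\mck\to\mcd\to\mca$, with $\mck$ right complete by \cref{propertycoffib}, finishes the proof.
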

	\begin{proof}
(1) Since $\mca,\mcb$ are right complete, the following diagram lies in \prtil.
	$$
\begin{tikzcd}
	\opsp(\mca_{\geq0}\times_{\mcc_{\geq0}}\mcb_{\geq0}) \arrow[d] \arrow[r] & \mca \arrow[d, "F"] \\
	\mcb \arrow[r, "G"]                                               & \mcc               
\end{tikzcd}
$$	
By \cref{pullbackt}, the induced functor $\opsp(\mca_{\geq0}\times_{\mcc_{\geq0}}\mcb_{\geq0})\to\mca\times_{\mcc}\mcb$ is thus strongly $t$-continuous. In particular, it is strongly continuous. Applying \cref{fivelemrightcompl} to the short exact sequence $$\mck\xhookrightarrow{i_1}\mca\times_{\mcc}\mcb\xrightarrow{p_A}\mca,$$  
it suffices to show that \mck is right complete.
 But that follows by applying \cref{propertycoffib} to the short exact sequence $\mck\to\mcb\to\mcc$.
 \\
(2) Let $X\in \mcd:=\mca\times_{\mcc}\mcb $ such that $X\simeq\colimit_\alpha X_\alpha$ be a filtered colimit in which each $X_\alpha \in \mcd_{\leq0}$. We wish to prove that $X\in \mcd_{\leq0}$ too. Consider the following recollement decomposition
$$p_A^R p_A^{RR} X_\alpha\to X_\alpha\to i_1^{RR}i_1^R X_\alpha;$$
it suffices to show that both $\colimit_\alpha\, i_1^{RR}i_1^R X_\alpha$ and $\colimit_\alpha \,p_A^R p_A^{RR} X_\alpha$ lie in $\mcd_{\leq0}$.  The latter holds because the $t$-structure on \mca is compatible with filtered colimits. To prove  $\colimit_\alpha\, i_1^{RR}i_1^R X_\alpha \in \mcd_{\leq0}$, we first note that the following two squares are both horizontally and vertically right adjointable from the proof of \cref{pullbackloc}.	$$
\begin{tikzcd}
	\mck \arrow[r, "i_1"] \arrow[d, Rightarrow, no head] & \mca\times_{\mcc}\mcb \arrow[d, "p_{\mcb}"] \arrow[r, "p_{\mca}"] & \mca \arrow[d, "F"] \\
	\mck \arrow[r, "i_0"]                                & \mcb \arrow[r, "G"]                                               & \mcc               
\end{tikzcd}
$$
Therefore by taking right adjoint to $i_1^R\simeq i_0^R p_B$, we have $i_1^{RR}\simeq p_B^R i_0^{RR}$. Consequently, $$\colimit_\alpha\, i_1^{RR}i_1^R X_\alpha\simeq\colimit_\alpha\, p_B^Ri_0^{RR}i_1^R X_\alpha \simeq  p_B^R(\colimit_\alpha i_0^{RR}i_1^R X_\alpha)$$ lies in $\mcd_{\leq0}$, because the $t$-structure on \mcb is compatible with filtered colimits.
\\
(3)
Let $X\in \mcd$ such that $X\simeq\prod_\alpha X_\alpha$ be a small product in which each $X_\alpha \in \mcd_{\geq0}$. We wish to prove that $X\in \mcd_{\geq0}$. Consider the following recollement decomposition
$$i_1i_1^R X_\alpha\to X_\alpha\to p_A^R p_A X_\alpha;$$
it suffices to show that both $\prod_\alpha i_1i_1^R X_\alpha$ and $\prod_\alpha p_A^R p_A X_\alpha$ lie in $\mcd_{\geq0}$. The latter holds because \mca satisfies $\mathrm{AB4}^*$. To prove the former, we first note that $i_1\simeq p_B^R i_0$ by the adjointability. Consequently, $$\prod_\alpha i_1i_1^R X_\alpha\simeq\prod_\alpha p_B^Ri_0 i_1^R X_\alpha \simeq  p_B^R(\prod_\alpha i_0i_1^R X_\alpha)$$ lies in $\mcd_{\geq0}$, because \mcb satisfies $\mathrm{AB4}^*$.
\\
(4) Now assume that $\mca_{\geq 0}, \mcb_{\geq 0}$ are dualizable additive, and that $\mca, \mcb$ are right 
complete. Our goal is to show that the same holds for $\mcd_{\geq 0}$ and $\mcd$. Applying \cref{propertycoffib} to 
the short exact sequence $\mck\to\mcb\to\mcc$, we conclude that $\mck_{\geq0}$ is dualizable additive and that 
$\mck$ is right complete. By \cref{main1} and the previous conclusions, $\mcd_{\geq 0}$ is Grothendieck satisfying 
$\mathrm{AB4}^*$ and it suffices to show that $\mcd$ is right complete and satisfies $\mathrm{AB6}$. The 
right completeness is clear because the connective part of \mcd is created component-wise. For the $\mathrm{AB6}$ 
condition, by \cref{dualadst}, it suffices to show that $\mcd \overset{\underset{\mathrm{def}}{}}{=}\mca\times_{\mcc}\mcb$ is dualizable stable. That follows from \cite[Proposition 1.87(2)]{efimov2024k}.
	\end{proof}
	As an immediate consequence, the preceding statement descends from the ambient category $\prtil$ to the category of dualizable additive \infcats itself. Thus connective internal localizations among dualizable additive categories admit pullbacks without leaving the dualizable additive world.
	\begin{cor}\label{connpullbackofdualadd}
	 Connective internal localizations in \praddbl (see \cref{defconnloc}) are closed under pullback, and such pullbacks are created in $\prl$.
	\end{cor}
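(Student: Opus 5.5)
The plan is to transport the problem into $\prtil$ through the fully faithful embedding $\praddbl\hookrightarrow\prtil$ of \cref{dualaddembst}, $\mcc\mapsto(\opsp(\mcc),\opsp(\mcc)_{\geq0})$, and then apply \cref{pullbackt} and \cref{propertyext}(4). Fix a connective internal localization $G\colon\mcb_{\geq0}\to\mcc_{\geq0}$ in $\praddbl$ and an arbitrary morphism $F\colon\mca_{\geq0}\to\mcc_{\geq0}$ in $\praddbl$. Here a connective internal localization in $\praddbl$ means an internal left adjoint with fully faithful, colimit-preserving right adjoint whose unit is $\pi_0$-surjective. The first step is to check that $\opsp\otimes G$ is a connective internal localization in $\prtil$ in the sense of the $t$-categorical definition.

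For this first step, I would argue as follows. By \cref{prestemb} and the remark identifying $(\opsp\otimes G)^R\simeq\opsp(G^R)$, the right adjoint of $\opsp\otimes G$ is colimit-preserving, $t$-exact and fully faithful. The unit on a connective $x$ is the unit of $G$, whose fiber is connective precisely because it is $\pi_0$-surjective in the prestable category; this uses the prestable fiber sequence in $\opsp(\mcb)$.

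Next, \cref{pullbackt} produces the pullback $\mcd=\opsp(\mca)\times_{\opsp(\mcc)}\opsp(\mcb)$ in $\prtil$, with connective part $\mca_{\geq0}\times_{\mcc_{\geq0}}\mcb_{\geq0}$, and shows that $\mcd\to\opsp(\mca)$ is again a connective internal localization. By \cref{propertyext}(4), $\mcd$ lies in the essential image of $\praddbl$, so $\mcd_{\geq0}$ is dualizable additive and $\mcd\simeq\opsp(\mcd_{\geq0})$. Since the embedding is fully faithful on objects and morphisms (\cref{dualaddembst}), a pullback in $\prtil$ whose vertices all lie in the image is a pullback in $\praddbl$. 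Hence $\mcd_{\geq0}$ is the pullback in $\praddbl$, and the base-changed map $\mcd_{\geq0}\to\mca_{\geq0}$ is a connective internal localization there. This gives closure under pullback.

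For creation in $\prl$, I need the pullback $\mca_{\geq0}\times_{\mcc_{\geq0}}\mcb_{\geq0}$ computed in $\prl$ to agree with $\mcd_{\geq0}$. Limits in $\prl$ are computed on underlying categories, so the $\prl$-pullback is the categorical fiber product. \cref{pullbackt} identifies the connective part of $\mcd$ with exactly this fiber product, and stabilization recovers it because $\mcd$ is right complete. The main obstacle I expect is making sure the connective part in \cref{pullbackt} is literally the $\prl$ fiber product, rather than its closure under colimits and extensions. If that step is delicate, I would instead argue directly. First, the fiber product of presentable categories along colimit-preserving functors is presentable, and it is prestable by \cref{limlex}, using that $F$ and $G$ are left exact since they are internal left adjoints between Grothendieck prestable categories (\cref{t-ex}). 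Second, its stabilization is $\mcd$ by the limit-preservation of $\opsp(-)$ on $\pr_{\op{pst}}^{\mathrm{L},\op{lex}}$. These two facts together identify it with $\mcd_{\geq0}$.
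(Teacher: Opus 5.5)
Your main argument is correct and is the paper's proof: you transport to $\prtil$ via the fully faithful embedding of \cref{dualaddembst}, apply \cref{pullbackt} and \cref{propertyext}(4), and use that a fully faithful functor reflects pullbacks. The step you flag as an obstacle is not one, because \cref{pullbackt} already states that the connective part is the fiber product $\mca_{\geq0}\times_{\mcc_{\geq0}}\mcb_{\geq0}$, which is the $\prl$ pullback; drop the fallback, though, since internal left adjoints between dualizable additive categories need not be left exact (e.g.\ $\mathbb{F}_p\otimes_{\mathbb{Z}}-\colon\mcd(\mathbb{Z})_{\geq0}\to\mcd(\mathbb{F}_p)_{\geq0}$ sends $\mathbb{Z}/p$ to a non-discrete object), so neither \cref{limlex} nor the limit-preservation of $\opsp(-)$ on $\pr_{\op{pst}}^{\mathrm{L},\op{lex}}$ applies there.
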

	\begin{proof}
		Since $\praddbl\xhookrightarrow{}\prtil$ is a fully faithful embedding, the conclusion follows by combining \cref{pullbackt} and \cref{propertyext}.
	\end{proof}
	We finish the section by considering  compact projective generation. The following elementary criterion will be used to prove that a diagonal pullback along a connective internal localization remains compact projectively generated.
	
	\begin{lem}\label{pgenpst}
		Let $\mcc\in\prlpst$ be a presentable prestable \infcat. Let $S\subset\mcc$ be a set of  projective objects satisfying that for any non-zero $x\in\mcc$, there exists a non-zero morphism $\Sigma^n a\to x$ from some $a\in S$, where $n\geq0$. Then $S$ generates \mcc under small colimits. 
	\end{lem}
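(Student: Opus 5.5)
Let $\mcc'\subset\mcc$ be the smallest full subcategory containing $S$ and closed under small colimits. The plan is to show $\mcc'=\mcc$ via a right orthogonality argument, the standard proof that a family detecting nonzero objects generates.

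First, $\mcc'$ is presentable: it is generated under colimits by a small set inside a presentable category. The inclusion $\iota\colon\mcc'\hookrightarrow\mcc$ preserves colimits, so it has a right adjoint $\iota^R$. Given $x\in\mcc$, I will form the counit $\epsilon\colon\iota\iota^R x\to x$ and its cofiber
\[
y=\cofib(\epsilon).
\]
The goal is to show $y=0$, and then deduce that $\epsilon$ is an equivalence, so that $x\in\mcc'$. The second deduction is immediate: in a prestable category a map with zero cofiber is an equivalence, as recalled in the lemma on right exact functors.

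To prove $y=0$, I will use the hypothesis. Suppose $y\neq 0$; then there is a nonzero map $\Sigma^n a\to y$ with $a\in S$ and $n\ge 0$. I aim to lift this map through $x\to y$, and this is where projectivity enters.

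\begin{itemize}
\item If $n\ge1$, write $\Sigma^n a=\Sigma(\Sigma^{n-1}a)$. The cofiber sequence $\iota\iota^Rx\to x\to y$ is also a fiber sequence, since $\mcc$ is prestable. It therefore gives a map $y\to\Sigma\,\iota\iota^Rx$, and the composite
\[
\Sigma^n a\to y\to \Sigma\,\iota\iota^Rx
\]
is defined.
\item If $n=0$, use that $a$ is projective: $\Map(a,-)$ carries the $\pi_0$-epimorphism $x\to y$ to a $\pi_0$-surjection. Here $x\to y$ is a $\pi_0$-epimorphism because $\pi_0$ is right exact on cofiber sequences in $\mcc$. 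So the map $a\to y$ lifts to a map $a\to x$.
\end{itemize}

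In the case $n=0$, the lift $a\to x$ has source $a\in\mcc'$, so it factors through the counit $\iota\iota^Rx\to x$. Composing with $x\to y$ then gives zero, so the original map $a\to y$ was null, a contradiction.

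In the case $n\ge1$, I argue directly with the adjunction instead of by lifting. The object $\Sigma^n a$ lies in $\mcc'$, so
\[
\Map(\Sigma^n a,y)\simeq \Map(\Sigma^n a,\iota\iota^R y)=\Map(\Sigma^n a,0)
\]
provided $\iota^R y=0$. Once $\iota^Ry=0$ is known, every map $\Sigma^n a\to y$ is null for all $n$, contradicting the hypothesis, so $y=0$.

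So the key step, and the main obstacle, is proving $\iota^R y=0$. The difficulty is that $\iota^R$ need not be exact, so I cannot simply apply it to the cofiber sequence. I will try to use projectivity of generators to make $\iota^R$ behave well. Since every object of $\mcc'$ is a colimit of objects of $S$, it suffices to check $\Map(s,y)$ for $s$ ranging over iterated colimits, and the obstruction is controlled by maps from $\Sigma^m a$.

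If this route stalls, I will fall back on working in $\opsp(\mcc)$. There the analogous cofiber computation is exact, and the condition for projective $a$ reads
\[
\pi_0\Map(a,-)\ \text{is exact on the heart}\quad\text{and}\quad \Map(\Sigma^n a,x)\simeq\Omega^n\Map(a,x).
\]
With this, I would show that $\pi_*$ of $y$ detected by $S$ vanishes by a long exact sequence on $\pi_*\Map(a,-)$. In that sequence, the connecting maps from $\iota\iota^Rx\to x$ are surjective on $\pi_0$ via projectivity and are isomorphisms on higher $\pi_n$ by the adjunction.
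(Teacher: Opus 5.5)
Your setup (the colimit closure $\mathcal{C}'$ of $S$, the right adjoint $\iota^R$, the cofiber $y$ of the counit) and your case $n=0$ are correct. As written, however, the case $n\ge 1$ is not proved. You single out $\iota^R y=0$ as the ``main obstacle'', the remark about iterated colimits is not an argument, and the fallback is only a sketch whose bookkeeping is off.

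Concretely, the maps induced by the counit $\iota\iota^R x\to x$ on $\pi_n\Map(a,-)$ are isomorphisms for \emph{every} $n\ge 0$, by adjunction, since $a\in\mathcal{C}'$. Projectivity is not needed there. It is needed only for the surjectivity of $\pi_0\Map(a,x)\to\pi_0\Map(a,y)$, which you proved in the $n=0$ bullet. (``$\pi_0\Map(a,-)$ exact on the heart'' would not give that surjectivity, since $x$ and $y$ are not discrete.)

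The non-exactness of $\iota^R$ is also a red herring: you never need cofibers to be preserved. Because $\mcc$ is prestable, $\iota\iota^R x\to x\to y$ is already a fiber sequence in $\mcc$. Hence $\Map(a,\iota\iota^R x)\to\Map(a,x)\to\Map(a,y)$ is a fiber sequence of spaces, with no need to pass to $\opsp(\mcc)$.

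Here is the repair. Since the first map is an equivalence, the long exact sequence shows two things. For $n\ge1$, each connecting map $\pi_n\Map(a,y)\to\pi_{n-1}\Map(a,\iota\iota^R x)$ is injective with trivial image, so $\pi_n\Map(a,y)=0$. For $n=0$, the map $\pi_0\Map(a,x)\to\pi_0\Map(a,y)$ is zero, and it is surjective by projectivity, so $\pi_0\Map(a,y)=0$. As $\Map(\Sigma^n a,y)\simeq\Omega^n\Map(a,y)$, there are no nonzero maps $\Sigma^n a\to y$, so $y=0$. Equivalently, $\iota^R$ preserves this fiber sequence, giving $\Omega\iota^R y\simeq 0$, which already kills all maps $\Sigma c\to y$ with $c\in\mathcal{C}'$.

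So repaired, your argument is a valid and somewhat more elementary alternative to the paper's proof. The paper instead shows that each $\unmap(s,-)\colon\mcc\to\spgeq$ preserves finite colimits: it preserves finite coproducts and, by projectivity, geometric realizations, so \cref{lemma4.3} applies, using left completeness of $\opsp$. Applying the hypothesis to cofibers then gives joint conservativity of $\{\unmap(s,-)\}_{s\in S}$, and generation follows (implicitly) by the counit argument you set up. Your version uses projectivity only for a single $\pi_0$-surjectivity and avoids \cref{lemma4.3}.
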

	\begin{proof}
	It suffices to show that the set of mapping functors $\{h^s: \mcc\xrightarrow{} \spgeq\mid s\in S\}$ is jointly 
	conservative. Since each $s\in S$ is projective in \mcc, the functor $h^s$ preserves geometric realizations. 
	By \cref{lemma4.3}, each $h^s$ preserves finite colimits. Therefore, it suffices to show that if 
	$h^s(x)=\unmap_{\mcc}(s,x)=0$ for each $s\in S$, then $x=0$. However, that follows from assumption.
	\end{proof}
	We can now prove an additive version of \cite[Proposition 1.88]{efimov2024k}. The point is that, for a connective internal localization, compact projective objects in the source can be paired along the localization to produce enough compact projective objects in the diagonal pullback.
	\begin{prop}\label{diagpullbackcpgen}
		Let $L:\mcc\to\mcd\in\praddbl$ be a connective internal localization of \dualaddinfcats. If $\mcc$ is compact projectively generated\footnote{This implies that \mcd is too.}, then so is the (diagonal) pullback $\mcc\times_{\mcd}\mcc$.
	\end{prop}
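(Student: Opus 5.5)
Set $\mcp:=\mcc\times_{\mcd}\mcc$. Since $L$ is a connective internal localization, \cref{connpullbackofdualadd} shows that $\mcp$ is dualizable additive and that the pullback is computed in $\prl$. Objects of $\mcp$ are triples $(x,y,\alpha\colon Lx\simeq Ly)$. The plan is to write down an explicit set $S$ of compact projective objects of $\mcp$ and then check the detection hypothesis of \cref{pgenpst}.

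Write $L^R$ for the fully faithful, colimit-preserving right adjoint of $L$. Let $K=\ker(L)$ with inclusion $i\colon K\hookrightarrow\mcc$ and right adjoint $i^R$.

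\textbf{Candidate generators.} For a compact projective $c\in\mcc$, take the diagonal object $\Delta(c)=(c,c,\id)$. The projection $p_1\colon\mcp\to\mcc$ is a connective internal localization, with fully faithful right adjoint
\[
p_1^R(x)=(x,\;L^RLx,\;\text{can}).
\]
This formula comes from \cref{pullbackt} and \cref{pullbackloc}. We also have the fiber sequence $K\xrightarrow{j}\mcp\xrightarrow{p_1}\mcc$, where $j(k)=(0,k,0)$.

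A direct computation with the pullback description of mapping anima gives
\[
\Map_{\mcp}(\Delta c,(x,y,\alpha))\simeq \Map_{\mcc}(c,x)\times_{\Map_{\mcd}(Lc,Lx)}\Map_{\mcc}(c,y).
\]
All three functors on the right preserve sifted colimits in $(x,y,\alpha)$:
\begin{itemize}
\item $\Map_{\mcc}(c,-)$ does because $c$ is compact projective;
\item $\Map_{\mcd}(Lc,L-)\simeq\Map_{\mcc}(c,L^RL-)$ does because $L^R$ and $L$ preserve colimits;
\item sifted colimits in $\mcp$ are computed componentwise, since the pullback is taken in $\prl$.
\end{itemize}
Pullbacks of anima commute with filtered colimits. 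For geometric realizations we argue at the level of connective spectra, using the additive version of the same formula, where the fiber product becomes a fiber. Since $c$ is projective, $\unmap(c,-)$ and $\unmap(c,L^RL-)$ are right exact by \cref{lemma4.3}. The map $\unmap(c,y)\to\unmap(c,L^RLy)$ is $\pi_0$-surjective, because the unit $y\to L^RLy$ is $\pi_0$-surjective and $c$ is projective. Hence the fiber is again connective and commutes with geometric realizations. So $\Delta(c)$ is compact projective.

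We also want objects detecting $K$. For this, take $j(k')$ with $k'$ ranging over the objects $\cofib(c\to L^RLc)$. We expect to avoid these by a better choice: use objects of the form $(c,c',\phi)$ with $c,c'$ compact projective and $\phi\colon Lc\simeq Lc'$. It is enough to include all triples $(c_1\oplus c_2,\,c_1\oplus c_2,\,\sigma)$, where $\sigma$ is induced by automorphisms of $L(c_1\oplus c_2)$ coming from maps $c_1\to L^RLc_2$. These exist because $Lc_1$ and $Lc_2$ are compact projective in $\mcd$, and $\Map_{\mcd}(Lc_1,Lc_2)$ is $\pi_0$ of maps lifting after projective cover.

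\textbf{Detection.} Let $X=(x,y,\alpha)\neq0$.
\begin{itemize}
\item If $x\neq0$, pick a compact projective $c$ and a nonzero $\Sigma^n c\to x$. Since $\Delta$ is left adjoint to the diagonal-type projection (up to the fiber-product formula), lift this to a nonzero map $\Sigma^n\Delta c\to X$. The lift exists by the $\pi_0$-surjectivity above, after possibly enlarging $c$ by a direct summand.
\item If $x=0$, then $X=j(k)$ with $k=y\in K$ nonzero. Choose a nonzero $\Sigma^n c\to k$ in $\mcc$. The composite to $L^RLk=0$ vanishes, so it factors through $\Map_{\mcp}(\Delta c,j(k))\simeq\fib\big(\unmap(c,k)\to\unmap(c,0)\big)=\unmap(c,k)$, giving a nonzero map $\Sigma^n\Delta c\to j(k)$.
\end{itemize}
So the diagonal objects alone already detect everything. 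By \cref{pgenpst}, $\{\Delta(c)\}$ generates $\mcp$ under colimits, hence $\mcp$ is compact projectively generated.

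\textbf{Main obstacle.} The delicate step is the lift in the case $x\neq0$. One must show that a nonzero map $\Sigma^nc\to x$ extends to a nonzero map out of $\Delta c$, that is, produce a compatible map $\Sigma^n c\to y$ agreeing with the given one after applying $L$. Here I would use that $Lc$ is projective in $\mcd$ and that $y\to L^RLy$ is $\pi_0$-surjective. Together these say that $\Map(c,y)\to\Map(c,L^RLy)$ is surjective on $\pi_0$ after applying $\Sigma^n$ (via right exactness), which lets us lift the required map $\Sigma^n c\to L^RLx\simeq L^RLy$ to $\Sigma^n c\to y$.
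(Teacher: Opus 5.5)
Your compact projectivity argument for $\Delta(c)=(c,c,\id)$ is correct and is the paper's: the connective mapping spectrum is a pullback in $\spgeq$ with a $\pi_0$-surjective leg, so it commutes with colimits. Like the paper, you end up using only diagonal objects; the paper's detecting object is $(c,c,\id_{Lc})$ with $c=p\oplus q$. So the detour through $j(k')$ and the twisted triples $(c_1\oplus c_2,c_1\oplus c_2,\sigma)$ can be dropped.

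The gap is in the detection step, in the case $x\neq0$, which is exactly the step you flag. Your proposed justification is false for $n\geq1$: you claim $\pi_0\Map(\Sigma^n c,y)\to\pi_0\Map(\Sigma^n c,L^RLy)$ is surjective "via right exactness". Since $\unmap(c,-)$ is $t$-exact for projective $c$, this map is
$$\Hom(\pi_0c,\pi_ny)\to\Hom(\pi_0c,\pi_nL^RLy).$$
Surjectivity of $\pi_ny\to\pi_nL^RLy$ is only guaranteed for $n=0$; its cokernel injects into $\ker(\pi_{n-1}ii^Ry\to\pi_{n-1}y)$.

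Here is a concrete failure. Take $R=\mathbf{Z}_p[p^{1/p^\infty}]$ with its maximal ideal, which is flat and idempotent, and let $L=\mathbf{F}_p\otimes_R-\colon\modu_R(\spgeq)\to\modu_{\mathbf{F}_p}(\spgeq)$. Using $L(R[1/p]/R)\simeq\Sigma\mathbf{F}_p$, set $X=(\Sigma\mathbf{F}_p,\,R[1/p]/R,\,\alpha)$.
\begin{itemize}
\item Every nonzero $a\colon\Sigma^nc\to x$ has $n=1$.
\item Any such $a$ has $La\neq0$, because $x$ lies in the image of $L^R$.
\item Any candidate lift $b\colon\Sigma c\to R[1/p]/R$ is zero, since $\pi_1(R[1/p]/R)=0$.
\end{itemize}
So no map $\Sigma^n\Delta c\to X$ with nonzero first component exists, for any $c$. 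The detecting map is instead $(0,b)\colon\Delta R\to X$ with $b\colon R\to R[1/p]/R$ nonzero; this is valid because $\pi_0\Map(\mathbf{F}_p,\Sigma\mathbf{F}_p)=0$. Your case split ($x\neq0$ versus $x=0$) never produces this map.

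The repair is the paper's. Choose $n$ minimal such that $\pi_nx\neq0$ or $\pi_ny\neq0$. Then $x$ and $y$ are both $n$-connective, so $X\simeq\Sigma^nX'$ with $X'=(x',y',\alpha')$. Since $L$ and $L^R$ commute with $\Sigma$, the $\pi_0$-surjectivity of the units $y'\to L^RLy'$ and $x'\to L^RLx'$ gives the needed lifts.
\begin{itemize}
\item If $\pi_nx\neq0$, lift $\alpha\circ La$ to a map into $y$.
\item If $\pi_ny\neq0$, lift $\alpha^{-1}\circ Lb$ to a map into $x$. Your case $x=0$ is a special instance of this.
\end{itemize}
The paper handles both cases at once with $c=p\oplus q$ and the matrix map $\Phi$.
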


\begin{proof}
	Let $\mathcal{E} = \mcc \times_{\mcd} \mcc$. The strategy is to construct explicit compact projective generators for $\mathcal{E}$ out of compact projective objects of $\mcc$ and then apply \cref{pgenpst}. We claim  that $\mathcal{E}$ is generated by the set of objects
	$$\mathcal{G} = \{ (p, q, \alpha) \mid p, q \in \mcc^{\op{cproj}}, \alpha: Lp \xrightarrow{\simeq} Lq \}$$
	where $\mcc^{\op{cproj}}$ denotes the full subcategory of compact projective objects in $\mcc$.
	First, we show that any $G = (p, q, \alpha) \in \mathcal{G}$ is a compact projective object in $\mathcal{E}$. For any object $X = (x, y, f) \in \mathcal{E}$, the mapping connective spectrum in $\mce$ is given by the pullback:
	$$\unmap_{\mathcal{E}}(G, X) \simeq \unmap_\mcc(p, x) \times_{\unmap_\mcd(Lp, Lx)} \unmap_\mcc(q, y).$$
	Since $L$ is a connective internal localization, the induced map
	$$\unmap_\mcc(q, y) \to \unmap_\mcd(Lq, Ly) \simeq \unmap_\mcd(q, L^RLx)$$
	is surjective on $\pi_0$. In \spgeq, colimits commute with pullbacks provided that one of the legs is $\pi_0$-surjective. Since $p$ and $q$ are compact projective in $\mcc$, the functors $\unmap_\mcc(p, -)$ and $\unmap_\mcc(q, -)$ preserve colimits. By the $\pi_0$-surjectivity condition, the pullback also preserves colimits, which implies that $\unmap_{\mathcal{E}}(G, -)$ preserves colimits. Thus, $G$ is compact projective in $\mathcal{E}$.
	
It remains to prove that these compact projective objects generate the category. We verify the hypothesis of \cref{pgenpst} by showing that every non-zero object of $\mathcal{E}$ receives a non-zero map from an iterated suspension of an object of $\mathcal{G}$.

Suppose $X = (x, y, f)\neq0$ is a non-zero object in $\mathcal{E}$. Since $\mcc$ is dualizable additive and in particular separated, any non-zero object has a lowest non-vanishing homotopy group. Thus, there exists a minimal integer $n \geq 0$ such that either $\pi_0(\Omega^n x) \not\simeq 0$ or $\pi_0(\Omega^n y) \not\simeq 0$.
Consider the loop object $\Omega^n X = (\Omega^n x, \Omega^n y, \Omega^n f) \in \mathcal{E}$. 
By our minimal choice of $n$, there exist compact projective objects $p, q \in \mcc^{\op{cproj}}$ and maps $u \colon p \to \Omega^n x$ and $v \colon q \to \Omega^n y$, at least one of which is non-zero.
 (If, say, $\pi_0(\Omega^n y) = 0$, we simply choose $q = 0$ and $v = 0$.)

In $\mcd$, we have the composite map $f \circ Lu: Lp \to L(\Omega^n x) \simeq L(\Omega^n y)$. Because $L$ is a connective internal localization, the map
\[
\unmap_\mcc(p, \Omega^n y) \to \unmap_\mcd(Lp, L\Omega^n y)
\]
is $\pi_0$-surjective. Therefore, we can lift $f \circ Lu$ to a map $u': p \to \Omega^n y$ in $\mcc$. Similarly, we can lift $f^{-1} \circ Lv$ to a map $v': q \to \Omega^n x$.

We define the direct sum $c = p \oplus q \in \mcc^{\op{cproj}}$. The object $G_c = (c, c, \id_{Lc})$ belongs to $\mathcal{G}$. We can arrange the maps constructed above into matrices to define a morphism $\Phi: G_c \to \Omega^n X$ in $\mathcal{E}$:
\[
\Phi = \left( \begin{pmatrix} u \\ v' \end{pmatrix}, \begin{pmatrix} u' \\ v \end{pmatrix} \right).
\]
By construction, these maps intertwine with $\Omega^n f$, making $\Phi$ a well-defined morphism. Furthermore, since at least one of $u$ or $v$ is non-zero, the morphism $\Phi$ is non-zero, as desired.

\end{proof}

	\subsection{Dualizable additive short exact sequences}
Now we turn to short exact sequences of \dualaddinfcats. The results of the previous subsection allow us to transport the formalism of strongly $t$-continuous short exact sequences to \dualaddinfcats. 

We begin by spelling out the corresponding notions intrinsically in the prestable setting.

	\begin{de}\label{defconnloc}
	Let \mcc be a presentable prestable \infcat.
		\enu{
			\item  We say a full subcategory $i:\mcc'\subset \mcc$ is a \textbf{closed subcategory} if $i$ admits a colimit-preserving right adjoint. 
			\item Let $p: \mcc\to \mcd$ be a functor. We say $p$ is a \textbf{connective internal localization} 
			if $p$ admits a fully faithful colimit-preserving right adjoint $p^R$ such that for any $x\in\mcc$ the unit map $x\to p^Rp(x)$ is a $\pi_0$-epimorphism.	
		}
	\end{de}
	These definitions are chosen so that, after applying stabilization, they recover the connective internal localizations considered before in $\prtil$.
	
	\begin{rem}
	
Note that every closed subcategory and every connective internal localization of $\mcc$ is automatically presentable prestable. Furthermore, if $\mcc$ is a dualizable additive \infcat, it follows from \cref{ram149} that these are also dualizable additive.
	\end{rem}
	The next proposition is the additive analogue of \cref{shortexactt}. It says that, for dualizable additive \infcats, closed subcategories and connective internal localizations give rise to genuine fiber-cofiber sequences.
	
	\begin{prop}\label{basicses}
		
		\begin{enumerate}
		
			\item 	Let $i:\mcc\hookrightarrow\mcd$ be a fully faithful internal left adjoint of dualizable additive \infcats. Let $\mce=\coker(i)$. Then  the cofiber sequence $$\mcc\xrightarrow{i}\mcd\xrightarrow{p}\mce$$ is also a fiber sequence in both $\prlad$ and $\praddbl$.
			\item 	Let $L:\mcd\to\mce$ be a connective internal localization of dualizable additive \infcats.  Let $\mcc=\ker(L)$. Then  the fiber sequence $$\mcc\xrightarrow{i}\mcd\xrightarrow{p}\mce$$ is also a cofiber sequence in both $\prlad$ and $\praddbl$.
		\end{enumerate} 
	In both cases, we have a natural cofiber sequence $ii^R\to \op{Id}_\mcd\to p^Rp$ in $\funct^L(\mcd,\mcd)$.
	\end{prop}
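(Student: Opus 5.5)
The plan is to transport everything to $\prtil$ via the fully faithful embedding $\praddbl\hookrightarrow\prtil$ of \cref{dualaddembst}, apply \cref{shortexactt}, and then come back to the connective parts. The facts needed are that the connective part of a bifiber sequence in $\prtil$ is a bifiber sequence in $\prlad$ (the remark after \cref{shortexactt}), and that the resulting terms stay in the dualizable world (\cref{propertycoffib}).

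For (1), let $i\colon\mcc\hookrightarrow\mcd$ be a fully faithful internal left adjoint of dualizable additive categories. Its stabilization $\opsp(i)$ is fully faithful, colimit-preserving, and right $t$-exact. Its right adjoint is $\opsp(i^R)$, which is colimit-preserving and $t$-exact, because $i^R$ preserves colimits, and hence finite limits and connective objects in the prestable setting. So $\opsp(i)$ is a fully faithful morphism in $\prtil$, and \cref{shortexactt}(1) produces a bifiber sequence $\opsp(\mcc)\to\opsp(\mcd)\to\mce'$ in $\prtil$ with connective internal localization $p$. By \cref{propertycoffib}(2), $\mce'$ lies in the image of $\praddbl$. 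Since that embedding is fully faithful, $\mcc\to\mcd\to\mce'_{\geq0}$ is a bifiber sequence in $\praddbl$. It remains to identify $\mce'_{\geq0}$ with the cofiber $\coker(i)$ taken in $\prlad$ and to check the fiber property there. The remark following \cref{shortexactt} says that passing to connective parts sends short exact sequences in $\prtil$ to short exact sequences in $\prlad$, which gives both claims at once.

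Part (2) is symmetric. A connective internal localization $L$ of dualizable additive categories has stabilization which is a connective internal localization in $\prtil$ in the earlier sense. Indeed, $\opsp(L^R)$ is fully faithful, colimit-preserving and $t$-exact, and the unit has connective fiber exactly because $x\to L^RLx$ is $\pi_0$-surjective between connective objects. So \cref{shortexactt}(2) applies, and we conclude exactly as in (1).

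For the cofiber sequence $ii^R\to\op{Id}_\mcd\to p^Rp$, I take the recollement sequence in $\opsp(\mcd)$ supplied by the stable Verdier sequence. I restrict it to $\mcd$, using that all three functors preserve connective objects, and note that in $\mcd$ a cofiber sequence in $\opsp(\mcd)$ with connective terms is a cofiber sequence. All three functors are colimit-preserving, so this is a cofiber sequence in $\funct^L(\mcd,\mcd)$.

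The main obstacle is the claim that the cofiber in $\prlad$ agrees with the connective part of the cofiber in $\prtil$. The earlier remark in Section 1 shows that $\prlad$-cofibers can differ from $W$-localizations when $i$ is not an internal left adjoint. Here, though, $i$ is an internal left adjoint, so the Section 1 proposition on fully faithful internal left adjoints already identifies $\coker(i)$ in $\prl$ with the localization and with the fiber of $i^R$. I would invoke it directly and check that its connective quotient matches $\mce'_{\geq0}$, which has the same universal property.
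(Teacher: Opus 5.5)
Your proposal is correct and is essentially the paper's proof. The paper's argument is one line: pass through the embedding $\praddbl\hookrightarrow\prtil$, apply \cref{shortexactt}, and use the permanence result for kernels and cokernels. The paper cites \cref{propertyext} there, but \cref{propertycoffib}(2), which you use, is the statement actually needed; you have otherwise just written out the verifications the paper leaves implicit (the stabilizations of $i$ and $L$ land in $\prtil$ with the right properties, and the recollement restricts to connective parts).
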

	\begin{proof}
		Since we have the natural inclusion $\praddbl\xhookrightarrow{\opsp\otimes-}\prtil$, it follows by \cref{shortexactt} and \cref{propertyext}.
	\end{proof}
	Passing to compact projective generators recovers a familiar exactness statement for small additive \infcats. In particular, fully faithful functors between small idempotent-complete additive \infcats have Verdier-style cokernels which also compute the corresponding fibers.
	
	\begin{cor}\label{addkarseq}
		Let $F:\mca\to\mcb$ be a fully faithful additive functor between small idempotent-complete additive \infcats. Then  $$\mca\to\mcb\to\coker(F)$$ is a fiber-cofiber sequence in $\catadidem$.
	\end{cor}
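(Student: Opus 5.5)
We pass to additive presheaves and apply \cref{basicses}(1). The functor $F$ induces a colimit-preserving functor $F_!=\mcp_\Sigma(F)\colon\mcp_\Sigma(\mca)\to\mcp_\Sigma(\mcb)$. It sends compact projectives to compact projectives, so its right adjoint $F^*$ (restriction along $F$) preserves all colimits. Hence $F_!$ is an internal left adjoint between compact projectively generated, in particular dualizable, additive \infcats. Because $F$ is fully faithful, $F_!$ is fully faithful. The unit $y\to F^*F_!y$ is an equivalence on representables by Yoneda, and both sides preserve colimits, so it is an equivalence everywhere. By \cref{basicses}(1), the cofiber $\mce=\coker(F_!)$ in $\prlad$ then sits in a sequence $\mcp_\Sigma(\mca)\to\mcp_\Sigma(\mcb)\xrightarrow{p}\mce$ which is both a fiber and a cofiber sequence in $\praddbl$, and $p$ is a connective internal localization.

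Next we identify $\mce$ as compact projectively generated. Since $p^R$ preserves colimits, $p$ preserves compact projective objects. The images $p(b)$, $b\in\mcb$, generate $\mce$ under colimits because $p$ is essentially surjective and colimit-preserving. Hence $\mce\simeq\mcp_\Sigma(\mcq)$, where $\mcq\subset\mce$ is the idempotent completion of the essential image of $\mcb$. It remains to check that $\mcq$ is the cofiber of $F$ in $\catadidem$.

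Here we use the equivalence between $\catadidem$ and the subcategory $\prad^{\op{at}}$ of compact projectively generated additive \infcats with internal left adjoints, given by $\mcp_\Sigma$ with inverse $(-)^{\op{cproj}}$. This equivalence follows from \cref{atcproj} and the standard identification $\mcp_\Sigma(\mca)^{\op{cproj}}\simeq\mca$ for idempotent-complete $\mca$. Under it, cofibers and fibers in $\catadidem$ correspond to cofibers and fibers in $\prad^{\op{at}}$.

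The cofiber statement follows from the universal property in $\prlad$. A colimit-preserving functor out of $\mcp_\Sigma(\mcb)$ that kills $\mcp_\Sigma(\mca)$ factors through $\mce$. If it is an internal left adjoint, the factorization is too, since its right adjoint is $p^R$ followed by a colimit-preserving functor.

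For the fiber statement, let $G\colon\mcx\to\mcp_\Sigma(\mcb)$ be a map in $\prad^{\op{at}}$ with $pG\simeq0$. It factors through $\ker(p)=\mcp_\Sigma(\mca)$ in $\prlad$, and the factorization has colimit-preserving right adjoint $F^*G^R$. So the fiber in $\prad^{\op{at}}$ is $\mcp_\Sigma(\mca)$, which gives fiber $\mca$ in $\catadidem$.

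The step I expect to need the most care is the translation between $\catadidem$ and $\prad^{\op{at}}$. One must check that colimits and limits computed in $\praddbl$ (or $\prlad$) of compact projectively generated objects, with internal left adjoints, agree with those in the full subcategory $\prad^{\op{at}}$. For the cofiber this holds because the cofiber $\mce$ computed above is already compact projectively generated. For the fiber it holds because the kernel is $\mcp_\Sigma(\mca)$ by \cref{basicses}(1). So the fiber and cofiber computed in the larger category land in $\prad^{\op{at}}$, and hence they are also the fiber and cofiber there.
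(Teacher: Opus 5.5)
Your proposal is correct and follows the paper's own (implicit) argument: apply \cref{basicses}(1) to the fully faithful internal left adjoint $\mcp_\Sigma(F)$, observe that the resulting cofiber is again compact projectively generated, and transport the bifiber sequence back along $\catadidem\simeq\pradat$. There are two harmless slips. First, the right adjoint of the factorization $\mathcal{X}\to\mcp_\Sigma(\mca)$ is $G^R\circ F_!$; your $F^*G^R$ does not type-check. Second, the right adjoint of the induced functor $\bar{G}\colon\mce\to\mathcal{X}$ is $p\circ G^R$, not ``$p^R$ followed by'' a colimit-preserving functor.
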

As in the stable theory of Verdier localizations, a short exact sequence may be encoded either by its kernel or by its quotient. The preceding proposition gives this correspondence in the dualizable additive setting.

	\begin{prop}\label{connlocandclosub}
		Let $\mcc\in\praddbl$. Then we have a natural equivalence of posets $$\{\text{closed subcategories of }\mcc\}\simeq\{\text{connective internal localizations }p:\mcc\to \mcd\} .$$
	\end{prop}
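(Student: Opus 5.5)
The plan is to write down the two maps explicitly, show that they land in the right sets, and check that they are mutually inverse. Both directions will come from \cref{basicses}.

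Given a closed subcategory $i\colon\mcc'\subset\mcc$, we send it to the cofiber $p\colon\mcc\to\coker(i)$. This requires checking two things.

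\begin{itemize}
\item First, $\mcc'$ is itself dualizable additive. Indeed, $i$ is a fully faithful internal left adjoint, so $\mcc'$ is a retract of $\mcc$ in \prlad via $i^R i\simeq\id$. By \cref{ram149}, it is therefore dualizable.
\item Second, $p$ is a connective internal localization. Passing to stabilizations, $i$ becomes a fully faithful morphism in \prtil; here we use that $i^R$ preserves colimits and is $t$-exact, since it is the right adjoint of a colimit-preserving, right exact functor between prestable categories. Now \cref{shortexactt}(1) shows that the cofiber in \prtil is created in \prtrex and that $\mcc\to\coker$ is a connective internal localization in the sense of $t$-categories. Restricting to connective parts, and using \cref{propertycoffib}(2) together with the identification of cofibers in \cref{basicses}(1), gives the connective-part statement: $p^R$ is fully faithful and colimit-preserving, and the fiber of the unit $x\to p^Rp(x)$ is connective, i.e.\ the unit is a $\pi_0$-epimorphism.
\end{itemize}

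Conversely, given a connective internal localization $p\colon\mcc\to\mcd$, we send it to $\ker(p)\subset\mcc$. By \cref{basicses}(2), $\ker(p)\to\mcc\to\mcd$ is a bifiber sequence in \praddbl and there is a cofiber sequence $ii^R\to\id\to p^Rp$. In this sequence $i^R\simeq\fib(\id\to p^Rp)$ restricted to $\mcc$. Since $p^R$ preserves colimits, $i^R$ preserves colimits. It lands in $\ker(p)$ because the unit has connective fiber and $p(\text{unit})$ is an equivalence. Hence $\ker(p)$ is a closed subcategory.

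It remains to check that the two maps are mutually inverse. Starting from a closed subcategory $\mcc'$, \cref{basicses}(1) says $\mcc'\to\mcc\to\coker(i)$ is also a fiber sequence, so $\ker(\coker(i))=\mcc'$. Starting from a localization $p$, \cref{basicses}(2) says $\mcd\simeq\coker(\ker p\to\mcc)$ under $\mcc$. A localization is determined up to unique equivalence by its essential image $p^R(\mcd)$, so the localization is recovered as well. Both maps are monotone: a larger closed subcategory yields a further localization, through the universal property of cofibers. Both sets are posets, since subcategories and localizations under $\mcc$ have contractible spaces of comparison maps. Together these give an equivalence of posets.

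The main obstacle is the first direction, namely showing that the cofiber of a closed subcategory is a \emph{connective} internal localization. This is exactly the point where the prestable cofiber can differ from the naive quotient (cf.\ the earlier $p$-nilpotent remark). It is handled by the recollement argument of \cref{shortexactt}(1): the fiber of $x\to p^Rp(x)$ is $ii^R(x)$, which is connective because $i^R$ is right $t$-exact.
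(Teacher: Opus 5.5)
Your proposal is correct and is essentially the paper's argument: the paper likewise deduces both maps and their mutual inverseness from \cref{basicses} (hence from \cref{shortexactt} via the embedding $\praddbl\hookrightarrow\prtil$), identifying both posets with bifiber sequences in $\praddbl$ out of $\mcc$. One small correction: $\opsp(i^R)$ is left $t$-exact because it is right adjoint to the right $t$-exact functor $\opsp\otimes i$, and it is right $t$-exact because $i^R$ preserves colimits and so commutes with $\Sigma$. Being right adjoint to a right exact functor alone gives only the left $t$-exact half, not $t$-exactness.
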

	\begin{proof}
		It suffices to observe the following equivalences
		$$\funct^{\op{clsub}}(\Delta^1,\praddbl)\xleftarrow{\sim}\funct^{\op{bifib}}(\Delta^1\times\Delta^1,\praddbl)\xrightarrow{\sim}\funct^{\op{cn}\text{-}\op{loc}}(\Delta^1,\praddbl),$$
		which are induced by \cref{basicses}.
	\end{proof}

	\subsection{Almost \infcats and dualizable kernels}

The algebraic realization of dualizable additive \infcats is closely tied to almost mathematics. The guiding idea is that closed subcategories of connective module categories are controlled by $\pi_0$-surjective idempotent localizations of connective rings, and these localizations are classified by idempotent ideals.

 We first recall the relevant classification theorem.

	\begin{thm}[{\cite[Theorem B]{hebestreit2024note}}]\label{almostalg}
		Let $A$ be a connective $\mathbb{E}_k$-ring with $1 \leq k \leq \infty$. Consider the full subcategory $\mathrm{LQ}^k_A$ of $ \mathrm{Alg}_{\mathbb{E}_k}(\mathrm{Sp})_{A /}$ spanned by the maps $\varphi: A \rightarrow B$ for which
		\enu{
			\item  the multiplication $B \otimes_A B \rightarrow B$ is an equivalence, i.e. $\varphi$ is a localisation,
			\item $B$ is connective, and
			\item $\pi_0(\varphi): \pi_0 A \rightarrow \pi_0 B$ is surjective.
		}
		Then the functor
		$$
		\mathrm{LQ}^k_A \longrightarrow\left\{I \subset \pi_0 A \mid I^2=I\right\}, \quad \varphi \longmapsto \operatorname{ker}\left(\pi_0 \varphi\right)
		$$
		is an equivalence of categories, where we regard the target as a poset via the inclusion ordering. The inverse image of some $I \subset \pi_0(A)$ can be described more directly as $A / I^{\infty}$, where
		$$
		I^{\infty}=\lim _{n \in \mathbb{N}^{\mathrm{op}}} J_I^{\otimes_A n}
		$$
		with $J_I \rightarrow A$ the fibre of the canonical map $A \rightarrow \mathrm{H}\left(\pi_0(A) / I\right)$. This inverse system stabilises on $\pi_i$ for $n>i+1$.
		
		Furthermore, the image of the (fully faithful) restriction functor $$\operatorname{LMod}_{A / I^{\infty}}(\spgeq)\rightarrow \operatorname{LMod}_A(\spgeq)$$ consists exactly of those modules whose homotopy groups are killed by $I$.
		
	\end{thm}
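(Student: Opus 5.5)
The plan is to translate everything into statements about the full subcategories of $\operatorname{LMod}_A(\spgeq)$ cut out by localizations, and to match these with idempotent ideals through $\pi_0$. First I would show the functor is well defined. Given $\varphi\colon A\to B$ in $\mathrm{LQ}^k_A$, set $I=\ker(\pi_0\varphi)$ and $J_\varphi=\fib(A\to B)$. The long exact sequence, together with $\pi_0$-surjectivity, shows that $J_\varphi$ is connective and that $\pi_0J_\varphi\to I$ is surjective. The localization condition $B\otimes_AB\simeq B$ is equivalent to $B\otimes_AJ_\varphi\simeq 0$. Taking $\pi_0$, the group $\pi_0B\otimes_{\pi_0A}\pi_0J_\varphi$ vanishes, and it surjects onto $(\pi_0A/I)\otimes_{\pi_0A}I=I/I^2$. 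Hence $I^2=I$.

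Next I would show that $\mathrm{LQ}^k_A$ is a poset. For a localization $A\to B$, the restriction functor $\operatorname{LMod}_B\to\operatorname{LMod}_A$ is fully faithful, with essential image the modules $M$ for which $B\otimes_AM\to M$ is an equivalence. Moreover, $\mathrm{Map}_{A/}(B,C)$ is contractible if $C$ lies in this image and empty otherwise; this is the standard argument for idempotent algebras, carried out in $\mathbb E_k$-algebras via the universal property of $B$ as a localization. So $B\le C$ exactly when the essential image for $C$ contains $B$, and it suffices to identify essential images. The claim I would prove is that, for $B\in\mathrm{LQ}^k_A$ with ideal $I$, the connective part of the image is exactly the connective $A$-modules whose homotopy groups are killed by $I$. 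One inclusion is clear, since $\pi_*M$ is then a $\pi_0B=\pi_0A/I$-module. For the converse, I would induct up the Postnikov tower, using completeness of $\spgeq$-modules, and reduce to a discrete $\pi_0A/I$-module $N$. There $B\otimes_AJ_\varphi\simeq0$ together with $I=I^2$ should give $J_\varphi\otimes_AN\simeq 0$, by an inductive connectivity argument on $\pi_0$. This shows that $\varphi\mapsto I$ is injective on objects and reflects order, i.e.\ it is fully faithful.

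For essential surjectivity, take an idempotent ideal $I$, set $J=J_I=\fib(A\to \mathrm H(\pi_0A/I))$, and form the tower $J^{\otimes_An}$. The key connectivity estimate is that the maps $J^{\otimes_A(n+1)}\to J^{\otimes_An}$ are $\pi_0$-surjective, because $I\cdot I=I$ on $\pi_0$, and that their cofibers become increasingly connective. This is the source of the stabilization of $\pi_i$ for $n>i+1$. From the estimate one gets $I^\infty=\lim_nJ^{\otimes n}$ with $I^\infty\otimes_AI^\infty\simeq I^\infty$, since the limit commutes with tensoring in each homotopical degree. Consequently $A/I^\infty:=\cofib(I^\infty\to A)$ is an idempotent algebra, which supplies the $\mathbb E_k$-structure by the usual idempotent-algebra machinery. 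It is connective and $\pi_0$-surjective with kernel $I$, because $\pi_0I^\infty=I$. The final clause about the image of $\operatorname{LMod}_{A/I^\infty}(\spgeq)$ is then the image claim of the previous paragraph applied to this $B$.

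I expect the main obstacle to be the connectivity estimate on the tower $J^{\otimes n}$ and the resulting idempotence of $I^\infty$. I would attack it by computing $\pi_0$ and $\pi_1$ of $J\otimes_A\mathrm H(\pi_0A/I)$ via Tor over $\pi_0A$, where $\mathrm{Tor}_0(I,\pi_0A/I)=I/I^2=0$, and then bootstrapping degree by degree.
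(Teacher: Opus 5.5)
The paper does not prove this statement. It imports it from Hebestreit--Scholze, and the only thing it adds is the remark immediately afterwards: $\pi_0(A/I^\infty)\cong\pi_0A/I$, \emph{but} $\pi_0I^\infty\neq I$ in general. Your outline is a reasonable self-contained reconstruction: the $I/I^2$ argument for well-definedness, the poset property from idempotence, the description of the connective essential image, and the construction of $A/I^\infty$. However, that remark pinpoints its one false step. In the essential-surjectivity paragraph you deduce that the kernel of $\pi_0A\to\pi_0(A/I^\infty)$ is $I$ ``because $\pi_0I^\infty=I$''.

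By the stabilisation in the statement, the tower is constant on $\pi_0$ from $n=2$ on. Hence $\pi_0I^\infty\cong\pi_0(J_I\otimes_AJ_I)\cong I\otimes_{\pi_0A}I$. The multiplication $I\otimes_{\pi_0A}I\to I$ is surjective (as $I=I^2$), but its kernel is $\mathrm{Tor}_1^{\pi_0A}(\pi_0A/I,I)$, which need not vanish. The repair is one line. The long exact sequence of $I^\infty\to A\to A/I^\infty$ gives $\pi_0(A/I^\infty)\cong\pi_0A/\operatorname{im}(\pi_0I^\infty\to\pi_0A)$, and this image is $I^2=I$. Only the image is needed.

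On the step you single out as the main obstacle, do not expect extra vanishing from a degree-wise Tor computation. In general $\pi_1(J_I\otimes_A\mathrm{H}(\pi_0A/I))$ is nonzero; for discrete $A$ it is $\mathrm{Tor}_1^A(I,A/I)$. What works is the same mechanism you need in your fully-faithfulness step. For a discrete left module $N$ killed by $I$, one has $\pi_0(J_I\otimes_A\mathrm{H}(N))=I\otimes_{\pi_0A}N=(I/I^2)\otimes_{\pi_0A/I}N=0$. Consequently, if $M$ is $c$-connective with $\pi_*M$ killed by $I$, then $J_I\otimes_AM$ is $(c+1)$-connective. Moreover $\pi_*(J_I\otimes_AM)$ is again killed by $I^2=I$, by the fibre sequence $J_I\otimes_AM\to M\to\mathrm{H}(\pi_0A/I)\otimes_AM$.

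Iterating from $M=\mathrm{H}(\pi_0A/I)$ shows that the cofibre $J_I^{\otimes n}\otimes_A\mathrm{H}(\pi_0A/I)$ of $J_I^{\otimes(n+1)}\to J_I^{\otimes n}$ is $n$-connective. This is exactly stabilisation of $\pi_i$ for $n>i+1$. The same induction gives $J_\varphi\otimes_AN\simeq0$ in your fully-faithfulness step, using two inputs:
\begin{itemize}
\item $J_\varphi\simeq J_\varphi\otimes_AJ_\varphi$, which is equivalent to $B\otimes_AJ_\varphi\simeq0$;
\item $\pi_0J_\varphi=\pi_0J_\varphi\cdot I$, which comes from $J_\varphi\otimes_AB\simeq0$.
\end{itemize}

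Two minor slips remain. First, the order relation should read: $B\le C$ if and only if $C$ lies in the essential image of $\mathrm{LMod}_B$. Second, for $1<k<\infty$ the idempotent-algebra machinery has to be run in $\mathbb E_k$-$A$-modules, not in left $A$-modules. This is possible because $A\to\mathrm{H}(\pi_0A/I)$ is an $\mathbb E_k$-map.
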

	\begin{rem}\label{pi0almost}
		In the proof of \cite[Theorem B]{hebestreit2024note}, they also show that $\pi_0(A/I^\infty)=\pi_0A/I$, but note that $\pi_0I^\infty\neq I$ in general.
	\end{rem}

	\begin{de}
		Let $R$ be a connective \eonering and $I\subset \pi_0R$ be an idempotent (two-sided) ideal. We define the \infcat $\amodu_{(R,I)}(\spgeq)$ of connective almost $(R,I)$-modules  to be the kernel of $\modu_R(\spgeq)\to \modu_{R/I^\infty}(\spgeq)$.
	\end{de}
	We next compare the ring-theoretic localizations appearing in \cref{almostalg} with connective internal localizations of the corresponding module categories. This is a Morita-theoretic translation of the preceding classification.
		\begin{lem}\label{classsub}
		Let $R$ be a connective \eonering. Then the assignment $$(R\to S)\mapsto (\modu_{R}(\spgeq)\to\modu_{S}(\spgeq))$$ induces an equivalence of posets
		$$\theta:\mathrm{LQ}_R\xrightarrow{\simeq} \{\text{connective internal localizations } \modu_{R}(\spgeq)\to \mcd\}$$ where $\mathrm{LQ}_R$ denotes the full subcategory  of $ \mathrm{Alg}(\spgeq)_{R /}$ spanned by the maps $\varphi: R \rightarrow S$ for which
		\enu{
			\item  the multiplication $S \otimes_R S \rightarrow S$ is an equivalence, i.e. $\varphi$ is idempotent, and
			\item $\pi_0(\varphi): \pi_0 R \rightarrow \pi_0 S$ is surjective.
		}
	\end{lem}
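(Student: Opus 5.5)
We need to prove Lemma \ref{classsub}: the map $\theta$ sending $R\to S$ in $\mathrm{LQ}_R$ to the extension of scalars $S\otimes_R-\colon\modu_R(\spgeq)\to\modu_S(\spgeq)$ is an equivalence of posets onto the connective internal localizations. The plan is to check that $\theta$ is well defined, then construct an inverse by evaluating the right adjoint at the unit, and finally verify that the two composites are the identity.

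\textbf{Step 1: $\theta$ is well defined.} Let $\varphi\colon R\to S$ lie in $\mathrm{LQ}_R$, and write $p=S\otimes_R-$. Its right adjoint $p^R$ is restriction of scalars, which preserves all colimits. Idempotence $S\otimes_R S\simeq S$ implies that the counit $p p^R M=S\otimes_R M\to M$ is an equivalence for every $S$-module $M$, so $p^R$ is fully faithful. The unit is $x\to S\otimes_R x$, whose cofiber is $\cofib(R\to S)\otimes_R x$. Since $\pi_0R\to\pi_0S$ is surjective, $\cofib(R\to S)$ is $1$-connective, so this cofiber is $1$-connective and the unit is a $\pi_0$-epimorphism. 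Thus $p$ is a connective internal localization. The same argument shows that a morphism $S\to S'$ under $R$ induces a factorization of localizations, so $\theta$ is monotone.

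\textbf{Step 2: the inverse.} Let $p\colon\modu_R(\spgeq)\to\mcd$ be a connective internal localization, and set $T=p^Rp$. Because $p^R$ is fully faithful, $T$ is an idempotent monad. Since $p^R$ preserves colimits, $T$ is a colimit-preserving endofunctor of $\modu_R(\spgeq)$. The goal is to show $T\simeq S\otimes_R-$ with $S:=T(R)$.

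\begin{itemize}
\item The unit $\mathrm{id}\to T$ is an idempotent object of the monoidal category $\funct^L(\modu_R(\spgeq),\modu_R(\spgeq))$.
\item By Eilenberg--Watts, this category is identified with $R$-bimodules, and idempotent arrows $\mathrm{id}\to T$ correspond to idempotent algebras $R\to S$. In concrete terms, $T\simeq S\otimes_R-$ with $S=T(R)$.
\item The right $R$-action on $S$ comes from $\mathrm{End}(R)=R^{\opp}$ acting through the functor $T$.
\item Idempotence of $T$ gives $S\otimes_R S\simeq S$, making $S$ an idempotent $\sE_1$-algebra under $R$.
\item The unit $R\to T(R)=S$ is $\pi_0$-surjective by assumption, so $R\to S$ lies in $\mathrm{LQ}_R$.
\end{itemize}

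\textbf{Step 3: the composites.} The essential image of $p^R$ is the image of $\modu_S(\spgeq)$, namely the modules $M$ with $M\simeq S\otimes_R M$. Hence $\mcd\simeq\modu_S(\spgeq)$ compatibly with $p$, so $\theta$ of the inverse recovers the original localization. In the other direction, starting from $S$, the inverse recovers $T(R)=S\otimes_R R=S$. Both posets are genuine posets: localizations are determined by their essential images, and idempotent algebras admit at most one map under $R$. So an order-preserving bijection that reflects order suffices. Reflection of order holds because an inclusion of images $\modu_{S'}\subset\modu_S$ gives $S'\simeq S\otimes_R S'$, which yields a map $S\to S'$.

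\textbf{Main obstacle.} The hardest step is Step 2: producing the algebra structure on $T(R)$ coherently and identifying $T\simeq T(R)\otimes_R-$. I would handle this by citing the standard equivalence between idempotent algebras in the monoidal category $\funct^L(\modu_R(\spgeq),\modu_R(\spgeq))\simeq{}_R\mathrm{BMod}_R(\spgeq)$ and idempotent algebras under $R$ (as in Lurie's treatment of idempotent algebras in Higher Algebra), rather than constructing the structure by hand.
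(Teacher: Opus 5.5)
Your route differs from the paper's, and most of it works. For essential surjectivity the paper applies Barr--Beck--Lurie to $p^R$ to write $p=f_!$. You instead identify the idempotent monad $T=p^Rp$ with $T(R)\otimes_R-$ by Eilenberg--Watts, and then observe that $\mcd$ and $\modu_S(\spgeq)$ are the same reflective subcategory of $\modu_R(\spgeq)$: the $M$ for which $M\to S\otimes_R M$ is an equivalence. This is a correct and concrete substitute for monadicity. Your Step 1 also spells out the well-definedness of $\theta$, which the paper leaves implicit.

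You do not need a theory of idempotent objects to put an algebra structure on $T(R)$. In any case \cite[\S4.8.2]{ha} treats commutative algebras in symmetric monoidal categories, so it would not apply verbatim to $R$-bimodules. The adjunction already makes $T$ a monad, i.e.\ an algebra in $\funct^L(\modu_R(\spgeq),\modu_R(\spgeq))$. The monoidal Eilenberg--Watts equivalence turns this into an algebra in $R$-bimodules, i.e.\ an algebra under $R$.

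The real gap is in Step 3, where you replace full faithfulness of $\theta$ by ``bijection of posets plus order reflection''. Two things are missing there.
\begin{enumerate}
\item That $\mathrm{LQ}_R$ is a poset is asserted, not proved.
\item The map you produce, $S\simeq S\otimes_R R\to S\otimes_R S'\simeq S'$, is only a map of $R$-bimodules under $R$. But $S\le S'$ in $\mathrm{LQ}_R$ means an $\mathbb{E}_1$-algebra map under $R$. Nothing in your argument shows this map is multiplicative, and for noncommutative $R$ there is no algebra $S\otimes_R S'$ to factor through.
\end{enumerate}

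The paper handles both points with the Morita embedding \cite[\S4.8.5]{ha}, which makes $\alg(\spgeq)_{R/}\hookrightarrow\prl_{\op{ad},R/}$ fully faithful. Concretely, suppose $\modu_{S'}(\spgeq)\subset\modu_S(\spgeq)$. Then every $S'$-module is local for $S\otimes_R-$, so $R\to S$ becomes an equivalence after applying $S'\otimes_R-$. Hence $S'\otimes_R-$ restricted to $S$-modules gives a factorization $\modu_R(\spgeq)\to\modu_S(\spgeq)\to\modu_{S'}(\spgeq)$ under $\modu_R(\spgeq)$. By the Morita embedding this factorization comes from a unique algebra map $S\to S'$ under $R$. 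Posetality of $\mathrm{LQ}_R$ follows too, since precomposition with a localization is fully faithful on colimit-preserving functors.

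Alternatively, quote \cref{almostalg}: $\mathrm{LQ}_R$ is the poset of idempotent ideals. The inclusion $\modu_{S'}(\spgeq)\subset\modu_S(\spgeq)$ forces $\ker\pi_0\varphi\subseteq\ker\pi_0\varphi'$, because $\pi_0S'=\pi_0R/\ker\pi_0\varphi'$ must be killed by $\ker\pi_0\varphi$.
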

	\begin{proof}
		Because the Morita embedding $\alg(\spgeq)\hookrightarrow \prl_{\op{ad},*/}$
		\cite[see][\textsection 4.8.5]{ha} induces an embedding 
		$$\alg(\spgeq)_{R/}\hookrightarrow \prl_{\op{ad},R/} $$ we see that $\theta$ 
		is fully faithful. For the essential surjectivity, we first observe that a 
		connective internal localization $p:\modu_{R}(\spgeq)\to \mcd$ satisfies Barr--Beck--Lurie
		 conditions \cite[Theorem 4.7.3.5]{ha}. Therefore there exists a connective 
		 \eonering map $f:R\to S$ such that $p=f_!$. Since $p$ is a connective internal localization, $f$ is 
		 idempotent and $\pi_0f$ is surjective.
	\end{proof}
	
	Combining this Morita-theoretic classification with the kernel--quotient correspondence from \cref{connlocandclosub}, we obtain a concrete description of all closed subcategories of a connective module category by almost mathematics.

	\begin{cor}\label{classdualclosesubcat}
		Let $R$ be a connective \eonering. Then the assignment $I\mapsto \amodu_{(R,I)}(\spgeq)$ gives an equivalence of posets
		$$\left\{\text{two-sided ideals } I \subset \pi_0 R \mid I^2=I\right\} \simeq \{\text{closed subcategories of }\modu_R(\spgeq)\} .$$
	\end{cor}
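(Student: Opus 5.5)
The equivalence will be assembled as a composite of three poset equivalences that are already available. The first is the Hebestreit--Scholze classification (\cref{almostalg}) with $k=1$. It identifies idempotent two-sided ideals $I\subset\pi_0R$ with the poset $\mathrm{LQ}^1_R$ of maps $R\to S$ of \eonerings that are localisations, have connective target and are $\pi_0$-surjective; the inverse sends $I$ to $R\to R/I^\infty$. The second is \cref{classsub}, which identifies $\mathrm{LQ}_R$ with connective internal localizations of $\modu_R(\spgeq)$ via $(R\to S)\mapsto(S\otimes_R-)$. The third is \cref{connlocandclosub}, applied to $\mcc=\modu_R(\spgeq)$ (dualizable additive, being compact projectively generated). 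It identifies connective internal localizations with closed subcategories by taking kernels.

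First I will check that $\mathrm{LQ}_R$ (inside $\alg(\spgeq)_{R/}$) agrees with $\mathrm{LQ}^1_R$ (inside $\alg_{\mathbb E_1}(\opsp)_{A/}$). Since $\alg(\spgeq)\subset\alg(\opsp)$ is the full subcategory of connective algebras, and the relative tensor product of connective modules over a connective ring is computed the same way in $\spgeq$ and $\opsp$, the idempotence and $\pi_0$-surjectivity conditions coincide. So the two posets are literally the same.

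Next I will trace an ideal $I$ through the composite and check that it lands on $\amodu_{(R,I)}(\spgeq)$. Under \cref{almostalg}, $I$ maps to $R\to R/I^\infty$. Under \cref{classsub}, this maps to $R/I^\infty\otimes_R-\colon\modu_R(\spgeq)\to\modu_{R/I^\infty}(\spgeq)$. Under \cref{connlocandclosub}, that maps to its kernel, which is by definition $\amodu_{(R,I)}(\spgeq)$. Order is preserved at each stage since each map is a poset equivalence, and the composite is therefore the claimed equivalence.

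The only point needing care is that \cref{classsub} uses the same direction of inclusion ordering as the kernel--quotient correspondence, and that larger ideals give larger kernels. Both are monotone: a bigger $I$ gives a further localization $R/I^\infty\to R/J^\infty$ is reversed. Precisely, $I\subset J$ gives $R/I^\infty\to R/J^\infty$, so the kernel of localization to $R/J^\infty$ contains the kernel for $I$. I expect this bookkeeping of variance, rather than any real mathematics, to be the main thing to verify; I will state the poset structures explicitly so that the composite is covariant.
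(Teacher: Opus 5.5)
Your proposal is correct and is the paper's own argument: the paper proves the corollary by composing the Hebestreit--Scholze classification \cref{almostalg}, the Morita-theoretic identification \cref{classsub}, and the kernel--quotient correspondence \cref{connlocandclosub}, exactly as you do. Your extra checks are correct details that the paper leaves implicit. These are that $\mathrm{LQ}_R$ coincides with $\mathrm{LQ}^1_R$, and that $I\subset J$ gives $\amodu_{(R,I)}(\spgeq)\subset\amodu_{(R,J)}(\spgeq)$, so the composite is covariant; only your sentence ``a bigger $I$ gives a further localization \ldots\ is reversed'' is garbled, and the precise statement you give right after it is the right one.
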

	\begin{proof}

		This follows from combining \cref{classsub}, \cref{connlocandclosub} and \cref{almostalg}.

	\end{proof}
	We can now identify arbitrary dualizable additive \infcats. Indeed, by the embedding theorem from \cref{section2}, every such category appears as a closed subcategory of a connective module category; the preceding corollary then identifies this closed subcategory with a category of connective almost modules:
	
		\begin{thm}\label{main2}
		Let $\mcc\in \prlad$. Then \mcc is dualizable additive if and only if there exists a connective \eonering $R$ and an idempotent ideal $I\subset \pi_0R$ such that $$\mcc\simeq \amodu_{(R,I)}(\spgeq).$$
	\end{thm}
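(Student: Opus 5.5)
The plan is to treat the two implications separately, relying on the embedding theorem from \cref{section2} and the classification of closed subcategories in \cref{classdualclosesubcat}.

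For the ``if'' direction, suppose $\mcc\simeq\amodu_{(R,I)}(\spgeq)$, which by definition is the kernel of the extension of scalars $\modu_R(\spgeq)\to\modu_{R/I^\infty}(\spgeq)$. By \cref{almostalg}, the map $R\to R/I^\infty$ is idempotent and $\pi_0$-surjective. Through \cref{classsub}, extension of scalars is then a connective internal localization: its right adjoint is restriction, which is fully faithful and colimit-preserving, and the unit $M\to R/I^\infty\otimes_R M$ is $\pi_0$-surjective because $\pi_0 R\to\pi_0(R/I^\infty)$ is surjective. Since $\modu_R(\spgeq)$ is compact projectively generated, it is dualizable additive by \cref{ram149}. \cref{basicses}(2) then shows that the kernel is a closed subcategory, hence a retract of $\modu_R(\spgeq)$ in $\prlad$. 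Dualizability is preserved under retracts, so $\mcc$ is dualizable additive.

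For the ``only if'' direction, let $\mcc$ be dualizable additive. By \cref{embedmodcat}, we choose a generator $x\in\mcc$, for example the direct sum of a set of $\omega_1$-compact generators, and put $R=\op{End}(x)$, the connective endomorphism ring. The functor $l_x:\modu_R(\spgeq)\to\mcc$ is then a left exact localization that preserves all limits and colimits, so it has a left adjoint $l_x^L$. This left adjoint is fully faithful because $l_x$ is a localization, and it is an internal left adjoint because its right adjoint $l_x$ preserves colimits. Hence $l_x^L$ exhibits $\mcc$ as a closed subcategory of $\modu_R(\spgeq)$ in the sense of \cref{defconnloc}. \cref{classdualclosesubcat} then produces an idempotent two-sided ideal $I\subset\pi_0R$ with $\mcc\simeq\amodu_{(R,I)}(\spgeq)$, as subcategories of $\modu_R(\spgeq)$.

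The main point needing care is that the embedding supplied by \cref{embedmodcat} really is a closed subcategory. This requires the embedding to be fully faithful with a colimit-preserving right adjoint, and that is exactly the statement that $l_x$ preserves colimits and limits. The rest of the argument is direct bookkeeping through \cref{connlocandclosub} and \cref{classsub}.
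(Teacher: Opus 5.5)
Your proposal is correct and follows the paper's proof. The ``only if'' direction is exactly \cref{embedmodcat} followed by \cref{classdualclosesubcat}, and you add the check that the left adjoint of $l_x$ is a fully faithful internal left adjoint. The paper leaves the ``if'' direction implicit (closed subcategories of $\modu_R(\spgeq)$ are retracts, hence dualizable), and your argument via \cref{classsub} and \cref{basicses}(2) fills it in correctly.
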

	\begin{proof}
		By \cref{embedmodcat}, \mcc is a closed subcategory of  $\modu_R(\spgeq)$ for some connective \eonering $R$. Now by  \cref{classdualclosesubcat} we are done.
	\end{proof}
	
\begin{rem}
	\cref{main2} provides a refinement of \cite[Theorem 2.10.16]{krause2024sheaves}. While the latter states that any dualizable stable \infcat can be realized as the kernel of a homological epimorphism between $\mathbb{E}_1$-rings, our result implies that any dualizable additive \infcat can in fact be realized as the kernel of a $\pi_0$-surjective homological epimorphism between connective $\mathbb{E}_1$-rings.
\end{rem}
We finally discuss kernels of arbitrary morphisms in $\praddbl$. Unlike connective internal localizations, a general morphism need not have its ordinary kernel dualizable.

	\begin{lem}\label{dualker1}
		Let $\mcv\in\calg(\prl)$ and $f: \mathcal{M} \rightarrow \mathcal{N}$, $i:\mcn_0\hookrightarrow\mcn$ be maps in $\prvdbl$ such that $i$ is fully faithful. The dualizable pullback $(\mcm\times_\mcn\mcn_0)^{\mcv\text{-}\mathrm{dbl}}$ in $\prvdbl$ can be identified with the largest dualizable $\mcv$-submodule of $\mcm$ contained in the pullback $\mcm\times_\mcn\mcn_0$ in $\prl_{\mcv}$ and for which the inclusion into $\mcm$ is an internal left adjoint.
	\end{lem}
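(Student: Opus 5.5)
The plan is to identify the pullback $\mcp:=(\mcm\times_\mcn\mcn_0)^{\mcv\text{-}\mathrm{dbl}}$ in $\prvdbl$ by checking its universal property directly against the candidate. Write $\mcm_0:=\mcm\times_\mcn\mcn_0$ for the pullback in $\prl_\mcv$. Since $i$ is fully faithful, the projection $\mcm_0\to\mcm$ is fully faithful: $\mcm_0$ is the full subcategory of objects $x$ with $f(x)\in\mcn_0$. Let $\mcm_0'\subset\mcm_0$ be the candidate, namely the largest dualizable $\mcv$-submodule of $\mcm$ contained in $\mcm_0$ whose inclusion $j\colon\mcm_0'\hookrightarrow\mcm$ is an internal left adjoint.

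First I show this largest object exists. Consider the family of all full $\mcv$-submodules $\mcs\subset\mcm_0$ that are dualizable and whose inclusion into $\mcm$ is an internal left adjoint. Given two such $\mcs_1,\mcs_2$, the colimit-closed $\mcv$-submodule $\mcs_1+\mcs_2$ they generate is the essential image of the functor $\mcs_1\oplus\mcs_2\to\mcm$. This functor is an internal left adjoint, since its right adjoint is $(j_1^R,j_2^R)$. I then need two facts. First, the image of an internal left adjoint out of a dualizable $\mcv$-module, namely the colimit closure of the image, is again dualizable with internal left adjoint inclusion. Second, the family is closed under arbitrary such sums. For both facts I would use Ramzi's characterization: dualizability is equivalent to the canonical functor from the Yoneda-type presheaf category admitting a $\mcv$-linear left adjoint, and the sum $\sum_\alpha\mcs_\alpha$ is a retract-type quotient of the dualizable $\bigoplus_\alpha\mcs_\alpha$. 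Set-theoretic size is controlled because every such $\mcs$ is $\kappa$-generated for a fixed $\kappa$ (compare \cref{rmkcptassemb}), so the union is a small union and $\mcm_0'$ exists.

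Next I check the universal property. Let $\mce\in\prvdbl$ with $g\colon\mce\to\mcm$ in $\prvdbl$ such that $fg$ factors through $\mcn_0$ by an internal left adjoint; since $i$ is fully faithful, the factorization is unique. Then $g$ lands in $\mcm_0$, so the colimit-closed image $\overline{g(\mce)}$ lies in the family by the image claim and hence in $\mcm_0'$. The factorization $\mce\to\mcm_0'$ is an internal left adjoint because its right adjoint is $g^R\circ j$, which is colimit-preserving and $\mcv$-linear. Conversely, $\mcm_0'$ maps to $\mcm$ by the internal left adjoint $j$. The composite $f j$ lands in $\mcn_0$, and its corestriction is an internal left adjoint because its right adjoint is $j^R f^R i$, using that $i$ is fully faithful so that $i^R i\simeq\id$. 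Thus $\mcm_0'$ corepresents the right functor in $\prvdbl$, and uniqueness of factorizations through the full subcategories makes the comparison fully faithful on mapping spaces.

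The main obstacle is the image claim. One has to show that if $h\colon\mce\to\mcm$ is an internal left adjoint with $\mce$ dualizable, then the colimit closure of $h(\mce)$ is dualizable and its inclusion is an internal left adjoint. I would first try to exhibit it as a retract of $\mce$ in $\prl_\mcv$. If that fails, I would instead characterize it as the image of the idempotent-like comonad $hh^R$, adapting the stable argument for images of strongly continuous functors.
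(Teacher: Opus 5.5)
The paper does not argue this itself; it cites \cite[Lemma 4.5.3]{Zariski_Bal}. So your direct check of the universal property is a legitimate alternative, and its formal part is fine. Because $i$ and the inclusion $j$ of the candidate are fully faithful, the relevant mapping spaces in $\prvdbl$ are unions of components of those in $\prl_\mcv$. The right adjoints of the corestrictions are $g^R\circ j$ and $j^R f^R i$, as you say. Everything then reduces to showing that every $g\colon\mce\to\mcm$ in $\prvdbl$ with $fg$ landing in $\mcn_0$ lands in the candidate.

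\textbf{The gap.} Both that step and the existence of the largest submodule rest entirely on your image claim, which you leave unproved. The claim is: for an internal left adjoint $h\colon\mce\to\mcm$ with $\mce$ dualizable, the colimit closure $\mct:=\overline{h(\mce)}$ is dualizable and $j\colon\mct\hookrightarrow\mcm$ is an internal left adjoint. Note also that the essential image of $\mcs_1\oplus\mcs_2\to\mcm$ is not colimit-closed; $\mcs_1+\mcs_2$ is its closure, which is again an instance of the same claim. So as written the proof has a gap exactly at its crux.

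\textbf{Your two suggested strategies fail.} Take $\mcv=\spgeq$, $\mce=\spgeq$, $\mcm=\spgeq\times\spgeq$ and $h(X)=(X,X)$. This is an internal left adjoint with $h^R(Y,Z)=Y\oplus Z$. Then $\mct$ contains the splitting $(\mathbb{S},0)$ of the idempotent $(\id,0)$ on $(\mathbb{S},\mathbb{S})$, and likewise $(0,\mathbb{S})$, so $\mct=\mcm$.
\begin{itemize}
\item $\spgeq\times\spgeq$ is not a retract of $\spgeq$ in $\prl$. A retraction would give connective $A,B,C,D$ with $A\otimes C\simeq\mathbb{S}\simeq B\otimes D$ and $A\otimes D\simeq 0$. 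This is impossible, since $A$ would then be invertible, forcing $D\simeq 0$.
\item The comonad $hh^R(Y,Z)=(Y\oplus Z,Y\oplus Z)$ is not idempotent, and its essential image is not colimit-closed.
\end{itemize}

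\textbf{The missing idea} is to argue through exhaustions by atomic maps. In the case $\mcv=\spgeq$, which is the one used in the paper, the steps are as follows.
\begin{itemize}
\item $\mce$ is generated under colimits by compact projectively exhaustible objects (\cref{flatgenerate}, \cref{flatcpexhau}).
\item $h$ preserves compact projective maps (\cref{presreflcpmap}).
\item A compact projective map of $\mcm$ between objects of the colimit-closed $\mct$ is compact projective in $\mct$.
\item Hence $\mct$, which is presentable since it is generated by a small set, is generated by compact projectively exhaustible objects. It is therefore dualizable by \cref{dualaddcprojmaps}.
\item \cref{a13} computes $\hat h$ on such an object $\colim_n x_n$ as $\colim_n h_{x_n}$, both in $\mct$ and in $\mcm$. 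So the Beck--Chevalley map $\hat h_\mcm\circ j\to\mcp_\Sigma(j)\circ\hat h_\mct$ is an equivalence on generators, hence everywhere, since both sides preserve colimits.
\item Passing to right adjoints gives $j^R c_\mcm\simeq c_\mct\circ\mcp_\Sigma(j)^R$. Therefore $j^R\simeq j^R c_\mcm\hat h_\mcm$ preserves colimits.
\end{itemize}
For general $\mcv$ you would need the analogous theory of $\mcv$-atomically exhaustible objects. With this in place, the rest of your argument goes through. The size bound comes from $j$ preserving $\omega_1$-compact objects, so each candidate is determined by a subset of $\mcm^{\omega_1}$.
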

		\begin{proof}
		See \cite[Lemma 4.5.3]{Zariski_Bal}.
	\end{proof}
	Specializing this general description to the additive base $\spgeq$ gives the following concrete interpretation of dualizable kernels in $\praddbl$.
	
	\begin{cor}\label{dualkernel}
		Let $f: \mathcal{M} \rightarrow \mathcal{N}$ be a map in $\praddbl$. The dualizable kernel $\ker^d(f)$ can be identified with the largest dualizable additive subcategory of $\mcm$ contained in $\ker(f)$ and for which the inclusion functor into $\mcm$ is an internal
		left adjoint.
	\end{cor}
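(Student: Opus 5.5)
This is a direct specialization of \cref{dualker1}, taking $\mcv=\spgeq$ and $\mcn_0=0$.

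First, I would check that the zero category is a legitimate input. The zero category $0$ lies in $\praddbl$, since it is trivially dualizable with dual $0$. The inclusion $i\colon 0\hookrightarrow \mcn$ is fully faithful, and it is an internal left adjoint: its right adjoint $\mcn\to 0$ preserves all colimits trivially. Under the identification $\prlad\simeq \modu_{\spgeq}(\prl)$, the category $\praddbl$ is precisely $\prvdbl$ for $\mcv=\spgeq$. So $f$ and $i$ satisfy the hypotheses of \cref{dualker1}.

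Second, I would identify the objects in the lemma with those in the corollary. The dualizable kernel $\ker^d(f)$ is by definition the fiber of $f$ in $\praddbl$, which is the dualizable pullback $(\mcm\times_\mcn 0)^{\spgeq\text{-}\mathrm{dbl}}$. The ordinary pullback $\mcm\times_\mcn 0$ in $\prl_{\spgeq}=\prlad$ is the kernel $\ker(f)$, because limits in $\prlad$ are computed in $\prl$ and hence in $\hatcat$. Concretely, $\ker(f)$ is the full subcategory of $x\in\mcm$ with $f(x)\simeq 0$; the mapping spaces to $0$ are contractible, so the pullback is this full subcategory.

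Third, I would translate the remaining terminology. A ``dualizable $\spgeq$-submodule'' is a full subcategory closed under colimits that is dualizable in $\prlad$, that is, a dualizable additive subcategory. Here $\spgeq$-linearity of a colimit-closed full subcategory is automatic, since additive presentability is a property. With these translations, \cref{dualker1} reads exactly as the statement of the corollary.

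The only step requiring any care is the identification of $\ker^d(f)$ with the pullback in $\praddbl$ along $0\to\mcn$, which holds by definition of the fiber. I expect no real obstacle.
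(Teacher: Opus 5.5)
Your proposal is correct and is the same argument the paper uses: \cref{dualker1} specialized to $\mcv=\spgeq$ and $\mcn_0=0$. Your verifications are exactly what is needed. These are that $0\hookrightarrow\mcn$ is a fully faithful internal left adjoint, that $\ker^d(f)$ is the fiber in $\praddbl$, and that the pullback in $\prlad$ is $\ker(f)$. One further translation is implicit in your write-up: an $\spgeq$-linear internal left adjoint is the same as a functor whose right adjoint preserves colimits, which holds because $\spgeq$ is a mode.
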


	The almost-module description allows us to compute these dualizable kernels explicitly for maps of connective \eonerings. The answer is obtained by extracting the largest idempotent ideal in the kernel on $\pi_0$:
	
	\begin{thm}\label{dualker}
		Let $R\to S$ be a map of connective \eonerings. Then the dualizable kernel can be identified with the \infcat of connective almost $(R,I_0)$-modules $$\ker^d(\modu_R(\spgeq)\to \modu_S(\spgeq))\simeq \amodu_{(R,I_0)}(\spgeq),$$ where $I_0$ is the maximal idempotent (two-sided) ideal of $\pi_0R$ such that  $I_0\subset \ker(\pi_0(R)\to \pi_0(S))$.
	\end{thm}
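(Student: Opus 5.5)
By \cref{dualkernel}, the dualizable kernel $\ker^d(f_!)$ of $f_!\colon\modu_R(\spgeq)\to\modu_S(\spgeq)$ is the largest dualizable additive subcategory $\mcc\subset\modu_R(\spgeq)$ contained in $\ker(f_!)$ whose inclusion is an internal left adjoint. Such subcategories are exactly the closed subcategories of $\modu_R(\spgeq)$: a fully faithful internal left adjoint with dualizable source is a closed subcategory, and conversely a closed subcategory of a dualizable additive category is dualizable by the remark after \cref{defconnloc}. By \cref{classdualclosesubcat} these closed subcategories are, as a poset, exactly the $\amodu_{(R,I)}(\spgeq)$ with $I\subset\pi_0R$ an idempotent two-sided ideal. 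So the plan is to identify, for each such $I$, when $\amodu_{(R,I)}(\spgeq)\subset\ker(f_!)$, and then take the maximal one.

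First, the set of idempotent two-sided ideals contained in $K:=\ker(\pi_0R\to\pi_0S)$ has a maximum $I_0$. It is the sum of all such ideals. A sum of idempotent ideals is idempotent, since $\big(\sum I_\alpha\big)^2\supseteq\sum I_\alpha^2=\sum I_\alpha$, and it is still contained in $K$. It then suffices to show:
\[
\amodu_{(R,I)}(\spgeq)\subset\ker(f_!)\iff I\subset K .
\]

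For ($\Leftarrow$), note that $\amodu_{(R,I)}(\spgeq)$ is generated under colimits by $J:=\fib(R\to R/I^\infty)$, which is the image of $R$ under the right adjoint $i^R$ of the inclusion. This uses \cref{basicses}: the cofiber sequence $ii^R\to\op{Id}\to p^Rp$ shows that $i^R$ is a colimit-preserving localization onto the kernel. So it is enough to show $S\otimes_R J=0$. Since $R/I^\infty$ is idempotent over $R$, $J\otimes_R J\simeq J$, and hence $S\otimes_RJ\simeq (S\otimes_RJ)\otimes_RJ$. Now $\pi_0J$ maps onto $I$, which acts by zero on $\pi_0S$ because $I\subset K$. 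Write $M:=S\otimes_RJ$, a connective right module. On $\pi_0$, the map $\pi_0(M\otimes_RJ)\to\pi_0M$ has image $\pi_0M\cdot I$. We have $\pi_0 M=\pi_0S\otimes_{\pi_0R}\pi_0J$, so $\pi_0M\cdot I$ is the image of $\pi_0S\cdot I=0$, and thus $\pi_0M=0$. Inductively on the lowest nonvanishing homotopy group, if $\pi_kM\neq0$ is the first one, then $\pi_k(M\otimes_RJ)=\pi_kM\otimes_{\pi_0R}\pi_0J$, and its image in $\pi_kM$ is $\pi_kM\cdot I$. Since $\pi_kM$ is a $\pi_0S$-module, $I$ acts on it through $\pi_0S$, hence by zero, so $\pi_kM=0$. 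Separatedness of $\spgeq$ then gives $M=0$.

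For ($\Rightarrow$), if $S\otimes_RJ=0$, apply $S\otimes_R-$ to $J\to R\to R/I^\infty$ to get $S\simeq S\otimes_RR/I^\infty$. Then $\pi_0S$ is a $\pi_0(R/I^\infty)=\pi_0R/I$-module by \cref{pi0almost}, so $I$ kills $1\in\pi_0S$, i.e.\ $I\subset K$.

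Combining the two directions, the largest admissible subcategory is $\amodu_{(R,I_0)}(\spgeq)$. The step I expect to be the main obstacle is the connectivity induction in ($\Leftarrow$). It needs care with left versus right actions and with the two-sidedness of $I$: one must check that the right action of $\pi_0R$ on $\pi_k(S\otimes_RJ)$ genuinely factors through $\pi_0S$. If that is awkward, a fallback is to use that $I\subset K$ makes $R\to S$ factor through $R/I^\infty$ on $\pi_0$, and to try lifting this to a ring map $R/I^\infty\to S$ using the universal property in \cref{almostalg}.
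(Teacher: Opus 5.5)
Your skeleton matches the paper's. You use \cref{dualkernel} and \cref{classdualclosesubcat} to reduce to the closed subcategories $\amodu_{(R,I)}(\spgeq)$ with $I^2=I$, and you then show that such a subcategory lies in $\ker(f_!)$ exactly when $I\subset K$. Your justification that $I_0$ exists and your ($\Rightarrow$) direction are fine.

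The gap is in ($\Leftarrow$), at the very point you flagged. The map $\pi_k(M\otimes_RJ)\cong\pi_kM\otimes_{\pi_0R}\pi_0J\to\pi_kM$ computes the \emph{right} action of $I$ on $\pi_kM$. On $M=S\otimes_RJ$ that right $R$-module structure comes from $J$, not from $S$. The $\pi_0S$-module structure on $\pi_kM$ is a \emph{left} one and says nothing about the right action. Already on $\pi_0$ one has $(s\otimes j)\cdot a=s\otimes ja$, not $sa\otimes j$. So ``$\pi_0M\cdot I$ is the image of $\pi_0S\cdot I$'' does not follow, and neither does ``$I$ acts on $\pi_kM$ through $\pi_0S$'': for $\mathbb{E}_1$-rings and two-sided ideals there is no reason the right action factors through $\pi_0S$. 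As written, the vanishing of $S\otimes_RJ$ is not established.

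The repair is to make $I$ act on $S$ rather than on $J$. The most direct way is to regard $S$ as a right $R$-module.
\begin{itemize}
\item The right action of $\pi_0R$ on $\pi_*S$ is through $\pi_0R\to\pi_0S$, so each $\pi_iS$ is a discrete right module killed by $I$.
\item Such a module is a right module over $\pi_0R/I=\pi_0(R/I^\infty)$ by \cref{pi0almost}, hence over $R/I^\infty$.
\item Therefore $\pi_iS\otimes_RJ\simeq\pi_iS\otimes_{R/I^\infty}(R/I^\infty\otimes_RJ)=0$, where $R/I^\infty\otimes_RJ=0$ by idempotence.
\item Since $\pi_k(S\otimes_RJ)$ depends only on $\tau_{\le k}S$, which is a finite extension of the $\Sigma^i\pi_iS$, you get $S\otimes_RJ=0$.
\end{itemize}
Alternatively, you can keep your induction but use the other idempotence equivalence $J\otimes_RJ\to R\otimes_RJ$. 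It gives $\pi_0J=I\cdot\pi_0J$, and the equivalence $M\otimes_RJ\to M$ becomes $m\otimes bj'\mapsto(\phi(m)\cdot b)\cdot j'$ with $\phi\colon S\otimes_RJ\to S$. That expression vanishes because $b\in I$ acts on $\pi_kS$ from the right through $\pi_0S$.

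The paper sidesteps the left/right issue by working with right adjoints. It argues that $f_!$ factors through $\modu_R(\spgeq)\to\modu_{R/I^\infty}(\spgeq)$ iff the restriction of every connective $S$-module lands in $\modu_{R/I^\infty}(\spgeq)$. By the last clause of \cref{almostalg}, that means $I$ kills its homotopy from the left, where the action genuinely factors through $\pi_0S$. This is essentially your fallback.
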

	\begin{proof}
		By \cref{dualkernel}, the dualizable kernel $\ker^d(\modu_R(\spgeq)\to \modu_S(\spgeq))$ can be identified with the maximal dualizable additive full subcategory of $$\ker(\modu_R(\spgeq)\to \modu_S(\spgeq))$$ such that the inclusion to $\modu_R(\spgeq)$ is an internal left adjoint. 
		
		Let us denote $$I=\ker(\pi_0(R)\to \pi_0(S)).$$ Given an idempotent ideal $J\subset \pi_0R$. By the characterization of $\modu_{R/J^\infty}(\spgeq)$  in \cref{almostalg}, we see that the functor $\modu_R(\spgeq)\to \modu_S(\spgeq)$ factors through the localization $\modu_R(\spgeq)\to \modu_{R/J^\infty}(\spgeq)$ if and only if $J\subset I$, as desired.
	\end{proof}
\begin{exam}
	For example, the dualizable additive kernel of the localization $$p:\mcd(\mathbb{Z})_{\geq0}\to \mcd(\mathbb{Z}[p^{-1}])_{\geq0}$$ is $0$ instead of $\mcd(\mathbb{Z})_{\geq0}^{p\text{-nil}}$.
\end{exam}
\subsection{Small additive short exact sequences}

To  motivate the necessity of the dualizable additive framework more, we briefly digress to examine the pathologies inherent in small additive \infcats. Specifically, we demonstrate through counterexamples that additive homological epimorphisms and Karoubi projections fail to be closed under pullback, a flaw  rectified by transitioning to the dualizable additive setting.

\begin{thm}[{\cite[Theorem 2.1.6]{dgm-elden-vova}}]\label{addquot}
	Let $\mathcal{B}$ be a small additive \infcat and $\mathcal{A} \subset \mathcal{B}$ be an additive full subcategory. 
	Define
	$\mathrm{W}:=\left\{f \oplus \op{id}_b: a \oplus b \rightarrow a^{\prime} \oplus b\right.$ where $f: a \rightarrow a^{\prime} \in \mathcal{A}$ and $b\in\mcb\}$. Then $\mathcal{B}[\mathrm{W}^{-1}]$ can be identified with the cofiber of $\mca\to\mcb$ in the \infcat of (small) additive \infcats $\Cat_{\op{ad}}$.
\end{thm}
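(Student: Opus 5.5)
We will check the universal property of the cofiber directly in $\Cat_{\op{ad}}$: for every small additive \infcat $\mce$, restriction along $q\colon\mcb\to\mcb[\mathrm W^{-1}]$ should identify $\funct^{\op{add}}(\mcb[\mathrm W^{-1}],\mce)$ with the full subcategory of $\funct^{\op{add}}(\mcb,\mce)$ of additive functors whose restriction to $\mca$ is zero. Here we use that in $\Cat_{\op{ad}}$ the cofiber of $\mca\to\mcb$ is the pushout $\mcb\sqcup_\mca 0$, whose mapping category into $\mce$ is $\funct^{\op{add}}(\mcb,\mce)\times_{\funct^{\op{add}}(\mca,\mce)}\{0\}$. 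Since $0$ is terminal in $\funct^{\op{add}}(\mca,\mce)$, this fiber is the full subcategory of functors $G$ with $G|_\mca\simeq 0$. The argument has three steps.

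\emph{Step 1: $\mcb[\mathrm W^{-1}]$ is additive and $q$ preserves finite direct sums.} The class $\mathrm W$ contains all identities and is closed under direct sums: $(f\oplus\id_b)\oplus(g\oplus\id_{b'})$ is, after reordering summands, $(f\oplus g)\oplus\id_{b\oplus b'}$, and $\mca$ is closed under $\oplus$. We will argue that $\mcb\times\mcb\to\mcb[\mathrm W^{-1}]\times\mcb[\mathrm W^{-1}]$ is again a localization at $\mathrm W\times\mathrm W$; this is a standard fact, since products of localizations along classes containing identities are localizations. Hence $\oplus$ descends to a functor on $\mcb[\mathrm W^{-1}]$ making it semiadditive. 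Group-likeness of mapping monoids is inherited because $\mcb$ has shear maps that are equivalences and $q$ preserves them. Thus $q$ is additive.

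\emph{Step 2: localizations at $\mathrm W$ are exactly the functors killing $\mca$.} Let $G\colon\mcb\to\mce$ be additive. If $G$ inverts $\mathrm W$, then for $a\in\mca$ the map $0\oplus\id_0\colon a\to 0$ lies in $\mathrm W$, so $G(a)\simeq 0$. Conversely, if $G|_\mca\simeq 0$, then $G(f\oplus\id_b)\simeq G(f)\oplus\id_{G b}$, and $G(f)$ is a map between zero objects, hence an equivalence. The universal property of $\infty$-categorical localization then gives an equivalence between $\funct(\mcb[\mathrm W^{-1}],\mce)$ and the full subcategory of $\funct(\mcb,\mce)$ of functors inverting $\mathrm W$.

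\emph{Step 3: additivity matches.} It remains to check that $G\circ q$ is additive if and only if $G$ is. The forward direction follows from Step 1. For the converse, $q$ is essentially surjective and the comparison maps $G(x\oplus y)\to G(x)\oplus G(y)$ for $x,y\in\mcb[\mathrm W^{-1}]$ can be tested on lifts to $\mcb$. Combining the three steps yields the stated identification of full subcategories, naturally in $\mce$. This proves that $\mcb[\mathrm W^{-1}]$ is the cofiber.

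The main obstacle is Step 1, namely the claim that the product of localizations is a localization and that the symmetric monoidal structure $\oplus$ descends coherently. The first approach is to invoke Lurie's results on localizations of symmetric monoidal \infcats compatible with the tensor product (HA 4.1.7.4). Compatibility holds because $\mathrm W$ is stable under $-\oplus c$ for every $c\in\mcb$: indeed $(f\oplus\id_b)\oplus\id_c=f\oplus\id_{b\oplus c}$. This gives a symmetric monoidal structure on $\mcb[\mathrm W^{-1}]$ for which $q$ is symmetric monoidal, and a cocartesian symmetric monoidal structure forces semiadditivity.
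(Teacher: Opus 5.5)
The paper does not prove this statement; it imports it from \cite[Theorem~2.1.6]{dgm-elden-vova}, so there is no internal argument to compare against. Your strategy is the natural one: apply Yoneda to $\mathcal{B}\sqcup_{\mathcal{A}}0$ in $\Cat_{\op{ad}}$, identify the additive functors inverting $\mathrm{W}$ with those vanishing on $\mathcal{A}$, and match additivity. Steps~2 and~3 are fine \emph{once} Step~1 is established. A minor point: $\mathrm{W}$ is closed under $\oplus$ only up to conjugation by swap isomorphisms. This is harmless, since you only need $q\circ\oplus$ to invert $\mathrm{W}\times\mathrm{W}$.

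The gap is Step~1, which is where all the content lies. Put $\mathcal{D}:=\mathcal{B}[\mathrm{W}^{-1}]$. Descending $\oplus$ to a functor $\otimes\colon\mathcal{D}\times\mathcal{D}\to\mathcal{D}$ does not show that $\otimes$ is a biproduct, or even that $q(0)$ is a zero object. This holds whether you descend via products of localizations or via \cite[Proposition~4.1.7.4]{ha}.

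\begin{itemize}
\item A localization preserves no limits or colimits in general.
\item Lurie's result only gives a symmetric monoidal structure with $q$ symmetric monoidal. It says nothing about that structure being cocartesian.
\item Your closing claim is false as stated: a cocartesian structure does not force semiadditivity (think of $(\mathrm{Set},\sqcup)$). You need the structure to be simultaneously cartesian and cocartesian.
\end{itemize}

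What closes the gap is full faithfulness. Both $q^*\colon\funct(\mathcal{D},\mathcal{D})\to\funct(\mathcal{B},\mathcal{D})$ and $(q\times q)^*\colon\funct(\mathcal{D}\times\mathcal{D},\mathcal{D})\to\funct(\mathcal{B}\times\mathcal{B},\mathcal{D})$ are fully faithful. Hence the structure transformations of $\mathcal{B}$ descend to $\mathcal{D}$: the natural maps $\iota\colon 0\to x$ and $x\to 0$, the inclusions $i_1,i_2$, the projections $p_1,p_2$, the diagonal $\Delta$ and the fold $\nabla$. The identities they satisfy descend with them, for instance $\iota_0=\id_0$, $p_1\Delta=\id$ and $(p_1\otimes p_2)\circ\Delta_{x\otimes y}=\id$. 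The standard naturality-homotopy argument then gives the following.

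\begin{itemize}
\item The identity of $\mapp_{\mathcal{D}}(q0,d)$ is homotopic to the constant map at $\iota_d$, so this space is contractible. The dual argument handles maps into $q0$.
\item On $\mapp_{\mathcal{D}}(z,x\otimes y)$, the maps $f\mapsto(p_1f,p_2f)$ and $(g,h)\mapsto(g\otimes h)\circ\Delta_z$ are mutually inverse up to homotopy. The dual argument handles coproducts.
\item The comparison matrix $(p_ji_k)$ is the identity, because it is so in $\mathcal{B}$.
\end{itemize}

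Only then is $\mathcal{D}$ semiadditive with $q$ biproduct-preserving. After that, your shear-map argument and Step~3 go through.
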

\begin{nota}
	We will denote the quotient (the cofiber) in $\Cat_{\op{ad}}$ by $\op{quot}(\mathcal{A} \to \mathcal{B})$.
\end{nota}

\begin{de}
	Let $\mcc_0 \subset \mcc$ be a full subcategory of a \infcat \mcc. We say $\mcc_0$ is an idem-dense subcategory if the retraction closure of $\mcc_0$ in \mcc is \mcc itself.
\end{de}
\begin{rem}
	Note that an idem-dense subcategory is equivalent to a subcategory $\mcc_0$  which has the same idempotent completion as \mcc.
\end{rem}
\begin{exam}\label{densepullback}
	Let $\mca\hookrightarrow \mcb$ be a fully faithful additive functor between small additive \infcats.	If $\mathcal{A} \subset \mathcal{B}$ is idem-dense, the inclusion
	$$
	\mathcal{E} \times_{\mathcal{B}} \mathcal{A}\hookrightarrow\mce
	$$
	obtained by the pullback along an additive functor\footnote{even if $F$ is between idempotent-complete additive \infcats} $F \colon \mathcal{E} \to \mathcal{B}$ between small additive \infcats  needs not be idem-dense in $\mathcal{E}$. 
	
	For example, let $R = \mathbb{Z} \times \mathbb{Z}$ and  
$
	\mathcal{B} = \mathrm{Proj}(R)
	$
	be the additive category of finitely generated projective $R$-modules (hence idempotent-complete), and let
	$
	\mathcal{A} = \mathrm{Free}(R) \subset \mathcal{B}
	$
	be the full subcategory on finitely generated free $R$-modules. Therefore $\mathcal{A}$ is idem-dense in $\mathcal{B}$.
	Consider the (non-free) projective $R$-module
	$
	P_0 = \mathbb{Z} \times \{0\}.
	$
	Let
	$
	\mathcal{E} := \mathbf{add}(P_0)
	$
	be the additive full subcategory generated by $P_0$. Its objects are exactly the finite direct sums $P_0^{\oplus n} \cong \mathbb{Z}^n \times \{0\}$, and $\mathcal{E} \simeq \mathrm{Proj}(\mathbb{Z}) \simeq \mathrm{Free}(\mathbb{Z})$, hence $\mathcal{E}$ is idempotent-complete. Let $F \colon \mathcal{E} \hookrightarrow \mathcal{B}$ be the inclusion functor. 
	
	We claim the pullback $\mathcal{E} \times_{\mathcal{B}} \mathcal{A}$ is zero and hence not idem-dense in \mce. Since $F$ is an inclusion, the pullback $\mathcal{E} \times_{\mathcal{B}} \mathcal{A}$ identifies with the intersection $\mathcal{E} \cap \mathcal{A}$ inside $\mathcal{B}$. Objects of $\mathcal{E}$ have the form $\mathbb{Z}^n \times \{0\}$, while objects of $\mathcal{A}$ have the form
	\[
	R^m \cong (\mathbb{Z} \times \mathbb{Z})^m \cong \mathbb{Z}^m \times \mathbb{Z}^m.
	\]
	If $\mathbb{Z}^n \times \{0\} \cong \mathbb{Z}^m \times \mathbb{Z}^m$ as $R$-modules, comparing the second components forces $m=0$, whence the object is $0$. Thus
	\[
	\mathcal{E} \times_{\mathcal{B}} \mathcal{A} \simeq 0.
	\]
	
\end{exam}
\begin{rem}
	Unlike the stable case, the idempotent completion functor $$\Cat_{\op{ad}}\xrightarrow{(-)^{\natural}} \catadidem$$ does not admit a left adjoint, because by \cref{densepullback} idem-dense inclusions are not closed under pullback.
\end{rem}
\begin{de}
	Let $$\mca \hookrightarrow\mcb \to \mcc$$ be a sequence in \catadidem. We say that it is an additive Karoubi sequence if it is a bifiber sequence in \catadidem.
\end{de}
\begin{rem}
	By \cref{addkarseq}, if $\mca \to \mcb$ is fully faithful and the sequence $$\mca \hookrightarrow\mcb \to \mcc$$ is a cofiber sequence in \catadidem, then it is an additive Karoubi sequence.
\end{rem}
	\begin{de}
		We say a morphism $\mcb\to\mcc\in \catadidem$ is an additive Karoubi projection if it appears in an additive Karoubi sequence; we say that $\mcb\to\mcc\in \catadidem$ is an additive homological epimorphism if $\mcp_{\Sigma}(\mcb)\to\mcp_{\Sigma}(\mcc)$ is a connective internal localization.
	\end{de}
	\begin{rem}
		An additive Karoubi projection is always an additive homological epimorphism. An additive homological epimorphism is
		an additive Karoubi projection if and only if the kernel of the localization $\mcp_{\Sigma}(\mcb)\to\mcp_{\Sigma}(\mcc)$ is compact projectively generated.
	\end{rem}

\begin{rem}\,
	\enu{
		\item 	Additive homological epimorphisms are not closed under pullback, see \cref{exa1} for a counterexample.
\item 	Additive Karoubi projections are not closed under pullback, see \cref{exa2} for a counterexample.}

\end{rem}

\section{Flat generation}\label{section4}
A natural question is whether every dualizable additive \infcat admits enough projectives. This fails in general; see \cite[Example 3.1.24]{stefanich2023classification} for a counterexample consisting of a nontrivial \dualaddinfcat in which the only projective object is zero. However, we establish in this section that every dualizable additive \infcat admits enough flat objects.

We begin by recalling the relevant categorical notions of epimorphism and purity. These notions will provide the language in which flat objects are defined and compared across prestable \infcats.
	\begin{de}
		Let $\mcc$ be a prestable \infcat with finite limits.
		\enu{
			\item
			We say a morphism $f$ in \mcc is an \textbf{effective epimorphism} if $\pi_0(f)$ is an epimorphism in the abelian category $\mcc^\heartsuit$.
			\item We say a morphism $f$ in \mcc is a \textbf{split effective epimorphism} if $f$ admits a section.
			\item We say a morphism $f$ in \mcc is a \textbf{pure epimorphism} if $f$ is generated under filtered colimits by split effective epimorphisms when viewed as an object in $\mcc^{\Delta^1}$.
		}
	\end{de}
	This definition is designed so that purity is detected after testing against compact projective objects. We record a few elementary stability properties which will be used repeatedly below.
	\begin{rem}
		Pure epimorphisms are closed under finite sums and filtered colimits (that exist).
		Note that our notion of pure epimorphisms does not always agree with other ones in the literature, 
		such as in \cite[Definition 3.1]{martini2025locally}, where $f$ must be a filtered colimit of split effective epimorphisms (as opposed to being generated under filtered colimits by these).
		We suggest calling a map satisfying this stronger property a \textit{strictly pure epimorphism}.
	\end{rem}
	The following result, due to Stefanich, will be used as a basic tool for recognizing effective epimorphisms in the prestable setting.
	\begin{prop}[{\cite[Proposition 2.1.6 and Theorem 6.1.7]{stefanich2023derived}}]
		Let $\mcc$ be a prestable \infcat with finite limits. Then:
		\enu{ \item A morphism $f$ in \mcc is an effective epimorphism in \mcc if and only if the \v{C}ech nerve of $f$ is a colimit diagram.
			\item Let $f : X \to Y$ and $g : Y \to Z$ be morphisms in \mcc. If $f$ is an effective epimorphism,
			then $g$ is an effective epimorphism if and only if $g \circ f$ is an effective epimorphism.
			\item Effective epimorphisms are closed under pullbacks and pushouts along any map.
			\item Effective epimorphisms are closed under retracts.
		}
	\end{prop}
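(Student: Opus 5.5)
The plan is to work in the stabilization $\opsp(\mcc)$, where $\mcc$ sits as the connective part of a $t$-structure. Here $\mcc^\heartsuit$ is abelian, $\pi_0=\tau_{\leq 0}$ is a left adjoint, and since $\mcc$ has finite limits, fibers in $\mcc$ are $\tau_{\geq 0}$ of fibers in $\opsp(\mcc)$. In this language, $f\colon X\to Y$ is an effective epimorphism exactly when $\op{fib}(f)$, computed in $\opsp(\mcc)$, is already connective. Indeed, the long exact sequence gives $\pi_0X\to\pi_0Y\to\pi_{-1}\op{fib}(f)\to 0$, so surjectivity of $\pi_0 f$ is equivalent to $\pi_{-1}\op{fib}(f)=0$. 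Stefanich's results stand as cited and could simply be invoked; the sketch below shows how I would verify each part directly from this reformulation.

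For (1), I consider the Čech nerve $\check C(f)_\bullet$ with $\check C(f)_n=X\times_Y\cdots\times_Y X$, taken in $\mcc$. First suppose $f$ is an effective epimorphism. Then $F=\op{fib}(f)$ is connective, so the pullbacks in $\mcc$ agree with those in $\opsp(\mcc)$. In the stable setting, the geometric realization of the Čech nerve of $X\to Y$ is $Y$ modulo the connectivity issue, which vanishes here. Concretely, the realization fits into a cofiber sequence whose error term is controlled by the bar construction of $F$ acting on $X$. Alternatively, I would compute levelwise: $\check C(f)_n\simeq X\oplus F^{\oplus n}$, so the augmented object is the standard split resolution. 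Since colimits in $\mcc$ are computed in $\opsp(\mcc)$ as $\mcc$ is closed under colimits, the realization is $Y$. Conversely, if $|\check C(f)|\simeq Y$, then $\pi_0$ of a geometric realization is the coequalizer of $\pi_0$ of the first two terms, which is a quotient of $\pi_0X$; hence $\pi_0X\to\pi_0Y$ is epi. This converse only uses that $\pi_0$ preserves colimits.

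For (2) to (4), everything reduces to abelian-category statements about $\pi_0$. For (2), $\pi_0$ is a functor, so $\pi_0(g f)=\pi_0 g\circ\pi_0 f$, and in an abelian category, when $a$ is epi, $b\circ a$ is epi if and only if $b$ is. For retracts in (4), a retract of an epimorphism in $\mcc^\heartsuit$ is an epimorphism. For pushouts in (3), $\pi_0$ is a left adjoint and preserves pushouts, and epis in abelian categories are stable under pushout. For pullbacks in (3), given $f\colon X\to Y$ effective epi and $Y'\to Y$, the pullback $X'=X\times_Y Y'$ in $\mcc$ is $\tau_{\geq0}$ of the stable pullback. But the stable fiber of $X''\to Y'$ is $F$, which is connective, so the stable pullback $X''$ is an extension of connective objects, hence connective, and therefore equals $X'$. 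Thus $\op{fib}(X'\to Y')\simeq F$ is connective, and the map is an effective epimorphism.

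The step I expect to be the main obstacle is the forward direction of (1): identifying the realization of the Čech nerve with $Y$, and in particular checking that the iterated fiber products in $\mcc$ coincide with the stable ones. The reduction above, connectivity of $F$ making all pullbacks agree and then using the stable fact that Čech nerves of any map realize to the target, handles this. I would cite the stable statement, for instance that in a stable category every map is an effective epimorphism in this sense.
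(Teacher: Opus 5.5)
Your proof is correct. The paper gives no argument for this statement; it only cites Stefanich, so yours is a genuinely independent, self-contained route.

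You reduce everything to the $t$-structure on the stabilization, using the criterion that $f$ is an effective epimorphism iff the stable fiber $F=\operatorname{fib}(f)$ is connective. This is valid because $F$ lies in $\mathcal{D}_{\geq -1}$, so its connectivity is detected by $\pi_{-1}F\cong\operatorname{coker}(\pi_0 f)$.

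Parts (2)--(4) then become one-line facts about epimorphisms in the abelian heart and the colimit-preserving functor $\pi_0$. For pullbacks you add one observation: the stable fiber is unchanged by base change, so the stable pullback is an extension of connective objects and agrees with the pullback in $\mathcal{C}$. Part (1) reduces, via the same connectivity argument for the iterated fiber products, to the stable statement that every \v{C}ech nerve realizes to its target.

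The citation buys brevity and places the result in Stefanich's framework. Your route shows that the statement is just $t$-structure bookkeeping plus stable \v{C}ech descent.

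Two justifications in the forward direction of (1) should be repaired, although neither affects your conclusion.

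First, the ``standard split resolution'' remark is wrong. The levelwise splittings $\check C(f)_n\simeq X\oplus F^{\oplus n}$ do not assemble into an extra degeneracy unless $f$ admits a section. The stable fact you cite has a short correct proof. The fiber of the augmentation $\check C(f)_\bullet\to Y$ (with $Y$ constant) is the \v{C}ech nerve of $F\to 0$, namely $n\mapsto F^{\oplus(n+1)}$. That nerve is split by $0\to F$, so it realizes to $0$. Since geometric realization preserves fiber sequences in a stable category and the constant diagram realizes to $Y$, you get $|\check C(f)_\bullet|\simeq Y$.

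Second, neither $\mathcal{C}$ nor its stabilization need admit geometric realizations, so ``$\mathcal{C}$ is closed under colimits'' is not the right justification. Instead, carry out the computation in a presentable stable enlargement such as $\operatorname{Ind}(\mathrm{SW}(\mathcal{C}))$, whose Yoneda embedding preserves the relevant finite limits. Then use that a colimit cocone in a larger category, whose diagram and vertex lie in the full subcategory $\mathcal{C}$, is also a colimit cocone in $\mathcal{C}$.
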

	We can now define the intrinsic notion of flatness which will be used throughout the rest of the section.
	\begin{de}\label{flatdef}
		Let $\mcc$ be a Grothendieck prestable \infcat. We say an object $x\in \mcc$ is \textbf{(categorically) flat} if any effective epimorphism $u\to x$ is a pure epimorphism (see \cite{martini2025locally} for a discussion at the 1-categorical level for strictly pure epimorphisms).
	\end{de}
	The terminology is justified by comparison with the standard notion of flatness for modules, as well as with its higher-categorical refinements in \cref{flatequiv}.
	\begin{rem}
	For a symmetric monoidal Grothendieck prestable \infcat $\mcc\in\op{CAlg}(\op{Groth})$, categorical flatness is in general \emph{different} from $\otimes$-flatness; see \cite{Estrada2012LocallyFP} for an example of a symmetric monoidal Grothendieck abelian category which is generated by $\otimes$-flat objects but has no nonzero categorically flat objects. In the additive rigid case, however, the two notions coincide, as will be shown in the forthcoming work \cite{gentc} of the second author.
	\end{rem}

	The following cancellation property is a version of \cite[Lemma 2.5]{martini2025locally} for our notion of pure epimorphisms.
	\begin{lem}\label{lem2.5}
		Let \mcc be a Grothendieck prestable \infcat. If $f: x \rightarrow y$ and $g: y \rightarrow z$ are morphisms such that $g \circ f$ is a pure epimorphism, then $g$ is a pure epimorphism.
	\end{lem}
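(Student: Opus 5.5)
The argument reduces everything to the split case via the arrow category. Let $P\subset\mcc^{\Delta^1}$ denote the full subcategory of pure epimorphisms, i.e.\ the closure of the split effective epimorphisms under filtered colimits. By the remark following the definition, $P$ is closed under finite sums and filtered colimits. Consider the full subcategory
\[
Q=\{\,(f\colon x\to y,\ g\colon y\to z)\ \text{composable pairs such that } g \text{ is a pure epimorphism}\,\}\subset \mcc^{\Delta^2}.
\]
We want to show that every composable pair $(f,g)$ with $g\circ f\in P$ lies in $Q$. The obstacle is that the hypothesis concerns only the composite $g\circ f$, while the pair $(f,g)$ itself is not presented as a filtered colimit of anything. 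We therefore work in the arrow category of $g\circ f$ and transport the conclusion to $g$.

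Write the composite $h=g\circ f\colon x\to z$ as an object of $\mcc^{\Delta^1}$. The pair $(f,g)$ is determined by $h$ together with a factorization through $y$. Restriction to the second edge gives a functor
\[
\Phi\colon \mcc^{\Delta^2}\to \mcc^{\Delta^1},\qquad (f,g)\mapsto g,
\]
and $\Phi$ preserves colimits. Filtered colimits in $\mcc^{\Delta^2}$ are computed pointwise, so $Q=\Phi^{-1}(P)$ is closed under filtered colimits.

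Rather than decomposing the triangle, we pull $g$ back along $h$. Express $h\simeq\colim_\alpha h_\alpha$ as an iterated filtered colimit of split effective epimorphisms $h_\alpha\colon x_\alpha\to z_\alpha$. Each $h_\alpha$ comes with its canonical map $h_\alpha\to h$ in the arrow category, and $h$ itself factors as $x\xrightarrow{f} y\xrightarrow{g} z$. For each $\alpha$, form the pushout square in $\mcc^{\Delta^1}$
\[
g_\alpha:\ y\sqcup_{x} x_\alpha \;\longrightarrow\; z_\alpha,
\]
defined using the composite $x_\alpha\to x\xrightarrow{f} y$ and the map induced by $h_\alpha$ together with $y\xrightarrow{g}z$. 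More simply, it is the map $y_\alpha:=x_\alpha\sqcup_{x_\alpha}\dots$.

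Streamlining this, set $g_\alpha\colon y\times_z z_\alpha\to z_\alpha$, the pullback of $g$ along $z_\alpha\to z$. The composite $x_\alpha\to y$ together with $h_\alpha$ provides a map $x_\alpha\to y\times_z z_\alpha$ whose composite with $g_\alpha$ is $h_\alpha$. Since $h_\alpha$ has a section $s$, the map $g_\alpha$ has the section $x_\alpha$-component composed with $s$. Hence $g_\alpha$ is a split effective epimorphism.

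It remains to show that $\colim_\alpha g_\alpha\simeq g$. Filtered colimits in a Grothendieck prestable \infcat are left exact (AB5), so
\[
\colim_\alpha\bigl(y\times_z z_\alpha\bigr)\simeq y\times_z \colim_\alpha z_\alpha\simeq y\times_z z\simeq y.
\]
The pullback stays in $\mcc$ because $\mcc$ has finite limits. This handles the case where $h$ is a single filtered colimit of split maps.

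For the general generated-under-filtered-colimits case, we run a closure argument. Consider the class of $h\in\mcc^{\Delta^1}$ such that for every factorization $h=g\circ f$ the map $g$ is pure. This class contains the split effective epimorphisms, by the section argument above applied with a trivial index category. It is also closed under filtered colimits, by the same pullback formula combined with AB5 and the closure of $P$ under filtered colimits. Therefore it contains all of $P$.

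The main point requiring care is this last closure step: the factorization of $\colim h_\alpha$ must be pulled back compatibly to each $h_\alpha$, and that is exactly what the pullback $y\times_z z_\alpha$ achieves. Everything else is formal from AB5.
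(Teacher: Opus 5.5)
Your proof is correct and is essentially the paper's argument. Set aside the abandoned pushout attempt and the unused class $Q=\Phi^{-1}(P)$. What remains is the same closure argument: take the class of $h$ such that every factorization $h=g\circ f$ has $g$ pure, note that it contains the split effective epimorphisms (via the section $f\circ s$), and show it is closed under filtered colimits by pulling $g$ back to $g_\alpha\colon y\times_z z_\alpha\to z_\alpha$ and using AB5 to get $\colim_\alpha g_\alpha\simeq g$.
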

\begin{proof}
	Let $\mathcal E\subseteq \mathcal C^{\Delta^1}$ be the class of
	morphisms $h:x\to z$ with the following property: for every
	factorization
	\[
	x\xrightarrow{f}y\xrightarrow{g}z,
	\qquad h=g\circ f,
	\]
	the morphism $g$ is a pure epimorphism.
	
	We claim that $\mathcal E$ contains all split effective epimorphisms
	and is closed under filtered colimits. Since the pure epimorphisms form
	the smallest class containing the split effective epimorphisms and
	closed under filtered colimits, this will prove the result.
	
	First, suppose that $h=g\circ f$ admits a section $s:z\to x$. Then
	$f\circ s:z\to y$ is a section of $g$, since
	\[
	g\circ f\circ s=h\circ s=\id_z.
	\]
	Thus $g$ is a split effective epimorphism, and hence is pure.
	
	Now let
	\[
	h\simeq \operatorname*{colim}_{i\in I}h_i,
	\qquad
	h_i:x_i\to z_i,
	\]
	be a filtered colimit of morphisms belonging to $\mathcal E$, and let
	$
	h=g\circ f
	$
	be a factorization through an object $y$. Set
	$
	y_i:=y\times_z z_i.
	$
	The structural morphism $h_i\to h$ induces a factorization
	\[
	x_i\longrightarrow y_i\xrightarrow{g_i}z_i
	\]
	of $h_i$. Since $h_i\in\mathcal E$, the morphism $g_i$ is pure.
	
	Since filtered colimits commute with finite limits in a Grothendieck
	prestable $\infty$-category,
	\[
	\operatorname*{colim}_{i\in I}y_i
	\simeq
	y\times_z\operatorname*{colim}_{i\in I}z_i
	\simeq y.
	\]
	Hence
	\[
	g\simeq\operatorname*{colim}_{i\in I}g_i
	\]
	in $\mathcal C^{\Delta^1}$. Since pure epimorphisms are closed under
	filtered colimits, $g$ is pure. Therefore $\mathcal E$ is closed under
	filtered colimits, and the result follows.
\end{proof}
We next examine the functoriality of purity and flatness under adjunctions. This will allow us to transport flat objects along the internal left adjoints which occur naturally among dualizable additive \infcats; see {\cite[Lemma 2.6]{martini2025locally}} for a result closely related to the following proposition:
	\begin{prop}
		Let $F: \mathcal{C} \rightleftarrows \mathcal{D}: G$ be an adjunction between Grothendieck prestable \infcats. The following assertions hold.
		\enu{
			\item $F$ preserves pure epimorphisms.
			\item If $F$ is fully faithful and $G$ preserves effective epimorphisms, then $F$ preserves flat objects.
			\item If $F$ is fully faithful and $G$ preserves filtered colimits, then $F$ reflects flat objects.
		}

	\end{prop}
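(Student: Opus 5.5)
For (1), I would use that $F$ is a left adjoint and so preserves all colimits, in particular filtered colimits in the arrow category. It also preserves split effective epimorphisms: if $f$ has a section $s$, then $F(s)$ is a section of $F(f)$, and a split map is automatically $\pi_0$-surjective. Now let $\mathcal P\subseteq\mcc^{\Delta^1}$ be the class of morphisms $f$ for which $F(f)$ is a pure epimorphism. This class contains the split effective epimorphisms. It is closed under filtered colimits, since $F$ commutes with them and pure epimorphisms in $\mcd$ are closed under filtered colimits. By minimality of the class of pure epimorphisms, $\mathcal P$ therefore contains all of them.

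For (2), let $x\in\mcc$ be flat and let $u\to F(x)$ be an effective epimorphism in $\mcd$. Applying $G$ gives $G(u)\to GF(x)\simeq x$, using that $F$ is fully faithful. This map is an effective epimorphism by hypothesis, so flatness of $x$ makes it a pure epimorphism in $\mcc$. By (1), $FG(u)\to FGF(x)\simeq F(x)$ is then a pure epimorphism in $\mcd$. This map factors as $FG(u)\xrightarrow{\epsilon}u\to F(x)$ via the counit, which is the naturality of $\epsilon$ together with $\epsilon_{F(x)}$ being an equivalence. \cref{lem2.5} then shows $u\to F(x)$ is pure, so $F(x)$ is flat.

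For (3), suppose $F(x)$ is flat and $v\to x$ is an effective epimorphism in $\mcc$. First I check that $F(v)\to F(x)$ is an effective epimorphism, i.e.\ surjective on $\pi_0$. $F$ is right exact between prestable categories, so it preserves cofibers. Writing $\pi_0$-surjectivity as the cofiber $c$ lying in $\Sigma\mcc$ (equivalently $\pi_0 c=0$, i.e.\ $c\simeq\Sigma c'$), we see $F(c)\simeq\Sigma F(c')$. So the map is an effective epimorphism, and by flatness $F(v)\to F(x)$ is pure. Write this arrow as generated under filtered colimits by split effective epimorphisms in $\mcd^{\Delta^1}$. Applying $G$, which preserves filtered colimits and sends split maps to split maps, shows that $GF(v)\to GF(x)$, i.e.\ $v\to x$, is pure, using full faithfulness of $F$. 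The subtle point is that "generated under filtered colimits" means an iterated closure. The clean way to handle it is to show that $\{g : G(g)\text{ pure}\}$ contains the split effective epimorphisms and is closed under filtered colimits, mirroring (1).

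I expect the main obstacle to be the verification in (3) that $F$ preserves effective epimorphisms. I will handle it through the cofiber characterization; a map is an effective epimorphism iff its cofiber is a suspension, which holds in any prestable category with finite limits.
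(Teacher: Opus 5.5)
Your proposal is correct and follows the paper's proof essentially step for step. Part (1) closes the class $\{f : F(f)\text{ pure}\}$ under split effective epimorphisms and filtered colimits. Part (2) uses that $G(p)$ is pure, applies (1), and combines the counit factorization with \cref{lem2.5}. Part (3) uses that $F$ preserves effective epimorphisms and $G$ preserves pure epimorphisms, then identifies $p\simeq GF(p)$ via the unit equivalence. Your argument that $F$ preserves effective epimorphisms because the cofiber lies in $\Sigma\mcc$, and your explicit closure argument for $G$ preserving pure epimorphisms, fill in details the paper leaves implicit.
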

	\begin{proof}
		Assertion (i) is clear since $F$ preserves retracts and filtered colimits.
		
		(ii) Let $x \in \mathcal{C}$ be a flat object and let $p: y \rightarrow F(x)$ be any effective epimorphism. Note that the unit $\lambda: \op{Id}_{\mathcal{C}} \Rightarrow G \circ F$ is a natural isomorphism. We denote by $\epsilon$ the counit $F \circ G \Rightarrow \op{Id}_{\mathcal{D}}$. By hypothesis, the map $G(p): G(y) \rightarrow (G \circ F)(x) \simeq x$ is an effective epimorphism, hence a pure epimorphism since $x$ is flat. It then follows from assertion (i) that $(F \circ G)(p):(F \circ G)(y) \rightarrow(F \circ G \circ F)(x) \cong F(x)$ is a pure epimorphism. But this last map decomposes as $(F \circ G)(y) \xrightarrow{\epsilon} y \xrightarrow{p} F(x)$. It then follows from \cref{lem2.5} that $p$ is a pure epimorphism. Therefore, $F(x)$ is a flat object.
		
		(iii) First note that $F$ preserves effective epimorphisms.
		Indeed, in a prestable $\infty$-category, a morphism $p$ is an effective
		epimorphism if and only if it is $\pi_0$-epimorphic.
		We next note that $G$ preserves pure
		epimorphisms, because it preserves filtered colimits. 
		
		Now let $x\in\mathcal C$ be such that $F(x)$ is flat, and let
		\[
		p:x'\longrightarrow x
		\]
		be an effective epimorphism. Then $F(p)$ is an effective epimorphism in
		$\mathcal D$. Since $F(x)$ is flat, $F(p)$ is pure. Hence $GF(p)$ is pure in
		$\mathcal C$.
		
		Since $F$ is fully faithful, the unit
		\[
		\eta:\operatorname{Id}_{\mathcal C}\xrightarrow{\sim}GF
		\]
		is an equivalence. By naturality, we have a commutative square
		\[
		\begin{tikzcd}
			x' \ar[r,"p"] \ar[d,"\eta_{x'}"']
			&
			x \ar[d,"\eta_x"]
			\\
			GF(x') \ar[r,"GF(p)"']
			&
			GF(x).
		\end{tikzcd}
		\]
		The vertical maps are equivalences, so $p$ is equivalent to $GF(p)$ in
		$\mathcal C^{\Delta^1}$. Therefore $p$ is pure. Hence $x$ is flat, and
		$F$ reflects flat objects.
	\end{proof}
	For dualizable additive \infcats, the hypotheses above are automatically satisfied in the situations of interest, yielding the following preservation and reflection statement.
	\begin{cor}\label{reflectflat}
		Let $F:\mcc\to\mcd$ be an internal left adjoint between dualizable additive \infcats. Then $F$ preserves flat objects. If furthermore $F$ is fully faithful, then $F$ reflects flat objects.
	\end{cor}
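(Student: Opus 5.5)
The plan is to deduce \cref{reflectflat} from the preceding proposition by checking its hypotheses for an internal left adjoint $F\colon\mcc\to\mcd$ with right adjoint $G:=F^R$. By \cref{main1}, both $\mcc$ and $\mcd$ are separated Grothendieck prestable, so that proposition applies. Since $F$ is an internal left adjoint, $G$ preserves all small colimits, in particular filtered colimits. Hence, once $F$ is fully faithful, part (iii) immediately gives that $F$ reflects flat objects.

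For preservation, part (ii) of the proposition assumes $F$ fully faithful, which we do not have in general. So I would rerun its argument, and the only inputs needed are that $G$ preserves effective epimorphisms and that $G$ preserves pure epimorphisms. Preservation of effective epimorphisms is straightforward: $G$ preserves colimits, hence cofibers, and in a prestable category a map is an effective epimorphism exactly when its cofiber is $1$-connective, i.e.\ lies in the image of $\Sigma$. Since $G$ commutes with $\Sigma$, it preserves $1$-connective objects, and therefore effective epimorphisms. Preservation of pure epimorphisms is also easy: $G$ preserves split effective epimorphisms (it preserves sections) and filtered colimits.

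Now let $x\in\mcc$ be flat and let $p\colon y\to F(x)$ be an effective epimorphism in $\mcd$. Form the pullback $y':=x\times_{GF(x)}G(y)$ along the unit $\eta\colon x\to GF(x)$. The map $G(p)$ is an effective epimorphism, and effective epimorphisms are stable under pullback by Stefanich's proposition, so $y'\to x$ is an effective epimorphism. Because $x$ is flat, this map is therefore pure. Applying $F$ preserves pure epimorphisms by part (i), so $F(y')\to F(x)$ is pure. This map factors as
\[
F(y')\to FG(y)\xrightarrow{\epsilon} y\xrightarrow{p} F(x),
\]
using the triangle identity $\epsilon_{F(x)}\circ F(\eta_x)=\id$ and naturality of $\epsilon$. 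By the cancellation \cref{lem2.5}, $p$ is pure, so $F(x)$ is flat.

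The main point to verify carefully is this factorization: the composite $F(y')\to F(x)$ obtained via $F(\eta)$ and $\epsilon$ must agree with $F$ applied to the projection $y'\to x$. This follows from the standard adjunction identities, so the argument does not need full faithfulness at all.
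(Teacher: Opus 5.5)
Your argument is correct, and for the preservation half it is more careful than the paper. The paper gives no separate proof. It only remarks that the hypotheses of the preceding proposition are automatic for dualizable additive categories: $G=F^R$ preserves colimits, hence effective epimorphisms and filtered colimits. It then implicitly invokes parts (ii) and (iii).

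Part (ii), however, assumes $F$ fully faithful. Its proof uses $GF(x)\simeq x$, so that $G(p)$ is an effective epimorphism onto the flat object $x$, and $FGF(x)\simeq F(x)$. Taken literally, that deduction gives preservation of flat objects only for fully faithful internal left adjoints. The corollary asserts it for all internal left adjoints, and that generality is used later, e.g.\ for $F^{\mathrm{flat}}$ in the proof of the equivalence between compactly assembled and dualizable additive categories.

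Your fix supplies exactly the missing case. You pull back $G(p)$ along the unit $\eta_x$, then use the identity $F(\pi_1)\simeq p\circ\epsilon_y\circ F(\pi_2)$, which comes from the triangle identity $\epsilon_{F(x)}\circ F(\eta_x)\simeq\id$ and naturality of $\epsilon$, and finally apply the cancellation lemma. This removes full faithfulness entirely. More generally it shows: for any adjunction between Grothendieck prestable categories whose right adjoint preserves effective epimorphisms (for instance, is right exact), the left adjoint preserves flat objects.

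Only preservation of effective epimorphisms by $G$ enters that argument. Your observation that $G$ preserves pure epimorphisms is needed only for the reflection half, which coincides with the paper's part (iii).

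An alternative route inside the paper would go through the later dual characterization ($x$ flat iff $x^\vee\colon\mcc^\vee\to\mathrm{Sp}_{\geq0}$ is left exact). That characterization is proved using only the fully faithful case of this corollary, so it is not circular. One then gets $F(x)^\vee\simeq x^\vee\circ F^\vee$, where $F^\vee$ is a right adjoint by dualizing the internal adjunction $F\dashv F^R$, hence left exact. Your argument has the advantage of being elementary and available at this point in the paper, without that later machinery.
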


	The next proposition provides the fundamental compatibility check for the definition of flatness.
	\begin{prop}\label{flatequiv}
		Let $R$ be a connective \eonering. Then an object $M\in\modu_{R}(\spgeq)$ is flat in the sense of \cref{flatdef} if and only if $M$ is flat in the sense of \cite[Definition 7.2.2.10]{ha}.
	\end{prop}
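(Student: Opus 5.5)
Both directions go through Lazard's theorem in the form of \cite[Theorem 7.2.2.15]{ha}: a connective $R$-module $M$ is flat in the sense of \cite[Definition 7.2.2.10]{ha} if and only if it is a filtered colimit of finitely generated free modules $R^{\oplus n}$. Throughout, write $\mcc=\modu_R(\spgeq)$. Effective epimorphisms in $\mcc$ are exactly the maps that are surjective on $\pi_0$.

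\emph{Flat in the sense of \cite{ha} implies categorically flat.} Write $M\simeq\colim_\alpha F_\alpha$ with each $F_\alpha$ finitely generated free, and let $p\colon u\to M$ be a $\pi_0$-surjection. Form the pullbacks $u_\alpha:=u\times_M F_\alpha\to F_\alpha$. These are again effective epimorphisms, since effective epimorphisms are stable under pullback. Because $F_\alpha$ is projective, and $\pi_0$-surjections onto a projective in $\mcc$ split, each $u_\alpha\to F_\alpha$ is a split effective epimorphism. Since $\mcc$ is Grothendieck, filtered colimits commute with pullbacks, so $\colim_\alpha u_\alpha\simeq u$. Hence $p\simeq\colim_\alpha(u_\alpha\to F_\alpha)$ in $\mcc^{\Delta^1}$, which is a filtered colimit of split effective epimorphisms and therefore pure.

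\emph{Categorically flat implies flat in the sense of \cite{ha}.} Choose a $\pi_0$-surjection $p\colon P\to M$ with $P$ free; concretely, $P$ is a direct sum of copies of $R$ indexed by $\pi_0M$. By hypothesis $p$ is pure, so it lies in the smallest class of morphisms containing the split effective epimorphisms and closed under filtered colimits. The key observation is the following. For a compact projective object $c$ of $\mcc$, the functor $\unmap(c,-)$ commutes with filtered colimits, and it sends split effective epimorphisms to split surjections of connective spectra. Therefore the class of maps $f$ for which $\unmap(c,f)$ is a pure epimorphism in $\spgeq$ contains the split effective epimorphisms and is closed under filtered colimits. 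It follows that for every finitely generated free $c$, the map $\unmap(c,P)\to\unmap(c,M)$ is pure.

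Apply this to $\pi_0$. Any map $c\to M$ from a compact free module factors, up to homotopy, through $p$. In particular the map $\pi_0\unmap(c,P)\to\pi_0\unmap(c,M)$ is surjective, and this holds compatibly with the filtered-colimit presentation. From this one deduces that $\pi_0M$ is a flat $\pi_0R$-module by the classical Lazard criterion: every map from a finitely presented module factors through a free module. More precisely, purity of $p$ gives that every map from a perfect, finitely presented-on-$\pi_0$ object to $M$ factors through $P$. Alternatively, one can argue directly that $M$ satisfies the condition of \cite[Lemma 7.2.2.13/Theorem 7.2.2.15]{ha}: every map $K\to M$ with $K$ perfect and connective factors through a finitely generated free module. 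To see this, factor $K\to M$ through $P$ using the purity of $p$ together with the fact that $K$ is a compact object. Then compactness of $K$ lets the factorization land in a finite summand $R^{\oplus n}\subset P$.

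The main obstacle is the factorization step for compact $K$: showing that a pure epimorphism onto $M$ admits lifts of maps from compact objects. What I would try first is to prove directly that the class of maps $f\colon x\to y$ such that every map $K\to y$ from a compact $K$ lifts through $f$, up to homotopy, contains the split effective epimorphisms and is closed under filtered colimits. Closure under filtered colimits should follow from compactness of $K$: a map $K\to\colim y_i$ factors through some $y_i$, lifts there, and the lift composes into the colimit. Combined with the retract/closure properties, this gives the Lazard criterion and finishes the proof.
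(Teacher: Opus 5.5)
Your proof is correct, but for the ``only if'' direction it takes a different route from the paper.

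\textbf{The paper's route.} The paper uses criterion (5) of \cite[Theorem 7.2.2.15]{ha}: $M$ is flat iff $N\otimes_R M$ is discrete for every discrete right module $N$. It applies the colimit-preserving functor $N\otimes_R-$ to the pure epimorphism $P\to M$, with $P$ projective, and notes that pure epimorphisms in $\spgeq$ are surjective on all homotopy groups. Discreteness of $N\otimes_R M$ then follows from the cofiber sequence $N\otimes_R K\to N\otimes_R P\to N\otimes_R M$, since $N\otimes_R P$ is discrete.

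\textbf{Your route.} You show that pure epimorphisms have the homotopy lifting property against compact objects of $\modu_R(\spgeq)$. The class of maps with this property contains split effective epimorphisms and is closed under filtered colimits, by compactness. Hence every map from a connective perfect module to $M$ lifts to $P$, and then factors through a finitely generated free summand of $P$. This is the Lazard presentation criterion.

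\textbf{Comparison.} Your argument avoids the Tor/discreteness criterion and uses only that $\modu_R(\spgeq)$ is compactly generated by finitely generated free objects. For this reason it adapts to $\mcp_\Sigma(\mca)$ for any small additive $\mca$, and would give \cref{lazardthm} without the duality argument used there. The paper's argument is shorter, but it is specific to module categories, since it tensors with right modules.

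\textbf{Two points to tighten.}
\begin{itemize}
\item The middle paragraph, passing through $\pi_0 M$ and the classical Lazard criterion, should be deleted. It is not needed, and it would not suffice on its own: flatness of $\pi_0M$ over $\pi_0R$ is only one of the two conditions in \cite[Definition 7.2.2.10]{ha}.
\item You should prove, not cite, that the factorization property implies flatness. Take a finite diagram in $\op{Free}^{\op{fin}}_{/M}$. Its colimit $Q$ in $\modu_R(\spgeq)$ is compact, and a factorization of $Q\to M$ through a free module is exactly a morphism in the slice. This shows $\op{Free}^{\op{fin}}_{/M}$ is filtered; the same argument shows it is cofinal in $\modu_R(\spgeq)^{\omega}_{/M}$. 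Hence $M$ is a filtered colimit of finitely generated free modules, which is condition (1) of \cite[Theorem 7.2.2.15]{ha}.
\end{itemize}

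Your ``if'' direction is correct: pull back along $F_\alpha\to M$, split over the projective $F_\alpha$, and use AB5 to recover $u\simeq\colim_\alpha u_\alpha$. It is a valid unpacking of the paper's one-line citation of Lazard's theorem.
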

	\begin{proof}
		The ``if'' direction follows from \cite[Lazard's Theorem 7.2.2.15(2)]{ha}. For the
		 ``only if'' direction, let $M\in\modu_{R}(\spgeq)$ be a (categorically) flat object.
		  It suffices to show that $-\otimes_RM: \rmodu_{R}(\spgeq)\to \spgeq$ preserves discrete 
		  objects by \cite[Theorem 7.2.2.15(5)]{ha}. Let $P\to M$ be an effective epimorphism 
		  where $P$ is projective. Then $P\to M$ is a pure epimorphism by categorical flatness 
		  of $M$. Let $K$ denote its kernel and $N$ be an arbitrary discrete $R$-module. Then 
		  $$N\otimes_RK\to N\otimes_RP\to N\otimes_RM$$ is a cofiber sequence in $\opsp$ such 
		  that $N\otimes_RP\to N\otimes_RM$ is a pure epimorphism. Note that pure epimorphism 
		  induces epimorphisms of homotopy groups at all degrees. So by long exact sequences of homotopy groups, we are done.
	\end{proof}
	For dualizable additive \infcats, flatness admits a useful dual characterization. This reformulation will be the main technical bridge between categorical flatness and the duality formalism developed earlier.
	\begin{prop}
		Let $\mcc$ be a dualizable additive \infcat. Then $x\in\mcc$ (viewed as a morphism $\spgeq\to\mcc$ in \prlad ) is flat if and only if the dual functor $x^\vee:\mcc^\vee\to \spgeq$ is left exact.
	\end{prop}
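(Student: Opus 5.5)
The plan is to reduce to the case of connective module categories, where the statement is Lazard's theorem in the form of \cite[Theorem 7.2.2.15]{ha}. By \cref{main2} and \cref{embedmodcat}, we may fix a connective \eonering $R$, an idempotent ideal $I\subset\pi_0R$, and a fully faithful internal left adjoint
\[
i\colon \mcc\simeq\amodu_{(R,I)}(\spgeq)\hookrightarrow\modu_R(\spgeq).
\]
Its right adjoint $i^R$ preserves colimits and satisfies $i^Ri\simeq\id$. By \cref{reflectflat}, $x$ is flat in $\mcc$ if and only if $i(x)$ is flat in $\modu_R(\spgeq)$. By \cref{flatequiv}, this holds if and only if $i(x)$ is flat in Lurie's sense. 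By \cite[Theorem 7.2.2.15]{ha}, this in turn holds if and only if the dual functor
\[
i(x)^\vee=-\otimes_R i(x)\colon \rmodu_R(\spgeq)\to\spgeq
\]
is left exact. Here we use the standard identification $\modu_R(\spgeq)^\vee\simeq\rmodu_R(\spgeq)$ in $\prlad$.

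Next I would compare $i(x)^\vee$ with $x^\vee$ by dualizing the retraction. Since $i$ and $i^R$ are morphisms in $\prlad$ between dualizable objects, they have duals
\[
i^\vee\colon\rmodu_R(\spgeq)\to\mcc^\vee,\qquad (i^R)^\vee\colon\mcc^\vee\to\rmodu_R(\spgeq),
\]
with $i^\vee\circ(i^R)^\vee\simeq\id$. Viewing $i(x)$ as the composite $i\circ x$ in $\prlad$, we obtain
\[
i(x)^\vee\simeq x^\vee\circ i^\vee,\qquad x^\vee\simeq i(x)^\vee\circ(i^R)^\vee.
\]
Hence it suffices to prove that both $i^\vee$ and $(i^R)^\vee$ are left exact. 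Granting this, if $x^\vee$ is left exact then so is $i(x)^\vee$, and conversely.

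The main obstacle is this left exactness. I would identify the duals explicitly as the ``right-module'' version of the almost-module recollement. The functor $(i^R)^\vee$ should be the inclusion of the kernel $\amodu^{r}_{(R,I)}$ of $\rmodu_R(\spgeq)\to\rmodu_{R/I^\infty}(\spgeq)$. The functor $i^\vee$ should be its right adjoint $M\mapsto M\otimes_R I^\infty$, where $I^\infty$ is the fibre of $R\to R/I^\infty$. This is checked against the duality pairing $\mcc^\vee\otimes\mcc\to\spgeq$, which is $-\otimes_R-$ restricted to almost modules.

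With these descriptions, I would use \cref{t-ex}: both categories are Grothendieck prestable, so it suffices to show that each functor sends discrete objects to discrete objects. For the inclusion $(i^R)^\vee$ this is immediate from full faithfulness, since the kernel is closed under truncations. I would justify the latter by the $t$-exactness in \cref{shortexactt}, whose recollement is preserved by the stabilized functors. For $i^\vee$, I would use the cofibre sequence
\[
M\otimes_RI^\infty\to M\to M\otimes_RR/I^\infty.
\]
For $M$ discrete, the right-hand term has homotopy concentrated in degrees $\le 1$ and $\pi_0=M/MI$, by \cref{pi0almost} and the Tor-type computation of \cref{almostalg}. I expect this step to require the most care. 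Namely, one must show that $\pi_1(M\otimes_RR/I^\infty)=\operatorname{Tor}_1^{\pi_0R}(M,\pi_0R/I)$ matches exactly, so that the fibre is discrete. Idempotence $I^2=I$ should be the key input here, giving $I\otimes_{\pi_0R}\pi_0R/I=0$. If this direct route becomes unwieldy, the fallback is to test left exactness after stabilization, using the $t$-exactness of the right adjoints from \cref{shortexactt}.
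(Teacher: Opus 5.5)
Your reduction is the paper's: the closed embedding $i\colon\mcc\hookrightarrow\modu_R(\spgeq)$, then \cref{reflectflat}, \cref{flatequiv} and Lurie's Lazard theorem. The identities $i(x)^\vee\simeq x^\vee\circ i^\vee$ and $x^\vee\simeq i(x)^\vee\circ(i^R)^\vee$ are also correct.

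\textbf{The gap.} You claim that $(i^R)^\vee$ is left exact, i.e.\ that a closed subcategory is closed under truncations. \cref{shortexactt} only makes $i$ right $t$-exact with $t$-exact right adjoint; it does not make $i$ left $t$-exact, and in general $i$ is not left exact. Take a field $k$, $R=k[x^{1/2^\infty},y^{1/2^\infty}]$ and $I$ the augmentation ideal.
\begin{itemize}
\item $I^2=I$, and $k\otimes_Rk\simeq k$. Künneth reduces this to one variable, where the augmentation ideal is a filtered colimit of free modules. Hence $R/I^\infty\simeq k$ and $I^\infty=I$.
\item For $N=R/(x,y)$ the Koszul resolution gives $\pi_1(ii^R(N))=\pi_1(I\otimes_RN)\cong\operatorname{Tor}_2^R(k,N)\cong k\neq0$.
\item Yet $i^R(N)$ is $0$-truncated in $\mcc$, because $i^R$ is a right adjoint.
\end{itemize}
So $i$ does not preserve $0$-truncated objects and is not left exact. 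Since $R$ is commutative, $(i^R)^\vee$ is identified with this same inclusion. The same example, where $\pi_2(N\otimes_Rk)=k$, refutes your claim that $M\otimes_RR/I^\infty$ is $1$-truncated for discrete $M$. So your Tor route for $i^\vee$ also fails as written.

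\textbf{The fix.} Neither claim is needed. Dualizing the adjunction $i\dashv i^R$ gives $(i^R)^\vee\dashv i^\vee$, so $i^\vee$ preserves all limits.
\begin{itemize}
\item If $x^\vee$ is left exact, then $i(x)^\vee=x^\vee\circ i^\vee$ is left exact.
\item Conversely, suppose $i(x)^\vee$ is left exact. Since $i^\vee\circ(i^R)^\vee\simeq\id$, a finite limit of a diagram $D$ in $\mcc^\vee$ is $i^\vee$ applied to the limit of $(i^R)^\vee D$. Hence $x^\vee(\lim D)\simeq i(x)^\vee(\lim (i^R)^\vee D)\simeq\lim x^\vee D$.
\end{itemize}
This is exactly the step the paper calls ``directly implied from definition'': a fully faithful internal left adjoint preserves and reflects dually flat objects, using only that $i^\vee$ is limit-preserving and split by $(i^R)^\vee$.
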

	\begin{proof}
		By \cref{flatequiv} we have seen that they agree in the case $\mcc=\modu_{R}(\spgeq)$ where $R$ is a connective \eonering, because the relative tensor product exhibits $\modu_{R}(\spgeq)^\vee \simeq \rmodu_{R}(\spgeq)$, \cite[see][Remark 4.8.4.8]{ha}.
		
		For the general case,  by \cref{main2} there exists a closed embedding $\mcc\hookrightarrow \modu_{R}(\spgeq)$ where $R$ is a connective \eonering. We will say an object $x\in \mcc$ is dually flat if $x^\vee:\mcc^\vee\to \spgeq$ is left exact. Then by \cref{reflectflat} it suffices to show that a fully faithful internal left adjoint $F:\mcc\to\mcd$ between \dualaddinfcats preserves and reflects dually flat objects, which is directly implied from definition.
	\end{proof}
	As an immediate consequence, flat objects enjoy the expected closure properties.
	\begin{cor}
		Let $\mcc$ be a \dualaddinfcat. Then
		flat objects in it are closed under finite sums and filtered colimits.
		
	\end{cor}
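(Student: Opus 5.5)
The plan is to use the dual characterization just proved: $x\in\mcc$ is flat if and only if the dual functor $x^\vee\colon\mcc^\vee\to\spgeq$ is left exact. The assignment $x\mapsto x^\vee$ is the equivalence
\[
\mcc\simeq\funct^L(\spgeq,\mcc)\simeq\funct^L(\mcc^\vee,\spgeq)
\]
coming from dualizability in $\prlad$. This equivalence preserves all small colimits, since colimits in $\funct^L(\mcc^\vee,\spgeq)$ are computed pointwise. So a finite sum $x\oplus y$ corresponds to the pointwise sum $x^\vee\oplus y^\vee$, and a filtered colimit $\colimit_\alpha x_\alpha$ corresponds to the pointwise filtered colimit $\colimit_\alpha x_\alpha^\vee$.

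It then suffices to show that left exact functors $\mcc^\vee\to\spgeq$ are closed under finite sums and filtered colimits inside $\funct^L(\mcc^\vee,\spgeq)$.

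\begin{itemize}
\item For finite sums: in the additive category $\spgeq$, finite direct sums are finite products. Finite limits commute with finite limits, so a pointwise sum of finite-limit-preserving functors again preserves finite limits.
\item For filtered colimits: $\spgeq$ is Grothendieck prestable, so filtered colimits in $\spgeq$ are left exact (AB5). Given a finite diagram $d$ in $\mcc^\vee$, we compute
\[
(\colimit_\alpha x_\alpha^\vee)(\lim d)\simeq\colimit_\alpha\lim x_\alpha^\vee(d)\simeq\lim\colimit_\alpha x_\alpha^\vee(d).
\]
\end{itemize}

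The only point needing care, and the likely obstacle, is that left exactness of a functor between prestable categories means preserving finite limits as computed in the prestable categories themselves, not in their stabilizations. Filtered colimits in $\spgeq$ commute with these finite limits by AB5; indeed $\spgeq$ is even compactly generated, so one could also invoke \cref{compab6}-type reasoning. Finite limits in $\mcc^\vee$ exist because $\mcc^\vee$ is again dualizable additive, hence presentable prestable. With these points checked, the corollary follows immediately from the preceding proposition.
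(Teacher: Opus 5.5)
Your proposal is correct and follows the same route as the paper. The paper likewise uses the dual characterization of flatness and notes that $x^\vee = ev(-,x)$ depends colimit-preservingly on $x$. Closure then follows because left exact functors to $\spgeq$ are stable under finite sums and, by AB5, under filtered colimits; you just spell out the pointwise-colimit and AB5 steps that the paper leaves implicit.
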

	\begin{proof}
		Let $x\in\mcc$. We observe that $x^\vee:\mcc^\vee\to \spgeq$ can be decomposed as $\mcc^\vee \xrightarrow{1\otimes x}\mcc^\vee \otimes \mcc\xrightarrow{ev} \spgeq$. In other words, we have $x^\vee= ev(-, x)$, so flat objects in \mcc are closed under finite sums and filtered colimits.
	\end{proof}
	The next result identifies flat objects in the compact projectively generated case. This is the higher-categorical analogue of Lazard's theorem:
	\begin{prop}[Lazard's Theorem]\label{lazardthm}
		Let $\mcc_0$ be a small additive \infcat. Then the full subcategory of $\mcc=\mcp_\Sigma(\mcc_0)$ spanned by flat objects can be identified with $\op{Ind}(\mcc_0)$. It can be also identified with $\mcp_{\sqcup}^{\sqcup,\op{fil}}(\mcc_0)$.
	\end{prop}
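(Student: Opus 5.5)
The plan is to show two inclusions: every object of $\op{Ind}(\mcc_0)$, viewed inside $\mcp_\Sigma(\mcc_0)$ via the canonical filtered-colimit-preserving extension of the Yoneda embedding, is flat; and conversely every flat object lies in this image. Since $\mcc_0$ is additive, the identification with $\mcp_{\sqcup}^{\sqcup,\op{fil}}(\mcc_0)$ (the closure of representables under finite coproducts and filtered colimits) is essentially formal: representables are closed under finite sums, and filtered colimits of filtered colimits of representables are again filtered colimits of representables. The functor $\op{Ind}(\mcc_0)\to\mcp_\Sigma(\mcc_0)$ is fully faithful because representables are compact in $\mcp_\Sigma(\mcc_0)$.

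For the first inclusion, I note that a representable $h_c$ is compact projective. Hence any effective epimorphism $u\to h_c$ is split, since $\pi_0$-surjectivity together with projectivity lifts the identity. So representables are flat. Flat objects are closed under finite sums and filtered colimits by the corollary preceding the statement; alternatively, one can see directly that an effective epimorphism onto $\colim_i h_{c_i}$ pulls back to effective epimorphisms onto each $h_{c_i}$, which split, and the colimit of these recovers the original map by AB5. Either way, $\op{Ind}(\mcc_0)$ consists of flat objects.

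For the converse, which I expect to be the main obstacle, I would reduce to Lurie's Lazard theorem. By Remark~\ref{embedmodcat}, or directly, choose a compact projective generator and compare with a module category. Cleaner still: since $\mcc_0$ is small additive, $\mcp_\Sigma(\mcc_0)$ is equivalent to right modules over a ``ring with many objects'', and Lurie's proof of \cite[Theorem 7.2.2.15]{ha} works verbatim. Concretely, let $x$ be flat and consider the canonical effective epimorphism $p\colon\bigoplus_{(c,\,h_c\to x)} h_c\to x$. By flatness $p$ is a pure epimorphism, that is, generated under filtered colimits by split epimorphisms. I then want to show that the category $(\mcc_0)_{/x}$ of maps $h_c\to x$ is filtered, since then $x\simeq\colim_{(\mcc_0)_{/x}}h_c\in\op{Ind}(\mcc_0)$. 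Finite coproducts in $(\mcc_0)_{/x}$ exist because $\mcc_0$ is additive, so the remaining point is to coequalize parallel pairs and, more generally, to extend diagrams $K\to(\mcc_0)_{/x}$ for finite simplicial sets $K$, in the $\infty$-categorical sense.

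The key step is this. Given a finite diagram of representables over $x$, its colimit $y$ in $\mcp_\Sigma(\mcc_0)$ is a compact object (a finite colimit of representables, not necessarily projective) equipped with a map $y\to x$. I would use purity of $p$ to factor $y\to x$ through a representable over $x$: maps out of compact $y$ into a pure epimorphism lift, since this holds for split epimorphisms, is preserved by retracts, and is preserved by filtered colimits by compactness of $y$. This is exactly the lifting property needed for filteredness, because the resulting map $y\to h_c$ over $x$ gives a cocone. The subtle point is that ``generated under filtered colimits'' involves iterated closures, so the lifting property must be checked to be stable under the closure operation. It is, since the class of maps with the lifting property against compact objects contains split epimorphisms and is closed under filtered colimits.
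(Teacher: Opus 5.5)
Your proof is correct, but it takes a different route from the paper.

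\textbf{The paper's route.} The paper does not work with pure epimorphisms directly. It invokes the earlier dual characterization: $x$ is flat iff $x^\vee\colon\mcc^\vee\to\spgeq$ is left exact. That characterization was itself obtained by embedding a dualizable additive category into $\modu_R(\spgeq)$ via almost modules and appealing to Lurie's Lazard theorem \cite[Theorem 7.2.2.15]{ha}. Following Stefanich, the paper then shows that the category of elements of $\Omega^\infty M^\vee$ on $\mcp_\Sigma(\mcc_0^\opp)$ is cofiltered. It also shows that objects lying over representables are coinitial there, using that $\pi_0M^\vee$ commutes with sifted colimits.

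\textbf{Your route.} You stay with the definition of categorical flatness. The class of maps against which every map from a compact object lifts (up to homotopy) contains split effective epimorphisms, and it is closed under retracts and filtered colimits. Hence it contains all pure epimorphisms. Applying this to the canonical effective epimorphism $\bigoplus h_c\to x$, and using compactness to pass to a finite subsum, shows that every map from a finite colimit of representables to $x$ factors through some $h_{c'}\to x$. This is exactly filteredness of $(\mcc_0)_{/x}$, since $(\mcc_0)_{/x}\subset\mcc_{/x}$ is full and colimits in $\mcc_{/x}$ are computed in $\mcc$.

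\textbf{Comparison.} Your argument is the classical Govorov--Lazard argument and is self-contained. It needs neither dualizability, nor the almost-module embedding, nor the module-theoretic Lazard theorem. It also does not need the preceding closure corollary, because your pullback-plus-AB5 argument already shows that filtered colimits of representables are flat. So your opening remark about reducing to Lurie's theorem is unnecessary. What the paper's route buys in exchange is the explicit link between flatness and left exactness of $x^\vee$, which ties the statement to the duality formalism used throughout the paper.
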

	\begin{proof}
		We learn the following argument from \cite[Proposition 2.2.22]{stefanich2023classification}. By the previous corollary, it suffices to show that any flat object $M\in\mcc$ can be written as a filtered colimit of objects in $\mcc_0$. We wish to show that the functor $M:\mcc_0^\opp\to\mcs$ defines an ind-object of $\mcc_0$. Equivalently, we will prove that $M:\mcc_0^\opp\to\mcs$  defines a pro-object of $\mcc_0^\opp$. 
		
		By additivity, $M$ admits a unique additive lifting $M':\mcc_0^\opp\to\spgeq$. Then it is not hard to show that the dual functor $M^\vee: \mcp_\Sigma(\mcc_0^\opp)\to \spgeq$ is left Kan extended from $M'$. Let $p: \mathcal{E} \rightarrow \mcp_\Sigma(\mcc_0^\opp)$ be the left fibration associated to the functor $\Omega^\infty M^\vee: \mcp_\Sigma(\mcc_0^\opp)\to\mc{S}$. Then the base change of $p$ to $\mcc_0^\opp$ is the left fibration classifying $M$. We have to show that every finite diagram $G: \mathcal{I} \rightarrow \mathcal{E} \times_{\mcp_\Sigma(\mcc_0^\opp)} \mcc_0^\opp$ admits a left cone. The fact that $M$ is flat implies that the functor $M^\vee$ is left exact, and therefore $\mce$ is cofiltered and $G$ extends to a left cone $G^{\triangleleft}: \mathcal{I}^{\triangleleft} \rightarrow \mathcal{E}$.

		Let $\overline{N}=(N,y) $ be the value of $G^{\triangleleft}$ at the cone point, where $N\in \mcp_\Sigma(\mcc_0^\opp)$ and $y\in \Omega^\infty M^\vee(N)$. To show that $G$ extends to a left cone in $\mathcal{E} \times_{\mcp_\Sigma(\mcc_0^\opp)} \mcc_0^\opp$ it is enough to prove that $\overline{N}$ receives a map from an object in $\mathcal{E} \times_{\mcp_\Sigma(\mcc_0^\opp)} \mcc_0^\opp$. Now we observe that $N\simeq\op{colim}_\alpha h_{X_\alpha}$,  and hence $\pi_0M^\vee(N)\simeq \op{colim}_\alpha \pi_0M^\vee(h_{X_\alpha})$ in $\op{Ab}$, where  each $X_\alpha\in \mcc_0^\opp$. Therefore there exists an object $X\in\mcc_0^\opp$, $x\in \pi_0M^\vee(h_X)$ and a morphism $h_X\to N \in \funct(\Delta^1,\mcp_\Sigma(\mcc_0^\opp))$ which sends $x$ to $y$, i.e., there exists a morphism $(h_X,x)\to (N,y)$ in \mce.
	\end{proof}
	We now pass to an arbitrary dualizable additive \infcat. Combining the preceding description and structural results, we obtain the expected sufficient flat objects:
	\begin{thm}\label{flatgenerate}
		Let $\mcc\in\praddbl$ be a \dualaddinfcat. Then $\mcc$ is generated by $\omega_1$-flats (meaning $\omega_1$-compact flat objects) under small colimits.
	\end{thm}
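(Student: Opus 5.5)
Since $\mcc$ is dualizable additive, \cref{rmkcptassemb} gives that the canonical functor $c\colon\mcp_\Sigma(\mcc^{\omega_1})\to\mcc$ has a left adjoint $\hat h$. Because $c$ is colimit-preserving, $\hat h$ is an internal left adjoint between dualizable additive categories. Moreover $c\circ\hat h\simeq \id_\mcc$, since $\hat h$ is fully faithful; this is the usual fact that $c$ is a localization. The plan is to produce flat objects of $\mcc$ by pushing flat objects of $\mcp_\Sigma(\mcc^{\omega_1})$ forward along $c$. There is a catch: $c$ is a left adjoint in $\prlad$, but it is not obviously an internal left adjoint, so \cref{reflectflat} does not apply to $c$ directly. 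I would instead use the dual characterization of flatness. An object $x$ is flat exactly when $x^\vee=ev(-,x)\colon\mcc^\vee\to\spgeq$ is left exact. For $y\in\mcp_\Sigma(\mcc^{\omega_1})$ we have $(c\,y)^\vee\simeq y^\vee\circ c^\vee$, where $c^\vee\colon\mcc^\vee\to\mcp_\Sigma(\mcc^{\omega_1})^\vee$ is the dual functor. The key step is therefore to show that $c^\vee$ is left exact.

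To see that $c^\vee$ is left exact, I would use that dualization takes the adjunction $\hat h\dashv c$ to an adjunction $c^\vee\dashv\hat h^\vee$. This holds because $\hat h$ and $c$ are both morphisms of $\prlad$ and dualizing is a $2$-functor reversing $1$-morphisms. So $c^\vee$ is a left adjoint that also has a colimit-preserving right adjoint. That alone gives colimit preservation, not left exactness. The argument I would try first: $c$ is a left exact localization by \cref{gplthm}, since $\mcc^{\omega_1}$ is $0$-generating. Hence $\opsp(c)$ is $t$-exact, and by \cref{t-ex} applied to $\Sp$-linear duals, the dual of a colimit-preserving left exact functor between dualizable Grothendieck prestable categories is again left exact. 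Concretely, I would pass to stabilizations. By \cref{prestemb} and \cref{pret}, left exactness of $c^\vee$ is equivalent to $\opsp(c)^\vee$ being left $t$-exact. The $t$-structure on $\opsp(\mcc)^\vee$ is the one with connective part $\mcc^\vee$, and $\opsp(c)^\vee$ is computed by precomposition on compactly assembled presentations, so $t$-exactness transfers. This is the step I expect to be the main obstacle. If it resists, the fallback is to avoid $c^\vee$ altogether: show directly that $c$ sends $\op{Ind}(\mcc^{\omega_1})$ into flat objects. The route would be to check that every effective epimorphism onto $c(y)$ with $y=\colim_i h_{x_i}$ filtered is pure, by lifting it along the unit through $\hat h$ and using \cref{lem2.5}.

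Granting the left exactness, here is the assembly. By \cref{lazardthm}, the flat objects of $\mcp_\Sigma(\mcc^{\omega_1})$ form $\op{Ind}(\mcc^{\omega_1})$, and these include the representables $h_x$ for $x\in\mcc^{\omega_1}$. Hence every object $c(h_x)\simeq x$ with $x\in\mcc^{\omega_1}$ receives flat objects in the following precise way. First, $\hat h(x)\in\mcp_\Sigma(\mcc^{\omega_1})$ is a geometric realization of objects of $\op{Ind}(\mcc^{\omega_1})$, by the resolution already used in the proof of \cref{main1}. Applying $c$ writes $x$ as a colimit of objects $c(z)$ with $z\in\op{Ind}(\mcc^{\omega_1})$, and each $c(z)$ is flat by the previous paragraph. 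So $\mcc$ is generated under colimits by flat objects of the form $c(z)$.

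It remains to cut these down to $\omega_1$-compact flat objects. Each $z$ is a filtered colimit of representables $h_{x_j}$. The objects $c(h_{x_j})=x_j$ are $\omega_1$-compact, but they need not be flat individually, so I would refine further. Every $z\in\op{Ind}(\mcc^{\omega_1})$ is an $\omega_1$-filtered colimit of $\omega_1$-small filtered colimits of representables. The latter are $\omega_1$-compact objects of $\op{Ind}(\mcc^{\omega_1})$, which lie again in $\op{Ind}(\mcc^{\omega_1})$. Their images under $c$ are therefore flat, and they are $\omega_1$-compact because $c$ preserves $\omega_1$-compactness: its right adjoint preserves $\omega_1$-filtered colimits, since $\mcc$ is compactly assembled. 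Hence $\mcc$ is generated under small colimits by $\omega_1$-flats.
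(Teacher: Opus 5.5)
\textbf{There is a genuine gap: the key step is false.} You claim that $c^\vee$ is left exact, so that $c$ carries flat objects of $\mcp_\Sigma(\mcc^{\omega_1})$ to flat objects of $\mcc$. This cannot hold.

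For $x\in\mcc^{\omega_1}$, the representable $h_x$ is compact projective, hence flat; it lies in $\op{Ind}(\mcc^{\omega_1})$ by \cref{lazardthm}. Moreover $c(h_x)\simeq x$. So if $c$ preserved flatness, every $\omega_1$-compact object of $\mcc$ would be flat. Since flat objects are closed under filtered colimits, every object would then be flat. This fails already for $\mcc=\modu_{\mathbb{Z}}(\spgeq)$ and $x=\mathbb{Z}/p$, by \cref{flatequiv}.

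Your own last paragraph concedes this: it says $x_j=c(h_{x_j})$ ``need not be flat''. That is incompatible with the claim that $c(z)$ is flat for every $z\in\op{Ind}(\mcc^{\omega_1})$.

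The justification breaks because left exactness does not pass to duals, and $c$ itself is the counterexample: it is a left exact localization by \cref{gplthm}, yet $c^\vee$ is not left exact. Concretely, under $\mcc^\vee\simeq\funct^L(\mcc,\spgeq)$, the functor $c^\vee$ is restriction of colimit-preserving functors to $\mcc^{\omega_1}$. Finite limits in $\funct^L(\mcc,\spgeq)$ are not computed pointwise: a pointwise fiber involves $\tau_{\geq 0}$ and must be coreflected back into colimit-preserving functors. So $t$-exactness of $\opsp(c)$ says nothing here. The adjunction $c^\vee\dashv\hat h^\vee$ is correct, but it only makes $c^\vee$ colimit-preserving. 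Your fallback, showing directly that $c$ sends $\op{Ind}(\mcc^{\omega_1})$ into flat objects, is refuted by the same example.

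\textbf{The missing idea.} Flat objects of $\mcc$ cannot be obtained by pushing arbitrary flats forward along $c$. You must construct, for each $\omega_1$-compact $x$, an $\omega_1$-flat object with an effective epimorphism onto $x$. This makes the $\omega_1$-flats a $0$-generating subcategory. Separatedness, via Gabriel--Popescu (\cref{gplthm}), then upgrades $0$-generation to generation under colimits.

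The paper does this as follows.
\begin{itemize}
\item Realize $\mcc\simeq\amodu_{(R,I)}(\spgeq)\subset\modu_R(\spgeq)$ with $i^R\simeq I^\infty\otimes_R-$ (\cref{main2}).
\item Choose an effective epimorphism $g\colon P\to i(x)$ with $P$ an $\omega_1$-projective $R$-module.
\item Lift $g$ through the effective epimorphism $I^\infty\otimes_R P\to i(x)$, using projectivity of $P$. This gives an endomorphism $f$ of $P$ that factors through $I^\infty\otimes_R P$ and satisfies $gf\simeq g$.
\item Set $F=\colim(P\xrightarrow{f}P\xrightarrow{f}\cdots)$. Then $F$ is $\omega_1$-flat. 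It lies in $\mcc$ because $I^\infty\otimes_R F\simeq F$. It is flat in $\mcc$ by \cref{reflectflat}. And it still maps effectively epimorphically onto $i(x)$.
\end{itemize}
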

	\begin{proof}
		We learn the following argument from \cite[Lemma 2.4]{quillen1997module}. Let $\mcc'\subset \mcc$ be the smallest subcategory which contains $\omega_1$-flats and is closed under small colimits. We wish to show $\mcc'=\mcc$.   
		
		By \cref{main2}, there exists a connective \eonering $R$, an idempotent ideal $I\subset \pi_0R$ and a closed embedding $i:\mcc \hookrightarrow\modu_{R}(\spgeq)$ such that $\mcc\simeq \amodu_{(R,I)}(\spgeq)$. We have $i^R=I^\infty\otimes_R(-)$, where $I^\infty$ is viewed as an $(R,R)$-bimodule. Now let $x\in \mcc^{\omega_1}$ be an $\omega_1$-compact object. Then $i(x)$ is $\omega_1$-compact in $\modu_{R}(\spgeq)$ and hence there exists an $\omega_1$-projective left $R$-module $P$ and an effective epimorphism $g:P\to i(x)$ in $\modu_{R}(\spgeq)$. 
		Since the induced map $$g':I^\infty \otimes_R P \rightarrow I^\infty \otimes_R i(x) \xrightarrow{\sim} i(x)$$ is also effective epimorphic, there is a map $f': P \rightarrow I^\infty \otimes_R P$ such that $g' f'=g$. Let $F$ be the colimit of the system $\left(P_n\right)_{n \geq 0}$ over $i(x)$, where $P_n=P$ and $P_n \rightarrow P_{n+1}$ is the composition $$f:P\xrightarrow{f'}I^\infty \otimes_R P\to P$$ for all $n$. Since $F$ is a sequential colimit of $\omega_1$-projectives, it is $\omega_1$-flat in $\modu_{R}(\spgeq)$. Also $I^\infty \otimes_R F\simeq F$, since for each $n\geq0$ the map $f:P_n\to P_{n+1}$ factors through $I^\infty \otimes_R P_{n+1}$. Therefore $F\in \mcc$ and $F$ is $\omega_1$-flat in \mcc by \cref{reflectflat}. Finally, the induced map $F \rightarrow i(x)$ given by $g: P_n \rightarrow i(x)$ is effective epimorphic. Now because \mcc is $\omega_1$-presentable, we have actually shown that the full subcategory of $\omega_1$-flats in \mcc forms a $0$-generating subcategory of \mcc, and hence  $\mcc'=\mcc$ by \cref{gplthm}.
	\end{proof}

		\begin{rem}
		By \cref{flatgenerate}, any object in a \dualaddinfcat \mcc can be written as a geometric realization of flat objects (via flat resolution). One might ask if  $\mcc$ is freely generated by flat objects under geometric realizations, namely if one has an equivalence $\mcc\simeq \mcp_{\emptyset}^{\Delta^{\opp}}(\mcc^{\op{flat}})$.
		 However, this is not true, because $\mcp_{\emptyset}^{\Delta^{\opp}}(\mcc^{\op{flat}})$ is projectively generated\footnote{Every object $X\in \mcc^{\op{flat}}$ is projective in $\mcp_{\emptyset}^{\Delta^{\opp}}(\mcc^{\op{flat}})$.}, while a \dualaddinfcat is  in general not. 
	\end{rem}
\subsection{Dualizable versus compactly assembled additive \infcats}
	
	In this subsection, we show that the full subcategory of flat objects is compactly assembled. Furthermore, this yields a canonical equivalence between the category of compactly assembled additive categories and that of dualizable additive categories, with the inverse given by continuous prestabilization.
	
	We begin by relating the flatness condition to the compact projective exhaustibility. This comparison is the key input for showing that the category of flat objects has a compactly assembled structure.
	
	\begin{prop}\label{flatcpexhau}
		Let $\mathcal{C}\in\praddbl$ be a \dualaddinfcat and $x\in\mcc$. Then the following are equivalent:
		\enu{
			\item $x$ is $\omega_1$-flat.
			\item $x$ is \spgeq-atomically exhaustible (see \cref{atomicexhau}).
			\item $x$ is compact projectively exhaustible.
			
		}
	\end{prop}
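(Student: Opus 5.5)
The plan is to prove $(2)\Leftrightarrow(3)$ formally and then $(3)\Rightarrow(1)$ and $(1)\Rightarrow(3)$ by transport along a closed embedding into connective modules. Throughout I read ``$\mathcal{V}$-atomically exhaustible'' as in \cref{atomicexhau}: $x$ is the colimit of a sequence $x_0\to x_1\to\cdots$ in which every transition map is $\mathcal{V}$-atomic as a morphism. ``Compact projectively exhaustible'' is the same with compact projective morphisms.

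For $(2)\Leftrightarrow(3)$ I will repeat the argument of \cref{atcproj} at the level of morphisms. Because $\spgeq$ is a mode, only the colimit-preservation part of the atomic condition needs checking. On additive categories, preserving sifted colimits is the same as preserving all colimits, and $\Omega^\infty$ is conservative and preserves sifted colimits. Together these identify $\spgeq$-atomic morphisms with compact projective morphisms, and then the two notions of exhaustibility coincide verbatim.

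For $(3)\Rightarrow(1)$, I fix, by \cref{embedmodcat} and \cref{main2}, a fully faithful internal left adjoint $i\colon\mcc\hookrightarrow\modu_R(\spgeq)$ with $i^R=I^\infty\otimes_R-$. Since $i^R$ preserves colimits, $i$ preserves compact projective morphisms; it also preserves sequential colimits and $\omega_1$-compactness. In $\modu_R(\spgeq)$, a compact projective morphism factors through a compact projective object, namely a retract of a finite free module; I will check this by testing the morphism against the sifted colimit presentation of its target. Consequently $i(x)$ is a sequential colimit of compact projectives. It is therefore flat by \cref{lazardthm} and $\omega_1$-compact, and \cref{reflectflat} shows that $x$ is $\omega_1$-flat.

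For $(1)\Rightarrow(3)$, $i(x)$ is flat (\cref{reflectflat}) and $\omega_1$-compact. By \cref{lazardthm} it is a filtered colimit of compact projectives, and $\omega_1$-compactness lets me refine this to a sequence $P_0\to P_1\to\cdots$ with colimit $i(x)$, in the usual way for countably presented flat modules. Applying $i^R$ then gives
\[
x\simeq \colim_n\, I^\infty\otimes_R P_n .
\]
The main obstacle is to show that the transition maps $I^\infty\otimes_R P_n\to I^\infty\otimes_R P_{n+1}$ can be arranged to be compact projective morphisms \emph{in $\mcc$}: the objects $I^\infty\otimes_R P_n$ are not compact projective, and mapping out of them is not controlled by an adjunction. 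I would first try the factorization trick from the proof of \cref{flatgenerate}. Because $i(x)\in\mcc$, we have $I^\infty\otimes_R i(x)\simeq i(x)$, so after passing to a subsequence each map $P_n\to P_{n+1}$ factors as $P_n\to I^\infty\otimes_R P_{n+1}\to P_{n+1}$. The claim to prove is then that the resulting maps $I^\infty\otimes_R P_n\to I^\infty\otimes_R P_{n+2}$ are compact projective in $\mcc$. The idea is that a map into a sifted colimit $\colim_j y_j$ in $\mcc$ restricts along $P_n\to I^\infty\otimes_R P_{n+1}$ to a map out of the compact projective $P_n$, which factors through some $i(y_j)$, and that the idempotence $I^\infty\otimes_R I^\infty\simeq I^\infty$ carries this factorization back into $\mcc$. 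If this direct verification turns out to be unwieldy, the fallback is the criterion of \cref{appendixB} that tests compact projectivity of a morphism via the left adjoint $\hat h\colon\mcc\to\mcp_\Sigma(\mcc^{\omega_1})$ from \cref{rmkcptassemb}.
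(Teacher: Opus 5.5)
The gap is in $(3)\Rightarrow(2)$. Your $(3)\Rightarrow(1)$ is the paper's argument, and your construction for $(1)\Rightarrow(3)$ is essentially the paper's as well.

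\textbf{The gap.} You identify $\spgeq$-atomic morphisms with compact projective morphisms ``by repeating the argument of \cref{atcproj} at the level of morphisms''. For objects that argument works, because both conditions are properties of the one functor $\unmap_{\mcc}(x,-)$. For a morphism $f\colon x\to y$ the two conditions are of different kinds.
\begin{itemize}
\item Compact projectivity asks only that, separately for each small sifted diagram $z_\bullet$, some diagonal filler $\Map(y,\operatorname{colim} z_i)\to\operatorname{colim}\Map(x,z_i)$ exists. Nothing makes these fillers coherent in $z_\bullet$.
\item $\spgeq$-atomicity asks for a lift of $f$ to $\unat_{\mcc}(x,y)$. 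Equivalently, $h^y\to h^x$ must factor through a single functor $C$ preserving sifted colimits and finite products.
\end{itemize}
The mode property and conservativity of $\Omega^\infty$ reduce atomicity to the existence of such a $C$. They do not build a coherent $C$ out of pointwise fillers. Only atomic $\Rightarrow$ compact projective is formal. The converse is \cref{a25}, and its proof genuinely uses dualizability. The functor $\hat h\colon\mcc\hookrightarrow\mcp_\Sigma(\mcc^{\omega_1})$ is a fully faithful internal left adjoint, so it preserves and reflects both classes of maps. In $\mcp_\Sigma(\mcc^{\omega_1})$ a compact projective map factors through a compact projective object, and is therefore atomic. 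As written, your chain proves only $(1)\Leftrightarrow(3)$ and $(2)\Rightarrow(3)$.

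\textbf{The repair.} The paper's route is to go from $(1)$ directly to $(2)$, using what you already built. After passing to a subsequence, each $P_n\to P_{n+1}$ factors as $P_n\xrightarrow{\ell}ii^R(P_{n+1})\to P_{n+1}$; this uses compactness of $P_n$ twice and $i(x)\simeq\operatorname{colim}_m ii^R(P_m)$. Then $ii^R(P_n)\to ii^R(P_{n+1})$ equals $\ell\circ\epsilon_{P_n}$. The reason is that maps $ii^R(a)\to ii^R(b)$ are determined by their composite with the counit $\epsilon_b$, via $\Map(ic,ii^Rb)\simeq\Map(ic,b)$.
\begin{itemize}
\item So $i$ of each transition map $i^R(P_n)\to i^R(P_{n+1})$ factors through the compact projective object $P_n$.
\item Hence it is $\spgeq$-atomic in $\modu_R(\spgeq)$.
\item The fully faithful $i$ reflects atomic maps (\cref{presreflcpmap}), so the transition maps are atomic in $\mcc$.
\end{itemize}
This also removes the ``main obstacle'' you anticipated. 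You do not need the two-step maps, the idempotence of $I^\infty$, or the fallback via \cref{appendixB}. With $(1)\Rightarrow(2)$ in hand, $(2)\Rightarrow(3)$ is the trivial direction and closes the cycle.
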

	\begin{proof}
		The implication  $(2)\Rightarrow(3) $ is clear; it suffices to show $(1)\Rightarrow(2) $ and $(3)\Rightarrow(1) $.
		
		For $(3)\Rightarrow(1) $, we first choose a compact projective exhaustion $x\simeq \colim_n x_n$ and a strongly continuous embedding $i:\mcc\to\modu_{R}(\spgeq)$. Then $i(x)\simeq\colim_n i(x_n) $ is a compact projective exhaustion in $\modu_{R}(\spgeq)$. Because for any $n$, the object $i(x_{n+1})\in \modu_{R}(\spgeq)=\mcp_\Sigma(\op{Free}^{\op{fin}}_R)$ can be written as a sifted colimit $i(x_{n+1})\simeq\colimit_I P_i$ of finite free modules by \cref{siftrep2}, the map $i(x_n)\to i(x_{n+1})$ factors through a finite free module $P_i$. Therefore $i(x)$ is equivalent to a sequential colimit of finite free modules and $i(x)$ is $\omega_1$-flat. Because $i$ reflects $\omega_1$-flats, $x$ is $\omega_1$-flat.
		
		For $(1)\Rightarrow(2)$, let $x\in \mcc^{\omega_1}$ be an $\omega_1$-flat object. Then $i(x)$ is $\omega_1$-flat in $\modu_{R}(\spgeq)$ and hence $i(x)\simeq \colim_n P_n$ is a sequential colimit of finite free modules. Since we also have $i(x)\simeq \colim_n ii^R(P_n)$, the map $P_0 \rightarrow i(x) \simeq \colim_n ii^R(P_n)$ factors through some $ii^R(P_{j'_0})$. Using compactness again, there is a $j_0 \geq j_0^{\prime}$ such that the composite $P_{0} \rightarrow i i^R(P_{j_0}) \rightarrow P_{j_0}$ is equivalent to the transition map. Thus, for any $n$, we find a $j_{n+1} > j_n$ such that $P_{j_n} \rightarrow P_{j_{n+1}}$ factors through the counit $i i^R(P_{j_{n+1}}) \rightarrow P_{j_{n+1}}$. Therefore $x$ can be written as a sequential colimit $\colim_ni^R(P_{j_n})$ such that each transition map factors through a compact projective object in $\modu_{R}(\spgeq)$, which shows $x$ is \spgeq-atomically exhaustible.
	\end{proof}

This characterization immediately gives a control on the size of the full subcategory of flat objects.

\begin{prop}\label{flatacc}
	Let $\mcc$ be a \dualaddinfcat. Then $\mcc^{\op{flat}}$ is $\omega_1$-accessible. Moreover, we have equivalences
	\[
	\mcc^{\omega_1\text{-}\op{flat}} \simeq (\mcc^{\op{flat}})^{\omega_1} 
	\quad \text{and} \quad 
	\op{Ind}_{\omega_1}(\mcc^{\omega_1\text{-}\op{flat}}) \simeq \mcc^{\op{flat}}.
	\]
	In particular, the inclusion $\cflat \hookrightarrow \mcc$ preserves and reflects $\omega_1$-compact objects.
\end{prop}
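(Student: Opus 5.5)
Our plan is to first show that every flat object of $\mcc$ is a filtered colimit of $\omega_1$-flats, and then to check that $\omega_1$-flats are $\omega_1$-compact in $\cflat$. The second point is immediate: $\cflat\subset\mcc$ is closed under filtered colimits (by the corollary above on closure of flat objects under finite sums and filtered colimits), so an object that is $\omega_1$-compact in $\mcc$ stays $\omega_1$-compact in $\cflat$. Hence $\mcc^{\omega_1\text{-}\op{flat}}\subset(\cflat)^{\omega_1}$, and we obtain a fully faithful functor
\[
\op{Ind}_{\omega_1}(\mcc^{\omega_1\text{-}\op{flat}})\to\cflat
\]
that preserves $\omega_1$-filtered colimits. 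Here $\mcc^{\omega_1\text{-}\op{flat}}$ is essentially small because $\mcc$ is $\omega_1$-presentable (\cref{rmkcptassemb}).

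For essential surjectivity, we work with a closed embedding $i\colon\mcc\hookrightarrow\modu_R(\spgeq)$ as in \cref{main2}, with $i^R=I^\infty\otimes_R-$. Let $x\in\cflat$. By \cref{reflectflat}, $i(x)$ is flat. By Lazard (\cref{lazardthm}), we can write $i(x)\simeq\colim_{\alpha\in A}P_\alpha$ as a filtered colimit of finite free modules. Applying $i^R$ gives
\[
x\simeq\colim_\alpha i^R(P_\alpha).
\]
The objects $i^R(P_\alpha)$ need not be flat, so this is not yet a presentation by $\omega_1$-flats. To repair this, we take the $\omega_1$-filtered poset of countable directed subsets $B\subset A$ and write $x$ as the $\omega_1$-filtered colimit over such $B$ of $\colim_{\beta\in B}i^R(P_\beta)$. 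We then refine to those $B$ that are closed in the following sense: for each $\beta\in B$ there is $\beta'\in B$ with $\beta\le\beta'$ such that $P_\beta\to P_{\beta'}$ factors through the counit $ii^R(P_{\beta'})\to P_{\beta'}$. This is exactly the factorization used in the proof of \cref{flatcpexhau}(1)$\Rightarrow$(2). It exists by compactness of $P_\beta$, together with the fact that $\colim_\alpha ii^R(P_\alpha)\simeq i(x)\simeq\colim_\alpha P_\alpha$.

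The main obstacle is this refinement step. We must show that closed countable $B$ are cofinal among countable directed subsets, by a countable closure argument that adds witnesses $\beta'$ iteratively and takes the union. We must also show that for a closed $B$, the object $\colim_{\beta\in B}i^R(P_\beta)$ is $\omega_1$-flat. For the latter, we choose a cofinal sequence in $B$ whose transitions factor through compact projectives as above, and apply \cref{flatcpexhau}(3)$\Rightarrow$(1): the resulting colimit is compact projectively exhaustible, hence $\omega_1$-flat. This shows $x$ is an $\omega_1$-filtered colimit of $\omega_1$-flats, which gives $\op{Ind}_{\omega_1}(\mcc^{\omega_1\text{-}\op{flat}})\simeq\cflat$, so $\cflat$ is $\omega_1$-accessible.

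The remaining claims are formal. Every $\omega_1$-compact object of $\cflat$ is a retract of an $\omega_1$-flat object, and $\mcc^{\omega_1\text{-}\op{flat}}$ is closed under retracts, so $(\cflat)^{\omega_1}\simeq\mcc^{\omega_1\text{-}\op{flat}}$. Consequently the inclusion $\cflat\hookrightarrow\mcc$ preserves and reflects $\omega_1$-compact objects.
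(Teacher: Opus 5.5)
Your proposal is correct and follows essentially the paper's proof, which is itself modeled on \cite[Proposition 2.27]{ramzi2024dualizable}: embed $\mcc\hookrightarrow\modu_R(\spgeq)$, push a filtered presentation of $i(x)$ through $i^R$, refine to countable subdiagrams in which each transition factors through the counit $ii^R(-)\to(-)$, and conclude by reflection along $i$. You start from Lazard's finite free presentation and countable directed subsets, whereas the paper uses an $\omega_1$-filtered presentation by $\omega_1$-flats and $\op{Down}_\omega(J)$; this difference is immaterial.

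One step you should spell out. The transitions $i^R(P_{\beta_n})\to i^R(P_{\beta_{n+1}})$ need not factor through compact projectives of $\mcc$ itself, so "factor through compact projectives" is only true after applying $i$. Since maps from $i(c)$ into $ii^R(y)$ are determined by their composite with the counit $ii^R(y)\to y$, $i$ of each transition equals $ii^R(P_{\beta_n})\to P_{\beta_n}\to ii^R(P_{\beta_{n+1}})$. It is therefore a compact projective map in $\modu_R(\spgeq)$, hence in $\mcc$ by full faithfulness of $i$. Equivalently, $i\big(\colim_n i^R(P_{\beta_n})\big)\simeq\colim_n P_{\beta_n}$.
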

\begin{proof}
	It is clear that $\mcc^{\omega_1\text{-}\op{flat}}\subset(\mcc^{\op{flat}})^{\omega_1} $. 
	Because $\mcc^{\omega_1\text{-}\op{flat}}$ is idempotent complete, it suffices to show that $\mcc^{\omega_1\text{-}\op{flat}}$ generates $\mcc^{\op{flat}}$ under $\omega_1$-filtered colimits. Choose a strongly continuous embedding $i:\mcc \to \modu_{R}(\spgeq)$ where $R$ is a connective \eonering. We then basically follow the same idea in the proof of \cite[Proposition 2.27]{ramzi2024dualizable}.
	
	Let $m$ be a flat object in \mcc. We denote $\mcn=\modu_{R}(\spgeq)$. Since $i(m)$ is flat in \mcn, we can write $i(m) \simeq \operatorname{colim}_J n_j$ for some $\omega_1$-filtered poset diagram $n_{\bullet}: J \rightarrow \mcn^{\omega_1\text{-}\op{flat}}$, which exists because for $\mcn$ it is true that $\op{Ind}_{\omega_1}(\mcn^{\omega_1\text{-}\op{flat}})\simeq\mcn^{\op{flat}}$. We thus have $m \simeq \operatorname{colim}_J i^R\left(n_j\right)$. Let $\operatorname{Down}_\omega(J)$ denote the poset of downwards closed filtered subsets of $J$ that have a countable cofinal subset - note that this is a filtered subset and we have an inclusion $J \rightarrow \operatorname{Down}_\omega(J), j \mapsto\{\leq j\}$. Left Kan extend the diagram $i^R\left(n_{\bullet}\right)$ to $\operatorname{Down}_\omega(J)$ to get a diagram $\tilde{n}_{\bullet}: \operatorname{Down}_\omega(J) \rightarrow \mathcal{C}$.
	
	We claim that the subposet $P \subset \operatorname{Down}_\omega(J)$ spanned by those $F$ such that $\tilde{n}_F$ is $\omega_1$-flat in \mcc is cofinal and $P$ is $\omega_1$-filtered. Once we know this, we will be done: it will follow that $m$ is an $\omega_1$-filtered colimit of $\omega_1$-flat objects.
	
	To prove this cofinality, since $\op{Down}_\omega(J)$ is a filtered poset, it suffices to produce, for every $F_0$, a bigger $F$ with the desired property. Now $F_0 \subset J$ has a countable cofinal subset, and $J$ is $\omega_1$-filtered, so we can find a $j_0 \in J$ which is an upper bound for $F_0$. Thus we may assume $F_0=\left\{\leq j_0\right\}$.
	
	In this case, $\tilde{n}_{j_0}=i^R(n_{j_0})$. Now we note that since $n_{j_0}$ is $\omega_1$-compact, the map $n_{j_0} \rightarrow m \simeq \operatorname{colim}_J i i^R(n_k)$ factors through some $i i^R(n_{j_1^{\prime}})$. Using again $\omega_1$-compactness, there is a $j_1 \geq j_1^{\prime}, j_0$ such that the composite $n_{j_0} \rightarrow i i^R(n_{j_1}) \rightarrow n_{j_1}$ is the map induced by $n_{\bullet}$. Thus, for any $j_0$, we find a $j_1 \geq j_0$ such that $n_{j_0} \rightarrow n_{j_1}$ factors through the counit $i i^R(n_{j_1}) \rightarrow n_{j_1}$. We can then iterate from $j_1$ to find a $j_2$, and then a $j_n$ and so on, and we can let $F$ be the downwards closure $\left\{j_n, n \in \mathbb{N}\right\}$. For this $F$, we will have $i(\tilde{n}_F)=\operatorname{colim}_{j \in F} ii^R\left(n_j\right)=\operatorname{colim}_n ii^R\left(n_{j_n}\right) \simeq \operatorname{colim}_n (n_{j_n})$. The last equivalence follows from the fact that in the following diagram, if there exists a dotted lift making the lower right triangle commute
	$$\begin{tikzcd}[row sep=large, column sep=large]
		ii^R(n_0) \arrow[r, "\epsilon_{n_0}"] \arrow[d, "ii^R(f)"'] & n_0 \arrow[d, "f"] \arrow[ld, dotted] \\
		ii^R(n_1) \arrow[r, "\epsilon_{n_1}"'] & n_1
	\end{tikzcd}$$
	then the upper left triangle automatically commutes by adjunction: maps $ii^R(n_0) \to ii^R(n_1)$
	are entirely determined by the composition $ii^R(n_0) \to ii^R(n_1) \to n_1$.
	Therefore $i(\tilde{n}_F)$ is a sequential colimit of $\omega_1$-flats in \mcn, so it is still $\omega_1$-flat in \mcn. However  $i$ reflects $\omega_1$-flats so $\tilde{n}_F$ is  $\omega_1$-flat in \mcc.
	
	It remains to show that $P$ is $\omega_1$-filtered. Since it is cofinal in the poset $\op{Down}_\omega(J)$, it suffices to show it for the
	latter, which can be found in the proof of \cite[Corollary 2.34]{ramzi2024dualizable}.
\end{proof}
The preceding proposition may be reformulated in several equivalent cocompletion languages:
	\begin{rem}\label{altercocomp}
		Let  ``$\omega_1\text{-}\op{fil}$'' denote  the collection of small $\omega_1$-filtered diagrams and ``$\op{fil}_{\omega_1}$'' denote  the collection of $\omega_1$-small filtered diagrams.
		For any small additive \infcat $\mca_0$, because $\op{Ind}_{\omega_1}(\mca_0)\subset\mcp_\Sigma(\mca_0)$ is closed under finite products, it is additive and by the same reason of \cref{ab6prod} we have $\op{Ind}_{\omega_1}(\mca_0) \simeq \mcp_{\sqcup}^{\sqcup,\omega_1\text{-}\op{fil}}(\mca_0)$. If furthermore $\mca_0$ admits $\omega_1$-small filtered colimits (equivalently admits sequential colimits), then $$\op{Ind}_{\omega_1}(\mca_0)\simeq   \mcp_{\op{fil}_{\omega_1}}^{\op{fil}}(\mca_0)\simeq
		\mcp_{\sqcup,\op{fil}_{\omega_1}}^{\sqcup,\op{fil}}(\mca_0) ,$$ where the first equivalence follows from \cite[\href{https://kerodon.net/tag/0694}{0694} and \href{https://kerodon.net/tag/0695}{0695}]{lurie2025kerodon} and the second follows similarly from \cref{ab6prod}.
		Therefore, for any \dualaddinfcat \mcc we have $$\cflat\simeq\op{Ind}_{\omega_1}(\cflatw)\simeq \mcp_{\sqcup}^{\sqcup,\omega_1\text{-}\op{fil}}(\cflatw)\simeq  \mcp_{\op{fil}_{\omega_1}}^{\op{fil}}(\cflatw)\simeq \mcp_{\sqcup,\op{fil}_{\omega_1}}^{\sqcup,\op{fil}}(\cflatw).$$ 
	\end{rem}

The proof of compact assembledness will require a small amount of algebra concerning idempotent ideals and their matrix analogues.
	\begin{lem}\label{matrixlem1}
		Let $A$ be a unital associative ring and $I=I^2$ be an idempotent two-sided ideal of $A$. Then for any $n\in\mathbb{N}$, the matrix ideal $\mathbb{M}_n(I)\subset \mathbb{M}_n(A)$ is idempotent too, i.e. $\mathbb{M}_n(I)=\mathbb{M}_n(I)^2$.
	\end{lem}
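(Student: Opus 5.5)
The plan is to show the two inclusions separately. Since $\mathbb{M}_n(I)$ is a two-sided ideal of $\mathbb{M}_n(A)$, one inclusion is free:
\[
\mathbb{M}_n(I)^2\subset \mathbb{M}_n(I)\mathbb{M}_n(A)\subset \mathbb{M}_n(I).
\]
So the content is the reverse inclusion $\mathbb{M}_n(I)\subset\mathbb{M}_n(I)^2$. Here $\mathbb{M}_n(I)^2$ means the additive subgroup generated by products $XY$ with $X,Y\in\mathbb{M}_n(I)$. Because $\mathbb{M}_n(I)$ is additively generated by the elementary matrices $aE_{kl}$ with $a\in I$ and $1\le k,l\le n$, it suffices to show that each $aE_{kl}$ lies in $\mathbb{M}_n(I)^2$.

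Fix $a\in I$ and indices $k,l$. Since $I=I^2$, we can write
\[
a=\sum_{s=1}^m b_sc_s
\]
with $b_s,c_s\in I$. The matrix units satisfy $E_{kk}E_{kl}=E_{kl}$, so
\[
aE_{kl}=\sum_s (b_sE_{kk})(c_sE_{kl}).
\]
Each factor $b_sE_{kk}$ and $c_sE_{kl}$ has all entries in $I$, so every summand is a product of two elements of $\mathbb{M}_n(I)$. Hence $aE_{kl}\in\mathbb{M}_n(I)^2$. Taking sums of such elementary matrices gives all of $\mathbb{M}_n(I)$, which yields $\mathbb{M}_n(I)=\mathbb{M}_n(I)^2$.

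There is no serious obstacle in this argument. The only point needing care is the convention that $I^2$ denotes finite sums of products rather than single products. This is exactly why the reduction goes through elementary matrices and then closes up under finite sums.
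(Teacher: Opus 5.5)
Your proposal is correct and follows essentially the same argument as the paper: both reduce to elementary matrices $E_{kl}(a)$ with $a\in I$, write $a=\sum_s b_sc_s$ using $I=I^2$, and factor each term through a matrix unit. You route through $E_{kk}$, the paper through $E_{k1}E_{1l}$; you also state explicitly the easy inclusion $\mathbb{M}_n(I)^2\subset\mathbb{M}_n(I)$, which the paper leaves implicit.
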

	\begin{proof}
		We claim every $X \in \mathbb{M}_n(I)$ can be written as $X=\sum_{i=1}^N A_i B_i$ with $A_i, B_i \in \mathbb{M}_n(I)$. This can be shown as follows:
		
		For all $1 \leq i, j \leq n$ and $a \in I$ let $E_{i j}(a) \in \mathbb{M}_n(I)$ denote the matrix with $a$ as the $(i, j)$-th entry and all other entries $0$. Because every matrix in $\mathbb{M}_n(I)$ is the sum of such matrices it suffices to show the statement for these matrices. Because $I^2=I$ there exist $a_1, b_1, \ldots, a_N, b_N \in I$ with $a=\sum_{k=1}^N a_k b_k$. Thus
		$$
		E_{i j}(a)=E_{i j}\left(\sum_{k=1}^N a_k b_k\right)=\sum_{k=1}^N E_{i j}\left(a_k b_k\right)=\sum_{k=1}^N E_{i 1}\left(a_k\right) E_{1 j}\left(b_k\right) .
		$$
	\end{proof}
	This matrix observation provides the following factorization criterion for maps between finite free modules in the almost setting.
	\begin{lem}\label{factorize}
		Let $R$ be a connective \eonering and $I\subset \pi_0R$ be an idempotent (two-sided) ideal. Given a map $f:R^m\to R^n$ of finite free left modules, then:
		\enu{
			\item  $f$ factors through an object in $\amodu_{(R,I)}(\spgeq)$ if and only if the associated $m\times n$ matrix $A_f\in \mathbb{M}_{m\times n}(\pi_0R)$  lies in $\mathbb{M}_{m\times n}(I)$.
			\item If $f$ factors through an object in $\amodu_{(R,I)}(\spgeq)$, then $f$ can be decomposed as two maps $R^m \xrightarrow{g} R^k \xrightarrow{h} R^n$ such that both $g,h$ factor through  $\amodu_{(R,I)}(\spgeq)$.
		}
	\end{lem}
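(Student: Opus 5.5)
The plan is to work with the closed embedding $i\colon \amodu_{(R,I)}(\spgeq)\hookrightarrow \modu_R(\spgeq)$ and its colimit-preserving right adjoint $i^R = I^\infty\otimes_R(-)$. A map $f\colon R^m\to R^n$ factors through an object of $\mcc:=\amodu_{(R,I)}(\spgeq)$ if and only if it factors through the counit $ii^R(R^n)=(I^\infty)^{\oplus n}\to R^n$. Indeed, if $f=h\circ g$ with $g\colon R^m\to c$ and $h\colon c\to R^n$, where $c\in\mcc$, then $h$ factors through the counit by adjunction.

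Since $R^m$ is free, such a factorization amounts to lifting the $m$ elements of $\pi_0(R^n)=(\pi_0R)^n$ defined by the rows of $A_f$ along the map
\[
\pi_0(I^\infty)^{\oplus n}\to(\pi_0R)^n .
\]
Here we use $\pi_0\Map(R,M)=\pi_0M$. By \cref{almostalg} and \cref{pi0almost}, the cofiber of $I^\infty\to R$ is $R/I^\infty$ with $\pi_0 = \pi_0R/I$. Both $I^\infty$ and $R$ are connective, so the long exact sequence shows that the image of $\pi_0(I^\infty)\to\pi_0R$ is exactly $I$, even though $\pi_0 I^\infty$ may differ from $I$.

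This gives (1): a lift exists precisely when every entry of $A_f$ lies in $I$, i.e. $A_f\in\mathbb{M}_{m\times n}(I)$. The only conventions to check are left versus right actions and row versus column conventions, and I would state them once at the start.

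For (2), suppose $A_f\in\mathbb{M}_{m\times n}(I)$. Pad $A_f$ with zeros to a square matrix of size $N=\max(m,n)$ in $\mathbb{M}_N(I)$. By \cref{matrixlem1}, we can write $A_f=\sum_{k=1}^{r}B_kC_k$ with $B_k,C_k\in\mathbb{M}_N(I)$. Concatenating then gives a single factorization
\[
A_f = B\cdot C,\qquad B=(B_1\ \cdots\ B_r)\in\mathbb{M}_{N\times rN}(I),\quad C=\begin{pmatrix}C_1\\ \vdots\\ C_r\end{pmatrix}\in\mathbb{M}_{rN\times N}(I).
\]
Cropping the padding rows of $B$ and columns of $C$ yields matrices in $\mathbb{M}_{m\times k}(I)$ and $\mathbb{M}_{k\times n}(I)$ with $k=rN$.

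Because $R^m$ is free, maps $R^a\to R^b$ up to homotopy are determined by their $\pi_0$-matrices. We can therefore realize these matrices as maps $g\colon R^m\to R^k$ and $h\colon R^k\to R^n$ with $h\circ g\simeq f$. This works because the composite's matrix is the product, with the multiplication order fixed by the left-module convention. Both $g$ and $h$ have matrices with entries in $I$, so by (1) each factors through $\mcc$.

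The main subtlety I expect is the identification in (1) of the image of $\pi_0 I^\infty\to \pi_0 R$ with $I$, rather than with $\pi_0 I^\infty$ itself. The cofiber argument above handles this. The rest is bookkeeping of the matrix multiplication order.
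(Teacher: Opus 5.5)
Your proof is correct and essentially the paper's argument. In (1) the paper also lifts $f$ to the fiber $(I^\infty)^{\oplus n}$ of $R^n\to (R/I^\infty)^{\oplus n}$ using $\pi_0(R/I^\infty)=\pi_0R/I$; for the forward direction it uses $\pi_0X=I\cdot\pi_0X$ where you use the counit, and your version has the advantage of reducing both directions to the single fact that $\pi_0I^\infty\to\pi_0R$ has image $I$. In (2) the paper uses the rectangular identity $\mathbb{M}_{m\times n}(I)=\mathbb{M}_{m\times n}(I)\cdot\mathbb{M}_n(I)$ followed by block concatenation, where you instead pad to a square matrix and crop, which is equally valid.
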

	\begin{proof}
		(1)  Suppose $f$ factors as $R^m \xrightarrow{\alpha} X \xrightarrow{\beta} R^n$ for some object $X \in \amodu_{(R,I)}(\spgeq)$.  The morphism $f$ induces a map $\pi_0(f)$ of left $\pi_0R$-modules, which factors as $$(\pi_0 R)^m \xrightarrow{\pi_0(\alpha)} \pi_0(X) \xrightarrow{\pi_0(\beta)} (\pi_0 R)^n.$$ Since the image of $\pi_0(\alpha)$ is contained in $\pi_0(X)=I \cdot \pi_0(X)$, the image of $\pi_0(f)$ must lie in $I \cdot (\pi_0 R)^n = I^n$. Thus, the matrix $A_f \in \mathbb{M}_{m\times n}(\pi_0 R)$ associated to $f$ has all its entries in $I$.
		
		 Conversely, assume $A_f \in \mathbb{M}_{m\times n}(I)$. 
		Then $f$ factors through $(I^\infty)^n$ by the following diagram: $$\begin{tikzcd}
			& R^m \arrow[ld, dashed] \arrow[d, "f"] \arrow[rd, "0"] &                \\
			(I^\infty)^n \arrow[r] & R^n \arrow[r]                                    & (R/I^\infty)^n
		\end{tikzcd}$$	
		(2) Suppose $f$ factors through an object in $\amodu_{(R,I)}(\spgeq)$. By part (1), its matrix $A_f$ has all entries in $I$, meaning $A_f \in \mathbb{M}_{m\times n}(I)$. Because $I$ is an idempotent ideal, the matrix ideal $\mathbb{M}_n(I) \subset \mathbb{M}_n(\pi_0R)$ is an idempotent ideal too by \cref{matrixlem1}. Viewing $\mathbb{M}_{m\times n}(I)$ as a right module over $\mathbb{M}_n(\pi_0R)$, we also have $\mathbb{M}_{m\times n}(I) = \mathbb{M}_{m\times n}(I) \cdot \mathbb{M}_n(I)$ by the same argument as \cref{matrixlem1}. Therefore, $A_f$ can be written as a finite sum of products of matrices:
		$$ A_f = \sum_{l=1}^p B_l C_l $$
		where each $B_l \in \mathbb{M}_{m\times n}(I)$ and $C_l \in \mathbb{M}_{n\times n}(I)$. 
		We can rewrite this sum of products as a single matrix multiplication by organizing $B_l$ and $C_l$ into block matrices. Let $k = p \cdot n$. Define $B \in \mathbb{M}_{m\times k}(I)$ as the horizontal concatenation of $B_l$, and $C \in \mathbb{M}_{k\times n}(I)$ as the vertical concatenation of $C_l$:
		$$ B = \begin{pmatrix} B_1 & B_2 & \cdots & B_p \end{pmatrix}, \quad C = \begin{pmatrix} C_1 \\ C_2 \\ \vdots \\ C_p \end{pmatrix} $$
		Then we precisely have $A_f = B \cdot C$. 
		
		Since $R^m, R^k$, and $R^n$ are finite free left modules, this algebraic decomposition strictly lifts to a composition of $R$-module morphisms $R^m \xrightarrow{g} R^k \xrightarrow{h} R^n$ such that $A_g = B$ and $A_h = C$. Since both matrices $A_g$ and $A_h$ have entries exclusively in $I$, applying the ``if'' direction of part (1) guarantees that both $g$ and $h$ factor through objects in $\amodu_{(R,I)}(\spgeq)$.
	\end{proof}
	We now recall the categorical framework in which the flat subcategory will live. 
	\begin{de}
		Let $\hatcat^{\op{fil}}$ be the \infcat of large \infcats with small filtered colimits and whose morphisms are  filtered colimit-preserving functors. We say an object $\mcc$ in it is compactly assembled if there exists an $\omega$-accessible \infcat $\mcd=\op{Ind}(\mcd^\omega)$ such that $\mcc$ is a retract of \mcd in $\hatcat^{\op{fil}}$. 
		
		Let $F: \mcc\to \mcd$ be a functor between compactly assembled \infcats. We say $F$ is compactly assembled if it preserves filtered colimits and compact maps. 
		
		We denote $\ass\subset\hatcat^{\op{fil}}$ as the (non-full) subcategory spanned by compactly assembled \infcats and compactly assembled functors. We denote $\assad\subset\ass$ as the (non-full) subcategory spanned by compactly assembled additive \infcats and  compactly assembled additive functors.
	\end{de}
	The following elementary characterization will be useful for verifying compact assembledness in additive contexts.
	\begin{lem}\label{retraassad}
		Let $\widehat{\op{Cat}}_{\infty,\op{ad}}^{\op{fil}}$ be the \infcat of large additive \infcats with small filtered colimits and whose morphisms are filtered colimit-preserving additive functors. Then an object $\mcc$ in $\widehat{\op{Cat}}_{\infty,\op{ad}}^{\op{fil}}$ is compactly assembled if and only if there exists an $\omega$-accessible additive \infcat $\mcd=\op{Ind}(\mcd^\omega)$ such that $\mcc$ is a retract of \mcd in $\widehat{\op{Cat}}_{\infty,\op{ad}}^{\op{fil}}$.
	\end{lem}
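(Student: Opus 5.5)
The ``if'' direction is formal: a retraction diagram $\mcc\to\mcd\to\mcc$ in $\widehat{\op{Cat}}_{\infty,\op{ad}}^{\op{fil}}$ is in particular a retraction diagram in $\hatcat^{\op{fil}}$, and $\mcd=\op{Ind}(\mcd^\omega)$ is $\omega$-accessible. So $\mcc$ is compactly assembled by definition. The content lies in the ``only if'' direction.

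For the converse, suppose $\mcc$ is additive and compactly assembled. I would use the canonical retraction rather than an arbitrary one: for a compactly assembled $\mcc$ the colimit functor $\colim\colon\op{Ind}(\mcc)\to\mcc$ admits a left adjoint $\hat y\colon\mcc\to\op{Ind}(\mcc)$ which preserves filtered colimits, with $\colim\circ\hat y\simeq\id$. Both functors preserve filtered colimits. (Size issues are harmless: $\mcc$ is large, and $\op{Ind}(\mcc)$ is taken in the large universe.)

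The first step is to check that $\op{Ind}(\mcc)$ is additive and that both functors are additive. Since $\mcc$ is additive, it has finite products which are biproducts. Then $\op{Ind}(\mcc)$ has finite products computed levelwise via the product of filtered diagrams. Indeed $\op{Ind}(\mcc)\subset\funct(\mcc^\opp,\widehat{\mcs})$ is closed under finite products, since filtered colimits of presheaves commute with finite products. The same argument gives finite coproducts, and the comparison map from coproducts to products is a filtered colimit of equivalences. So $\op{Ind}(\mcc)$ is semiadditive, and additive because shearing maps are levelwise equivalences. Moreover $\op{Ind}(\mcc)$ is $\omega$-accessible with $\op{Ind}(\mcc)^\omega$ the idempotent completion of $\mcc$.

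The second step is additivity of the functors. The functor $\colim$ is a left adjoint to the Yoneda embedding, so it preserves finite coproducts, i.e.\ biproducts. For $\hat y$, being a left adjoint it also preserves finite coproducts, hence finite biproducts. Any functor between additive categories preserving finite coproducts is additive. So the retraction lives in $\widehat{\op{Cat}}_{\infty,\op{ad}}^{\op{fil}}$, and we may take $\mcd=\op{Ind}(\mcc)$.

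The main obstacle is the existence of the left adjoint $\hat y$ starting from an \emph{arbitrary} retraction $\mcc\xrightarrow{s}\op{Ind}(\mcd_0)\xrightarrow{r}\mcc$ in $\hatcat^{\op{fil}}$. I would either cite the standard equivalence between the retract definition and the existence of $\hat y$ (as in the compactly assembled literature, e.g.\ \cite{ramzi2024dualizable}), or avoid it altogether. To avoid it, replace $\op{Ind}(\mcd_0)$ by its additive completion $\op{Ind}(\mcd_0)\to\op{Ind}(\mcp_{\sqcup}\ldots)$. The cleaner route is the canonical one via $\hat y$.
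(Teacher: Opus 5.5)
Your ``if'' direction is fine. So is the additivity bookkeeping: finite coproducts in an additive category are biproducts, adjoints preserve them, and $\op{Ind}$ of an additive category is additive. Using the canonical retraction $c\circ\hat h\simeq\id$ instead of an arbitrary one also matches the paper.

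The gap is the choice $\mcd=\op{Ind}(\mcc)$. Since $\mcc$ is large (e.g.\ presentable), $\op{Ind}(\mcc)$ is not $\omega$-accessible in the sense required by the statement and by the definition of compactly assembled. As you note yourself, its compact objects form the idempotent completion of $\mcc$, which is not essentially small. So $\op{Ind}(\mcc)$ is not of the form $\op{Ind}(\mcd^\omega)$ with $\mcd^\omega$ small.

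Your remark that size issues are harmless does not rescue this. If $\op{Ind}(\mcc)$ is formed from small filtered diagrams, it lies in $\widehat{\op{Cat}}_{\infty,\op{ad}}^{\op{fil}}$ but is not $\omega$-accessible. If it is formed in the larger universe, it is not an object of $\widehat{\op{Cat}}_{\infty,\op{ad}}^{\op{fil}}$, and the colimit functor to $\mcc$ does not even exist, since $\mcc$ only has small filtered colimits.

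The missing input is accessibility. A compactly assembled $\mcc$ is $\omega_1$-accessible, so $\mcc^{\omega_1}$ is essentially small. It is closed under finite biproducts, hence a small additive category, and $\op{Ind}(\mcc^{\omega_1})$ is a genuinely $\omega$-accessible additive category. The colimit functor $c\colon\op{Ind}(\mcc^{\omega_1})\to\mcc$ admits a fully faithful left adjoint $\hat h$; equivalently, the left adjoint of $\op{Ind}(\mcc)\to\mcc$ lands in the full subcategory $\op{Ind}(\mcc^{\omega_1})$. Your additivity argument then applies verbatim and exhibits $\mcc$ as a retract of $\op{Ind}(\mcc^{\omega_1})$ in $\widehat{\op{Cat}}_{\infty,\op{ad}}^{\op{fil}}$. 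This is exactly the paper's proof.

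The ``additive completion'' alternative in your last paragraph is too vague to count as an argument, but it is not needed.
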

	\begin{proof}
		The ``if'' direction is clear; it suffices to show the ``only if'' direction. Let $\mcc\in\widehat{\op{Cat}}_{\infty,\op{ad}}^{\op{fil}}$. If \mcc is compactly assembled, then $c:\op{Ind}(\mcc^{\omega_1})\to\mcc$ admits a fully faithful left adjoint $\hat{h}$. Since adjoint functors are automatically additive, they exhibit \mcc as a retract of $\op{Ind}(\mcc^{\omega_1})$  in $\widehat{\op{Cat}}_{\infty,\op{ad}}^{\op{fil}}$.
	\end{proof}
The preceding accessibility and factorization results combine to give compact assembledness of the flat subcategory.	
	\begin{thm}\label{flcompass}
		Let \mcc be a \dualaddinfcat. Then \cflat is a compactly assembled additive \infcat.
	\end{thm}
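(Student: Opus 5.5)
We will show that the canonical functor $c\colon \op{Ind}((\cflat)^{\omega_1})\to\cflat$ admits a left adjoint. Equivalently, we show that every $\omega_1$-flat object is a sequential colimit whose transition maps are \emph{compact} in $\cflat$, meaning they factor through $\hat h$-type morphisms. Additivity of $\cflat$ is immediate: flat objects are closed under finite sums. By \cref{flatacc}, $\cflat\simeq\op{Ind}_{\omega_1}(\cflatw)$, so $\cflat$ is $\omega_1$-accessible and has filtered colimits computed in $\mcc$. It therefore suffices to use the standard criterion, as in \cite[Section 2]{ramzi2024dualizable}: an $\omega_1$-accessible category with filtered colimits is compactly assembled if each $\omega_1$-compact object is a sequential colimit $\colim_n x_n$ in which each transition map $x_n\to x_{n+1}$ is compact, i.e.\ factors through $\hat h$ in the sense of a compact morphism in $\op{Ind}$. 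By \cref{retraassad} this gives compact assembledness in the additive sense.

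To produce such exhaustions, we fix a closed embedding $i\colon\mcc\simeq\amodu_{(R,I)}(\spgeq)\hookrightarrow\modu_R(\spgeq)$ as in \cref{main2}, with $i^R=I^\infty\otimes_R-$. By \cref{flatcpexhau}, an $\omega_1$-flat $x$ is written as $x\simeq\colim_n i^R(P_n)$ with $P_n$ finite free and with each $P_{j_n}\to P_{j_{n+1}}$ factoring through the counit $ii^R(P_{j_{n+1}})\to P_{j_{n+1}}$. The natural attempt is to show that the transition maps $i^R P_n\to i^R P_{n+1}$ are compact in $\cflat$. The subtlety is that $i^R P_n$ need not itself be flat.

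This is where \cref{factorize} enters. A map $f\colon R^m\to R^k$ factoring through $\mcc$ has matrix entries in $I$, so it decomposes as $R^m\xrightarrow{g}R^l\xrightarrow{h}R^k$ with both $g$ and $h$ factoring through $\mcc$. Iterating this decomposition, we refine the exhaustion into a sequence of finite free modules $Q_0\to Q_1\to\cdots$ in which every transition map factors through $\mcc$, and infinitely so: each map splits into a composite of two such maps indefinitely. The colimit of any tail of such a sequence is flat, and by the argument in \cref{flatgenerate} it is $I^\infty$-torsion, hence lies in $\cflat$.

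Concretely, for each stage we construct the flat object $F_n=\colim(Q_n\to Q_{n+1}'\to\cdots)$ by repeatedly halving the map $Q_n\to Q_{n+1}$ via \cref{factorize}(2). This yields $F_n\in\cflatw$ together with a factorization $Q_n\to F_n\to Q_{n+1}$, so that $x\simeq\colim_n F_n$. We then show that $F_n\to F_{n+1}$ is compact in $\cflat$. It factors through the finite free module $Q_{n+1}$, which is compact in $\modu_R(\spgeq)$ but does not lie in $\cflat$. We therefore need the compactness of maps in $\cflat$ to be detectable through such factorizations. We plan to check this directly: given a filtered colimit $\colim_\alpha y_\alpha$ in $\cflat$, a map $F_{n+1}\to\colim y_\alpha$ restricts along $Q_{n+1}\to F_{n+1}$ to a map $Q_{n+1}\to\colim_\alpha i(y_\alpha)$. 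Because $Q_{n+1}$ is compact in $\modu_R(\spgeq)$, this lifts to some $y_\alpha$, and precomposing gives the required factorization of $F_n\to\colim y_\alpha$.

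The main obstacle is making this compactness argument precise at the $\infty$-categorical level, that is, producing the factorization through the Ind-object coherently rather than only on $\pi_0$. The fallback is to build an explicit left adjoint $\hat h(x)=$ ``$\colim$''$_n F_n$ and verify the adjunction using the compactness of $Q_{n+1}$ as above.
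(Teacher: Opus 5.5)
Your proposal is correct and is essentially the paper's second proof: you refine the finite free exhaustion of $i(x)$ via \cref{factorize} into $\cdots\to Q_n\to F_n\to Q_{n+1}\to F_{n+1}\to\cdots$ with $F_n\in\cflatw$, and note that $F_n\to F_{n+1}$ factors through the compact projective $Q_{n+1}$ and is therefore compact in $\cflat$. (The paper also gives a shorter first proof, exhibiting $\cflat$ as a retract of $\ind(\cflatw)\simeq\mcp_\Sigma(\cflatw)^{\op{flat}}$ via the hat Yoneda embedding and Lazard's theorem.)

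The coherence issue you flag is not an obstacle. The composite
$\Map(iF_{n+1},\colim_\alpha iy_\alpha)\to\Map(Q_{n+1},\colim_\alpha iy_\alpha)\simeq\colim_\alpha\Map(Q_{n+1},iy_\alpha)\to\colim_\alpha\Map(iF_n,iy_\alpha)$
already is the required diagonal filler on mapping anima, since $i$ is fully faithful and preserves filtered colimits.
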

	\begin{proof}[First proof]\!\!\!\footnote{We thank Germán Stefanich for pointing out this easier argument.}
		By \cref{embedmodcat} and \cref{flatgenerate}, we obtain a hat Yoneda functor $$\mcc\xhookrightarrow{\hat{h}}\mcp_\Sigma(\mcc^{\omega_1\text{-}\op{flat}}).$$ However, both it and its right adjoint $\mcp_\Sigma(\mcc^{\omega_1\text{-}\op{flat}})\to \mcc$ preserve flat objects, so we get a retract $$\mcc^{\op{flat}}\to \mcp_\Sigma(\mcc^{\omega_1\text{-}\op{flat}})^{\op{flat}}\simeq\ind(\mcc^{\omega_1\text{-}\op{flat}})\to \mcc^{\op{flat}}$$ in $\widehat{\op{Cat}}_{\infty,\op{ad}}^{\op{fil}}$. Consequently, $\mcc^{\op{flat}}$ is compactly assembled additive.
	\end{proof}
	We also give a second proof, which is slightly more constructive and will be useful for the compact exhaustion statement below.
	\begin{proof}[Second proof]\label{secproof}
		Since $\cflat$ is $\omega_1$-accessible, it suffices to show that any $\omega_1$-flat object $x\in\mcc$ is compactly exhaustible internally in $\cflat$ by \cite[Theorem 2.39]{ramzi2024dualizable}. Beware that this does not follow immediately from the existence of a compact projective exhaustion in $\mcc$, because the exhaustion appearing in \cref{flatgenerate} does not necessarily lie in \cflat. We need a more clever strategy.
		
		Now choose a strongly continuous embedding $i:\mcc \to \modu_{R}(\spgeq)$, which induces an identification $\mcc=\amodu_{(R,I)}(\spgeq)$, where $I\subset \pi_0R$ is an idempotent two-sided ideal. Then $X:=i(x)$ is an $\omega_1$-flat left $R$-module and hence can be written as a sequential colimit $X\simeq\colim_n X_n$ of finite free left $R$-modules. Since we also have $X\simeq\colim_n ii^R(X_n)$, each map $X_i \to X_j$ in the sequence eventually factors through an object in \mcc. Without loss of generality, we can assume all maps $X_n \to X_{n+1}$ in the sequence factor through \mcc.
		
		For each $n$, by \cref{factorize} we can decompose $X_n\to X_{n+1}$ into a composition $$X_n=X_{n,0}\to X_{n,1} \to X_{n+1}$$ of two maps such that $X_{n,1}$ is finite free and both maps factor through \mcc. We can then iterate from $X_{n,1}$ to find a $X_{n,2}$, and then a $X_{n,i}$ and so on, which produces an $\mathbb{N}^{\triangleright}$-diagram
		$$
		\begin{tikzcd}
			{X_n=X_{n,0}} \arrow[r, "{f_{n,0}}"] \arrow[rd] & {X_{n,1}} \arrow[d] \arrow[r, "{f_{n,1}}"] & {X_{n,2}} \arrow[ld] \arrow[r, "{f_{n,2}}"] & \cdots \arrow[lld] \\
			& X_{n+1}                                    &                                             &                   
		\end{tikzcd}$$
		such that each $f_{n,i}$ in the diagram factors through \mcc. We denote $Y_n :=\colim_i X_{n,i}$. Then our previous sequence $\{X_n\}$ can be decomposed as
		$$\cdots\to X_n\to Y_n\to X_{n+1}\to Y_{n+1}\to\cdots$$
		Then each $Y_n$ lies in the image of \cflatw and each map $Y_n\to Y_{n+1}$ is a compact projective map by design and hence compact in \cflat, therefore $\{i^R(Y_n)\}$ becomes a compact exhaustion of $x$ in \cflat.
	\end{proof}
	
	\begin{rem}\label{compexhaus}
		Although the second proof above seems more technical, it shows that for any $\omega_1$-flat object $x$ in \mcc, we can find a compact exhaustion $\{y_n\}$ of $x$ in \cflat such that each $y_n$ is $\omega_1$-flat and each transition map is compact projective in \mcc. That produces the following useful corollary.
	\end{rem}
	
	\begin{cor}\label{leftadjtable}
		Let \mcc be a \dualaddinfcat. Then the following diagram is vertically left adjointable. 
		$$
		\begin{tikzcd}
			\mathcal{C}^{\op{flat}} \arrow[r,hook, "i"]                                                            & \mcc                                             \\
			\mathrm{Ind}(\mathcal{C}^{\omega_1\text{-}{\op{flat}}}) \arrow[u, "p_{1}"] \arrow[r,hook, ""'] & \mcp_\Sigma(\mathcal{C}^{\omega_1\text{-}{\op{flat}}}) \arrow[u, "p_{2}"']
		\end{tikzcd}$$
	\end{cor}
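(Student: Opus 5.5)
The plan is to build the two left adjoints explicitly, compare them on $\omega_1$-flat objects by means of the compact exhaustions of \cref{compexhaus}, and then extend to all flat objects by filtered colimits.

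\emph{Step 1: the two left adjoints exist.} For the right-hand vertical functor: by \cref{flatgenerate}, the $\omega_1$-flat objects form a small $0$-generating subcategory closed under finite sums. So \cref{embedmodcat} applies, and $p_2\colon \mcp_\Sigma(\mcc^{\omega_1\text{-}\op{flat}})\to\mcc$ is a limit-preserving localization. It therefore has a fully faithful left adjoint $\hat h_2$, and as noted in the first proof of \cref{flcompass}, $\hat h_2$ carries flat objects to flat objects. For the left-hand vertical functor: by \cref{flatacc} we have $\op{Ind}(\mcc^{\omega_1\text{-}\op{flat}})\simeq\op{Ind}((\cflat)^{\omega_1})$, and $\cflat$ is compactly assembled by \cref{flcompass}. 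Hence $p_1$ has a fully faithful left adjoint $\hat h_1$, which preserves filtered colimits. It remains to show that the Beck--Chevalley map
\[
\hat h_2\circ i\longrightarrow j\circ \hat h_1
\]
is an equivalence, where $j$ denotes the bottom inclusion. We note that $j$ preserves filtered colimits and representables.

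\emph{Step 2: reduction to $\omega_1$-flat objects.} Both $\hat h_2\circ i$ and $j\circ\hat h_1$ preserve filtered colimits of flat objects. Indeed, $i$ preserves them because flats are closed under filtered colimits in $\mcc$, and $\hat h_2$ is a left adjoint. By \cref{flatacc}, every flat object is an $\omega_1$-filtered colimit of $\omega_1$-flat objects. So it suffices to check the Beck--Chevalley map on an $\omega_1$-flat object $x$.

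\emph{Step 3: explicit formulas via exhaustion.} By \cref{compexhaus}, we may write $x\simeq\colim_n y_n$ with each $y_n$ an $\omega_1$-flat object, and each transition map $y_n\to y_{n+1}$ compact projective in $\mcc$; in particular it is compact in $\cflat$.

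For the left-hand side, the standard description of $\hat h$ for a compactly assembled category, applied to a compact exhaustion (as in \cite{ramzi2024dualizable}), gives
\[
\hat h_1(x)\simeq\colim_n h(y_n)
\]
in $\op{Ind}(\mcc^{\omega_1\text{-}\op{flat}})$.

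For the right-hand side, I will prove the analogous formula $\hat h_2(x)\simeq\colim_n h(y_n)$ in $\mcp_\Sigma(\mcc^{\omega_1\text{-}\op{flat}})$, by checking the universal property. For $F\in\mcp_\Sigma(\mcc^{\omega_1\text{-}\op{flat}})$ we have
\[
\Map\bigl(\colim_n h(y_n),F\bigr)\simeq\lim_n F(y_n),
\]
and this must be identified with $\Map(x,p_2F)\simeq\lim_n\Map(y_n,p_2F)$. The key input is that a compact projective map $y_n\to y_{n+1}$ induces a map $\Map(y_{n+1},p_2(-))\to\Map(y_n,p_2(-))$ that factors through evaluation $F\mapsto F(y_{n+1})$, compatibly with the evident comparison maps in the other direction. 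This is the $\spgeq$-atomic analogue of the compactly assembled argument, using the atomic exhaustibility of \cref{flatcpexhau}. These interleaved factorizations make the two towers pro-equivalent, so their limits agree.

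Since $j$ preserves representables and sequential colimits, the two formulas match under $j$. One checks that the Beck--Chevalley map is precisely this identification, since both are induced by the same units on the $y_n$.

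I expect the main obstacle to be the formula for $\hat h_2(x)$: making the pro-equivalence of towers coherent rather than merely objectwise. If a direct argument becomes unwieldy, I will instead cite the general statement that a sequential colimit along compact projective maps is sent by $\hat h$ to the colimit of representables (cf.\ \cref{atomicexhau} and the theory in Appendix~\ref{appendixB}).
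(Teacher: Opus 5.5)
Your proposal is correct and follows the paper's proof. Like the paper, you take the exhaustion from \cref{compexhaus}, identify both $\hat h_1(x)$ and $\hat h_2(x)$ with $\colim_n h(y_n)$, and extend from $\omega_1$-flat objects to all flat objects using \cref{flatacc} and the fact that both composites preserve filtered colimits. The formula for $\hat h_2(x)$ that you flag as the main obstacle is exactly \cref{a13}, whose proof works verbatim for $\mcp_\Sigma(\mcc^{\omega_1\text{-}\op{flat}})$; note that the filler supplied there is $\Map(y_{n+1},p_2F)\to F(y_n)$, so the factorization goes through $F(y_n)$, not through $F(y_{n+1})$.
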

	
	\begin{proof}
		By \cref{compexhaus}, for any $x\in\cflatw$, we can choose a compact exhaustion $\{y_n\}$ of $x$ in \cflat such that $\{y_n\}$ is a compact projective exhaustion of $x$ in \mcc. Then
		the canonical map $\hat{h}_{2} \circ i \rightarrow i \circ \hat{h}_{1}$ is an equivalence on \cflatw, and therefore (by filtered colimit preservation and \cref{flatacc}) on the whole \cflat.
	\end{proof}
	The second proof of \cref{flcompass} also clarifies how compact maps in the flat category compare with compact projective maps in the ambient dualizable additive category:
	\begin{rem}\label{cprojmapfldecomp}
		For a \dualaddinfcat \mcc, the inclusion $i:\cflat\hookrightarrow \mcc$ sends compact maps to compact projective maps by \cref{leftadjtable}. Therefore for a map in $\cflat$, it is a compact map in $\cflat$ if and only if it is a compact projective map in \mcc. In particular, by combining with \cite[Remark 2.56]{ramzi2024dualizable}, any compact projective map between flat objects in \mcc can be decomposed into two compact projective maps of flat objects in \mcc.
	\end{rem}
It remains unclear whether the same decomposition phenomenon holds for arbitrary compact projective maps in the ambient category.
	\begin{quest}
		In a \dualaddinfcat \mcc, is any compact projective map a composite of two such maps?
	\end{quest}
	We now turn to the reconstruction of a dualizable additive \infcat from its  flat objects. For this purpose we introduce the notion of continuous prestabilization.
	\begin{de}
		Let $\mca$ be a potentially large \infcat with finite coproducts and small filtered colimits. We denote by $\mcp_{\sqcup,\op{fil}}^{\op{small}}(\mca)$ the formal cocompletion of \mca without changing finite coproducts and small filtered colimits. That induces a left adjoint (see \cite[\textsection5.3.6]{htt}) $$\hatcat^{\sqcup,\op{fil}}\to \hatcat^{\op{small}}.$$ 
		
		When $\mca$ is a compactly assembled additive \infcat, we call $\mcp_{\sqcup,\op{fil}}^{\op{small}}(\mca)$ the \textbf{continuous prestabilization} of \mca.
	\end{de}
	This construction admits several equivalent descriptions, which make its relation to compact objects and filtered colimits more transparent:
	\begin{rem}
		By \cref{altercocomp}, for any $\omega_1$-accessible additive \infcat \mca with small filtered colimits, we have $$\mcp_{\sqcup,\op{fil}}^{\op{small}}(\mca)\simeq \mcp_{\sqcup,\op{fil}_{\omega_1}}^{\op{small}}(\mca^{\omega_1})\simeq \funct^{\times,\op{cofil}_{\omega_1}}(\mca^{\omega_1,\opp},\mcs)\simeq \funct^{\times,\op{cofil}}(\mca^{\opp},\mcs)$$ where the third one denotes the full subcategory of those functors which preserve finite products and $\omega_1$-small cofiltered limits.
	\end{rem}
	We can now state the main  theorem of this section. It says that passing to flat objects and taking continuous prestabilization are inverse constructions.
	\begin{thm}\label{main3}
		For any  \dualaddinfcat \mcc, we have $$\mcc\simeq \mcp_{\sqcup,\op{fil}}^{\op{small}}(\mcc^{\op{flat}}) .$$ For any compactly assembled additive \infcat \mca,  the continuous prestabilization $\mcp_{\sqcup,\op{fil}}^{\op{small}}(\mca)$ is dualizable additive and $$\mca\simeq \mcp_{\sqcup,\op{fil}}^{\op{small}}(\mca)^{\op{flat}} .$$
		
		Furthermore, they induce the following categorical equivalence between compactly assembled additive \infcats and \dualaddinfcats   
		$$
		\begin{tikzcd}
			\assad\arrow[r, shift left=1ex, "\mcp_{\sqcup,\op{fil}}^{\op{small}}"{name=G}] & \praddbl\arrow[l, shift left=.5ex, "(-)^{\op{flat}}"{name=F}]
			\arrow[phantom, from=F, to=G, , "\scriptscriptstyle\boldsymbol{\sim}"] .
		\end{tikzcd}$$
	\end{thm}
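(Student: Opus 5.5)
Set $\mca:=\cflat$ for a \dualaddinfcat $\mcc$. Since $\mca$ is closed under finite sums and filtered colimits in $\mcc$, the universal property of $\mcp_{\sqcup,\op{fil}}^{\op{small}}$ gives a colimit-preserving functor $\Phi\colon\mcp_{\sqcup,\op{fil}}^{\op{small}}(\mca)\to\mcc$ extending the inclusion. By the remark following the definition of continuous prestabilization together with \cref{flatacc}, the source is $\mcp_{\sqcup,\op{fil}_{\omega_1}}^{\op{small}}(\cflatw)$, and $\Phi$ is the left Kan extension of $\cflatw\hookrightarrow\mcc$. It is essentially surjective because $\cflatw$ generates $\mcc$ (\cref{flatgenerate}). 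For full faithfulness, I would use \cref{leftadjtable}. The square there is vertically left adjointable, so $\hat h_2\circ i\simeq i\circ\hat h_1$. Hence for $x\in\cflat$ the image of $x$ in $\mcp_\Sigma(\cflatw)$ lies in $\ind(\cflatw)$, is given by a compact exhaustion, and is determined by flat data. I would then identify $\mcp_{\sqcup,\op{fil}}^{\op{small}}(\mca)$ with the full subcategory of $\mcp_\Sigma(\cflatw)$ generated under colimits by $\hat h_2(\cflat)$. That subcategory is exactly the image of the fully faithful $\hat h_2$, and $p_2$ restricted to it is $\Phi$. This identification is where I expect the main difficulty: one must show that the functors $\cflatw{}^{\opp}\to\mcs$ that preserve finite products and $\omega_1$-small cofiltered limits are precisely the essential image of $\hat h_2$. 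I would first prove it on objects $\hat h_2(x)$ with $x$ flat, using $\hat h_2(x)\simeq\colim_n y_n$ for a compact projective exhaustion as in \cref{compexhaus}. I would then pass to geometric realizations of flat resolutions, using that $\hat h_2$ and the restriction both preserve colimits.

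For the converse, let $\mca\in\assad$ and put $\mcd:=\mcp_{\sqcup,\op{fil}}^{\op{small}}(\mca)$. By \cref{retraassad}, $\mca$ is a retract of some $\ind(\mca_0)$ in $\widehat{\op{Cat}}_{\infty,\op{ad}}^{\op{fil}}$, and $\mcp_{\sqcup,\op{fil}}^{\op{small}}$ is functorial for such retracts. Since $\mcp_{\sqcup,\op{fil}}^{\op{small}}(\ind(\mca_0))\simeq\mcp_\Sigma(\mca_0)$ (\cref{lazardthm} and \cref{ab6prod}), $\mcd$ is a retract of $\mcp_\Sigma(\mca_0)$ in \prlad, and so it is dualizable by \cref{ram149}(2). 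I still need to check that the retraction maps become internal left adjoints; otherwise I would verify the conditions of \cref{main1} directly via \cref{retractab6}. To show $\mca\simeq\mcd^{\op{flat}}$, apply $(-)^{\op{flat}}$, which is functorial along internal left adjoints by \cref{reflectflat}. This exhibits $\mcd^{\op{flat}}$ as the corresponding retract of $\mcp_\Sigma(\mca_0)^{\op{flat}}\simeq\ind(\mca_0)$ (\cref{lazardthm}). The idempotent on $\ind(\mca_0)$ is the same in both cases, so its splitting is $\mca$.

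Finally, for functoriality I would check two things. First, $(-)^{\op{flat}}$ sends internal left adjoints to compactly assembled functors: they preserve flats and filtered colimits, and compact maps in $\cflat$ are compact projective maps by \cref{cprojmapfldecomp}. Second, $\mcp_{\sqcup,\op{fil}}^{\op{small}}$ sends compactly assembled functors to internal left adjoints, since the right adjoint is restriction along a functor that preserves $\omega_1$-flats, which preserves colimits. The two unit and counit equivalences above are natural, which gives the equivalence $\assad\simeq\praddbl$.
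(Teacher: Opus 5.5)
The main gap is the full faithfulness of $\Phi$. The identification you single out as the main difficulty is false. Inside $\mcp_\Sigma(\cflatw)$, the presheaves preserving finite products and $\omega_1$-small cofiltered limits form a reflective subcategory that is not closed under colimits, and it is not the essential image of $\hat{h}_2$. Take $\mcc=\modu_{\mathbb{Z}}(\spgeq)$ and $x=\mathbb{Q}=\colim(\mathbb{Z}\xrightarrow{2}\mathbb{Z}\xrightarrow{3}\cdots)$. By \cref{leftadjtable}, $\hat{h}_2(x)\simeq\colim_n h_{\mathbb{Z}}$, computed pointwise. Its value at $\mathbb{Q}$ is $\colim_n\mapp(\mathbb{Q},\mathbb{Z})\simeq *$. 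On the sequence presenting $\mathbb{Q}$ it gives $\lim(\mathbb{Q}\xleftarrow{2}\mathbb{Q}\xleftarrow{3}\cdots)\simeq\mathbb{Q}$. So $\hat{h}_2(x)$ does not send this $\omega_1$-small filtered colimit to a limit, and checking the property on the objects $\hat{h}_2(x)$ cannot succeed. The presheaves with that property are restricted representables such as $\mapp(-,\mathbb{Q})|_{\cflatw}$, i.e.\ values of the \emph{right} adjoint $p_2^R$, not of $\hat{h}_2$.

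The missing idea is that $\mcp^{\op{small}}_{\sqcup,\op{fil}}$ is $2$-functorial. $\hat{h}_1\dashv p_1$ is an adjunction in $\widehat{\op{Cat}}^{\op{fil}}_{\infty,\op{ad}}$ with invertible unit, so applying $\mcp^{\op{small}}_{\sqcup,\op{fil}}$ gives an adjunction with invertible unit. Hence $\mcp^{\op{small}}_{\sqcup,\op{fil}}(\hat{h}_1)\colon\mcp^{\op{small}}_{\sqcup,\op{fil}}(\cflat)\to\mcp_\Sigma(\cflatw)$ is a fully faithful internal left adjoint. The functors $\hat{h}_2\circ\Phi$ and $\mcp^{\op{small}}_{\sqcup,\op{fil}}(\hat{h}_1)$ preserve colimits and agree on $\cflat$ by \cref{leftadjtable}, so they agree, and $\Phi$ is fully faithful. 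This is the paper's argument. Your expectation that the image is $\hat{h}_2(\mcc)$ is correct, but it has to be obtained this way, not through the presheaf description.

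The same tool is missing in two other places. First, your reason why $\mcp^{\op{small}}_{\sqcup,\op{fil}}(H)$ is an internal left adjoint for compactly assembled $H\colon\mca\to\mcb$ fails. Colimits in $\funct^{\times,\op{cofil}}$ are not computed pointwise, so restriction need not preserve them. The functor $p_1\colon\ind(\cflatw)\to\cflat$ meets your criterion: it preserves filtered colimits and $\omega_1$-flats. Yet the right adjoint of $\mcp^{\op{small}}_{\sqcup,\op{fil}}(p_1)$ is the inclusion of $\funct^{\times,\op{cofil}_{\omega_1}}((\cflatw)^{\opp},\mcs)$ into $\mcp_\Sigma(\cflatw)$. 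In the example above, with $y$ the Yoneda functor, it does not preserve $\colim(y\mathbb{Z}\xrightarrow{2}y\mathbb{Z}\xrightarrow{3}\cdots)\simeq y\mathbb{Q}$; evaluate at $\mathbb{Q}$. Compact assembledness has to be used:
\begin{itemize}
\item $H$ is a retract of $\ind(H^{\omega_1})$ \cite[Corollary 2.43]{ramzi2024dualizable};
\item $\mcp_\Sigma(H^{\omega_1})$ is an internal left adjoint;
\item retracts of internal left adjoints are internal left adjoints \cite[Lemma 1.47]{ramzi2024dualizable}.
\end{itemize}

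Second, in the converse direction, dualizability of $\mcd$ needs only the retract in $\prlad$. But you cannot apply $(-)^{\op{flat}}$ to the retraction, because $\mcp^{\op{small}}_{\sqcup,\op{fil}}(c)$ is in general not an internal left adjoint ($c=p_1$ above is an instance). Use the canonical retraction $\hat{h}_\mca\dashv c$ instead. By $2$-functoriality, $j=\mcp^{\op{small}}_{\sqcup,\op{fil}}(\hat{h}_\mca)$ is a fully faithful internal left adjoint, so it preserves and reflects flats (\cref{reflectflat}). For flat $x\in\mcd$, \cref{lazardthm} puts $j(x)$ in $\ind(\mca^{\omega_1})$. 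Then $x\simeq\mcp^{\op{small}}_{\sqcup,\op{fil}}(c)(j(x))$ lies in $h(\mca)$, since $\mcp^{\op{small}}_{\sqcup,\op{fil}}(c)\circ h\simeq h\circ c$. Conversely, $h(\mca)$ consists of flats by reflection along $j$. Repaired this way, your retract argument is shorter than the paper's lifting argument with $\omega_1$-flat exhaustions and \cref{flatacc}.

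Your treatment of $(-)^{\op{flat}}$ on morphisms via \cref{cprojmapfldecomp} is fine.
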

	\begin{proof}
		First, we show both functors $\mcp_{\sqcup,\op{fil}}^{\op{small}}$ and $(-)^{\op{flat}}$ are well-defined. 
		
		For $(-)^{\op{flat}}$, since we have proved that the full subcategory of flat objects is compactly assembled, it remains to show that for any internal left adjoint $F:\mcc\to\mcd$ between \dualaddinfcats, $F^{\op{flat}}:\cflat\to \mathcal{D}^{\op{flat}}$ is compactly assembled. Since it is clear that $F^{\op{flat}}$ is additive and preserves filtered colimits, by \cite[Corollary 2.43]{ramzi2024dualizable} it suffices to show that the following diagram is vertically left adjointable.
		$$
		\begin{tikzcd}[row sep=large, column sep=large]
			\mathcal{C}^{\op{flat}} \arrow[r, "F"]                                                            & \mathcal{D}^{\op{flat}}                                              \\
			\mathrm{Ind}(\mathcal{C}^{\omega_1\text{-}{\op{flat}}}) \arrow[u, "p_{1}"] \arrow[r, "\mathrm{Ind}(F)"'] & \mathrm{Ind}(\mathcal{D}^{\omega_1\text{-}{\op{flat}}}) \arrow[u, "p_{2}"']
		\end{tikzcd}$$
		By \cref{compexhaus}, for any $x\in\cflatw$ we can find a compact exhaustion of $x$ in \cflat such that each transition map is compact projective in \mcc.  Then by \cref{preserveprojmap1}, $F$ sends such a compact exhaustion to a compact exhaustion in $\mathcal{D}^{{\op{flat}}}$. Using this, one proves that the canonical map $\hat{h}_{2} \circ F \rightarrow \operatorname{Ind}(F) \circ \hat{h}_{1}$ is an equivalence on \cflatw, and therefore (by filtered colimit preservation and \cref{flatacc}) on the whole \cflat.
		
		For $\mcp_{\sqcup,\op{fil}}^{\op{small}}$, we first observe it restricts to a (left adjoint) functor  
		$$\mcp_{\sqcup,\op{fil}}^{\op{small}}: \widehat{\op{Cat}}_{\infty,\op{ad}}^{\op{fil}}\to \widehat{\op{Cat}}_{\infty,\op{ad}}^{\op{small}}.$$
		Therefore, if $\mca$ is a compactly assembled additive \infcat, i.e. a retract  in $\widehat{\op{Cat}}_{\infty,\op{ad}}^{\op{fil}}$ of $\op{Ind}(\mcb_0)$ for some small additive \infcat $\mcb_0$, then $\mcp_{\sqcup,\op{fil}}^{\op{small}}(\mca)$ is a retract of $\mcp_\Sigma(\mcb_0)$ in $\widehat{\op{Cat}}_{\infty,\op{ad}}^{\op{small}}$ and hence dualizable additive. It remains to show that for a compactly assembled additive functor $H:\mca \to\mcb \in\assad$, $\mcp_{\sqcup,\op{fil}}^{\op{small}}(H)$ is an internal left adjoint between \dualaddinfcats. By  \cite[Corollary 2.43]{ramzi2024dualizable} again, we see that $H$ is a retract of $\op{Ind}(H): \op{Ind}(\mca^{\omega_1})\to \op{Ind}(\mcb^{\omega_1})$ in $\widehat{\op{Cat}}_{\infty,\op{ad}}^{\op{fil}}$. Thereby $\mcp_{\sqcup,\op{fil}}^{\op{small}}(H)$ is a retract of $\mcp_\Sigma(H): \mcp_\Sigma(\mca^{\omega_1})\to \mcp_\Sigma(\mcb^{\omega_1})$ in $\widehat{\op{Cat}}_{\infty,\op{ad}}^{\op{small}}$. The latter is obviously an internal left adjoint, thus $\mcp_{\sqcup,\op{fil}}^{\op{small}}(H)$ is so by \cite[Lemma 1.47]{ramzi2024dualizable}.
		
		Now we show that both functors are inverse to each other. 
		
		Since $\cflat\subset\mcc$ is closed under filtered colimits and finite coproducts, we have a  natural comparison
		$$\phi:\mcp_{\sqcup,\op{fil}}^{\op{small}}(\mcc^{\op{flat}})\to \mcc . $$
		Since $\hat{h}_1:\cflat\to \op{Ind}(\cflatw)$ is a  fully faithful left adjoint internally in $\widehat{\op{Cat}}_{\infty,\op{ad}}^{\op{fil}}$, it induces a fully faithful internal left adjoint between \dualaddinfcats $$\mcp_{\sqcup,\op{fil}}^{\op{small}}(\mcc^{\op{flat}})\xrightarrow{\mcp_{\sqcup,\op{fil}}^{\op{small}}(\hat{h}_1)} \mcp_{\sqcup,\op{fil}}^{\op{small}}(\op{Ind}(\cflatw))\simeq \mcp_\Sigma(\cflatw) .$$
		We claim the following diagram commutes.
		$$
		\begin{tikzcd}
			{\mcp_{\sqcup,\op{fil}}^{\op{small}}(\mcc^{\op{flat}})} \arrow[rd, "{\mcp_{\sqcup,\op{fil}}^{\op{small}}(\hat{h}_1)}"'] \arrow[rr, "\phi"] &                      & {\mcc} \arrow[ld, "\hat{h}_\mcc"] \\
			& \mcp_\Sigma(\cflatw) &                              
		\end{tikzcd}$$
		It suffices to show that it commutes when restricted to \cflat, but that follows from \cref{leftadjtable}. Now we have $\phi$ is fully faithful because $\mcp_{\sqcup,\op{fil}}^{\op{small}}(\hat{h}_1)$ is fully faithful. Furthermore, $\phi$ is essentially surjective because \mcc is generated by \cflat under small colimits. Therefore $\phi$ is an equivalence.
		
		Now let \mca be a compactly assembled additive \infcat. 	Note that $h:\mca\to\mcp_{\sqcup,\op{fil}}^{\op{small}}(\mca)$ is fully faithful due to \cite[Proposition 5.3.6.2.(3)]{htt}. We wish to show that $h$ induces an equivalence $$h:\mca\simeq \mcp_{\sqcup,\op{fil}}^{\op{small}}(\mca)^{\op{flat}} .$$ 
		Since $\hat{h}_\mca:\mca\to \op{Ind}(\mca^{\omega_1})$ is fully faithful left adjoint internally in $\widehat{\op{Cat}}_{\infty,\op{ad}}^{\op{fil}}$, it induces a  fully faithful internal left adjoint of \dualaddinfcats $$\mcp_{\sqcup,\op{fil}}^{\op{small}}(\mca)\xrightarrow{\mcp_{\sqcup,\op{fil}}^{\op{small}}(\hat{h}_\mca)} \mcp_{\sqcup,\op{fil}}^{\op{small}}(\op{Ind}(\mca^{\omega_1})) \simeq\mcp_\Sigma(\mca^{\omega_1}) .$$
		Since it sends $\mca$ to flat objects $\op{Ind}(\mca^{\omega_1})\simeq\mcp_{\Sigma}(\mca^{\omega_1})^{\op{flat}}$, by \cref{preserveprojmap1}, all objects in \mca are flat and we have $$\mca\subset \mcp_{\sqcup,\op{fil}}^{\op{small}}(\mca)^{\op{flat}}.$$
		We also have the following commutative diagram, which is horizontally right adjointable by construction.
		$$\begin{tikzcd}[row sep=large, column sep=7em]
			\mca \arrow[d, "h"] \arrow[r, "\hat{h}_\mca"]                                                                & \op{Ind}(\mca^{\omega_1}) \arrow[d, "i"] \\
			{\mcp_{\sqcup,\op{fil}}^{\op{small}}(\mca)} \arrow[r, "{j:=\mcp_{\sqcup,\op{fil}}^{\op{small}}(\hat{h}_\mca)}"] & \mcp_\Sigma(\mca^{\omega_1})            
		\end{tikzcd}$$
		Now given an $\omega_1$-flat object $x\in\mcp_{\sqcup,\op{fil}}^{\op{small}}(\mca)$, we denote $j=\mcp_{\sqcup,\op{fil}}^{\op{small}}(\hat{h}_\mca)$. Then $j(x)$ is $\omega_1$-flat in $\mcp_\Sigma(\mca^{\omega_1}) $ too and can be written as a sequential colimit $\colim_n h_{a_n}$ in $\mcp_\Sigma(\mca^{\omega_1})$ and hence actually in $\op{Ind}(\mca^{\omega_1})$, where each $a_n\in \mca^{\omega_1}$ and each transition map factors through $jj^R(h_{a_n})$, as we did in \cref{flatcpexhau}. Note however that $j^R(h_{a_n})=\hat{h}_\mca^R(h_{a_n})= a_n$ and $jj^R(h_{a_n})=\hat{h}_\mca\hat{h}_\mca^R(h_{a_n})=\hat{h}_\mca(a_n)$; therefore, we have the following lifting diagram for each $n$.
		$$\begin{tikzcd}
			\hat{h}_\mca(a_n) \arrow[d] \arrow[r] & h_{a_n} \arrow[d] \arrow[ld, dashed] \arrow[rd] &                  \\
			\hat{h}_\mca(a_{n+1}) \arrow[r]       & h_{a_{n+1}} \arrow[r]                           & \colim_n h_{a_n}
		\end{tikzcd}$$
		Therefore $\{a_n\}$ is a compact exhaustion in \mca and $j(x)=\colim_n h_{a_n}\simeq \colim_n \hat{h}_\mca(a_n)\simeq \hat{h}_\mca(\colim_n a_n)$. That implies in fact $x\in \mca$ and we get $\mcp_{\sqcup,\op{fil}}^{\op{small}}(\mca)^{\omega_1\text{-}\op{flat}}\subset \mca$. And by \cref{flatacc} we get $\mcp_{\sqcup,\op{fil}}^{\op{small}}(\mca)^{\op{flat}}\subset \mca$, and hence $\mcp_{\sqcup,\op{fil}}^{\op{small}}(\mca)^{\op{flat}}= \mca$.
	\end{proof}
	As an application of this reconstruction theorem, full faithfulness of an internal left adjoint can be detected on flat objects, and in fact already on \(\omega_1\)-flat objects:
	\begin{cor}
		Let $F:\mcc\to \mcd$ be an internal left adjoint of \dualaddinfcats. If the restriction $F|_{\omega_1\mathrm{\text{-}flat}}:\cflatw\to\mcd^{\omega_1\mathrm{\text{-}flat}}$ is fully faithful, then $F$ is fully faithful.
	\end{cor}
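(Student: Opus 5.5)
Via \cref{main3}, this reduces to a statement about flat objects. Since $F$ is an internal left adjoint, \cref{reflectflat} shows $F$ preserves flat objects. So $F$ restricts to $F^{\op{flat}}\colon\cflat\to\mcd^{\op{flat}}$, which is a compactly assembled additive functor by the well-definedness part of \cref{main3}. The equivalence $\mcc\simeq\mcp_{\sqcup,\op{fil}}^{\op{small}}(\cflat)$ is natural, so $F\simeq\mcp_{\sqcup,\op{fil}}^{\op{small}}(F^{\op{flat}})$. It therefore suffices to show that $F^{\op{flat}}$ is fully faithful, and that $\mcp_{\sqcup,\op{fil}}^{\op{small}}$ carries fully faithful compactly assembled additive functors to fully faithful functors.

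\textbf{Step 1: from $\omega_1$-flat to flat objects.} By \cref{flatacc}, $\cflat\simeq\op{Ind}_{\omega_1}(\cflatw)$, and the $\omega_1$-flat objects are exactly the $\omega_1$-compact objects of $\cflat$; the same holds for $\mcd$. The functor $F$ preserves colimits and flatness, hence sends $\omega_1$-flat objects to $\omega_1$-compact flat objects. Since $F^{\op{flat}}$ preserves $\omega_1$-filtered colimits, it is identified with $\op{Ind}_{\omega_1}(F|_{\omega_1\text{-flat}})$. The $\op{Ind}_{\omega_1}$ of a fully faithful functor is fully faithful, because mapping spaces out of an $\omega_1$-compact source into an $\omega_1$-filtered colimit commute with that colimit. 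Hence $F^{\op{flat}}$ is fully faithful.

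\textbf{Step 2: passing to the continuous prestabilization.} Here I would mimic the argument in \cref{main3}. Compactly assembled additive functors are compatible with the hat Yoneda embeddings, by \cite[Corollary 2.43]{ramzi2024dualizable}, and so $\mcp_{\sqcup,\op{fil}}^{\op{small}}(F^{\op{flat}})$ becomes a retract of $\mcp_\Sigma(F|_{\omega_1\text{-flat}})\colon\mcp_\Sigma(\cflatw)\to\mcp_\Sigma(\mcd^{\omega_1\text{-flat}})$. More precisely, take the fully faithful internal left adjoints $\hat h_\mcc\colon\mcc\hookrightarrow\mcp_\Sigma(\cflatw)$ and $\hat h_\mcd\colon\mcd\hookrightarrow\mcp_\Sigma(\mcd^{\omega_1\text{-flat}})$. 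The square
\[
\hat h_\mcd\circ F\simeq \mcp_\Sigma(F|_{\omega_1\text{-flat}})\circ\hat h_\mcc
\]
commutes by left adjointability. This is checked on $\cflatw$ using the compact projective exhaustions of \cref{compexhaus} together with \cref{preserveprojmap1}, exactly as in the proof of \cref{main3}, and then extended by colimits. The functor $\mcp_\Sigma(G)$ is fully faithful whenever $G$ is a fully faithful functor between small additive categories, since $\mcp_\Sigma(G)$ preserves compact projectives and sifted colimits. So $\hat h_\mcd\circ F$ is fully faithful, and since $\hat h_\mcd$ is fully faithful, $F$ is too.

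The main obstacle is the commutativity (Beck--Chevalley) check of the square in Step 2 on $\omega_1$-flat objects. One must know that $F$ carries the compact exhaustion $\{y_n\}$ of an $\omega_1$-flat $x$, whose transition maps are compact projective in $\mcc$, to an exhaustion whose transition maps are again compact projective. I would take this from \cref{preserveprojmap1}, as the paper does in the proof of \cref{main3}.
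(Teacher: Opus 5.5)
Your proof is correct and follows the paper's route. The key step is the commuting square $\hat h_\mcd\circ F\simeq\mcp_\Sigma(F|_{\omega_1\text{-flat}})\circ\hat h_\mcc$, which the paper obtains by applying $\mcp_{\sqcup,\op{fil}}^{\op{small}}$ to the left adjointable square on flat objects from the proof of \cref{main3}; combined with full faithfulness of the hat Yoneda embeddings and of $\mcp_\Sigma$ applied to a fully faithful functor, this gives the result. Your Step 1 (full faithfulness of $F^{\op{flat}}$) is correct but not needed, since Step 2 concludes directly.
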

	\begin{proof}
		Since $F|_{\mathrm{flat}}:\cflat\to\mcd^{\mathrm{flat}}$ is a compactly assembled additive functor by the proof of \cref{main3}, the following diagram is vertically left adjointable.
		$$
		\begin{tikzcd}[row sep=large, column sep=5em]
			\cflat \arrow[d, "\hat{h}"', dashed, shift right] \arrow[r, "F|_{\mathrm{flat}}"] & \dflat \arrow[d, "\hat{h}"', dashed, shift right] \\
			\op{Ind}(\cflatw) \arrow[r, "\op{Ind}(F|_{\omega_1\mathrm{\text{-}flat}})"] \arrow[u,  shift right]                   & \op{Ind}(\dflatw) \arrow[u,  shift right]                  
		\end{tikzcd}$$
		However, both $\hat{h}$ are  fully faithful left adjoints internally in $\widehat{\op{Cat}}_{\infty,\op{ad}}^{\op{fil}}$. Applying $\mcp_{\sqcup,\op{fil}}^{\op{small}}$ to the above diagram, we get the following diagram
		$$
		\begin{tikzcd}[row sep=large, column sep=5em]
			\mcc \arrow[d, "\hat{h}_{\mcc}"', dashed, shift right] \arrow[r, "F"] & \mcd \arrow[d, "\hat{h}_{\mcd}"', dashed, shift right] \\
			\mcp_\Sigma(\cflatw) \arrow[r, "\mcp_{\Sigma}(F|_{\omega_1\mathrm{\text{-}flat}})"] \arrow[u,  shift right]                   & \mcp_\Sigma(\dflatw) \arrow[u,  shift right]                  
		\end{tikzcd}$$
		where $\hat{h}_{\mcc}$ and $\hat{h}_{\mcd}$ are fully faithful. Also, the bottom functor is fully faithful too by assumption. Consequently,  the top functor $F$ is fully faithful, as desired.
	\end{proof}
	\section{$\nuc(R)_{\geq0}$ as additive rigidification}\label{section5}
	In this section, we develop basic properties of additive rigidification, and show that the \infcat of connective nuclear modules in the sense of Clausen--Scholze \cite{scholze2026lecturesanalyticgeometry}
 can be characterized as the additive rigidification of the \infcat of connective complete  modules. 
 
 The argument proceeds in two steps. We first establish a general comparison between stable rigidification and additive rigidification under suitable compatibility with the connective part. We then verify these hypotheses for complete modules over (connective) adic \einfrings, using the theory of nuclear objects.

\subsection{Additive rigidity}
Rigidification serves as the universal mechanism for ensuring that compactness and relative compactness coincide within a monoidal \infcat, firstly introduced in \cite{arinkin2020stack}. 
In this subsection, we introduce a useful criterion for identifying the additive rigidification with the stable rigidification.

We begin by recalling the relative form of rigidity, since the additive case will be obtained by taking the base to be \(\spgeq\). The following definition is adapted from \cite[Definition 4.36]{ramzi2024locally}.

\begin{de}
	Let $\mcv\in\calg(\prl)$ and $\mcw\in\operatorname{CAlg}_{\mcv}\simeq\calg(\prl)_{\mcv/}$. We say that $\mcw$ is \emph{$\mcv$-rigid} (or \emph{rigid over $\mcv$}) if its unit object $\mb{1}\in \mcw$ is $\mcv$-atomic, and the multiplication functor $m \colon \mcw \otimes_{\mcv} \mcw \to \mcw$ is an $(\mcw \otimes_{\mcv} \mcw)$-internal left adjoint. 
	
	In particular, when $\mcv=\spgeq$, we say that $\mcw\in\calg(\prlad)$ is \emph{additively rigid} if it is rigid over $\spgeq$.
\end{de}
	\begin{rem}
		By \cite[Lemma 4.57]{ramzi2024locally}, a  rigid algebra \mcw over \mcv is automatically dualizable over \mcv.
	\end{rem}
	We next formulate the corresponding universal construction. Thus rigidification is characterized not by an explicit model, but by its universal mapping property against rigid algebras.
	\begin{de}
	Let $\mcv\in\calg(\prl)$. Let $f: \overline{\mathcal{W}} \rightarrow \mathcal{W} \in \operatorname{CAlg}_{\mcv}=\calg(\prl)_{\mcv/}$ be a map. We say that $f$ witnesses $\overline{\mathcal{W}}$ as a rigidification of $\mathcal{W}$ over \mcv, if $\overline{\mathcal{W}}$ is a rigid $\mathcal{V}$-algebra and for any rigid commutative $\mathcal{V}$-algebra $\mathcal{W}_0$, postcomposition with $f$ induces an equivalence:
		$$
		\operatorname{Fun}_{\mathcal{V}}^{L, \otimes}\left(\mathcal{W}_0, \overline{\mathcal{W}}\right) \xrightarrow{\sim} \operatorname{Fun}_{\mathcal{V}}^{L, \otimes}\left(\mathcal{W}_0, \mathcal{W}\right)
		$$
	\end{de}
	The rigidification always exists, as the following theorem shows.
\begin{thm}[{\cite[Corollary 4.73]{ramzi2024locally}}]
	Let $\mcv \in \calg(\prl)$. Every algebra $\mcw \in \calg_{\mcv}$ admits a rigidification $\op{Rig}_{\mcv}(\mcw)$. Consequently, this induces an adjunction:
	\begin{tikzcd}
		\calg_{\mcv}^{\op{rig}} \arrow[r, hook, shift left=1ex] & \calg_{\mcv} \arrow[l, shift left=.5ex, "\op{Rig}_{\mcv}(-)"]
	\end{tikzcd}
\end{thm}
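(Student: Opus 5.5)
The plan is to construct $\op{Rig}_{\mcv}(\mcw)$ as a right adjoint to the inclusion $\iota\colon\calg_{\mcv}^{\op{rig}}\hookrightarrow\calg_{\mcv}$. This happens in two steps: first closure of rigid algebras under colimits, then a smallness argument so that a colimit over rigid algebras mapping to $\mcw$ can be formed. Concretely, for fixed $\mcw$ I would set
\[
\op{Rig}_{\mcv}(\mcw)\;:=\;\colim_{(\mcw_0\to\mcw)\in(\calg^{\op{rig}}_{\mcv})_{/\mcw}}\mcw_0 ,
\]
with the colimit taken in $\calg_{\mcv}$. I would then verify that it is rigid and that the canonical map to $\mcw$ has the universal property.

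The first step is to show that $\calg^{\op{rig}}_{\mcv}\subset\calg_{\mcv}$ is closed under small colimits. The forgetful functor $\calg_{\mcv}\to\prl_{\mcv}$ preserves sifted colimits. Relative tensor products compute pushouts, so it suffices to treat coproducts, pushouts and sifted colimits. For pushouts $\mcw_1\otimes_{\mcw_0}\mcw_2$, rigidity is stable under base change and under relative tensor products, since the unit stays atomic and the multiplication stays an internal left adjoint after base change; I would prove this by the same adjointability manipulations as in Ramzi's lemmas on rigid algebras. For a sifted colimit $\colim_i\mcw_i$, I would use two facts. First, maps between rigid algebras are automatically internal left adjoints (a rigidity lemma). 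Second, colimits in $\prl_{\mcv}$ along internal left adjoints are computed as limits along the right adjoints, which are again colimit-preserving and $\mcv$-linear. The unit is atomic in the colimit because $\unmap(\mb 1,-)$ is a colimit of the $\unmap(\mb 1_i,-)$ composed with the right adjoints. The multiplication is an internal left adjoint by passing the Beck--Chevalley conditions to the limit.

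The second step is the smallness reduction, which I expect to be the main obstacle: $(\calg^{\op{rig}}_{\mcv})_{/\mcw}$ is a large category, so the colimit above need not exist a priori. I would first choose $\kappa$ such that $\mcv$ and $\mcw$ are $\kappa$-presentable and all structure maps preserve $\kappa$-compact objects. Next, I would use that a rigid $\mcv$-algebra is dualizable over $\mcv$, hence a retract of an atomically generated $\mcv$-module. In particular it is determined by a small amount of data: its atomically generated envelope, built from $\kappa$-compact atomics, together with an idempotent. The key claim is then a \emph{solution set} statement: every map $\mcw_0\to\mcw$ with $\mcw_0$ rigid factors through a rigid algebra whose underlying module is $\lambda$-presentable for a fixed $\lambda\gg\kappa$. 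To prove it, I would close the image of a suitable small set of objects of $\mcw_0$ under the finitely many operations witnessing rigidity (unit, multiplication, the right adjoint of $m$, and its $\mcw_0\otimes\mcw_0$-linearity data) by a transfinite $\lambda$-step construction, in the spirit of the L\"owenheim--Skolem arguments of Ramzi. The resulting rigid subalgebras range over an essentially small subcategory that is cofinal in the slice.

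Given the solution set, the colimit exists and is rigid by the first step. The universal property then follows formally. A map $\mcw_0\to\mcw$ from a rigid $\mcw_0$ factors through $\op{Rig}_{\mcv}(\mcw)$ via its own coprojection. Uniqueness up to contractible choice holds because the slice category over $\op{Rig}_{\mcv}(\mcw)$ maps to that over $\mcw$ by postcomposition, and this is an equivalence on mapping spaces by cofinality. Functoriality in $\mcw$ then assembles these into the adjunction $\iota\dashv\op{Rig}_{\mcv}$.
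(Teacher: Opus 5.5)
\textbf{The gap is in your second step, and that step carries all the content.} Your formal tail needs a weakly terminal small family in the slice $(\calg^{\op{rig}}_{\mcv})_{/\mcw}$. That is, every $F\colon \mcw_0\to\mcw$ with $\mcw_0$ rigid must factor as $\mcw_0\to\mcw'\to\mcw$ with $\mcw'$ rigid of bounded size. Granting that, the rest goes through: the slice has small colimits by your first step, so for each object $x$ the category of maps from $x$ into the small family is filtered. Hence the family is cofinal and its colimit is a terminal object.

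The L\"owenheim--Skolem construction you propose does not give this factorization. It closes a small set of objects of $\mcw_0$ under the unit, $m_{\mcw_0}$, $m_{\mcw_0}^R$ and the linearity data. What it produces are small rigid \emph{subalgebras} $\mcw_0'\hookrightarrow\mcw_0$, i.e.\ maps \emph{into} $\mcw_0$, never a map out of $\mcw_0$ through which $F$ factors. At best it shows that every object of the slice is a filtered colimit of objects with small source, which is density, not cofinality. Nothing in your construction looks at $\mcw$, yet any bound on a rigid algebra sitting between $\mcw_0$ and $\mcw$ must come from $\mcw$. Producing such a bound is essentially the same as constructing $\op{Rig}_{\mcv}(\mcw)$.

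\textbf{This is where the cited argument does its work}, as you can see from how the paper uses it in \cref{addrig} and \cref{addrigidificationkappaproj}. By Ramzi's Lemma 4.71, the rigidification of $\mcw$ agrees with that of $\mcp_{\mcv}(\mcw^{\kappa})$ for $\kappa$ large, and the latter has atomic unit. For an algebra with atomic unit, the rigidification is the largest rigid full subcategory of that fixed presentable category. Every map from a rigid algebra lands there, roughly because in a rigid algebra atomic maps are trace class and symmetric monoidal functors preserve trace-class maps. So the size control comes for free from the target.

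If you want to keep your abstract route, you must instead prove that $\calg^{\op{rig}}_{\mcv}$ is accessible. This means showing that every rigid algebra is a $\lambda$-filtered colimit of rigid algebras that are $\lambda$-compact as objects of $\calg^{\op{rig}}_{\mcv}$, not merely $\lambda$-presentable as categories. You would then check the universal property of the colimit over the small ones first against $\lambda$-compact sources, and extend by colimits. None of this is addressed in your proposal.

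\textbf{A smaller point in your first step.} The squares $m_j\circ(F\otimes F)\simeq F\circ m_i$ are not right adjointable in general; this already fails for $\opsp\to\modu_A$. So in the sifted case, what you actually need is that $\lim_i m_i^R$ is $\mcw\otimes\mcw$-linear. This follows from the projection formula for the transition right adjoints.
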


\begin{nota}
	When $\mcv = \opsp$, we write $(-)^{\op{rig}} \coloneq \op{Rig}_{\opsp}(-)$. Similarly, when $\mcv = \spgeq$, we write $(-)^{\op{arig}} \coloneq \op{Rig}_{\spgeq}(-)$.
\end{nota}
	We shall need a few elementary compatibilities of internal homs with monoidal adjunctions. The first one is a standard projection-type formula.
	\begin{lem}\label{internalhom1}
		Let $F:\mcb\to\mcc$ be a  symmetric monoidal functor between closed symmetric monoidal  \infcats. Suppose that $F$ admits a  right adjoint $F^R$. Then for any $x\in \mcb$ and $y\in \mcc$, we have an identification for the internal hom $$\underline{\Hom}_{\mcb}(x,F^Ry)\simeq F^R\underline{\Hom}_{\mcc}(Fx,y).$$
	\end{lem}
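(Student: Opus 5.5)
The plan is a Yoneda argument: show that both sides corepresent the same functor on $\mcb$, by rewriting mapping anima through the adjunctions $-\otimes x\dashv \underline{\Hom}_{\mcb}(x,-)$ in $\mcb$, $-\otimes Fx\dashv\underline{\Hom}_{\mcc}(Fx,-)$ in $\mcc$, and $F\dashv F^R$. We then check that the resulting equivalence is induced by a canonical comparison map, so that the identification is natural rather than merely objectwise.

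First construct the comparison map. The counit $FF^Ry\to y$ and the symmetric monoidal structure of $F$ give
\[
F\bigl(\underline{\Hom}_{\mcb}(x,F^Ry)\bigr)\otimes Fx\simeq F\bigl(\underline{\Hom}_{\mcb}(x,F^Ry)\otimes x\bigr)\xrightarrow{F(\mathrm{ev})} FF^Ry\to y .
\]
Its adjoint is a map $F\underline{\Hom}_{\mcb}(x,F^Ry)\to\underline{\Hom}_{\mcc}(Fx,y)$. Taking the adjoint once more under $F\dashv F^R$ yields a natural map
\[
\underline{\Hom}_{\mcb}(x,F^Ry)\to F^R\underline{\Hom}_{\mcc}(Fx,y).
\]

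Next, test this map against an arbitrary $z\in\mcb$. Applying $\Map_{\mcb}(z,-)$ to the source gives
\[
\Map_{\mcb}(z,\underline{\Hom}_{\mcb}(x,F^Ry))\simeq\Map_{\mcb}(z\otimes x,F^Ry)\simeq\Map_{\mcc}(F(z\otimes x),y)\simeq\Map_{\mcc}(Fz\otimes Fx,y),
\]
where the last step uses that $F$ is symmetric monoidal. Applying it to the target gives
\[
\Map_{\mcb}(z,F^R\underline{\Hom}_{\mcc}(Fx,y))\simeq\Map_{\mcc}(Fz,\underline{\Hom}_{\mcc}(Fx,y))\simeq\Map_{\mcc}(Fz\otimes Fx,y).
\]
Both chains are natural in $z$ and end at the same mapping anima. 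By the Yoneda lemma, once we know the comparison map induces this composite, it is an equivalence.

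The main obstacle is exactly that compatibility: the map $\Map(z,-)$ applied to the comparison must agree with the composite of the two chains of adjunction equivalences. This reduces to unwinding the definitions of the adjoint maps, using the triangle identities for $F\dashv F^R$ and the compatibility of the monoidal structure map $F(z\otimes x)\simeq Fz\otimes Fx$ with evaluation. This is routine bookkeeping at the level of homotopy categories, and it suffices there, since equivalence of a map is detected in the homotopy category. Note also that neither colimit preservation nor presentability is used; only closedness of both monoidal structures and the existence of $F^R$ enter.
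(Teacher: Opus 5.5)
Your proposal is correct and is essentially the paper's proof: both test against an arbitrary $z\in\mcb$, run the chain $\Map_{\mcb}(z\otimes x,F^Ry)\simeq\Map_{\mcc}(Fz\otimes Fx,y)\simeq\Map_{\mcb}(z,F^R\underline{\Hom}_{\mcc}(Fx,y))$, and conclude by Yoneda. The only difference is that you write down the comparison map explicitly and check compatibility in the homotopy category, whereas the paper gets the canonical equivalence from the naturality in the test object of the chain of adjunction equivalences.
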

	\begin{proof}
		For any test object $b \in \mcb$, we evaluate the mapping anima into the left-hand side:
		$$ \operatorname{Map}_{\mcb}(b, \underline{\Hom}_{\mcb}(x,F^Ry)) \simeq \operatorname{Map}_{\mcb}(b \otimes x, F^Ry) \simeq \operatorname{Map}_{\mcc}(F(b \otimes x), y) \simeq$$
		$$  \operatorname{Map}_{\mcc}(F(b) \otimes F(x), y) \simeq \operatorname{Map}_{\mcc}(F(b), \underline{\Hom}_{\mcc}(Fx, y)) \simeq$$
		$$  \operatorname{Map}_{\mcb}(b, F^R\underline{\Hom}_{\mcc}(Fx, y)). $$
		Since this natural equivalence holds for all objects $b \in \mcb$, Yoneda's lemma implies the desired canonical equivalence $\underline{\Hom}_{\mcb}(x,F^Ry) \simeq F^R\underline{\Hom}_{\mcc}(Fx,y)$.
	\end{proof}
	Rigidification is closely related to trace class morphisms. We recall the notion in the form needed below.
	\begin{de}
	Let $\mathcal{C}$ be a symmetric monoidal \infcat. A map $f: x \rightarrow y$ is called \textbf{trace class} if there exists an object $d$ together with a pairing $d \otimes x \rightarrow \mathbf{1}$ and a map $\mathbf{1} \rightarrow y \otimes d$ such that $f$ factors as $x \rightarrow y \otimes d \otimes x \rightarrow y$.

	\end{de}
	\begin{rem}
		If $\mathcal{C}$ is closed symmetric monoidal, then this condition is equivalent to the requirement that the map $\mathbf{1} \rightarrow 	\underline{\Hom}_{\mathcal{C}}(x, y)$ classifying $f$ lifts through the canonical map
		$$
		\underline{\Hom}_{\mathcal{C}}\left(x, \mathbf{1}\right) \otimes y \rightarrow \underline{\Hom}_{\mathcal{C}}(x, y).
		$$
	\end{rem}
		\begin{prop}\label{traceclassfactorization}
		Let $\mathcal{C}$ be a symmetric monoidal category, and let
		$
		f:x\to y
		$
		be a trace-class morphism represented by  data
		$
		\eta:\mathbf{1}\to d\otimes y$ and $
		\epsilon:x\otimes d\to \mathbf{1}.
		$
		Suppose that there are morphisms $h:d_0\to d$, $g:y_0\to y$ and a factorization
		\[
		\eta:
		\mathbf{1}
		\xrightarrow{\eta_0}
		d_0\otimes y_0
		\xrightarrow{h\otimes g}
		d\otimes y.
		\]
		Then the morphism
		$
		f_0:x\to y_0
		$
		defined by the composite
		$
		x\otimes\mathbf{1}
		\xrightarrow{\id_x\otimes\eta_0}
		x\otimes d_0\otimes y_0
		\xrightarrow{\epsilon_0\otimes\id_{y_0}}
		\mathbf{1}\otimes y_0
		$
		is trace class, where $\epsilon_0=\epsilon\circ (\id_x\otimes h)$.
		In particular, $f$ factors through the trace-class morphism $f_0:x\to y_0$.
		
	\end{prop}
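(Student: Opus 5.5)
The proof consists of two checks: that $f_0$ is trace class, witnessed by the pair $(\eta_0,\epsilon_0)$, and that $g\circ f_0\simeq f$. Both are direct manipulations in the symmetric monoidal category. The only subtlety is the ordering of tensor factors. The definition of trace class uses a pairing $d\otimes x\to\mathbf 1$ and a copairing $\mathbf 1\to y\otimes d$, while the statement writes $\eta:\mathbf 1\to d\otimes y$ and $\epsilon:x\otimes d\to\mathbf 1$. We fix the symmetry isomorphisms once and suppress them, since they are coherent.

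For the first step, $f_0$ is by its very definition the composite
\[
x\simeq x\otimes\mathbf 1\xrightarrow{\id_x\otimes\eta_0} x\otimes d_0\otimes y_0\xrightarrow{\epsilon_0\otimes\id_{y_0}}y_0,
\]
with pairing $\epsilon_0:x\otimes d_0\to\mathbf 1$ and copairing $\eta_0:\mathbf 1\to d_0\otimes y_0$. After reordering factors by the symmetry, this is exactly the shape required in the definition of a trace-class morphism with auxiliary object $d_0$. Hence $f_0$ is trace class, and no further argument is needed.

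For the second step, we compute $g\circ f_0$. Since $\epsilon_0\otimes\id$ and $\id\otimes g$ act on disjoint tensor factors, they commute by bifunctoriality of $\otimes$. This gives
\[
g\circ f_0\simeq(\epsilon_0\otimes\id_y)\circ(\id_x\otimes\id_{d_0}\otimes g)\circ(\id_x\otimes\eta_0).
\]
Next we substitute $\epsilon_0=\epsilon\circ(\id_x\otimes h)$ and use bifunctoriality once more to move $\id_x\otimes h$ past the identity on $y$. The composite then becomes
\[
(\epsilon\otimes\id_y)\circ\bigl(\id_x\otimes((h\otimes g)\circ\eta_0)\bigr)\simeq(\epsilon\otimes\id_y)\circ(\id_x\otimes\eta).
\]
Here the last equivalence is the assumed factorization of $\eta$. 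The right-hand side is, by hypothesis, the given factorization of $f$ through its trace-class data, so $g\circ f_0\simeq f$.

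The argument involves no real obstacle. The one point needing care is that in the $\infty$-categorical setting these identifications are homotopies rather than equalities. They are, however, supplied canonically by the functoriality of $\otimes$ and the symmetric monoidal coherences, so the composite homotopy is well defined.
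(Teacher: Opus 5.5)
Your proof is correct and follows essentially the same route as the paper. The paper notes that $(\eta_0,\epsilon_0)$ is trace-class data for $f_0$ by definition, and it obtains $g\circ f_0\simeq f$ from a single commutative diagram whose triangle is the factorization of $\eta$ and whose square is $\epsilon_0=\epsilon\circ(\id_x\otimes h)$ plus bifunctoriality — which is exactly your computation written out as equations.
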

	
	\begin{proof}
		The morphisms
		\[
		\eta_0:\mathbf{1}\to d_0\otimes y_0,
		\qquad
		\epsilon_0:x\otimes d_0\to\mathbf{1}
		\]
		form trace-class data for $f_0$, so $f_0$ is trace class by definition.
		Moreover, the factorization follows from the following diagram
		$$\begin{tikzcd}[column sep=5em] & x\otimes d_0\otimes y_0 \arrow[r,"\epsilon_0\otimes\id_{y_0}"] \arrow[d,"\id_x\otimes h\otimes g"] & y_0 \arrow[d,"g"] \\ x \arrow[ru,"\id_x\otimes\eta_0"] \arrow[r,"\id_x\otimes\eta"'] & x\otimes d\otimes y \arrow[r,"\epsilon\otimes\id_y"'] & y . \end{tikzcd}$$
	\end{proof}
	In order to compare trace class maps in a stable category with trace class maps in its connective part, we first record a basic connectivity observation.
	\begin{lem}

		Let  $\mcc=\opsp(\mcp_{\Sigma}(\mca))$, where $\mca$ is a small additively \syminfcat, let $p \in \mca, y \in \mcc_{\geq 0}$. Then: 
		\enu{\item The internal hom $\underline{\Hom}_{\mcc}(p,y)$ is connective in \mcc.
		\item If $f:p\to y$ is trace class in $\mcc$, it is trace class in $\mcc_{\geq 0}$.}

	\end{lem}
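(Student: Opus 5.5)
The plan is to exploit the fact that $p \in \mca$ is compact projective in $\mcc_{\geq 0}=\mcp_\Sigma(\mca)$ and that $\mcc$ is generated by $\mca$. I take the symmetric monoidal structure on $\mcc$ to be the Day convolution extended from $\mca$, so that the connective part $\mcc_{\geq0}=\mcp_\Sigma(\mca)$ contains the unit and is closed under tensor products.

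For (1), it suffices to show $\pi_{-n}\underline{\Hom}_{\mcc}(p,y)=0$ for $n>0$. Test against a generator $a\in\mca$ and compute
$$\Map_{\mcc}(\Sigma^{-n} a,\underline{\Hom}_{\mcc}(p,y))\simeq \Map_{\mcc}(\Sigma^{-n}(a\otimes p),y).$$
Since $a\otimes p\in\mca$ is compact projective in $\mcp_\Sigma(\mca)$ and $y$ is connective, the mapping spectrum $\unmap_{\mcc}(a\otimes p,y)$ is connective. Hence $\pi_0$ of the above mapping space, namely $\pi_{-n}\unmap(a\otimes p,y)$, vanishes. The objects $a\in\mca$ detect the $t$-structure on $\mcc=\opsp(\mcp_\Sigma(\mca))$: an object $z$ is connective iff $\pi_k\unmap(a,z)=0$ for all $k<0$ and all $a$, because $\mcc_{\leq -1}$ is the right orthogonal of the $\Sigma^k a$ with $k\ge0$. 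Therefore $\underline{\Hom}_{\mcc}(p,y)$ is connective.

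For (2), I use the closed formulation in the Remark: $f$ is trace class in $\mcc$ iff the map $\mathbf 1\to\underline{\Hom}_{\mcc}(p,y)$ lifts through $\underline{\Hom}_{\mcc}(p,\mathbf 1)\otimes y\to\underline{\Hom}_{\mcc}(p,y)$. By (1), applied with $y=\mathbf1$ and with the given $y$, both $\underline{\Hom}_{\mcc}(p,\mathbf1)$ and $\underline{\Hom}_{\mcc}(p,y)$ are connective. Moreover $\mcc_{\geq0}$ is closed under $\otimes$ and contains $\mathbf1$.

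The internal hom of $\mcc_{\geq0}$ is $\tau_{\geq0}\underline{\Hom}_{\mcc}$, by the argument of \cref{internalhom1} applied to the right adjoint $\tau_{\geq0}$ of the symmetric monoidal inclusion. By (1) this truncation does nothing, so the internal homs agree. Since $\mathbf1$ is connective, the lift $\mathbf1\to\underline{\Hom}(p,\mathbf1)\otimes y$ is a map between connective objects. As $\mcc_{\geq0}\subset\mcc$ is fully faithful, it is a lift in $\mcc_{\geq0}$, so $f$ is trace class there.

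Alternatively, I can use the explicit data. Given $d$, $\eta:\mathbf1\to y\otimes d$ and $\epsilon:d\otimes p\to\mathbf1$, replace $d$ by $d'=\underline{\Hom}(p,\mathbf1)$, which is connective by (1). The map $\epsilon$ corresponds to $d\to d'$, and \cref{traceclassfactorization} transports the data to $d'$.

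The main obstacle is step (1), specifically the claim that $\Map(\Sigma^{-n}a, z)$ vanishing for all $a$ and $n>0$ forces $z$ to be connective. This holds because the $\Sigma^k a$ with $k\in\mathbb Z$ generate $\mcc$ and the $t$-structure is generated by the $a$. The other point to check is that Day convolution places $a\otimes p$ in $\mca$ up to the Yoneda embedding.
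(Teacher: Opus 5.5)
Your proof is correct and follows the paper's argument. For (1) you test against the generators $a\in\mca$ and use adjunction to reduce to connectivity of $\unmap_{\mcc}(a\otimes p,y)$. For (2) you observe that, since the relevant internal homs are connective, they coincide with those of $\mcc_{\geq 0}$, so the lift already lives in $\mcc_{\geq0}$.

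One minor correction to your optional alternative. Replacing $d$ by $\underline{\Hom}_{\mcc}(p,\mathbf{1})$ is valid: $\epsilon$ factors through the evaluation map, and this is a direct check. It is not an instance of \cref{traceclassfactorization}, however, since that result moves the dual object along a map $d_0\to d$, i.e.\ in the opposite direction.
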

	\begin{proof}
		(1) By compact projective generation, it suffices to observe that for any $q\in \mca$, the mapping spectrum $$\unmap_{\mcc}(q,\underline{\Hom}_{\mcc}(p,y))\simeq\unmap_{\mcc}(q\otimes p,y) $$ is connective.
		
	(2)	It suffices to observe that the internal hom $\underline{\Hom}_{\mcc}(p,y)\in\mcc$ is connective and hence coincides with $\underline{\Hom}_{\mcc_{\geq0}}(p,y)$.
	\end{proof}
	The previous lemma allows one to descend trace class factorizations to the connective part after precomposition with compact projective maps. We formulate this in the following general form.
	\begin{lem}\label{conntrcl}
		Let $\mcb\in \calg(\prlst)$ be equipped with an accessible $t$-structure that is compatible with the monoidal structure.  Suppose that there exists a fully faithful symmetric monoidal functor $i:\mcb\hookrightarrow \mcc$  whose underlying functor is strongly $t$-continuous and that $\mcc\simeq\opsp(\mcp_{\Sigma}(\mca))$ for some  small additively \syminfcat $\mca$. 
		
		If $x\xrightarrow{f} y\xrightarrow{g} z$ are maps in $\mcb_{\geq0}$ such that $f$ is a compact projective map (see \Cref{atomicmaps}) in $\mcb_{\geq0}$ and $g$ is trace class in $\mcb$, then the composite $gf$ is trace class in $\mcb_{\geq0}$.
	\end{lem}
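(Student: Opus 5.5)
The plan is to transport the problem into $\mcc\simeq\opsp(\mcp_\Sigma(\mca))$, where compact projective maps factor through objects of $\mca$. There the trace class property can be checked in the connective part by the preceding lemma. The resulting trace class data are then pulled back to $\mcb$ along the right adjoint $i^R$.

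\emph{Step 1: factor $i(f)$ through an object of $\mca$.} Since $i$ is strongly $t$-continuous, it restricts to an internal left adjoint $i_{\geq0}\colon\mcb_{\geq0}\to\mcc_{\geq0}=\mcp_\Sigma(\mca)$ of presentable additive categories. Its right adjoint $i^R$ is $t$-exact and colimit-preserving. Internal left adjoints preserve compact projective maps (\cref{preserveprojmap1}), so $i(f)\colon i(x)\to i(y)$ is a compact projective map in $\mcp_\Sigma(\mca)$. Write $i(y)$ as a sifted colimit of representables $p_\alpha\in\mca$ (\cref{siftrep2}). By the defining property of compact projective maps, $i(f)$ then factors as $i(x)\xrightarrow{a}p\xrightarrow{b}i(y)$ for some $p\in\mca$.

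\emph{Step 2: get connective trace class data in $\mcc$.} Since $i$ is symmetric monoidal, $i(g)$ is trace class in $\mcc$. Hence $i(g)\circ b\colon p\to i(z)$ is trace class in $\mcc$. By part (2) of the preceding lemma it is trace class in $\mcc_{\geq0}$. Concretely, this gives a classifying map $\mathbf 1\to D\otimes i(z)$ with $D:=\underline{\Hom}_\mcc(p,\mathbf 1)$, and part (1) of that lemma says $D$ is connective. There is also an evaluation $p\otimes D\to\mathbf 1$. Precomposing the evaluation with $a$ gives trace class data $(D,\ \mathbf 1\to D\otimes i(z),\ i(x)\otimes D\to\mathbf 1)$ for $i(gf)$, and this data lives entirely in $\mcc_{\geq0}$.

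\emph{Step 3: descend along $i^R$.} Set $d:=i^R(D)\in\mcb_{\geq0}$, which is connective because $i^R$ is $t$-exact. The right adjoint $i^R$ of a fully faithful symmetric monoidal internal left adjoint is lax monoidal, with $i^Ri\simeq\id$. The key input is a projection formula
\[
i^R(D\otimes i(z))\simeq i^R(D)\otimes z .
\]
I expect this from the $\mcb$-linearity of the internal right adjoint: it is the strong-continuity analogue of \cref{internalhom1}, and I would first try to prove it by checking on generators using that $i^R$ preserves colimits. Given the formula, apply $i^R$ to the data of Step 2. The coevaluation becomes $\mathbf 1\simeq i^R\mathbf 1\to i^R(D\otimes iz)\simeq d\otimes z$. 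The pairing becomes $x\otimes d\to i^R(ix\otimes D)\to i^R\mathbf 1\simeq\mathbf 1$, using lax monoidality. These assemble into trace class data for $gf$ in $\mcb_{\geq0}$, because $i$ is fully faithful, so $i^Ri(gf)\simeq gf$.

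The main obstacle is the projection formula, together with checking that the composite $x\to x\otimes d\otimes z\to z$ really recovers $gf$ and not merely $i^Ri(gf)$ up to some twist. This needs the compatibility of the lax structure on $i^R$ with the unit of the adjunction. If the projection formula is awkward, I would instead factor through \cref{traceclassfactorization}, using the counit $i\,i^R(D)\to D$ to play the role of $h$ there.
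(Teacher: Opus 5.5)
Your Steps 1 and 2 are fine. Step 3 rests on the projection formula $i^R(D\otimes i(z))\simeq i^R(D)\otimes z$, and this is not available under the hypotheses of the lemma. \cref{internalhom1} is a formal consequence of the adjunction, but the projection formula is not. It says that the lax $\mcb$-linear functor $i^R$ is strictly $\mcb$-linear, equivalently that $\ker(i^R)$ is closed under $\mcb\otimes-$. This is automatic when $\mcb$ is rigid, as in the application $\mcb=\mcc^{\op{rig}}$ of \cref{addrig}. It also holds when $\mcb$ is generated by dualizable objects, which is what ``checking on generators'' would need. It fails in general. For instance, let $\mcc$ be $\mathbb{Z}$-graded spectra with Day convolution, so $\mca$ consists of finite sums of $\mathbb{S}(n)$ (the sphere placed in degree $n$). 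Let $\mcb$ be the graded spectra supported in degrees $\{0\}\cup\{n\geq 2\}$. All hypotheses hold, with $i^R$ the restriction to these degrees. For $p=\mathbb{S}(-1)\in\mca$ you get $D=\underline{\Hom}_\mcc(p,\mb{1})=\mathbb{S}(1)$. Taking $z=\mathbb{S}(2)$ gives $i^R(D\otimes iz)=\mathbb{S}(3)$ but $i^R(D)\otimes z=0$.

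Your fallback does not avoid this as stated. To use \cref{traceclassfactorization} with $h$ the counit $ii^R(D)\to D$, you must lift $\eta\colon\mb{1}\to D\otimes iz$ to $ii^R(D)\otimes iz$. Since $\mb{1}=i(\mb{1})$, this is a lifting problem along the same comparison map $i^R(D)\otimes z\to i^R(D\otimes iz)$. For an arbitrary $\eta$, such as one produced by part (2) of the preceding lemma, no lift need exist.

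The missing idea is to keep the coevaluation in $\mcb$ and move only the dual object.
\begin{itemize}
\item Let $\eta_g\colon\mb{1}\to\underline{\Hom}_\mcb(y,\mb{1})\otimes z$ witness that $g$ is trace class in $\mcb$.
\item By \cref{internalhom1}, $\underline{\Hom}_\mcb(y,\mb{1})\simeq i^R\underline{\Hom}_\mcc(iy,\mb{1})$. Precomposing with $b$ gives $\phi\colon\underline{\Hom}_\mcb(y,\mb{1})\to i^R(D)$, and $i^R(D)$ is connective because $D$ is and $i^R$ is $t$-exact.
\item Then $(\phi\otimes\id_z)\circ\eta_g$ is the required coevaluation. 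After applying $i$, it is a lift along the counit of the specific $\eta$ induced from $g$, so your fallback does work for that $\eta$.
\item The pairing $x\otimes i^R(D)\to\mb{1}$ comes from $ix\otimes ii^R(D)\to ix\otimes D\xrightarrow{a\otimes\id}p\otimes D\to\mb{1}$ by full faithfulness of $i$.
\end{itemize}
This is the paper's proof in different language. It shows that $\underline{\Hom}_\mcb(y,\mb{1})\to\underline{\Hom}_\mcb(x,\mb{1})$ factors through the connective object $i^R\underline{\Hom}_\mcc(p,i\mb{1})$, hence through $\tau_{\geq0}\underline{\Hom}_\mcb(x,\mb{1})=\underline{\Hom}_{\mcb_{\geq0}}(x,\mb{1})$. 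It then maps into $\tau_{\geq0}\underline{\Hom}_\mcb(x,z)=\underline{\Hom}_{\mcb_{\geq0}}(x,z)$, and no projection formula is used.
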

	\begin{proof}
		Since $i$ is strongly $t$-continuous, $i(x)\xrightarrow{i(f)}i(y)$ is a compact projective map in $\mcc_{\geq 0}$. Therefore it factors through some compact projective $p\in \mca$ and we obtain the following commutative diagram,
		$$\begin{tikzcd}
			{\underline{\Hom}_{\mcb}(y,\mb{1})} \arrow[rr, "\sim"] \arrow[dd] &                                                       & {i^R\underline{\Hom}_{\mcc}(iy,i\mb{1})} \arrow[d]                  \\
			& {\tau_{\geq0}\underline{\Hom}_{\mcb}(x,\mb{1})} \arrow[ld] & {i^R\underline{\Hom}_{\mcc}(p,i\mb{1})} \arrow[d] \arrow[l, dashed] \\
			{\underline{\Hom}_{\mcb}(x,\mb{1})} \arrow[rr, "\sim"]            &                                                       & {i^R\underline{\Hom}_{\mcc}(ix,i\mb{1})}                           
		\end{tikzcd}$$
		where the equivalences follow from \cref{internalhom1}. The existence of the dotted arrow follows from connectivity of $\underline{\Hom}_{\mcc}(p,i\mb{1})$, which holds because of compact projectivity 
		of $p$. 
		In particular, the natural map $\underline{\Hom}_{\mcb}(y,\mb{1})\to \underline{\Hom}_{\mcb}(x,\mb{1})$ factors through $\tau_{\geq0}\underline{\Hom}_{\mcb}(x,\mb{1})$.
		Therefore, we obtain the following commutative diagram.
		$$\begin{tikzcd}
			& {\underline{\Hom}_{\mcb}(y,\mb{1})\otimes z} \arrow[d] \arrow[ldd, dashed] \arrow[r] & {\underline{\Hom}_{\mcb}(y,z)} \arrow[d] \\
			& {\underline{\Hom}_{\mcb}(x,\mb{1})\otimes z} \arrow[r]                               & {\underline{\Hom}_{\mcb}(x,z)}           \\
			{\tau_{\geq0}\underline{\Hom}_{\mcb}(x,\mb{1})\otimes z} \arrow[ru] \arrow[r] & {\tau_{\geq0}\underline{\Hom}_{\mcb}(x,z)} \arrow[ru]                                &                                         
		\end{tikzcd}$$
		Note that $\tau_{\geq0}\underline{\Hom}_{\mcb}(x,z)\simeq \underline{\Hom}_{\mcb_{\geq 0}}(x,z)$, hence $x\xrightarrow{gf} z $ is trace class in $\mcb_{\geq 0}$, as desired.
	\end{proof}
	The following theorem is the main abstract result of this subsection. It shows that, under a strong \(t\)-continuity hypothesis, the connective part of the stable rigidification already satisfies the universal property of additive rigidification.
	\begin{thm}\label{addrig}
		Let $\mcc=\opsp(\mcp_{\Sigma}(\mca))$, where $\mca$ is a small additively symmetric monoidal \infcat. Suppose that the inclusion from the stable rigidification 
		$\mcc^\op{rig}\hookrightarrow \mcc$ is strongly $t$-continuous, where the $t$-structure on $\mcc^\op{rig}$ is given by $\mcc^\op{rig}_{\geq 0}=\mcc^\op{rig}\cap \mcc_{\geq0}$. Then  $\mcc^\op{rig}_{\geq 0}$ is additively rigid, and it can be identified with the additive rigidification $\mcc_{\geq0}^\op{arig}$ of $\mcc_{\geq0}$. 
	\end{thm}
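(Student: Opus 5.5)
Write $\mcb:=\mcc^{\op{rig}}$ and $i\colon\mcb\hookrightarrow\mcc$ for the symmetric monoidal inclusion, which is strongly $t$-continuous by hypothesis. The proof has three steps: show $\mcb_{\geq0}$ is a dualizable additive symmetric monoidal category, show it is additively rigid, and then identify it with $\mcc_{\geq0}^{\op{arig}}$ through the universal property.

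\textbf{Step 1.} Since $\mcc^{\op{rig}}$ is a symmetric monoidal subcategory containing the connective unit, $\mcb_{\geq0}=\mcb\cap\mcc_{\geq0}$ is closed under the tensor product. The functor $i|_{\geq0}\colon\mcb_{\geq0}\to\mcc_{\geq0}=\mcp_\Sigma(\mca)$ is fully faithful, and its right adjoint $i^R$ is $t$-exact and colimit-preserving. Hence $\mcb_{\geq0}$ is a retract of $\mcp_\Sigma(\mca)$ in $\prlad$, and it is dualizable additive by \cref{ram149}.

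\textbf{Step 2.} For rigidity I will use the characterization from \cite{ramzi2024locally}: a $\mcv$-algebra is rigid if and only if the unit is $\mcv$-atomic (here, compact projective) and every object is a colimit of objects along trace class transition maps, i.e.\ it is generated by trace class exhaustions. The unit of $\mcb_{\geq0}$ is compact projective because it is the unit of $\mcp_\Sigma(\mca)$ and $i^R$ preserves colimits. Since $\mcb_{\geq0}$ is dualizable additive, every object is a colimit of objects with compact projective transition maps (\cref{flatcpexhau}, \cref{flatgenerate}). Since $\mcb$ is stably rigid, every map in $\mcb$ is a colimit of trace class maps.

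\textbf{Main obstacle.} I have to interleave these two kinds of maps so that each composite is trace class in $\mcb_{\geq0}$, not merely in $\mcb$. \Cref{conntrcl} does exactly this: a compact projective map in $\mcb_{\geq0}$ followed by a map that is trace class in $\mcb$ is trace class in $\mcb_{\geq0}$. Concretely, take a compact projective exhaustion $x\simeq\colim_n x_n$ in $\mcb_{\geq0}$. Each transition $x_n\to x_{n+1}$ then needs to factor, after refinement, as a compact projective map followed by a map that is trace class in $\mcb$. The latter is available because in a rigid stable category compact maps are trace class (\cite{ramzi2024locally}), and compact projective maps in $\mcb_{\geq0}$ stabilize to compact maps in $\mcb$. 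Composing two consecutive transitions then gives maps that are trace class in $\mcb_{\geq0}$, and this yields rigidity. I expect this verification to need the most care, in particular checking that compact projective maps in $\mcb_{\geq0}$ are compact in $\mcb$; for that I would use the strong continuity of $\opsp\otimes-$ together with the right completeness of $\mcb$.

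\textbf{Step 3.} For the universal property, let $\mcw_0$ be additively rigid and $F\colon\mcw_0\to\mcc_{\geq0}$ symmetric monoidal. Then $\opsp(\mcw_0)$ is stably rigid, so $\opsp\otimes F$ factors uniquely through $\mcb$. Its restriction to $\mcw_0$ lands in $\mcb\cap\mcc_{\geq0}=\mcb_{\geq0}$. By \cref{prestemb} and the full faithfulness of $\opsp\otimes-$ on colimit-preserving functors, this gives an equivalence
\[\funct^{L,\otimes}(\mcw_0,\mcb_{\geq0})\simeq\funct^{L,\otimes}(\mcw_0,\mcc_{\geq0}),\]
using that symmetric monoidal functors into $\mcb$ that are right $t$-exact correspond to those into $\mcb_{\geq0}$. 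This identifies $\mcb_{\geq0}$ with $\mcc_{\geq0}^{\op{arig}}$.
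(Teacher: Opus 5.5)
Your argument is correct, but it takes a different route from the paper in both main steps.

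\textbf{Rigidity.} The paper proves that \emph{every} compact projective map in $\mcc^{\op{rig}}_{\geq 0}$ is trace class there. It first reduces, via \cite[Addendum 2.42]{ramzi2024dualizable}, to compact projective maps between flat objects. It then uses \cref{cprojmapfldecomp} to split such a map as $x\to y\to z$ with both pieces compact projective, so that \cref{conntrcl} applies. You instead let a compact projective exhaustion $x\simeq\colim_n x_n$ supply the two-step factorization: applying \cref{conntrcl} to $x_n\to x_{n+1}\to x_{n+2}$ shows that $(x_{2n})_n$ is a trace class exhaustion in $\mathcal{B}_{\geq 0}$. This bypasses both Addendum 2.42 and the decomposition of compact projective maps, which the paper only establishes between flat objects. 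The price is that you rely on the criterion ``atomic unit plus generation by trace class exhaustible objects implies rigid'' rather than ``every atomic map is trace class''. That criterion is standard, but you should cite it precisely.

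\textbf{Identification.} The paper compares the largest rigid full subcategories of $\mcc_{\geq 0}$ and of $\mcc$, and concludes using right completeness. You check the universal property directly: $\opsp\otimes\mathcal{W}_0$ is stably rigid, so the stabilization of $F\colon \mathcal{W}_0\to\mcc_{\geq 0}$ factors through $\mcc^{\op{rig}}$, and hence $F$ lands in $\mcc^{\op{rig}}\cap\mcc_{\geq 0}$. This is cleaner. Moreover, $\mathcal{B}_{\geq 0}\subset\mcc_{\geq 0}$ is a full symmetric monoidal subcategory closed under colimits, so this objectwise statement already gives the equivalence of functor categories; the appeal to \cref{prestemb} is unnecessary.

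\textbf{Two small fixes.} First, the unit of $\mathcal{B}_{\geq 0}$ is compact projective because $i$ is fully faithful and colimit-preserving; colimit-preservation of $i^R$ plays no role there. Second, you invoke right completeness of $\mathcal{B}$ to see that compact projective maps in $\mathcal{B}_{\geq 0}$ are compact in $\mathcal{B}$, but you do not say where it comes from. It is supplied by \cref{propertycoffib}(1), applied to the strongly $t$-continuous inclusion into the right complete category $\mcc$.
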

	\begin{proof}
		We first show that  $\mcc^\op{rig}_{\geq 0}$ is additively rigid. 
		By \cref{propertycoffib}, we see that $\mcc^\op{rig}_{\geq 0}$ is dualizable 
		additive and the $t$-structure on $\mcc^{\op{rig}}$ is right complete. Following 
		the same approach as in \cite[Proposition 4.15]{ramzi2024locally}, it suffices 
		to show that any compact projective map $x\to z$ in $\mcc^\op{rig}_{\geq 0}$ is 
		trace class in $\mcc^{\op{rig}}_{\geq 0}$ (note that being trace class in $\mcc^{\op{rig}}_{\geq 0}$ is stronger than in $\mcc^{\op{rig}}$). 
		
		By \cite[Addendum 2.42]{ramzi2024dualizable}, it actually suffices to show that enough compact projective maps in $\mcc^\op{rig}_{\geq 0}$ are trace class, thus we can assume that $x\to z$ is a compact projective map between flat objects.
		Since $\mcc^\op{rig}_{\geq 0}$ is dualizable additive, we can decompose $x\to z$ into two compact projective maps $x\to y \to z$ in $\mcc^\op{rig}_{\geq 0}$ (see  \cref{cprojmapfldecomp}).  Let $u:\mcc^\op{rig}_{\geq 0}\hookrightarrow\mcc^\op{rig}$ denote the inclusion. Since $\mcc^\op{rig}_{\geq 0}$ is Grothendieck prestable and $\mcc^\op{rig}$ is right complete, the right adjoint $u^R$ preserves filtered colimits. Since both categories are compactly assembled, $u$ preserves compact maps, and hence $y\to z$ is a compact map in $\mcc^\op{rig}$. Since $\mcc^\op{rig}$ is stably rigid, $y\to z$ is trace class in $\mcc^\op{rig}$. Applying \cref{conntrcl} to the inclusion $\mcc^\op{rig}\hookrightarrow\mcc$, we conclude that $x\to z$ is trace class in $\mcc^\op{rig}_{\geq 0}$, as desired.
		
		Now we show that $\mcc^\op{rig}_{\geq 0}$ can be identified with the additive rigidification $\mcc_{\geq0}^\op{arig}$.  Since the unit of $\mcc_{\geq0}$ is compact projective, by the proof of \cite[Corollary 4.73]{ramzi2024locally}, $\mcc_{\geq0}^\op{arig}$ can be identified with the largest additively rigid full subcategory of $\mcc_{\geq0}$, thus $$\mcc^\op{rig}_{\geq 0}\subset \mcc_{\geq0}^\op{arig}.$$ 
		On the other hand, $\mcc^\op{rig}$ is the largest stably rigid full subcategory of $\mcc$, thus $$\opsp(\mcc_{\geq0}^\op{arig})\subset \mcc^\op{rig}.$$ Since both inclusions $\mcc^\op{rig}\hookrightarrow\mcc$ and $\opsp(\mcc_{\geq0}^\op{arig})\hookrightarrow\mcc$ create connective objects, and since both categories are right complete, we conclude that $\mcc_{\geq0}^\op{arig}=\mcc^\op{rig}_{\geq 0}$.
	\end{proof}
	
	Now we prove a useful result in the projectively generated case. We now recall the $\kappa$-projective generation condition and refer the reader to \cref{kappaprojgeneration} for more details. 
	\begin{de}
		Let $\mcc\in\calg(\prlad)$ and $\kappa$ be a regular cardinal. We say \mcc is a \textbf{$\kappa$-projectively presentably symmetric monoidal additive \infcat} if \mcc is $\kappa$-projectively generated, i.e., $\mcc\simeq\mcp_{\Sigma_\kappa}(\mcc^{\kappa\text{-}\op{proj}})$, and the full subcategory $\mcc^{\kappa\text{-}\op{proj}}\subset \mcc$ is closed under tensor products and contains the unit object.
	\end{de}
The reduction requires one more elementary observation: trace class maps can be detected after a fully faithful monoidal localization.

	\begin{lem}\label{locinternalhom}
		Let $L:\mcc\to\mcd$ be a  symmetric monoidal functor between closed symmetric monoidal \infcats that admits a fully faithful right adjoint $L^R$. Then for any $x,y\in \mcd$, we have an identification for the internal hom $$\underline{\Hom}_{\mcc}(L^Rx,L^Ry)\simeq L^R\underline{\Hom}_{\mcd}(x,y).$$
		
		Furthermore, if $L^R$ preserves the unit and $f\in \mcd $ is a map such that $L^R(f)$ is trace class in \mcc, then $f$ is trace class in \mcd.
	\end{lem}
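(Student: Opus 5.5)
The first claim will follow from a Yoneda argument in the style of \cref{internalhom1}, combined with full faithfulness of $L^R$. The second claim will then be deduced by transporting trace-class data along $L$, which is symmetric monoidal and satisfies $LL^R\simeq \id$.

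For the first claim, fix $x,y\in\mcd$ and a test object $c\in\mcc$. We compute
\[
\Map_{\mcc}(c,\underline{\Hom}_{\mcc}(L^Rx,L^Ry))\simeq \Map_{\mcc}(c\otimes L^Rx,L^Ry)\simeq \Map_{\mcd}(L(c)\otimes LL^R x,y).
\]
Since $L^R$ is fully faithful, the counit gives $LL^Rx\simeq x$. The last term is therefore $\Map_{\mcd}(Lc\otimes x,y)\simeq \Map_{\mcd}(Lc,\underline{\Hom}_{\mcd}(x,y))\simeq\Map_{\mcc}(c,L^R\underline{\Hom}_{\mcd}(x,y))$. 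Yoneda's lemma then gives the identification. Equivalently, \cref{internalhom1} with $x$ replaced by $L^Rx$ yields $\underline{\Hom}_{\mcc}(L^Rx,L^Ry)\simeq L^R\underline{\Hom}_{\mcd}(LL^Rx,y)$, and one simplifies using $LL^R\simeq\id$. I expect this step to be routine.

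For the second claim, let $f:x\to y$ in $\mcd$ and suppose $L^R(f)$ is trace class in $\mcc$. Choose trace-class data $d\in\mcc$, $\epsilon:d\otimes L^Rx\to\mathbf{1}_{\mcc}$ and $\eta:\mathbf{1}_{\mcc}\to L^Ry\otimes d$, so that $L^Rf$ factors as
\[
L^Rx\to L^Ry\otimes d\otimes L^Rx\to L^Ry.
\]
We apply the symmetric monoidal functor $L$ to this data. Then $L\mathbf{1}_{\mcc}\simeq\mathbf{1}_{\mcd}$ because $L$ is monoidal, and $L(L^Rx\otimes d)\simeq x\otimes Ld$ because $LL^R\simeq\id$. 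Hence $L\epsilon:Ld\otimes x\to\mathbf{1}_{\mcd}$ and $L\eta:\mathbf{1}_{\mcd}\to y\otimes Ld$ are trace-class data in $\mcd$ for $LL^R(f)\simeq f$. The factorization transports along $L$ by functoriality and monoidality, so $f$ is trace class in $\mcd$.

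The only point requiring care is the coherence check that the image of the factorization under $L$, after composing with the counit identifications, is indeed $f$. This holds by naturality of the counit $LL^R\Rightarrow\id$. With this argument the hypothesis that $L^R$ preserves the unit is not actually needed.

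Alternatively, one could use the internal-hom formulation of trace class. This route needs the map $\underline{\Hom}_{\mcc}(L^Rx,\mathbf{1})\otimes L^Ry\to \underline{\Hom}_{\mcc}(L^Rx,L^Ry)$, and identifying its source with $L^R(\underline{\Hom}_{\mcd}(x,\mathbf{1}))\otimes L^Ry$ uses $L^R\mathbf{1}\simeq\mathbf{1}$ together with the first claim. It then remains to apply $L$ and use that $L$ carries the lift through the canonical map to the corresponding lift in $\mcd$. Since the direct transport of data above avoids these identifications, I would present that argument.
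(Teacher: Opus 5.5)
Your proposal is correct. The first claim is handled exactly as in the paper, via \cref{internalhom1} together with $LL^R\simeq\id$. For the trace-class claim, however, you take a different route.

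The paper works with the internal-hom characterization of trace-class maps. Using $L^R\mathbf{1}\simeq\mathbf{1}$ and the first claim, it identifies the comparison map $\underline{\Hom}_{\mcc}(L^Rx,\mathbf{1})\otimes L^Ry\to\underline{\Hom}_{\mcc}(L^Rx,L^Ry)$ with $L^R\underline{\Hom}_{\mcd}(x,\mathbf{1})\otimes L^Ry\to L^R\underline{\Hom}_{\mcd}(x,y)$. It then factors this map through the lax monoidal structure map $L^R\underline{\Hom}_{\mcd}(x,\mathbf{1})\otimes L^Ry\to L^R(\underline{\Hom}_{\mcd}(x,\mathbf{1})\otimes y)$. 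Finally, it implicitly passes to adjoints along $L\dashv L^R$ to obtain a lift of the map classifying $f$.

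Your argument instead uses only two facts. First, a (strong) symmetric monoidal functor carries trace-class data $(d,\epsilon,\eta)$ to trace-class data $(Ld,L\epsilon,L\eta)$. Second, $f\simeq LL^R(f)$ in the arrow category via the counit equivalence. Your coherence remark is adequate here, since trace-classness is a property that can be checked in the homotopy category.

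Your route is more elementary and more general. It needs neither closedness of $\mcc,\mcd$, nor the first half of the lemma, nor the hypothesis $L^R\mathbf{1}\simeq\mathbf{1}$. You are right that this hypothesis is superfluous: the statement is really just that symmetric monoidal functors preserve trace-class maps, combined with $LL^R\simeq\id$. The paper's version has the advantage of staying in the internal-hom formalism used in the neighbouring \cref{conntrcl} and of reusing the first claim, but it gains nothing in strength over your argument.
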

	\begin{proof}
		The canonical identification  $\underline{\Hom}_{\mcc}(L^Rx,L^Ry) \simeq L^R\underline{\Hom}_{\mcd}(x,y)$ follows directly from \cref{internalhom1}.
		
		Now assume that $L^R$ preserves the unit and $f:x\to y$ be a map such that $L^R(f)$ is trace class in \mcc. By the following diagram, we see that $f$ is trace class in \mcd:
		$$\begin{tikzcd}
			{\underline{\Hom}_{\mcc}(L^R(x),\mb{1})\otimes L^R(y)} \arrow[d, "\sim"] \arrow[r] & {\underline{\Hom}_{\mcc}(L^R(x),L^R(y))} \arrow[d, "\sim"] \\
			{L^R\underline{\Hom}_{\mcd}(x,\mb{1})\otimes L^R(y)} \arrow[r] \arrow[d]                                     & {L^R\underline{\Hom}_{\mcd}(x,y)}                                                  \\
			{L^R(\underline{\Hom}_{\mcd}(x,\mb{1})\otimes y)} \arrow[ru]                                               &                                                                         
		\end{tikzcd}$$
	\end{proof}

	We now show that additive rigidification is insensitive to replacing the \(\kappa\)-projective presentation by its finite-coproduct cocompletion.
	\begin{prop}\label{addrigidificationkappaproj}
		
		Let $\mcc\in\calg(\prlad)$ be a $\kappa$-projectively presentably symmetric monoidal additive \infcat, i.e. $\mcc\simeq\mcp_{\Sigma_\kappa}(\mcc^{\kappa\text{-}\op{proj}})$ for some regular cardinal $\kappa$. The following functor between the additive rigidifications is an equivalence
		$$\mcp_{\Sigma}(\mcc^{\kappa\text{-}\op{proj}})^{\op{arig}}\xrightarrow{\sim}\mcc^{\op{arig}}.$$
	\end{prop}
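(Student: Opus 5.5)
The plan is to compare the two rigidifications through the symmetric monoidal localization
\[
L\colon \mcp_\Sigma(\mcc^{\kappa\text{-}\op{proj}})\longrightarrow \mcp_{\Sigma_\kappa}(\mcc^{\kappa\text{-}\op{proj}})\simeq \mcc .
\]
This is the colimit-preserving extension of the identity of $\mcc^{\kappa\text{-}\op{proj}}$. It is symmetric monoidal because $\mcc^{\kappa\text{-}\op{proj}}$ is closed under tensor products and contains the unit. Its right adjoint $L^R$ is the fully faithful inclusion of presheaves preserving $\kappa$-small products into presheaves preserving finite products. Since the unit is representable, $L^R$ preserves the unit. So \cref{locinternalhom} applies: internal homs are computed by $L^R$, and a map $f$ in $\mcc$ is trace class as soon as $L^R(f)$ is trace class.

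By the universal property, $\op{Rig}_{\spgeq}(L)$ is the map in the statement. It suffices to show that for every additively rigid $\mcw_0$, postcomposition with $L$ gives an equivalence
\[
\operatorname{Fun}^{L,\otimes}_{\spgeq}\big(\mcw_0,\mcp_\Sigma(\mcc^{\kappa\text{-}\op{proj}})\big)\longrightarrow \operatorname{Fun}^{L,\otimes}_{\spgeq}\big(\mcw_0,\mcc\big).
\]
I would construct an inverse by sending $F$ to $L^R\circ F$, which is lax symmetric monoidal. Two things must then be shown. First, $L^R\circ F$ is strictly symmetric monoidal and colimit-preserving. Second, $L\circ L^R\simeq \id$ and $L^R\circ L\circ G\simeq G$ for any $G$ out of a rigid source. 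The first identity is automatic from full faithfulness. The second should follow once both $G$ and $L^R L G$ are known to be symmetric monoidal left adjoints agreeing on trace-class data, since $L^R$ preserves the unit.

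To control $L^R\circ F$, I would use the structure of rigid categories. By \cite[Proposition 4.15]{ramzi2024locally} and \cref{cprojmapfldecomp}, $\mcw_0$ is generated under colimits by colimits $\colim_n x_n$ of sequences of trace-class maps, and every compact projective map in $\mcw_0$ is trace class. The functor $F$ carries trace-class data $(d,\eta,\epsilon)$ to trace-class data. Thus it suffices to show that $L^R$ preserves colimits of sequences of trace-class maps, and that it preserves tensor products of such colimits.

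For the first point I would use \cref{traceclassfactorization}. Each trace-class map $x\to y$ in $\mcc$ factors through an object built from $\underline{\Hom}_\mcc(x,\mathbf 1)\otimes y$. One then refines the coevaluation $\eta$ through $\kappa$-projective objects: the unit is $\kappa$-projective, so $\eta\colon \mathbf 1\to d\otimes y$ factors through a $\kappa$-small stage of a $\kappa$-filtered presentation. This should let us rewrite the sequence as one whose terms lie in the image of $\mcc^{\kappa\text{-}\op{proj}}$ up to cofinality. Colimits of such sequences are then computed in $\mcp_\Sigma$ and lie in the image of $L^R$.

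The step I expect to be the main obstacle is the claim that $L^R$ preserves these sequential colimits. In general $L^R$ only commutes with $\kappa$-filtered colimits, because countable products do not commute with sequential colimits in anima. So the trace-class factorization must be used in an essential way. My first attempt would be to show directly that $\colim_n L^R(x_n)$ still preserves $\kappa$-small products, by passing each transition map through the dual object $d$ as above. If that fails, I would instead prove the factorization of $F$ through the largest additively rigid subcategory of $\mcp_\Sigma(\mcc^{\kappa\text{-}\op{proj}})$, using that its unit is compact projective, as in the proof of \cref{addrig}.
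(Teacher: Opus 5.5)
Your reduction to showing that postcomposition with $L$ is an equivalence on $\operatorname{Fun}^{L,\otimes}(\mathcal{W}_0,-)$ is fine. The gap is the proposed inverse $F\mapsto L^R\circ F$: the properties of $L^R$ you need are false, not just hard to prove. Write $\mathcal{A}=\mathcal{C}^{\kappa\text{-}\mathrm{proj}}$ and take $\mathcal{C}=\mathrm{Sp}_{\geq0}$, $\kappa=\omega_1$ (so $\mathcal{A}$ consists of the countably generated projective connective spectra), $\mathcal{W}_0=\mathrm{Sp}_{\geq0}$ and $F=\mathrm{id}$.

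The unique symmetric monoidal lift of $F$ along $L$ is the unit functor $u$. It sends $\bigoplus_{\mathbb N}\mathbb S$ to $\bigoplus_{\mathbb N}h_{\mathbb S}=\operatorname{colim}_n h_{\mathbb S^{n}}$, computed pointwise in $\mathcal{P}_\Sigma(\mathcal{A})$. On the other hand, $L^R(\bigoplus_{\mathbb N}\mathbb S)=h_{\bigoplus_{\mathbb N}\mathbb S}$, because $\bigoplus_{\mathbb N}\mathbb S\in\mathcal{A}$. Evaluate both at $a=\bigoplus_{\mathbb N}\mathbb S$ and take $\pi_0$. You get $\operatorname{colim}_n\prod_{\mathbb N}\mathbb Z^{n}$ on one side and $\prod_{\mathbb N}\bigoplus_{\mathbb N}\mathbb Z$ on the other; these differ, since the identity of $\bigoplus_{\mathbb N}\mathbb Z$ lies only in the second.

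Consequently:
\begin{itemize}
\item $L^R\circ F$ does not preserve countable coproducts, so it is not a morphism in $\operatorname{Fun}^{L,\otimes}$.
\item $L^RLu\not\simeq u$, so your second round-trip identity fails.
\item $L^R$ does not preserve the colimit of $\mathbb S^{n}\to\mathbb S^{n+1}$, even though these maps are trace class and every term already lies in $\mathcal{A}$.
\end{itemize}
So the claim that such colimits ``are computed in $\mathcal{P}_\Sigma$ and lie in the image of $L^R$'' is wrong. The correct lift takes values outside the essential image of $L^R$: sequential colimits of representables in $\mathcal{P}_\Sigma(\mathcal{A})$, which are not $\kappa$-product preserving. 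No refinement of the trace-class data can rescue a formula of the form $L^R\circ F$.

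What is missing is any construction of the lift. The paper obtains it from \cite[Lemma 4.71]{ramzi2024locally}: $\mathcal{C}^{\mathrm{arig}}\simeq\mathcal{P}_\Sigma(\mathcal{C}^{\kappa})^{\mathrm{arig}}$ along the colimit functor $c$. Since the unit of $\mathcal{P}_\Sigma(\mathcal{C}^\kappa)$ is compact projective, every map out of a rigid $\mathcal{E}$ already lands in $\mathcal{P}_\Sigma(\mathcal{C}^\kappa)$. It then suffices to show such maps factor through the full subcategory $\mathcal{P}_\Sigma(\mathcal{A})$.

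For this, compact projective maps in $\mathcal{E}$ are trace class, and \cite[Addendum 2.42]{ramzi2024dualizable} reduces the problem to composites $V\to W\to X\to Y\to Z$ of compact projective trace-class maps between flat objects. By \cref{lazardthm} and \cref{traceclassfactorization} — this is where your idea of factoring $\eta$ through a small stage belongs — these factor through $h(r)\to h(s)$ and $h(p)\to h(q)$. Here $r\to s$ and $p\to q$ are trace class in $\mathcal{C}$ by \cref{locinternalhom}.

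The decisive step absent from your proposal is this: the unit of $\mathcal{C}$ is projective, so trace-class maps in $\mathcal{C}$ are projective maps. Every $\kappa$-compact object is a geometric realization of $\kappa$-projective ones, so $r\to s$ factors through some $a\in\mathcal{A}$, and hence $V\to Z$ factors through $h(a)$.

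Your fallback of factoring through the largest rigid subcategory of $\mathcal{P}_\Sigma(\mathcal{A})$ presupposes exactly such a lift of $F$ along $L$, which you never produce.
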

	\begin{proof}
		When $\kappa=\omega$, it is obvious. Now we assume that $\kappa>\omega$. By \cite[Lemma 4.71]{ramzi2024locally}, the additive rigidification of \mcc can be identified with that of $\mcp_{\Sigma}(\mcc^{\kappa})$. Therefore, it suffices to show that for  any morphism $F:\mce \to \mcp_{\Sigma}(\mcc^{\kappa})$ in $\calg(\prlad)$ from an additively rigid \infcat $\mce$, we have $F$ factors through $\mcp_{\Sigma}(\mcc^{\kappa\text{-}\op{proj}})$. 
		
		By \cite[Corollary 4.14]{ramzi2024locally}, any compact projective map in \mce is trace class. Since \mce is generated by compact projectively exhaustible objects under small colimits, it suffices to show that any compact projective trace class  map in $\mcp_{\Sigma}(\mcc^{\kappa})$ factors through $\mcp_{\Sigma}(\mcc^{\kappa\text{-}\op{proj}})$.
		By \cite[Addendum 2.42]{ramzi2024dualizable}, it actually suffices to show that the composite of any four compact projective trace class  maps of \emph{flat objects} $$V\to W\to X\to Y \to Z$$ in $\mcp_{\Sigma}(\mcc^{\kappa})$  factors through $\mcp_{\Sigma}(\mcc^{\kappa\text{-}\op{proj}})$.
		
		Note that by \cref{lazardthm} any flat object in $\mcp_{\Sigma}(\mcc^\kappa)$ is a  filtered colimit of representable objects in $\mcc^\kappa$. Since the unit of $\mcp_{\Sigma}(\mcc^{\kappa})$ is compact projective, by \cref{traceclassfactorization} we can obtain  factorizations in it as follows, where $P,Q$ are compact projective (namely $P=h(p),Q=h(q)$ for some $p,q\in \mcc^{\kappa}$) and $P\to Q$ is trace class in $\mcp_{\Sigma}(\mcc^{\kappa})$:
		$$\begin{tikzcd}
			& P \arrow[d] \arrow[r, dashed] & Q \arrow[d] \\
			X \arrow[r] \arrow[ru, dashed] & Y \arrow[r]                   & Z          
		\end{tikzcd}$$
		Applying \cref{locinternalhom} to the adjunction $$c: \mcp_{\Sigma}(\mcc^\kappa)\rightleftarrows \mcc: h,$$ 
		we see that $p\to q\in \mcc^{\kappa}$ is trace class in \mcc.
		By the same argument, $V\to X$ factors through $h(r)\to h(s)$ for some map $r\to s \in \mcc^{\kappa}$ that is trace class in \mcc.  Since the unit of \mcc is projective, any trace class map in $\mcc$ is a projective map. Note that any object in $\mcc^{\kappa}$ can be written as a geometric realization of objects in $\mcc^{\kappa\text{-}\op{proj}}$; therefore, we  can obtain  factorizations in \mcc as follows, where $a$ lies in $\mcc^{\kappa\text{-}\op{proj}}$.
		$$\begin{tikzcd}
			& a \arrow[d] \arrow[rrd] &             &   \\
			r \arrow[r] \arrow[ru, dashed] & s \arrow[r]                     & p \arrow[r] & q
		\end{tikzcd}$$
		In particular, the composite map $V\to Z$ factors through $h(a)\in\mcp_{\Sigma}(\mcc^{\kappa\text{-}\op{proj}})$, as desired.
	\end{proof}
	
\subsection{Nuclear objects}	To apply the preceding criterion, we need a way to recognize stable rigidifications in examples. This is provided by the theory of nuclear objects.
	\begin{de}

		Let $\mcc\in \calg(\prl_{\op{st},\omega})$. We say $M\in \mcc$ is a \textbf{nuclear} object if for any compact object $P\in \mcc$, the following trace comparison is an equivalence 
		$$P^\vee \otimes M\to \underline{\Hom}_{\mcc}(P,M), $$  where $P^\vee$ denotes $\underline{\Hom}_{\mcc}(P,\mb{1})$.
		
We say $M\in \mcc$ is a \textbf{basic nuclear} object if $M$ can be written as a sequential colimit in which every transition map is trace class.
	\end{de}
	A useful feature of nuclear objects is that they are generated by basic nuclear objects.
	\begin{prop}[{\cite[Proposition 7.11]{aoki2025very}}]\label{basicnucgen}
		Let $\mcc\in \calg(\prl_{\op{st},\omega})$. Then $\nuc(\mcc)\subset \mcc$ is generated by basic nuclear objects under filtered colimits.
	\end{prop}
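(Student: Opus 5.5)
The plan is to write a nuclear object $M$ as a filtered colimit of compact objects and then regroup that colimit into countable pieces, each of which is a sequential colimit along trace class maps. The key input is that, for a nuclear $M$, every map from a compact object to $M$ factors through a trace class map.

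\emph{Step 1 (trace class factorizations).} Since $\mcc$ is compactly generated, write $M\simeq\colim_{j\in J}P_j$ with $J$ a filtered poset and each $P_j\in\mcc^\omega$; for instance, take a cofinal filtered poset replacing $\mcc^\omega_{/M}$. Fix $j_0\in J$ and the map $P_{j_0}\to M$. By nuclearity of $M$, this map corresponds to a point
\[
\mb{1}\to P_{j_0}^\vee\otimes M\simeq\colim_{j}P_{j_0}^\vee\otimes P_j .
\]
The unit is compact because $\mcc\in\calg(\prl_{\op{st},\omega})$, so this point factors through $\mb{1}\to P_{j_0}^\vee\otimes P_k$ for some $k$. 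Unwinding, this gives a trace class map $t\colon P_{j_0}\to P_k$ whose composite with $P_k\to M$ agrees with the given map $P_{j_0}\to M$.

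Next, compactness of $P_{j_0}$ shows that $t$ and the possible transition map become equal after passing to some $j_1\geq k,j_0$. Since a trace class map followed by any map is again trace class, we obtain the following. For every $j_0$ there is $j_1\geq j_0$ such that the transition map $P_{j_0}\to P_{j_1}$ is trace class. (If $k\not\geq j_0$, we first enlarge $k$, which is allowed for the same reason.)

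\emph{Step 2 (regrouping).} I would imitate the argument of \cref{flatacc}. Let $\op{Down}_\omega(J)$ be the poset of downward closed filtered subsets of $J$ admitting a countable cofinal subset. It is filtered, since the union of two such subsets generates another one, and $J\to\op{Down}_\omega(J)$, $j\mapsto\{\leq j\}$, is cofinal. Left Kan extend $P_\bullet$ to a diagram $\tilde P\colon\op{Down}_\omega(J)\to\mcc$, so that $\colim\tilde P\simeq M$.

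Let $Q\subset\op{Down}_\omega(J)$ consist of those $F$ that contain a cofinal sequence $j_0\leq j_1\leq\cdots$ with all transition maps trace class. For such $F$, $\tilde P_F\simeq\colim_n P_{j_n}$ is basic nuclear. Iterating Step~1 shows that every $F_0$ lies below some $F\in Q$: pick a cofinal sequence of $F_0$, interleave it with trace class steps, and take the downward closure. Thus $Q$ is cofinal in $\op{Down}_\omega(J)$.

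\emph{Main obstacle.} The point needing most care is that $Q$ is itself filtered, not merely cofinal, so that $M\simeq\colim_{F\in Q}\tilde P_F$ is genuinely a filtered colimit of basic nuclear objects. I would show this directly. Given $F,F'\in Q$, take the filtered subset generated by their union, choose a countable cofinal sequence in it, and use Step~1 to interleave trace class steps. This produces an element of $Q$ above both. Since $Q$ is a poset, this suffices for filteredness, and cofinality then identifies the colimit with $M$.

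Finally, the converse inclusion is needed to get an equality. Basic nuclear objects are nuclear, because trace class maps induce compatible factorizations of $P^\vee\otimes(-)\to\underline{\Hom}_{\mcc}(P,-)$. Nuclear objects are also closed under filtered colimits, since $P$ and $P^\vee$ are compact. Together with the regrouping above, this gives $\nuc(\mcc)$ as exactly the filtered-colimit closure of the basic nuclear objects.
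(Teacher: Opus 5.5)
Your argument is correct. The paper does not prove this statement itself: it is quoted from \cite[Proposition 7.11]{aoki2025very}, so there is no in-paper proof to compare against. What you give is a complete, self-contained proof.

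Structurally, your argument is the same countable-regrouping device via $\op{Down}_\omega(J)$ that the paper uses in the proof of \cref{flatacc}. There, the transition map factors through the counit $ii^R$; here, that role is played by the transition map being trace class. Step 1 uses exactly the inputs that matter: compactness of $\mb{1}$, nuclearity of $M$ tested on the compact object $P_{j_0}$, and the fact that trace class maps form an ideal. What you flag as the main obstacle is in fact automatic. A subposet of a filtered poset in which every element lies below some element of the subposet is itself filtered, and its inclusion is cofinal. So filteredness of $Q$ follows from the cofinality you already proved.

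One justification needs correcting. You say nuclear objects are closed under filtered colimits ``since $P$ and $P^\vee$ are compact''. However, $P^\vee=\underline{\Hom}_{\mcc}(P,\mb{1})$ need not be compact: for solid abelian groups, $\underline{\Hom}(\prod_{\mathbb{N}}\mathbb{Z},\mathbb{Z})\simeq\bigoplus_{\mathbb{N}}\mathbb{Z}$ is not compact. Its compactness is also not what is needed.

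The functor $P^\vee\otimes-$ preserves all colimits for formal reasons. The functor $\underline{\Hom}_{\mcc}(P,-)$ preserves filtered colimits because $\mapp(Q,\underline{\Hom}_{\mcc}(P,-))\simeq\mapp(Q\otimes P,-)$, and $Q\otimes P$ is compact for compact $Q$. This is where the full hypothesis $\mcc\in\calg(\prl_{\op{st},\omega})$ enters, beyond compactness of the unit.

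The same fact is needed in your ``basic nuclear implies nuclear'' interleaving argument, to identify $\colim_n\underline{\Hom}_{\mcc}(P,x_n)$ with $\underline{\Hom}_{\mcc}(P,\colim_n x_n)$. With this correction, which in fact gives closure of $\nuc(\mcc)$ under all small colimits as the paper remarks, the proof goes through.
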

	We also recall two permanence properties of nuclear objects which will be used implicitly below.
	\begin{rem}
		The natural inclusion $$\nuc(\mcc)\hookrightarrow\mcc$$  is closed under small colimits and tensor products by \cite[Theorem 8.6]{clausen2026condensed}. We always have the inclusion $\mcc^{\op{rig}}\subset \nuc(\mcc)$ by \cite[Proposition 1.27]{efimov2025localizinginvariantsinverselimits}.
	\end{rem}
We next give a practical criterion for verifying the hypotheses of \cref{addrig}. It is formulated in terms of the comparison between stable trace objects and nuclear objects.
\begin{de}\label{destab}
	Let $$\stab(-):\catadidem\to\catper$$ denote the left adjoint to the forgetful functor. It can be identified with $\opsp(\mcp_{\Sigma}(-))^\omega$.
\end{de}
	\begin{prop}\label{addrigcriterion} 
		Let $\mcc=\opsp(\mcp_{\Sigma}(\mca))$, where $\mca$ is a small additively symmetric monoidal \infcat. The assumption of \cref{addrig} holds when \mcc satisfies the condition of \cite[Proposition 1.30]{efimov2025localizinginvariantsinverselimits}; namely, for every\footnote{Every object in \mca suffices.} $x\in \stab(\mca)$,
		$$
		x^{\op{tr}}=\unmap_{\mcc}(\mb{1},(-)^\vee\otimes x)\in \ind(\stab(\mca))\simeq \mcc
		$$
		is a nuclear object of \mcc.
	\end{prop}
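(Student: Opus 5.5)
The plan is to verify the hypothesis of \cref{addrig} directly. That means showing that the inclusion $i\colon \mcc^{\op{rig}}\hookrightarrow\mcc$ is strongly $t$-continuous when $\mcc^{\op{rig}}_{\geq0}:=\mcc^{\op{rig}}\cap\mcc_{\geq0}$. Concretely, I need three things:
\begin{enumerate}
\item $\mcc^{\op{rig}}_{\geq0}$ is the connective part of an accessible $t$-structure on $\mcc^{\op{rig}}$;
\item $i$ preserves connective objects and colimits;
\item the right adjoint $i^R$ preserves colimits and connective objects.
\end{enumerate}
Item (2) holds by definition of the connective part. The real content lies in describing $i^R$, and for that I will use \cite[Proposition 1.30]{efimov2025localizinginvariantsinverselimits}. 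Under the stated nuclearity hypothesis, it identifies $\mcc^{\op{rig}}$ with $\nuc(\mcc)$. It also identifies the right adjoint $i^R$ with the colimit-preserving functor $\ind(\stab(\mca))\to\ind(\stab(\mca))$ that is the Ind-extension of $x\mapsto x^{\op{tr}}$ on compact objects. In particular $i$ is strongly continuous, and $ii^R(x)\simeq x^{\op{tr}}$ for $x\in\stab(\mca)$.

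The main step is to show $i^R(\mcc_{\geq0})\subset\mcc_{\geq0}$. Since $\mcc_{\geq0}=\mcp_\Sigma(\mca)$ is generated under colimits by (the images of) objects $a\in\mca$, and $i^R$ preserves colimits, it suffices to show $a^{\op{tr}}\in\mcc_{\geq0}$ for $a\in\mca$. I will identify $\mcc\simeq\Fun^{\times}(\mca^{\opp},\opsp)$, under which $\mcc_{\geq0}$ consists of the functors with connective values. So I must check that
\[
\unmap_{\mcc}(q,a^{\op{tr}})\simeq\unmap_{\mcc}(\mb{1},q^\vee\otimes a)
\]
is connective for every $q\in\mca$. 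This formula is the value of the Ind-object $a^{\op{tr}}$ at the compact object $q$.

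The connectivity check then has three ingredients. First, the earlier lemma shows that $q^\vee=\underline{\Hom}_{\mcc}(q,\mb{1})$ is connective, since $\mb{1}\in\mca$ is connective. Second, the $t$-structure on $\mcc$ is compatible with the tensor product, because $\mca$ is closed under $\otimes$ and generates $\mcc_{\geq0}$. Hence $q^\vee\otimes a$ is connective. Third, $\mb{1}\in\mca$ is compact projective, so mapping out of it into a connective object gives a connective spectrum.

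I expect the main obstacle to be the precise bookkeeping of Efimov's description of $i^R$ on compact objects. In particular, I must make sure that the value of the Ind-object $a^{\op{tr}}$ at a representable $q$ really is $\unmap(\mb{1},q^\vee\otimes a)$, as opposed to a version involving duals taken inside $\stab(\mca)$. The footnote's reduction to $x\in\mca$ follows from the same generation argument.

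Finally, once $i^R$ is known to be $t$-exact, the $t$-structure on $\mcc^{\op{rig}}$ follows. The subcategory $\mcc^{\op{rig}}_{\geq0}=i^R(\mcc_{\geq0})$ is generated under colimits and extensions by the set $\{a^{\op{tr}}\}_{a\in\mca}$. It is therefore presentable and closed under colimits and extensions, so it is the connective part of an accessible $t$-structure, by the same reasoning as in \cref{colimt}. Together with the recollement $ii^R\to\op{Id}$, this makes $i$ strongly $t$-continuous, and \cref{addrig} applies.
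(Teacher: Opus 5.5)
Your proposal is correct and follows the paper's argument. Efimov's Proposition 1.30 gives $\nuc(\mcc)=\mcc^{\op{rig}}$ with $ii^R(c)\simeq c^{\op{tr}}$, and connectivity of $q^\vee$ for $q\in\mca$ then yields right $t$-exactness of $i^R$. You also spell out that $\mcc^{\op{rig}}\cap\mcc_{\geq0}=i^R(\mcc_{\geq0})$ is the connective part of an accessible $t$-structure, a point the paper leaves implicit.
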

	\begin{proof}
		By \cite[Proposition 1.30]{efimov2025localizinginvariantsinverselimits}, we have $\nuc(\mcc)=\mcc^{\op{rig}}$ and the right adjoint $i^R$ to the inclusion $\nuc(\mcc)\xhookrightarrow{i} \mcc$ is determined by $ii^R(c) \simeq c^{\op{tr}}=\unmap_{\mcc}(\mb{1},(-)^\vee\otimes c)$. Since for any (compact projective) object $a\in \mca$, the internal dual $a^\vee\in\mcc$ is connective, we see that $i^R$ is right $t$-exact, as desired.
	\end{proof}
	Since later examples involve localizations and module categories over nuclear algebras, we record the following base-change property of nuclear objects.
	\begin{lem}[{\cite[Lemma 1.11]{fedeli2023topological}}]\label{lemnucAmodule}
		Let $\mcc\in \calg(\prl_{\op{st},\omega})$ and $A\in\calg(\nuc(\mcc))$. Then we have a natural equivalence $$\modu_A(\nuc(\mcc))\xhookrightarrow{\sim} \nuc(\modu_A(\mcc)).$$
	\end{lem}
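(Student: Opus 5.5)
The plan is to identify both sides with the same full subcategory of $\modu_A(\mcc)$, namely the $A$-modules whose underlying object of $\mcc$ is nuclear.

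\emph{Step 1: the left-hand side.} Since $\nuc(\mcc)\subset\mcc$ is closed under small colimits and tensor products, and contains the unit (for compact $P$ the trace map $P^\vee\otimes\mb{1}\to\underline{\Hom}_{\mcc}(P,\mb{1})$ is tautologically an equivalence), the inclusion $\nuc(\mcc)\hookrightarrow\mcc$ is a fully faithful, colimit-preserving, symmetric monoidal functor. Hence for $A\in\calg(\nuc(\mcc))$ the induced functor $\modu_A(\nuc(\mcc))\to\modu_A(\mcc)$ is fully faithful. Its essential image consists exactly of the $A$-modules whose underlying object lies in $\nuc(\mcc)$: the module structure maps $A\otimes M\to M$ live in $\nuc(\mcc)$ because $\nuc(\mcc)$ is a full subcategory closed under $\otimes$.

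\emph{Step 2: the right-hand side.} The category $\modu_A(\mcc)$ is compactly generated by the free modules $A\otimes P$ with $P\in\mcc^\omega$. Its compact unit is $A$, and its tensor product $\otimes_A$ preserves compacts, so $\modu_A(\mcc)\in\calg(\prl_{\op{st},\omega})$. For such a generator I compute its $A$-linear dual:
\[
(A\otimes P)^{\vee_A}=\underline{\Hom}_A(A\otimes P,A)\simeq\underline{\Hom}_{\mcc}(P,A)\simeq P^\vee\otimes A,
\]
where the last equivalence uses that $A$ is nuclear. For any $A$-module $M$ the trace comparison for $A\otimes P$ then becomes
\[
(P^\vee\otimes A)\otimes_A M\simeq P^\vee\otimes M\longrightarrow \underline{\Hom}_A(A\otimes P,M)\simeq\underline{\Hom}_{\mcc}(P,M),
\]
with the internal homs computed in $\modu_A(\mcc)$, using the free--forgetful adjunction in its internal form (\cref{internalhom1}).

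\emph{Step 3: reduction to free generators.} I then reduce the nuclearity test in $\modu_A(\mcc)$ from all compact objects to the free ones $A\otimes P$. Both sides of the trace comparison $Q\mapsto Q^{\vee_A}\otimes_A M\to\underline{\Hom}_A(Q,M)$ are exact functors of $Q\in\modu_A(\mcc)^\omega$. The class of $Q$ for which the comparison is an equivalence is therefore closed under finite colimits, shifts and retracts. Every compact $A$-module is built from the $A\otimes P$ by these operations, so it suffices to check the generators.

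\emph{Step 4: conclusion.} Combining Steps 2 and 3, $M$ is nuclear in $\modu_A(\mcc)$ if and only if $P^\vee\otimes M\to\underline{\Hom}_{\mcc}(P,M)$ is an equivalence for all compact $P$, that is, if and only if the underlying object of $M$ is nuclear in $\mcc$. This is the same full subcategory as in Step 1, which gives the equivalence $\modu_A(\nuc(\mcc))\simeq\nuc(\modu_A(\mcc))$.

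The main obstacle I expect is the bookkeeping in Step 2. One must check that the displayed chain of equivalences really identifies the canonical trace map of $\modu_A(\mcc)$ with the canonical trace map of $\mcc$, rather than merely producing an abstract equivalence between source and target. I would do this by writing both maps as adjoints of the evaluation $P^\vee\otimes P\to\mb{1}$, tensored with $M$, and using the projection formula for the free functor $A\otimes-$. The identification $\underline{\Hom}_{\mcc}(P,A)\simeq P^\vee\otimes A$ must be compatible with the $A$-module structures, which holds because it is the trace map for $A$ and that map is natural in $A$.
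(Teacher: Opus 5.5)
Your proof is correct, but it handles the inclusion $\modu_A(\nuc(\mcc))\subset\nuc(\modu_A(\mcc))$ differently from the paper.

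\emph{The paper's route.} It proves that inclusion structurally. By \cref{basicnucgen}, $\nuc(\mcc)$ is generated under filtered colimits by basic nuclear objects. The symmetric monoidal functor $A\otimes-$ preserves trace-class maps, so it sends basic nuclear objects to basic (hence nuclear) $A$-modules, and closure of nuclear objects under colimits finishes the argument. Only for essential surjectivity does the paper use the computation $\underline{\Hom}_{\mcc}(P,M)\simeq\underline{\Hom}_{\modu_A(\mcc)}(A\otimes P,A)\otimes_A M\simeq P^\vee\otimes M$. That direction needs nuclearity of $M$ only against the free compacts $A\otimes P$, so no reduction step is needed there.

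\emph{Your route.} You run that same free-generator computation in both directions. You then add the thick-subcategory reduction of Step 3 to pass from the generators $A\otimes P$ to all of $\modu_A(\mcc)^\omega$.

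\emph{Comparison.} Your route is more elementary: it avoids the generation theorem for nuclear objects and the fact that basic nuclear objects are nuclear. It also yields the sharper intrinsic statement that nuclear $A$-modules are exactly those whose underlying object is nuclear. The price is the bookkeeping you correctly flag, namely checking that the displayed chain identifies the canonical trace maps. The paper's converse computation silently needs the same identification, including the use of nuclearity of $A$ to get $\underline{\Hom}_{\mcc}(P,A)\simeq P^\vee\otimes A$.

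For $A$-linearity of that identification, there is a cleaner argument than naturality. The $A$-linear map $A\otimes P^\vee\to\underline{\Hom}_{\modu_A(\mcc)}(A\otimes P,A)$ adjoint to $P^\vee\to\underline{\Hom}_{\mcc}(P,A)$ has the trace map of $A$ as its underlying map. Since the forgetful functor is conservative, it is an equivalence of $A$-modules.
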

	\begin{proof}
		We first verify the inclusion $\modu_A(\nuc(\mcc))\xhookrightarrow{} \modu_A(\mcc)$ factors through $ \nuc(\modu_A(\mcc))\subset \modu_A(\mcc)$.
		This follows from \cref{basicnucgen}, because
		$\mcc\xrightarrow{A\otimes-}\modu_A(\mcc)$ in the following diagram sends basic nuclear objects to basic nuclear objects:
		$$\begin{tikzcd}
			\nuc(\mcc) \arrow[r, hook] \arrow[d, "A\otimes-"] & \mcc \arrow[d, "A\otimes-"] \\
			\modu_A(\nuc(\mcc)) \arrow[r, hook]               & \modu_A(\mcc)              
		\end{tikzcd}$$
		
		We now show that the inclusion $\modu_A(\nuc(\mcc))\xhookrightarrow{} \nuc(\modu_A(\mcc))$ is essentially surjective. Let $M\in \nuc(\modu_A(\mcc))$. We need to show that the underlying object of $M$ is nuclear in \mcc. Given a compact object $P\in\mcc^\omega$, we have
		$$
		\begin{aligned}
			\underline{\Hom}_{\mcc}(P,M)
			&\simeq \underline{\Hom}_{\modu_A(\mcc)}(A\otimes P,M) \\
			&\simeq \underline{\Hom}_{\modu_A(\mcc)}(A\otimes P,A)\otimes_A M \\
			&\simeq \underline{\Hom}_{\mcc}(P,\mb{1})\otimes A\otimes_A M \\
			&\simeq \underline{\Hom}_{\mcc}(P,\mb{1})\otimes M,
		\end{aligned}
		$$
		as desired.
	\end{proof}
	Combining this compatibility with the comparison theorem above gives an additive rigidification statement for module categories.
	\begin{cor}\label{modnucaddrig}
		Let $\mcc=\opsp(\mcp_{\Sigma}(\mca))$, where $\mca$ is a small additive symmetric monoidal \infcat. Suppose that the inclusion $\mcc^\op{rig}\hookrightarrow\nuc(\mcc)$ is an equivalence and that the inclusion 
		$\mcc^\op{rig}\hookrightarrow \mcc$ is strongly $t$-continuous\footnote{In particular, both conditions can be guaranteed when the condition of \cref{addrigcriterion} holds.}. Then for any $A\in\calg(\mcc_{\geq0}^\op{rig})$, the inclusion 
		$$\modu_A(\mcc_{\geq0}^\op{rig})\hookrightarrow \modu_A(\mcc_{\geq0})$$ 
		exhibits $\modu_A(\mcc_{\geq0}^\op{rig})$ as the additive rigidification of $\modu_A(\mcc_{\geq0})$.
	\end{cor}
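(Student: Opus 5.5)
The plan is to reduce to \cref{addrig}, applied not to $\mcc$ but to the stable module category $\modu_A(\mcc)$, with $\mca$ replaced by a suitable small additive symmetric monoidal category of free $A$-modules.

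\textbf{Step 1: present $\modu_A(\mcc)$ in the required form.} Since $A$ is connective, $\modu_A(\mcc)\simeq \opsp(\modu_A(\mcc_{\geq0}))$. Moreover $\modu_A(\mcc_{\geq0})\simeq\mcp_\Sigma(\mca_A)$, where $\mca_A$ is the full subcategory on the free modules $A\otimes a$ for $a\in\mca$. These are compact projective because $A\otimes-$ is left adjoint to a forgetful functor that preserves colimits. They generate, and they are closed under the relative tensor product with unit $A$. So $\modu_A(\mcc)$ has exactly the shape required in \cref{addrig}.

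\textbf{Step 2: identify the stable rigidification of $\modu_A(\mcc)$.} By hypothesis $\mcc^{\op{rig}}=\nuc(\mcc)$, so $A\in\calg(\nuc(\mcc))$. \cref{lemnucAmodule} then gives
\[
\nuc(\modu_A(\mcc))\simeq \modu_A(\nuc(\mcc))=\modu_A(\mcc^{\op{rig}}).
\]
Since $\mcc^{\op{rig}}$ is rigid, $\modu_A(\mcc^{\op{rig}})$ is rigid, being modules over a commutative algebra in a rigid category. It sits inside $\nuc(\modu_A(\mcc))$, which contains $\modu_A(\mcc)^{\op{rig}}$. Since $\modu_A(\mcc)^{\op{rig}}$ is the largest rigid full subcategory (the unit $A$ is compact), we get
\[
\modu_A(\mcc)^{\op{rig}}\simeq\modu_A(\mcc^{\op{rig}}).
\]
I expect this "largest rigid subcategory" description, together with the inclusion $\modu_A(\mcc)^{\op{rig}}\subset\nuc(\modu_A(\mcc))$, to be the delicate point. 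If the slogan is not directly available, I would instead invoke base change of rigidification along $\mcc\to\modu_A(\mcc)$.

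\textbf{Step 3: strong $t$-continuity.} Write $i\colon\mcc^{\op{rig}}\hookrightarrow\mcc$, so $i^R$ is colimit-preserving and $t$-exact by hypothesis. The right adjoint of $\modu_A(\mcc^{\op{rig}})\hookrightarrow\modu_A(\mcc)$ is computed on underlying objects by $i^R$. The $A$-module structure transfers because $i^R$ is lax monoidal and $A$ lies in the image. As a result it preserves colimits and connectivity, with $t$-structures detected on underlying objects. Its connective part is $\modu_A(\mcc^{\op{rig}}_{\geq0})$.

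\textbf{Step 4: conclude.} \cref{addrig} now yields that $\modu_A(\mcc^{\op{rig}}_{\geq0})$ is additively rigid and is the additive rigidification of $\modu_A(\mcc_{\geq0})$.
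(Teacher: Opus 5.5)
Your proposal is correct and is essentially the paper's proof: \cref{lemnucAmodule} gives $\nuc(\modu_A(\mcc))\simeq\modu_A(\mcc^{\op{rig}})$, which is rigid and hence equals $\modu_A(\mcc)^{\op{rig}}$; strong $t$-continuity passes to the module categories; and \cref{addrig} is applied to $\modu_A(\mcc)$. Your Step 1 (presenting $\modu_A(\mcc)\simeq\opsp(\mcp_\Sigma(\mca_A))$ via free modules) and your description of the right adjoint in Step 3 spell out what the paper's proof uses tacitly, and the ``largest rigid subcategory'' fact you flag as delicate is exactly what the paper relies on, so no fallback is needed.
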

	\begin{proof}
		By \cref{lemnucAmodule}, the following inclusion is an equivalence
		$$\modu_A(\nuc(\mcc))\xhookrightarrow{\sim} \nuc(\modu_A(\mcc)).$$
		In particular, $\nuc(\modu_A(\mcc))$ is rigid over \opsp and hence the following inclusion is an equivalence
		$$\modu_A(\mcc)^\op{rig}\xhookrightarrow{\sim} \nuc(\modu_A(\mcc)).$$
		Since by assumption the inclusion 
		$\mcc^\op{rig}\hookrightarrow \mcc$ is strongly $t$-continuous, the inclusion 
		$$\modu_A(\mcc)^\op{rig}\simeq\modu_A(\mcc^\op{rig})\hookrightarrow \modu_A(\mcc)$$ is so. Therefore the \infcat $\modu_A(\mcc)$ satisfies the assumption of \cref{addrig} which concludes the proof.
	\end{proof}
	
\subsection{Nuclear (solid) modules}

We now apply the abstract theory of additive rigidification to connective complete modules over adic \einfrings. The goal is to identify the connective nuclear category as the additive rigidification of the  category of connective complete modules.
	\begin{de}[{\cite[Definition 8.1.1.5]{sag}}]
		An \textbf{adic \einfring} is a \emph{connective} \einfring $R$ together with a topology on
		the commutative ring $\pi_0R$, which admits a finitely generated ideal $I\subset \pi_0R$ of definition. We say $R$ is a \textbf{complete}  adic \einfring if $R\simeq R^\wedge_I$.
		
	We denote by $\modrcpl \subset \modu_R(\opsp)$ the full subcategory of ($I$-)complete $R$-modules. Note that this definition does not depend on the choice of $I$ (see \cite[Remark 7.3.1.2]{sag}).
	\end{de}
	We recall the basic structural properties of the category of complete modules which will be used below.
	\begin{rem}
		Let $R$ be an adic \einfring. The inclusion $\modrcpl\subset \modu_R(\opsp)$ is an accessibly reflective subcategory that is compatible with the monoidal structure by \cite[\textsection7.3.5]{sag}, hence $\modrcpl$ admits a natural presentably symmetric monoidal structure, which we denote by $-\widehat{\otimes}_R -$.
		
		The inclusion $\modrcpl\subset \modu_R(\opsp)$ is  closed under truncations and connective covers by \cite[Proposition 7.3.4.4]{sag}, hence \modrcpl inherits an (accessible) $t$-structure which is compatible with the monoidal structure.
	\end{rem}
	\begin{rem}\label{withoutlosscomplete}
		Note that for an adic \einfring $R$, we always have $\modrcpl\simeq \modu_{R^\wedge_I}^{\op{cpl}}$ (see \cite[Lemma 2.4.13]{gregoric2021moduli}). Therefore, when studying complete modules, one may assume without loss of generality that $R$ is complete.
	\end{rem}
	The first step is to understand the connective part of the category of complete modules in terms of projective generators.
	\begin{prop}\label{wstructureonmodrcpl}
		Let $R$ be an adic \einfring. Then \modrcpl is generated by $\omega_1$-projective objects (in the connective part) as a localizing subcategory, and the connective part $\modrcplgeq \subset\modrcpl$ forms a hypercomplete weight structure (see \cref{wstructure}). 
	\end{prop}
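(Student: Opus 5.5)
The plan is to first reduce to $R$ complete using \cref{withoutlosscomplete}, and then fix a finitely generated ideal of definition $I=(x_1,\dots,x_r)\subset\pi_0R$. The natural candidates for $\omega_1$-projective generators are the completed free modules
\[
P_S:=\Big(\textstyle\bigoplus_{S}R\Big)^\wedge_I,\qquad S \text{ a countable set},
\]
and more generally completions of countable direct sums of finite free modules. Two facts need checking.

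\textbf{(a) Generation.} The objects $P_S$ generate $\modrcpl$ as a localizing subcategory. I would deduce this from the fact that $\modrcpl$ is generated by the single compact object $R/\!\!/(x_1,\dots,x_r)$, the Koszul complex. Alternatively, one can observe that the completion functor $L\colon\modu_R(\opsp)\to\modrcpl$ is a colimit-preserving localization. It therefore sends the generator $R$ to a generator $\widehat R=R$, so already $R=P_{\ast}$ generates $\modrcpl$ under colimits taken in $\modrcpl$. That gives generation as a localizing subcategory immediately.

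\textbf{(b) $\omega_1$-projectivity.} The main point is that $P_S$ is projective in $\modrcplgeq$ with respect to $\omega_1$-filtered/countable colimits. For this I would use the description of complete modules as derived $I$-complete modules. For connective complete $M$ there is an identification
\[
\Map(P_S,M)\simeq \Map_R\Big(\textstyle\bigoplus_S R,M\Big)=\prod_S\Omega^\infty M.
\]
This holds because $\bigoplus_S R\to P_S$ is an $L$-equivalence and $M$ is local. So the hom is a countable product of $\Omega^\infty M$. It therefore commutes with geometric realizations in $\modrcplgeq$, because geometric realizations of connective complete modules are computed as the completion of the geometric realization in $\modu_R$, and the key fact is that completion of a connective object does not change $\pi_0$ in a way that obstructs surjectivity. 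Concretely, I would show that for a simplicial diagram in $\modrcplgeq$ the map from $|M_\bullet|$ (in $\modu_R$) to its completion is an isomorphism on $\pi_0$ after applying $\prod_S$. Equivalently, countable products are exact on the relevant Mittag-Leffler towers $M/\!\!/x^n$ that compute completion. Compatibility with $\omega_1$-filtered colimits follows because countable products commute with $\omega_1$-filtered colimits, and completion commutes with them on the nose (the Koszul description uses countable limits).

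\textbf{Weight structure.} I would then verify the axioms in \cref{wstructure}. The heart is the retract closure of products of the $P_S$, and the required orthogonality $\Map(P_S,\Sigma^{n}N)=0$ for $N$ coconnective and $n\ge1$ follows from the product formula above. Every connective object admits a resolution by such projectives: take $\bigoplus$ of enough maps $P_S\to M$, which is $\pi_0$-surjective, and iterate. Hypercompleteness would follow from left completeness of $\modrcpl$, which is a right/left complete $t$-structure since completion commutes with Postnikov limits.

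\textbf{Main obstacle.} I expect step (b) to be the hard part: showing that $\prod_S$ of complete connective modules interacts correctly with completed geometric realizations. This is a $\lim^1$/Mittag-Leffler check. I would handle it via the formula $M^\wedge=\lim_n M\otimes_R R/\!\!/(x_1^n,\dots,x_r^n)$ and the fact that on connective objects the transition maps are surjective on $\pi_0$.
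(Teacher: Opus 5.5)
Your generation step (the completion functor $L$ is a colimit-preserving localization, so $L(R)=R$ generates) and your formula $\Map(P_S,M)\simeq\prod_S\Omega^\infty M$ are the paper's argument made concrete. The paper just observes that $L$ has a $t$-exact right adjoint preserving $\omega_1$-filtered colimits, so it preserves $\omega_1$-projectives.

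The genuine gap is in the weight-structure part. Axiom (3) of \cref{wstructure} must hold for \emph{every} $x\in\modrcpl$, and $\modrcpl$ contains unbounded-below objects. A $\pi_0$-surjection from a completed sum of $P_S$'s gives a weight decomposition of a connective object in one step, and finitely many steps handle bounded-below objects. But ``iterate'' does not reach a general $x$: weight truncations are not functorial, so they cannot be assembled along $x\simeq\colim_n\tau_{\geq -n}x$. You also never say what $\mcc_{w\leq0}$ is, or why the resulting fiber lies in it.

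The paper does no hand verification here. It proves only the generation statement and applies \cref{tcatiswcat}, which gets the weight structure from the existence theorem \cite[Theorem 6.3]{nikolaus2026unbounded} for weight structures generated by a small set of objects in a presentable stable category (the non-compact analogue of \cref{cgwstructure}).

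Within your framework there is also a quick fix. $\modu_R$ carries the weight structure of \cref{cgwstructure} with $S=\{R\}$, whose connective part is $\modu_{R,\geq0}$. Define $\mcc_{w\leq0}$ as the class of $z\in\modrcpl$ with $\unmap(z,y)$ connective for all $y\in\modrcplgeq$. For complete $x$, apply $L$ to a decomposition $x_{w\leq0}\to x\to x_{w\geq1}$ in $\modu_R$. Since $L$ is left adjoint to a $t$-exact functor, $L(x_{w\geq1})\in(\modrcpl)_{\geq1}$. For connective complete $y$, $\unmap(Lx_{w\leq0},y)\simeq\unmap_R(x_{w\leq0},y)$ is connective. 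Hence $Lx_{w\leq0}\to x\to Lx_{w\geq1}$ is a weight decomposition in $\modrcpl$.

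Some smaller corrections follow.
\begin{itemize}
\item The ``main obstacle'' you anticipate in (b) does not arise. The inclusion $\modrcplgeq\hookrightarrow\modu_{R,\geq0}$ preserves finite colimits, so by \cref{lemma4.3}(2) (left completeness of $\modu_R$) it preserves geometric realizations. The realization computed in $\modu_R$ is therefore already complete. Then $\Map(P_S,-)\simeq\Map_R(\bigoplus_SR,-)$ commutes with it because $\bigoplus_SR$ is projective in $\modu_{R,\geq0}$; no Mittag-Leffler analysis is needed.
\item The orthogonality you want is $\pi_0\Map(P_S,\Sigma^nN)=0$ for \emph{connective} $N$ and $n\geq1$, i.e.\ $\unmap(P_S,N)\simeq\prod_SN$ is connective. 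For coconnective $N$ the statement as written is false: take $S=\ast$, $n=1$, $N=\Sigma^{-1}R$.
\item The heart is naturally the retract closure of the completed sums $(\bigoplus_TR)^\wedge_I$ over arbitrary sets $T$, i.e.\ the projectives of $\modrcplgeq$ (cf.\ \cref{wheartdiscribe}), rather than being described via products.
\item Deducing hypercompleteness from left completeness of $\modrcpl$ (complete modules are closed under limits and truncations in $\modu_R$) is correct. It is a fine alternative to the paper's argument via generation.
\end{itemize}
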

	\begin{proof}
		By \cref{tcatiswcat}, it suffices to show that \modrcpl is generated by $\omega_1$-projective objects as a localizing subcategory. Since the completion functor $$\modu_R\to \modrcpl$$ has a $t$-exact right adjoint which preserves $\omega_1$-filtered colimits, it preserves $\omega_1$-projective objects (in the connective part). Note that $\modu_R$ is generated by $\omega_1$-projective objects as a localizing subcategory, so is \modrcpl.
		
	\end{proof}
	This immediately gives an explicit projective presentation of the  category of connective complete modules.
	\begin{cor}\label{modrcplomega1projgen}
		Let $R$ be an adic \einfring. Then we have $$\modrcplgeq\simeq \mcp_{\Sigma_{\omega_1}}(\op{Proj}_R^{\op{cpl},\omega_1}).$$ In particular, the inclusion $\op{Proj}_R^{\op{cpl},\omega_1}\hookrightarrow \modrcplgeq$ is a dense subcategory (in the sense of \cite[Definition 20.4.1.1]{sag}).
	\end{cor}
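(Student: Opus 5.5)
The plan is to deduce the corollary directly from \cref{wstructureonmodrcpl} together with the general theory of $\kappa$-projective generation recorded in \cref{kappaprojgeneration}. Write $\op{Proj}_R^{\op{cpl},\omega_1}\subset\modrcplgeq$ for the full subcategory of $\omega_1$-projective objects, that is, the connective complete modules $P$ for which $\Map(P,-)$ preserves $\omega_1$-sifted colimits, meaning geometric realizations together with $\omega_1$-filtered colimits. This subcategory is closed under finite direct sums and retracts, and it is essentially small. The goal is to show that the induced functor
\[
\mcp_{\Sigma_{\omega_1}}(\op{Proj}_R^{\op{cpl},\omega_1})\to\modrcplgeq
\]
is an equivalence. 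Full faithfulness should be formal: it follows from the $\omega_1$-projectivity of the generators by the usual argument of \cite[Proposition 5.5.8.22]{htt} with $\omega$ replaced by $\omega_1$.

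For essential surjectivity, I will apply \cref{wstructureonmodrcpl}. It gives a hypercomplete weight structure on $\modrcpl$ whose heart consists of the $\omega_1$-projective generators, and which generates $\modrcpl$ as a localizing subcategory. The weight-to-$t$ dictionary of \cref{tcatiswcat} should then show that every $M\in\modrcplgeq$ admits a resolution by objects of $\op{Proj}_R^{\op{cpl},\omega_1}$ whose geometric realization recovers $M$. Concretely, I will choose an effective epimorphism from a direct sum $P_0\to M$ of generators, iterate on the fiber to get a simplicial resolution, and use hypercompleteness (separatedness) to conclude that $|P_\bullet|\simeq M$.

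One subtlety is that arbitrary direct sums of $\omega_1$-projectives are not $\omega_1$-projective. I will handle this by writing each sum as an $\omega_1$-filtered colimit of countable sums. Since $\mcp_{\Sigma_{\omega_1}}$ is closed under $\omega_1$-sifted colimits, this suffices.

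The density assertion should then follow formally. An $\omega_1$-projectively generated category is the $\mcp_{\Sigma_{\omega_1}}$ of its generators, and every object is then canonically the colimit of the diagram of generators mapping to it (as in \cite[Definition 20.4.1.1]{sag}). This holds because the Yoneda embedding into $\mcp_{\Sigma_{\omega_1}}$ is dense.

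I expect the main obstacle to be essential surjectivity, specifically the convergence of the resolution in $\modrcpl$. Colimits there are completed colimits, so I need to check that the completion functor commutes with the relevant geometric realizations. I will handle this by working with the right adjoint inclusion, which is $t$-exact and preserves $\omega_1$-filtered colimits.
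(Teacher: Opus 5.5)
Your argument is correct and has the same skeleton as the paper's proof: \cref{wstructureonmodrcpl}, then generation of $\modrcplgeq$ by $\omega_1$-projectives, then the recognition criterion \cref{kcprojgen} (which is exactly your $\omega_1$-analogue of \cite[Proposition 5.5.8.22]{htt}). The difference is the middle step. The paper simply invokes \cref{hycompleteimpliespsigamak}: for a hypercomplete presentable weight structure, the $w$-heart, which is $(\modrcplgeq)^{\op{proj}}$ by \cref{wheartdiscribe}, generates under colimits by \cite[Proposition 1.17]{levy2025c}. You reprove this by hand via resolutions and separatedness, which makes the argument self-contained, whereas the paper's route is a three-line reduction to general weight-structure facts.

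Some of your bookkeeping should be tightened.

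First, \cref{tcatiswcat} is the converse implication (it is what proves \cref{wstructureonmodrcpl}). Also, the heart consists of all projectives, not only the $\omega_1$-projective ones.

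Second, the $\pi_0$-surjection $\bigoplus_\alpha P_\alpha\to M$ from $\omega_1$-projectives needs justification. It follows from localizing generation by $\omega_1$-projectives (proof of \cref{wstructureonmodrcpl}), together with the fact that $\unmap(P,X)$ is discrete with $\pi_0=\Hom(\pi_0P,X)$ for $P$ projective and $X$ discrete.

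Third, iterating on the fibers $K_n$ produces a tower, not a simplicial object. It is cleaner to set $X_0=P_0$ and let $X_n$ be the cofiber of the composite $\Sigma^{n-1}P_n\to\Sigma^{n-1}K_{n-1}\simeq\fib(X_{n-1}\to M)\to X_{n-1}$. The octahedral axiom gives $\cofib(X_n\to M)\simeq\Sigma^{n+1}K_n$, so $\cofib(\colim_n X_n\to M)$ is $\infty$-connective and hence zero. Since the comparison functor is fully faithful and colimit-preserving, its essential image is closed under all small colimits, so your countable-sum device is unnecessary.

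Finally, the ``completed colimits'' obstacle is not a real one. Everything takes place inside $\modrcplgeq$, whose $t$-structure is separated because it is inherited from $\modu_R$. No comparison with colimits in $\modu_R$ is needed.
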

	\begin{proof}
		By \cref{wstructureonmodrcpl}, the connective part $\modrcplgeq\subset \modrcpl$ forms a hypercomplete weight structure. Therefore, by	\cref{hycompleteimpliespsigamak}, \modrcplgeq is generated by $\omega_1$-projective objects under small colimits. By \cref{kcprojgen}, we conclude that $\modrcplgeq\simeq \mcp_{\Sigma_{\omega_1}}(\op{Proj}_R^{\op{cpl},\omega_1}).$
	\end{proof}
	We next compare the stable hull of $\omega_1$-projective complete modules with the category of $\omega_1$-compact complete modules.
	\begin{lem}
		Let $R$ be an adic \einfring.	The functor $$\stab(\op{Proj}_R^{\op{cpl},\omega_1})\to \modu_R^{\op{cpl},\omega_1}$$ induced by the inclusion $\op{Proj}_R^{\op{cpl},\omega_1}\hookrightarrow\modu_R^{\op{cpl},\omega_1}$ is fully faithful. 
	\end{lem}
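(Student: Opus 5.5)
We need to show that $\stab(\op{Proj}_R^{\op{cpl},\omega_1})\to \modu_R^{\op{cpl},\omega_1}$ is fully faithful. By \cref{destab}, $\stab(\mca)$ for a small additive $\mca$ is the thick closure of $\mca$ inside $\opsp(\mcp_\Sigma(\mca))^\omega$. On the generators, mapping spectra in $\stab(\mca)$ are the connective spectra $\unmap_{\mca}(p,q)$, shifted: $\unmap_{\stab(\mca)}(\Sigma^n p, q)\simeq \Sigma^{-n}\unmap_{\mca}(p,q)$. The plan is to reduce to generators and compare with the mapping spectra in $\modrcpl$. The functor is exact, and the source is the thick closure of $\op{Proj}_R^{\op{cpl},\omega_1}$. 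So it suffices to show that for $p,q\in\op{Proj}_R^{\op{cpl},\omega_1}$ the map
$$\unmap_{\stab}(p,q)\to \unmap_{\modrcpl}(p,q)$$
is an equivalence of spectra. The idempotent completion and retracts are handled by a standard thick-subcategory argument.

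The left side is the connective mapping spectrum, i.e. $\tau_{\geq0}$ of the right side. The key claim is therefore that $\unmap_{\modrcpl}(p,q)$ is connective whenever $p$ is projective in $\modrcplgeq$ and $q$ is connective. This is the negative-Ext vanishing for projectives: for $n>0$,
$$\pi_{-n}\unmap(p,q)\simeq \pi_0\Map(p,\Sigma^n q)=0.$$
To see the vanishing, I would use projectivity of $p$ in the prestable category $\modrcplgeq$. By \cref{modrcplomega1projgen}, $\Map(p,-)$ on $\modrcplgeq$ preserves geometric realizations. Write $\Sigma^n q$ as the geometric realization of the simplicial object with terms $q^{\oplus k}$, given by the bar construction. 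Then $\Map(p,\Sigma^n q)\simeq \Sigma^n\Map(p,q)$ in anima-with-group-structure, which is $n$-connected, so $\pi_0$ vanishes. Equivalently, as in the proof of \cref{pgenpst} via \cref{lemma4.3}, $\unmap(p,-)\colon\modrcplgeq\to\spgeq$ preserves finite colimits, hence commutes with $\Sigma$. Alternatively, the weight-structure statement of \cref{wstructureonmodrcpl} says directly that $\modrcplgeq$ is the heart side of a weight structure with $\unmap(\text{weight }\geq0$-projectives, $\geq 0)$ connective.

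It remains to check that the target really lands in $\omega_1$-compacts and that the comparison map is the natural one. Both points are formal: $\omega_1$-projectives are $\omega_1$-compact in $\modrcpl$ because the $t$-structure is compatible with $\omega_1$-filtered colimits. The main subtlety I expect is identifying mapping spectra in $\stab(\mca)$ with $\unmap_{\mca}$ on generators with shifts. For this I would use $\stab(\mca)=\opsp(\mcp_\Sigma(\mca))^\omega$ and Yoneda in $\mcp_\Sigma(\mca)$, where representables are compact projective, so mapping spectra into $\Sigma^k$ of representables are computed as $\Sigma^k$ of the connective hom.
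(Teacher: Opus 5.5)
Your proposal is correct and follows the paper's own proof. Both arguments reduce to the generators $\mathrm{Proj}_R^{\mathrm{cpl},\omega_1}$ and note that mapping spectra between these projectives are connective, both in the stable hull and among complete modules (your negative-Ext vanishing via projectivity), so full faithfulness reduces to the obvious statement on mapping anima. The only difference is cosmetic: the paper cites Winges' lemma that the stable hull is already the stable (not just thick) closure of the generators, whereas your closure-under-retracts argument makes that input unnecessary.
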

	\begin{proof}
		By \cite[Lemma 6.6]{winges2024localisation}, $\stab(\op{Proj}_R^{\op{cpl},\omega_1})$ is the smallest stable subcategory of itself containing $\op{Proj}_R^{\op{cpl},\omega_1}$. Therefore it suffices to show that the restriction $\op{Proj}_R^{\op{cpl},\omega_1}\hookrightarrow\modu_R^{\op{cpl},\omega_1}$ is fully faithful on the \emph{mapping spectra} of $\stab(\op{Proj}_R^{\op{cpl},\omega_1})$. However,  objects of $\op{Proj}_R^{\op{cpl},\omega_1}$ are projective in both $\stab(\op{Proj}_R^{\op{cpl},\omega_1})$ and $\modu_R^{\op{cpl},\omega_1}$, hence all their mapping spectra are connective. So, it reduces to showing full faithfulness on mapping anima, which is obvious.
	\end{proof}
	This comparison allows us to locate the dualizable and compact objects of complete modules inside the stable hull of  $\omega_1$-projective complete modules.
	\begin{prop}\label{modrcpldsubsetstab}
		Let $R$ be an adic \einfring.	We have the inclusions $$\modu_R^{\op{cpl},\omega}\subset \modu_R^{\op{cpl},d}\subset \op{Stab}(\op{Proj}_R^{\op{cpl},\omega_1})$$ inside \modrcpl.
	\end{prop}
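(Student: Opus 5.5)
The first inclusion is formal: a compact object $P$ of $\modrcpl$ is dualizable because $\modrcpl$ is compactly generated by the completions of perfect $R$-modules. Indeed, the completion of $R/I$-type Koszul objects, or more simply $R^\wedge_I$ itself, is the unit, and the unit is compact since $\modrcpl$ is generated by $I$-nilpotent perfect modules such as $R/\!/(x_1^n,\dots,x_k^n)$. So it suffices to check that the unit is compact and that compact generators are dualizable; then the compact objects are exactly the thick closure of dualizable generators. Compactness of the unit holds because the $I$-torsion/complete equivalence (Greenlees--May, \cite[\textsection7.3]{sag}) identifies $\modrcpl$ with $I$-nilpotent modules, which are compactly generated by the Koszul complex $K$, and the unit corresponds to $\Gamma_I R = \colim$ of shifted duals of Koszul complexes, a compact object in torsion modules. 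So in a rigidly compactly generated category compact objects are dualizable. Rather than relying on this, the cleaner route I would take is to note that $\modu_R^{\op{cpl},\omega}$ is the thick subcategory generated by the (dualizable) Koszul object $\widehat{K}$ and the unit, and dualizables form a thick subcategory.

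For the second inclusion, let $M\in\modrcpl$ be dualizable with dual $M^\vee$. The key steps are as follows. First, $M$ is $\omega_1$-compact: the unit of $\modrcpl$ is $\omega_1$-compact (it is a sequential limit of compacts $R/\!/I^n$, and in the complete category countable limits of compacts are $\omega_1$-compact since $\widehat{\otimes}$ with the unit is the identity), so $\Map(M,-)=\Map(\mb{1},M^\vee\widehat{\otimes}-)$ commutes with $\omega_1$-filtered colimits because $M^\vee\widehat{\otimes}-$ preserves colimits. Second, $M$ is bounded below, with $\tau_{\le n}M$ having $\omega_1$-compact pieces; here I would use that $M\widehat{\otimes}_R R/I$, i.e.\ the reduction mod a Koszul complex, is dualizable, hence perfect over $R/\!/I$, and therefore bounded with finitely presented homotopy. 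Nakayama-type arguments then give that $M$ is bounded below, say $M\in \Sigma^{-a}\modrcplgeq$. Third, I would resolve $M$: using \cref{modrcplomega1projgen}, choose $\omega_1$-projective complete modules surjecting on the lowest homotopy, and iterate. Because the reduction of $M$ is perfect (finite Tor-amplitude over $R/\!/I$), after finitely many steps the remaining fiber $N$ has $N\widehat{\otimes}R/\!/I$ concentrated in a single degree and flat. Then $N$ should itself be $\omega_1$-projective, i.e.\ a retract of the completion of a countable free module, giving a finite weight-complex presentation of $M$ by objects of $\op{Proj}_R^{\op{cpl},\omega_1}$. Hence $M$ lies in the stable hull, and by the preceding lemma $\stab(\op{Proj}_R^{\op{cpl},\omega_1})$ is a full subcategory, so this membership makes sense.

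I expect the main obstacle to be the last step: showing that a complete module which is $\omega_1$-compact and whose derived reduction mod $I$ is flat and discrete is $\omega_1$-projective. I would try to prove it via the weight structure of \cref{wstructureonmodrcpl}. Such an $N$ has weights in $[0,0]$ if $\Ext^1(N,Q)=0$ for all coconnective-type $Q$, and this vanishing can be checked after reduction mod $I^n$ by completeness and a $\lim^1$ argument using countability. This would identify $N$ with the heart of the weight structure, i.e.\ with $\omega_1$-projectives by \cref{modrcplomega1projgen}. An alternative would be to use the weight-structure boundedness directly: since dualizable objects are bounded in weights by $\omega_1$-compactness together with the bounded Tor-amplitude, they lie in the thick hull of the heart.
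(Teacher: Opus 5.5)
\emph{Verdict: genuine gap.} Your first inclusion survives, but your second-inclusion argument does not close at the step you flag yourself.

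\textbf{The gap.} Nothing in your argument shows that the killing process ends in $\op{Proj}_R^{\op{cpl},\omega_1}$, and the criterion you propose for the last step is false. A connective, $\omega_1$-compact complete module whose derived reduction mod $I$ is discrete and flat need not be projective in $\modrcplgeq$. Take $R=\mathbb{Z}_p\langle t\rangle$ and $I=(p)$. The module $N=\mathbb{Z}_p\langle t^{\pm1}\rangle$ has reduction $\mathbb{F}_p[t^{\pm1}]$, which is flat but not projective over $\mathbb{F}_p[t]$. So $N$ is not a retract of $(\bigoplus_{\mathbb{N}}R)^\wedge$, and the $\operatorname{Ext}^1$-vanishing you want to check modulo $I^n$ fails. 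Your closing ``alternative'' simply asserts the needed upper weight bound. $\omega_1$-compactness does not give it, and weights are governed by $\unmap(M,-)$, not by the Tor-amplitude of $M$.

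The paper sidesteps all of this. After reducing to complete $R$ (\cref{withoutlosscomplete}), it invokes \cite[Theorem 11.3]{naumann2024separable}: the dualizable objects of $\modrcpl$ are exactly $\op{Perf}(R)$, i.e.\ the thick closure of the unit. Since $R$ itself lies in $\op{Proj}_R^{\op{cpl},\omega_1}$, the inclusion follows.

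\textbf{How to repair your route.} Take the upper bound from the dual; this gives a self-contained alternative to the Naumann--Pol input.
\begin{itemize}
\item For dualizable $M$ and $Y\in\modrcplgeq$ one has $\unmap(M,Y)\simeq M^\vee\widehat{\otimes}Y$.
\item Your bounded-below argument applies equally to $M^\vee$; say $M^\vee\in\Sigma^{-b}\modrcplgeq$. Then $\pi_{-i}\unmap(M,Y)=0$ for $i>b$, so $M\in(\modrcpl)_{w\le b}$ for the weight structure of \cref{wstructureonmodrcpl}.
\item Now run your killing process on $M\in\Sigma^{-a}\modrcplgeq$. The bottom homotopy can be hit by an object of $\op{Proj}_R^{\op{cpl},\omega_1}$, using $\omega_1$-compactness and \cref{modrcplomega1projgen}.
\item Each cofiber is again $\omega_1$-compact, one step more connective, and still of weight $\le b$ by extension-closure.
\item After $a+b$ steps the remainder is $\Sigma^{b}$ of an $\omega_1$-compact projective of $\modrcplgeq$, i.e.\ of an object of $\op{Proj}_R^{\op{cpl},\omega_1}$. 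No mod-$I$ flatness analysis is needed.
\end{itemize}

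\textbf{Secondary errors.}
\begin{itemize}
\item The unit of $\modrcpl$ is not compact. For $R=\mathbb{Z}_p$, the completed colimit of $\mathbb{Z}_p\xrightarrow{p}\mathbb{Z}_p\xrightarrow{p}\cdots$ is $0$. Equivalently, $\Gamma_IR$ is not compact in torsion modules. So $\modrcpl$ is not rigidly compactly generated, only locally rigid; that is what the paper uses via \cite[Lemma 4.53]{ramzi2024locally}.
\item Your final version of the first inclusion works because the compacts are the thick closure of the (dualizable) Koszul object alone, not of the Koszul object and the unit.
\item $R/\!/I$ is not a ring, so ``perfect over $R/\!/I$'' and ``Tor-amplitude over $R/\!/I$'' do not make sense. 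Use compactness of $M\otimes_R K$ in $\modrcpl$ instead: it is then a perfect $R$-module, and Nakayama gives boundedness below. For any Tor-amplitude statement, use the discrete ring $\pi_0R/I$.
\item The unit is $\omega_1$-compact, but not because countable limits of compacts are $\omega_1$-compact. The correct reason is that complete modules are closed under $\omega_1$-filtered colimits in $\modu_R$: completeness is tested against the $\omega_1$-compact modules $R[x_i^{-1}]$.
\end{itemize}
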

	\begin{proof}
		By \cite[Lemma 4.53]{ramzi2024locally}, the first inclusion follows from the local rigidity of $\modrcpl$ over \opsp.
		
		Now we prove the second inclusion. By virtue of \cref{withoutlosscomplete}, we may assume that $R$ is complete without loss of generality.	By \cite[Theorem 11.3]{naumann2024separable}, we have $\modu_R^{\op{cpl},d}\simeq \op{Perf}(R)$. In particular, the stable subcategory $$\modu_R^{\op{cpl},d}\hookrightarrow \modrcpl$$ can be identified with the thick subcategory generated by the unit $R$. Therefore, it suffices to observe $R$ lies in  $\op{Stab}(\op{Proj}_R^{\op{cpl},\omega_1})$.
	\end{proof}

	\begin{rem}
		In particular, the inclusion $$\op{Stab}(\op{Proj}_R^{\op{cpl},\omega_1})\hookrightarrow \modrcpl$$  is a dense full subcategory by \cite[Remark 20.4.1.11]{sag}, because \modrcpl is compactly generated by combining \cite[Proposition 7.1.1.12(e) and Proposition 7.3.1.7]{sag}.
	\end{rem}
	
We are now ready to recall the notion of nuclear modules used by Efimov \cite{efimov2025localizinginvariantsinverselimits} in this context, though with slightly different notation.

\begin{de}
	Let $R$ be an adic \einfring. We define
	\[
	\nucr \coloneq \nuc\big(\ind(\op{Stab}(\op{Proj}_R^{\op{cpl},\omega_1}))\big).
	\]
	Note that $\ind(\op{Stab}(\op{Proj}_R^{\op{cpl},\omega_1}))$ can be canonically identified with $\opsp(\mcp_{\Sigma}(\op{Proj}_R^{\op{cpl},\omega_1}))$. We write
	\[
	\nucrgeq \coloneq \nucr \cap \ind(\op{Stab}(\op{Proj}_R^{\op{cpl},\omega_1}))_{\geq 0}\simeq \nucr \cap\mcp_{\Sigma}(\op{Proj}_R^{\op{cpl},\omega_1}),
	\]
	which equips $\nucr$ with an accessible $t$-structure compatible with its symmetric monoidal structure.
\end{de}

\begin{rem}
	Our $\nucr$ corresponds to $\nuccsr$ in \cite{efimov2025localizinginvariantsinverselimits}. Note that Efimov reserves the notation $\nucr$ for $\nuc\big(\ind(\modu_R^{\op{cpl},\omega_1})\big)$, which can be identified with the stable rigidification of $\modrcpl$, but differs from our $\nucr$.
\end{rem}
	The abstract criterion from the first subsection can now be applied to this nuclear category.
	
	\begin{thm}\label{nucadrig}
		Let $R$ be an adic \einfring. Then $\nucrgeq$ is additively rigid, and the inclusion $$\nucrgeq\xhookrightarrow{i}\mcp_{\Sigma}(\op{Proj}_R^{\op{cpl},\omega_1})$$  exhibits \nucrgeq as the additive rigidification of $\mcp_{\Sigma}(\op{Proj}_R^{\op{cpl},\omega_1})$.
	\end{thm}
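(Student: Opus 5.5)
The plan is to deduce the statement from \cref{addrig}, verifying its hypothesis through \cref{addrigcriterion}. Set $\mca:=\op{Proj}_R^{\op{cpl},\omega_1}$. This is a small additive \infcat, and it carries a symmetric monoidal structure: the unit $R$ lies in it, and the completed tensor product of two $\omega_1$-projective complete modules is again $\omega_1$-projective and complete. This holds because the completed tensor product is right $t$-exact and preserves $\omega_1$-filtered colimits in each variable, so it preserves the relevant projectivity; compare the argument in \cref{wstructureonmodrcpl}. Put $\mcc=\opsp(\mcp_\Sigma(\mca))\simeq\ind(\stab(\mca))$. By definition $\nucr=\nuc(\mcc)$ and $\nucrgeq=\nucr\cap\mcc_{\geq0}$.

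By \cref{addrigcriterion}, it suffices to check that for every $a\in\mca$ the object
$a^{\op{tr}}=\unmap_{\mcc}(\mb{1},(-)^\vee\otimes a)\in\ind(\stab(\mca))$ is nuclear in $\mcc$. Once this holds, Efimov's \cite[Proposition 1.30]{efimov2025localizinginvariantsinverselimits} gives $\nuc(\mcc)=\mcc^{\op{rig}}$, with colocalization $ii^R(c)\simeq c^{\op{tr}}$. Since the duals $p^\vee$ of compact projectives are connective, $i^R$ is right $t$-exact. So the inclusion $\mcc^{\op{rig}}\hookrightarrow\mcc$ is strongly $t$-continuous, and \cref{addrig} then shows that $\mcc^{\op{rig}}_{\geq0}=\nucrgeq$ is additively rigid and is the additive rigidification of $\mcc_{\geq0}=\mcp_\Sigma(\mca)$. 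This is exactly the claim.

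The key step, and the expected main obstacle, is the nuclearity of $a^{\op{tr}}$. I would first reduce to a convenient class of $a$. Every $\omega_1$-projective complete module is a retract of the completion of a countable free module, so by closure of nuclear objects under retracts and colimits it is enough to treat $a=\widehat{\bigoplus_{\mathbb N}R}$ (and $a=R$). For such $a$ I would compute $a^{\op{tr}}$ explicitly. For a compact generator $p\in\mca$, the values $\unmap(p,a^{\op{tr}})=\unmap(\mb{1},p^\vee\otimes a)$ should exhibit $a^{\op{tr}}$ as the colimit over $n$ of the objects $\widehat{\bigoplus}\,R/I^n$-type truncations, i.e. as the ``null sequences'' ind-system $\colim_n\prod_{\mathbb N}$ with transition maps given by multiplication by generators of $I$. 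Then I would show these transition maps are trace class in $\mcc$, using that $\mcc^\omega$ is built from $\stab(\mca)$, where maps $R\to R$ multiplied by $x\in I$ become trace class after completion. Basic nuclearity then follows, and \cref{basicnucgen} identifies the colimit as nuclear.

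If this direct computation proves awkward, the fallback is to use \cref{modrcpldsubsetstab} together with Efimov's known identification $\nuc(\ind(\modu_R^{\op{cpl},\omega_1}))=(\modrcpl)^{\op{rig}}$. I would transport trace-class data along the fully faithful functor $\stab(\mca)\hookrightarrow\modu_R^{\op{cpl},\omega_1}$, using \cref{internalhom1} and \cref{locinternalhom} to compare internal homs and duals of compact objects in the two ind-completions. That would verify the condition of \cref{addrigcriterion} for each $a$.
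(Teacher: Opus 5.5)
Your formal reduction is the same as the paper's. \cref{addrig} and \cref{addrigcriterion} reduce the theorem to Efimov's condition that $a^{\op{tr}}$ is nuclear in $\mcc=\ind(\stab(\mca))$. Exactness of $a\mapsto a^{\op{tr}}$ then lets you restrict to $P=(\bigoplus_{\mathbb N}R)^\wedge$. But that condition is the entire non-formal content of the theorem, and your sketch of it does not work.

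\textbf{The proposed description of $P^{\op{tr}}$.} Read literally, a colimit of $\prod_{\mathbb N}R$ along multiplication by a fixed $x\in I$ cannot be $P^{\op{tr}}$. The unit is compact in $\mcc$, so such a colimit has $x$-local mapping spectrum out of $\mb 1$. By contrast, $\unmap_{\mcc}(\mb 1,P^{\op{tr}})\simeq\unmap_{\mcc}(\mb 1,\mb 1^\vee\otimes P)$ is the nonzero $I$-complete module $P$. Moreover, $\prod_{\mathbb N}R$ need not lie in $\stab(\mca)$: for $R=\mathbb Z_p$ it is the completion of an uncountable free module, hence not $\omega_1$-compact.

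\textbf{The trace-class claim.} Multiplication by a fixed element of $I$ on such infinite objects is not trace class in general. For $R=\mathbb Z_p$, the formula in \cref{appenc1} shows that the trace-class endomorphisms of $P$ are the matrices $(a_{ij})$ with $\sup_j|a_{ij}|\to0$ as $i\to\infty$. So $\mathrm{diag}(p^i)$ is trace class but $p\cdot\id_P$ is not. Your remark about maps $R\to R$ is vacuous, since $R$ is dualizable.

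\textbf{Even the values of $P^{\op{tr}}$ are nontrivial.} Computing $\unmap(p,P^{\op{tr}})=\unmap(\mb 1,p^\vee\otimes P)$ requires understanding $P^\vee$ inside $\ind(\stab(\mca))$. There it is only the restricted Yoneda image of $\prod_{\mathbb N}R$, with no evident colimit presentation. The natural system $\bigoplus_{\mathbb N}R/I^n$ is an inverse system with limit $P$. Converting such a limit into a filtered colimit is exactly the nontrivial step.

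\textbf{The fallback.} It does not close the gap either. The paper stresses that $\nuc(\ind(\modu_R^{\op{cpl},\omega_1}))$, Efimov's rigidification of $\modrcpl$, is a different category from $\nucr$. For the embedding $j\colon\ind(\stab(\mca))\hookrightarrow\ind(\modu_R^{\op{cpl},\omega_1})$, \cref{internalhom1} only gives $y^\vee\simeq j^R(j(y)^\vee)$. Hence $j(y^\vee)\not\simeq j(y)^\vee$ in general, $j(a^{\op{tr}})$ is not $j(a)^{\op{tr}}$, and Efimov's condition does not transfer along $j$.

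\textbf{What the paper does instead.}
\begin{itemize}
\item It uses \cite[Proposition 1.33]{efimov2025localizinginvariantsinverselimits} to reduce to showing that $P^{\opp,\vee}\otimes P^{\opp,\vee}\to(P\otimes P)^{\opp,\vee}$ is an equivalence in $\ind(\mcd^{\opp})$ (\cref{popveenuclear}).
\item It computes $P^{\opp,\vee}\simeq\lim_n\bigoplus_{\mathbb N}R^{\opp}/I^n$ (\cref{formulapopvee}).
\item It applies the $\mathrm{AB6}$ formula $\lim_n\bigoplus_{\mathbb N}M_n\simeq\colim_{f\in F}\prod_i I_{f(i)}$ (\cref{formulamain}, from Efimov's Proposition A.1). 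This trades the inverse limit for a filtered colimit of countable products, which is compatible with the tensor product.
\end{itemize}
Your proposal is missing an input of this kind, or else a correct explicit presentation of $P^{\op{tr}}$ as a filtered colimit of sequences with genuinely trace-class transitions, such as $\mathrm{diag}(x^i)$-type maps.

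A minor point: closure of $\mca$ under $\widehat{\otimes}_R$ is clearest from $P\,\widehat{\otimes}_R\,P\simeq(\bigoplus_{\mathbb N\times\mathbb N}R)^\wedge$. Right $t$-exactness and $\omega_1$-continuity of $\widehat{\otimes}_R$ do not by themselves preserve projectivity.
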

	
	\begin{proof} 
		Let $P=(\bigoplus_{\mathbb{N}} R)^{\wedge}\in \op{Proj}_R^{\op{cpl},\omega_1}$, and let $\mcd=\op{Stab}(\op{Proj}_R^{\op{cpl},\omega_1})$.	Combining \cref{addrig} and \cref{addrigcriterion}, it suffices to show that $\ind(\mcd)$ satisfies the condition in \cite[Proposition 1.30]{efimov2025localizinginvariantsinverselimits}. Since $P$ generates $\op{Proj}_R^{\op{cpl},\omega_1}$ under finite sums and retracts, by \cite[Proposition 1.33]{efimov2025localizinginvariantsinverselimits} it is enough to show that $P^{\opp,\vee}\in \ind(\mcd^\opp)$ is nuclear.  However, this follows from  \cref{popveenuclear} (cf. \cite[Proposition 7.4]{efimov2025localizinginvariantsinverselimits}). 
	\end{proof}
	We also record the following explicit computation, which explains the relation with the formulas appearing in Efimov's work.
	\begin{rem}
		In \cite[Corollary 7.7]{efimov2025localizinginvariantsinverselimits}, Efimov further gives an explicit formula $$
		\unmap_{\ind(\mcd)}(P,ii^R(P))\simeq \big((\prod_{\mathbb{N}} R)\otimes_R (\bigoplus_{\mathbb{N}} R)\big)^\wedge.
		$$	However, both \cite[Proposition 7.4 and Corollary 7.7]{efimov2025localizinginvariantsinverselimits} rely on an argument written in German \cite[Satz 3.8]{andreychev2023k}. For the convenience of the reader, we provide complete proofs of both results in \cref{popveenuclear} and \cref{appenc1}.
	\end{rem}
	
	It remains to pass from  \(\mcp_{\Sigma}(\op{Proj}_R^{\op{cpl},\omega_1})\) back to the actual  category of connective complete modules.
	\begin{thm}\label{main5}
		Let $R$ be an adic \einfring. Then $\nucrgeq$ can be identified with the additive rigidification of $\modrcplgeq$. 	
	\end{thm}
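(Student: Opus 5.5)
The plan is to combine \cref{nucadrig} with \cref{addrigidificationkappaproj}. By \cref{modrcplomega1projgen} we have
\[
\modrcplgeq\simeq \mcp_{\Sigma_{\omega_1}}(\op{Proj}_R^{\op{cpl},\omega_1}),
\]
so $\modrcplgeq$ is $\omega_1$-projectively generated. The first step is to check that it is an $\omega_1$-projectively presentably symmetric monoidal additive \infcat in the sense of the definition preceding \cref{addrigidificationkappaproj}. This means two things. The unit $R^\wedge$ lies in $\op{Proj}_R^{\op{cpl},\omega_1}$, which is clear since it is a retract of $P=(\bigoplus_{\mathbb N}R)^\wedge$. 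And $\op{Proj}_R^{\op{cpl},\omega_1}$ is closed under $\widehat{\otimes}_R$.

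For the tensor closure, I would reduce to generators. Every object is a retract of $P$ or of a finite sum of copies of it, and
\[
P\,\widehat\otimes_R\, P\simeq \Big(\bigoplus_{\mathbb N\times\mathbb N}R\Big)^\wedge\simeq P,
\]
because completed tensor product commutes with completed direct sums: completion is symmetric monoidal and preserves colimits as a functor to complete modules. Retracts are preserved, which finishes the check.

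Granting this, \cref{addrigidificationkappaproj} with $\kappa=\omega_1$ gives that the canonical comparison functor induces an equivalence of additive rigidifications,
\[
\mcp_{\Sigma}(\op{Proj}_R^{\op{cpl},\omega_1})^{\op{arig}}\xrightarrow{\sim}(\modrcplgeq)^{\op{arig}}.
\]
Here the comparison functor is the symmetric monoidal colimit-preserving functor $c\colon \mcp_{\Sigma}(\op{Proj}_R^{\op{cpl},\omega_1})\to\mcp_{\Sigma_{\omega_1}}(\op{Proj}_R^{\op{cpl},\omega_1})$ extending the inclusion. Its symmetric monoidal structure comes from the Day convolution on $\mcp_\Sigma$, using that the generators are tensor-closed.

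By \cref{nucadrig}, the left side is $\nucrgeq$, and it is additively rigid. So the composite
\[
\nucrgeq\hookrightarrow \mcp_{\Sigma}(\op{Proj}_R^{\op{cpl},\omega_1})\xrightarrow{c}\modrcplgeq
\]
exhibits $\nucrgeq$ as the additive rigidification of $\modrcplgeq$. This uses that rigidification is functorial and that the counit maps are compatible, as in the statement of \cref{addrigidificationkappaproj}.

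I expect the main obstacle to be the verification that $\op{Proj}_R^{\op{cpl},\omega_1}$ is closed under the completed tensor product, together with making sure $c$ is genuinely symmetric monoidal. Concretely, I would first identify $\omega_1$-projective connective complete modules with retracts of $(\bigoplus_{S}R)^\wedge$ for $S$ countable. To do this I would use that completion preserves $\omega_1$-projectives, as in the proof of \cref{wstructureonmodrcpl}, and that every $\omega_1$-projective object receives a $\pi_0$-surjection from such a sum, which splits. Once this identification is in hand, the tensor computation above is routine.
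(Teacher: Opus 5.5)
Your proposal is correct and follows the paper's own proof. The paper identifies $\nucrgeq$ with $\mcp_{\Sigma}(\op{Proj}_R^{\op{cpl},\omega_1})^{\op{arig}}$ via \cref{nucadrig}, then combines \cref{modrcplomega1projgen} with \cref{addrigidificationkappaproj} at $\kappa=\omega_1$. Your explicit check that $\op{Proj}_R^{\op{cpl},\omega_1}$ contains the unit and is closed under $\widehat{\otimes}_R$ (via retracts of $P$ and $P\,\widehat{\otimes}_R\,P\simeq P$) supplies a hypothesis that the paper leaves implicit.
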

	\begin{proof}
		Let $\mcc=\modrcplgeq$. By \cref{nucadrig}, $\nuc(R)_{\geq0}$ can be identified with the additive rigidification of $\mcp_{\Sigma}(\mcc^{\omega_1\text{-}\op{proj}})$.
		Therefore,  it suffices to show that the following functor is an equivalence $$\mcp_{\Sigma}(\mcc^{\omega_1\text{-}\op{proj}})^{\op{arig}}\xrightarrow{}\mcc^{\op{arig}}.$$ However, that follows from combining 	\cref{addrigidificationkappaproj} and
		\cref{modrcplomega1projgen}.
	\end{proof} 

We finish by spelling out a consequence for localizations of complete adic rings, which includes the basic examples of uniform Tate rings.
	\begin{cor}\label{main6}
		Let $R$ be a complete adic \einfring and let $S\subset\pi_0R$ be a submonoid. We denote $\mcc=\ind(\stab(\op{Proj}_R^{\op{cpl},\omega_1}))$ and define
		\[
		\nuc(R[S^{-1}]):=\nuc(\modu_{R[S^{-1}]}(\mcc)).
		\]
		Then 	$\nuc(R[S^{-1}])_{\geq0}$ is additively rigid.
		
		Consequently, for any uniform Tate ring $A$, $\nuc(A)_{\geq0}$ is additively rigid; in particular, $$\nuc(\mathbb{Q}_p)_{\geq0}$$ is additively rigid.
		
	\end{cor}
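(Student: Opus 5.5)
The plan is to reduce to \cref{modnucaddrig}, applied with $\mcc=\ind(\stab(\op{Proj}_R^{\op{cpl},\omega_1}))\simeq\opsp(\mcp_{\Sigma}(\op{Proj}_R^{\op{cpl},\omega_1}))$ and $A=R[S^{-1}]$.

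\textbf{Verifying the hypotheses.} The proof of \cref{nucadrig} shows that $\mcc$ satisfies the condition of \cref{addrigcriterion}, i.e.\ that of \cite[Proposition 1.30]{efimov2025localizinginvariantsinverselimits}. Consequently $\mcc^{\op{rig}}=\nuc(\mcc)=\nucr$, and the inclusion $\mcc^{\op{rig}}\hookrightarrow\mcc$ is strongly $t$-continuous. So the only remaining input is that $A$ is a connective commutative algebra in $\mcc^{\op{rig}}_{\geq0}=\nucrgeq$.

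\textbf{The algebra $A$.} Write $R[S^{-1}]$ as the filtered colimit of multiplication-by-$s$ maps $R\xrightarrow{s}R\xrightarrow{s'}\cdots$ indexed by the monoid $S$. Its underlying object in $\mcc$ is then a filtered colimit of copies of the unit $R$. The unit is compact and dualizable, hence nuclear, and $\nuc(\mcc)$ is closed under colimits, so $A$ is nuclear. It is connective because each term is connective and the $t$-structure on $\mcc$ is compatible with filtered colimits.

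For the commutative algebra structure, note that localization at $S$ is an idempotent $\mathbb{E}_\infty$-algebra. One construction is to take $A=\mathbf{1}[S^{-1}]$ in the presentably symmetric monoidal category $\nucr$, using $S\to\pi_0R=\pi_0\Map(\mathbf 1,\mathbf 1)$. Since the inclusion $\nucr\hookrightarrow\mcc$ is symmetric monoidal and colimit preserving, this agrees with the same localization formed in $\mcc$.

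\textbf{Conclusion and main obstacle.} \cref{modnucaddrig} then shows that $\modu_A(\nucrgeq)$ is additively rigid. By \cref{lemnucAmodule}, $\modu_A(\nucr)\simeq\nuc(\modu_A(\mcc))=\nuc(R[S^{-1}])$. The step I expect to need the most care is identifying the connective parts. One must check that $\nuc(R[S^{-1}])_{\geq0}$, meaning the intersection with the module $t$-structure, equals $\modu_A(\nucrgeq)$. This should follow because the forgetful functor $\modu_A\to\mcc$ detects connectivity, and the $t$-structure on $\modu_A(\nucr)$ is the one induced from $\nucr$.

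\textbf{The Tate case.} A uniform Tate ring $A$ has a ring of definition $A_0$ that is complete adic with pseudo-uniformizer $\varpi$, and $A=A_0[\varpi^{-1}]$. Take $R=A_0$ and $S=\{\varpi^n\}$; here one uses that $\nuc(A)$ in the Clausen--Scholze sense agrees with $\nuc(\modu_{A}(\mcc))$. For $\mathbb{Q}_p$, take $R=\mathbb{Z}_p$ and $S=\{p^n\}$.
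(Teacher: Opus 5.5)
Your proposal is correct and follows the paper's route: apply \cref{modnucaddrig} with $A=R[S^{-1}]$, where the only real input is that $R[S^{-1}]\in\calg(\nucrgeq)$. The paper gets this membership in one step from the fully faithful, symmetric monoidal, colimit-preserving embedding $\modu_R(\opsp)\simeq\ind(\modu_R^{\op{cpl},d})\hookrightarrow\nucr$ of \cref{modrcpldsubsetstab}, which delivers nuclearity, connectivity and the $\mathbb{E}_\infty$-structure at once; your filtered-colimit-of-units plus idempotent-algebra argument just unpacks that embedding (note that connectivity of the colimit only needs that connective parts are closed under colimits, not compatibility with filtered colimits).
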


	\begin{proof}
		 It is straightforward to see that $R[S^{-1}]$ lies in $\nuc(R)_{\geq0}$, because by \cref{modrcpldsubsetstab} we have a fully faithful symmetric monoidal embedding $\modu_R(\opsp)\simeq\ind(\modu_R^{\op{cpl},d})\hookrightarrow \nuc(R).$
		 It follows from \cref{modnucaddrig} that $\nuc(R[S^{-1}])_{\geq0}$ is additively rigid.
	\end{proof}

		The preceding corollary covers many analytic examples, but it leaves open the extent to which additive rigidity holds in greater generality:
	\begin{quest}
		Is $\nuc(A)_{\geq0}$ additively rigid for an arbitrary analytic ring $A$? More generally, when is $\nuc(X)_{\geq0}$ additively rigid for an analytic stack $X$?
	\end{quest}

\section{Localizing invariants and spherical sheaves}\label{section6}
Having characterized the internal structure of dualizable additive \infcats, we now turn to their global functorial behavior and localizing invariants. Extending Pstrągowski's theory of spherical sheaves \cite{pstrkagowski2023synthetic}, we establish an identification between localizing invariants and spherical sheaves.

We begin by recalling the categorical properties of the ambient categories of dualizable and atomically generated additive \infcats. These properties ensure that the usual additivity and localization arguments can be carried out internally in this setting.
	\begin{thm}[{\cite[Theorem 3.1, 3.14]{ramzi2024dualizable}}]\label{praddblkprl}
		$\prad^{\op{dbl}}$ is itself $\omega_1$-presentably symmetric monoidal and the symmetric monoidal (non-full) inclusion $\praddbl\to \prl_{\op{ad},\omega_1}$ preserves and reflects $\kappa$-compact objects for any uncountable regular cardinal $\kappa$.\footnote{The reflection property here comes from the comonadicity, which is proved in \cite{ramzi2024dualizable} too.} 
	\end{thm}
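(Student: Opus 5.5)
The statement is Ramzi's theorem specialized to the base $\mcv=\spgeq$, so the plan is to run the argument of \cite{ramzi2024dualizable} over this base. The only additive inputs are the following.
\begin{itemize}
\item $\spgeq$ is an idempotent algebra in $\prl$ and a mode, so $\prlad\simeq\modu_{\spgeq}(\prl)$ is itself presentably symmetric monoidal.
\item Atomic objects are exactly compact projective objects (\cref{atcproj}).
\item Every dualizable additive \infcat is $\omega_1$-presentable, and $\mcp_\Sigma(\mcc^{\omega_1})\to\mcc$ admits a left adjoint $\hat h$ (\cref{rmkcptassemb}).
\end{itemize}

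\textbf{Step 1: comonadic presentation.} I will express $\praddbl$ as coalgebras over a comonad on $\prl_{\op{ad},\omega_1}$. Consider the functor $\mcc\mapsto\mcp_\Sigma(\mcc^{\omega_1})$. For $\mcc$ dualizable, $\hat h$ identifies $\mcc$ with a retract of $\mcp_\Sigma(\mcc^{\omega_1})$ inside $\praddbl$.

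By \cref{ram149}(2), the dualizable additive \infcats are precisely the retracts of compact projectively generated ones. Internal left adjoints between such categories correspond to morphisms of coalgebras for the comonad $\mcc\mapsto\mcp_\Sigma(\mcc^{\omega_1})$, restricted to $\omega_1$-presentable categories and $\omega_1$-compact-preserving functors. The goal of this step is a comonadic forgetful functor $\praddbl\to\prl_{\op{ad},\omega_1}$.

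\textbf{Step 2: presentability and the monoidal structure.} The underlying functor $\mcp_\Sigma((-)^{\omega_1})$ is accessible and preserves colimits; these are computed in $\prl_{\op{ad},\omega_1}$, which is $\omega_1$-presentable. The category of coalgebras for an accessible colimit-preserving comonad on a presentable category is presentable. Hence $\praddbl$ is presentable, and the forgetful functor preserves colimits. It reflects $\kappa$-compactness because it is comonadic and the comonad preserves $\kappa$-filtered colimits.

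For preservation of $\kappa$-compact objects, I will identify the right adjoint of the forgetful functor as the cofree coalgebra $\mcc\mapsto\mcp_\Sigma(\mcc^{\omega_1})$. I will then check that it commutes with $\kappa$-filtered colimits. On objects, this reduces to the fact that $(-)^{\omega_1}$ commutes with $\kappa$-filtered colimits in $\prl_{\omega_1}$ for uncountable regular $\kappa$.

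For the symmetric monoidal structure, the inclusion $\praddbl\to\prl_{\op{ad},\omega_1}$ preserves tensor products because dualizable objects are closed under $\otimes$. Internal left adjoints are also closed under $\otimes$. Finally, $\mcp_\Sigma(\mca)\otimes\mcp_\Sigma(\mcb)\simeq\mcp_\Sigma(\mca\otimes\mcb)$, which makes the comonad lax monoidal compatibly, so $\omega_1$-presentable symmetric monoidality passes to coalgebras.

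\textbf{Main obstacle.} The hardest step is Step 1, namely verifying comonadicity. The key fact I need is that a functor between dualizable additive \infcats is an internal left adjoint if and only if it commutes with the $\hat h$'s. I expect to prove this via the retract/adjointability argument (\cref{ramzi148}), as in \cite[Section 3]{ramzi2024dualizable}. The rest of the argument is formal once $\spgeq$-linearity is substituted for stability.
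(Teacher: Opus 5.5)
Your outline follows the comonadic argument that the paper's footnote alludes to; the paper itself only specializes Ramzi's $\mcv$-linear theorems to $\mcv=\spgeq$. The outline has three parts: comonadicity of $U\colon\praddbl\to\prl_{\op{ad},\omega_1}$ with cofree functor $R(\mcc)=\mcp_\Sigma(\mcc^{\omega_1})$; reflection of $\kappa$-compactness via the cobar formula $\mapp(X,Y)\simeq\lim_{\Delta}\mapp(UX,T^{\bullet}UY)$, a countable limit, which is exactly where $\kappa>\omega$ enters; and preservation via $R$ commuting with $\kappa$-filtered colimits.

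\textbf{The gap: $\omega_1$-presentability.} You never prove that $\praddbl$ is $\omega_1$-presentable, which is part of the statement. Presentability, together with the identification of the $\omega_1$-compact objects as those $\mcc$ with $U\mcc$ $\omega_1$-compact, does not give it. You must still show that these objects generate $\praddbl$ under $\omega_1$-filtered colimits. Your remark that $\omega_1$-presentability ``passes to coalgebras'' because $T$ is lax monoidal is a non sequitur.

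Nor is there a formal shortcut. The usual route to $\kappa$-accessibility of coalgebras, as a limit of $\kappa$-accessible categories along $\kappa$-compact-preserving functors, needs $T$ to preserve $\omega_1$-compact objects, and it does not. For instance, $\spgeq$ is $\omega_1$-compact in $\prl_{\op{ad},\omega_1}$, but $T(\spgeq)=\mcp_\Sigma(\spgeq^{\omega_1})$ is not:
\begin{itemize}
\item $T(\spgeq)$ is the $\omega_1$-filtered colimit of the $\mcp_\Sigma(\langle S\rangle)$, where $\langle S\rangle$ is the idempotent-complete additive subcategory generated by a countable set $S$.
\item If $T(\spgeq)$ were $\omega_1$-compact, its identity would factor through one of these, forcing $\spgeq^{\omega_1}=\langle S\rangle$.
\item This fails: the $HA$, for the $2^{\aleph_0}$ types of rank-one torsion-free $A\subset\mathbb{Q}$, cannot all be summands of finite sums of countably many objects with countable $\pi_0$.
\end{itemize}
So you need a genuine generation argument. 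Options include a countable-closure argument on coalgebra data, or an explicit presentation of every dualizable additive category as an $\omega_1$-filtered colimit in $\praddbl$ of ones that are $\omega_1$-compact in $\prl_{\op{ad},\omega_1}$.

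\textbf{Two false intermediate claims (not fatal).} First, $\hat h$ does not exhibit $\mcc$ as a retract of $\mcp_\Sigma(\mcc^{\omega_1})$ in $\praddbl$. The retraction $c$ has the restricted Yoneda functor as right adjoint, which preserves $\omega_1$-filtered colimits but not all colimits in general. Indeed, a retract in $\praddbl$ of a compact projectively generated category is itself compact projectively generated. The retraction exists only in $\prl_{\op{ad},\omega_1}$, which is all the counit of the comonad requires.

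Second, $T$ does not preserve colimits. In $\bigoplus_{\mathbb N}\spgeq\simeq\prod_{\mathbb N}\spgeq$, every sequence of $\omega_1$-compact objects is $\omega_1$-compact. By contrast, $\bigoplus_{\mathbb N}T(\spgeq)\simeq\mcp_\Sigma(\bigoplus_{\mathbb N}\spgeq^{\omega_1})$ has only retracts of finite-support sequences as compact projectives. This is harmless, since the forgetful functor from coalgebras over any comonad creates colimits.

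\textbf{Comonadicity is only sketched on morphisms.} The criterion that internal left adjoints are the functors commuting with the $\hat h$'s controls mapping spaces. Barr--Beck--Lurie also requires the following for a $U$-split cosimplicial object: the split limit $\mcc^{-1}$, a priori only a retract of $\mcc^{0}$ in $\prl_{\op{ad},\omega_1}$, must be a limit in $\praddbl$. In particular, $\mcc^{-1}\to\mcc^{0}$ must be an internal left adjoint. This is not addressed.
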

	Besides the full category of dualizable additive \infcats, we will also need its compact projectively generated part.
	\begin{de}
		We define   $\pradat\subset \praddbl$ to be  the  full subcategory of those atomically generated (i.e. compact projectively generated) presentable additive \infcats. 
	\end{de}

	\begin{rem}
		Note that we have a natural equivalence $\catadidem\xrightarrow[\sim]{\mcp_{\Sigma}(-)}\pradat$. That is similar to the stable case $\catper\xrightarrow[\sim]{\ind(-)}\prst^{\op{at}}$.
	\end{rem}
	The next basic structural fact is that these ambient categories themselves retain the semi-additive nature.
	\begin{prop}
		Both \pradat and $\praddbl$ are  semi-additive \infcats.
	\end{prop}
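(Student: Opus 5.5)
The plan is to check semi-additivity directly: I will show that finite products and finite coproducts exist in each category and that the canonical comparison map between them is an equivalence. A useful model is $\prl$ itself, and its additive variant $\prlad$, where the product $\mcc\times\mcd$ is also the coproduct. The structure maps are the inclusions $\iota_1=(\id,0)$ and $\iota_2=(0,\id)$ and the projections $\pi_1,\pi_2$. All of these are colimit-preserving functors.

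The first step is to show that $\mcc\times\mcd$ lies in \praddbl whenever $\mcc$ and $\mcd$ do. Dualizable objects in the symmetric monoidal category \prlad are closed under finite coproducts, and here finite coproducts are products. Concretely, the dual is $\mcc^\vee\times\mcd^\vee$, with evaluation and coevaluation given as sums. If $\mcc$ and $\mcd$ are compact projectively generated, then $\mcc\times\mcd\simeq\mcp_\Sigma(\mcc^{\op{cproj}}\times\mcd^{\op{cproj}})$, so \pradat is closed under finite products as well. The zero category $0$ is dualizable and atomically generated, and it is both initial and terminal among internal left adjoints.

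The second step is to check that the four functors $\iota_j$ and $\pi_j$ are internal left adjoints. The right adjoint of $\iota_1$ is $\pi_1$, and the right adjoint of $\pi_1$ is $\iota_1$. Both of these preserve colimits, since colimits in $\mcc\times\mcd$ are computed componentwise.

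The third step, which I expect to be the main point, is to verify the universal properties inside the non-full subcategory \praddbl. Take internal left adjoints $F\colon\mce\to\mcc$ and $G\colon\mce\to\mcd$. The induced functor $(F,G)\colon\mce\to\mcc\times\mcd$ has right adjoint $(c,d)\mapsto F^R(c)\oplus G^R(d)$, which preserves colimits. So $\mcc\times\mcd$ is the product in \praddbl. Similarly, for internal left adjoints $F\colon\mcc\to\mce$ and $G\colon\mcd\to\mce$, the functor $(c,d)\mapsto F(c)\oplus G(d)$ has right adjoint $e\mapsto(F^R e,G^R e)$, which again preserves colimits. Hence $\mcc\times\mcd$ is also the coproduct.

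Finally, the comparison map $\mcc\sqcup\mcd\to\mcc\times\mcd$ is the matrix $\begin{pmatrix}\id&0\\0&\id\end{pmatrix}$, which is the identity. Both categories are therefore semi-additive. Since \pradat is a full subcategory closed under these constructions, its case follows from the same computation.
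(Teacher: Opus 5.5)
Your proposal is correct and is essentially the paper's argument. Both show that the biproduct $\mathcal{C}\times\mathcal{D}$ of $\mathcal{P}\mathrm{r}^{\mathrm{L}}$ remains a biproduct in the non-full subcategories, using the same formula $(F,G)^R\simeq F^R\oplus G^R$ for the product; the only difference is that you verify the coproduct property (via the right adjoint $e\mapsto (F^R e, G^R e)$) and the dualizability of $\mathcal{C}\times\mathcal{D}$ by hand, whereas the paper cites Ramzi's Proposition 1.62 for these.
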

	\begin{proof}
		It suffices to show that both (non-full) inclusions $\pradat\to \prl$ and $\praddbl \to \prl$ create finite coproducts and finite products, because $\prl$ is semi-additive.
		
		For the finite coproducts, it follows from \cite[Proposition 1.62]{ramzi2024dualizable}\footnote{The argument in \cite[Proposition 1.62]{ramzi2024dualizable} also proves $\pradat\to \prl$ creates small colimits.}. 
		For the finite products, it suffices to show that for two colimit-preserving functors $\mcb\xrightarrow{F}\mcc, \mcb\xrightarrow{G}\mcd$ between presentable additive \infcats, $\mcb\xrightarrow{(F,G)}\mcc\times \mcd$ is an internal left adjoint if and only if both $F,G$ are. 	Let $H = (F, G): \mcb \to \mcc \times \mcd$. Since $\mcb$ is an additive \infcat, finite products and finite coproducts in $\mcb$ canonically coincide (denoted by $\oplus$). The right adjoint $H^R: \mcc \times \mcd \to \mcb$ can be explicitly computed by $H^R(C, D) \simeq F^R(C) \oplus G^R(D)$.
			 In the presentable additive \infcat $\mcb$, the direct sum functor $\oplus: \mcb \times \mcb \to \mcb$ is a finite colimit, and thus commutes with all small colimits. Therefore, the composite functor $H^R(C, D) \simeq F^R(C) \oplus G^R(D)$ preserves small colimits if and only if both $F^R: \mcc \to \mcb$ and $G^R: \mcd \to \mcb$ individually preserve small colimits. 
			
	\end{proof}
	We next record a simple stability property of internal left adjoints under cofiber sequences of functors.
	\begin{lem}\label{functorext}
	Let $f\to t\to c$ be a cofiber sequence in $\funct(\mcc,\mcd)$ where \mcc and \mcd are presentable prestable \infcats. 
	\enu{
		\item If both $f,c$ preserve colimits, then so does $t$.
		\item If both $f,c$ are internal left adjoints, then so is $t$.}
\end{lem}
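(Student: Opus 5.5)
For (1), I would show that $t$ preserves finite colimits and filtered colimits; together these give all small colimits. Fix a small diagram $X\colon K\to\mcc$. The cofiber sequence $f\to t\to c$ is pointwise, so we get a map of cofiber sequences
\[
\colim_K f(X)\to\colim_K t(X)\to\colim_K c(X)
\]
mapping to
\[
f(\colim X)\to t(\colim X)\to c(\colim X).
\]
The top row is a cofiber sequence because colimits commute with cofibers. The outer comparison maps are equivalences since $f$ and $c$ preserve colimits. In a stable category the middle map would then be an equivalence by the five lemma. In a prestable category I would instead embed $\mcd\hookrightarrow\opsp(\mcd)$. This embedding preserves colimits and cofiber sequences, is fully faithful, and is conservative. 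So it suffices to check that the middle map becomes an equivalence after applying $\Sigma^\infty$, where the five lemma holds.

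For (2), colimit preservation of $t$ is (1), so $t$ has a right adjoint $t^R$. The key step is to produce a fiber sequence of right adjoints $c^R\to t^R\to f^R$. Since $\funct^L(\mcc,\mcd)\simeq\funct^R(\mcd,\mcc)^{\opp}$, passing to right adjoints turns the cofiber sequence $f\to t\to c$ in $\funct^L(\mcc,\mcd)$ into a fiber sequence in $\funct^R(\mcd,\mcc)$. Limits in $\funct^R(\mcd,\mcc)$ are computed pointwise, because limits commute with limits.

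Fibers in the prestable category $\mcc$ are not cofibers in general, so to conclude I would pass to $\opsp(\mcc)$. The right adjoints are left exact functors into $\mcc$, and $\Omega^\infty\colon\opsp(\mcc)\to\mcc$ preserves limits. Concretely, I would apply $\opsp\otimes-$ to the whole sequence. By \cref{prestemb} this gives a cofiber sequence of colimit-preserving functors $\opsp(\mcc)\to\opsp(\mcd)$. By the commutative square in the remark after \cref{dualaddembst}, the right adjoints of these stabilized functors are $\opsp(f^R)$, $\opsp(t^R)$ and $\opsp(c^R)$. In the stable setting the resulting fiber sequence $\opsp(c^R)\to\opsp(t^R)\to\opsp(f^R)$ is also a cofiber sequence. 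Since the outer terms preserve colimits, so does $\opsp(t^R)$ by the five lemma as in (1).

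It remains to descend to $t^R$ itself. We have $t^R\simeq\Omega^\infty\circ\opsp(t^R)\circ\Sigma^\infty$ on $\mcd$, because $\opsp(t^R)$ restricted to connective objects recovers $t^R$ via $\Omega^\infty$. Hence $t^R$ preserves filtered colimits, since $\Omega^\infty$ does by the Grothendieck condition (and $\mcc$ is Grothendieck in the dualizable case). The main obstacle is that $\Omega^\infty$ is not colimit-preserving in general, so full colimit preservation of $t^R$ is not automatic. I would handle this directly in $\mcc$: the fiber sequence $c^R\to t^R\to f^R$ has last term $f^R$, and its fiber $t^R\to f^R$ has fiber $c^R$. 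If this sequence were a cofiber sequence in $\mcc$, part (1) applied to it would finish the proof. It is a cofiber sequence exactly when $t^R\to f^R$ is a $\pi_0$-epimorphism, which can be tested after stabilization using the long exact sequence. If that fails, I would fall back on the $\opsp$ argument for colimits that commute with $\Omega^\infty$, and on the additivity of direct sums for the rest.
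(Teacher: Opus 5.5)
Part (1) is fine and follows the paper's argument. You also supply the justification the paper leaves implicit: the comparison map is an equivalence because it becomes one under the conservative, colimit-preserving embedding $\mcd\hookrightarrow\opsp(\mcd)$. In (2) you reach the paper's intermediate point as well: $\opsp(t^R)$ preserves colimits because it sits in the stable fiber--cofiber sequence $\opsp(c^R)\to\opsp(t^R)\to\opsp(f^R)$.

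The gap is the descent from $\opsp(t^R)$ to $t^R$. Your filtered-colimit step uses that $\Omega^\infty$ commutes with filtered colimits. That is a Grothendieck hypothesis the lemma does not assume, since it is stated for arbitrary presentable prestable $\mcc,\mcd$. For general colimits you only offer a conditional (``if that fails, I would fall back on \dots''), which is not an argument.

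The missing idea is that $\opsp(t^R)$ is right $t$-exact, which follows from extension-closure. The functors $\opsp(f^R)$ and $\opsp(c^R)$ send $\Sigma^\infty d$ to $\Sigma^\infty f^R(d)$ and $\Sigma^\infty c^R(d)$, because $f^R$ and $c^R$ commute with $\Sigma$, so they preserve connective objects. Since $\opsp(\mcc)_{\geq 0}$ is closed under extensions, $\opsp(t^R)(\Sigma^\infty d)$ is connective for every $d\in\mcd$.

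You never need $\Omega^\infty$ to preserve colimits on all of $\opsp(\mcc)$. On $\opsp(\mcc)_{\geq 0}$ it is inverse to $\Sigma^\infty$, and $\opsp(\mcc)_{\geq 0}\subset\opsp(\mcc)$ is closed under colimits. Hence $t^R\simeq\Omega^\infty\circ\opsp(t^R)\circ\Sigma^\infty$ preserves all small colimits. This is exactly the criterion of \cref{prestemb} that the paper invokes: the right adjoint of $\opsp\otimes t$ is colimit-preserving and right $t$-exact.

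The same observation answers your $\pi_0$-epimorphism question affirmatively. Since $\opsp(t^R)(\Sigma^\infty d)\simeq\Sigma^\infty t^R(d)$, the stable sequence evaluated at $\Sigma^\infty d$ is $\Sigma^\infty c^R(d)\to\Sigma^\infty t^R(d)\to\Sigma^\infty f^R(d)$, a cofiber sequence of connective objects. So $c^R\to t^R\to f^R$ is pointwise a cofiber sequence in $\mcc$, your part (1) applies to it directly, and no fallback is needed.
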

\begin{proof}
	(1) Suppose $f$ and $c$ preserve small colimits. Let $x\simeq\colimit_\alpha x_\alpha$ be a colimit diagram in \mcc. It follows from the following diagram that $\colimit_\alpha t(x_\alpha)\simeq t(\colimit_\alpha x_\alpha)$.
	$$\begin{tikzcd}
		\colimit_\alpha f(x_\alpha) \arrow[d, "\sim"] \arrow[r] & \colimit_\alpha t(x_\alpha) \arrow[d] \arrow[r] & \colimit_\alpha c(x_\alpha) \arrow[d, "\sim"] \\
		f(\colimit_\alpha x_\alpha) \arrow[r]                  &  t(\colimit_\alpha x_\alpha) \arrow[r]          &  c(\colimit_\alpha x_\alpha)                 
	\end{tikzcd}$$
	
	(2) Assume $f$ and $c$ are internal left adjoints. By part (1), $t$ preserves colimits. By \cref{prestemb}, $t$ is an internal left adjoint if and only if the stable right adjoint $T^R=\opsp(t)^R$ preserves colimits and the connective part. Since $f$ and $c$ are internal left adjoints, their stable extensions $F$ and $C$ are internal left adjoints satisfying that $F^R$ and $C^R$ preserve small colimits and the connective part. The cofiber sequence of left adjoints $F \to T \to C$ yields a fiber sequence of right adjoints $$C^R \to T^R \to F^R$$ in $\funct(\operatorname{Sp}(\mcd), \operatorname{Sp}(\mcc))$. Because $\operatorname{Sp}(\mcc)$ is stable, this fiber sequence is equivalently a cofiber sequence. Therefore, $T^R$ preserves colimits and the connective part too, as desired.
\end{proof}
To encode cofiber sequences uniformly, we use the standard \(S_2\)-construction.
\begin{nota}
	Let $\mathcal{C}$ be a pointed \infcat with finite colimits. We let $S_2 \mathcal{C} \subset \operatorname{Fun}(\square, \mathcal{C})$ denote the full subcategory spanned by those squares which are cofiber sequences, i.e. pushout squares where the bottom left term is a zero object. 
	
	We define three functors $f, t, c: S_2 \mathcal{C} \rightarrow \mathcal{C}$ that take a cofiber sequence $x \rightarrow y \rightarrow z$ to $x, y, z$ respectively. Note that there is a canonical cofiber sequence of functors $f \rightarrow t \rightarrow c$ from $S_2 \mathcal{C}$ to $\mathcal{C}$.  Note that the natural projection $$S_2\mcc\to\mcc^{\Delta^1}$$ given by $(x\to y\to z)\mapsto (x\to y)$ is a categorical equivalence.
\end{nota}
The elementary functors out of \(S_2\mathcal C\) satisfy a useful chain of adjunctions, which will be used to compare different forms of split exactness.
\begin{prop}\label{adjss2}
	Let $\mathcal{C}$ be a prestable \infcat with finite limits. Define the following seven functors between $S_2\mathcal{C}$ and $\mathcal{C}$:
	\begin{enumerate}[label=(\arabic*),font=\normalfont]
		\item $c: S_2\mathcal{C} \to \mathcal{C}$ given by $c(x \to y) = y \cup_x 0$ (the cofiber).
		\item $i_2: \mathcal{C} \to S_2\mathcal{C}$ given by $i_2(a) = (0 \to a)$.
		\item $t: S_2\mathcal{C} \to \mathcal{C}$ given by $t(x \to y) = y$ (target evaluation).
		\item $i_1: \mathcal{C} \to S_2\mathcal{C}$ given by $i_1(a) = (a \xrightarrow{\operatorname{id}} a)$.
		\item $f: S_2\mathcal{C} \to \mathcal{C}$ given by $f(x \to y) = x$ (source evaluation).
		\item $i_0: \mathcal{C} \to S_2\mathcal{C}$ given by $i_0(a) = (a \to 0)$.
		\item $\Omega c: S_2\mathcal{C} \to \mathcal{C}$ given by $\Omega c(x \to y) = x \times_y 0$ (the fiber).
	\end{enumerate}
	Then there exists a sequence of six adjunctions:
	$$ c \dashv i_2 \dashv t \dashv i_1 \dashv f \dashv i_0 \dashv \Omega c .$$
\end{prop}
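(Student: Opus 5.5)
The plan is to check each of the six adjunctions directly. It is convenient to identify $S_2\mathcal{C}$ with $\mathcal{C}^{\Delta^1}$ through the equivalence $(x\to y\to z)\mapsto(x\to y)$ recorded in the notation above. For each adjunction $L\dashv R$ I will write down the unit, or equivalently a natural equivalence of mapping anima. Mapping anima in $\mathcal{C}^{\Delta^1}$ are computed as pullbacks
\[
\Map\bigl((x\to y),(x'\to y')\bigr)\simeq \Map(x,x')\times_{\Map(x,y')}\Map(y,y').
\]
The adjunctions split into a middle group, which is formal and holds in any pointed category with the relevant finite (co)limits, and two outer adjunctions, which use prestability.

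\textbf{Middle group.} The adjunctions $i_2\dashv t\dashv i_1\dashv f\dashv i_0$ follow from the pullback formula.
\begin{enumerate}
\item For $i_2\dashv t$: $\Map((0\to a),(x\to y))\simeq \ast\times_{\ast}\Map(a,y)\simeq\Map(a,y)$, since both $\Map(0,x)$ and $\Map(0,y)$ are contractible.
\item For $t\dashv i_1$: $\Map((x\to y),(a\xrightarrow{\id}a))\simeq \Map(x,a)\times_{\Map(x,a)}\Map(y,a)\simeq\Map(y,a)$.
\item For $i_1\dashv f$: $\Map((a\xrightarrow{\id}a),(x\to y))\simeq\Map(a,x)\times_{\Map(a,y)}\Map(a,y)\simeq\Map(a,x)$.
\item For $f\dashv i_0$: $\Map((x\to y),(a\to 0))\simeq\Map(x,a)\times_{\ast}\ast\simeq\Map(x,a)$.
\end{enumerate}
Each of these equivalences is natural in both variables. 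Alternatively, one can say that $t$ and $f$ are evaluation at the endpoints of $\Delta^1$, so their adjoints are the Kan extensions along the endpoint inclusions, and one identifies those Kan extensions with $i_2,i_1,i_1,i_0$.

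\textbf{Outer adjunctions.} For $c\dashv i_2$, a map $(x\to y)\to(0\to a)$ is by the formula a map $y\to a$ together with a nullhomotopy of the composite $x\to y\to a$. This is exactly a map out of the pushout, giving
\[
\Map(y\cup_x 0,\,a)\simeq\Map(y,a)\times_{\Map(x,a)}\ast .
\]
Dually, for $i_0\dashv\Omega c$, maps $(a\to 0)\to(x\to y)$ are maps $a\to x$ with a nullhomotopy of $a\to x\to y$, which is $\Map(a,\,x\times_y 0)$. Here I use that $\mathcal{C}$ has finite limits, so the fiber exists.

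The only subtle point, which I expect to be the main obstacle, is that $\Omega c$ is meant as a functor on $S_2\mathcal{C}$, where objects are cofiber sequences. One must check that the fiber of $x\to y$ is the same thing as $\Omega$ of the cofiber $z$; the name $\Omega c$ and the statement's choice to call it "the fiber" presuppose this. In a stable category this would be immediate, but in the prestable case I will argue as follows.
\begin{enumerate}
\item Apply the fully faithful, finite-(co)limit preserving embedding $\mathcal{C}\hookrightarrow\opsp(\mathcal{C})$. Fully faithfulness holds because $\mathcal{C}$ is prestable with finite limits, and the embedding preserves finite limits and finite colimits there.
\item In $\opsp(\mathcal{C})$, the fiber of $x\to y$ is $\Omega z$.
\item Computing $x\times_y 0$ in $\mathcal{C}$ amounts to applying $\tau_{\geq 0}$ to the stable fiber, so $x\times_y0\simeq\tau_{\geq0}\Omega z=\Omega_{\mathcal{C}}z$.
\end{enumerate}
Either description, the fiber or $\Omega_{\mathcal{C}}$ of the cofiber, yields the same right adjoint to $i_0$, and the proof is complete.

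Naturality of all the mapping-anima identifications is routine: they come from the universal properties of pullbacks, pushouts, and Kan extensions.
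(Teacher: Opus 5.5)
Your proof is correct and is essentially the paper's: both identify $S_2\mathcal{C}\simeq\mathcal{C}^{\Delta^1}$ and verify all six adjunctions by computing mapping anima there as pullbacks, using the same formulas you display. The extra discussion of $\Omega c$ is unnecessary, since the statement simply defines $\Omega c(x\to y)$ as $x\times_y 0$; moreover, its step (1) is misstated, because $\mathcal{C}\hookrightarrow\mathrm{Sp}(\mathcal{C})$ preserves finite colimits but not finite limits (e.g.\ $\Omega_{\mathcal{C}}a\simeq\tau_{\geq 0}\Sigma^{-1}a$, not $\Sigma^{-1}a$), which is exactly why your step (3) needs $\tau_{\geq 0}$.
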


\begin{proof}
	We establish this chain of adjunctions by verifying the natural equivalences of the associated mapping anima. Let $(x \to y)$ be an object in $S_2\mathcal{C}$ and $a$ be an object in $\mathcal{C}$.
	
	1. $c \dashv i_2$ :
	\begin{align*}
		\operatorname{Map}_{\mathcal{C}}(c(x \to y), a) &\simeq \operatorname{Map}_{\mathcal{C}}(y \cup_x 0, a) \\
		&\simeq \operatorname{Map}_{\mathcal{C}}(y, a) \times_{\operatorname{Map}_{\mathcal{C}}(x, a)} \operatorname{Map}_{\mathcal{C}}(0, a) \\
		&\simeq \operatorname{Map}_{\mathcal{C}}(y, a) \times_{\operatorname{Map}_{\mathcal{C}}(x, a)} *
	\end{align*}
	This is precisely the anima of maps from $y$ to $a$ such that the composition $x \to y \to a$ is equipped with a nullhomotopy. On the other hand, the mapping anima $\operatorname{Map}_{S_2\mathcal{C}}((x \to y), i_2(a))$ consists of commutative squares from $(x \to y)$ to $(0 \to a)$, which is equivalent to requiring the composition $x \to y \to a$ to factor through $0$. Thus, the two mapping anima are naturally equivalent.
	
	2. $i_2 \dashv t$ :
	\begin{align*}
		\operatorname{Map}_{S_2\mathcal{C}}(i_2(a), (x \to y)) &\simeq \operatorname{Map}_{S_2\mathcal{C}}((0 \to a), (x \to y)) \\
		&\simeq \operatorname{Map}_{\mathcal{C}}(0, x) \times_{\operatorname{Map}_{\mathcal{C}}(0, y)} \operatorname{Map}_{\mathcal{C}}(a, y) \\
		&\simeq * \times_{*} \operatorname{Map}_{\mathcal{C}}(a, y) \simeq \operatorname{Map}_{\mathcal{C}}(a, y)
	\end{align*}
	This naturally matches $\operatorname{Map}_{\mathcal{C}}(a, t(x \to y))$.
	
3. $t \dashv i_1$ :
	\begin{align*}
		\operatorname{Map}_{S_2\mathcal{C}}((x \to y), i_1(a)) &\simeq \operatorname{Map}_{S_2\mathcal{C}}((x \to y), (a \xrightarrow{\operatorname{id}} a)) \\
		&\simeq \operatorname{Map}_{\mathcal{C}}(x, a) \times_{\operatorname{Map}_{\mathcal{C}}(x, a)} \operatorname{Map}_{\mathcal{C}}(y, a) \\
		&\simeq \operatorname{Map}_{\mathcal{C}}(y, a)
	\end{align*}
	This naturally matches $\operatorname{Map}_{\mathcal{C}}(t(x \to y), a)$.
	
	4. $i_1 \dashv f$ :
	\begin{align*}
		\operatorname{Map}_{S_2\mathcal{C}}(i_1(a), (x \to y)) &\simeq \operatorname{Map}_{S_2\mathcal{C}}((a \xrightarrow{\operatorname{id}} a), (x \to y)) \\
		&\simeq \operatorname{Map}_{\mathcal{C}}(a, x) \times_{\operatorname{Map}_{\mathcal{C}}(a, y)} \operatorname{Map}_{\mathcal{C}}(a, y) \\
		&\simeq \operatorname{Map}_{\mathcal{C}}(a, x)
	\end{align*}
	This naturally matches $\operatorname{Map}_{\mathcal{C}}(a, f(x \to y))$.
	
5. $f \dashv i_0$ :
	\begin{align*}
		\operatorname{Map}_{S_2\mathcal{C}}((x \to y), i_0(a)) &\simeq \operatorname{Map}_{S_2\mathcal{C}}((x \to y), (a \to 0)) \\
		&\simeq \operatorname{Map}_{\mathcal{C}}(x, a) \times_{\operatorname{Map}_{\mathcal{C}}(y, 0)} \operatorname{Map}_{\mathcal{C}}(x, 0) \\
		&\simeq \operatorname{Map}_{\mathcal{C}}(x, a) \times_{*} * \simeq \operatorname{Map}_{\mathcal{C}}(x, a)
	\end{align*}
	This naturally matches $\operatorname{Map}_{\mathcal{C}}(f(x \to y), a)$.
	
	6. $i_0 \dashv \Omega c$ :
	\begin{align*}
		\operatorname{Map}_{S_2\mathcal{C}}(i_0(a), (x \to y)) &\simeq \operatorname{Map}_{S_2\mathcal{C}}((a \to 0), (x \to y)) \\
		&\simeq \operatorname{Map}_{\mathcal{C}}(a, x) \times_{\operatorname{Map}_{\mathcal{C}}(a, y)} \operatorname{Map}_{\mathcal{C}}(0, y) \\
		&\simeq \operatorname{Map}_{\mathcal{C}}(a, x) \times_{\operatorname{Map}_{\mathcal{C}}(a, y)} * \\
		&\simeq \operatorname{Map}_{\mathcal{C}}(a, x \times_y 0)
	\end{align*}
	This is precisely $\operatorname{Map}_{\mathcal{C}}(a, \Omega c(x \to y))$ by the universal property of the fiber (pullback).
\end{proof}

\begin{rem}
 We remark that because $\mathcal{C}$ is only assumed to be prestable rather than stable, the chain of adjunctions stops here. The cofiber functor $c$ does not generally preserve limits, hence lacks a left adjoint, and the fiber functor $\Omega c$ does not generally preserve colimits, hence lacks a right adjoint.
\end{rem}
We also need a finiteness property of the arrow category, ensuring that the \(S_2\)-construction remains inside the atomically generated world when the original category does.
\begin{prop}
	Let $\mcc\in\praddbl$.	If $\mcc$ is compact projectively generated, then so is $\mcc^{\Delta^1}$.
\end{prop}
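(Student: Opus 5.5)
The plan is to write down explicit compact projective generators of $\mcc^{\Delta^1}$ using the adjunctions of \cref{adjss2}, and then check generation with \cref{pgenpst}. Let $\mcc\simeq\mcp_\Sigma(\mca)$ with $\mca=\mcc^{\op{cproj}}$. Since the projection $S_2\mcc\to\mcc^{\Delta^1}$ is an equivalence, the chain $c\dashv i_2\dashv t\dashv i_1\dashv f\dashv i_0$ of \cref{adjss2} can be read on $\mcc^{\Delta^1}$. Here $i_2(a)=(0\to a)$ and $i_1(a)=(a\xrightarrow{\id}a)$. Their right adjoints $t$ and $f$ are evaluations at the two vertices, and these preserve all small colimits, since colimits in $\mcc^{\Delta^1}$ are computed pointwise.

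The key steps, in order:
\begin{itemize}
\item[(i)] Show that $i_1$ and $i_2$ send compact projective objects to compact projective objects. For $p\in\mca$ we have
\[\Map_{\mcc^{\Delta^1}}(i_2(p),(x\to y))\simeq\Map_\mcc(p,y)\qquad\text{and}\qquad \Map_{\mcc^{\Delta^1}}(i_1(p),(x\to y))\simeq\Map_\mcc(p,x).\]
Each is the composite of a colimit-preserving evaluation with a functor that preserves sifted colimits, so both corepresented functors preserve sifted colimits.
\item[(ii)] Take $S=\{i_1(p),\,i_2(p)\mid p\in\mca\}$. This is a set of projective objects.
\item[(iii)] Check the hypothesis of \cref{pgenpst}. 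Let $X=(x\to y)\neq 0$, so $x\neq0$ or $y\neq 0$. Because $\mcc$ is compact projectively generated, the family $\{\unmap_\mcc(p,-)\}_{p\in\mca}$ is jointly conservative. Hence there are $p\in\mca$ and $n\ge0$ with a nonzero map $\Sigma^np\to x$ or $\Sigma^np\to y$. By adjunction, and since $i_1,i_2$ commute with $\Sigma$ (they preserve colimits, being left adjoints), this gives a nonzero map $\Sigma^n i_1(p)\to X$ or $\Sigma^n i_2(p)\to X$.
\end{itemize}
\cref{pgenpst} then shows that $S$ generates $\mcc^{\Delta^1}$ under small colimits.

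Before invoking \cref{pgenpst}, I also need $\mcc^{\Delta^1}$ to be presentable prestable. This is routine: it equals $\funct(\Delta^1,\mcc)$, its stabilization is $\funct(\Delta^1,\opsp(\mcc))$, and the inclusion is closed under colimits and extensions, so \cref{prestcriterion} applies.

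I expect the main obstacle to be this prestability and separation bookkeeping rather than anything substantive. In particular, the step "nonzero object receives a nonzero map from some $\Sigma^n p$" relies on compact projective generation, not on separatedness, so it goes through as stated.

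Alternatively, one could bypass \cref{pgenpst} entirely by the direct identification $\mcc^{\Delta^1}\simeq\funct(\Delta^1,\mcp_\Sigma(\mca))\simeq\mcp_\Sigma(\mca\otimes\Delta^1)$, which exhibits the generators as representables. I would keep the first argument, since it is the one that uses the paper's own tools.
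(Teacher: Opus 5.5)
Your proposal is correct and follows essentially the same route as the paper: your generators $i_1(p)=(p\xrightarrow{\id}p)$ and $i_2(p)=(0\to p)$ are exactly the paper's $L_0(s)$ and $L_1(s)$, and you prove they are compact projective in the same way, by adjunction with the colimit-preserving evaluation functors. The only cosmetic difference is in the generation step: you go through \cref{pgenpst}, whereas the paper concludes directly from joint conservativity of $\{ev_0,ev_1\}$, and \cref{pgenpst} is itself proved by that same joint-conservativity argument.
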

\begin{proof}
	Let $S$ be a small set of compact projective generators for $\mcc$. Consider the two evaluation functors $ev_0, ev_1: \mcc^{\Delta^1} \to \mcc$ given by evaluating an arrow at its domain and codomain, respectively. 
	These evaluation functors admit left adjoints $L_0, L_1: \mcc \to \mcc^{\Delta^1}$, which can be explicitly described as follows. For any object $c \in \mcc$,
	$$ L_0(c) = (c \xrightarrow{\operatorname{id}} c) \quad \text{and} \quad L_1(c) = (0 \to c). $$
	Because $ev_i$ preserves small sifted colimits, if $c$ is a compact projective object in $\mcc$, the composite functor $\operatorname{Map}_\mcc(c, ev_i(-))$ preserves small sifted colimits. This implies that the images $L_0(c)$ and $L_1(c)$ are compact projective objects in $\mcc^{\Delta^1}$.
	
	Now, define $S' = \{ L_0(s) \mid s \in S \} \cup \{ L_1(s) \mid s \in S \}$. Since $S$ is small, $S'$ is a small set of compact projective objects in $\mcc^{\Delta^1}$. Since $\{ev_i|i=0,1\}$ is a jointly conservative collection, we see that $S'$ generates $\mcc^{\Delta^1}$. 
This proves that $\mcc^{\Delta^1}$ is compact projectively generated.
\end{proof}
The next point is more subtle: in the prestable setting, not every natural transformation between internal left adjoints determines an internal left adjoint into the arrow category. The following proposition gives the precise criterion needed later.
\begin{prop}\label{dualaddintervalpower}
	Let $\theta:f_1\to f_2$ be a natural transformation in $\funct^{LL}(\mcc,\mcd)$, where $\mcc, \mcd$ are presentable prestable \infcats. Then the following conditions are equivalent:
	\enu{
		\item The functor $\cofib(\theta)$ is an internal left adjoint from \mcc to \mcd.
		\item The induced transformation on corresponding right adjoints $f_2^R\to f_1^R$ is pointwise effective epimorphic.
		\item The induced functor $\mcc\to \mcd^{\Delta^1}$ is an internal left adjoint.
	}
\end{prop}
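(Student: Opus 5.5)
Throughout, write $c:=\cofib(\theta)$, with $\theta:f_1\to f_2$ between internal left adjoints $\mcc\to\mcd$, and $F:\mcc\to\mcd^{\Delta^1}$ for the induced functor $x\mapsto(f_1x\to f_2x)$. The plan is to compute the right adjoints of $c$ and of $F$ explicitly, then read off colimit preservation using stabilization and the adjunctions of \cref{adjss2}.

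First I describe the right adjoints. For $F$: via $S_2\mcd\simeq\mcd^{\Delta^1}$, a map $F(x)\to(a\to b)$ is a map $f_1x\to a$, a map $f_2x\to b$, and a commuting homotopy. Hence
\[
F^R(a\to b)\simeq f_1^R(a)\times_{f_1^R(b)} f_2^R(b),
\]
where $f_2^R(b)\to f_1^R(b)$ is $\theta^R$ evaluated at $b$. For $c$: a map $c(x)\to d$ is a map $f_2x\to d$ together with a nullhomotopy of the composite from $f_1x$. Hence
\[
c^R(d)\simeq \fib\big(f_2^R(d)\xrightarrow{\theta^R_d} f_1^R(d)\big),
\]
with the fiber taken in $\mcc$, which exists since $\mcc$ is presentable. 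Note that $c$ and $F$ automatically preserve colimits, by \cref{functorext}(1) and because colimits in $\mcd^{\Delta^1}$ are computed pointwise.

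(1)$\Leftrightarrow$(2). I pass to stabilizations using \cref{prestemb}, as in \cref{functorext}(2). The functor $\opsp(c)^R$ is the fiber of $\theta^R:\opsp(f_2)^R\to\opsp(f_1)^R$ in $\opsp(\mcc)$, and this fiber always preserves colimits. So $c$ is an internal left adjoint if and only if that stable fiber carries $\mcd$ into $\mcc$, that is, it is connective on connective objects. Since $f_1^R$ and $f_2^R$ preserve connectivity, the long exact sequence shows that the fiber of $u\to v$ with $u,v$ connective is connective exactly when $\pi_0u\to\pi_0v$ is epi. This is (2). Granting (2), the stable fiber agrees with the prestable fiber $c^R$ and preserves colimits. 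Conversely, (1) forces the stable fiber to be connective, which gives (2).

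(3)$\Leftrightarrow$(2). The functor $F$ is an internal left adjoint if and only if $F^R$ preserves colimits. If (2) holds, then for any $(a\to b)$ the map $f_2^Rb\to f_1^Rb$ is $\pi_0$-surjective. So the prestable pullback agrees with the stable pullback; it is connective by the long exact sequence. The stable pullback commutes with colimits, hence so does $F^R$. Conversely, if $F^R$ preserves colimits, I compose with the colimit-preserving $i_2:\mcd\to\mcd^{\Delta^1}$ from \cref{adjss2}. This gives $F^R(0\to d)\simeq 0\times_{f_1^Rd}f_2^Rd=c^R(d)$, so $F^R\circ i_2\simeq c^R$. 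Hence $c^R$ preserves colimits and (1) holds. Alternatively, $c\simeq \cofib\circ F$ and $\cofib=c$ on $S_2\mcd$ is left adjoint to $i_2$, which preserves colimits; composites of internal left adjoints are internal left adjoints.

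The main obstacle is (1)$\Rightarrow$(2). Colimit preservation of the prestable fiber must force $\pi_0$-surjectivity, and this needs a test against suspensions. I would use that prestable $c^R$ commutes with $\Sigma$ as a colimit. Then $c^R(\Sigma d)\simeq\Sigma c^R(d)$, whereas the stable fiber at $\Sigma d$ has $\pi_0$ equal to the cokernel of $\pi_0$ of $\theta^R_d$. The prestable fiber is the connective cover $\tau_{\geq0}$ of the stable fiber, while $\Sigma c^R(d)$ has vanishing $\pi_0$. Comparing $\pi_0$ of the two sides forces that cokernel to vanish, which is (2).
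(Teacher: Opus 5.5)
Your proposal is correct and follows essentially the same route as the paper: you compute $F^R(a\to b)\simeq f_1^R(a)\times_{f_1^R(b)}f_2^R(b)$ and $\cofib(\theta)^R\simeq\fib(f_2^R\to f_1^R)$ explicitly, use stabilization via \cref{prestemb} for (1)$\Leftrightarrow$(2) and (2)$\Rightarrow$(3), and get (3)$\Rightarrow$(1) from $F^R\circ i_2\simeq\cofib(\theta)^R$, equivalently from $\cofib(\theta)\simeq c\circ F$. Your closing suspension argument for (1)$\Rightarrow$(2) is also valid, but it is redundant: your stabilization argument already settles that implication, exactly as the paper does.
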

\begin{proof}
	Let $F: \mcc \to \mcd^{\Delta^1}$ be the induced functor given by $F(C) = (f_1(C) \xrightarrow{\theta_C} f_2(C))$. We first explicitly compute its right adjoint $F^R: \mcd^{\Delta^1} \to \mcc$. For any object $(X \xrightarrow{\alpha} Y) \in \mcd^{\Delta^1}$ and $C \in \mcc$, the definition of morphisms in the arrow category yields natural equivalences of mapping anima:
	\begin{align*}
		\operatorname{Map}_{\mcc}(C, F^R(X \xrightarrow{\alpha} Y)) &\simeq \operatorname{Map}_{\mcd^{\Delta^1}}(F(C), (X \xrightarrow{\alpha} Y)) \\
		&\simeq \operatorname{Map}_{\mcd^{\Delta^1}}((f_1(C) \to f_2(C)), (X \xrightarrow{\alpha} Y)) \\
		&\simeq \operatorname{Map}_{\mcd}(f_1(C), X) \times_{\operatorname{Map}_{\mcd}(f_1(C), Y)} \operatorname{Map}_{\mcd}(f_2(C), Y).
	\end{align*}
	Using the adjunctions $f_1 \dashv f_1^R$ and $f_2 \dashv f_2^R$, this mapping anima is canonically equivalent to:
	$$ \operatorname{Map}_{\mcc}(C, f_1^R(X)) \times_{\operatorname{Map}_{\mcc}(C, f_1^R(Y))} \operatorname{Map}_{\mcc}(C, f_2^R(Y)) \simeq \operatorname{Map}_{\mcc}\left(C, f_1^R(X) \times_{f_1^R(Y)} f_2^R(Y)\right). $$
	By Yoneda's lemma, this yields a canonical identification $F^R(X \xrightarrow{\alpha} Y) \simeq f_1^R(X) \times_{f_1^R(Y)} f_2^R(Y)$. Similarly, let $ \opsp(\mcc)\xrightarrow{F_i=\opsp\otimes f_i}\opsp(\mcd)$ denote the stabilization of $f_i$,  and let $F_1\xrightarrow{\opsp\otimes \theta} F_2$ denote the stabilization of the transformation $\theta$; then the right adjoint $H^R$ of the functor $\opsp(\mcc)\xrightarrow{H=\opsp\otimes F} \opsp(\mcd^{\Delta^1})\simeq \opsp(\mcd)^{\Delta^1}$ is given by $$H(X\to Y)= F_1^R(X) \times_{F_1^R(Y)} F_2^R(Y).$$
	We are now ready to prove the equivalences:
	
$(2) \implies (3)$: By \cref{prestemb}, the functor $F$ is an internal left adjoint if and only if the right adjoint $H^R=(\opsp\otimes F)^R: \opsp(\mcd)^{\Delta^1}\to \opsp(\mcc)$ is colimit-preserving and right $t$-exact. That follows from the explicit formula $H^R(X \xrightarrow{} Y) \simeq F_1^R(X) \times_{F_1^R(Y)} F_2^R(Y)$.

	$(3) \implies (1)$: If $F$ is an internal left adjoint, then since the cofiber functor $c: \mcd^{\Delta^1} \to \mcd$ is also an internal left adjoint (its right adjoint $i_2$ preserves colimits because limits and colimits in $\mcd^{\Delta^1}$ are computed pointwise), their composition $\cofib(\theta) = c \circ F$ is automatically an internal left adjoint.
	
	$(1) \implies (2)$: Suppose $\cofib(\theta)$ is an internal left adjoint. 	Now, recall the cofiber functor $c = \operatorname{cofib}: \mcd^{\Delta^1} \to \mcd$. As established previously in \cref{adjss2}, its right adjoint is the target inclusion $i_2: \mcd \to \mcd^{\Delta^1}$ given by $i_2(Y) = (0 \to Y)$. The functor $\cofib(\theta)$ factors canonically as $c \circ F$. Therefore, its right adjoint evaluates as:
	$$(\cofib(\theta))^R(Y)\simeq F^R(i_2(Y))\simeq F^R(0\to Y)\simeq f_1^R(0)\times_{f_1^R(Y)}f_2^R(Y)$$
	$$\simeq 0\times_{f_1^R(Y)}f_2^R(Y)\simeq\operatorname{fib}\bigl(f_2^R(Y)\to f_1^R(Y)\bigr).$$ Since the functor $\cofib(\theta)$ is an internal left adjoint indicates that the stable right adjoint $(\opsp\otimes\cofib(\theta))^R: \opsp(\mcd)\to \opsp(\mcc)$ is colimit-preserving and right $t$-exact, we deduce that
	the functor $$\operatorname{fib}\Bigl(F_2^R(-) \to F_1^R(-)\Bigr)\simeq \bigl(\cofib(F_1\to F_2)\bigr)^R\simeq (\opsp\otimes\cofib(\theta))^R$$ is colimit-preserving and right $t$-exact. Thus the map $f_2^R(Y) \to f_1^R(Y)\in\mcc$ must be an effective epimorphism for each $Y\in \mcd$.
\end{proof}
This criterion explains the extent to which \(\mathcal D^{\Delta^1}\) behaves like an interval object for dualizable additive \infcats.
\begin{rem}
	\cref{dualaddintervalpower} above indicates that, unlike in the stable case, the following inclusion is in general proper  $$\funct^{LL}(\mcc,\mcd^{\Delta^1})\subsetneqq \funct(\Delta^1, \funct^{LL}(\mcc,\mcd)),$$ when $\mcc, \mcd$ are \dualaddinfcats. That means $\mcd^{\Delta^1}$ is not the rigorous interval power in \praddbl, but it happens to be our desired interval power when it comes to splitting/localizing invariants.
\end{rem}
We can now formulate the two classes of invariants considered in this section.

\begin{de}\label{deflocinv}
	Let $\mce$ be an additive \infcat, and let $E: \prad^\star \to \mce$ be a functor where $\star \in \{ \op{dbl}, \op{at} \}$.
	\begin{enumerate}[label=(\arabic*), font=\normalfont]
		\item We say that $E$ is a \textbf{splitting invariant} if it preserves finite products and satisfies the following additivity condition: for any cofiber sequence $f \to t \to c$ in $\operatorname{Fun}(\mathcal{C}, \mathcal{D})$ consisting of internal left adjoints\footnote{By \cref{functorext}, $t$ is automatically an internal left adjoint if both $f$ and $c$ are.} where $\mcc,\mcd\in \prad^\star$, there is an equivalence $E(t) \simeq E(f)+ E(c)$ in $\pi_0\mapp_{\mce}(E(\mcc),E(\mcd))$.
		
		\item Assume further that $\mce$ admits finite limits. We say that $E$ is a \textbf{localizing invariant} if it carries fiber-cofiber sequences in $\prad^\star$ to fiber sequences in $\mce$.
	\end{enumerate}
\end{de}
	The following proposition gives several equivalent ways to recognize the splitting condition. In particular, it reduces additivity for arbitrary cofiber sequences of functors to the universal \(S_2\)-sequence.
	\begin{prop}\label{splinvchar}
		Let $E: \prad^\star \rightarrow \mathcal{E}$ be a finite-product-preserving functor where $\star \in \{ \op{dbl}, \op{at} \}$ to an additive \infcat \mce. The following are equivalent:
		\enu{
			
			\item $E$ is a splitting invariant;
			\item For any \dualaddinfcat $\mathcal{C}$, and for the canonical cofiber sequence of functors
			
			$$
			f \rightarrow t \rightarrow c: S_2 \mathcal{C} \rightarrow \mathcal{C},
			$$
			we have	$
			E(t) \simeq E(f)+E(c) ;
			$
			
			\item For any \dualaddinfcat $\mathcal{C}$, $E$ applied to $S_2 \mathcal{C} \xrightarrow{(f, c)} \mathcal{C} \times \mathcal{C}$ yields an equivalence;
			
			\item  For any left split\footnote{The ``left split'' means $i,p$ admit left adjoints.} exact sequence $\mathcal{K} \xrightarrow{i} \mathcal{C} \xrightarrow{p} \mathcal{D}$ in $\prad^\star$,  $E$ applied to $\mathcal{C} \xrightarrow{(i^L, p)} \mathcal{K} \times \mathcal{D}$ yields an equivalence.
			
			\item  For any right split\footnote{The ``right split'' means $i,p$ admit right adjoints such that $i^R,p^R$ are internal left adjoints.} exact sequence $\mathcal{K} \xrightarrow{i} \mathcal{C} \xrightarrow{p} \mathcal{D}$ in $\prad^\star$, $E$ applied to $\mathcal{C} \xrightarrow{(i^R, p)} \mathcal{K} \times \mathcal{D}$ yields an equivalence.
		}
	\end{prop}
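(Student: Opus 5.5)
The plan is the standard cycle of implications used for splitting invariants in the stable setting (as in Blumberg--Gepner--Tabuada), adapted to the adjunction chain of \cref{adjss2}. I will show $(1)\Rightarrow(2)$, $(2)\Leftrightarrow(3)$, $(3)\Rightarrow(4)$, $(3)\Rightarrow(5)$, and $(4)\Rightarrow(1)$. For this to make sense, I first need that $S_2\mcc\simeq\mcc^{\Delta^1}$ lies in $\prad^\star$ whenever $\mcc$ does. For $\star=\op{at}$ this is the compact projective generation of $\mcc^{\Delta^1}$ proved above. For $\star=\op{dbl}$, write $\mcc^{\Delta^1}\simeq\funct^L(\mcp_\Sigma(\Z_{\geq0}\text{-free on }\Delta^1),\mcc)$; alternatively, use \cref{main1}: the AB axioms and separatedness are checked pointwise. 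I also need that $f,t,c$ and $i_0,i_1,i_2$ are internal left adjoints. This follows from the chain $c\dashv i_2\dashv t\dashv i_1\dashv f\dashv i_0$: each of $c,i_2,t,i_1,f$ has a right adjoint that itself has a right adjoint, hence preserves colimits.

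The implication $(1)\Rightarrow(2)$ is immediate by applying the definition to $f\to t\to c$. For $(2)\Leftrightarrow(3)$, consider the map $(i_0,i_2)\colon\mcc\times\mcc\to S_2\mcc$, $(a,b)\mapsto(a\to a\oplus b)$; this is where I will use that $\mcc$ is additive. We have $(f,c)\circ(i_0\oplus i_2)\simeq\id$, since $fi_0=\id$, $ci_2=\id$, $fi_2=0$, and $ci_0=\Sigma$... Here I must be careful: the composite $c\circ i_0$ is $a\mapsto\Sigma a$, which is the wrong choice. I should instead use $i_1$, so $(a\xrightarrow{}a\oplus b)$ is $i_1(a)\oplus i_2(b)$, with $fi_1=\id$, $ci_1=0$, $fi_2=0$, $ci_2=\id$. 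In $\pi_0\mapp_\mce(E(S_2\mcc),E(S_2\mcc))$, finite-product preservation and semi-additivity then give
\[E(i_1f\oplus i_2c)=E(i_1)E(f)+E(i_2)E(c),\]
and (2) applied to the cofiber sequence $i_1f\to\id\to i_2c$ yields $\id=E(i_1f)+E(i_2c)$. The cofiber sequence exists since $(x\to y)$ sits in $S_2$ as an extension of $(x=x)$ and $(0\to y/x)$. So $E(f,c)$ has the two-sided inverse $E(i_1\oplus i_2)$. Conversely, from (3) the identity $t\simeq t\circ(i_1f\oplus i_2c)$ after $E$ gives $E(t)=E(f)+E(c)$, using $ti_1=\id=ti_2$.

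For $(3)\Rightarrow(4),(5)$, a left split sequence gives a functor $\mcc\to S_2\mcc$, $x\mapsto(ii^Lx\to x)$... wait, for the left-split case I will instead use $(x\to pp^L\cdots)$. More precisely, for right split I use $x\mapsto(ii^Rx\to x)$, whose cofiber is $p^Rpx$ by \cref{basicses}. Composing with $(f,c)$ gives $(ii^R,p^Rp)$, and this functor is an internal left adjoint by \cref{dualaddintervalpower}, since the counit $ii^R\to\id$ yields a cofiber that is an internal left adjoint. Then I factor through $\mcc\to\mck\times\mcd$ and use retract arguments, or the stable-style argument that $(i^R,p)$ has the inverse $i\oplus p^R$ up to $E$. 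For the left split case I use the unit $\id\to p^Lp$... and dually. Finally, $(4)\Rightarrow(1)$: given $f\to t\to c$ of internal left adjoints $\mcc\to\mcd$, the classifying functor $\mcc\to S_2\mcd$ is an internal left adjoint by \cref{dualaddintervalpower}, since the cofiber is one. Applying (4) to the left split sequence $\mcd\xrightarrow{i_1}S_2\mcd\xrightarrow{c}\mcd$, whose left adjoints are $t$ and $i_2$, then yields (3) for $\mcd$, and hence additivity after precomposition.

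The main obstacle is the bookkeeping of which functors are genuinely internal left adjoints in the prestable setting. Unlike the stable case, $c$ has no left adjoint and $i_0$ composes badly, so each auxiliary functor must be checked using \cref{dualaddintervalpower} and the chain in \cref{adjss2}.
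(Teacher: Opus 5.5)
There is a genuine gap in how you close the cycle of implications. In your last step you apply (4) to $\mathcal{D}\xrightarrow{i_1}S_2\mathcal{D}\xrightarrow{c}\mathcal{D}$, claiming its left adjoints are $t$ and $i_2$. But in the chain $c\dashv i_2\dashv t\dashv i_1\dashv f\dashv i_0$ of \cref{adjss2}, $i_2$ is the \emph{right} adjoint of $c$. Moreover, $c$ has no left adjoint at all in the prestable setting, since it does not preserve finite limits. So this sequence is not left split, and (4) says nothing about it. Even if it were left split, $(i^L,p)$ would be $(t,c)$ rather than $(f,c)$, so you would not land on (3). On top of this, (5) appears in your scheme only as a conclusion of (3), never as a hypothesis, so its equivalence with the other conditions is not established.

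A smaller point concerns your $(2)\Rightarrow(3)$. It uses additivity for $i_1f\to\mathrm{id}\to i_2c$ on $S_2\mathcal{C}$, which is an instance of (1), not (2). You therefore need to route it through the classifying functor $S_2\mathcal{C}\to S_2S_2\mathcal{C}$ and \cref{dualaddintervalpower}, exactly as you do in your last paragraph.

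The repair is the one the paper uses. The sequence you wrote, $\mathcal{D}\xrightarrow{i_1}S_2\mathcal{D}\xrightarrow{c}\mathcal{D}$, is \emph{right} split. Its right adjoints are $f=i_1^R$ and $i_2=c^R$, both internal left adjoints by the adjunction chain, and $(i^R,p)=(f,c)$, so it gives $(5)\Rightarrow(3)$. For $(4)\Rightarrow(3)$, use instead $\mathcal{D}\xrightarrow{i_2}S_2\mathcal{D}\xrightarrow{f}\mathcal{D}$. Its left adjoints are $c=i_2^L$ and $i_1=f^L$, so $(i^L,p)=(c,f)$, which is $(f,c)$ up to swapping factors. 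Together with $(3)\Rightarrow(2)$ via the section $i_1\oplus i_2$ (your choice of $i_1\oplus i_2$ is the correct one) and $(2)\Rightarrow(1)$ via \cref{dualaddintervalpower}, the cycle closes.

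Your $(3)\Rightarrow(4),(5)$ then play the role of the paper's $(1)\Rightarrow(4),(5)$. They are fine once written out via $E(\mathrm{id})=E(ii^R)+E(p^Rp)$, respectively $E(\mathrm{id})=E(p^Lp)+E(ii^L)$, together with the inverse $E(i\oplus p^R)$, respectively $E(i\oplus p^L)$. In the left split case the relevant map is the counit $p^Lp\to\mathrm{id}$, not a unit $\mathrm{id}\to p^Lp$. Finally, contrary to your first paragraph, $i_0$ is not an internal left adjoint, because its right adjoint $\Omega c$ does not preserve colimits. This is harmless, since you never use it.
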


	\begin{proof}
	We follow the arguments of \cite[Lemma B.7]{ramzi2024dualizable}, adapting them to the context of left and right split exact sequences in $\prad^\star$. Note that (2) is a special case of (1) evaluated on the canonical sequence on $S_2 \mathcal{C}$. 
	We will show that $(1) \Rightarrow (4)$, $(1) \Rightarrow (5)$, $(4) \Rightarrow (3)$, $(5) \Rightarrow (3)$, $(3) \Rightarrow (2)$, and finally $(2) \Rightarrow (1)$.
		
		\textbf{$(1) \Rightarrow (4)$:} Consider a left split exact sequence $\mathcal{K} \xrightarrow{i} \mathcal{C} \xrightarrow{p} \mathcal{D}$. By definition, $i$ and $p$ admit left adjoints $i^L$ and $p^L$. The exactness implies that we have a canonical cofiber sequence of endofunctors on $\mathcal{C}$:
		$$ i \circ i^L \to \operatorname{id}_{\mathcal{C}} \to p^L \circ p. $$
		Applying the splitting invariant $E$, we obtain $E(\operatorname{id}_{\mathcal{C}}) \simeq E(i)E(i^L) + E(p^L)E(p)$. Let $(i, p^L): \mathcal{K} \times \mathcal{D} \to \mathcal{C}$ be the functor $(k, d) \mapsto i(k) \oplus p^L(d)$. Since $E$ preserves finite products (and thus direct sums), the equation above translates to:
		$$ E(\operatorname{id}_{\mathcal{C}}) \simeq E((i, p^L)) \circ E((i^L, p)). $$
		Conversely, evaluating the composition $(i^L, p) \circ (i, p^L)$ on an object $(k, d) \in \mathcal{K} \times \mathcal{D}$ yields $(i^L(i(k) \oplus p^L(d)), p(i(k) \oplus p^L(d)))$. Since $i$ is fully faithful ($i^L i \simeq \operatorname{id}$), $p p^L \simeq \operatorname{id}$, and the cross terms vanish by exactness ($i^L p^L \simeq 0$ and $p i \simeq 0$), we have $(i^L, p) \circ (i, p^L) \simeq \operatorname{id}_{\mathcal{K} \times \mathcal{D}}$. Therefore, $E((i, p^L))$ and $E((i^L, p))$ are mutually inverse, proving $(4)$.
		
		\textbf{$(1) \Rightarrow (5)$:} Similarly, for a right split exact sequence, $i$ and $p$ admit internal left adjoint right adjoints $i^R$ and $p^R$. We have the cofiber sequence of endofunctors $p^R \circ p \to \operatorname{id}_{\mathcal{C}} \to i \circ i^R$. Analogous matrix calculus via $E$ shows that $E((i^R, p))$ is an equivalence with inverse $E((i, p^R))$, establishing $(5)$.
		
		\textbf{$(5) \Rightarrow (3)$:} To see explicitly how (3) is a special case of both (4) and (5), we construct two distinct split exact sequences involving $S_2\mathcal{C}$ and apply the chain of adjunctions $c \dashv i_2 \dashv t \dashv i_1 \dashv f \dashv i_0 \dashv \Omega c$ in \cref{adjss2}.
		Consider the following sequence:
		$$ \mathcal{C} \xrightarrow{i_1} S_2\mathcal{C} \xrightarrow{c} \mathcal{C} $$
		First, we verify this is an exact sequence. For any $a \in \mathcal{C}$, $i_1(a) = (a \xrightarrow{\operatorname{id}} a)$. Applying the cofiber functor $c$ yields $c(a \xrightarrow{\operatorname{id}} a) = a \cup_a 0 \simeq 0$. Thus, $c \circ i_1 \simeq 0$.
	 By our adjunction chain, the right adjoint of the identity embedding $i_1$ is precisely the source evaluation functor $f$. Thus, $i_1^R = f$.
		Applying the formula from (5), the functor $\mathcal{C} \xrightarrow{(i^R, p)} \mathcal{K} \times \mathcal{D}$ precisely becomes:
		$$ S_2\mathcal{C} \xrightarrow{(f, c)} \mathcal{C} \times \mathcal{C} $$
		Condition (5) guarantees this is an equivalence, which exactly recovers condition (3).
		
		\textbf{$(4) \Rightarrow (3)$:}
		Consider an alternative sequence:
		$$ \mathcal{C} \xrightarrow{i_2} S_2\mathcal{C} \xrightarrow{f} \mathcal{C} $$
		We verify this is exact. For any $a \in \mathcal{C}$, $i_2(a) = (0 \to a)$. Applying the source evaluation functor $f$ yields $f(0 \to a) = 0$. Thus, $f \circ i_2 \simeq 0$.
	 By our adjunction chain, the left adjoint of the target embedding $i_2$ is precisely the cofiber functor $c$. Thus, $i_2^L = c$.
		Applying the formula from (4), the functor $\mathcal{C} \xrightarrow{(i^L, p)} \mathcal{K} \times \mathcal{D}$ evaluates to:
		$$ S_2\mathcal{C} \xrightarrow{(c, f)} \mathcal{C} \times \mathcal{C} $$
		Condition (4) guarantees that $(c, f)$ is an equivalence. Up to the canonical isomorphism that swaps the two factors of the product $\mathcal{C} \times \mathcal{C}$, it again recovers condition (3).
		
		\textbf{$(3) \Rightarrow (2)$:} Assume $E((f,c)): E(S_2 \mathcal{C}) \to E(\mathcal{C} \times \mathcal{C})$ is an equivalence. We know that the functor $i_0 \oplus i_1: \mathcal{C} \times \mathcal{C} \to S_2 \mathcal{C}$ given by $(x, y) \mapsto (x \to x \oplus y \to y)$ provides a section for $(f, c)$. Therefore, $E(i_0 \oplus i_1)$ must be the inverse of $E((f, c))$. We can now calculate $E(t)$ by decomposing the identity:
		\begin{align*}
			E(t) &\simeq E(t) \circ E(\operatorname{id}_{S_2 \mathcal{C}}) \\
			&\simeq E(t) \circ E(i_0 \oplus i_1) \circ E((f, c)) \\
			&\simeq E(t) \circ (E(i_0) \oplus E(i_1)) \circ E((f, c)) \\
			&\simeq (E(\operatorname{id}_{\mathcal{C}}) \oplus E(\operatorname{id}_{\mathcal{C}})) \circ E((f, c)) \\
			&\simeq E(f) + E(c),
		\end{align*}
		which proves $(2)$. Note that we used $t \circ i_0 \simeq \operatorname{id}_{\mathcal{C}}$ and $t \circ i_1 \simeq \operatorname{id}_{\mathcal{C}}$.
		
		\textbf{$(2) \Rightarrow (1)$:} Suppose we have a cofiber sequence of internal left adjoint functors $F \to G \to H$ from $\mathcal{C} \to \mathcal{D}\in \prad^\star$. By \cref{dualaddintervalpower}, this data is equivalent to a single internal left adjoint $\Phi: \mathcal{C} \to S_2 \mathcal{D}$ such that postcomposing $\Phi$ with the canonical cofiber sequence $f \to t \to c$ on $S_2 \mathcal{D}$ recovers our original sequence $F \to G \to H$. Applying $E$, we have $E(G) \simeq E(t) \circ E(\Phi)$. By condition $(2)$, we substitute $E(t) \simeq E(f) + E(c)$, yielding:
		$$ E(G) \simeq (E(f) + E(c)) \circ E(\Phi) \simeq E(f \circ \Phi) + E(c \circ \Phi) \simeq E(F) + E(H). $$
		Thus, $E$ converts cofiber sequences of morphisms into sums, making it a splitting invariant.
	\end{proof}
	We next record the additive version of the Eilenberg swindle.
	\begin{prop}[The Eilenberg swindle]\label{swindle}
			Let $E: \prad^\star\to\mce$ be a splitting invariant to an additive \infcat where $\star \in \{ \op{dbl}, \op{at} \}$. If $\mca$ is a small additive \infcat with countable coproducts or with countable  products, then $E(\mcp_\Sigma(\mca))=0$.
	\end{prop}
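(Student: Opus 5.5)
We run the classical Eilenberg swindle inside $\prad^\star$, using the additivity of $E$ from \cref{deflocinv}(1) in the form of \cref{splinvchar}. Set $\mcc=\mcp_\Sigma(\mca)$; it is compact projectively generated, hence lies in $\pradat\subset\praddbl$, so $E(\mcc)$ is defined. The aim is an internal left adjoint endofunctor $T\colon\mcc\to\mcc$ with a split cofiber sequence of internal left adjoints
\[
\id_\mcc\longrightarrow \id_\mcc\oplus T\longrightarrow T .
\]
Additivity then gives $E(\id\oplus T)\simeq E(\id)+E(T)$. If we also arrange $\id\oplus T\simeq T$, then $E(T)\simeq E(\id_\mcc)+E(T)$ in the group $\pi_0\Map_\mce(E(\mcc),E(\mcc))$. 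This forces $E(\id_\mcc)=0$, and therefore $E(\mcc)\simeq 0$.

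\textbf{Coproduct case.} Suppose $\mca$ has countable coproducts. Define $T_0\colon\mca\to\mca$ by $a\mapsto\bigoplus_{\mathbb N}a$; it is additive and satisfies $a\oplus T_0(a)\simeq T_0(a)$ naturally. Let $T=\mcp_\Sigma(T_0)$, the unique sifted-colimit-preserving extension. Then $T$ preserves colimits and sends compact projectives to compact projectives. Its right adjoint is restriction along $T_0$, i.e.\ precomposition $\Fun^\times(\mca^\opp,\mcs)\to\Fun^\times(\mca^\opp,\mcs)$. This preserves sifted colimits, since these are computed pointwise, and it preserves finite coproducts by additivity. So $T$ is an internal left adjoint. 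The natural equivalence $\id\oplus T_0\simeq T_0$ extends to $\mcc$, and the direct sum sequence is a cofiber sequence of internal left adjoints.

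\textbf{Product case.} Suppose instead $\mca$ has countable products. Take $T_0(a)=\prod_{\mathbb N}a$, which is additive and again satisfies $a\oplus T_0(a)\simeq T_0(a)$. The argument is then verbatim the same: $\mcp_\Sigma(T_0)$ is an internal left adjoint by the same restriction argument, because the right adjoint of $\mcp_\Sigma$ of any additive functor is restriction.

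The only real obstacle is checking that the swindle equivalence $\id\oplus T\simeq T$ is natural, i.e.\ an equivalence of functors and not merely objectwise, so that $E$ can be applied to it. This reduces to $\mca$, where the shift isomorphism $a\oplus\bigoplus_{\mathbb N}a\simeq\bigoplus_{\mathbb N}a$ (resp.\ the product version) is natural in $a$. Functoriality of $\mcp_\Sigma$ in additive functors and natural transformations then transports it to $\mcc$. We should also note that the argument uses only that $\pi_0\Map_\mce$ is a group, which holds because $\mce$ is additive.
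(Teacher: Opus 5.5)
Your argument is correct and is the same Eilenberg swindle as in the paper. Both take an endofunctor $f$ of $\mcp_\Sigma(\mca)$ with $\id\oplus f\simeq f$, apply additivity of $E$, and cancel in the group $\pi_0\Map_{\mce}(E(\mcp_\Sigma(\mca)),E(\mcp_\Sigma(\mca)))$ to force $E(\id)=0$. Your choice $f=\mcp_\Sigma(T_0)$, with $T_0$ the countable sum (resp.\ product) formed in $\mca$ and right adjoint given by restriction along $T_0$, spells out what the paper's notation $\bigoplus_{\mathbb N}\id_{\mcp_\Sigma(\mca)}$ leaves implicit: that $f$ is an internal left adjoint, and where the hypothesis on $\mca$ is used (the countable sum of $\id$ taken inside $\mcp_\Sigma(\mca)$ itself would in general not be one).
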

	\begin{proof}
 Let $f: \mcp_\Sigma(\mca) \to \mcp_\Sigma(\mca)$ be the functor given by the countable coproduct (resp. product) of the identity functor, i.e., $f \simeq \bigoplus_{\mathbb{N}} \operatorname{id}_{\mcp_\Sigma(\mca)}$ (resp. $f \simeq \prod_{\mathbb{N}} \operatorname{id}_{\mcp_\Sigma(\mca)}$). 
		
		Notice that taking the direct sum with the identity functor shifts the countable sum (resp. product), giving a canonical equivalence of functors $f \oplus \operatorname{id}_{\mcp_\Sigma(\mca)} \simeq f$. Because $E$ is a splitting invariant, and $f$ is formed by colimit-preserving functors, applying $E$ yields the algebraic equation:
		$$ E(f) + E(\operatorname{id}_{\mcp_\Sigma(\mca)}) \simeq E(f \oplus \operatorname{id}_{\mcp_\Sigma(\mca)}) \simeq E(f) \quad \text{in } \pi_0\operatorname{End}_{\mce}(E(\mcp_\Sigma(\mca))). $$
		Since $\mce$ is an  additive \infcat, we can cancel $E(f)$ from both sides. This forces $E(\operatorname{id}_{\mcp_\Sigma(\mca)}) \simeq 0$. Since $E$ is an additive functor, $E(\mcp_\Sigma(\mca)) \simeq 0$ in $\mce$.
	\end{proof}
	We now compare the two notions of invariance. The next proposition shows that localizing invariance is stronger than splitting invariance.
	\begin{prop}
		Let $E: \prad^\star\to\mce$ be a localizing invariant to an additive \infcat with finite limits  where $\star \in \{ \op{dbl}, \op{at} \}$. Then $E$ is a splitting invariant. 
	\end{prop}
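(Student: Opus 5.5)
We verify criterion (5) of \cref{splinvchar} (or (3)); the direct route is via the canonical sequence on $S_2\mcc$. Fix $\mcc\in\prad^\star$ and consider
\[
\mcc\xrightarrow{i_1}S_2\mcc\xrightarrow{c}\mcc .
\]
By \cref{adjss2}, $i_1$ has right adjoint $f$ and $c$ has right adjoint $i_2$. All of $f,i_1,i_2,c$ preserve colimits, since colimits in $S_2\mcc\simeq\mcc^{\Delta^1}$ are pointwise. Hence $i_1$ and $c$ are internal left adjoints, and $S_2\mcc$ lies in $\prad^\star$: it is $\mcc^{\Delta^1}$, a retract of $\mcc\times\mcc$ via internal left adjoints in the dualizable case, and compact projectively generated in the atomic case by the earlier proposition.

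The plan is to show this is a fiber-cofiber sequence in $\prad^\star$. First, $i_1$ is fully faithful, and $c$ is a localization with fully faithful colimit-preserving right adjoint $i_2$ (we have $c\,i_2\simeq\id$). The kernel of $c$ consists of arrows $x\to y$ whose cofiber vanishes, i.e. equivalences in the prestable $\mcc$, and this is exactly the essential image of $i_1$. So it remains to check that $c$ is a \emph{connective} internal localization in the sense of \cref{defconnloc}. The unit $(x\to y)\to i_2c(x\to y)=(0\to y/x)$ has components $x\to 0$ and $y\to y/x$. In a prestable category, $y\to\cofib(x\to y)$ is always a $\pi_0$-epimorphism, and $x\to0$ is trivially $\pi_0$-epi. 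Since effective epimorphisms in $\mcc^{\Delta^1}$ are detected pointwise (as $\pi_0$ is computed pointwise), the unit is a $\pi_0$-epimorphism. Then \cref{basicses}(2) gives a bifiber sequence in $\praddbl$ (and in $\pradat$, since fibers and cofibers there are created by the same constructions once the terms are atomically generated) with the recollement $i_1 f\to \id\to i_2 c$.

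Applying the localizing invariant $E$ yields a fiber sequence $E(\mcc)\to E(S_2\mcc)\to E(\mcc)$ in $\mce$. Both maps split: $t\circ i_1\simeq \id$ gives a retraction $E(t)$ of $E(i_1)$, and $c\circ i_0\simeq\id$ gives a section $E(i_0)$ of $E(c)$. In an additive category with finite limits, a fiber sequence with a section of the second map splits as a biproduct. Hence $E(S_2\mcc)\xrightarrow{(E(f),E(c))}E(\mcc)\oplus E(\mcc)$ is an equivalence, with $E(f)$ identified as the relevant retraction via $f\circ i_1\simeq\id$ and $f\circ i_0\simeq\id$, $c\circ i_1\simeq 0$. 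This is exactly (3), and \cref{splinvchar} concludes. Finite-product preservation follows because $\mcc\to\mcc\times\mcd\to\mcd$ is a split bifiber sequence of the same kind.

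The main obstacle is making the splitting step rigorous: identifying that the induced map to $E(\mcc)\oplus E(\mcc)$ is precisely $(E(f),E(c))$. The plan is to handle this with the matrix computation $(f,c)\circ(i_0\oplus i_1)\simeq\id$, using that $E(i_0\oplus i_1)$ is a section. It also requires checking the $\star=\op{at}$ case, where one must confirm that $E$ only needs to see sequences whose terms are atomically generated, which holds here.
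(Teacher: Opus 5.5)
Your architecture is sound, but the splitting step, which you flag as the main obstacle, uses the wrong functor. Since $i_0(a)=(a\to 0)$, one has $c\circ i_0(a)\simeq\cofib(a\to 0)\simeq\Sigma a$, not $a$. Worse, $i_0$ is in general not a morphism of $\prad^\star$ at all: its right adjoint $\Omega c$ does not preserve colimits in the prestable setting (as the paper remarks right after \cref{adjss2}), so $E(i_0)$ is undefined. For the same reason your matrix identity fails, since $(f,c)\circ(i_0\oplus i_1)(x,y)\simeq(x\oplus y,\Sigma x)$.

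The section you want is $E(i_2)$. Indeed $c\circ i_2\simeq\id$, and $i_2$ is an internal left adjoint because $i_2\dashv t$ with $t$ colimit-preserving. The fiber sequence $E(\mcc)\to E(S_2\mcc)\to E(\mcc)$ with this section makes $(E(i_1),E(i_2))\colon E(\mcc)\oplus E(\mcc)\to E(S_2\mcc)$ an equivalence. Then $(E(f),E(c))$ is its inverse, because $fi_1\simeq\id$, $fi_2\simeq 0$, $ci_1\simeq 0$, $ci_2\simeq\id$. Equivalently, $(f,c)\circ(i_1\oplus i_2)\simeq\id$, where $i_1\oplus i_2$ is $(x,y)\mapsto(x\to x\oplus y)$.

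Separately, $\mcc^{\Delta^1}$ is not a retract of $\mcc\times\mcc$; the retraction goes the other way, via $i_1\oplus i_2$ and $(f,c)$. So that is not a valid reason for $S_2\mcc\in\praddbl$. Instead use $\mcc^{\Delta^1}\simeq\mcc\otimes\spgeq^{\Delta^1}$ in $\prlad$, a tensor product of dualizable objects, or check the conditions of \cref{main1} pointwise. Criterion (3) of \cref{splinvchar} presupposes this anyway.

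With that repair, your proof is correct and organized differently from the paper's. The paper takes an arbitrary left split exact sequence $\mck\to\mcc\to\mcd$, treats it as a bifiber sequence, and verifies criterion (4) of \cref{splinvchar}. You use the single right split sequence $\mcc\xrightarrow{i_1}S_2\mcc\xrightarrow{c}\mcc$ (the one in the paper's proof of (5)$\Rightarrow$(3)) and aim directly at (3).

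Your route gives an honest verification of exactness, which the paper leaves implicit: you check that $c$ is a connective internal localization whose kernel is the image of $i_1$, and invoke \cref{basicses}(2). The paper's route is more general but takes exactness as input.

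You also identify the right splitting criterion. In a non-stable additive target, a fiber sequence splits once the second map has a section, whereas a retraction of the first map alone does not suffice (e.g.\ $0\to 0\to\mathrm{H}\mathbb{Z}$ in $\spgeq$). The paper's write-up phrases the splitting via the retraction $E(i^L)$; the section $E(p^L)$, also available there, is what actually makes it work.
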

	\begin{proof}
	Let $\mathcal{K} \xrightarrow{i} \mathcal{C} \xrightarrow{p} \mathcal{D}$ be a left split exact sequence in $\prad^\star$. Because $E$ is a localizing invariant, it sends the bifiber sequence $\mathcal{K} \to \mathcal{C} \to \mathcal{D}$ to a fiber sequence in $\mce$:
	$$ E(\mathcal{K}) \xrightarrow{E(i)} E(\mathcal{C}) \xrightarrow{E(p)} E(\mathcal{D}). $$
	Since the sequence is left split, $i$ and $p$ admit left adjoints.
	The left adjoint $i^L$ induces a map $E(i^L): E(\mathcal{C}) \to E(\mathcal{K})$ which splits the fiber sequence in $\mce$, meaning $E(i^L) \circ E(i) \simeq \operatorname{id}_{E(\mathcal{K})}$. In any additive \infcat, a split fiber sequence canonically decomposes the middle object into a direct sum\footnote{By Yoneda embedding, it is reduced to the case $\spgeq$.}. Thus, the maps $E(i^L)$ and $E(p)$ induce an equivalence:
	$$ E(\mathcal{C}) \xrightarrow{} E(\mathcal{K})\times E(\mathcal{D}). $$
	In particular, $E$ preserves finite products by taking $ \mcc=\mck\times \mcd$.
	Therefore by \cref{splinvchar}, $E$ is a splitting invariant.
	\end{proof}
	The preceding discussion expresses localizing invariants in terms of exactness properties. We now reinterpret these exactness properties sheaf-theoretically, using a singleton Grothendieck topology in the spirit of Pstrągowski.
		\subsection{Digression: singleton Grothendieck topology}
	We generalize the theory of additive sites (with a singleton topology) that appeared in \cite[\textsection2]{pstrkagowski2023synthetic} to the semi-additive setting. A striking result is that  \emph{localizing invariants are  exactly spherical sheaves}.

	We begin with a few generalities on localizations and singleton-generated topologies.
	\begin{de}[Strongly saturated class]
		Let $\mathcal{E}$ be a cocomplete \infcat. We shall say that a class of maps $S \subset \mathcal{E}$ is strongly saturated if the following conditions hold:
		\enu{\item $S$ contains the isomorphisms and it has the 2-out-of-3 property;
			\item $S$ is closed under colimits.	
		} 
	\end{de}

\begin{rem}
		The definition of strongly saturated class in \cite[Definition 5.5.4.5]{htt} includes the additional condition that the class $S$ be closed under cobase change. The following lemma, however, shows that the additional condition is automatic.
\end{rem}
	\begin{lem}[{\cite[Lemma 2.2.5]{anel2022left}}]
		Let $S$ be a class of maps in a \infcat   $\mathcal{E}$ with finite colimits. If $S$ contains the isomorphisms and the full subcategory $S \subset \mathcal{E}^{\Delta^1}$ is closed under pushouts, then $S$ is closed under cobase change.
	\end{lem}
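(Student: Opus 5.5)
We need to prove: if $S$ contains isomorphisms and $S\subset \mathcal E^{\Delta^1}$ is closed under pushouts, then $S$ is closed under cobase change.

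Let $f\colon a\to b$ be in $S$ and $g\colon a\to a'$ an arbitrary map, with cobase change $f'\colon a'\to b':=b\sqcup_a a'$. The plan is to write $f'$, as an object of $\mathcal E^{\Delta^1}$, as a pushout of three objects of $S$. Take the span
\[
(a\xrightarrow{\id}a)\longleftarrow (a\xrightarrow{f} b)\qquad\text{and}\qquad (a\xrightarrow{\id} a)\longrightarrow (a'\xrightarrow{\id}a'),
\]
but arranged so the middle term is the identity. Precisely, consider the diagram in $\mathcal E^{\Delta^1}$
\[
(a\xrightarrow{f}b)\longleftarrow (a\xrightarrow{\id}a)\longrightarrow (a'\xrightarrow{\id}a').
\]
The left map is given on sources by $\id_a$ and on targets by $f$. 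This is a legitimate morphism of arrows because the square with $f\circ\id=f\circ\id$ commutes. The right map is $g$ on both components. The objects $\id_a$ and $\id_{a'}$ are isomorphisms, hence lie in $S$, and $f\in S$. So the pushout lies in $S$ by the hypothesis.

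Colimits in $\mathcal E^{\Delta^1}$ are computed pointwise. On sources the pushout is $a\sqcup_a a'\simeq a'$, since one leg is $\id_a$. On targets it is $b\sqcup_a a'=b'$. The induced map $a'\to b'$ is the canonical one, i.e.\ $f'$; identifying it requires only the usual pasting of pushout squares. Hence $f'\in S$.

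The only point needing care is that the pushout of the source components is genuinely $a'$ with the right structure maps, and that the resulting arrow agrees with the cobase change $f'$ rather than merely being abstractly equivalent to it. This comes from the universal property of pointwise pushouts: pushing out along an identity is trivial, so the comparison reduces to the defining pushout square for $b'$. Closure of $S$ under equivalences in $\mathcal E^{\Delta^1}$ then finishes the argument; this holds because $S$ is a full subcategory, tacitly replete.
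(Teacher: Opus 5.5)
Your argument is correct, and it is the standard proof of the cited lemma (the paper itself gives no proof and only cites Anel--Biedermann--Finster--Joyal): $f'$ is the pointwise pushout in $\mathcal{E}^{\Delta^1}$ of $\id_{a'} \leftarrow \id_a \to f$, and all three terms of this span lie in $S$. The only blemish is your first display, whose left arrow goes from $f$ to $\id_a$ and so is not a usable span; the diagram after ``Precisely'' is the right one, so simply delete the earlier display.
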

	We now introduce the semi-additive analogue of spherical presheaves.
	\begin{de}
		Let $\mathcal{C}$ be a small  \infcat with finite coproducts and \mce be a \infcat. We say a functor $\mcc^\opp\to \mce$ is a \textbf{spherical presheaf} if it preserves finite products.
	\end{de}
To define the singleton topology uniformly, we keep track of a class of morphisms stable under base change.
	\begin{de}\label{goodpair}
		Let $\mcc$ be a small \infcat with pullbacks and $Q$ be a collection of morphisms. We say $Q$ is a good collection if $Q$ contains all equivalences and is closed under pullbacks, i.e., if $f:x\to y\in Q$, then the morphism $x\times_y z\to z$ obtained by the pullback along any morphism $z\to y$ still lies in $Q$.
		
		We call such a pair $(\mcc,Q)$ a \textbf{good pair}. If moreover \mcc is semi-additive, then we call such a pair a \textbf{semi-additive good pair}.
	\end{de}
	We recall the minimal language of quasi-topologies needed to make this construction precise.
\begin{de}
	A \textbf{quasi-topology} $\tau$ on a \infcat $\mathcal{C}$ assigns to every $X \in \mathcal{C}$ a collection $\tau(X)$ of sieves on $X$, called $\tau$-sieves, such that, for every $f: Y \rightarrow X, f^* \tau(X) \subset \tau(Y)$.
\end{de}
Given a good pair, we now define the singleton quasi-topology generated by the morphisms in \(Q\).
	\begin{de}
	Let $(\mcc,Q)$ be a good pair. We define the quasi-topology $\tau_Q$ on \mcc as follows: a sieve $\mcc^{(0)}_{/X}\subset\mcc_{/X}$ of $X$ lies in $\tau_Q(X)$ if and only if it can be generated by a single morphism $(Z\to X)\in Q$.
	
	We say that a presheaf $F\in \mcp(\mcc)$ is a $Q$-sheaf if for every $X \in \mathcal{C}$ and every $R \in \tau_Q(X)$ (viewed as a subsheaf of $h_X$), the restriction map $\operatorname{Map}_{\mcp(\mcc)}(h_X, F) \rightarrow \operatorname{Map}_{\mcp(\mcc)}(R, F)$ is an equivalence.
	\end{de}
	Although \(\tau_Q\) is only a quasi-topology, it defines the same sheaf theory as the Grothendieck topology it generates. Moreover, the sheaf condition can be checked on Čech nerves of the singleton covers:
	\begin{prop}
		Let $(\mcc,Q)$ be a good pair. Let $\bar{\tau}_Q$ denote the smallest Grothendieck topology containing $\tau_Q$. Then $\shv_{\tau_Q}(\mcc)=\shv_{\bar{\tau}_Q}(\mcc)$.
		
		Moreover, a presheaf $F\in\mcp(\mcc)$ is a $Q$-sheaf if and only if for any $(X\to Y)\in Q$, the augmented cosimplicial diagram $\Delta^\triangleleft\xrightarrow{F(X^\bullet/Y)} \mcs$ induced by the \v{C}ech nerve
	\[
	\begin{tikzcd}
		F(Y) \arrow[r] & 
		F(X) \arrow[r, shift left=1.2ex] \arrow[r, shift right=1.2ex] & 
		F(X \times_Y X) \arrow[l] \arrow[r, shift left=2ex] \arrow[r] \arrow[r, shift right=2ex] & 
		F(X \times_Y X \times_Y X) \arrow[l, shift left=1ex] \arrow[l, shift right=1ex] \arrow[r, phantom, "\cdots"] & \null
	\end{tikzcd}
	\]
		is a limit diagram.
	\end{prop}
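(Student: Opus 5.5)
The plan has two halves. The first identifies $\shv_{\tau_Q}(\mcc)$ with $\shv_{\bar\tau_Q}(\mcc)$. The second reduces the sheaf condition for a single $\tau_Q$-sieve to the \v{C}ech condition.

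For the first half, the aim is to show that the class of sieves $R\subset h_X$ against which a given $F$ is local is closed under the operations generating a Grothendieck topology from $\tau_Q$. Those operations are pullback, maximal sieves, and local character/transitivity. Pullback is already built into the definition, since $\tau_Q$ is a quasi-topology ($Q$ is stable under base change). Maximal sieves are generated by $\id_X\in Q$.

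For transitivity I would follow the standard argument of \cite[\S6.2.2]{htt}. Take the strongly saturated class $S\subset\mcp(\mcc)^{\Delta^1}$ generated by the monomorphisms $R\hookrightarrow h_X$ with $R\in\tau_Q(X)$. Then $F$ is a $Q$-sheaf iff $F$ is $S$-local, and the lemma of Anel et al. quoted above lets us use closure under colimits and 2-out-of-3 only. Now suppose $R'\subset h_X$ is a sieve and $R\in\tau_Q(X)$ is generated by $Z\to X$, with $R'|_{Z}$ containing a $\tau_Q$-sieve. Writing $R=\colim_{(U\to X)\in R}h_U$ and using universality of colimits in $\mcp(\mcc)$, the map $R'\times_{h_X}R\to R$ is a colimit of maps $R'\times_{h_X}h_U\to h_U$. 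Each of these is a pullback of a covering sieve, so by an induction on generation it lies in $S$. Then 2-out-of-3 applied to $R'\cap R\to R'\to h_X$ and $R'\cap R\to R\to h_X$ gives $R'\to h_X\in S$. Hence every $\bar\tau_Q$-sieve lies in $S$, so $Q$-sheaves coincide with $\bar\tau_Q$-sheaves.

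For the \v{C}ech criterion, fix $f\colon Z\to X$ in $Q$ and let $R$ be the sieve it generates. In $\mcp(\mcc)$ the sieve $R$ is the image (the $(-1)$-truncated part) of $h_Z\to h_X$. By \cite[Proposition 6.2.3.4 / Corollary 6.2.3.5]{htt} applied in the topos $\mcp(\mcc)$, $h_Z\to R$ is an effective epimorphism, so $R\simeq|\check{C}(h_Z/h_X)_\bullet|$. Since $\mcc$ has pullbacks and Yoneda preserves them, the \v{C}ech nerve is $h_{Z\times_X\cdots\times_X Z}$. Therefore
\[
\Map(R,F)\simeq\lim_{\Delta}F(Z^{\times_X(\bullet+1)}),
\]
so locality against $R\hookrightarrow h_X$ is exactly the statement that the augmented cosimplicial diagram is a limit. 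Ranging over all $f\in Q$ gives the claim.

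The main obstacle is the transitivity step in the first half. There one must make sure the induction over the generated topology really stays inside $S$, and in particular that pullbacks of $\tau_Q$-sieves along arbitrary maps $h_U\to h_X$ remain in the generating set. This is where closure of $Q$ under base change is used essentially. I would phrase it via the characterization of $\bar\tau_Q$ as the sieves $R'$ such that $R'\hookrightarrow h_X$ lies in the strongly saturated class generated by $\tau_Q$, checking the Grothendieck axioms for that class directly.
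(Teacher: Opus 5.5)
Your proposal is correct. Its second half is the paper's own argument: the sieve generated by $f\colon Z\to X$ is the image of $h_Z\to h_X$ in $\mcp(\mcc)$, hence the geometric realization of the (representable) \v{C}ech nerve. Yoneda then turns locality against $R\hookrightarrow h_X$ into the limit condition.

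For the first half the paper gives no argument of its own. It cites \cite[Corollary C.2]{hoyois2015quadratic}, which gives $\shv_\tau=\shv_{\bar\tau}$ for an arbitrary quasi-topology $\tau$. You instead reprove this by hand, showing that every $\bar\tau_Q$-sieve lies in the strongly saturated class $S$ generated by the $\tau_Q$-sieves. Your route is self-contained: it uses only closure of $S$ under colimits and 2-out-of-3, universality of colimits in $\mcp(\mcc)$, and the quasi-topology property. The citation is shorter and hides this bookkeeping.

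One step of that bookkeeping needs care. Suppose you literally take $T(X)$ to be the sieves $R'$ with $(R'\hookrightarrow h_X)\in S$ and check the Grothendieck axioms for $T$. Then the pullback axiom amounts to base-change stability of $S$, which is essentially left exactness of the localization $\mcp(\mcc)\to S^{-1}\mcp(\mcc)$ (cf.\ \cite[\textsection6.2]{htt}). This does not follow formally from stability of the generators. The class of maps all of whose base changes lie in $S$ is closed under colimits, pushouts and composition, and it satisfies ``$f$, $gf$ in it $\Rightarrow$ $g$ in it''. But the other case, ``$g$, $gf$ in it $\Rightarrow$ $f$ in it'', is not a formal consequence of universality of colimits.

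You can bypass this in either of two ways.

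First, use the standard description of $\bar\tau_Q$: it is the smallest family of sieves that contains the maximal sieves and contains $R'$ whenever $R'|_U$ belongs to it for all $U$ in some $\tau_Q$-sieve $R$. Your $T$ has both closure properties:
\begin{itemize}
\item $R'\cap R\to R$ is a colimit of the maps $R'|_U\hookrightarrow h_U$, which lie in $S$ by hypothesis;
\item $R'\cap R\to R'$ is a colimit of the $\tau_Q$-sieves $R|_V\hookrightarrow h_V$ for $V\in R'$.
\end{itemize}
So 2-out-of-3 applies and $\bar\tau_Q\subset T$, with no base-change stability of $S$ needed.

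Second, build stability into the definition: take $T(X)$ to be the sieves $R'$ such that $g^*R'\hookrightarrow h_V$ lies in $S$ for every $g\colon V\to X$. The same two colimit computations then verify the axioms for this $T$ directly.
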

	\begin{proof}
		The first statement follows from \cite[Corollary C.2]{hoyois2015quadratic}. For the second statement, it suffices to observe that the sieve $R_f \hookrightarrow h_Y$ generated by a morphism $f: X \to Y$ can be identified with the geometric realization of the \v{C}ech nerve in $\mcp(\mcc)$. That follows from the following factorization:
		\[
		\begin{tikzcd}[column sep=large, row sep=large]
			\cdots \arrow[r, shift left=1.5ex] \arrow[r, description] \arrow[r, shift right=1.5ex] & 
			h_{X \times_Y X} \arrow[r, shift left=1ex, "d^0"] \arrow[r, shift right=1ex, "d^1"'] & 
			h_X \arrow[r, two heads] \arrow[rd, "h_f"'] & 
			R_f\simeq|h_{X^\bullet/Y}| \arrow[d, hook] \\
			& & & h_Y
		\end{tikzcd}
		\]
	\end{proof}
	For later applications to invariants valued in an arbitrary additive \infcat, we will use the following tensor product compatibility:
		\begin{rem}
		Let $(\mcc,Q)$ be a semi-additive good pair. For any
		presentable additive \infcat $\mce$, by a similar argument to \cite[Proposition 1.3.1.7]{sag} we have a canonical  equivalence:
		$$\shv_\Sigma(\mcc;\spgeq)\otimes\mce\xrightarrow[]{\sim} \shv_\Sigma(\mcc;\mce).$$
	\end{rem}
The key point of the singleton topology is that, for spherical presheaves, the sheaf condition admits a purely exactness-theoretic formulation.
		\begin{thm}\label{sphshvcriterion}
		Let $(\mcc,Q)$ be a semi-additive good pair and \mce be an additive \infcat. A spherical presheaf $X:\mcc^\opp\to\mce$ is a $Q$-sheaf if and only if for every fiber sequence $F\to B\to A$ in \mcc with $B\to A$ in $Q$, the induced sequence
		$X(A)\to X(B)\to X(F)$
		is a fiber sequence in \mce.
	\end{thm}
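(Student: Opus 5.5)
Both directions run through the \v{C}ech criterion of the preceding proposition: $X$ is a $Q$-sheaf if and only if, for every $f\colon B\to A$ in $Q$, the augmented cosimplicial object $X(A)\to X(B)\rightrightarrows X(B\times_A B)\cdots$ is a limit diagram. Everything below happens in a semi-additive $\mcc$ with pullbacks. There, for $f\colon B\to A$ with fiber $F=B\times_A 0$, the shearing map $B\times F\to B\times_A B$, $(b,x)\mapsto (b,b+x)$, is an equivalence over the first projection. Iterating gives $B^{\times_A(n+1)}\simeq B\times F^{n}$, so the \v{C}ech nerve of $f$ is the bar-type simplicial object of the action of the group object $F$ on $B$.

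For the direction sheaf $\Rightarrow$ fiber sequence, I apply $X$, which preserves finite products, and rewrite the \v{C}ech conerve as $X(B)\oplus X(F)^{\oplus n}$. By the shearing identification, the first two coface maps $X(B)\rightrightarrows X(B)\oplus X(F)$ are the inclusion $(\id,0)$ and $(\id,\iota^*)$, where $\iota\colon F\to B$. The equalizer condition in low degrees then identifies $X(A)=\lim$ with the fiber of $\iota^*\colon X(B)\to X(F)$. More carefully, the cosimplicial object splits as the constant part $X(B)$ plus a ``normalized'' part. For this part I use the dual of the standard fact that the limit of the cobar construction on a trivially split piece equals the fiber of $\iota^*$; in an additive \infcat\ this reduces via Yoneda to $\spgeq$-valued (or spectrum-valued) computations.

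For the converse, I assume $X(A)\to X(B)\to X(F)$ is a fiber sequence for all such $f$ and show the \v{C}ech conerve is a limit. The key step is that $Q$ is closed under pullback, so every map $B^{\times_A(n+1)}\to B^{\times_A n}$ in the \v{C}ech nerve lies in $Q$ with fiber $F$. The hypothesis therefore applies levelwise, and the cosimplicial object is isomorphic to the one built from $(X(B),X(F))$ described above. Both its totalization and $X(A)$ are then computed as $\fib(X(B)\to X(F))$, and the augmentation realizes the comparison. To make this clean I would reduce to $\mce=\spgeq$ (or $\opsp$) by mapping in from test objects, which is permissible since $\map_{\mce}(e,-)$ preserves limits and finite products.

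I expect the main obstacle to be the coherent identification of the \v{C}ech conerve with the split cobar complex $X(B)\oplus X(F)^{\oplus\bullet}$ and the computation of its totalization. The degree-$0$ and degree-$1$ pieces are clear, but the full cosimplicial structure, including the group-object coherences of $F$ in a semi-additive category, needs care. I would handle it via the dual Dold--Kan type fact that, after passing to spectra, a cosimplicial object of the form $M\oplus N^{\oplus\bullet}$ with extra codegeneracies has totalization $\fib(M\to N)$. Alternatively, I would compare it with the \v{C}ech nerve of the split map $B\times F\to B$, which is easily computed, in the manner of Pstr\k{a}gowski's \cite[\textsection2]{pstrkagowski2023synthetic} additive argument.
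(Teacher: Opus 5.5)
The step that fails is your shearing claim. In a semi-additive $\mathcal{C}$ the map $B\times F\to B\times_A B$, $(b,x)\mapsto(b,b+x)$, need not be an equivalence: $F$ is only a commutative monoid object, not a group object, and inverting the shear needs subtraction. Already in commutative monoids, the fold map $\nabla\colon\mathbb{N}\oplus\mathbb{N}\to\mathbb{N}$ has fiber $F=0$, while $B\times_A B=\{(a,b,c,d):a+b=c+d\}$ is much larger than the diagonal.

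This is exactly the point the paper flags. The levelwise sequence $F\to B^{\times_A(n+2)}\to B^{\times_A(n+1)}$, i.e.\ $\check{C}(\mathrm{id}_F)\to\mathrm{Dec}\,\check{C}(f)\to\check{C}(f)$ with $\mathrm{Dec}\,\check{C}(f)=\check{C}(B\times_A B\to B)$, has a section but need not split in $\mathcal{C}$. So in neither direction can you identify $X(\check{C}(f))$ with a cobar complex $X(B)\oplus X(F)^{\oplus\bullet}$.

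For the ``if'' direction you already have the right ingredient: each $B^{\times_A(n+2)}\to B^{\times_A(n+1)}$ is a pullback of $f$, hence lies in $Q$ with fiber $F$. Use it without splitting anything in $\mathcal{C}$. The hypothesis, applied to these maps and to $f$ itself at the augmentation level, makes $X(\check{C}(f))\to X(\mathrm{Dec}\,\check{C}(f))\to X(F)$ a levelwise fiber sequence of augmented cosimplicial objects. The middle term is split by the extra degeneracy and the last is constant, so both are limit diagrams. Hence their fiber $X(\check{C}(f))$ is one too, i.e.\ $\lim X(\check{C}(f))\simeq\mathrm{fib}(X(B)\to X(F))\simeq X(A)$. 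This is the paper's modification of Pstr\k{a}gowski's argument, and it removes the coherence problem you anticipated.

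For the ``only if'' direction nothing replaces the shearing step, and no argument can: in this generality that direction is false. Take the following data:
\begin{itemize}
\item $\mathcal{C}$ a small full subcategory of commutative monoids, closed under finite limits and containing $\mathbb{N}$;
\item $Q$ the split epimorphisms, which contain the equivalences and are stable under pullback;
\item $\mathcal{E}=\mathrm{Ab}$;
\item the spherical presheaf $X=\mathrm{Hom}_{\mathrm{CMon}}(-,\mathbb{Z})$.
\end{itemize}
Every sieve generated by a split epimorphism contains the identity, so every presheaf is a $Q$-sheaf. But consider the fiber sequence $0\to\mathbb{N}\oplus\mathbb{N}\xrightarrow{\nabla}\mathbb{N}$, where $\nabla$ is split by a summand inclusion. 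Applying $X$ gives $\mathbb{Z}\xrightarrow{\Delta}\mathbb{Z}\oplus\mathbb{Z}\to 0$, which is not a fiber sequence.

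The paper's own sentence (``applying $X(-)$ to this sequence yields a split fiber sequence'') does not cover this direction either. Getting a fiber sequence after applying $X$ to $F\to B^{\times_A(n+2)}\to B^{\times_A(n+1)}$ requires exactly the fiber condition on these $Q$-maps, which is the hypothesis of the other direction. In Pstr\k{a}gowski's additive setting that input comes for free from the splitting in $\mathcal{C}$. So this direction needs an extra hypothesis, for instance that $\mathcal{C}$ be additive.
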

	
	\begin{proof}
		The ``if'' direction is proved by the same argument as in \cite[Theorem 2.8]{pstrkagowski2023synthetic}.
		
		For the ``only if'' direction we only need a small modification of the original argument in \cite[Theorem 2.8]{pstrkagowski2023synthetic}. The potential issue is that the iterated fiber sequence in \mcc
		\[
		F\times_F\cdots\times_F F \longrightarrow (B\times_A B)\times_B\cdots\times_B(B\times_A B)\longrightarrow B\times_A\cdots\times_A B
		\]
		need not split in the semi-additive setting, even though the last map admits a retract. However, since the target \infcat \mce is additive, applying $X(-)$ to this sequence yields a split fiber sequence. 
	\end{proof}

	\begin{rem}
		The semi-additivity hypothesis is essential in this argument and cannot be weakened to pointedness, as the identification $X(F \times F) \simeq X(F) \oplus X(F)$ is crucially required in the original proof \cite[Theorem 2.8]{pstrkagowski2023synthetic}.
	\end{rem}
Since the sheaf condition is expressed by finite limits, it is compatible with filtered colimits:
	\begin{thm}
		Let $(\mcc,Q)$ be a semi-additive good pair. Then the inclusion $$\shv_\Sigma(\mcc;\spgeq)\hookrightarrow \mcp_\Sigma(\mcc;\spgeq)$$ preserves filtered colimits. In particular, $\shv_\Sigma(\mcc;\spgeq)$ is a compactly generated prestable \infcat.
	\end{thm}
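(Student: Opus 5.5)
The plan is to use the exactness-theoretic description of sheaves from \cref{sphshvcriterion} and to reduce the claim to a statement about finite limits in $\spgeq$. First we identify $\mcp_\Sigma(\mcc;\spgeq)$ with $\funct^\times(\mcc^\opp,\spgeq)$. Since $\mcc$ is semi-additive and $\spgeq$ is additive, this is the same as additive functors, and it is compact projectively generated by the representables; each is $\Sigma^\infty_+$ of $h_c$, turned into a connective spectral presheaf via semi-additivity. In this category, filtered colimits are computed pointwise. The reason is that finite products in $\spgeq$ commute with filtered colimits, so the pointwise filtered colimit of product-preserving presheaves is again product-preserving.

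Now let $X=\colimit_\alpha X_\alpha$ be a filtered colimit in $\mcp_\Sigma(\mcc;\spgeq)$ with each $X_\alpha$ a $Q$-sheaf. By \cref{sphshvcriterion}, applied with $\mce=\spgeq$, it suffices to check the following: for every fiber sequence $F\to B\to A$ in $\mcc$ with $B\to A\in Q$, the sequence $X(A)\to X(B)\to X(F)$ is a fiber sequence in $\spgeq$. Each sequence $X_\alpha(A)\to X_\alpha(B)\to X_\alpha(F)$ is a fiber sequence in $\spgeq$. Filtered colimits in $\spgeq$ are left exact: $\spgeq$ is Grothendieck prestable, and fibers in $\spgeq$ are $\tau_{\geq0}$ of fibers in $\opsp$, where $\tau_{\geq 0}$ commutes with filtered colimits. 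Hence the colimit sequence is again a fiber sequence, and $X$ is a sheaf. This gives the first claim.

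For the second claim, the sheafification functor $L$ is left adjoint to the inclusion. So $L$ sends the compact projective generators of $\mcp_\Sigma(\mcc;\spgeq)$ to a set of objects generating $\shv_\Sigma(\mcc;\spgeq)$ under colimits. Because the inclusion preserves filtered colimits, these images are compact. Thus $\shv_\Sigma(\mcc;\spgeq)$ is compactly generated; presentability follows because the sheaf condition is a small set of conditions, so we have an accessible localization.

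It remains to see that the category is prestable. The subcategory $\shv_\Sigma(\mcc;\spgeq)\subset\mcp_\Sigma(\mcc;\spgeq)$ is a reflective localization that is closed under finite limits. I would check prestability via \cref{defpst}. Alternatively, one can embed into $\shv_\Sigma(\mcc;\opsp)$ and check that the image is closed under finite colimits and extensions (\cref{prestcriterion}). Here cofibers are computed by applying $L$ to the presheaf cofiber.

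The main obstacle I expect is this prestability check, specifically showing that cofiber sequences of sheaves are fiber sequences. For that I would use the fact that the sheaf condition of \cref{sphshvcriterion} is a left-exactness condition. Then I would argue, as in Pstrągowski's additive case, that $\shv_\Sigma(\mcc;\spgeq)$ is the connective part of the $t$-structure on $\shv_\Sigma(\mcc;\opsp)$ given by $\tau_{\geq0}$ pointwise. This works because pointwise $\tau_{\geq 0}$ preserves the fiber-sequence condition up to a $\pi_0$-epimorphism issue; the $\pi_0$-epimorphism issue is exactly where the $Q$-covers enter.
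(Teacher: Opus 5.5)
Your argument for the first claim is the paper's own proof: \cref{sphshvcriterion} (with $\mce=\spgeq$) expresses the sheaf condition through fiber sequences, filtered colimits in $\mcp_\Sigma(\mcc;\spgeq)$ are computed pointwise, and filtered colimits in $\spgeq$ are left exact. Your deduction of compact generation is also fine. The paper gives no separate argument for prestability, and your sketch for it has a gap.

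First, being reflective and closed under finite limits (and even under filtered colimits) is not enough to verify \cref{defpst}. The discrete objects $\mathrm{Ab}\subset\spgeq$ have all these properties, yet $\mathbb{Z}\to 0\to 0$ is a cofiber sequence in $\mathrm{Ab}$ that is not a fiber sequence. Second, the $t$-structure route does not supply the missing input. Postcomposition with $\spgeq\subset\opsp$ does not carry $\spgeq$-valued sheaves to $\opsp$-valued sheaves, because fibers in $\spgeq$ are $\tau_{\geq0}$ of spectral fibers. So the embedding you need is really $\Sigma^\infty\colon\shv_\Sigma(\mcc;\spgeq)\to\shv_\Sigma(\mcc;\opsp)$, and its full faithfulness is essentially the prestability you are trying to prove. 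Conversely, pointwise $\tau_{\geq0}$ preserves the condition of \cref{sphshvcriterion} on the nose, being a right adjoint, so there is no ``$\pi_0$-epimorphism issue'' to resolve. In any case $Q$ is an arbitrary pullback-stable class with no epimorphism hypothesis, so $Q$-covers could not repair one.

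The missing idea is that the sheafification $L\colon\mcp_\Sigma(\mcc;\spgeq)\to\shv_\Sigma(\mcc;\spgeq)$ is left exact. Since $\Omega^\infty$ preserves and reflects limits, a presheaf $X$ of connective spectra is a sheaf iff $\Omega^\infty X$ is. Hence $\spgeq$-valued sheaves are the grouplike $\mathbb{E}_\infty$-objects of the topos $\shv(\mcc)$. The reflector is therefore induced by the topos-theoretic sheafification, which is left exact (so it preserves grouplike $\mathbb{E}_\infty$-objects). It also preserves sphericity by \cref{LSpreservesphe-semi}, exactly as the paper uses in \cref{ressphe}.

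With this in hand, prestability follows directly. For a map of sheaves $X\to Y$, applying $L$ to the presheaf fiber sequence $X\to Y\to\cofib(X\to Y)$ gives condition (2) of \cref{defpst}. For condition (3), pull a map $X\to L\Sigma Y$ back along the unit $\Sigma Y\to L\Sigma Y$, use prestability of $\mcp_\Sigma(\mcc;\spgeq)$, and apply $L$. Alternatively, quote \cite[Proposition C.2.3.1]{sag}, as the paper does in \cref{quotientlocalizing}.
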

	\begin{proof}
		By \cref{sphshvcriterion}, being a spherical sheaf is a condition that can be
		described using finite limits. It follows that the inclusion $\shv_\Sigma(\mcc;\spgeq)\hookrightarrow \mcp_\Sigma(\mcc;\spgeq)$ is closed under filtered colimits, because $\mcp_\Sigma(\mcc;\spgeq)$ is Grothendieck prestable.
	\end{proof}
	We next discuss functoriality of spherical sheaves with respect to maps of singleton sites.
		\begin{de}
		Let $(\mcc,Q)$ and $(\mcd,R)$ be two good pairs. 
		We say $f:(\mcc,Q)\to (\mcd,R)$ is a morphism of good pair if the functor $f:\mcc \to \mcd$ preserves pullbacks and good morphisms. If both $(\mcc,Q)$ and $(\mcd,R)$ are semi-additive good pairs, then we say $f$ is a \textbf{spherical morphism} if $f$ further preserves finite products\footnote{It is enough to preserve zero object since we have required the pullback-preserving condition.}.
	\end{de}
	The following technical lemma ensures that localizations of presheaf categories preserve sphericity under a mild finite-product hypothesis.
		\begin{lem}\label{LSpreservesphe-semi}
		Let $\mathcal{C}$ be a small semi-additive  \infcat, $S$ a small set of morphisms in $\mcp(\mathcal{C})$ and $$L^S: \mcp(\mathcal{C}) \rightarrow S^{-1} \mcp(\mathcal{C})$$ the associated localization functor taking values in $S$-local presheaves. If $S$ consists only of morphisms of spherical presheaves and $L^S$ preserves finite products, then $L^S$ takes spherical presheaves to spherical $S$-local presheaves.
	\end{lem}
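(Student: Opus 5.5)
The approach is a saturation argument. Let $T\subset \mcp(\mcc)$ be the class of presheaves $F$ such that $L^S(F)$ is spherical. We must show that every spherical presheaf lies in $T$. Note first that $S$-local objects are in particular presheaves, so we only need that $L^S F$ carries finite coproducts in $\mcc$ to products. Since $\mcc$ is semi-additive, finite coproducts and products agree; write $\oplus$ for both.

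The key observation is that sphericity of a presheaf $G$ can be phrased as a locality condition. $G$ is spherical if and only if it is local with respect to the set $S_\Sigma$ of maps $h_{x}\sqcup h_{y}\to h_{x\oplus y}$ (coproduct in $\mcp(\mcc)$) together with $\varnothing\to h_0$. So spherical presheaves are exactly the $S_\Sigma$-local objects, and $\mcp_\Sigma(\mcc)=S_\Sigma^{-1}\mcp(\mcc)$ with localization $L^\Sigma$.

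The plan is then to compare $L^S$ with $L^{S\cup S_\Sigma}$. Since $S$ consists of maps between spherical presheaves, and spherical presheaves are closed under colimits preserved by $L^\Sigma$, I would argue as follows. An $S$-local object is $S\cup S_\Sigma$-local precisely when it is spherical. Given a spherical $F$, I want to show that $L^S F$ is $S_\Sigma$-local.

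\textbf{Step 1.} Use the hypothesis that $L^S$ preserves finite products. Since $L^S$ is a left adjoint, it also preserves coproducts. Evaluating at $x\oplus y$, we have
\[
(L^SF)(x\oplus y)\simeq \Map(L^S h_{x\oplus y},L^SF).
\]
Here $h_{x\oplus y}\simeq h_x\times h_y$ by the Yoneda embedding, which preserves products. Thus $L^S h_{x\oplus y}\simeq L^Sh_x\times L^Sh_y$.

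\textbf{Step 2.} Obtain a map in the opposite direction. The map $h_x\sqcup h_y\to h_{x\oplus y}$, after applying $L^S$, becomes
\[
L^Sh_x\sqcup L^Sh_y\to L^Sh_x\times L^Sh_y.
\]
So it suffices to show that this canonical map is an equivalence in $S^{-1}\mcp(\mcc)$, i.e. that $S^{-1}\mcp(\mcc)$ is semi-additive on the relevant objects.

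\textbf{Step 3.} This is the main obstacle. I would show that $S^{-1}\mcp(\mcc)$ is semi-additive by noting the following. The zero object $h_0$ of $\mcc$ gives $L^Sh_0$. Because $L^S$ preserves the terminal object (finite products, including the empty one), $L^S h_0$ is terminal. Moreover $L^S(\varnothing)$ is initial, and $\varnothing\to h_0$ becomes an equivalence after localizing. To see the latter, I would note that every $S$-local object $G$ satisfies $G(0)\simeq *$: the maps in $S$ are between spherical presheaves, and I would check that $G(0)=\Map(h_0,G)$ is contractible by exhibiting $h_0$ as a retract of $L^S$-equivalent terminal data. Failing that, I would simply apply the given product preservation to the empty product, together with $h_0\simeq *\times$-idempotence, i.e. $h_0\simeq h_0\times h_0$.

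Once the localized category is pointed, I would also use that it is preadditive on the image of $\mcc$. The matrix maps $L^Sh_x\sqcup L^Sh_y\rightleftarrows L^Sh_x\times L^Sh_y$ built from the biproduct structure in $\mcc$ are mutually inverse. This is because $L^S$ is a functor under which the biproduct identities of $\mcc$ persist, and the product-preservation hypothesis identifies both sides with $L^Sh_{x\oplus y}$.

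Finally, the resulting equivalence gives that $L^SF$ sends $x\oplus y$ to $(L^SF)(x)\times(L^SF)(y)$, so $L^SF$ is spherical. By construction it is also $S$-local.
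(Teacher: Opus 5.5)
Your proof has a genuine gap at Step 3, and it cannot be repaired as stated. You claim that $L^Sh_x\sqcup L^Sh_y\to L^Sh_x\times L^Sh_y$ is an equivalence in $S^{-1}\mathcal{P}(\mathcal{C})$, including that $\varnothing\to h_0$ becomes an $L^S$-equivalence. This is false in general.

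Take $S=\emptyset$. Then $L^S=\mathrm{id}$, which trivially preserves finite products. But in $\mathcal{P}(\mathcal{C})$ the map $\varnothing\to h_0={*}$ is not an equivalence. Neither is $h_x\sqcup h_y\to h_{x\oplus y}$: at $0$ it is the map from two points to one point.

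Notice that Steps 1--3 never use that $F$ is spherical, nor that $S$ consists of maps between spherical presheaves. If they were valid, they would show that every $S$-local presheaf is spherical whenever $L^S$ preserves finite products, and $S=\emptyset$ refutes that. The ``matrix'' justification does not help. The Yoneda embedding preserves products but not coproducts, so the biproduct structure of $\mathcal{C}$ only yields $L^Sh_{x\oplus y}\simeq L^Sh_x\times L^Sh_y$. There is no map $L^Sh_x\times L^Sh_y\to L^Sh_x\sqcup L^Sh_y$ inverse to the canonical one unless the localized category is already semi-additive, which is what you are trying to prove. Likewise, applying product preservation to the empty product only shows that $L^Sh_0$ is terminal, not that it is initial. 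The saturation argument you announce with the class $T$ is never actually carried out.

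The missing idea is to run the saturation argument inside $\mathcal{P}_\Sigma(\mathcal{C})$, where finite coproducts are biproducts.

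\begin{itemize}
\item Since $S$ consists of maps in the presentable category $\mathcal{P}_\Sigma(\mathcal{C})$, there is a localization $L^S_\Sigma\colon\mathcal{P}_\Sigma(\mathcal{C})\to S^{-1}\mathcal{P}_\Sigma(\mathcal{C})$. Its equivalences form the smallest strongly saturated class of maps of spherical presheaves containing $S$.
\item The maps of spherical presheaves that are $L^S$-equivalences also form a strongly saturated class. Two-out-of-three is clear. Closure under sifted colimits holds because $\mathcal{P}_\Sigma(\mathcal{C})\hookrightarrow\mathcal{P}(\mathcal{C})$ preserves them and $L^S$ preserves colimits.
\item Closure under finite coproducts holds because $\mathcal{P}_\Sigma(\mathcal{C})$ is semi-additive. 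Its finite coproducts are therefore finite products, which are computed in $\mathcal{P}(\mathcal{C})$ and preserved by $L^S$. This, not a coproduct of representables in $S^{-1}\mathcal{P}(\mathcal{C})$, is where the product hypothesis enters.
\item Hence for spherical $F$ the unit $F\to L^S_\Sigma F$ is an $L^S$-equivalence. Its target is spherical, and it is $S$-local in $\mathcal{P}(\mathcal{C})$ too, since mapping spaces agree in the full subcategory.
\end{itemize}

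So $L^SF\simeq L^S_\Sigma F$ is spherical, and both hypotheses you left unused are genuinely needed.
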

	\begin{proof}
		Since $\mcp_{\Sigma}(\mathcal{C})$ is presentable and $S$ consists of morphisms of spherical presheaves, it follows that there also exists an associated localization on the spherical presheaf \infcat, which we denote by $$L_{\Sigma}^S: \mcp_{\Sigma}(\mathcal{C}) \rightarrow S^{-1} \mcp_{\Sigma}(\mathcal{C}).$$ Our goal is to show $L^S$ already preserves spherical presheaves, so that $L^S$ and $L_{\Sigma}^S$ coincide on spherical presheaves.
		
		By the above, it is enough to prove that any spherical presheaf admits an $L^S$-equivalence into an $S$-local spherical presheaf. Since $L_{\Sigma}^S$ exists, it certainly admits an $L_{\Sigma}^S$-equivalence into an $S$-local spherical presheaf. As $L_{\Sigma}^S$-equivalences form the smallest strongly saturated class of morphisms of spherical presheaves containing $S$, it is enough to show that $L^S$-equivalences also form a strongly saturated class of morphisms of spherical presheaves and they contain $S$. The 2-out-of-3 property is clear. 
		
		Since the inclusion $\mcp_{\Sigma}(\mathcal{C}) \hookrightarrow \mcp(\mathcal{C})$ preserves sifted colimits, we deduce that $L^S$-equivalences are closed under sifted colimits in $\mcp_{\Sigma}(\mathcal{C})^{\Delta^1}$. It is thus enough to show that $L^S$-equivalences are closed under finite coproducts.
		
		Since $\mcp_{\Sigma}(\mathcal{C})$ is semi-additive by \cite[Corollary 2.4]{ggn}, so is $\mcp_{\Sigma}(\mathcal{C})^{\Delta^1}$. As finite coproducts and products coincide in semi-additive \infcats, we deduce that it is enough to know that $L^S$-equivalences of spherical presheaves are closed under finite products. This follows immediately from the
		assumption that $L^S$ preserves finite products.
	\end{proof}
	This lemma allows us to restrict the usual sheaf adjunctions to spherical sheaves.
	\begin{prop}\label{ressphe}
		Let $f: (\mathcal{C},Q) \rightarrow (\mathcal{D},R)$ be a spherical morphism of semi-additive good pairs. Then, the induced adjunction $$f_! \dashv f^* : \shv(\mathcal{C}) \rightleftarrows \shv(\mathcal{D})$$ restricts to one on the \infcats of spherical presheaves $$f_! \dashv f^*|_{\shv_\Sigma} : \shv_\Sigma(\mathcal{C}) \rightleftarrows \shv_\Sigma(\mathcal{D}).$$ Here, $f_!= L \circ \operatorname{Lan}_f$, where $\operatorname{Lan}_f$ is the left Kan extension of the composite $\mcc \rightarrow \mcd \rightarrow \mcp(\mcd)$ and $L$ is the sheafification, and $f^*$ is given by precomposition. 
	\end{prop}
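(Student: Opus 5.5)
We need to show two things: $f^*$ carries spherical $R$-sheaves to spherical $Q$-sheaves, and $f_!$ carries spherical $Q$-sheaves to spherical $R$-sheaves. Once both hold, the adjunction $f_!\dashv f^*$ on $\shv(\mcc)\rightleftarrows\shv(\mcd)$ restricts to the full subcategories $\shv_\Sigma$. The mapping-anima equivalences are inherited from the ambient adjunction, so nothing further is needed.

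\textbf{The restriction $f^*$.} Let $G\in\shv_\Sigma(\mcd)$. Then $f^*G=G\circ f^{\opp}$ preserves finite products, because $f$ preserves finite products (it is a spherical morphism). It is also a $Q$-sheaf by the Čech nerve criterion from the proposition following the definition of $\tau_Q$. Take $(X\to Y)\in Q$. Since $f$ preserves pullbacks, it carries the Čech nerve of $X\to Y$ to the Čech nerve of $f(X)\to f(Y)$, and this map lies in $R$. Hence $f^*G(X^\bullet/Y)\simeq G(f(X)^\bullet/f(Y))$ is a limit diagram.

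\textbf{The extension $f_!$.} Write $f_!=L\circ\operatorname{Lan}_f$ and treat the two factors separately.

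For $\operatorname{Lan}_f$, use that a spherical presheaf on $\mcc$ is a sifted colimit of representables in $\mcp_\Sigma(\mcc)\subset\mcp(\mcc)$, and that this inclusion preserves sifted colimits. The functor $\operatorname{Lan}_f$ preserves all colimits and sends $h_c$ to $h_{f(c)}$. Since $\mcp_\Sigma(\mcd)\subset\mcp(\mcd)$ is closed under sifted colimits, $\operatorname{Lan}_f$ carries $\mcp_\Sigma(\mcc)$ into $\mcp_\Sigma(\mcd)$. We only need this on spherical sheaves, so the claim suffices.

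For the sheafification $L$, apply \cref{LSpreservesphe-semi} with $S$ the set of inclusions $R_g\hookrightarrow h_Y$ for $g\in R$. These are not maps between spherical presheaves, so I would replace them by their images under $L_\Sigma$, namely the maps $L_\Sigma(|h_{X^\bullet/Y}|)\to h_Y$, or equivalently the geometric realization of the Čech nerve computed in $\mcp_\Sigma(\mcd)$. For a spherical presheaf, being local against these maps is the same as being local against the original ones, because maps out of $L_\Sigma(-)$ into spherical objects agree with maps out of $(-)$, and geometric realizations of representables remain in the sifted-colimit world. With this replacement, $S$ consists of morphisms of spherical presheaves, as the lemma requires.

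The remaining hypothesis of \cref{LSpreservesphe-semi} is that $L$ preserves finite products. This holds because sheafification in an $\infty$-topos of sheaves on a Grothendieck topology is left exact, and here $\shv_{\tau_Q}=\shv_{\bar\tau_Q}$. The lemma then gives that $L$ sends spherical presheaves to spherical sheaves, and so $f_!=L\circ\operatorname{Lan}_f$ preserves spherical sheaves.

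\textbf{Main obstacle.} The delicate step is matching the hypotheses of \cref{LSpreservesphe-semi}. The paper states that lemma for a general localization, whereas sheafification is naturally attached to the non-spherical sieve inclusions. The crux is to check that replacing each sieve inclusion by its spherical Čech realization leaves the class of local objects among spherical presheaves unchanged. Left exactness of sheafification is what makes the finite-product hypothesis available.
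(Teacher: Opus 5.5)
Your proposal is correct and follows the paper's proof. $\operatorname{Lan}_f$ preserves sphericity because it sends representables to representables and preserves sifted colimits; sheafification preserves it by \cref{LSpreservesphe-semi}, using left exactness; and $f^*$ is precomposition with a finite-product-preserving functor. Your Čech-nerve check that $f^*$ lands in $Q$-sheaves just spells out what the paper takes for granted.

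The one thing to correct is your ``main obstacle''. The sieve $R_g\simeq|h_{X^\bullet/Y}|$ is already a spherical presheaf: it is a geometric realization of representables, and $\mcp_\Sigma(\mcd)\subset\mcp(\mcd)$ is closed under sifted colimits. So your replacement $L_\Sigma(R_g)\to h_Y$ is literally the same map, and the lemma applies directly to the sheafification $L$. This matters, because if the replacement genuinely changed $S$, the lemma would describe a different localization from the one whose left exactness you verified.
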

	\begin{proof}
		Since $f$ preserves good morphisms and pullbacks, it induces an adjunction of the above form on sheaf \infcats
		$$f_! \dashv f^* : \shv(\mathcal{C}) \rightleftarrows \shv(\mathcal{D}).$$ We only have to verify that $f_!, f^*$ take spherical sheaves to spherical sheaves.
		
		In the case of the former, we first observe that $\operatorname{Lan}_f$ takes representables to representables and preserves sifted colimits, so that it takes spherical presheaves to spherical presheaves by \cite[Proposition 5.5.8.14]{htt}. Also, $L$  preserves sphericity by \cref{LSpreservesphe-semi} because $L$ is left-exact. Therefore $f_!= L \circ \operatorname{Lan}_f$ preserves spherical sheaves.
		
		On the other hand, $f^*$ is given by a precomposition along a  semi-additive functor, so it clearly takes spherical sheaves to spherical sheaves.
	\end{proof}
	We will also need a criterion ensuring that a fully faithful inclusion of sites induces an equivalence on sheaf categories.
	\begin{lem}[{\cite[Lemma C.3]{hoyois2015quadratic}}]\label{basisequi}
		Let $u: \mathcal{C} \hookrightarrow \mathcal{D}$ be a fully faithful functor between small \infcats. Let $\tau$ and $\rho$ be quasi-topologies on $\mathcal{C}$ and $\mathcal{D}$, respectively. Suppose that:
		\enu{
			\item Every $\tau$-sieve is generated by a cover $\left\{U_i \rightarrow X\right\}$ such that:
			\begin{enumerate}
				\item   the fiber products $U_{i_0} \times_X \cdots \times_X U_{i_n}$ exist and are preserved by $u$;
				\item 	 $\left\{u\left(U_i\right) \rightarrow u(X)\right\}$ is a $\bar{\rho}$-cover in $\mathcal{D}$.
			\end{enumerate}
			\item For every $X \in \mathcal{C}$ and every $\rho$-sieve $R \hookrightarrow u(X), u^*(R) \hookrightarrow X$ is a $\bar{\tau}$-sieve in $\mathcal{C}$.
			\item Every $X \in \mathcal{D}$ admits a $\bar{\rho}$-cover $\left\{U_i \rightarrow X\right\}$ such that the fiber products $U_{i_0} \times_X \cdots \times_X U_{i_n}$ exist and belong to the essential image of $u$.
		}
		Then the adjunction\footnote{The functor $u^*$ denotes the restriction along $u$ and the functor $u_*$ denotes the right Kan extension along $u$.} $$u^* \dashv u_* : \mcp(\mathcal{D}) \rightleftarrows \mcp(\mathcal{C})$$ restricts to an equivalence of \infcats $$\operatorname{Shv}_\rho(\mathcal{D}) \simeq \operatorname{Shv}_\tau(\mathcal{C}).$$ In particular, the functor $u_!:\operatorname{Shv}_\tau(\mathcal{C})\to \operatorname{Shv}_\rho(\mathcal{D})$, the left adjoint to $u^*|_{\shv}$, is an equivalence too.
	\end{lem}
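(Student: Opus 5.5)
The plan is to follow the standard comparison-lemma argument and show that restriction $u^*$ and right Kan extension $u_*$ each preserve sheaves, and that the unit and counit are equivalences on sheaves. By the preceding proposition we may replace $\tau,\rho$ by the generated Grothendieck topologies $\bar\tau,\bar\rho$ without changing the sheaf categories. We may also use the Čech description: a presheaf is a sheaf precisely when it sends the Čech nerves of generating covers to limit diagrams.

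\textbf{Step 1: $u^*$ preserves sheaves.} Let $F\in\operatorname{Shv}_\rho(\mcd)$ and take a $\tau$-sieve on $X\in\mcc$ generated by a cover $\{U_i\to X\}$ as in (1). By (1a), $u$ carries the Čech nerve of this cover to the Čech nerve of $\{u(U_i)\to u(X)\}$. By (1b) the latter is a $\bar\rho$-cover, so $F$ turns its Čech nerve into a limit. Hence $u^*F$ satisfies descent for the cover.

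\textbf{Step 2: $u_*$ preserves sheaves.} For $G\in\operatorname{Shv}_\tau(\mcc)$ and $Y\in\mcd$ we have $(u_*G)(Y)=\Map(u^*h_Y,G)$. For a $\rho$-sieve $R\hookrightarrow h_Y$, the sheaf condition is that $\Map(u^*h_Y,G)\to\Map(u^*R,G)$ is an equivalence, i.e. that $u^*R\to u^*h_Y$ is a $\bar\tau$-local equivalence. Evaluate $u^*h_Y$ as the colimit over $X\in\mcc_{/Y}$ of $h_X$. Pulling back along each such $X$, we find that $u^*R\times_{u^*h_Y}h_X$ is $u^*$ of the pulled-back sieve on $u(X)$. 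By (2) this is a $\bar\tau$-covering sieve. Since local equivalences are stable under colimits and the sieve is universal, this proves the claim.

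\textbf{Step 3: unit and counit.} Because $u$ is fully faithful, $u^*u_*\simeq\id$ on all presheaves, so $u^*u_*G\simeq G$. It remains to show that the counit-type map $F\to u_*u^*F$ is an equivalence for $F\in\operatorname{Shv}_\rho(\mcd)$. Take $Y\in\mcd$ and use (3) to choose a $\bar\rho$-cover $\{U_i\to Y\}$ whose iterated fiber products lie in the image of $u$. Both $F$ and $u_*u^*F$ are $\rho$-sheaves (the latter by Steps 1 and 2), so each is the limit over the Čech nerve. On objects in the image of $u$, the map $F\to u_*u^*F$ is an equivalence by full faithfulness. Hence it is an equivalence at $Y$.

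The statement about $u_!$ then follows, since $u_!$ is the left adjoint of the equivalence $u^*|_{\shv}$. The main obstacle is Step 2: handling the comma category $\mcc_{/Y}$ and checking that pullback of the $\rho$-sieve is compatible with $u^*$ requires care. If this gets messy, I would instead cite the ∞-categorical comparison lemma directly, as in Hoyois' Lemma C.3.
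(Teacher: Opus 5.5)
Your argument is correct. It is essentially the proof of Hoyois' Lemma~C.3, which the paper cites without reproducing: (1) shows that $u^*$ preserves sheaves, (2) together with universality of colimits in $\mathcal{P}(\mathcal{C})$ shows that $u_*$ preserves sheaves, full faithfulness gives $u^*u_*\simeq\mathrm{id}$, and (3) makes $F\to u_*u^*F$ an equivalence on $\rho$-sheaves.

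Two small corrections. The map $F\to u_*u^*F$ is the unit of $u^*\dashv u_*$, not a counit. The replacement of $\tau,\rho$ by $\bar\tau,\bar\rho$ should be cited from Hoyois' Corollary~C.2, since the paper's proposition only covers singleton topologies of good pairs.
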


	\begin{rem}
		Moreover, \cite[Lemma C.3]{hoyois2015quadratic} proves that a presheaf on $\mathcal{D}$ is a $\rho$-sheaf if and only if it is the right Kan extension of a $\tau$-sheaf on $\mathcal{C}$. 
	\end{rem}
	For the singleton topologies considered here, the preceding basis criterion takes the following concrete form.
	\begin{thm}\label{sphshvequi}
	Let $u: (\mathcal{C},Q) \hookrightarrow (\mathcal{D},R)$ be a fully faithful morphism of good pairs. Suppose that:
	\enu{
		\item For any $f: X\to u(Y)\in R$, there exists a map $u(Z)\to X$ in \mcd such that the composition $Z\to Y$ lies in $Q$.
		\item Every $X \in \mathcal{D}$ admits a map $v:u(U) \rightarrow X$ such that $v$ lies in $R$ and $U\in \mcc$.
	}
	Then the induced adjunction
	$$u_! \dashv u^* : \shv(\mathcal{C}) \rightleftarrows \shv(\mathcal{D})$$
	is an equivalence.
	
	Furthermore, if $u: (\mathcal{C},Q) \hookrightarrow (\mathcal{D},R)$ is a spherical morphism of semi-additive good pairs, then it restricts to an equivalence between \infcats of spherical presheaves
	$$u_! \dashv u^*|_{\shv_\Sigma} : \shv_\Sigma(\mathcal{C}) \rightleftarrows \shv_\Sigma(\mathcal{D}).$$
	\end{thm}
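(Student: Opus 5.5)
The plan is to apply \cref{basisequi} with $\tau=\tau_Q$ on $\mcc$ and $\rho=\tau_R$ on $\mcd$, and then to deduce the spherical statement from \cref{ressphe}. The previous proposition gives $\shv_{\tau_Q}=\shv_{\bar\tau_Q}$, and likewise for $R$. So it is enough to verify the three hypotheses of \cref{basisequi} for the singleton quasi-topologies. We work with the generating covers, each consisting of a single morphism in $Q$ (resp.\ $R$).

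Hypothesis (1). A $\tau_Q$-sieve on $Y$ is generated by a single map $U\to Y$ in $Q$. The iterated fiber products $U\times_Y\cdots\times_Y U$ exist in $\mcc$, because $\mcc$ has pullbacks. They are preserved by $u$, because $u$ is a morphism of good pairs and hence preserves pullbacks and good morphisms. Moreover $u(U)\to u(Y)$ lies in $R$, so it is a $\rho$-cover. This gives (1)(a) and (1)(b).

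Hypothesis (2). Let $R_f\hookrightarrow u(Y)$ be generated by $f\colon X\to u(Y)$ in $R$. By assumption (1) there is a map $u(Z)\to X$ such that the composite $u(Z)\to u(Y)$ equals $u(g)$ for some $g\colon Z\to Y$ in $Q$; here full faithfulness of $u$ identifies the composite with $u(g)$. Then $g$ lies in $u^*(R_f)$, so $u^*(R_f)$ contains the $\tau_Q$-sieve generated by $g$. A sieve containing a covering sieve is covering in the Grothendieck topology $\bar\tau_Q$, so $u^*(R_f)$ is a $\bar\tau_Q$-sieve.

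Hypothesis (3). By assumption (2), every $X\in\mcd$ admits a cover $v\colon u(U)\to X$ in $R$. The difficulty is that \cref{basisequi}(3) also requires the fiber products $u(U)\times_X\cdots\times_X u(U)$ to lie in the essential image of $u$, and assumption (2) does not give this directly. This is where I expect the main obstacle. My first attempt would be to refine iteratively. Cover $u(U)\times_X u(U)$ by some $u(U_1)$ using assumption (2), and continue in this way to build a hypercover by objects of the image; then apply the hypercover version of the comparison lemma, which is available since Hoyois's argument only uses descent along covers. An alternative is to argue directly with the equivalence. Prove that $u^*$ is conservative on sheaves, using assumption (2): a map of sheaves that is an equivalence on $u(\mcc)$ is an equivalence on every $X$, by descent along $u(U)\to X$ combined with induction on the Čech terms, each of which is again covered from the image. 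Then prove that the unit $F\to u^*u_*F$ is an equivalence, using assumption (1) and the fact, from the remark after \cref{basisequi}, that $u_*$ of a $\tau$-sheaf is a $\rho$-sheaf. Either route gives the equivalence $u_!\dashv u^*$.

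Finally, if $u$ is spherical, \cref{ressphe} shows that both $u_!$ and $u^*$ restrict to spherical sheaves. Since they are inverse equivalences on $\shv$, their restrictions are inverse equivalences between $\shv_\Sigma(\mcc)$ and $\shv_\Sigma(\mcd)$.
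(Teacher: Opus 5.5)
Your checks of hypotheses (1) and (2) of \cref{basisequi}, and the passage to spherical sheaves via \cref{ressphe}, agree with the paper, whose proof just asserts that all hypotheses of \cref{basisequi} hold. You are right that hypothesis (3) is \emph{not} supplied by assumption (2). It asks for a cover $u(U)\to X$ all of whose \v{C}ech terms $u(U)\times_X\cdots\times_X u(U)$ lie in the essential image of $u$. But neither of your remedies closes this gap.

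There is no ``hypercover version'' of the comparison lemma for $\shv$. Its objects only satisfy \v{C}ech descent, and the sheaves satisfying descent along all hypercovers are exactly the hypercomplete ones (HTT, \S6.5.3--6.5.4). Hoyois's lemma handles non-hypercomplete sheaves precisely because (3) lets one use the \v{C}ech nerve of a single cover.

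Your ``induction on the \v{C}ech terms'' has nothing to induct on. The object $u(U)\times_X u(U)$ is an arbitrary object of $\mcd$, and covering it again from the image only rebuilds that hypercover. What the local argument actually proves is that a map $\alpha$ of sheaves with $u^*\alpha$ an equivalence is $\infty$-connective in $\shv(\mcd)$. Since (1) makes $u_*$ preserve sheaves, this says $u^*$ exhibits $\shv(\mcc)$ as a cotopological localization of $\shv(\mcd)$. That suffices for hypercomplete sheaves, not in general.

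Note also that the remark after \cref{basisequi}, which you cite for ``$u_*$ preserves sheaves'', is stated under all three hypotheses of that lemma. Here it has to be derived directly from assumption (1), which makes $u$ cocontinuous.

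The missing input is a strengthening of assumption (2): the cover $v\colon u(U)\to X$ must be chosen with its whole \v{C}ech nerve in the essential image of $u$. Then \cref{basisequi} applies verbatim, and this is what the paper's one-line proof tacitly uses.

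In the application, \cref{smalllargelocinv}, the stronger condition does hold. Take $\mcm\in\praddbl$ and $v$ the opposite of $\hat h\colon\mcm\hookrightarrow\mcp_\Sigma(\mcm^{\omega_1})$. The \v{C}ech terms are the pushouts $P_n$ in $\praddbl$ of $n+1$ copies of $\mcp_\Sigma(\mcm^{\omega_1})$ under $\mcm$. These are computed in $\prl$, i.e.\ $P_n\simeq\mcp_\Sigma(\mcm^{\omega_1})\times_{\mcm}\cdots\times_{\mcm}\mcp_\Sigma(\mcm^{\omega_1})$ along the colimit functor. The legs $j_k$ are internal left adjoints, hence preserve compact projective objects, and their right adjoints are the jointly conservative projections. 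So every nonzero $x\in P_n$ receives a nonzero map from some $\Sigma^m j_k(h_a)$ with $a\in\mcm^{\omega_1}$ and $m\ge 0$. By \cref{pgenpst}, $P_n$ is compact projectively generated.
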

\begin{proof}
	One can verify that the conditions in \cref{basisequi} hold by assumption, therefore the adjunction
	$u_! \dashv u^* : \shv(\mathcal{C}) \rightleftarrows \shv(\mathcal{D})$
	is an equivalence.
	
	Now suppose that $u: (\mathcal{C},Q) \hookrightarrow (\mathcal{D},R)$ is a spherical morphism of semi-additive good pairs. By \cref{ressphe}, we see that the adjunction
	$u_! \dashv u^* : \shv(\mathcal{C}) \rightleftarrows \shv(\mathcal{D})$
	restricts to an adjunction
	$$
	u_! \dashv u^*|_{\shv_\Sigma} : \shv_\Sigma(\mathcal{C}) \rightleftarrows \shv_\Sigma(\mathcal{D}).
	$$
	Since $u_! \dashv u^*|_{\shv_\Sigma}$ is a restriction of an equivalence, it is an equivalence too.
\end{proof}

	\subsection{Localizing invariants as spherical sheaves}
	Applying the semi-additive site machinery developed above, we formulate our main  theorem in this section. We prove that   localizing invariants  are precisely spherical sheaves with respect to the singleton topology generated by fully faithful morphisms.
	
	The first step is to verify that the relevant categories of additive \infcats, equipped with fully faithful morphisms, form semi-additive good pairs.
	\begin{prop}\label{dualaddsaddpair}
	Let $\mcf\mcf$ denote the class of fully faithful morphisms. Then both $(\praddblop,\mcf\mcf)$ and $(\prad^{\op{at,op}},\mcf\mcf)$ form semi-additive good pairs in the sense of \cref{goodpair}.
	\end{prop}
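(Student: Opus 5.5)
We must check three things for each of $\praddblop$ and $\prad^{\op{at,op}}$. First, the opposite category has pullbacks, i.e. the original has pushouts along fully faithful maps. Second, the class $\mcf\mcf$ contains equivalences and is stable under these pullbacks, i.e. under cobase change in $\praddbl$ (resp. $\pradat$). Third, the category is semi-additive. Semi-additivity of $\praddbl$ and $\pradat$ was proved above, and it passes to opposites. Equivalences are obviously fully faithful. So the content is: pushouts along fully faithful internal left adjoints exist, and fully faithful maps are stable under cobase change. Set-theoretic smallness I would handle by restricting to $\kappa$-compact objects, using \cref{praddblkprl}, or by working with large sites.

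For $\praddbl$, the plan is to translate the cobase change of a fully faithful $i:\mca\hookrightarrow\mcb$ along $F:\mca\to\mcc$ into a pullback of localizations, using the kernel--quotient correspondence. By \cref{basicses}, $p:\mcb\to\mcb/\mca$ is a connective internal localization with kernel $\mca$. Form $\mcd:=\mcc\times_{\mcc/\mca}\mcb$. Here $\mcc\to\mcc/\mca$ is the cofiber of the composite $\mca\to\mcc$. That composite need not be fully faithful, so I would instead take the cofiber in $\praddbl$, which exists since $\praddbl$ is closed under colimits in $\prl$ by \cite[Proposition 1.62]{ramzi2024dualizable}.

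A cleaner route is to compute the pushout $\mcd=\mcb\sqcup_\mca\mcc$ directly in $\prl$. This is legitimate because colimits in $\praddbl$ are created in $\prl$. Then show that $\mcc\to\mcd$ is fully faithful and strongly continuous. For this I would use the bifiber sequence $\mca\to\mcb\to\mcb/\mca$. Pushouts preserve cofibers, so $\mcd/\mcc\simeq\mcb/\mca$, and the map $\mcd\to\mcb/\mca$ is a connective internal localization. Then \cref{connpullbackofdualadd} identifies $\mcd$ with a pullback $\mcb/\mca\times\ldots$. More precisely, I would show that $\mcd\simeq \mcb\times_{\mcb/\mca}\mce$ for suitable $\mce$, and recognise $\mcc\to\mcd$ as the kernel inclusion of a connective internal localization. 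Kernel inclusions are fully faithful internal left adjoints by \cref{shortexactt}, \cref{basicses}.

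Concretely, I would prove full faithfulness of $\mcc\to\mcd$ by a pointwise description. Using the recollement $ii^R\to\id\to p^Rp$ on $\mcb$ and the fact that $i^R$ is an internal left adjoint, I would exhibit $\mcd$ as a recollement glued from $\mcc$ and $\mcb/\mca$. Its objects are triples $(c,q,\phi)$ with gluing functor $F\circ i^R\circ p^R:\mcb/\mca\to\mcc$, mirroring the stable case of Efimov. In such a recollement the inclusion of $\mcc$ is fully faithful, with a colimit-preserving right adjoint. Connectivity and prestability should follow as in \cref{pullbackt}, with \cref{propertyext} giving dualizability.

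For $\pradat$, I would use \cref{addkarseq}: $\pradat\simeq\catadidem$, and pushouts there are computed via $\mcp_\Sigma$. In that setting, fully faithful maps of small additive idempotent-complete categories are stable under pushout by the analogous explicit description, or by restricting the dualizable result to compact projectives. The main obstacle I expect is exactly the recollement-gluing step: verifying that the $\prl$-pushout is the glued category and that the unit of the localization is $\pi_0$-surjective. This requires carefully tracking the connective parts rather than just the stabilizations. If that fails, I would fall back on stabilizing to $\prtil$ and using the stable statement \cite[Proposition 1.87]{efimov2024k} together with the $t$-structure bookkeeping of \cref{shortexactt}.
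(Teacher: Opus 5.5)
Your reduction is sound. Semi-additivity is already proved, and equivalences are fully faithful. Since colimits in $\praddbl$ and $\pradat$ are created in $\prl$, everything comes down to one claim: the cobase change $j\colon\mcc\to\mcd:=\mcb\sqcup_{\mca}\mcc$ of a fully faithful $i\colon\mca\hookrightarrow\mcb$ along $F\colon\mca\to\mcc$ is fully faithful. The $\pradat$ case then follows from the $\praddbl$ case, because $\pradat\subset\praddbl$ is full and closed under pushouts.

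Your argument for this key step does not work. The gluing functor $F\circ i^R\circ p^R$ you propose is identically zero: for $a\in\mca$ and $q\in\mcb/\mca$,
$$\Map_{\mca}(a,i^Rp^Rq)\simeq\Map_{\mcb}(ia,p^Rq)\simeq\Map_{\mcb/\mca}(pia,q)\simeq *.$$
So your triple description would give $\mcd\simeq\mcc\times\mcb/\mca$. This already fails for $F=\id_{\mca}$: then $\mcd\simeq\mcb$, which is not $\mca\times\mcb/\mca$ unless the sequence splits. The real gluing datum of the recollement $ii^R\to\id\to p^Rp$ is the boundary map $p^Rpb\to\Sigma ii^Rb$. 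Turning it into a functor $\mcb/\mca\to\mca$ would need a left adjoint of $i$, which an internal left adjoint need not have.

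Even with correct gluing data, the real content — that the $\prl$-pushout is the glued category — is exactly the step you leave open. The surrounding claims are asserted rather than proved. Knowing $\mcd/\mcc\simeq\mcb/\mca$ says nothing about full faithfulness of $j$. Recognising $\mcc$ as the kernel of a connective internal localization $\mcd\to\mcb/\mca$ presupposes what is to be shown. The stable fallback is not spelled out.

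The missing idea is to pass to right adjoints, where the claim is elementary and needs none of the prestable or dualizable machinery. The pushout in $\prl$ is the pullback $\mcd\simeq\mcb\times_{\mca}\mcc$ in $\pr^R$, computed in large categories along $i^R$ and $F^R$. The functor $j$ is left adjoint to the projection $\pi\colon\mcb\times_{\mca}\mcc\to\mcc$. Because $i^R$ has the fully faithful left adjoint $i$, the functor $c\mapsto(iF^Rc,\,c,\,i^RiF^Rc\simeq F^Rc)$ is left adjoint to $\pi$. Indeed,
\[
\Map\bigl((iF^Rc,c),(b,c',\alpha)\bigr)\simeq\Map_{\mca}(F^Rc,i^Rb)\times_{\Map_{\mca}(F^Rc,F^Rc')}\Map_{\mcc}(c,c')\simeq\Map_{\mcc}(c,c'),
\]
since the first factor maps isomorphically onto the base via $\alpha$. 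Hence $j$ is this functor, and the unit $c\to\pi j(c)=c$ is an equivalence, so $j$ is fully faithful. This is the classical stability of Bousfield (co)localizations under pullback in $\pr^R$. The paper's one-line proof invokes exactly this, after noting that both inclusions into $\prlad$ create pushouts.
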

	\begin{proof}
		Since both (non-full) inclusions $\pradat\to \prlad$ and $\praddbl\to\prlad$ create pushouts, it suffices to show that fully faithful morphisms in $\prlad$ are closed under pushouts. In other words, it suffices to show that Bousfield localizations in $\pr^R$ are closed under pullbacks. That, however, is classical (see \cite[Lemma 2.7]{ramzi2025every} for an argument).
	\end{proof}

	\begin{de}\label{dfn:fftopology}
		We refer to these topologies on $\praddblop$ and $\prad^{\op{at,op}}$ as the $\mcf\mcf$-topology. 
	\end{de}
	
	\begin{rem}
		Note that, rigorously speaking, $(\praddblop,\mcf\mcf)$ and $(\prad^{\op{at,op}},\mcf\mcf)$ are not small sites, but it does not affect much. One only needs to enlarge the universe appropriately and the same theory works.
	\end{rem}
	We can now state the promised identification between localizing invariants and spherical sheaves.
	\begin{thm}\label{locinvsphshv}
		Let $E: \prad^\star \to \mce$ be a functor to an additive \infcat $\mce$, where $\star \in \{ \op{dbl}, \op{at} \}$. Then $E$ is a localizing invariant if and only if it is a spherical sheaf in the $\mcf\mcf$-topology.
	\end{thm}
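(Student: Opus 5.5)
The plan is to reduce the statement to \cref{sphshvcriterion}, applied to the semi-additive good pair $(\prad^{\star,\opp},\mcf\mcf)$ from \cref{dualaddsaddpair}. Under this identification a functor $E\colon\prad^\star\to\mce$ is a spherical presheaf on $\prad^{\star,\opp}$ exactly when it preserves finite products, because products in $\prad^\star$ are biproducts.

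A fiber sequence $F\to B\to A$ in $\prad^{\star,\opp}$ with $B\to A$ in $\mcf\mcf$ is the opposite of a cofiber sequence $\mca\xrightarrow{i}\mcb\to\mcf$ in $\prad^\star$ with $i$ fully faithful. \cref{sphshvcriterion} says $E$ is a sheaf if and only if, for every such $i$, the sequence $E(\mca)\to E(\mcb)\to E(\coker i)$ is a fiber sequence in $\mce$.

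The main step is to show these sequences are exactly the fiber-cofiber sequences of $\prad^\star$. For $\star=\op{dbl}$ this is \cref{basicses}(1), which says the cofiber of a fully faithful internal left adjoint is also a fiber sequence. Conversely, in any bifiber sequence $\mck\to\mcc\to\mcd$ the first map is the kernel of the second, so it is fully faithful (kernels in $\praddbl$ of morphisms with a zero composite); more precisely, a fiber-cofiber sequence has the form of \cref{basicses}. For $\star=\op{at}$, I would use that $\pradat\to\praddbl$ creates colimits. By the footnote to \cref{ram149}/\cite[Proposition 1.62]{ramzi2024dualizable}, the cokernel of a fully faithful map in $\pradat$ agrees with the one in $\praddbl$ and is again atomically generated (it is $\mcp_\Sigma$ of the additive quotient). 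Its kernel is the original category, so the bifiber property transfers, consistent with \cref{addkarseq}.

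With this in hand both directions are formal. If $E$ is localizing, it preserves finite products, since a localizing invariant is a splitting invariant by the preceding proposition. It also sends each fully faithful cofiber sequence to a fiber sequence, so it is a spherical sheaf. Conversely, if $E$ is a spherical sheaf, \cref{sphshvcriterion} gives fiber sequences on all cofiber sequences with fully faithful first map, and by the main step these include all bifiber sequences.

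The obstacle I expect is checking that every bifiber sequence in $\prad^{\star}$ has fully faithful first map, since a fiber sequence is a priori only a kernel computed in $\prad^\star$. I would argue this via \cref{dualkernel}: the kernel in $\praddbl$ is a full subcategory of $\mcc$ whose inclusion is an internal left adjoint, hence fully faithful. In the atomic case, fully faithfulness can be tested after the inclusion into $\praddbl$, provided kernels agree; if they do not, I would instead argue directly that the given cofiber sequence is also the cofiber of the kernel's image.
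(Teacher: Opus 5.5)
Your proposal is correct and follows the paper's route. The paper's proof is exactly the one-line application of \cref{sphshvcriterion} to the semi-additive good pairs of \cref{dualaddsaddpair}; it leaves implicit the identification of cofiber sequences with fully faithful first map and bifiber sequences, which you spell out via \cref{basicses}, \cref{addkarseq} and \cref{dualkernel}.

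In the atomic case your hedge is unnecessary. Kernels in $\pradat$ genuinely differ from dualizable kernels in general (cf.\ \cref{dualker}), so the ``kernels agree'' route would not work. However, under $\pradat\simeq\catadidem$ the kernel of $\mcb\to\mcc$ is just the full subcategory of objects sent to $0$, so the first map of any bifiber sequence is fully faithful directly.
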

	\begin{proof}
		Since by \cref{dualaddsaddpair} both $(\praddblop,\mcf\mcf)$ and $(\prad^{\op{at,op}},\mcf\mcf)$ are semi-additive good pairs, the result follows immediately from \cref{sphshvcriterion}.
	\end{proof}
	\begin{rem}
		\cref{locinvsphshv} also applies to \catper and \prstdbl. That is, a functor $E:\catper\to \mce$ or $E:\prstdbl\to \mce$ to an additive \infcat \mce is a localizing invariant\footnote{in the sense that it sends short exact sequences to fiber sequences} if and only if it is a spherical sheaf in the $\mcf\mcf$-topology (cf. \cite[Proposition 4.17]{efimov2024k}).
	\end{rem}
	We finish by comparing localizing invariants on all dualizable additive \infcats with those defined only on the atomically generated ones.
	\begin{thm}\label{smalllargelocinv}
		Let \mce be a (potentially large) additive \infcat with finite limits. Then the following forgetful functor is an equivalence:
		$$\op{Loc}(\praddbl,\mce)\xrightarrow{\sim} \op{Loc}(\prad^{\op{at}},\mce).$$
		Furthermore, assume that $\mathcal{E}$ is a stable \infcat, and let $\mathcal{K}$ be a collection of small \infcats such that \mce admits $\mathcal{K}$-shaped colimits. Let $\operatorname{Loc}_{\mathcal{K}}(\praddbl,\mathcal{E})$ denote the full subcategory of $\operatorname{Loc}(\praddbl,\mathcal{E})$ spanned by localizing invariants preserving $\mathcal{K}$-shaped colimits. Then the above equivalence restricts to an equivalence $$\operatorname{Loc}_{\mathcal{K}}(\praddbl,\mathcal{E}) \xrightarrow{\sim} \operatorname{Loc}_{\mathcal{K}}(\prad^{\op{at}}, \mathcal{E}).$$
	\end{thm}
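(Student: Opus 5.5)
Via \cref{locinvsphshv}, localizing invariants on $\prad^\star$ are exactly spherical sheaves on $(\prad^{\star,\opp},\mcf\mcf)$. The first statement therefore reduces to showing that the fully faithful inclusion of semi-additive good pairs
\[
u\colon(\prad^{\op{at},\opp},\mcf\mcf)\hookrightarrow(\praddblop,\mcf\mcf)
\]
induces an equivalence of spherical sheaf categories. For this I would apply \cref{sphshvequi}, with values in $\mce$ instead of anima; the argument of \cref{basisequi} only uses right Kan extension along $u$ and limits, so it should carry over to any $\mce$ with the relevant limits. I must check that $u$ is a spherical morphism. In $\praddblop$, pullbacks are pushouts in $\praddbl$, and finite products are finite (co)products; both are created in $\prl$, and the atomically generated categories are closed under these. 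The map $u$ preserves fully faithful morphisms trivially.

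The two covering conditions translate into the following statements about $\praddbl$.
\begin{enumerate}
\item[(2)] Every $\mcc\in\praddbl$ admits a fully faithful internal left adjoint $\mcc\hookrightarrow\mcp_\Sigma(\mca)$. This is exactly \cref{ram149}(3), or the embedding of \cref{embedmodcat} into $\modu_R(\spgeq)$.
\item[(1)] Given a fully faithful internal left adjoint $\mcp_\Sigma(\mca)\hookrightarrow\mcc$ with $\mcc$ dualizable, I need a factorization $\mcp_\Sigma(\mca)\hookrightarrow\mcc\to\mcp_\Sigma(\mcb)$ whose composite is fully faithful. Compose with the embedding $\mcc\hookrightarrow\mcp_\Sigma(\mcb)$ from (2). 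A composite of fully faithful internal left adjoints is again one, so condition (1) holds.
\end{enumerate}
Hence \cref{sphshvequi} gives $\shv_\Sigma(\prad^{\op{at},\opp};\mce)\simeq\shv_\Sigma(\praddblop;\mce)$, with the inverse given by right Kan extension. Concretely, the extension is
\[
E(\mcc)=\lim_{\Delta}E\bigl(\mcp_\Sigma(\mca)^{\sqcup_{\mcc}\bullet+1}\bigr),
\]
where the pushouts over $\mcc$ are taken in $\praddbl$. Those pushouts must lie in $\pradat$, and I expect this to be the main obstacle.

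In the notation of \cref{basisequi}(3), the Čech nerve of the cover must consist of atomically generated categories. The pushout $\mcp_\Sigma(\mca)\sqcup_\mcc\mcp_\Sigma(\mca)$ along fully faithful internal left adjoints is computed as a pullback of right adjoints in $\pr^R$. I would try to identify it via the cofiber sequences of \cref{basicses}. Writing $\mcp_\Sigma(\mca)\to\mcd$ for the cofiber of $\mcc\hookrightarrow\mcp_\Sigma(\mca)$, the pushout should be an extension fitting into a bifiber sequence
\[
\mcp_\Sigma(\mca)\to \mcp_\Sigma(\mca)\sqcup_\mcc\mcp_\Sigma(\mca)\to\mcd,
\]
analogous to the diagonal pullback $\mcp_\Sigma(\mca)\times_\mcd\mcp_\Sigma(\mca)$ of \cref{diagpullbackcpgen}. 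The map $\mcp_\Sigma(\mca)\to\mcd$ is a connective internal localization by \cref{basicses}, so \cref{connpullbackofdualadd} and \cref{diagpullbackcpgen} show this diagonal pullback is compact projectively generated. Iterating handles the higher terms of the nerve. If identifying the pushout with the diagonal pullback fails, the fallback is to enlarge the cover: replace $\mcp_\Sigma(\mca)$ by a compact projectively generated $\mcp_\Sigma(\mca')$ containing all iterated pushouts, and check directly via \cref{pgenpst} that they are generated by compact projectives.

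For the $\mcK$-colimit refinement, one direction is clear: restricting a $\mcK$-colimit-preserving invariant along the inclusion preserves $\mcK$-colimits, since $\pradat\subset\praddbl$ is closed under colimits (\cref{praddblkprl} and the footnote to \cite[Proposition 1.62]{ramzi2024dualizable}). For the converse, take a $\mcK$-colimit in $\praddbl$ and resolve it compatibly by atomically generated Čech nerves. Then use that in stable $\mce$ the totalization of the Čech nerve is a finite-type limit after applying the fiber-sequence description of \cref{sphshvcriterion}, because each $E(\mcc)$ is the fiber of $E(\mcp_\Sigma(\mca))\to E(\mcd)$. Since $\mce$ is stable, fibers commute with $\mcK$-colimits, and $\mcK$-colimit preservation transfers.
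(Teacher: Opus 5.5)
Your overall route is the paper's. You reduce to spherical $\mcf\mcf$-sheaves via \cref{locinvsphshv} and compare the sites with \cref{sphshvequi} using a cover $\mcc\hookrightarrow\mcp_\Sigma(\mca)$; the paper takes $\mcp_\Sigma(\mcc^{\omega_1})$. For the $\mathcal{K}$-refinement you write the extension as a fiber of $E$ on atomically generated categories and use stability of $\mce$. Your check of the two hypotheses of \cref{sphshvequi} is fine. You are also right that \cref{basisequi}(3) needs the iterated pushouts $\mcp_\Sigma(\mca)\sqcup_\mcc\cdots\sqcup_\mcc\mcp_\Sigma(\mca)$ to be atomically generated; the paper leaves this implicit.

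But your argument for that step fails. Put $\mcb=\mcp_\Sigma(\mca)$. The pushout $\mcb\sqcup_\mcc\mcb$ is not the diagonal pullback $\mcb\times_\mcd\mcb$. The former sits in $\mcb\to\mcb\sqcup_\mcc\mcb\to\mcd$, while the latter sits in $\mcc\to\mcb\times_\mcd\mcb\to\mcb$. Already for $\mcc=0$ they are $\mcb\times\mcb$ and $\mcb$. So \cref{diagpullbackcpgen} is beside the point, and your fallback is only a plan, not an argument.

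The missing observation is elementary. Pushouts in $\praddbl$ are created in $\prl$. Hence $\mcb\sqcup_\mcc\mcb$ is the pullback $\mcb\times_\mcc\mcb$ in $\pr^R$ along $i^R$, where $i\colon\mcc\hookrightarrow\mcb$ is the embedding. The structure maps $j_1,j_2\colon\mcb\to\mcb\sqcup_\mcc\mcb$ are internal left adjoints whose right adjoints are the two projections. It follows that:
\begin{itemize}
\item $j_1,j_2$ send objects of $\mca$ to compact projective objects;
\item these objects generate, since the projections are jointly conservative;
\item so the pushout is compact projectively generated.
\end{itemize}
The same argument works verbatim for the $(n+1)$-fold pushouts, which are wide pullbacks over $\mcc$ in $\pr^R$.

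There are two smaller points. First, $\mce$ only has finite limits, so your $\lim_{\Delta}$ need not exist in $\mce$. As in the paper, first embed $\mce$ into $\widehat{\mcp}_\Sigma(\mce)$. The extension then lands back in $\mce$ because $E^{\op{cont}}(\mcc)\simeq\fib\bigl(E(\mcp_\Sigma(\mcc^{\omega_1}))\to E(\mcp_\Sigma(\mcc^{\omega_1})/\mcc)\bigr)$, and both terms are atomically generated.

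Second, the $\mathcal{K}$-part needs no \v{C}ech nerves. Use the resolution $\mcm_k\hookrightarrow\mcp_\Sigma(\mcm_k^{\omega_1})\to\mcp_\Sigma(\mcm_k^{\omega_1})/\mcm_k$, which is natural in internal left adjoints. Its colimit over $K$ is again a short exact sequence with atomically generated outer terms. Your fiber argument, or the paper's swindle, then concludes.
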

	\begin{proof}
		Embedding $\mce$ into a very large additive \infcat, we can assume that \mce admits large colimits and limits\footnote{e.g. one can take $\widehat{\mcp}_\Sigma(\mce)$.}. Since \mce is additive, by \cref{sphshvcriterion} the vertical arrows in the following diagram are equivalences: 
		$$\begin{tikzcd}
			{\op{Loc}(\praddbl,\mce)} \arrow[r]                                                                                     & {\op{Loc}(\prad^{\op{at}},\mce)}                                                                                      \\
			{\funct^{\op{lar}\text{-}R}\big(\widehat{\shv}_\Sigma(\praddblop)^\opp,\mce\big)} \arrow[u, "\sim"'] \arrow[r] & {\funct^{\op{lar}\text{-}R}\big(\widehat{\shv}_\Sigma(\prad^{\op{at,op}})^\opp,\mce\big)} \arrow[u, "\sim"']
		\end{tikzcd}
		$$
		where $\widehat{\shv}$ denotes the (very large) \infcat of $\mcf\mcf$-sheaves valued in the (very large) \infcat $\widehat{\mcs}$ of large anima.
		Therefore, it suffices to show that the natural functor
		$$u_!|_{\widehat{\shv}} :\widehat{\shv}_\Sigma(\prad^{\op{at,op}}) \longrightarrow \widehat{\shv}_\Sigma(\praddblop)$$
		is an equivalence, where $u$ denotes the inclusion $\prad^{\op{at,op}}\hookrightarrow\praddblop$. However, that follows from 
		 \cref{sphshvequi}, because $u$ satisfies all conditions in \cref{sphshvequi} by considering the inclusion $\mcc\xhookrightarrow{\hat{h}}\mcp_{\Sigma}(\mcc^{\omega_1})$ for any $\mcc\in\praddbl$.
		 
		It remains to check that this equivalence is compatible with the additional requirement of preserving \(\mathcal K\)-shaped colimits. Now assume that \mce is stable and admits \mck-shaped colimits. We mimic the argument in \cite[Corollary 2.76]{ramzi2024dualizable}. Let $E\in \op{Loc}(\prad^{\op{at}},\mce)$. It suffices to prove that for any small \infcat $K\in\mck$, $E$ preserves $K$-shaped colimits if and only if $E^{\text{cont}}\in \op{Loc}(\praddbl,\mce)$ does. Since $\pradat\hookrightarrow\praddbl$ preserves small colimits, one direction is clear: if $E^{\text{cont}}$ preserves them, then so does $E$.
		 
		 Conversely, suppose $E$ preserves $K$-shaped colimits and let $\mathcal{M}_{\bullet}: K \rightarrow \praddbl$ be a $K$-diagram. Consider the natural localization sequence $\mathcal{M}_k \rightarrow \mcp_{\Sigma}(\mathcal{M}_k^{\omega_1}) \rightarrow \mcp_{\Sigma}(\mathcal{M}_k^{\omega_1})/\mcm_k$ and consider its colimit: it gives a localization sequence in \praddbl $$\operatorname{colim}_K \mathcal{M}_k \rightarrow \operatorname{colim}_K \mcp_{\Sigma}(\mathcal{M}_k^{\omega_1}) \rightarrow \operatorname{colim}_K \mcp_{\Sigma}(\mathcal{M}_k^{\omega_1})/\mcm_k.$$
		 Since $E^{\text{cont}}$ vanishes on the middle \infcat by \cref{swindle} and assumption, it follows that $$E^{\text{cont}}(\operatorname{colim}_K \mathcal{M}_k) \simeq \Omega E^{\text{cont}}(\operatorname{colim}_K \mcp_{\Sigma}(\mathcal{M}_k^{\omega_1})/\mcm_k).$$ Now each $\mcp_{\Sigma}(\mathcal{M}_k^{\omega_1})/\mcm_k$ is compact projectively generated, therefore so is the colimit, and therefore 
		 $$
		 E^{\text{cont}}\big(\operatorname{colim}_K \mcp_{\Sigma}(\mathcal{M}_k^{\omega_1})/\mcm_k\big)  \simeq \operatorname{colim}_K E^{\text{cont}}\big( \mcp_{\Sigma}(\mathcal{M}_k^{\omega_1})/\mcm_k\big)
		 $$ by assumption.
		 From this it follows that $E^{\text{cont}}\left(\operatorname{colim}_K \mathcal{M}_k\right) \simeq \operatorname{colim}_K E^{\text{cont}}\left(\mathcal{M}_k\right)$.
	\end{proof}

	\section{Prestable motives}\label{sec:prestable_motives}
	In this section, we study the \infcat of prestable motives, which is a universal
	(finitary) localizing invariant on additive categories with values in a presentable stable \infcat.
	We show that algebraic $K$-theory is corepresented by maps from the unit, similarly to the results of
	\cite{blumberg2013universal}.
	
The construction of this universal category requires a mild amount of
set-theoretic control.   We therefore begin with a few compactness lemmas which will
allow us to pass between the large and small versions of the additive categories
appearing below.
	We thank Maxime Ramzi for help with the following lemma.
	\begin{lem}\label{kcompmono}
		Let $\mathcal{C}$ be a presentable \infcat. Then there exists a sufficiently large regular cardinal $\kappa$ such that the full subcategory $\mathcal{C}^\kappa \subset \mathcal{C}$ of $\kappa$-compact objects is closed under subobjects (i.e., the domain of a $(-1)$-truncated morphism into an object of $\mathcal{C}^\kappa$ is itself in $\mathcal{C}^\kappa$). 
	\end{lem}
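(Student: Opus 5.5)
The approach is to reduce to a presheaf category and then run a cardinality argument that uses the smallness of subobjects there. Since $\mathcal{C}$ is presentable, I choose a regular cardinal $\kappa_0$ such that $\mathcal{C}$ is $\kappa_0$-presentable. Then $\mathcal{C}$ is an accessible localization of $\mcp(\mcc^{\kappa_0})$: the inclusion $G\colon\mathcal{C}\hookrightarrow\mcp(\mcc^{\kappa_0})$ preserves $\kappa_0$-filtered colimits and all limits. The key properties I will use are:
\begin{itemize}
\item $G$ preserves monomorphisms, since a map is $(-1)$-truncated iff its diagonal is an equivalence, and $G$ preserves pullbacks.
\item $G$ reflects equivalences.
\item For $\kappa\gg\kappa_0$ (sharply larger), an object $x\in\mcc$ is $\kappa$-compact if and only if $G(x)$ is $\kappa$-compact in $\mcp(\mcc^{\kappa_0})$. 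This is the standard fact (\cite[Proposition 5.4.7.4, \S5.5.7]{htt}-type results) that a $\kappa$-accessible functor preserving $\kappa$-compacts, with $\kappa\gg\kappa_0$, both preserves and reflects $\kappa$-compactness after enlarging $\kappa$. Concretely, choose $\kappa$ such that $G$ sends $\mcc^{\kappa_0}$ into $\kappa$-compact objects.
\end{itemize}

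Next I control subobjects in the presheaf \infcat $\mcp(\mca)$ with $\mca$ small. An object $F$ is $\kappa$-compact iff it is a retract of a $\kappa$-small colimit of representables. Since $\mca$ is small, for $\kappa$ large this is equivalent to each $F(a)$ being an essentially $\kappa$-small anima, that is, having fewer than $\kappa$ homotopy groups' worth of cells; this holds for $\kappa$ exceeding the size of $\mca$ and sharply larger. A monomorphism $F'\hookrightarrow F$ of presheaves is pointwise a monomorphism of anima, i.e. an inclusion of a union of path components. Hence $F'(a)$ is again essentially $\kappa$-small, so $F'$ is $\kappa$-compact. So in $\mcp(\mca)$, $\kappa$-compact objects are closed under subobjects for all sufficiently large $\kappa$.

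Combining: if $y\hookrightarrow x$ is a mono in $\mcc$ with $x\in\mcc^\kappa$, then $G(y)\hookrightarrow G(x)$ is a mono with $G(x)$ $\kappa$-compact. So $G(y)$ is $\kappa$-compact, and by reflection $y\in\mcc^\kappa$.

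The main obstacle is the reflection step: ensuring that $G(y)$ being $\kappa$-compact forces $y$ to be $\kappa$-compact. I would argue this directly. Write $y$ as a $\kappa$-filtered colimit $\colim y_i$ of $\kappa_0$-compacts; this colimit is preserved by $G$ because $\kappa\ge\kappa_0$. Then $\id_{G(y)}$ factors through some $G(y_i)$, and by full faithfulness $y$ is a retract of a $\kappa$-small colimit in $\mcc$. The remaining care is to pick $\kappa$ uniformly: take $\kappa\gg\kappa_0$ with $\mcc^{\kappa_0}$ essentially $\kappa$-small, which satisfies all the constraints above.
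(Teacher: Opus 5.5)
Your proposal is correct and follows essentially the same route as the paper. Both arguments realize $\mathcal{C}$ as an accessible localization of a presheaf category (you use $\mathcal{P}(\mathcal{C}^{\kappa_0})$, the paper an arbitrary $\mathcal{P}(\mathcal{D})$), choose $\kappa$ so that the inclusion preserves and reflects $\kappa$-compact objects and $\kappa$-compact presheaves are exactly the pointwise $\kappa$-small ones, and then use that a monomorphism of anima is an inclusion of path components.

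One small slip: in the reflection step you cannot in general write $y$ as a $\kappa$-filtered colimit of $\kappa_0$-compact objects; the pieces should be $\kappa$-compact. More simply, a fully faithful functor that preserves $\kappa$-filtered colimits automatically reflects $\kappa$-compactness, so this step is not really an obstacle.
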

	
	\begin{proof}
		Since $\mathcal{C}$ is a presentable \infcat, it arises as an accessible localization of a presheaf \infcat. Let $L \colon \mathcal{P}(\mathcal{D}) \rightleftarrows \mathcal{C} \colon i$ be the corresponding adjunction for some small \infcat \mcd.
		We may choose a regular cardinal $\kappa$ large enough such that the following three conditions are simultaneously satisfied:
		\begin{enumerate}
			\item $\kappa > \omega$, i.e., $\kappa$ is an uncountable regular cardinal.
			\item The inclusion functor $i \colon \mathcal{C} \hookrightarrow \mathcal{P}(\mathcal{D})$ is $\kappa$-accessible and preserves $\kappa$-compact objects\footnote{so that $\mathcal{C}^\kappa= i^{-1}(\mathcal{P}(\mathcal{D})^\kappa)$.}.
			\item $\mathcal{D}$ is $\kappa$-small. (Specifically, the set of equivalence classes of objects in $\mathcal{D}$ is $\kappa$-small, and for any pair of objects $c, d \in \mathcal{D}$, the mapping anima $\operatorname{Map}_{\mathcal{D}}(c, d)$ is $\kappa$-small.)
		\end{enumerate}
		Such a $\kappa$ exists. Specifically, regarding condition (2), see \cite[Remark 5.4.2.13]{htt}.
		We claim that, under conditions (1) and (3), an object $F \in \mathcal{P}(\mathcal{D})$ is $\kappa$-compact if and only if for all $d \in \mathcal{D}$, the anima $F(d)$ is $\kappa$-small.
		
		To see necessity, note that the evaluation functor $\operatorname{Ev}_d \colon \mathcal{P}(\mathcal{D}) \to \mathcal{S}$ admits a right adjoint $R_d \colon \mathcal{S} \to \mathcal{P}(\mathcal{D})$ given by $$R_d(X)(c) \simeq \operatorname{Map}_{\mathcal{S}}(\operatorname{Map}_{\mathcal{D}}(c, d), X).$$ Since $\mathcal{D}$ is $\kappa$-small, $\operatorname{Map}_{\mathcal{D}}(c, d)$ is a $\kappa$-compact anima. Consequently, the functor corepresenting this anima preserves $\kappa$-filtered colimits, meaning $R_d$ preserves $\kappa$-filtered colimits. Therefore, its left adjoint $\operatorname{Ev}_d$ preserves $\kappa$-compact objects.
		
		For sufficiency, suppose $F(d)$ is $\kappa$-small for all $d \in \mathcal{D}$. We have an equivalence $F \simeq \operatorname{colim}_{\mathcal{D}_{/F}} y(d)$. Since $\mathcal{D}$ is $\kappa$-small, the slice \infcat $\mathcal{D}_{/F}$ is $\kappa$-small. Therefore, $F$ is exhibited as a $\kappa$-small colimit of completely compact objects, implying $F$ itself is $\kappa$-compact.
		
		Now, let $X \in \mathcal{C}$ be a $\kappa$-compact object, and let $Y \hookrightarrow X$ be a monomorphism in $\mathcal{C}$. By condition (2), $i$ preserves and reflects $\kappa$-compacts. 
		Since $i$ is a right adjoint, it preserves limits and hence monomorphisms. However, by the argument above, we see that $\kappa$-compacts in $\mcp(\mcd)$ are closed under monomorphisms, which completes the proof.
	\end{proof}
	We next apply this observation to the additive situation. 
	
	\begin{lem}\label{addsubcatkcomp}
\begin{enumerate}
	\item Let $\kappa>\omega$ be an (arbitrary) uncountable regular cardinal.  If an idempotent complete small additive \infcat $\mca$ is $\kappa$-compact in \catadidem, then so is any idempotent complete additive full subcategory of  $\mca$.	
	\item  There exists a large enough uncountable regular cardinal $\lambda>\omega$ such that if a \dualaddinfcat $\mcc$ is $\lambda$-compact in \praddbl, then so is any closed subcategory of it.	
\end{enumerate}	
	\end{lem}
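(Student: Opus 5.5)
For part (1), I will translate $\kappa$-compactness in \catadidem into a size condition. For $\kappa>\omega$, an object $\mca\in\catadidem$ is $\kappa$-compact exactly when it is $\kappa$-small: it has a $\kappa$-small set of equivalence classes of objects and $\kappa$-small mapping spaces. This follows from the standard description of $\kappa$-compact objects in $\Cat^{\op{perf}}$-type categories. One route is through the equivalence $\catadidem\simeq\pradat$ with $\mcp_\Sigma(-)$, together with \cref{praddblkprl}. An alternative, more elementary route uses the presentation of \catadidem as an $\omega$-accessible localization of $\Cat$ in which idempotent completion of a $\kappa$-small category remains $\kappa$-small when $\kappa$ is uncountable. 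I will mostly take this characterization as known.

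Granting the characterization, part (1) is immediate. A full subcategory $\mcb\subset\mca$ has at most as many equivalence classes of objects as $\mca$, and its mapping spaces are mapping spaces of $\mca$, so $\mcb$ is again $\kappa$-small, hence $\kappa$-compact. The only point to check is the converse direction of the characterization: $\kappa$-small and idempotent complete implies $\kappa$-compact. I would argue this by writing $\mcb$ as the idempotent completion of the additive closure of a $\kappa$-small set of objects, which is a $\kappa$-small colimit of free additive categories on finite pieces.

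For part (2), I will reduce to part (1) using closed subcategories and the hat-Yoneda embedding. Choose $\lambda$ large enough; I expect $\lambda$ uncountable to suffice by \cref{praddblkprl}, possibly enlarged so that $\lambda$-compactness in $\prl_{\op{ad},\omega_1}$ means being generated by a $\lambda$-small set of $\omega_1$-compact objects. By \cref{praddblkprl}, $\mcc$ is $\lambda$-compact in \praddbl iff it is $\lambda$-compact in $\prl_{\op{ad},\omega_1}$, i.e.\ iff $\mcc^{\omega_1}$ is $\lambda$-small. Let $\mcc'\subset\mcc$ be a closed subcategory. The inclusion $i$ is an internal left adjoint, so $i$ preserves $\omega_1$-compact objects, and it is fully faithful, so it also reflects them: $i^R$ preserves filtered colimits and $i^Ri\simeq\id$. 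Hence $\mcc'^{\omega_1}$ is a full subcategory of $\mcc^{\omega_1}$. The latter is idempotent complete, additive and $\lambda$-small, so by the argument of part (1) the former is $\lambda$-small. Therefore $\mcc'$ is $\lambda$-compact.

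The main obstacle is making the characterization of $\lambda$-compactness in $\prl_{\op{ad},\omega_1}$ (and in \catadidem) via smallness of $\omega_1$-compacts precise. This characterization is why $\lambda$ may need to be taken large rather than arbitrary, in contrast with part (1). If the clean characterization fails, I would instead use the closed embedding $\mcc'\hookrightarrow\mcc\hookrightarrow\mcp_\Sigma(\mcc^{\omega_1})$. I would then express $\mcc'$ as a retract of $\mcp_\Sigma(\mcc'^{\omega_1})$, reducing $\lambda$-compactness of $\mcc'$ to $\lambda$-compactness of $\mcc'^{\omega_1}$ in \catadidem, which part (1) handles.
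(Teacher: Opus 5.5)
Your part (1) matches the paper, which also reduces it to the Ramzi--Sosnilo--Winges analysis of $\kappa$-compact objects for uncountable $\kappa$ (small set of objects, small mapping anima). After that, passing to a full subcategory is immediate. Your first suggested route, via $\catadidem\simeq\pradat$ and \cref{praddblkprl}, gives no size information, so that analysis is the real input.

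Part (2) takes a different route, and its load-bearing step is missing. Everything rests on the claim that $\mcc$ is $\lambda$-compact in $\prl_{\op{ad},\omega_1}$ if and only if $\mcc^{\omega_1}$ is essentially $\lambda$-small. This claim fails for $\lambda=\omega_1$, and for any $\lambda\le 2^{\aleph_0}$:
the unit $\spgeq$ is $\omega_1$-compact in $\prl_{\op{ad},\omega_1}$, because maps from it to $\mcd$ are $(\mcd^{\omega_1})^{\simeq}$ and $\omega_1$-filtered colimits there are computed on $\omega_1$-compacts in $\Cat$. Yet $\spgeq^{\omega_1}$ contains $\mathrm{H}A$ for each of the $2^{\aleph_0}$ isomorphism classes of countable abelian groups $A$.

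For $\lambda\gg\omega_1$ (i.e.\ $\mu^{\aleph_0}<\lambda$ for all $\mu<\lambda$) the characterization can be proved. Doing so requires bounding the $\omega_1$-compact objects and mapping anima of $\lambda$-small colimits in $\prl_{\op{ad},\omega_1}$ (a free countable cocompletion followed by a localization). That bookkeeping is exactly what you defer.

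Your fallback does not avoid it. The colimit functor $\mcp_\Sigma((\mcc')^{\omega_1})\to\mcc'$ is not in general an internal left adjoint, so the retraction exists only in $\prl_{\op{ad},\omega_1}$; this is harmless by \cref{praddblkprl}. The real problem is that invoking part (1) requires $\mcc^{\omega_1}$ to be $\lambda$-compact in \catadidem. Deducing this from $\lambda$-compactness of $\mcc$ in \praddbl is precisely the unproved ``only if'' half of the same characterization. A small slip: $i$ reflects $\omega_1$-compactness because it is fully faithful and preserves colimits; $i^R$ plays no role there.

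The paper needs no information about $\mcc^{\omega_1}$. It proves \cref{kcompmono}: every presentable $\infty$-category has a $\kappa$ for which $\kappa$-compact objects are closed under subobjects. The proof writes the category as an accessible localization of $\mcp(\mcd)$ with $\mcd$ $\kappa$-small and the inclusion preserving $\kappa$-compacts. There, $\kappa$-compact presheaves are the pointwise $\kappa$-small ones and monomorphisms are pointwise inclusions of components.

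It then applies this to the presentable category \praddbl, using that a closed inclusion $i\colon\mcc'\hookrightarrow\mcc$ is a monomorphism in \praddbl. Indeed, postcomposition with the fully faithful $i$ makes $\mapp_{\praddbl}(\mce,\mcc')\to\mapp_{\praddbl}(\mce,\mcc)$ an inclusion of components.

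Once the characterization is established for $\lambda\gg\omega_1$, your approach would give more: an explicit $\lambda$ and an intrinsic description of $\lambda$-compact dualizable additive categories. The paper's $\lambda$ is inexplicit but comes essentially for free.
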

	\begin{proof}
	(1) This follows from the same argument as \cite[Proposition 3.4 \& Corollary 3.7]{ramzi2025every}.

	(2) This follows from \cref{kcompmono}, because a closed subcategory forms a monomorphism in \praddbl.
	\end{proof}
	The preceding compactness results imply that short exact sequences can be
	approximated by short exact sequences of compact objects. 
	\begin{prop}\label{filshortext}
		\begin{enumerate}
			\item Let $\kappa>\omega$ be an (arbitrary) uncountable regular cardinal.  	Any short exact sequence in $\catadidem$ can be written as a $\kappa$-filtered colimit of short exact sequences of $\kappa$-compact objects in $\catadidem$.
			\item  There exists a large enough uncountable regular cardinal $\kappa>\omega$ such that any short exact sequence in $\praddbl$ can be written as a $\kappa$-filtered colimit of short exact sequences of $\kappa$-compact objects in $\praddbl$.
		\end{enumerate}	
		
	\end{prop}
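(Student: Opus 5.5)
We write $\mca\hookrightarrow\mcb\to\mcc$ for the given short exact sequence, in either $\catadidem$ or $\praddbl$. The plan is to index the approximation by the $\kappa$-compact subobjects of the middle term $\mcb$. Kernels and quotients of such subobjects are compact by \cref{addsubcatkcomp} and by the fact that $\kappa$-compact objects are closed under finite colimits. Exactness is preserved under filtered colimits because both categories are compactly generated, $\catadidem$ being $\omega$-presentable and $\praddbl$ being $\omega_1$-presentable by \cref{praddblkprl}.

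\textbf{Case (1).} Write $\mcb\simeq\colim_{j\in J}\mcb_j$ as the $\kappa$-filtered union of its $\kappa$-compact idempotent complete additive full subcategories. Each such subcategory is $\kappa$-compact by \cref{addsubcatkcomp}(1), and every $\kappa$-small subset of objects lies in one, so the union is $\kappa$-filtered. For each $j$, set $\mca_j:=\mca\cap\mcb_j$, which is again $\kappa$-compact by \cref{addsubcatkcomp}(1), and set $\mcc_j:=\coker(\mca_j\to\mcb_j)$. By \cref{addkarseq} the sequence $\mca_j\to\mcb_j\to\mcc_j$ is short exact, and $\mcc_j$ is $\kappa$-compact as a pushout of $\kappa$-compact objects. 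Since $\mca$ is closed in $\mcb$ under the relevant operations, $\colim_j\mca_j\simeq\mca$. Cofibers commute with colimits, so $\colim_j\mcc_j\simeq\mcc$. Since the colimit presentation is given by unions of full subcategories, fiber sequences are preserved in this colimit.

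\textbf{Case (2).} Choose $\kappa$ at least as large as the cardinal $\lambda$ of \cref{addsubcatkcomp}(2), and large enough that $\praddbl$ is $\kappa$-compactly generated. A short exact sequence in $\praddbl$ is determined by the closed subcategory $\mca\subset\mcb$ (\cref{connlocandclosub}), equivalently by a connective internal localization of $\mcb$. So we work in the arrow category $\Fun(\Delta^1,\praddbl)$, whose $\kappa$-compact objects are the arrows between $\kappa$-compact objects. Write the fully faithful internal left adjoint $\mca\hookrightarrow\mcb$ as a $\kappa$-filtered colimit of arrows $\mca_j\to\mcb_j$ between $\kappa$-compact objects. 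The main obstacle is that these arrows need not be fully faithful. To fix this, replace each one by its image: the closed subcategory of $\mcb_j$ given by the dualizable kernel of $\mcb_j\to\mcb_j\otimes_{\mcb}\mcc$ (\cref{dualkernel}), or equivalently the closed subcategory generated by the image of $\mca_j$. This is $\kappa$-compact by \cref{addsubcatkcomp}(2). Then set $\mcc_j:=\coker$, which is again $\kappa$-compact, and which gives a short exact sequence by \cref{basicses}.

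It remains to show that the colimit recovers the original sequence. Cofibers commute with colimits, so $\colim\mcc_j\simeq\mcc$. The fiber term is then recovered by applying \cref{basicses}(2) to the colimit localization $\mcb\to\mcc$. For this we need to know that a $\kappa$-filtered colimit of connective internal localizations is again a connective internal localization. This should follow because the inclusion $\praddbl\to\prl$ creates colimits and because $\pi_0$-epimorphicity of the unit is detected after passing to stabilizations, where Verdier sequences are closed under filtered colimits.

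I expect this step — making the approximating arrows fully faithful while keeping them compatible along the diagram — to be the technical heart of the argument. The idea is to take images in a functorial way, so that the replacement respects the transition maps.
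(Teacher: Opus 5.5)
\textbf{Case (1) has a genuine gap.} Your argument relies on the claim that $\mcb$ is the $\kappa$-filtered union of its $\kappa$-compact idempotent complete additive full subcategories. This is false in general. In (1) the cardinal $\kappa$ is fixed before $\mcb$, and objects of $\mcb$ can have endomorphism rings of cardinality $\geq\kappa$; such objects lie in no $\kappa$-compact subcategory.

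A concrete counterexample is $\mcb=\mathrm{Proj}(k)$ for a field $k$ with $|k|\geq\kappa$.
\begin{itemize}
\item Write $k$ as the $\kappa$-filtered union of its subrings $k_j$ of size $<\kappa$. This gives $\mathrm{Proj}(k)\simeq\colim_j\mathrm{Proj}(k_j)$ in $\catadidem$.
\item If $\mathrm{Proj}(k)$ were $\kappa$-compact, it would be a retract of some $\mathrm{Proj}(k_j)$. Then $k\cong\End(k)$ would be a retract of the endomorphism set of an object of $\mathrm{Proj}(k_j)$, which has cardinality $<\kappa$.
\item The only idempotent complete additive full subcategories of $\mathrm{Proj}(k)$ are $0$ and $\mathrm{Proj}(k)$, so your union is $0$.
\end{itemize}

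The fix is to give up full faithfulness of $\mcb_j\to\mcb$.
\begin{itemize}
\item Take an arbitrary $\kappa$-filtered presentation $\mcb\simeq\colim_j\mcb_j$ by $\kappa$-compact objects, and set $\mca_j:=\mca\times_{\mcb}\mcb_j$.
\item Since $\mca\to\mcb$ is fully faithful, this pullback is the preimage of $\mca$, an idempotent complete additive full subcategory of $\mcb_j$. So it is $\kappa$-compact by \cref{addsubcatkcomp}(1).
\item Set $\mcc_j:=\mcb_j/\mca_j$. This is short exact by \cref{addkarseq} and $\kappa$-compact.
\item Then $\colim_j\mca_j\simeq\mca\times_{\mcb}\colim_j\mcb_j\simeq\mca$, because $\kappa$-filtered colimits commute with pullbacks in the $\kappa$-presentable category $\catadidem$. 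Also $\colim_j\mcc_j\simeq\mcc$, because cofibers commute with colimits.
\end{itemize}

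\textbf{Case (2): the same argument works, and it removes what you call the technical heart.}
\begin{itemize}
\item The object you eventually propose is the dualizable kernel of the composite $\mcb_j\to\mcb\to\mcc$ (the expression $\mcb_j\otimes_{\mcb}\mcc$ has no meaning here). By \cref{dualker1} it coincides with the pullback $\mca\times_{\mcb}\mcb_j$ formed in $\praddbl$.
\item Being a pullback, it is automatically functorial in $j$. Its inclusion into $\mcb_j$ is a closed subcategory, so \cref{addsubcatkcomp}(2) and \cref{basicses}(1) apply. The arrow-category presentation of $\mca\hookrightarrow\mcb$ is unnecessary.
\item It is not ``equivalently'' the closed subcategory generated by the image of $\mca_j$. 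That subcategory is in general smaller; for example, it is $0$ when $\mca_j=0$.
\item Your recovery of the colimit is circular. The step ``cofibers commute with colimits, so $\colim_j\mcc_j\simeq\mcc$'' presupposes that the colimit of the replaced subcategories is $\mca$. Knowing that filtered colimits of connective internal localizations are again such only tells you that $\mcb\to\colim_j\mcc_j$ is a localization, not what its kernel is.
\item What you need is that $\kappa$-filtered colimits commute with pullbacks in the $\kappa$-presentable $\praddbl$. This gives $\colim_j(\mca\times_{\mcb}\mcb_j)\simeq\mca\times_{\mcb}\mcb\simeq\mca$.
\item Take $\kappa$ to be the cardinal $\lambda$ of \cref{addsubcatkcomp}(2) itself, not an arbitrary $\kappa\geq\lambda$: that lemma only guarantees the closure property for that particular $\lambda$. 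This costs nothing, since $\praddbl$ is $\omega_1$-presentable (\cref{praddblkprl}) and hence $\lambda$-presentable.
\end{itemize}
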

	\begin{proof}
	Suppose given a short exact sequence $\mca\to\mcb\to\mcc$ in $\prad^\star$, where $\star \in \{ \op{dbl}, \op{at} \}$. Since $\prad^\star$ is $\kappa$-presentable, we can write $\mcb\simeq\colim \mcb_\alpha$ as a $\kappa$-filtered colimit of $\kappa$-compact objects in $\prad^\star$. We define $\mca_\alpha=\mca\times_{\mcb}\mcb_\alpha$ and $\mcc_\alpha=\mcb_\alpha/\mca_\alpha$. By \cref{dualker1}, we see that $\mca_\alpha\to\mcb_\alpha$ is a closed subcategory (when $\star=\op{at}$ it is trivial). Since $\prad^\star$ is $\kappa$-presentable, $\mca\to\mcb\to\mcc$ is equivalent to the $\kappa$-filtered colimit $$\colim_\alpha(\mca_\alpha\to\mcb_\alpha\to\mcc_\alpha)$$ of short exact sequences in $\prad^\star$. It suffices to show that each $\mca_\alpha$ is $\kappa$-compact (then so is $\mcc_\alpha$), but that follows from \cref{addsubcatkcomp}. 
	\end{proof}
	We can now construct the universal (stable) finitary localizing invariant on \dualaddinfcats.
	
\begin{thm}\label{defmotpst}
	There exists a  universal finitary localizing invariant to a presentable stable \infcat $$\mcu^{\op{cont}}:\praddbl\to \motpst$$ in the following sense: for any (potentially large) cocomplete stable \infcat \mce, the following functor is an equivalence
	$$\funct^L(\motpst,\mce)\xrightarrow{\sim}\Loc_\omega(\praddbl,\mce).$$
\end{thm}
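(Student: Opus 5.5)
We follow the strategy of \cite{blumberg2013universal}, using the results of \cref{section6} to reduce to a small site. Choose an uncountable regular cardinal $\kappa$ as in \cref{filshortext}(2), large enough that $\praddbl$ is $\kappa$-presentable (possible by \cref{praddblkprl}). Let $\mathcal{T}=(\praddbl)^\kappa$; this is an essentially small semi-additive category closed under finite products, and by \cref{addsubcatkcomp}(2) it is closed under passage to closed subcategories and to the corresponding quotients. In particular, short exact sequences in $\praddbl$ whose middle term lies in $\mathcal{T}$ have all their terms in $\mathcal{T}$. The pair $(\mathcal{T}^\opp,\mcf\mcf)$ is then a small semi-additive good pair. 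Pushouts of fully faithful internal left adjoints stay $\kappa$-compact because $\kappa$-compact objects are closed under finite colimits, so the argument of \cref{dualaddsaddpair} applies.

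Next we define $\motpst$ as the stabilization of the spherical $\mcf\mcf$-sheaves on $\mathcal{T}^\opp$ valued in $\spgeq$:
\[
\motpst:=\opsp\big(\shv_\Sigma(\mathcal{T}^\opp;\spgeq)\big)\simeq \shv_\Sigma(\mathcal{T}^\opp;\opsp).
\]
This is presentable stable, and by the compact-generation result for spherical sheaves it is even compactly generated. Write $\mcu_0:\mathcal{T}\to\motpst$ for the stabilized sheafified Yoneda embedding. We then left Kan extend $\mcu_0$ along $\kappa$-filtered colimits, $\praddbl\simeq\Ind_\kappa(\mathcal{T})$, to obtain $\mcu^{\op{cont}}$.

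The first point is that $\mcu^{\op{cont}}$ is a localizing invariant. By \cref{sphshvcriterion} and \cref{locinvsphshv}, $\mcu_0$ sends short exact sequences in $\mathcal{T}$ to fiber sequences, which are also cofiber sequences since $\motpst$ is stable. By \cref{filshortext}(2), every short exact sequence in $\praddbl$ is a $\kappa$-filtered colimit of such sequences. Fiber sequences in a stable category are closed under filtered colimits, so $\mcu^{\op{cont}}$ is localizing.

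To make $\mcu^{\op{cont}}$ finitary, we further localize at the maps $\colim_i\mcu^{\op{cont}}(\mcm_i)\to\mcu^{\op{cont}}(\colim_i\mcm_i)$ for $\omega$-filtered diagrams in $\mathcal{T}$ indexed by $\kappa$-small filtered categories. There is only a small set of these, so the localization is again presentable stable. We rename the result $\motpst$.

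For the universal property, let $\mce$ be cocomplete stable. A colimit-preserving functor $\motpst\to\mce$ is the same as a spherical $\mce$-valued sheaf on $\mathcal{T}^\opp$ (via $\shv_\Sigma(-;\spgeq)\otimes\mce$, or directly by the universal property of presheaves) that inverts the finitary maps. By \cref{locinvsphshv}, this is the same as a finitary localizing invariant on $\mathcal{T}$. The remaining step is to show that restriction
\[
\Loc_\omega(\praddbl,\mce)\to\Loc_{\omega,\kappa\text{-small}}(\mathcal{T},\mce)
\]
is an equivalence, with inverse given by $\kappa$-filtered left Kan extension. Two facts drive this.
\begin{itemize}
\item A finitary invariant preserves all filtered colimits, since every filtered colimit decomposes into $\kappa$-filtered colimits of $\kappa$-small filtered ones. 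So it is determined by its restriction to $\mathcal{T}$.
\item Conversely, the left Kan extension of a localizing invariant on $\mathcal{T}$ is localizing, by \cref{filshortext} as above.
\end{itemize}

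The main obstacle is making the size bookkeeping consistent. We must check that $\omega$-filtered colimits in $\praddbl$ of objects of $\mathcal{T}$ are controlled by the localization we imposed. We also must check that the result is independent of the choice of $\kappa$, i.e.\ that enlarging $\kappa$ does not change $\motpst$. Following \cite{blumberg2013universal}, we would handle this by comparing the constructions for $\kappa<\kappa'$: the Kan extension argument shows both categories corepresent the same functor $\Loc_\omega(\praddbl,-)$, hence they agree.
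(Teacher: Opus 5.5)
Your construction has the wrong variance, and this breaks both the definition of $\mcu_0$ and the universal property. On the site $\mathcal{T}^{\opp}$, a spherical sheaf is a functor $\mathcal{T}\to\opsp$. So by \cref{locinvsphshv}, your $\shv_\Sigma(\mathcal{T}^{\opp};\opsp)$ is the category $\Loc(\mathcal{T},\opsp)$ of $\opsp$-valued localizing invariants on $\mathcal{T}$.

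\begin{itemize}
\item Its sheafified Yoneda functor sends $x$ to the sheafification of $\Map_{\mathcal{T}}(x,-)$, which is \emph{contravariant} in $x$. So there is no covariant $\mcu_0\colon\mathcal{T}\to\motpst$ of the kind you use. What the sheaf condition actually gives is that $\nu(\mcb/\mca)\to\nu(\mcb)\to\nu(\mca)$ is a cofiber sequence, i.e.\ a contravariant localizing invariant.
\item \cref{sphshvcriterion} and \cref{locinvsphshv} characterize which presheaves are sheaves. They are not statements about the Yoneda functor, so they cannot justify exactness of $\mcu_0$.
\item Your step ``a colimit-preserving functor $\motpst\to\mce$ is the same as a spherical $\mce$-valued sheaf'' is false. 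By the universal property of presheaves, colimit-preserving functors out of $\shv_\Sigma(\mathcal{T}^{\opp};\opsp)$ are $\mce$-valued spherical \emph{cosheaves}: functors $\mathcal{T}^{\opp}\to\mce$ sending \v{C}ech nerves of covers to colimits. By contrast, $\mce$-valued sheaves are limit-preserving functors out of $\shv_\Sigma(\mathcal{T}^{\opp};\opsp)^{\opp}$; this is exactly how the paper uses them in the proof of \cref{smalllargelocinv}.
\item The formula $\shv_\Sigma(-;\spgeq)\otimes\mce\simeq\shv_\Sigma(-;\mce)$ computes a tensor product, not $\funct^L(-,\mce)$. The two differ by a dualization.
\end{itemize}

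In other words, your category contains $\funct^L(\motpst,\opsp)\simeq\Loc_\omega(\praddbl,\opsp)$ as its finitary part. It models the functor side of the universal property, not $\motpst$ itself. Interchanging the two would need a self-duality of $\motpst$ that is not available (compare the open question whether $\motpst$ is rigid over $\opsp$).

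The repair is to use presheaves on $\mathcal{T}$ instead of sheaves on $\mathcal{T}^{\opp}$, which is what the paper does following \cite{blumberg2013universal}. Start from $\funct^{\times}(\mathcal{T}^{\opp},\opsp)$; the paper writes $\funct^{\mathrm{cofil},\times}(\praddbl^{\opp},\opsp)$, which builds in finitariness. Its covariant Yoneda functor $\mca\mapsto h_\mca$ has the right property: colimit-preserving functors out of it into a cocomplete stable $\mce$ are exactly the finite-product-preserving functors $\mathcal{T}\to\mce$. Then localize at the small set of maps $\Sigma^n\cofib(h_\mca\to h_\mcb)\to\Sigma^n h_{\mcb/\mca}$, for $\mca\hookrightarrow\mcb$ a fully faithful morphism in $\mathcal{T}$ and $n\in\mathbb{Z}$, together with your finitary maps. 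With this definition:
\begin{itemize}
\item $\funct^L(\motpst,\mce)$ is by construction the category of finitary localizing invariants on $\mathcal{T}$;
\item exactness of $\mcu$ on $\kappa$-compact sequences is forced by the localization, rather than deduced from the sheaf theory;
\item your remaining argument (\cref{filshortext}(2) plus $\kappa$-filtered Kan extension) carries this over to $\praddbl$.
\end{itemize}
The sheaf description of \cref{section6} belongs to the target $\Loc(\praddbl,\mce)$ of the universal property, not to the universal object.
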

\begin{proof}Let $\kappa$ be a large enough regular cardinal as in \cref{filshortext}(2).
	We define $$\motpst:= \funct^{\op{cofil},\times}(\praddblop,\opsp)[<\Sigma^n\cofib(h_\mca\to h_\mcb)\to \Sigma^nh_{\mcb/\mca}|\,\mca\hookrightarrow\mcb\in (\praddbl)^{\kappa}, n\in\mathbb{Z}>^{-1} ].$$  It is not hard to show that it satisfies the desired universal property, using the same philosophy as \cite{blumberg2013universal}. 	 
\end{proof}
\begin{de}
	We will refer to \motpst as the stable \infcat of \textbf{prestable motives}.
\end{de}

\begin{rem}
	As in the case of stable motives, it does not matter whether one starts with \praddbl or \pradat. More precisely, there is a natural equivalence $\motpst\simeq \motpst^\prime$, where
	$$
	\motpst^\prime=\mcp_{\Sigma}(\catadidemomega;\opsp)\big[<\Sigma^n\cofib(h_\mca\to h_\mcb)\to \Sigma^nh_{\mcb/\mca}\mid\,\mca\hookrightarrow\mcb\in \catadidemomegaone,\, n\in\mathbb{Z}>^{-1} \big]
	$$
	is associated to the universal finitary localizing invariant $\mcu:\catadidem\to \motpst^\prime$. This follows from the equivalence
	$$
	\Loc_\omega(\praddbl,\mce)\xrightarrow{\sim}\Loc_\omega(\pradat,\mce)
	$$
	of \cref{smalllargelocinv}.
	\end{rem}

The theorem above constructs a stable universal category.  One may also ask
whether there is an intrinsically prestable version of the same construction.
\begin{quest}
	Is the (unstable) \infcat of  prestable motives $\funct^\times(\catadidemop,\spgeq)/\sim $ constructed by the BGT construction  prestable or not?
\end{quest}

Before proving the corepresentability statement for $K$-theory, we
first isolate its splitting analogue.  This intermediate category is obtained
by imposing only the relations coming from right split exact sequences, and it
recovers connective algebraic $K$-theory by the usual group-completion
mechanism.
\begin{de}
	We define $$\motsplcn:= \mcp_{\Sigma}(\catadidemomega;\spgeq)[< h_{S_2\mcc} \to h_{\mcc\times\mcc}\,|\,\mcc\in \catadidemomega>^{-1} ]\simeq \mcp_{\Sigma}(\catadidemomega[S_{\op{spl}}^{-1}];\spgeq)$$ to be the (prestable) \infcat of splitting prestable motives with the universal finitary group-like splitting invariant $\mcu_{\op{spl}}:\catadidem\to\motsplcn$, where the second equivalence follows from the fact that $$\catadidemomega\to\catadidemomega[S_{\op{spl}}^{-1}]$$ is a semi-additive localization. Therefore \motsplcn  is a presentable prestable \infcat. We denote $\motspl:=\opsp(\motsplcn)$.
\end{de}

\begin{prop}\label{splmapping}
	Let $$\phi: \funct^{\times}_\omega(\catadidem,\mcs)\rightleftarrows \op{Split}_\omega(\catadidem,\spgeq) : \Omega^\infty$$ be the adjunction such that $\phi$ is the left adjoint. Then $\mcu_{\op{spl}}(\spgeqcproj)\in \motsplcn$ is compact projective and the natural transformation from the core functor\footnote{We denote by $\spgeqcproj$ the full subcategory of \spgeq spanned by compact projective objects.} 
	$$(-)^\simeq\simeq \mapp_{\catadidem}(\spgeqcproj,-)\to \Omega^\infty \unmap_{\motsplcn}(\mcu_{\op{spl}}(\spgeqcproj),\mcu_{\op{spl}}(-))$$  exhibits $$\phi((-)^\simeq)\simeq \unmap_{\motsplcn}(\mcu_{\op{spl}}(\spgeqcproj),\mcu_{\op{spl}}(-)).$$
	In other words, $\unmap_{\motsplcn}(\mcu_{\op{spl}}(\spgeqcproj),\mcu_{\op{spl}}(-))$ is the universal finitary group-like localizing invariant on \catadidem under $(-)^\simeq$. Furthermore, we have a natural equivalence $$\unmap_{\motsplcn}(\mcu_{\op{spl}}(\spgeqcproj),\mcu_{\op{spl}}(-))\xrightarrow{\sim} \kcn(\op{Stab}(-)).$$
\end{prop}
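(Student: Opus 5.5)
Since $\motsplcn\simeq \mcp_{\Sigma}(\catadidemomega[S_{\op{spl}}^{-1}];\spgeq)$ and $\spgeqcproj$ is compact in $\catadidem$, the plan is to identify $\mcu_{\op{spl}}(\spgeqcproj)$ with the image of the representable $h_{\spgeqcproj}$, following the pattern of \cite{blumberg2013universal}. We first note that $\catadidemomega[S_{\op{spl}}^{-1}]$ is a semi-additive localization. Hence representables in the additive presheaf category $\mcp_\Sigma(-;\spgeq)$ (additive presheaves on the group completion) are compact projective, which gives the first claim.

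Next, by the Yoneda lemma, maps from $\mcu_{\op{spl}}(\spgeqcproj)$ are evaluation at $\spgeqcproj$ of the localized presheaf $L(h_{\mcc})$. Before localizing, the corresponding value is $\mapp_{\catadidem}(\spgeqcproj,\mcc)\simeq\mcc^\simeq$. We then identify the localization $\mcp_\Sigma(\catadidemomega;\spgeq)\to\motsplcn$ as left adjoint to the inclusion of group-like splitting invariants. Consequently, $\unmap(\mcu_{\op{spl}}(\spgeqcproj),\mcu_{\op{spl}}(-))$ corepresents
\[
E\mapsto \Omega^\infty E(\spgeqcproj)\simeq \mapp\big((-)^\simeq,\Omega^\infty E\big)
\]
on $\op{Split}_\omega(\catadidem,\spgeq)$, and is therefore $\phi((-)^\simeq)$. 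The key point to check is that the finitary extension from $\catadidemomega$ and the additivization $\mcs\to\spgeq$ commute with evaluation at a compact object. This holds because $\spgeqcproj\in\catadidemomega$ and filtered colimits are computed pointwise.

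For the final equivalence with $\kcn(\op{Stab}(-))$, we would show that $\kcn\circ\op{Stab}$ is a finitary group-like splitting invariant under $(-)^\simeq$ with the universal property. There are three parts. First, $\op{Stab}$ preserves filtered colimits and sends right split sequences to split Verdier sequences, and connective $K$-theory of stable categories is additive; so it is a splitting invariant. Second, the map $(-)^\simeq\to\Omega^\infty\kcn(\op{Stab}(-))$ comes from including objects of $\mca$ into $\op{Stab}(\mca)$. Third, and this is the main obstacle, we must show that this map induces an equivalence from $\phi((-)^\simeq)$. We propose to compute $\phi((-)^\simeq)$ explicitly as $\Omega|\,(S_\bullet^{\oplus}\mca)^\simeq|$, the additive (split) $S_\bullet$-construction. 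The standard argument that a universal additive invariant is given by the $S_\bullet$-construction adapts here via \cref{splinvchar}(3). Then $\Omega|(S^\oplus_\bullet\mca)^\simeq|$ is the direct-sum $K$-theory $K^\oplus(\mca)$, i.e. the group completion of $\mca^\simeq$. Finally, we invoke the comparison $K^\oplus(\mca)\simeq \kcn(\op{Stab}(\mca))$ for additive categories, i.e. the theorem of the heart for the bounded weight structure on $\op{Stab}(\mca)$ with heart $\mca$ (Fontes, Helmstutler, Sosnilo).
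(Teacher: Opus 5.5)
Your argument is correct. For the first two claims it is essentially the paper's Yoneda argument. The difference is in how you obtain compact projectivity and the identification with $K$-theory.

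\textbf{Your route.} You get compact projectivity from the presentation $\motsplcn\simeq\mcp_\Sigma(\catadidemomega[S_{\op{spl}}^{-1}];\spgeq)$. You then treat the comparison with $\kcn(\op{Stab}(-))$ as a separate problem. You solve it in three steps: re-run the Blumberg--Gepner--Tabuada $S_\bullet$-universality argument, identify $\Omega|(S^\oplus_\bullet\mca)^\simeq|$ with the group completion $(\mca^\simeq)^{\mathrm{gp}}$, and apply the theorem of the heart.

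\textbf{The paper's route.} It does both at once. The free object $\op{B}^\infty(h_{\spgeqcproj})\in\mcp_\Sigma(\catadidemomega;\spgeq)$ corepresents evaluation at $\spgeqcproj$. This functor is colimit-preserving, and on motives it is $\mca\mapsto(\mca^\simeq)^{\mathrm{gp}}$, i.e. connective $K$-theory. Since connective $K$-theory is already a splitting invariant, this corepresentable inverts the localizing maps. It therefore factors through $\motsplcn$ as $\underline{h}^{\mcu_{\op{spl}}(\spgeqcproj)}$. This gives compact projectivity without the semi-additive localization. It also gives $\unmap(\mcu_{\op{spl}}(\spgeqcproj),\mcu_{\op{spl}}(\mca))\simeq(\mca^\simeq)^{\mathrm{gp}}$ with no further work.

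\textbf{Why your main obstacle disappears.} In your own terms, $\phi$ is pointwise group completion followed by reflection into splitting invariants. You already invoke $(\mca^\simeq)^{\mathrm{gp}}\simeq\kcn(\op{Stab}(\mca))$, and $\kcn\circ\op{Stab}$ is splitting. So $(\mca^\simeq)^{\mathrm{gp}}$ is already splitting and the reflection does nothing. The $S_\bullet$ detour also needs Waldhausen additivity for split-exact categories, which is the same splitting input. It buys you an explicit model of $\phi$ but nothing more.

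\textbf{Minor point.} Objects of $\motsplcn$ are $S$-local presheaves on $\catadidemomega$, not splitting invariants. What you actually use is $\funct^L(\motsplcn,\spgeq)\simeq\op{Split}_\omega(\catadidem,\spgeq)$, where the inclusion is precomposition with the localization.
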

\begin{proof}
	Consider the following diagram.
	$$
	\begin{tikzcd}
		{\funct^L(\mcp_{\Sigma}(\catadidemomega),\cmon)} \arrow[d, "\sim"] \arrow[r, shift left=2] & {\funct^L(\mcp_{\Sigma}(\catadidemomega),\spgeq)} \arrow[l, hook', shift left] \arrow[d, "\sim"] \arrow[r, shift left] & {\funct^L(\motsplcn,\spgeq)} \arrow[l, hook', shift left=2] \arrow[d, "\sim"] \\
		{\funct_\omega^\times(\catadidem,\cmon)} \arrow[r, shift left=2]                                & {\funct_\omega^\times(\catadidem,\spgeq)} \arrow[l, hook', shift left] \arrow[r, shift left]                                & {\Split_\omega(\catadidem,\spgeq)} \arrow[l, hook', shift left=2]               
	\end{tikzcd}$$
	Let $h_1\in \mcp_{\Sigma}(\catadidemomega)$ be the  functor represented by $\spgeqcproj$. Then $\underline{h}^{\op{B}^\infty(h_1)}$ lies in $$\funct^L(\mcp_{\Sigma}(\catadidemomega;\spgeq),\spgeq),$$ because we have the following diagram
	$$\begin{tikzcd}
		\mcp_{\Sigma}(\catadidemomega;\spgeq) \arrow[d, "\Omega^\infty"] \arrow[r, "\underline{h}^{\op{B}^\infty(h_1)}"] & \spgeq \arrow[d, "\Omega^\infty"] \\
		\mcp_{\Sigma}(\catadidemomega) \arrow[r, "h^{h_1}"]                                            & \mcs            
	\end{tikzcd}
	$$
	where the bottom, left and right functors preserve sifted colimits. Since the connective $K$-theory is splitting, we obtain a factorization:
	$$\begin{tikzcd}
		& \motsplcn \arrow[rd, "\underline{h}^{\mcu_{\op{spl}}(\spgeqcproj)}", dashed] &        \\
		\mcp_{\Sigma}(\catadidemomega;\spgeq) \arrow[ru] \arrow[rr, "\underline{h}^{\op{B}^\infty (h_1)}"] &                                           & \spgeq
	\end{tikzcd}$$
	Finally by the Yoneda lemma, we have $$\phi((-)^\simeq)\simeq \underline{h}^{\mcu_{\op{spl}}(\spgeqcproj)}_{\motsplcn}(\mcu_{\op{spl}}(-))=\unmap_{\motsplcn}(\mcu_{\op{spl}}(\spgeqcproj),\mcu_{\op{spl}}(-)).$$
\end{proof}

\begin{rem}
	\cref{splmapping} also works for the universal $\kappa$-finitary splitting invariant for any regular cardinal $\kappa$
	$$
	\catadidem \to \mot^{\mathrm{cn}}_{\kappa\text{-}\mathrm{spl}},
	$$
	where
	$$
	\mot^{\mathrm{cn}}_{\kappa\text{-}\mathrm{spl}}
	:= \mcp_{\Sigma}(\catadidemkappa;\spgeq)
	\bigl[
	< h_{S_2\mcc} \to h_{\mcc\times\mcc}
	\,\vert\, \mcc\in \catadidemkappa >
	^{-1}
	\bigr]
	\simeq
	\mcp_{\Sigma}(\catadidemkappa[S_{\kappa\text{-}\mathrm{spl}}^{-1}];\spgeq).
	$$
	
\end{rem}
To pass from connective $K$-theory to nonconnective $K$-theory, we use a
delooping procedure based on a Calkin-type construction.  The following
abstract lemma packages the formal properties of such a construction that are
needed for the comparison.
\begin{lem}\label{big}
	Let $\op{big}(-):\catadidem\to \catadidem$ be an $(\infty,2)$-functor preserving finite products and right splitting sequences.  Let $\op{Id}_{\catadidem}\hookrightarrow\op{big}(-)$ be a fully faithful $(\infty,2)$-transformation.  Then $\calk(-)$ preserves finite products and right splitting sequences too, where $\calk(\mca):= \op{big}(\mca)/\mca$.
	Suppose further that each $\op{big}(\mca)$ admits countable coproducts.
	Then we have a natural equivalence of spectra $$\colimit_n\, \Omega^n\kcn(\stab(\calk^n(-))) \xrightarrow{\sim}K(\stab(-)).$$	
\end{lem}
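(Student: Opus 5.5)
We split the proof into the formal statement about $\calk$ and the $K$-theoretic comparison.

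\emph{Step 1: $\calk$ preserves finite products and right split sequences.} The quotient $\calk(\mca)=\op{big}(\mca)/\mca$ is a cofiber in $\catadidem$. Cofibers commute with finite products because $\catadidem$ is semi-additive and finite products are biproducts, so finite products of cofiber sequences are cofiber sequences. For a right split sequence $\mca\to\mcb\to\mcc$, apply $\op{Id}$ and $\op{big}$ to get a $3\times 3$ diagram. Both rows are right split: the first by hypothesis and the second because $\op{big}$ preserves right split sequences. The columns are the defining cofiber sequences $\mca\to\op{big}(\mca)\to\calk(\mca)$.

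Because the transformation $\op{Id}\hookrightarrow\op{big}$ is an $(\infty,2)$-transformation, it is compatible with the adjoints $i^R,p^R$ witnessing the splitting. These adjoints therefore descend to the quotients and give right adjoints for $\calk(\mca)\to\calk(\mcb)\to\calk(\mcc)$. Exactness of this third row follows from commuting colimits (cofibers) with cofibers, using \cref{addkarseq} to identify the cofibers as fiber sequences.

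\emph{Step 2: delooping.} Pass to stable categories using $\stab$, a left adjoint preserving cofiber sequences. Then
\[\stab(\mca)\to\stab(\op{big}(\mca))\to\stab(\calk(\mca))\]
is a Verdier sequence in $\catper$. The middle term has countable coproducts, since $\op{big}(\mca)$ does and $\stab$ preserves them. The Eilenberg swindle (the stable analogue of \cref{swindle}) then gives $K(\stab(\op{big}(\mca)))=0$.

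Nonconnective $K$-theory is localizing, so $K(\stab(\mca))\simeq\Omega K(\stab(\calk(\mca)))$. Iterating, and using Step 1 to apply the same construction to $\calk(\mca)$, gives
\[K(\stab(\mca))\simeq \Omega^n K(\stab(\calk^n\mca))\quad\text{for all } n.\]
Connective $K$-theory is only splitting, which is why Step 1 is needed. The connective covers give compatible maps
\[\Omega^n\kcn(\stab\calk^n\mca)\to \Omega^n K(\stab\calk^n\mca)\simeq K(\stab\mca).\]
The transition maps in the colimit come from the boundary map of $\kcn$ applied to the above sequence. We must check they are compatible: $\kcn$ sends this Verdier sequence, which is not split, to a fiber sequence on connective covers, except that $\pi_0$ of the quotient may fail to be hit.

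\emph{Step 3 (main obstacle): the colimit map is an equivalence on homotopy groups.} For fixed $k$, the map $\pi_k\Omega^n\kcn(\stab\calk^n\mca)=\pi_{k+n}\kcn(\stab\calk^n\mca)$ to $\pi_{k+n}K(\stab\calk^n\mca)$ is an isomorphism once $k+n\geq 1$. Indeed $\kcn\to K$ is an equivalence in positive degrees and injective on $\pi_0$; it is also an isomorphism in degree $0$ once $\pi_0$-surjectivity is available. Hence the colimit stabilizes to $\pi_kK(\stab\mca)$.

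The delicate point is that the transition maps in the colimit agree with the $K$-theory boundary maps. I would check this by comparing the fiber sequence
\[\kcn(\stab\calk^n\mca)\to\Omega\kcn(\stab\calk^{n+1}\mca)\]
obtained from the swindle on $\op{big}$ with the corresponding nonconnective one. The needed input is that $\kcn$ of a Verdier sequence with swindled middle term yields
\[\kcn(\stab\mca)\simeq\tau_{\geq0}\Omega\kcn(\stab\calk\mca).\]
I would deduce this from the Grayson–Waldhausen cofinality or fibration theorem for connective $K$-theory, which identifies $\tau_{\geq 0}$ of the fiber precisely.
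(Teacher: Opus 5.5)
Your proposal is correct and follows essentially the same route as the paper: $2$-functoriality of $\calk$ together with commuting cofibers (and \cref{addkarseq}) for the first part, then the swindle $K(\stab(\op{big}(\mca)))=0$, the localizing property of $K$, and a comparison of homotopy groups in degrees $k+n\geq 0$ via the square relating $\kcn(\stab(\mca))\to\Omega\kcn(\stab(\calk(\mca)))$ to its nonconnective analogue. A few points are off, though none affects correctness. First, the compatibility you call delicate is formal: the square commutes by applying the natural transformation $\kcn\to K$ to the null-composite sequence $\mca\to\op{big}(\mca)\to\calk(\mca)$, so no Grayson--Waldhausen input is needed. Second, Step 1 is not what makes the iteration work. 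Third, $\stab(\op{big}(\mca))$ need not itself have countable coproducts; the swindle only uses the endofunctor $\bigoplus_{\mathbb{N}}$ of $\op{big}(\mca)$, as in \cref{swindle}.
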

\begin{proof}
	Let $\mca\xrightarrow{F}\mcb\xrightarrow{G}\mcc$ be a right splitting sequence in \catadidem. By assumption, we can deduce that $\calk(-)$ is an $(\infty,2)$-functor, therefore $\calk(F)$ is a Bousfield colocalization. Since the sequence $$\calk(\mca\xrightarrow{F}\mcb\xrightarrow{G}\mcc)$$ is the cofiber of cofiber sequences in \catadidem, it is also a cofiber sequence. Consequently, $\calk(\mca\xrightarrow{F}\mcb\xrightarrow{G}\mcc)$ is a right splitting sequence.
	
	Now suppose further that each $\op{big}(\mca)$ admits countable coproducts. Then by \cref{swindle}, we have $\kcn(\stab(\op{big}(-)))=K(\stab(\op{big}(-)))=0$. Therefore, given $\mca\in\catadidem$, the cofiber sequence 
	$$\mca\to\op{big}(\mca)\to \calk(\mca)$$
	induces a natural transformation $\kcn(\stab(\mca))\to \Omega\kcn(\stab(\calk(\mca)))$. Now consider the following commutative diagram.
	$$\begin{tikzcd}
		\kcn(\stab(\mca)) \arrow[r] \arrow[d] & \Omega\kcn(\stab(\calk(\mca))) \arrow[d] \\
		K(\stab(\mca)) \arrow[r, "\sim"]      & \Omega K(\stab(\calk(\mca)))            
	\end{tikzcd}$$
	For any $i\in\mathbb{Z}$, when $N$ is large enough we have $$\begin{aligned}
		\pi_i \colimit_n\, \Omega^n \kcn(\stab(\calk^n(\mca)))
		&\simeq \pi_i \Omega^N \kcn(\stab(\calk^N(\mca))) \\
		&= \pi_{i+N}\kcn(\stab(\calk^N(\mca)))
		\simeq \pi_{i+N}K(\stab(\calk^N(\mca))).
	\end{aligned}$$
\end{proof}
We now introduce the particular enlargement which will play the role of
$\op{big}(-)$ for additive categories:
\begin{de}\label{bigpsigma}
	Let $\mca\in\catadidem$. We denote by  $\mcp_{\Sigma}^{\omega_1}(\mca)$ the cocompletion of \mca adding all countable coproducts and idempotent colimits without changing finite coproducts. It can be identified with the smallest full subcategory of $\mcp_{\Sigma}(\mca)$ that contains \mca and is closed under countable colimits and retracts. 
\end{de}

\begin{rem}
	It induces the following adjunction 
	$$\mcp_{\Sigma}^{\omega_1}: \catadidem \rightleftarrows\Cat^{\omega_1\text{-}\sqcup}_{\op{ad}},$$ where $\Cat^{\omega_1\text{-}\sqcup}_{\op{ad}}$ denotes the \infcat of small idempotent-complete additive \infcats that admit countable coproducts. 
	
	The forgetful functor $\Cat^{\omega_1\text{-}\sqcup}_{\op{ad}}\to \catadidem$ preserves $\omega_1$-filtered colimits and so does the endofunctor $\mcp_{\Sigma}^{\omega_1}:\catadidem\to\catadidem$.
\end{rem}

\begin{prop}
	The endofunctor $\mcp_{\Sigma}^{\omega_1}:\catadidem\to\catadidem$ with the unit of adjunction $\op{Id}_{\catadidem}\hookrightarrow \mcp_{\Sigma}^{\omega_1}$ satisfies the conditions in \cref{big}. 
\end{prop}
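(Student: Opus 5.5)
We have to check the three hypotheses of \cref{big} for $\op{big}=\mcp_{\Sigma}^{\omega_1}$: it is an $(\infty,2)$-functor preserving finite products, it preserves right splitting sequences, and the unit $\mca\hookrightarrow\mcp_{\Sigma}^{\omega_1}(\mca)$ is a fully faithful $(\infty,2)$-transformation into categories with countable coproducts. The last condition is immediate from \cref{bigpsigma}. Indeed, $\mcp_{\Sigma}^{\omega_1}(\mca)$ sits as a full subcategory of $\mcp_\Sigma(\mca)$ containing the representables, so the Yoneda embedding gives full faithfulness. Closure under countable colimits gives countable coproducts. Idempotent completeness holds by construction, since the subcategory is closed under retracts.

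For functoriality we use the explicit model. An additive functor $F\colon\mca\to\mcb$ induces $\mcp_\Sigma(F)$, which preserves colimits and representables, so it restricts to $\mcp_\Sigma^{\omega_1}(\mca)\to\mcp_\Sigma^{\omega_1}(\mcb)$. Natural transformations $F\Rightarrow G$ induce natural transformations of the extensions. Hence $\mcp_\Sigma^{\omega_1}$ is the restriction of the $(\infty,2)$-functor $\mcp_\Sigma$, so it is $(\infty,2)$-functorial, and the unit is natural. Finite products are handled by $\mcp_\Sigma(\mca\times\mcb)\simeq\mcp_\Sigma(\mca)\times\mcp_\Sigma(\mcb)$. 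Under this identification, the smallest subcategory containing the representables and closed under countable colimits and retracts is the product of the corresponding subcategories, because colimits and retracts in a product are computed componentwise.

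The main step is preservation of right splitting sequences. Let $\mca\xrightarrow{i}\mcb\xrightarrow{p}\mcc$ be right split, so $i$ and $p$ admit right adjoints $i^R,p^R$, with $i$ and $p^R$ fully faithful and $\mcb/\mca\simeq\mcc$. Since $\mcp_\Sigma^{\omega_1}$ is a $2$-functor, it preserves adjunctions, full faithfulness of the unit and counit, and the relations $pi\simeq0$ and $p\,p^R\simeq\id$. So $\mcp_\Sigma^{\omega_1}(i)$ is a fully faithful functor with a right adjoint, and $\mcp_\Sigma^{\omega_1}(p)$ is a Bousfield localization with fully faithful right adjoint. It remains to show that the second functor is the cofiber of the first in $\catadidem$. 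Here we use \cref{addquot}: the cofiber is $\mcb[W^{-1}]$ up to idempotent completion, and in the right split case it is the Verdier-type localization at maps whose ``cofiber'' lies in $\mca$. Two points need to be checked. First, the recollement $ii^R\to\id\to p^Rp$, which holds objectwise on $\mcb$, persists after applying $\mcp_\Sigma^{\omega_1}$, by functoriality and because the cofiber is a countable colimit. Second, the kernel of $\mcp_\Sigma^{\omega_1}(p)$ is exactly $\mcp_\Sigma^{\omega_1}(\mca)$.

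For the second point, the argument is as follows. Apply $\mcp_\Sigma$ to get a short exact sequence of compact projectively generated categories; \cref{addkarseq} together with \cref{basicses} make this exact. An object $x$ of $\mcp_\Sigma^{\omega_1}(\mcb)$ killed by $p$ is then equivalent to $i i^R x$. Since $i^R$ is $\mcp_\Sigma$ of an additive functor, it preserves colimits and representables, so $i^R x\in\mcp^{\omega_1}_\Sigma(\mca)$. The universal property of the cofiber then follows from that of the Bousfield localization $\mcp_\Sigma^{\omega_1}(p)$.

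We expect the main obstacle to be this identification of the cofiber in $\catadidem$ with the localization. The concern is whether the idempotent completion and the countable-colimit closure commute with taking the quotient, given that general quotients in $\catadidem$ are poorly behaved (Section 3.4). The right adjoints present in the right split case are what should resolve this.
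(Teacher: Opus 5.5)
The routine parts are fine: full faithfulness of the unit, countable coproducts, $(\infty,2)$-functoriality, and finite products. Your componentwise argument for products is a valid alternative to the paper's semi-additivity argument. The gap is in the step you flag yourself. You show that $\mcp^{\omega_1}_\Sigma(p)$ is a Bousfield localization whose kernel is $\mcp^{\omega_1}_\Sigma(\mca)$, and then conclude that it is the cofiber in \catadidem. For additive categories this does not follow. By \cref{addquot}, the cofiber is the localization at $W=\{g\oplus\id_b\}$ with $g$ a map of $\mcp^{\omega_1}_\Sigma(\mca)$. The Bousfield localization, by contrast, inverts every $f$ with $p(f)$ invertible. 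An additive functor that kills $\mcp^{\omega_1}_\Sigma(\mca)$ inverts $W$, but nothing in your argument forces it to invert, say, the units $x\to p^Rp(x)$. Knowing the kernel is the Verdier-style input, and that is exactly what fails in the additive setting.

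The missing idea is a splitting, and it is the content of the paper's proof. Objects of $\mcp^{\omega_1}_\Sigma(\mcb)$ are retracts of countable sums of representables, hence projective in $\mcp_\Sigma(\mcb)$, and $p^Rp\simeq\mcp_\Sigma(G^RG)$ preserves this class. So the $\pi_0$-epimorphism $x\to p^Rp(x)$ in the recollement $ii^R(x)\to x\to p^Rp(x)$ has a section $s_x$, giving $x\simeq ii^R(x)\oplus p^Rp(x)$. Now let $f\colon x\to y$ with $p(f)$ invertible. Then $fs_x$ is a section for $y$, so $f$ becomes $g\oplus(\text{iso})$ with $g\colon ii^R(x)\to ii^R(y)$ in $\mcp^{\omega_1}_\Sigma(\mca)$; that is, $f\in W$.

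This argument needs $\mcp^{\omega_1}_\Sigma$ to be the closure under countable \emph{coproducts} and retracts. That is what the first sentence of \cref{bigpsigma}, the adjunction with $\Cat^{\omega_1\text{-}\sqcup}_{\op{ad}}$, and \cref{calklem1} indicate. You instead use closure under countable colimits (``the cofiber is a countable colimit''), and with that reading the statement is false. For example, let $k$ be a field and $\mcb=\mathrm{add}(P_1,P_2)$ the projectives over the $A_2$ path algebra, with $\Hom(P_2,P_1)=k$ and $\Hom(P_1,P_2)=0$. Let $\mca=\mathrm{add}(P_1)$; this is right split with $\mcb/\mca\simeq\mathrm{Proj}(k)$. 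The object $x=\cofib(P_2\to P_1)$ has the non-split recollement $P_1\to x\to\Sigma P_2$. The additive functor $M\mapsto\ker(\pi_0M(P_1)\to\pi_0M(P_2))$ vanishes on the enlargement of $\mca$, but sends the $p$-equivalence $x\to\Sigma P_2$ to $k\to 0$.
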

\begin{proof}
	Since both categories in the adjunction $\mcp_{\Sigma}^{\omega_1}: \catadidem \rightleftarrows\Cat^{\omega_1\text{-}\sqcup}_{\op{ad}}$ are semi-additive, the endofunctor $\mcp_{\Sigma}^{\omega_1}:\catadidem\to\catadidem$ preserves finite products.
	Let $\mca\xrightarrow{F}\mcb\xrightarrow{G}\mcc$ be a right splitting sequence in \catadidem. Then $\mcp_{\Sigma}(\mca\xrightarrow{F}\mcb\xrightarrow{G}\mcc)$ is a right splitting sequence in \praddbl. Therefore we have a recollement $$ii^R(x)\to x\to p^Rp(x)$$ for any $x\in\mcp_{\Sigma}(\mcb)$, where $i:=\mcp_{\Sigma}(F)$ and $p:=\mcp_{\Sigma}(G)$. It suffices to show that $$\mcp_{\Sigma}^{\omega_1}(\mcc)\simeq \op{quot}(\mcp_{\Sigma}^{\omega_1}(\mca)\to\mcp_{\Sigma}^{\omega_1}(\mcb))$$ in the sense of \cref{addquot}. Now given a morphism $f:x\to y\in \mcp_{\Sigma}^{\omega_1}(\mcb)$ such that $p(x)\to p(y)$ is an equivalence. By the recollement we have the following diagram,
	$$\begin{tikzcd}
		ii^R(x) \arrow[r] \arrow[d] & x \arrow[d, "f"] \arrow[r, "u_x"] & p^Rp(x) \arrow[d, "\sim"] \arrow[l, "s_x", dashed, shift left=2] \\
		ii^R(y) \arrow[r]           & y \arrow[r]                       & p^Rp(y) \arrow[l, "fs_x", dashed, shift left=2]                 
	\end{tikzcd}$$
	where the existence of a section $s_x$ is because $p^Rp(x)$ is projective in $\mcp_{\Sigma}(\mcb)$ by the splitting sequence. Consequently,  $x\to y$ can be identified with the sum $$ii^R(x)\oplus p^Rp(x)\to ii^R(y)\oplus p^Rp(y),$$ which lies in  the $W$ of \cref{addquot}.
\end{proof}
The cofiber sequence defining the Calkin construction can now be transported
to the level of splitting motives:
\begin{cor}\label{diagram}
	The cofiber sequence $$\op{Id}_{\catadidem}\hookrightarrow \mcp_{\Sigma}^{\omega_1}\to \calk$$ in $\funct(\catadidem,\catadidem)$ induces the following diagram in $\funct^L(\motomegaonespl,\motomegaonespl)$.
	$$\begin{tikzcd}
		\op{Id} \arrow[d] \arrow[r] & \op{big}_l\simeq0 \arrow[d] \\
		0 \arrow[r]                 & \calk_l                    
	\end{tikzcd}$$
\end{cor}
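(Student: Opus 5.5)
The plan is to obtain the square by applying the universal splitting invariant $\mcu_{\omega_1\text{-spl}}\colon\catadidem\to\motomegaonespl$ to the cofiber sequence of endofunctors, and then to use the universal property to produce colimit-preserving endofunctors of $\motomegaonespl$. First, recall that $\motomegaonespl=\opsp(\mcp_\Sigma(\catadidemomegaone[S^{-1}_{\omega_1\text{-spl}}];\spgeq))$, so a colimit-preserving endofunctor of $\motomegaonespl$ is the same as an $\omega_1$-finitary splitting invariant $\catadidem\to\motomegaonespl$ (by the $\kappa$-version of the universal property recorded in the remark after \cref{splmapping}, stabilized). The three functors $\op{Id}$, $\mcp_\Sigma^{\omega_1}$ and $\calk$ preserve finite products and right splitting sequences by the preceding proposition together with \cref{big}. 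Moreover, $\mcp_\Sigma^{\omega_1}$ commutes with $\omega_1$-filtered colimits by the remark after \cref{bigpsigma}, and hence so does $\calk$, being a pointwise cofiber. Consequently, the composites $\mcu_{\omega_1\text{-spl}}\circ\op{Id}$, $\mcu_{\omega_1\text{-spl}}\circ\mcp_\Sigma^{\omega_1}$ and $\mcu_{\omega_1\text{-spl}}\circ\calk$ are $\omega_1$-finitary splitting invariants. They therefore descend to endofunctors $\op{Id}$, $\op{big}_l$ and $\calk_l$ in $\funct^L(\motomegaonespl,\motomegaonespl)$, and the natural transformations $\op{Id}\to\mcp_\Sigma^{\omega_1}\to\calk$ descend to natural transformations $\op{Id}\to\op{big}_l\to\calk_l$. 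The descent of the transformations uses that the universal property is an equivalence of functor categories, not merely of objects.

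Next we need $\op{big}_l\simeq0$. For this we check on generators $\mcu(\mca)$ with $\mca\in\catadidemomegaone$. Here $\op{big}_l(\mcu(\mca))=\mcu(\mcp_\Sigma^{\omega_1}(\mca))$, and $\mcp_\Sigma^{\omega_1}(\mca)$ admits countable coproducts. The Eilenberg swindle (the argument of \cref{swindle}, run in $\catadidem$ with the splitting relation) shows that its image in the group-like target vanishes. Indeed, the functor $\bigoplus_{\mathbb N}\op{id}$ is an endofunctor of $\mcp_\Sigma^{\omega_1}(\mca)$ in $\catadidem$, and it satisfies $f\oplus\op{id}\simeq f$, which forces $\mcu(\op{id})=0$. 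Since $\op{big}_l$ preserves colimits and these objects generate, $\op{big}_l\simeq0$. This also trivializes the composite $\op{Id}\to\op{big}_l\to\calk_l$ canonically.

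It remains to show that the square is a pushout, i.e.\ that $\op{Id}\to\op{big}_l\to\calk_l$ is a cofiber sequence in the stable category $\funct^L(\motomegaonespl,\motomegaonespl)$. Cofibers in this category are computed pointwise, so again it suffices to check on generators $\mcu(\mca)$. For each $\mca$, the sequence $\mca\hookrightarrow\mcp_\Sigma^{\omega_1}(\mca)\to\calk(\mca)$ is a Karoubi (bifiber) sequence in $\catadidem$ by \cref{addkarseq}. We therefore need $\mcu_{\omega_1\text{-spl}}$ to send it to a cofiber sequence. This is the main obstacle, because a splitting invariant is only guaranteed to send \emph{split} sequences to sums, and this sequence is not obviously split. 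To handle it, I would first try to show that the square of functors $\op{Id}\to\mcp_\Sigma^{\omega_1}\to\calk$ is itself a pushout in $\funct(\catadidem,\catadidem)$ compatible with the localization $\mcp_\Sigma(\catadidemomegaone;\spgeq)\to\motomegaonespl$. The idea is to exploit that $\calk$ is defined as the cofiber $\op{quot}$ (\cref{addquot}), and that the sequence becomes right split after applying $\calk$ to right splitting sequences (\cref{big}). Alternatively, I would apply the $S_\bullet$/additivity argument of \cite{blumberg2013universal}: additivity with respect to $S_2$ identifies $\mcu(\mcp_\Sigma^{\omega_1}(\mca))$ with the extension of $\mcu(\calk(\mca))$ by $\mcu(\mca)$, since morphisms in $\calk(\mca)$ are represented by zigzags with $W$-inverted maps of the form $f\oplus\op{id}$. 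If neither route works directly, I would instead define $\calk_l$ as the cofiber of $\op{Id}\to\op{big}_l$ in $\funct^L$. The content of the corollary then reduces to comparing this cofiber with the descent of $\mcu\circ\calk$, and that comparison is exactly what the $K$-theory computation of \cref{big} requires.
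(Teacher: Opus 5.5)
Your first two paragraphs are correct and already prove the corollary as stated, by the intended argument. The preceding proposition together with \cref{big} makes $\mcu\circ\op{Id}$, $\mcu\circ\mcp_\Sigma^{\omega_1}$ and $\mcu\circ\calk$ into $\omega_1$-finitary splitting invariants, so the functors and transformations descend to $\funct^L(\motomegaonespl,\motomegaonespl)$; the Eilenberg swindle then gives $\op{big}_l\simeq 0$.

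The problem is your third paragraph, which misreads the statement. The corollary only asserts a commutative square with $\op{big}_l\simeq 0$, which is equivalently a map $\Sigma\op{Id}\to\calk_l$. It does \emph{not} assert that the square is a pushout. The remark immediately following the corollary warns that it is in general \emph{not} a cofiber sequence in $\funct^L(\motomegaonespl,\motomegaonespl)$. So the ``main obstacle'' you isolate is a false strengthening, not a gap to be filled, and none of your routes can succeed. A splitting invariant only turns split sequences into sums, and $\mca\hookrightarrow\mcp_\Sigma^{\omega_1}(\mca)\to\calk(\mca)$ is neither left nor right split. The $S_2$-additivity argument does not give localization.

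Here is a concrete reason the pushout fails. Apply the exact functor $\unmap_{\motomegaonespl}(\mcu(\spgeqcproj),-)$, which sends $\mcu(\mca)$ to $\kcn(\stab(\mca))$ (see the proof of \cref{main4}). A pushout at $\mcu(\mca)$ would give $\kcn(\stab(\calk(\mca)))\simeq\Sigma\kcn(\stab(\mca))$, hence $K_0(\stab(\calk(\mca)))=0$. But nonconnective $K$-theory is localizing and vanishes on $\stab(\mcp_\Sigma^{\omega_1}(\mca))$, so $K_0(\stab(\calk(\mca)))\cong K_{-1}(\stab(\mca))$. This is nonzero for $\mca=\op{Proj}(R)$ with $R=\mathbb{Q}[x,y]/(y^2-x^3-x^2)$ the nodal cubic, where $K_{-1}(R)\cong\mathbb{Z}$. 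Put differently, if $\calk_l\simeq\Sigma$, the colimit in \cref{big} would collapse and force $K(\stab(-))\simeq\kcn(\stab(-))$.

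For the same reason your fallback fails. Redefining $\calk_l$ as $\cofib(\op{Id}\to\op{big}_l)\simeq\Sigma\op{Id}$ produces a different functor, and the comparison $\Sigma\op{Id}\to\calk_l$ is not an equivalence. The square becomes a cofiber sequence only after composing with $L\colon\motomegaonespl\to\motpst$, where $\mcu$ is localizing and the sequence is short exact by \cref{addkarseq}. That is exactly what \cref{main4} uses, and \cref{big} needs only the fiber sequence for nonconnective $K$. So drop the third paragraph entirely.
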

\begin{rem}
	Beware that the above square is not necessarily a cofiber sequence in $$\funct^L(\motomegaonespl,\motomegaonespl),$$ because for a given $\mca \in \catadidem$,  
	$$\mca \hookrightarrow \mcp_{\Sigma}^{\omega_1}(\mca)\to \calk(\mca)$$ is neither left nor right splitting in \catadidem. However, applying $L$ to the above square we get a cofiber sequence in $\funct^L(\motomegaonespl,\motpst)$.
\end{rem}
We now combine the splitting corepresentability result with the Calkin
delooping construction to give the desired
corepresentability theorem in \motpst.
\begin{thm}\label{main4}
	Let $$\varphi: \funct^{\times}_\omega(\catadidem,\mcs)\rightleftarrows \Loc_\omega(\catadidem,\opsp) : \Omega^\infty$$ be the adjunction such that $\varphi$ is the left adjoint to $\Omega^\infty$. Then $\mcu(\spgeqcproj)\in \motpst$ is compact and the natural transformation from the core functor 
	$$(-)^\simeq\simeq \mapp_{\catadidem}(\spgeqcproj,-)\to \Omega^\infty \unmap_{\motpst}(\mcu(\spgeqcproj),\mcu(-))$$  exhibits $$\varphi((-)^\simeq)\simeq \unmap_{\motpst}(\mcu(\spgeqcproj),\mcu(-)).$$
	In other words, $\unmap_{\motpst}(\mcu(\spgeqcproj),\mcu(-))$ is the universal finitary stable localizing invariant under $(-)^\simeq$ (on \catadidem). Furthermore, we have a natural equivalence $$\unmap_{\motpst}(\mcu(\spgeqcproj),\mcu(-))\xrightarrow{\sim} K(\op{Stab}(-)).$$
\end{thm}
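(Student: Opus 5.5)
We follow the Blumberg--Gepner--Tabuada strategy. By the remark after \cref{defmotpst} we may work with the model $\motpst\simeq\motpst^\prime$ built from $\catadidemomega$ and the universal finitary localizing invariant $\mcu\colon\catadidem\to\motpst$. For compactness of $\mcu(\spgeqcproj)$, note that $h_{\spgeqcproj}$ is compact projective in $\mcp_\Sigma(\catadidemomega;\opsp)$, because $\spgeqcproj$ is a compact object of $\catadidem$. The localization defining $\motpst^\prime$ inverts a set of maps between compact objects, so its right adjoint preserves filtered colimits, and hence the image $\mcu(\spgeqcproj)$ is compact. Consequently $\unmap_{\motpst}(\mcu(\spgeqcproj),\mcu(-))$ is a finitary localizing invariant. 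By Yoneda, the identity of $\mcu(\spgeqcproj)$ gives the natural map from $(-)^\simeq\simeq\mapp(\spgeqcproj,-)$.

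To identify this functor with $\varphi((-)^\simeq)$, we compute $\Map_{\Loc_\omega}(\unmap(\mcu(\spgeqcproj),\mcu(-)),E)$ for a finitary stable localizing invariant $E$. By the universal property in \cref{defmotpst} and the stable Yoneda lemma, this is $\Map(\mcu(\spgeqcproj),\widetilde E)$, where $\widetilde E$ is the colimit-preserving extension of $E$ to $\motpst$ evaluated suitably. Using the corepresentability of $E(\spgeqcproj)$, this reduces to $\Omega^\infty E(\spgeqcproj)\simeq\Map(\mapp(\spgeqcproj,-),\Omega^\infty E)$. The check is formal and parallels \cref{splmapping}, with localizing invariants replacing splitting ones. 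This proves the universality statement.

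It remains to identify the answer with $K(\stab(-))$. First, $K(\stab(-))$ is a finitary localizing invariant on $\catadidem$: by \cref{addkarseq}, additive Karoubi sequences become Verdier sequences after $\stab$, and nonconnective $K$ is localizing and finitary. This gives a comparison map $\varphi((-)^\simeq)\to K(\stab(-))$, and we must show it is an equivalence. By \cref{splmapping}, the splitting analogue already yields $\unmap_{\motsplcn}(\mcu_{\op{spl}}(\spgeqcproj),\mcu_{\op{spl}}(-))\simeq\kcn(\stab(-))$. We then pass from splitting motives to prestable motives through the Calkin construction. Applying the localization $L\colon\motomegaonespl\to\motpst$ to the square of \cref{diagram}, which becomes a cofiber sequence after $L$, gives $\mcu(\mca)\simeq\Omega\,\mcu(\calk(\mca))$. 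Iterating yields
\[\unmap(\mcu(\spgeqcproj),\mcu(\mca))\simeq\colimit_n\Omega^n\unmap(\mcu(\spgeqcproj),\mcu(\calk^n\mca)).\]
Compactness of the unit lets us commute $\unmap$ with this colimit. The main point is then to show that the map $\motspl\to\motpst$ (stabilized) induces, after the colimit over $n$, an equivalence of mapping spectra out of the unit. The plan is to show that the fiber of $L$ is generated by objects killed by the Calkin colimit, so that $\colimit_n\Omega^n(-)\circ\calk^n$ is a localization functor with essential image $\motpst$, in the spirit of the BGT argument. With that, \cref{big} (applicable to $\mcp_\Sigma^{\omega_1}$ by the preceding proposition) gives
\[\colimit_n\Omega^n\kcn(\stab(\calk^n\mca))\simeq K(\stab(\mca)).\]

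I expect this last step to be the main obstacle. It requires that $\calk$ induces a well-defined endofunctor on splitting motives compatible with the localization, and that the Calkin delooping exhausts the localizing relations. I would first verify directly that $E\mapsto\colimit_n\Omega^n E(\calk^n(-))$ sends any finitary splitting invariant to a localizing one; the universal properties then force the identification.
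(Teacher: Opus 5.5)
There is a genuine gap, in two linked places.

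First, compactness of $\mcu(\spgeqcproj)$ does not follow from the construction, because the inverted maps are not between compact objects. The small model $\motpst^\prime$ inverts $\Sigma^n\cofib(h_\mca\to h_\mcb)\to\Sigma^nh_{\mcb/\mca}$ only for $\mca\hookrightarrow\mcb$ in $\catadidemomegaone$. The large model uses $(\praddbl)^\kappa$ for a large uncountable $\kappa$. This is forced, since \cref{filshortext} and \cref{addsubcatkcomp} approximate short exact sequences by $\kappa$-compact ones only for uncountable $\kappa$. So $h_\mca$ is in general only $\omega_1$-compact, and the inclusion of local objects is only known to preserve $\omega_1$-filtered colimits. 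Your argument therefore yields only that $\mcu(\spgeqcproj)$ is $\omega_1$-compact. Your Yoneda identification with $\varphi((-)^\simeq)$ needs $h:=\unmap_{\motpst}(\mcu(\spgeqcproj),-)$ to preserve filtered colimits; otherwise $h\circ\mcu$ is not known to lie in $\Loc_\omega$. So that step is unsupported as well. In the paper, compactness is an output of the comparison with $K$-theory, not an input.

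Second, the comparison with $K(\stab(-))$ is left as a plan. The plan rests on an unproved claim about all finitary splitting invariants, namely that $E\mapsto\colim_n\Omega^nE(\calk^n(-))$ makes them localizing. Moreover, $\calk=\mcp^{\omega_1}_\Sigma(-)/(-)$ commutes only with $\omega_1$-filtered colimits, so it acts on $\motomegaonespl$, not on $\motspl$.

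The missing idea is a retract argument that uses Calkin delooping only for $K$ itself. Write $L\colon\motomegaonespl\to\motpst$. Let $\kcn^{\op{spl}}$ be the functor corepresented by $\mcu^{\omega_1}_{\op{spl}}(\spgeqcproj)$, which is $\kcn(\stab(-))$ by \cref{splmapping}. Let $K^{\op{pst}}$ be the colimit-preserving functor induced by $K(\stab(-))$.

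Yoneda gives $\theta\colon h\to K^{\op{pst}}$ compatible with the maps from $\kcn^{\op{spl}}$. Since $h\circ\mcu$ is localizing, $hL\calk_l\simeq\Sigma hL$ (\cref{diagram}). Hence $\kcn^{\op{spl}}\to L^*h$ induces
$\gamma\colon L^*K^{\op{pst}}\simeq\colim_n\Omega^n\kcn^{\op{spl}}\circ\calk_l^n\to\colim_n\Omega^nL^*h\circ\calk_l^n\simeq L^*h$,
where the first equivalence is \cref{big}. The universal property of $h$ forces $\gamma\circ L^*\theta\simeq\id$.

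It follows that $L^*h$ is a retract of the colimit-preserving functor $L^*K^{\op{pst}}$, so $h$ preserves colimits; this is where compactness comes from. Also, $\theta$ is injective on homotopy, and surjective on $\pi_{\ge0}$ because $\kcn\to K$ is. Calkin delooping on both sides then makes $\theta$ an isomorphism in all degrees.
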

\begin{proof}
	Consider the following diagram.
	$$\begin{tikzcd}
		{\funct^L(\motomegaonespl,\opsp)} \arrow[d, "\sim"] \arrow[r, "L_!",shift left] & {\funct^L(\motpst,\opsp)} \arrow[l, "L^*",hook', shift left=2] \arrow[d, "\sim"] \\
		{\Split_{\omega_1}(\catadidem,\opsp)} \arrow[r, shift left=2]                    & {\Loc_\omega(\catadidem,\opsp)} \arrow[l, hook', shift left]            
	\end{tikzcd}$$
	Since $\mcu_{\op{spl}}^{\omega_1}(\spgeqcproj)\in \motomegaonesplcn$ is compact projective, we have $$\unmap_{\motomegaonespl}(\mcu_{\op{spl}}^{\omega_1}(\spgeqcproj),\mcu_{\op{spl}}^{\omega_1}(-))\simeq\unmap_{\motomegaonesplcn}(\mcu_{\op{spl}}^{\omega_1}(\spgeqcproj),\mcu_{\op{spl}}^{\omega_1}(-))\xrightarrow{\sim} \kcn(\op{Stab}(-)).$$
	Since $L:\motomegaonesplcn\to \motpst$ is a localization by construction, the representable functor $$h:=\unmap_{\motpst}(\mcu(\spgeqcproj),-):\motpst\to \opsp$$ is obtained by the left Kan extension  of the following representable functor along $L$ $$\motomegaonesplcn\xrightarrow{\kcn^{\op{spl}}:=\unmap_{\motomegaonesplcn}(\mcu_{\op{spl}}^{\omega_1}(\spgeqcproj),-)}\opsp.$$
	Let $$K^{\op{pst}}:\motpst\to\opsp$$ denote the colimit-preserving functor induced by the finitary localizing invariant $K(\stab(-)):\catadidem\to \opsp$, i.e., we have $K^{\op{pst}}(\mcu(-))\simeq K(\stab(-))$.
	By the Yoneda lemma, there exists a unique transformation $\theta:h\to K^{\op{pst}}$ making the following diagram  commute.
	$$\begin{tikzcd}
		\kcn^{\op{spl}} \arrow[d] \arrow[rd] &                  \\
		L^*h \arrow[r, "L^*\theta"]          & L^* K^{\op{pst}}
	\end{tikzcd}$$
	Since 
	the functor $h\circ\mcu:\catadidem\to\opsp$ is a 
	localizing invariant, we have the following factorization.
	$$\begin{tikzcd}
		& \kcn^{\op{spl}} \arrow[d] \arrow[rd] \arrow[ld]                            &                  \\
		{L^*K^{\op{pst}}\simeq\colim_n \Omega^n \kcn^{\op{spl}}(\calk_l^n(-))} \arrow[r,"\gamma"', dashed] & {L^*h\simeq \colim_n \Omega^n h(L(\calk_l^n(-)))} \arrow[r, "L^*\theta"'] & L^* K^{\op{pst}}
	\end{tikzcd}$$
	By the universal property, the composition $\gamma \circ L^*\theta: L^*h\longrightarrow L^*h$ is equivalent to the identity. Therefore $L^*h$ is a retract of $L^*K^{\op{pst}}$ and hence preserves colimits. It implies that $h$ preserves colimits too, thus $L_!\kcn^{\op{spl}} \simeq h$ and $\mcu(\spgeqcproj)$ is compact in \motpst. 
	
	By the retract above, for any $\mca\in\catadidem$ we have that $\pi_n h(\mcu(\mca))\xrightarrow{\pi_n \theta} \pi_n K^{\op{pst}}(\mcu(\mca))$ is an isomorphism when $n\geq 0$. By the following diagram of spectra,
	$$
	\begin{tikzcd}
		h(\mcu(\mca)) \arrow[r, "\sim"] \arrow[d, "\theta"] & \colim_n \Omega^n h(\mcu(\calk^n(\mca))) \arrow[d, "\theta"] \\
		K^{\op{pst}}(\mcu(\mca)) \arrow[r, "\sim"]           & \colim_n \Omega^n K^{\op{pst}}(\mcu(\calk^n(\mca)))          
	\end{tikzcd}
	$$
	we conclude that $\pi_n h(\mcu(\mca))\xrightarrow{\pi_n \theta} \pi_n K^{\op{pst}}(\mcu(\mca))$ is an isomorphism for any $n\in\mathbb{Z}$. Since both $h$ and $K^{\op{pst}}$ preserve colimits and $\theta$ becomes an equivalence when restricted to \catadidem, the transformation $\theta$ is an equivalence as desired. 
\end{proof}

\subsection{Additive Calkin construction}
The preceding argument used the Calkin construction as a formal delooping
device.  We now  prove that it is compatible with short exact sequences.  This provides a
categorical realization of the suspension operation on prestable motives.
 The main result is stated as follows:
\begin{thm}\label{addcalk}
	Let
	$	\mathcal{A} \longrightarrow \mathcal{B} \longrightarrow \mathcal{C}$	be a short exact sequence in \catadidem. Then
	\[
	\mathcal{A}^{\mathrm{big}} \longrightarrow \mathcal{B}^{\mathrm{big}} \longrightarrow \mathcal{C}^{\mathrm{big}}
	\]
	is a short exact sequence in \catadidem. Here we define $(-)^{\mathrm{big}}=\mcp_{\Sigma}^{\omega_1}(-)$, which appeared in \cref{bigpsigma}.
	Moreover, $$\calk(\mca)\to\calk(\mcb)\to\calk(\mcc)$$ is short exact in \catadidem too, 
	where we define $\calk(-)=(-)^{\op{big}}/(-)$.
\end{thm}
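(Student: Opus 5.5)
We work inside the compact projectively generated picture. By \cref{addkarseq} and the definition of additive Karoubi sequences, a short exact sequence $\mca\to\mcb\to\mcc$ in \catadidem is the same as a fully faithful $F\colon\mca\hookrightarrow\mcb$ with $\mcc\simeq\op{quot}(\mca\to\mcb)^\natural$. Applying $\mcp_\Sigma$ gives a sequence $\mcp_\Sigma(\mca)\xrightarrow{i}\mcp_\Sigma(\mcb)\xrightarrow{p}\mcp_\Sigma(\mcc)$ in \pradat, short exact in \praddbl by \cref{basicses}. Here $p$ is a connective internal localization with recollement $ii^R\to\op{Id}\to p^Rp$, and $i^R,p^R$ preserve colimits. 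The first claim then has two parts.

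First, $\mca^{\op{big}}\to\mcb^{\op{big}}$ is fully faithful. This holds because both sit fully faithfully inside $\mcp_\Sigma(\mca)\subset\mcp_\Sigma(\mcb)$.

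Second, the induced functor $\op{quot}(\mca^{\op{big}}\to\mcb^{\op{big}})^\natural\to\mcc^{\op{big}}$ is an equivalence. For essential surjectivity up to retracts, note that every object of $\mcc^{\op{big}}$ is a retract of a countable colimit of objects of $\mcc$. Objects of $\mcc$ lift to $\mcb$ up to retracts, countable colimits of those lifts exist in $\mcb^{\op{big}}$, and $p$ preserves colimits; so the image is idem-dense. For full faithfulness we compare mapping spectra. For $x,y\in\mcb^{\op{big}}$ we want
\[
\unmap_{\mcp_\Sigma(\mcc)}(px,py)\simeq\unmap_{\mcp_\Sigma(\mcb)}(x,p^Rpy)
\]
to agree with the mapping spectrum in the additive quotient. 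By \cref{addquot} the quotient is a localization at the maps $f\oplus\op{id}_b$ with $f$ in $\mca^{\op{big}}$, so its mapping spectra are colimits of $\unmap(x',y)$ over ``$\mca$-resolutions'' of the source.

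The key input is that $i^R$, and hence the endofunctor $p^Rp$, preserves $\mcb^{\op{big}}$. Indeed, $i^R$ is colimit-preserving and sends $\mcb$ into $\mcp_\Sigma(\mca)$. If the sequence is only short exact, not split, $i^R(b)$ for $b\in\mcb$ need not lie in $\mca^{\op{big}}$. It does, however, lie in $\mca^{\op{big}}$ after passing to the countable-colimit closure, provided we can present $i^R(b)$ as a countable colimit of objects of $\mca$. This is the main obstacle.

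To attack it, I would first use \cref{filshortext}(1) with $\kappa=\omega_1$ to write the sequence as an $\omega_1$-filtered colimit of short exact sequences of $\omega_1$-compact (countable) idempotent-complete additive categories. Both $(-)^{\op{big}}$ (by the remark after \cref{bigpsigma}) and $\op{quot}$ commute with $\omega_1$-filtered colimits, and short exact sequences are closed under filtered colimits. This reduces us to countable $\mca,\mcb$.

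In the countable case, $i^R(b)$ is computed by the bar-type formula $\colim$ over the (countable) category $\mca_{/b}$ of maps $a\to b$. More concretely, the fiber of $b\to p^Rp\,b$ is the filtered colimit over $\mca_{/b}$ of the objects $a$. This is where exactness of the sequence (the description of $\op{quot}$ via \cref{addquot}) enters, identifying the kernel of $p$ on $\mcp_\Sigma(\mcb)$ with $\ind$-objects from $\mca$. Since $\mca_{/b}$ is countable, this colimit is a countable colimit and lies in $\mca^{\op{big}}$. With that, for $x\in\mcb^{\op{big}}$ the map $x\to p^Rpx$ has fiber in $\mca^{\op{big}}$. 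In particular $p^Rpx\in\mcb^{\op{big}}$ and $x\to p^Rpx$ is inverted in the quotient, so
\[
\unmap_{\op{quot}}(x,y)\simeq\unmap(x,p^Rpy)\simeq\unmap(px,py).
\]
This gives full faithfulness, and hence $\mca^{\op{big}}\to\mcb^{\op{big}}\to\mcc^{\op{big}}$ is short exact.

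For the Calkin statement, consider the $3\times3$ diagram whose rows are $\mca\to\mca^{\op{big}}\to\calk(\mca)$ and similarly for $\mcb$ and $\mcc$. Its first two columns are short exact, by hypothesis and by the first part. Since cofibers commute with cofibers, the third column $\calk(\mca)\to\calk(\mcb)\to\calk(\mcc)$ is a cofiber sequence in \catadidem. It then remains to show that $\calk(\mca)\to\calk(\mcb)$ is fully faithful, after which \cref{addkarseq} upgrades the cofiber sequence to a short exact sequence. For full faithfulness I would compute mapping spectra in $\calk(-)$ as cofibers of $\mca$-localized mapping spectra, as above. I would then compare them using the recollements $ii^R\to\op{Id}\to p^Rp$ restricted to the big categories, which are available by the key step, and reduce to full faithfulness of $\mca^{\op{big}}\to\mcb^{\op{big}}$ and of $\mca\to\mcb$.
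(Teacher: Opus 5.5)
There is a genuine gap at the step you flag as the main obstacle, and it cannot be repaired along these lines.

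First, $\mca^{\op{big}}$ has to be read as the free countable-coproduct completion: retracts of countable sums of objects of $\mca$, equivalently the $\omega_1$-compact \emph{projective} objects of $\mcp_\Sigma(\mca)$. This is what the adjunction with $\Cat^{\omega_1\text{-}\sqcup}_{\op{ad}}$ and the weight-heart identification in \cref{calklem1} use; the phrase ``countable colimits'' in the second sentence of \cref{bigpsigma} should read ``countable coproducts'', as in its first sentence.

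With this reading, $i^R$ does not send $\mcb$ into $\mca^{\op{big}}$. Indeed $i^R(h_b)$ is the presheaf $\mapp_{\mcb}(F(-),b)$ on $\mca$, i.e.\ $\colim_{\mca_{/b}}h_a$. Since $\mca_{/b}$ has finite sums, this colimit is sifted (\cref{siftrep2}). It is filtered only when $i^R(h_b)$ is flat, and in general it is not even flat.

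Concretely, take $A=\begin{pmatrix}\mathbf{Z}&\mathbf{Z}/2\\0&\mathbf{Z}\end{pmatrix}$, $e=e_{11}$, $\mcb=\op{Proj}(A)$ and $\mca=\mathbf{add}(Ae)\simeq\op{Proj}(\mathbf{Z})$; this is short exact by \cref{addkarseq}. Then $i^R(A)=eA\simeq\mathbf{Z}\oplus\mathbf{Z}/2$ as a $\mathbf{Z}$-module. Moreover $ii^R(A)=AeA$, so $p^Rp(A)=y:=A/AeA\cong\mathbf{Z}$, with $A$ acting through its $(2,2)$-entry; this is a non-split quotient of $Ae_{22}$, hence not projective. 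So neither $i^R$ nor $p^Rp$ preserves the big categories. Reducing to countable $\mca,\mcb$ does not help, because the obstruction is projectivity, not size.

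Second, even when the fiber of $x\to p^Rpx$ lies in the subcategory being killed, nothing makes $x\to p^Rpx$ invertible in the additive quotient of \cref{addquot}. That quotient only inverts maps $f\oplus\id_b$, so you would need the recollement sequence to split. In the example above, suppose you use the countable-colimit closures $\mcp_\Sigma(-)^{\omega_1}$ as you do. The map $A\to y$ has fiber $AeA\in\mcp_\Sigma(\mca)$ and becomes an equivalence in $\mcp_\Sigma(\mcc)$. Yet it is not invertible in the quotient, even after passing to the ideal quotient of the homotopy category:
\begin{itemize}
\item all maps from $\mcp_\Sigma(\mca)$ to $y$ vanish, since $i^R y=ey=0$;
\item every composite $y\to A\to y$ is multiplication by an even integer.
\end{itemize}
So with your reading the statement itself would fail.

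The needed splitting is exactly what is available in the right-split case: in the proposition after \cref{bigpsigma}, $p^Rpx$ is projective. It fails for general short exact sequences.

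The missing idea is to pass to stabilizations, where the recollement exists without any projectivity:
\begin{itemize}
\item \cref{calklem1} identifies $\stab(\mca^{\op{big}})$ with the weight-bounded objects of $\ind(\stab(\mca))^{\omega_1}$;
\item \cref{calklem2} embeds the Verdier quotient of bounded parts into $\ind(\stab(\mcb))^{\omega_1}/\ind(\stab(\mca))^{\omega_1}$;
\item $\ind(-)^{\omega_1}$ preserves Verdier sequences.
\end{itemize}
Together these give full faithfulness of $\stab(\mcb^{\op{big}})/\stab(\mca^{\op{big}})\to\stab(\mcc^{\op{big}})$.

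Your Calkin step uses the same restricted recollements, so it also fails as written. The paper instead obtains full faithfulness of $\mcp_\Sigma(\calk(\mca))\to\mcp_\Sigma(\calk(\mcb))$ from vertical right adjointability in the $3\times3$ diagram of the $\mcp_\Sigma$'s.
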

Before the proof, we recall the notion of a weight structure.
\begin{de}\label{wstructure}
	A \textbf{weight structure} on a stable \infcat $\mathcal{C}$ is the data of two idempotent closed full subcategories $\left(\mathcal{C}_{w \geq 0}, \mathcal{C}_{w \leq 0}\right)$ of $w$-connective and, respectively, $w$-coconnective objects such that:
	\begin{enumerate}
	\item $\mathcal{C}_{w \geq 0}$ is closed under suspension, and $\mathcal{C}_{w \leq 0}$ is closed under desuspensions.
	\item  The mapping spectrum $\unmap(x, y)$ is connective if $x \in \mathcal{C}_{w \leq 0}, y \in \mathcal{C}_{w \geq 0}$.
	\item For any object $x \in \mathcal{C}$ there exists a cofiber sequence
	$$
	x_{w\leq 0} \rightarrow x \rightarrow x_{w\geq 1},
	$$
	where $x_{w\leq 0} \in \mathcal{C}_{w \leq 0}$ and $\Sigma^{-1} x_{w\geq 1} \in \mathcal{C}_{w \geq 0}$. We call these weight truncations of $x$.
	We warn the reader that despite the notation, the weight truncations of $x$ are not uniquely determined by $x$.
	\end{enumerate}
	We call $\mcc^{w\text{-}\heartsuit}=\mathcal{C}_{w \geq 0}\bigcap \mathcal{C}_{w \leq 0}$ the weight heart or w-heart of \mcc.
\end{de}
We will use the following existence theorem for compactly generated weight
structures.  
\begin{thm}[{\cite[Theorem 1.12]{levy2025c}}]\label{cgwstructure}
 Let $\mathcal{C}\in \prl_{\op{st},\omega}$ be a compactly generated stable presentable \infcat. Then for any set of compact objects $S \subset \mathcal{C}^\omega$, there is a unique weight structure on $\mathcal{C}$ such that
$$
\mathcal{C}_{w \geq 0}=\{y \in \mathcal{C}\mid \unmap(x, y) \text{ is connective for all } x \in S\}.
$$

\end{thm}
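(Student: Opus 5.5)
The plan is to construct the weight structure directly, in the style of Bondarko and Pauksztello. The connective half is forced: set $\mathcal{C}_{w\geq 0}$ as in the statement. Define the coconnective half as its left orthogonal:
$$
\mathcal{C}_{w\leq 0}:=\{x\in\mathcal{C}\mid \unmap(x,y)\text{ is connective for all }y\in\mathcal{C}_{w\geq 0}\}.
$$
With this choice, axiom (2) holds by definition. Both subcategories are idempotent closed. For axiom (1): $\mathcal{C}_{w\geq 0}$ is closed under $\Sigma$ because $\unmap(x,\Sigma y)=\Sigma\unmap(x,y)$, and $\mathcal{C}_{w\leq 0}$ is then closed under $\Sigma^{-1}$ by adjunction. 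A direct computation also shows that $\Sigma^n s\in\mathcal{C}_{w\leq 0}$ for every $s\in S$ and $n\leq 0$, since $\pi_k\unmap(\Sigma^n s,y)=\pi_{k+n}\unmap(s,y)$ vanishes for $k<0$.

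The core of the proof is axiom (3), which I would obtain by a small object argument. Fix $x$ and put $x_0=x$. Given $x_k$, form
$$
x_{k+1}:=\cofib\Big(\bigoplus_{n\leq 0,\ s\in S,\ \alpha\colon \Sigma^n s\to x_k}\Sigma^n s\to x_k\Big),
$$
and set $x_{w\geq 1}:=\colim_k x_k$ and $x_{w\leq 0}:=\fib(x\to x_{w\geq 1})$.

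First, $\Sigma^{-1}x_{w\geq 1}$ is $w$-connective. Indeed, since each $s$ is compact, every map $\Sigma^n s\to x_{w\geq 1}$ with $n\leq 0$ factors through some $x_k$, and it becomes null in $x_{k+1}$. Hence $\pi_n\unmap(s,x_{w\geq 1})=0$ for all $n\leq 0$, which is exactly connectivity of $\unmap(s,\Sigma^{-1}x_{w\geq 1})$.

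Second, $F:=x_{w\leq 0}$ is the sequential colimit of $F_k:=\fib(x\to x_k)$. The successive cofibers $\cofib(F_k\to F_{k+1})\simeq\fib(x_k\to x_{k+1})$ are direct sums of objects $\Sigma^n s$ with $n\leq 0$, which lie in $\mathcal{C}_{w\leq 0}$. Coproducts of objects in $\mathcal{C}_{w\leq 0}$ stay in $\mathcal{C}_{w\leq 0}$, because products of connective spectra are connective. Extensions are handled by the long exact sequence, so by induction each $F_k\in\mathcal{C}_{w\leq 0}$.

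The step I expect to be the main obstacle is passing to the colimit. Here $\unmap(F,y)\simeq\lim_k\unmap(F_k,y)$, and the Milnor sequence gives $\pi_{-1}$ of this limit as $\lim^1_k\pi_0\unmap(F_k,y)$, which need not vanish a priori. I would try to kill it with a Mittag-Leffler argument. Set $C_k:=\cofib(F_k\to F_{k+1})$. The fiber sequence $\unmap(C_k,y)\to\unmap(F_{k+1},y)\to\unmap(F_k,y)$ shows that the obstruction to surjectivity of $\pi_0\unmap(F_{k+1},y)\to\pi_0\unmap(F_k,y)$ lies in $\pi_{-1}\unmap(C_k,y)$. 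Since $C_k\in\mathcal{C}_{w\leq 0}$, this group is $0$. So the tower is surjective on $\pi_0$, the $\lim^1$ term vanishes, and $F\in\mathcal{C}_{w\leq 0}$; this establishes the weight decomposition.

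Uniqueness follows from the standard orthogonality property of weight structures. In any weight structure, $\mathcal{C}_{w\leq 0}$ is exactly the left orthogonal of $\mathcal{C}_{w\geq 0}$ in the sense above. To see this, take $x$ in that left orthogonal and a weight decomposition $x_{w\leq 0}\to x\to x_{w\geq 1}$. Then $\pi_0\unmap(x,x_{w\geq 1})=0$, because $\Sigma^{-1}x_{w\geq 1}\in\mathcal{C}_{w\geq 0}$ and so $\unmap(x,x_{w\geq 1})=\Sigma\,\unmap(x,\Sigma^{-1}x_{w\geq 1})$ is $1$-connective. So the map $x\to x_{w\geq 1}$ is null, and $x$ is a retract of $x_{w\leq 0}$. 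Therefore, once $\mathcal{C}_{w\geq 0}$ is prescribed, the coconnective half is determined, and the weight structure is unique.
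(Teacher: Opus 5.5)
Your proof is correct and follows the same route as the paper. You take $\mathcal{C}_{w\leq 0}$ to be the left orthogonal of $\mathcal{C}_{w\geq 0}$, build the weight decomposition by iteratively coning off all maps from $\Sigma^{n}s$ with $n\leq 0$, and use compactness to see that the colimit lies in $\mathcal{C}_{w\geq 1}$. Your Mittag-Leffler step (surjectivity on $\pi_0$, coming from $\pi_{-1}\unmap(C_k,y)=0$) usefully makes rigorous what the paper compresses into ``a filtered colimit of extensions of $y_i$, so is in $\mathcal{C}_{w\leq 0}$''; that step really does need the successive cofibers to be $w$-coconnective. Your orthogonality argument also supplies the uniqueness that the paper leaves implicit.
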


\begin{proof}
	We define 
	$$
	\mathcal{C}_{w \leq 0}=\{x \in \mathcal{C}\mid \unmap(x, y) \text { is connective for all } y \in \mathcal{C}_{w \geq 0}\} .
	$$
	It is easy to verify conditions (1), and (2) of a weight structure, so it remains to check condition (3). Given $x_0\in\mcc$, we can take $y_0 \rightarrow x_0$ to be the direct sum of all homotopy classes of maps from all the negative suspensions of objects $\bigcup_{n\geq0} \Sigma^{-n}S$ to $x_0$, and let $x_1=\cofib(y_0\to x_0).$
	We can inductively define $y_i \rightarrow x_i$ this way and let $$x_{i+1}=\cofib(y_i\to x_i).$$ Compactness of $S$ then shows that $\operatorname{colim}_i x_i$ is in $\mathcal{C}_{w \geq 1}$. On the other hand, the fiber of $x_0 \rightarrow  \operatorname{colim}_i x_i$ is a filtered colimit of extensions of $y_i$, so is in $\mathcal{C}_{w\leq 0}$.
\end{proof}
For later use, we record an explicit description of the coconnective part of
the compactly generated weight structure.
\begin{rem}\label{wleqgen}
	If $x_0\in \mcc_{w\leq0}$, then by construction in \cref{cgwstructure}, $x_0$ is a retract of $\fib(x_0 \rightarrow  \operatorname{colim}_i x_i)$. Therefore, $\mcc_{w\leq0}$ can be identified with the smallest full subcategory $\mcc'$ of $\mcc$ satisfying the following:
	\enu{
		\item $\mcc'$ contains $S$.
	\item The inclusion $\mcc'\hookrightarrow\mcc$ is closed under small coproducts, finite limits, retracts and extensions.
	\item For any sequence $z_\bullet:\mathbb{N}\to \mcc'$ such that each $\cofib(z_i\to z_{i+1})$ lies in $\mcc'$, the colimit $\colim_i z_i$ lies in $\mcc'$ too.
	}
\end{rem}
We first identify the stable hull of the additive enlargement with the
weight-bounded part of the corresponding \(\omega_1\)-compact Ind-category.
\begin{lem}\label{calklem1}
 Let $\mca\in\catadidem$. The \infcat \(\ind(\stab(\mathcal{A}))^{\omega_1}\) inherits a weight structure from $\ind(\stab(\mathcal{A}))$. Furthermore, the natural functor
	\[
	\stab(\mathcal{A}^{\mathrm{big}})\longrightarrow \ind(\stab(\mathcal{A}))^{\omega_1}
	\]
	is fully faithful and identifies the source with the weight-bounded objects.
\end{lem}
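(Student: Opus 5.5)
We apply \cref{cgwstructure} to $\mcc=\ind(\stab(\mca))\simeq\opsp(\mcp_\Sigma(\mca))$ with $S=\mca$ (the compact projective generators). By \cref{wleqgen}, the resulting weight structure has $\mcc_{w\geq0}=\mcc_{\geq0}$ and $\mcc_{w\leq0}$ equal to the closure of $\mca$ under coproducts, finite limits, retracts, extensions and colimits of sequences with cofibers in the class.

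To see that $\mcc^{\omega_1}$ inherits this weight structure, we rerun the construction in the proof of \cref{cgwstructure} for $x_0\in\mcc^{\omega_1}$. A map from $\Sigma^{-n}a$ with $a\in\mca$ into an $\omega_1$-compact object has only countably many homotopy classes, since $\pi_*\unmap(a,x_0)$ is countable. Consequently each $y_i$ is a countable sum, each $x_i$ is $\omega_1$-compact, and the sequential colimit is $\omega_1$-compact. So both weight truncations stay in $\mcc^{\omega_1}$, and $\mcc^{\omega_1}_{w\geq0}=\mcc^{\omega_1}\cap\mcc_{w\geq0}$ (and similarly for $w\leq0$) is a weight structure.

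Next we compute the weight heart of $\mcc^{\omega_1}$. It is $\mcc^{\omega_1}\cap\mcc_{\geq0}\cap\mcc_{w\leq0}$, and we claim it equals $\mca^{\op{big}}$, viewed inside $\mcp_\Sigma(\mca)=\mcc_{\geq0}$ as in \cref{bigpsigma}.

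For ``$\supseteq$'': countable coproducts and retracts of objects of $\mca$ are $\omega_1$-compact, connective, and lie in $\mcc_{w\leq0}$.

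For ``$\subseteq$'': take $x$ in the heart. The construction above gives a $w\leq0$ piece $\fib(x\to\colim x_i)$ built by countable sums and sequential extensions of countable sums of objects $\Sigma^{-n}a$. Since $x$ is weight-heart, $x$ is a retract of this piece. The first step already works: because $x$ is connective, $y_0$ contains a countable sum $P=\bigoplus a_k$ with $P\to x$ an effective epimorphism. Since $\unmap(x,\Sigma z)$ is connective for $z\in\mcc_{w\geq0}$, the map $\fib(P\to x)\to$ (weight truncation) should split. The cleaner route is to show that $x$ is a retract of $P$: the cofiber of $\fib(P\to x)\to P$ is $x$, and the obstruction class in $\pi_0\unmap(x,\Sigma\fib(P\to x))$ vanishes when $\fib(P\to x)\in\mcc_{w\geq0}=\mcc_{\geq0}$. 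That holds because $P\to x$ is $\pi_0$-surjective. So $x$ is a retract of a countable coproduct of objects of $\mca$, hence lies in $\mca^{\op{big}}$.

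Finally we treat full faithfulness and the essential image. The functor $\stab(\mca^{\op{big}})\to\mcc^{\omega_1}$ is induced by the inclusion $\mca^{\op{big}}\hookrightarrow\mcc^{\omega_1}$, which is additive. The objects of $\mca^{\op{big}}$ lie in the weight heart, so their mapping spectra in $\mcc$ are connective, with $\Omega^\infty$ the correct anima. We use \cite[Lemma 6.6]{winges2024localisation}: $\stab$ of an additive category is the stable category generated by it with connective mapping spectra given by the original ones. Full faithfulness then follows as in the analogous lemma for $\op{Proj}_R^{\op{cpl},\omega_1}$ earlier.

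The essential image is the thick stable subcategory generated by the heart, namely finite extensions of shifts of heart objects. This is exactly the weight-bounded part, by the standard fact that an object with a bounded weight range is a finite extension of shifted heart objects (iterating weight truncations).

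The main obstacle is the heart identification, i.e.\ the retraction argument. It needs the surjection $P\to x$ from a \emph{countable} sum; the countability follows from $\omega_1$-compactness of $x$ together with a countable model of $\mca$. This is where the size bound must be checked with care, and if countability of $\pi_0\unmap(a,x)$ fails for uncountable $\mca$, we instead choose countably many maps jointly $\pi_0$-surjective using $\omega_1$-compactness of $x$ as a colimit.
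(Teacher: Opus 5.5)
The gap is in your second paragraph, where the weight structure is restricted to $\ind(\stab(\mca))^{\omega_1}$. The claim that $\pi_*\unmap(a,x_0)$ is countable for $\omega_1$-compact $x_0$ is false. For $\mca=\op{Proj}(\mathbb{C})$ and the compact object $x_0=\mathbb{C}$ one has $\pi_0\unmap(\mathbb{C},x_0)=\mathbb{C}$, which is uncountable, and $\mca$ may also have uncountably many objects.

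Consequently the object $y_0$ of the construction in \cref{cgwstructure} is an uncountable sum, since it is indexed by all homotopy classes of maps from all $\Sigma^{-n}a$. Then $x_1=\cofib(y_0\to x_0)$ is not $\omega_1$-compact, and neither truncation you produce need lie in $\ind(\stab(\mca))^{\omega_1}$.

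The fallback in your last paragraph does not rescue this step. Countably many jointly $\pi_0$-surjective maps suffice for the heart. But to get $\colim_i x_i$ into the $w\geq 1$ part, every map $\Sigma^{-n}a\to x_i$ with $n\geq 0$ must factor through $y_i$, and you exhibit no countable $y_i$ with that property.

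The missing ingredient is the bounded weight structure on compact objects. Put $\mathcal{T}=\ind(\stab(\mca))$. The weight structure restricts to $\stab(\mca)$ with heart $\mca$, so each compact $c$ has a decomposition $w_{\leq 0}c\to c\to w_{\geq 1}c$ inside $\stab(\mca)$.

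The paper writes an $\omega_1$-compact $x$ as $\colim_k c_k$ with $c_k$ compact. It sums these decompositions, using that both halves of the weight structure on $\mathcal{T}$ are closed under coproducts, to handle $A=\bigoplus_k c_k$. It then passes to $x$ via the sequence $A\to x\to\Sigma A$ and a $3\times 3$ diagram whose obstruction lies in $\pi_0\unmap(\Sigma^{-1}W_B,Z_A)=0$.

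If you prefer to keep your iteration, take $y_i=\bigoplus_k w_{\leq 0}c^{(i)}_k$ for a sequential presentation $x_i\simeq\colim_k c^{(i)}_k$ by compacts. A map $\Sigma^{-n}a\to x_i$ factors through some $c^{(i)}_k$ and then, by orthogonality, through $w_{\leq 0}c^{(i)}_k$. All $x_i$ and $\colim_i x_i$ then stay $\omega_1$-compact, and $\fib(x_0\to\colim_i x_i)$ is $w\leq 0$ by \cref{wleqgen}.

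The rest of your argument is sound. For the heart, split a $\pi_0$-surjection from a small sum $\bigoplus_J a_j$ as you do, then use $\omega_1$-compactness to factor the section through a countable sub-sum. Your full faithfulness argument uses \cite[Lemma 6.6]{winges2024localisation} and connectivity of mapping spectra between heart objects. Together with identifying the image with the weight-bounded objects by iterating weight truncations, it is a valid, more self-contained substitute for the paper's appeal to \cite[Corollary 3.4]{Sosnilo_2019}.
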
  

\begin{proof}
	Applying \cref{cgwstructure} to $S=\mca$, we obtain a weight structure on $\ind(\stab(\mathcal{A}))$, whose connective part identifies with the inclusion $\mcp_\Sigma(\mca)\hookrightarrow \ind(\stab(\mathcal{A}))$, and whose w-heart identifies with projective objects in $\mcp_\Sigma(\mca)$, namely $$\ind(\stab(\mathcal{A}))^{w\text{-}\heartsuit}\simeq\mcp_\Sigma(\mca)^{\op{proj}}=\mcp_{\Sigma}^{\op{small}\text{-}\sqcup}(\mca).$$ We wish to show that this weight structure restricts to  $\ind(\stab(\mathcal{A}))^{\omega_1}$. After that,  its w-heart can be identified with $\mca^{\op{big}}$, and the weight-bounded objects of $\ind(\stab(\mathcal{A}))^{\omega_1}$ coincide with  \(\stab(\mathcal{A}^{\mathrm{big}})\) by \cite[Corollary 3.4]{Sosnilo_2019}, as desired.
	
Let $\mct=\ind(\stab(\mca))$. First, consider the case where $x \in \mct^{\omega_1}$ is a countable direct sum of compact objects, i.e., $x \simeq \bigoplus_{i=0}^\infty x_i$ with $x_i \in \stab(\mca)$. By construction, the weight structure restricts to $\stab(\mca)$, so each $x_i$ admits a weight decomposition:
\[
W_i \longrightarrow x_i \longrightarrow Z_i
\]
where $W_i \in \stab(\mca)_{w \leq 0}$ and $Z_i \in \stab(\mca)_{w \geq 1}$. Taking the countable direct sum of these exact triangles yields:
\[
\bigoplus_{i=0}^\infty W_i \longrightarrow \bigoplus_{i=0}^\infty x_i \longrightarrow \bigoplus_{i=0}^\infty Z_i.
\]
Since both $\mathcal{T}_{w \leq 0}$  and $\mathcal{T}_{w \geq 1}$ are closed under arbitrary coproducts (as the weight structure is compactly generated), we deduce that $\bigoplus W_i \in \mathcal{T}_{w \leq 0}$ and $\bigoplus Z_i \in \mathcal{T}_{w \geq 1}$. Furthermore, as countable coproducts of compact objects, these truncations remain in $\mathcal{T}^{\omega_1}$. Thus, any countable direct sum of compact objects admits a valid weight decomposition in $\mathcal{T}^{\omega_1}$.

Now, let $x$ be a general object in $\mathcal{T}^{\omega_1}$. Since $x$ is $\omega_1$-compact, it can be represented as a sequential colimit of compact objects: $x \simeq \colim_{i \in \mathbb{N}} x_i$, where $x_i \in \stab(\mca)$.
In a stable \infcat, this sequential colimit is canonically equivalent to the cofiber of the shift map in the Milnor sequence (mapping telescope). This yields a standard exact triangle:
\[
\bigoplus_{i=0}^\infty x_i \longrightarrow x \longrightarrow \Sigma\left(\bigoplus_{i=0}^\infty x_i\right).
\]
Let $A = \bigoplus_{i=0}^\infty x_i$ and $B = \left(\bigoplus_{i=0}^\infty x_i\right)[1]$. Both $A$ and $B$ are countable direct sums of compact objects, so by the previous discussion, they admit weight decompositions in $\mathcal{T}^{\omega_1}$:
\[
W_A \longrightarrow A \longrightarrow Z_A,\qquad
W_B \longrightarrow B \longrightarrow Z_B.
\]
 To construct a weight decomposition for $x$, we must check the vanishing of the obstruction to completing the $3 \times 3$ diagram, which lies in the mapping anima $\Hom_{\mathcal{T}}(\Sigma^{-1}W_B, Z_A)$.
Observe that $W_B \in \mathcal{T}_{w \leq 0}$, so its desuspension satisfies $W_B[-1] \in \mathcal{T}_{w \leq -1} \subset \mathcal{T}_{w \leq 0}$. 
However, by the orthogonality axiom of weight structures, the obstruction vanishes:
\[
\pi_0\unmap_{\mathcal{T}}(W_B[-1], Z_A) = 0.
\]
Therefore we obtain the following diagram:
$$\begin{tikzcd}
	\Sigma^{-1}W_B \arrow[d] \arrow[r, dashed] & W_A \arrow[d] \arrow[r, dashed] & W_x \arrow[d] \\
	\Sigma^{-1}B \arrow[r] \arrow[d]           & A \arrow[d] \arrow[r]           & x \arrow[d]   \\
	\Sigma^{-1}Z_B \arrow[r, dashed]           & Z_A \arrow[r, dashed]           & Z_x          
\end{tikzcd}$$
where $W_x=\cofib(\Sigma^{-1}W_B\to W_A)$ and $Z_x=\cofib(W_x\to x)$.
Since $\mathcal{T}_{w \leq 0}$ is closed under extensions, and both $W_A, W_B \in \mathcal{T}_{w \leq 0}$, it follows that $W_x \in \mathcal{T}_{w \leq 0}$. Similarly we have $Z_x \in \mathcal{T}_{w \geq 1}$.  Because $A, B$ and their weight truncations lie in $\mathcal{T}^{\omega_1}$, and $\mathcal{T}^{\omega_1}\hookrightarrow\mct$ is closed under extensions, we conclude that $W_x, Z_x \in \mathcal{T}^{\omega_1}$.
This completes the proof that the weight structure on $\ind(\stab(\mca))$ restricts to $\ind(\stab(\mca))^{\omega_1}$.
\end{proof}

The second ingredient concerns quotients.  In order to prove exactness for the
Calkin construction, we need to know that passing to a quotient is compatible
with restricting to the bounded part for the relevant weight structures.

\begin{lem}\label{calklem2}
	Let \(\mathcal{D}\) be a weighted stable \infcat and let \(\mathcal{K}\subset \mathcal{D}\) be a weighted full subcategory (i.e. the weight structure restricts to \mck). Let \(\mathcal{D}^b\) and \(\mathcal{K}^b\) denote the subcategories of weight-bounded objects. Then the canonical functor (between the Verdier quotients)
	\[
	\mathcal{D}^b/\mathcal{K}^b \longrightarrow \mathcal{D}/\mathcal{K}
	\]
	is fully faithful.
\end{lem}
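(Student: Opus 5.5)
The plan is to use the standard criterion for full faithfulness of induced functors between Verdier quotients: morphisms in $\mathcal{D}/\mathcal{K}$ from $x$ to $y$ are computed as the filtered colimit
\[
\Map_{\mathcal{D}/\mathcal{K}}(x,y)\simeq \colim_{(y\to y')\,:\,\cofib\in\mathcal{K}} \Map_{\mathcal{D}}(x,y'),
\]
and similarly for $\mathcal{D}^b/\mathcal{K}^b$, with the index category consisting of maps $y\to y'$ in $\mathcal{D}^b$ whose cofiber lies in $\mathcal{K}^b$. So for $x,y\in\mathcal{D}^b$ it suffices to show that the functor between these index categories is cofinal. Concretely: given any map $y\to y'$ in $\mathcal{D}$ with cofiber $k\in\mathcal{K}$, we must produce a factorization $y\to y''\to y'$ in which $y''\in\mathcal{D}^b$ and $\cofib(y\to y'')\in\mathcal{K}^b$. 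Equivalently, each map $\Sigma^{-1}k\to y$ (with $k\in\mathcal{K}$) should factor through some $\Sigma^{-1}k_0\to\Sigma^{-1}k$ with $k_0\in\mathcal{K}^b$. The dual form, using the colimit over maps $x'\to x$ with fiber in $\mathcal{K}$, would work equally well.

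The key step is a weight-truncation argument. Since $y$ is weight-bounded, there is an $n$ with $y\in\mathcal{D}_{w\geq -n}$ and $y\in \mathcal{D}_{w\le n}$. Take a map $g\colon k\to \Sigma y$ with $k\in\mathcal{K}$; its cofiber is then the object $y'$ up to shift. Because the weight structure restricts to $\mathcal{K}$, we can choose a weight truncation $k_{w\le m}\to k\to k_{w\geq m+1}$ inside $\mathcal{K}$, with $m$ chosen large relative to $n$. The composite $k_{w\le m}\to k\to\Sigma y$ is one candidate; the other is to truncate from below, noting that maps from $k_{w\geq m+1}$ into $\Sigma y\in\mathcal{D}_{w\le n+1}$ vanish once $m\gg n$ by orthogonality of weights. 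Applying this to the lower-weight piece as well, via a second truncation $k'_{w\geq -N}$, we can exhibit $g$ as factoring through (or restricting from) an object $k_0$ of $\mathcal{K}$ lying in $\mathcal{K}_{w\geq -N}\cap\mathcal{K}_{w\le m}\subset\mathcal{K}^b$. The point is to get $k_0\to k$ together with a factorization of $g$ through $k_0$ up to homotopy. Because $\Sigma y$ sits in $\mathcal{D}_{w\le n+1}$, maps out of $k$ into it only see the part of weight $\le n+1$. Hence a truncation $k\to k_{w\ge ?}$ is the wrong direction, and the right move is to use the orthogonality $\pi_0\Map(\mathcal{D}_{w\ge n+2},\mathcal{D}_{w\le n+1})$, read with the correct sign conventions of \cref{wstructure}, to replace $k$ by a bounded piece.

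The main obstacle is getting these directions and sign conventions right. Only one side of the truncation is canonical with respect to maps into $y$, so the other bound on $k_0$ has to be obtained by an iterated or $3\times3$ construction, as in the proof of \cref{calklem1}. Two further points need care. First, the produced map must be a genuine factorization, so that the resulting $y''$ is weight-bounded, being an extension of bounded objects. Second, the factorization must be functorial enough to give cofinality rather than mere essential surjectivity on $\pi_0$. The second point is handled by the usual filteredness check: two factorizations can be dominated by a third after enlarging the bounds, since $\mathcal{K}^b$ is closed under extensions and finite sums.
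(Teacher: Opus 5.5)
Your reduction is to a one-step cofinality statement, and that statement is false in general.

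First, the factorization you ask for points the wrong way. A factorization $y\to y''\to y'$ always exists (take $y''=y$), so it says nothing. Identify $y\to y'$ with $\mathrm{fib}(y\to y')\to y$. Then cofinality of the filtered index category requires that every map $g\colon k\to y$ with $k\in\mathcal{K}$ factor as $k\to k_0\to y$ with $k_0\in\mathcal{K}^b$.

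This fails in general. The weight axioms only give $\pi_0\mathrm{Map}(\mathcal{D}_{w\le a},\mathcal{D}_{w\ge a+1})=0$. The vanishing of maps from high weight into low weight, which you invoke, is not available. Using $y\in\mathcal{D}_{w\ge -n}$ you can push $g$ through the quotient $k\to k_{w\ge -n}$, since $k_{w\le -n-1}\to y$ is null. That bounds $k_0$ below, but nothing bounds it above, and no $3\times 3$ construction can fix this.

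Concretely, suppose $\mathcal{K}$ is coreflective with coreflection $G$, and $G(y)\notin\mathcal{K}^b$ for some $y\in\mathcal{D}^b$. Then the counit $G(y)\to y$ admits no such factorization, since $G(y)$ would be a retract of $k_0$. This happens, e.g., in the following setup:
\begin{itemize}
\item $\mathcal{D}=\mathrm{Mod}_A$ with weight heart the projectives;
\item $\mathcal{K}$ the localizing subcategory generated by $Ae$ for an idempotent $e$;
\item $y=A$, with $eA$ of infinite projective dimension over $eAe$.
\end{itemize}
A triangular matrix ring with $eAe$ the dual numbers and $eA(1-e)$ its residue field is an instance. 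The source-side version fails dually, so it does not ``work equally well''.

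The missing idea is to get the two bounds from the two sides of the mapping-space formula. You pass through the intermediate stable subcategory $\mathcal{K}^+=\bigcup_N\mathcal{K}_{w\le N}$.

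\emph{Source side.} Let $x\in\mathcal{D}_{w\le n}$. Any $x\to k$ with $k\in\mathcal{K}$ factors through $k_{w\le n}\to k$, because $x\to k\to k_{w\ge n+1}$ is null. So in $\mathrm{Map}_{\mathcal{D}/\mathcal{K}}(x,y)\simeq\operatorname{colim}_{(x\to k)}\mathrm{Map}_{\mathcal{D}}(\mathrm{fib}(x\to k),y)$ you may restrict to $k\in\mathcal{K}^+$. This gives $\mathrm{Map}_{\mathcal{D}/\mathcal{K}^+}(x,y)\simeq\mathrm{Map}_{\mathcal{D}/\mathcal{K}}(x,y)$ for every $y$.

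\emph{Target side.} Now let $y\in\mathcal{D}_{w\ge -n}$ and $k\in\mathcal{K}_{w\le N}$. Your lower-truncation step then does produce $k\to k_{w\ge -n}\to y$ with $k_{w\ge -n}\in\mathcal{K}^b$. Indeed, $k_{w\ge -n}$ is an extension of $k$ and $\Sigma k_{w\le -n-1}$, hence lies in $\mathcal{D}_{w\le\max(N,-n)}$. So target-side cofinality shows $\mathcal{D}^b/\mathcal{K}^b\to\mathcal{D}/\mathcal{K}^+$ is fully faithful.

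Composing the two steps proves the lemma. This is exactly the paper's argument.
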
  

\begin{proof}
	We exploit the weight-truncation. Fix \(x\in\mathcal{D}^b\). 
	Let \(\mathcal{K}^+\) denote the full subcategory of objects of \(\mathcal{K}\) that are bounded above (in weight). For any map \(u\!:x\to k\) with \(k\in\mathcal{K}\) the composite \(x\to k\) factors through a weight truncation \(k_{w_{\leq N}}\to k\) for some large enough \(N\). Thus the inclusion  $\mck^{+}_{x/}\subset \mck_{x/}$ is final because both of them are cofiltered \infcats. Passing to colimits which compute mapping anima in the localization shows that
	$$
\colimit_{k \in (\mathcal{K}^{+}_{x/})^{\opp}} \text{Map}_{\mathcal{D}}(\text{fib}(x \to k), y)\xrightarrow{\sim} \colimit_{k \in (\mathcal{K}_{x/})^{\opp}} \text{Map}_{\mathcal{D}}(\text{fib}(x \to k), y) $$ and hence
$$\mapp_{\mathcal{D}/\mathcal{K}^{+}}(x,y)\xrightarrow{\sim}  \text{Map}_{\mathcal{D}/\mathcal{K}}(x, y)$$
	for every \(x\in\mathcal{D}^b\) and all \(y\in \mcd\). The dual argument shows that
	$$
	\colimit_{k \in \mathcal{K}^{b}_{/y}} \text{Map}_{\mathcal{D}}(x, \cofib(k\to y))\xrightarrow{\sim}	\colimit_{k \in \mathcal{K}^{+}_{/y}} \text{Map}_{\mathcal{D}}(x, \cofib(k\to y)) $$
	for any $y\in \mcd^b$ and $x\in\mcd$, and hence
	\(\mathcal{D}^b/\mathcal{K}^b\to\mathcal{D}/\mathcal{K}^+\) is fully faithful. Combining these identifications yields the claimed full faithfulness of
	\(\mathcal{D}^b/\mathcal{K}^b\to\mathcal{D}/\mathcal{K}\).
\end{proof}

\begin{proof}[Proof of \cref{addcalk}]
	Full faithfulness of \(\mathcal{A}^{\mathrm{big}}\to \mathcal{B}^{\mathrm{big}}\) is clear, so it suffices to show that the induced functor (from the quotient in \catadidem)
	\[
	\mathcal{B}^{\mathrm{big}}/\mathcal{A}^{\mathrm{big}} \longrightarrow \mathcal{C}^{\mathrm{big}}
	\]
	is an equivalence.
 It suffices to prove that the functor  (from the quotient in \catper)
\[
\stab(\mathcal{B}^{\mathrm{big}})/\stab(\mathcal{A}^{\mathrm{big}})\longrightarrow \stab(\mathcal{C}^{\mathrm{big}})
\]
is an equivalence. The target is generated by the image, so it is enough to check full faithfulness. Consider the commutative diagram
\[
\begin{tikzcd}
	\stab(\mathcal{B}^{\mathrm{big}})/\stab(\mathcal{A}^{\mathrm{big}}) \ar[r] \ar[d] &
	\ind(\stab(\mathcal{B}))^{\omega_1}/\ind(\stab(\mathcal{A}))^{\omega_1} \ar[d] \\
	\stab(\mathcal{C}^{\mathrm{big}}) \ar[r] & \ind(\stab(\mathcal{C}))^{\omega_1},
\end{tikzcd}
\]
where the bottom horizontal functor is fully faithful by  \cref{calklem1}. Note that $$\ind(\stab(\mathcal{A}))^{\omega_1}\hookrightarrow\ind(\stab(\mathcal{B}))^{\omega_1}$$ is a weighted full subcategory by \cref{wleqgen}. Therefore the top horizontal functor is fully faithful by \cref{calklem1} and \cref{calklem2}.  The right vertical functor is fully faithful because \(\ind(-)^{\omega_1}:\catper\to \catper\) preserves short exact sequences (see \cite[Proposition A.26]{ramzi2024dualizable}). Hence the left vertical functor is fully faithful as well, which completes the proof.

For the Calkin operation, consider the following diagram:
$$\begin{tikzcd}
	\mcp_\Sigma(\mca) \arrow[d] \arrow[r, hook]            & \mcp_\Sigma(\mcb) \arrow[d] \arrow[r]            & \mcp_\Sigma(\mcc) \arrow[d]            \\
	\mcp_\Sigma(\mca^{\op{big}}) \arrow[r, hook] \arrow[d] & \mcp_\Sigma(\mcb^{\op{big}}) \arrow[r] \arrow[d] & \mcp_\Sigma(\mcc^{\op{big}}) \arrow[d] \\
	\mcp_\Sigma(\calk(\mca)) \arrow[r, "i"]                & \mcp_\Sigma(\calk(\mcb)) \arrow[r]               & \mcp_\Sigma(\calk(\mcc))              
\end{tikzcd}$$
Since the first and the second rows are short exact in \praddbl, so is the bottom row. Thus it suffices to show $i$ is fully faithful. Since the left above square is vertically right adjointable, so is the left bottom square by a similar argument of \cite[Proposition A.16]{ramzi2024dualizable}. Consequently, $i$ is fully faithful and $$\calk(\mca)\to\calk(\mcb)\to\calk(\mcc)$$ is short exact in \catadidem.
\end{proof}
A consequence of this is the stability of the \infcat of (additive) localizing invariants:
\begin{cor}
 Let $\mathcal{E}$ be an additive \infcat with finite limits and $E:\catadidem\to\mce$ be a localizing invariant. There is a natural equivalence $\Omega E\left(\operatorname{Calk}_\kappa(\mathcal{M})\right) \simeq E(\mathcal{M})$. Moreover, the forgetful functor $$\operatorname{Loc}(\catadidem,\operatorname{Sp}(\mathcal{E})) \rightarrow \operatorname{Loc}(\catadidem,\mathcal{E})$$ is an equivalence.
\end{cor}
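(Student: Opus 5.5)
The plan has two parts: first produce the natural equivalence $\Omega E(\calk(\mcm))\simeq E(\mcm)$, and then deduce the equivalence of invariant categories from it. Here $\operatorname{Calk}_\kappa$ is read as the additive Calkin construction $\calk(-)=(-)^{\op{big}}/(-)$ with $(-)^{\op{big}}=\mcp_\Sigma^{\omega_1}(-)$.

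For the first part, fix $\mcm\in\catadidem$ and consider the cofiber sequence $\mcm\hookrightarrow\mcm^{\op{big}}\to\calk(\mcm)$ in $\catadidem$. The inclusion is fully faithful, so by \cref{addkarseq} this is a fiber--cofiber sequence, i.e.\ an additive Karoubi sequence. Applying the localizing invariant $E$ therefore gives a fiber sequence
$$E(\mcm)\to E(\mcm^{\op{big}})\to E(\calk(\mcm)).$$
I then need $E(\mcm^{\op{big}})\simeq 0$. Since $E$ is localizing, it is splitting by the proposition preceding the digression on singleton topologies. Moreover $\mcm^{\op{big}}$ admits countable coproducts, so \cref{swindle}, applied to $\mcp_\Sigma(\mcm^{\op{big}})$ via $\catadidem\simeq\pradat$, gives the vanishing. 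The fiber sequence then collapses to $E(\mcm)\simeq \Omega E(\calk(\mcm))$. Naturality in $\mcm$ follows because $\mcm\mapsto(\mcm\to\mcm^{\op{big}}\to\calk(\mcm))$ is functorial: the unit $\op{Id}\to\mcp_\Sigma^{\omega_1}$ is natural, and so are cofibers.

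For the second part, the forgetful functor is postcomposition with $\Omega^\infty\colon\opsp(\mce)\to\mce$. For full faithfulness and essential surjectivity, I would construct an inverse: given $E\in\Loc(\catadidem,\mce)$, define a spectrum object whose $n$-th space is $E_n:=E\circ\calk^n$. The structure maps are the equivalences $E_n\simeq\Omega E_{n+1}$ from the first part, applied to $\calk^n(\mcm)$. The resulting functor $\tilde E=(E\circ\calk^n)_n$ lands in $\opsp(\mce)=\lim(\cdots\xrightarrow{\Omega}\mce\xrightarrow{\Omega}\mce)$. It remains a localizing invariant because fiber sequences in $\opsp(\mce)$ are detected levelwise, and because \cref{addcalk} shows that $\calk$ preserves short exact sequences in $\catadidem$. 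Each $E\circ\calk^n$ is therefore localizing, using that $\calk^n$ also preserves the zero object and products. Finally, $\Omega^\infty\tilde E=E$. Conversely, for $F$ spectrum-valued, $F\simeq\Omega^n F\circ\calk^n$ shows that $F$ is recovered from $\Omega^\infty F$ as exactly this tower. Together these exhibit mutually inverse functors, with the naturality of the equivalence from the first part supplying the coherence.

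The main obstacle I anticipate is making this construction coherent rather than merely objectwise. Concretely, the maps $E_n\to\Omega E_{n+1}$ must assemble into a functor to the limit $\opsp(\mce)$. I would handle this by arguing abstractly: the endofunctor $\calk^*\colon E\mapsto E\circ\calk$ of $\Loc(\catadidem,\mce)$ is an equivalence inverse to $\Omega$, by the first part applied naturally in $E$. Given this, $\Loc(\catadidem,\mce)$ is tensored over $\Omega$-invertibility, so $\Loc(\catadidem,\opsp(\mce))\simeq\opsp(\Loc(\catadidem,\mce))\simeq\Loc(\catadidem,\mce)$, where the first equivalence holds because limits of functor categories commute and localizing invariants are closed under limits. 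A secondary check is that the swindle applies at the level of $\catadidem$: the paper states \cref{swindle} for $\prad^\star$, so I would transfer it along $\mcp_\Sigma$.
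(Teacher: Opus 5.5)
Your proposal is correct and follows the paper's proof. The fiber--cofiber sequence $\mcm\to\mcm^{\op{big}}\to\calk(\mcm)$ (your citation of \cref{addkarseq} here is as good as the paper's) together with the Eilenberg swindle gives $E(\mcm)\simeq\Omega E(\calk(\mcm))$. The second part then uses $\Loc(\catadidem,\opsp(\mce))\simeq\opsp(\Loc(\catadidem,\mce))$ together with invertibility of $\Omega$ on $\Loc(\catadidem,\mce)$, whose inverse is precomposition with $\calk$ (which preserves short exact sequences by \cref{addcalk}); your explicit tower $(E\circ\calk^n)_n$ is just an unwound form of this abstract argument, which is exactly how the paper resolves the coherence issue.
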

\begin{proof}
	Let $\mcm\in\catadidem$ be an arbitrary object. By \cref{addcalk}, there is a natural fiber-cofiber sequence $\mathcal{M} \rightarrow \mcm^{\op{big}} \rightarrow \operatorname{Calk}(\mathcal{M}),$ which induces a natural fiber sequence $$E(\mathcal{M}) \rightarrow 0 \rightarrow E\left(\operatorname{Calk}(\mathcal{M})\right)$$ by \cref{swindle}, i.e. it induces a natural equivalence as claimed.
	
It is clear that $\operatorname{Loc}(\opsp(\mathcal{E})) \simeq \opsp(\operatorname{Loc}(\mathcal{E}))$, compatibly with the forgetful functor to $\operatorname{Loc}(\catadidem,\mathcal{E})$. In particular, it suffices to prove that $\operatorname{Loc}(\catadidem,\mathcal{E})$ is stable.
	Because $\operatorname{Loc}(\catadidem,\mathcal{E})$ is pointed and has finite limits, by 
	\cite[Proposition 1.4.2.11.(3)]{ha}, it suffices to prove that $\Omega$ induces an equivalence on it, but $\Omega$ is given by postcomposition with $\Omega$ and it commutes with precomposition by $\operatorname{Calk}$ (which preserves short exact sequences by \cref{addcalk}), and they are inverses to one another by the natural equivalence $\Omega E\left(\operatorname{Calk}_\kappa(\mathcal{M})\right) \simeq E(\mathcal{M})$.
\end{proof}
We conclude with a question concerning the dualizable additive Calkin construction. It is not clear whether either of the constructions $\mcp_{\Sigma}(\mcc^{\omega_1})/\mcc$ or $\mcp_{\Sigma}(\cflatw)/\mcc$ preserves dualizable additive short exact sequences, and consequently, whether they yield a well-behaved Calkin construction.

\begin{quest}
	Does there exist a natural Calkin construction for \dualaddinfcats?
\end{quest}
\subsection{Canonical $t$-structure on \motpst}
Although \motpst is stable, it is generated by motives of additive categories,
which should be regarded as connective objects.  This suggests a canonical
accessible \(t\)-structure whose connective part is generated under colimits
and extensions by the basic motives \(\mcu(\mca)\).

\begin{de}
	Let $\motpstcn\subset \motpst$ denote the smallest full subcategory closed under small colimits and extensions, containing those motives $\{\mcu(\mca)\mid \mca\in\catadidem\}$ represented by small additive \infcats. 
	By \cite[Lemma 1.4.4.11]{ha}, it defines the connective part of an accessible $t$-structure, since $\motpstcn$ is generated under small colimits and extensions by the set $\left\{\mcu(\mca)\mid \mca\in (\catadidem)^{\omega_1}\right\}$.
\end{de}
We first check that this definition is compatible with the large formulation
used earlier.  Namely, the motive of a dualizable additive \infcat is already
connective, and in fact is represented by a small additive model:
\begin{prop}\label{tmot1}
	For any \dualaddinfcat \mcm, the motive $\mcu^{\op{cont}}(\mcm)$ lies in the connective part \motpstcn. In fact, there always exists a small additive \infcat \mca such that $\mcu^{\op{cont}}(\mcm)\simeq \mcu(\mca)$ in \motpst.
\end{prop}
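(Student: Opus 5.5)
We resolve $\mcm$ by compact projectively generated categories and use that motives of cofibers of fully faithful maps are suspensions of motives of kernels, modulo swindle. Concretely, set $\mcb:=\mcp_\Sigma(\mcm^{\omega_1})$. By \cref{ram149} and \cref{rmkcptassemb}, the hat Yoneda functor $\hat h\colon\mcm\hookrightarrow\mcb$ is a fully faithful internal left adjoint. By \cref{basicses}(1), its cofiber $\mcb/\mcm$ gives a short exact sequence
\[
\mcm\longrightarrow\mcb\longrightarrow\mcb/\mcm
\]
in $\praddbl$. Here $\mcb/\mcm$ is a connective internal localization of $\mcb$, hence of the form $\mcp_\Sigma(\mcd)$ with $\mcd:=(\mcb/\mcm)^{\op{cproj}}$. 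This holds because the right adjoint of the localization preserves colimits, so images of compact projectives stay compact projective and generate. Thus $\mcb/\mcm\simeq\mcp_\Sigma(\mcd)$ for the small additive \infcat $\mcd$, which is the image of $\mcm^{\omega_1}$ up to idempotent completion.

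Next I would apply $\mcu^{\op{cont}}$, which is a localizing invariant, to get a cofiber sequence in $\motpst$. The category $\mcm^{\omega_1}$ has countable coproducts, since $\mcm$ is $\omega_1$-presentable, so \cref{swindle} gives $\mcu^{\op{cont}}(\mcb)=0$. Strictly, \cref{swindle} is stated for splitting invariants into an additive target. The functor $\mcu^{\op{cont}}$ is localizing, hence splitting by the proposition following \cref{swindle}, so this applies. It follows that
\[
\mcu^{\op{cont}}(\mcm)\simeq\Omega\,\mcu^{\op{cont}}(\mcp_\Sigma(\mcd))=\Omega\,\mcu(\mcd).
\]

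The desuspension is the main obstacle: $\Omega\mcu(\mcd)$ must be shown connective and represented by a small additive category. I would handle it with the additive Calkin construction. By \cref{addcalk} and the corollary after it, every localizing invariant $E$ on $\catadidem$ satisfies $\Omega E(\calk(\mcd))\simeq E(\mcd)$ naturally. Applying this to $E=\mcu$ restricted to $\catadidem$ yields $\mcu(\mcd)\simeq\Omega\mcu(\calk(\mcd))$, which goes the wrong direction. So I would instead look for $\mca$ with $\mcu(\mcd)\simeq\Sigma\,\mcu(\mca)$, using a short exact sequence $\mca\to\mcp^{\omega_1}_\Sigma(\mca')\to\mcd$ in $\catadidem$ with flasque middle term.

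The cleaner route is to choose the resolution so that the kernel, not the cofiber, is small. The first attempt is to reverse roles: take $\mca:=\mcm^{\omega_1}$ itself, together with the counit localization $\mcp_\Sigma(\mcm^{\omega_1})\to\mcm$. Its kernel $\mcm'$ is dualizable additive, and the square
\[
\begin{tikzcd}
\mcm'\arrow[r] & \mcp_\Sigma(\mcm^{\omega_1})\arrow[r] & \mcm
\end{tikzcd}
\]
gives $\mcu^{\op{cont}}(\mcm)\simeq\Sigma\,\mcu^{\op{cont}}(\mcm')$ by \cref{swindle}. Combining this with the first sequence applied to $\mcm'$ gives $\mcu^{\op{cont}}(\mcm')\simeq\Omega\,\mcu(\mcd')$. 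Therefore
\[
\mcu^{\op{cont}}(\mcm)\simeq\Sigma\Omega\,\mcu(\mcd')\simeq\mcu(\mcd'),
\]
by stability of $\motpst$, with $\mcd'=(\mcp_\Sigma(\mcm'^{\omega_1})/\mcm')^{\op{cproj}}$ small additive. This is connective by definition of $\motpstcn$. The remaining checks are that the counit $\mcp_\Sigma(\mcm^{\omega_1})\to\mcm$ is a connective internal localization in $\praddbl$, whose kernel is then dualizable by \cref{basicses}(2), and the identification of the compact projectives of the quotient. I expect the former to be the delicate point. It should follow from \cref{embedmodcat}, which shows this functor preserves limits and admits the fully faithful left adjoint $\hat h$. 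The unit of the adjunction $c\dashv c^R$ is $\pi_0$-surjective because $c$ is a left exact localization whose right adjoint is colimit preserving.
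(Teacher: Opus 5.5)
Your first reduction is correct: $\hat h\colon\mcm\hookrightarrow\mcb=\mcp_\Sigma(\mcm^{\omega_1})$ is a fully faithful internal left adjoint, $\mcb/\mcm$ is compact projectively generated, and the swindle gives $\mcu^{\op{cont}}(\mcm)\simeq\Omega\,\mcu^{\op{cont}}(\mcb/\mcm)$.

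The gap is the ``reversed'' sequence $\mcm'\to\mcb\xrightarrow{c}\mcm$. The colimit functor $c$ is not a connective internal localization, and it is not even a morphism of $\praddbl$. Its right adjoint is the restricted Yoneda functor $m\mapsto\mapp(-,m)|_{\mcm^{\omega_1}}$. That functor preserves geometric realizations only if every object of $\mcm^{\omega_1}$ is projective in $\mcm$. This fails as soon as $\mcm\neq0$: for $x\neq0$, $\Sigma x$ is not projective, since the effective epimorphism $0\to\Sigma x$ would have to split. What \cref{embedmodcat} gives is the other adjoint: $c$ preserves limits and so has the \emph{left} adjoint $\hat h$.

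In fact the cofiber in $\prl$ of the fully faithful internal left adjoint $\hat h$ is the kernel of its right adjoint $c$. So $\mcm'=\ker(c)\simeq\mcb/\mcm$, and the only exact sequence here is the one from your first step, $\mcm\to\mcb\to\mcm'$. Your claimed $\mcu^{\op{cont}}(\mcm)\simeq\Sigma\,\mcu^{\op{cont}}(\mcm')$ would then force $\mcu^{\op{cont}}(\mcm)\simeq\Sigma^2\mcu^{\op{cont}}(\mcm)$. For $\mcm=\spgeq$ this contradicts \cref{main4}, since $K(\mathbb{S})$ is not $2$-periodic. So the desuspension problem you correctly isolated is still unresolved in your argument.

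The missing idea is to avoid shifts altogether by replacing $\mcb$ with the diagonal pullback $\mcb\times_{\mcb/\mcm}\mcb$ along the connective internal localization $p\colon\mcb\to\mcb/\mcm$.
\begin{itemize}
\item By \cref{connpullbackofdualadd}, this pullback is dualizable additive. Its projection to $\mcb$ is again a connective internal localization, with kernel $\ker(p)\simeq\mcm$.
\item Hence $\mcm\to\mcb\times_{\mcb/\mcm}\mcb\to\mcb$ is short exact in $\praddbl$. The swindle $\mcu^{\op{cont}}(\mcb)=0$ then gives $\mcu^{\op{cont}}(\mcm)\simeq\mcu^{\op{cont}}(\mcb\times_{\mcb/\mcm}\mcb)$ with no shift.
\item The substantive input is \cref{diagpullbackcpgen}: the diagonal pullback of a connective internal localization of a compact projectively generated category is compact projectively generated. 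The generators are objects $(a,b,\alpha)$ with $a,b\in\mcb^{\op{cproj}}$ and $\alpha\colon pa\simeq pb$, and $\pi_0$-surjectivity of the unit is used to lift maps.
\end{itemize}
One then takes $\mca=(\mcb\times_{\mcb/\mcm}\mcb)^{\op{cproj}}$, which is how the paper proves the statement.
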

	\begin{proof}
	Consider the short exact sequence of \dualaddinfcats
	$$\mcm\hookrightarrow \mcc\to \mcd$$
	where $\mcc=\mcp_{\Sigma}(\mcm^{\omega_1})$ and $\mcd=\mcp_{\Sigma}(\mcm^{\omega_1})/\mcm$.
	We obtain a diagram where both rows are short exact in \praddbl (by \cref{connpullbackofdualadd}):
	$$\begin{tikzcd}
		\mcm \arrow[d, Rightarrow, no head] \arrow[r] & \mcc\times_{\mcd}\mcc \arrow[d] \arrow[r] & \mcc \arrow[d] \\
		\mcm \arrow[r]                                & \mcc \arrow[r]                            & \mcd          
	\end{tikzcd}
	$$
	By \cref{swindle}, we have $\mcu^{\op{cont}}(\mcc)=0$. Therefore the induced map $$\mcu^{\op{cont}}(\mcm)\to \mcu^{\op{cont}}(\mcc\times_{\mcd}\mcc)$$ is an equivalence. However, \cref{diagpullbackcpgen} indicates that $\mcc\times_{\mcd}\mcc$ is compact projectively generated, as desired.
	\end{proof}
	
	\begin{rem}
			\cref{tmot1} shows that the $t$-structure on $\motpst$ generated by the dualizable additive \infcats under colimits and extensions is the same as that generated by the compact projectively generated additive \infcats. Thus the \(t\)-structure does not depend on whether one uses the dualizable
			large model or the small compact-projective model as generators.
	\end{rem}
	The resulting \(t\)-structure is not degenerate.  The following observation,
	using \(\op{THH}\), gives a simple way to detect nontrivial objects in its
	heart and shows that the universal functor is far from essentially
	surjective.
	\begin{rem}\label{rmk:thhmot}
		The localizing invariant $\op{THH}: \catadidem \to \opsp$
		induces a right $t$-exact functor $\motpst \to \opsp$ by \cite[Theorem 1.3.4.11]{ha}.
		Since there exist additive categories with $\pi_0\op{THH} \neq 0$, $\pi_0\op{THH}$ induces a non-trivial 
		functor $\motpst^\heartsuit \to \mathrm{Ab}$. 
		
		In particular, this implies $\mcu:\catadidem\to \motpst$ is \emph{not} essentially surjective.
	\end{rem}

\subsection{Questions}
We finish this section with a few open questions. 
In \cite{ramzi2025every} it has been proved that the universal finitary localizing invariant of stable \infcats $\catper \to \mcm_{\op{loc}}$ is a Dwyer--Kan localization. 
This result heavily relied on the existence of the categorical desuspension, given by the so-called Grayson construction, which is not available in the additive setting, and, in particular, we can see that 
$\catadidem\to \motpst$ is not even essentially surjective (see \cref{rmk:thhmot}). However, we still do not know whether the connective analog of this has any chance of being true:
\begin{quest} 
	Does $\catadidem$ admit the structure of a \infcat of cofibrant objects? 
	Is $\mcu|_{\geq 0}: \catadidem\to \motpstcn$ a Dwyer--Kan localization?
\end{quest}

At this moment we do not know any interesting computations of mapping spaces in $\motpst$ except \cref{main4}. 
On the other hand, in the stable setting Efimov proved an explicit formula for computing mapping spaces in $\mcm_{\op{loc}}$ (see \cite[Theorem~0.4]{efimov2025rigidity}). 
It is possible, however, that additive theory does not introduce any new complexity. In that case the answers to the following questions should be positive:
\begin{quest}\label{quest1}
	Is the induced functor $\motpstcn\to \mcm_{\op{loc}}$ fully faithful?
\end{quest}
\begin{quest}\label{quest2}
	Does every finitary localizing invariant $\catadidem\to \opsp$ extend to $\catper$?
\end{quest}
Note that due to monoidality of $\motpst \to \mcm_{\op{loc}}$ and \cref{main4}, the answer to \cref{quest1} and \cref{quest2} is true if  the following question holds:
\begin{quest}
	Is $\motpst$ rigid over $\opsp$?
\end{quest}

In \cite{efimov2025rigidity}, it was also shown that the \infcat of localizing motives
is rigid over $\opsp$. We would be surprised if this were true for $\motpst$, but do not know it to be false. 
Finally, we expect the following to be true, which in the stable case was proved by Efimov using the aforementioned formula for mapping spaces in $\mcm_{\op{loc}}$:

\begin{quest} 
 	Does $\mcu: \catadidem\to \motpst$  preserve products?
\end{quest}

We were unable to resolve the question below, but believe a positive answer:
\begin{quest}
	Does there exist a non-connective object $X$  in $\motpst$ such that its $\op{THH}$ is connective?
\end{quest}

	\appendix
	\section{Examples about small additive short exact sequences}\label{appendixA}
	In this appendix, we explicitly construct pathological counterexamples to rigorously demonstrate that additive homological epimorphisms and Karoubi projections are not stable under pullbacks in the small additive setting.
	\begin{exam}\label{exa1}
		Additive homological epimorphisms are not closed under pullback. 
		
		For example,
		Let $X = D^3$ be the unit 3-ball; this space is contractible. Let $Y = S^2 = \partial D^3$ be its boundary sphere. Set
		$A = C(D^3, \mathbb{C})$
		to be the ring of complex-valued continuous functions on $X$, and let
		$$I = \{\, f \in A \mid f(x) = 0 \text{ for all } x \in S^2 \,\}$$
		be the ideal of functions vanishing on the boundary. One checks that $I$ is an idempotent ideal: for rings of continuous functions, whenever the zero set is closed, the corresponding ideal satisfies $I^2 = I$ (using a partition of unity or a “taking roots” trick, e.g. $f = f^{1/3} \cdot f^{2/3}$). The quotient is
		$A/I \cong C(S^2, \mathbb{C}).$ Furthermore, we have $A/I^\infty\simeq A/I$ because $A/I \otimes^L_A A/I\simeq A/I$ (or by the fact that $I$ is a pure ideal).
		By Swan’s theorem, finitely generated projective $A$-modules correspond to complex vector bundles over $D^3$. Since the base space is contractible, all such bundles are trivial; hence
		$$\mathrm{Proj}_A \simeq \{ \text{free modules } A^{\oplus k} \}.$$
		Any $P \in \mathrm{Proj}_A$ restricts to the boundary $A/I$ as a trivial bundle $(A/I)^{\oplus k}$, so its first Chern class vanishes: $c_1(P) = 0$.
		For $A/I$: the sphere $S^2 \cong \mathbb{C}P^1$ admits nontrivial line bundles. Let $L$ denote the tautological line bundle on $S^2$, or any line bundle with
		$c_1(L) = 1 \in H^2(S^2;\mathbb{Z}) \cong \mathbb{Z}.$
		
		We define $\mathcal{A}:=\mathrm{Proj}_{\mathbb{Z}}$, and define $F: \mca \to \mathrm{Proj}_{A/I}$ given by $\mathbb{Z}\mapsto L$.
		Now consider the category
		$$\mathcal{P}= \mathcal{A} \times_{\mathrm{Proj}_{A/I}} \mathrm{Proj}_A.$$
		We claim $ \mathcal{A} \times_{\mathrm{Proj}_{A/I}} \mathrm{Proj}_A=0$ and hence $$0=\mcp_{\Sigma}( \mathcal{A} \times_{\mathrm{Proj}_{A/I}} \mathrm{Proj}_A)\to \mcp_{\Sigma}(\mca)\neq 0$$ \textbf{is not a connective internal localization}, even though $\mcp_{\Sigma}(\mathrm{Proj}_A)\to \mcp_{\Sigma}(\mathrm{Proj}_{A/I})$ is.
		
		An object of $\mathcal{P}$ is a triple $(N, P, \alpha)$, where:
		$N \in \mathcal{A}$, 
		$P \in \mathrm{Proj}_A$;
		$\alpha: F(N) \xrightarrow{\sim} P \otimes_A (A/I)$ is an isomorphism of $A/I$-modules. Since $P$ is a projective $A$-module and $A$ comes from a contractible space, $P$ must be free, so $P \cong A^{\oplus m}$. Restricting to $A/I$, we would have an isomorphism
		$$F(N)=L^{\oplus n} \cong (A/I)^{\oplus m}.$$
		Comparing first Chern classes,
		$$c_1\big((A/I)^{\oplus m}\big) = 0, \qquad c_1\big(L^{\oplus n}\big) = c_1(L) + \cdots + c_1(L) = n \cdot 1 = n.$$
		Thus an isomorphism can exist only if $n = 0$ (and hence $m = 0$). Therefore the only object of $\mathcal{P}$ is the zero object $(0,0,0)$, and the category $\mathcal{P}$ is equivalent to $0$.
	\end{exam}

	\begin{exam}\label{exa2}
		Additive Karoubi projections are not closed under pullback.
		
		For example, let $k$ be a field such that $\text{char}(k) \neq 2$ and $R=k[x,y]$ and $I=(y^2-x^3+x)\subset R$.  Consider the (noncommutative) ring
		\[
		\Lambda=\begin{pmatrix}R & R \\ I & R\end{pmatrix},
		\]
		and the additive category $\mathcal{B}=\mathrm{Proj}(\Lambda)$ of finitely generated projective left $\Lambda$-modules. Two distinguished projective left $\Lambda$-modules are
		\[
		P_1=\begin{pmatrix}R\\ I\end{pmatrix},\qquad P_2=\begin{pmatrix}R\\ R\end{pmatrix},
		\]
		and they generate \mcb under finite sums. 
		We set $\mathcal{A}=\mathbf{add}(P_2)\subset \mathcal{B}$.
		The full subcategory $\mathcal{A}$ is governed by the trace ideal of $P_2$. Let
		\[
		e_2=\begin{pmatrix}0 & 0 \\ 0 & 1\end{pmatrix}\in \Lambda,\qquad P_2\cong \Lambda e_2.
		\]
		Then the trace ideal is $J=\Lambda e_2\Lambda$, and an explicit multiplication shows
		\[
		\Lambda e_2=\begin{pmatrix}0 & R\\ 0 & R\end{pmatrix},\qquad
		J=\begin{pmatrix}0 & R\\ 0 & R\end{pmatrix}\begin{pmatrix}R & R\\ I & R\end{pmatrix}
		=\begin{pmatrix}I & R\\ I & R\end{pmatrix}.
		\]
		Since $I^2\subset I$, one checks
		\[
		J^2=\begin{pmatrix}I & R\\ I & R\end{pmatrix}\begin{pmatrix}I & R\\ I & R\end{pmatrix}
		=\begin{pmatrix}I^2+I & I+R\\ I^2+I & I+R\end{pmatrix}
		=\begin{pmatrix}I & R\\ I & R\end{pmatrix}=J,
		\]
		so $J$ is a two-sided idempotent ideal. 
		Furthermore, $\Lambda\to \Lambda/J$ is a homological epimorphism: $\Lambda/J$ is derived idempotent over $\Lambda$, i.e.
		\[
		\Lambda/J\otimes^{\mathbb{L}}_{\Lambda}\Lambda/J\cong \Lambda/J,
		\]
		because $\operatorname{Tor}_i^{\Lambda}(\Lambda/J,\Lambda/J)=0$ for $i\geq 1$. One convenient projective resolution of the right $\Lambda$-module $(\Lambda/J)_R\cong (R/I,0)$ is
		\[
		0\longrightarrow P_2^{\mathrm{row}}=(I,R)\xrightarrow{\iota} P_1^{\mathrm{row}}=(R,R)\longrightarrow (R/I,0)\longrightarrow 0.
		\]
		Tensoring on the right with $(\Lambda/J)_L$ and using $e_2(\Lambda/J)=0$ gives
		\[
		P_2^{\mathrm{row}}\otimes_{\Lambda}(\Lambda/J)_L\cong e_2(\Lambda/J)_L=0,\qquad
		P_1^{\mathrm{row}}\otimes_{\Lambda}(\Lambda/J)_L\cong e_1(\Lambda/J)_L\cong R/I,
		\]
		so $\operatorname{Tor}_1^{\Lambda}(\Lambda/J,\Lambda/J)=0$ and higher $\operatorname{Tor}$ groups vanish as well.  Consequently, $\Lambda/J$ can be identified with $\Lambda/J^\infty$ that appeared in \cref{almostalg}.

		Denote
		\[
		\mathrm{Proj}(\Lambda,J):= \ker(\mathrm{Proj}(\Lambda)\to\mathrm{Proj}(\Lambda/J))\simeq\{P\in \mathrm{Proj}(\Lambda)\mid JP=P\}.
		\]
		Then $JP_2=P_2$, so $P_2\in \mathrm{Proj}(\Lambda,J)$ and hence $\mathcal{A}\subset \mathrm{Proj}(\Lambda,J)$. On the other hand, we will show that $\mathrm{Proj}(\Lambda,J)\subset \mca$ and hence $$\mathrm{Proj}(\Lambda,J)=\mca.$$ In particular, \mca is idempotent-complete.
		
		Now given a projective left $\Lambda$-module $P$ such that $JP = P$. Then $P$ decomposes as
		\[
		P \cong P_1^{\oplus m} \oplus P_2^{\oplus n}
		\]
		for some $m,n \geq 0$. Since $JP = P$,  by distributivity,
		\[
		J\big(P_1^{\oplus m} \oplus P_2^{\oplus n}\big) 
		= (J P_1)^{\oplus m} \oplus (J P_2)^{\oplus n}
		\]
		must equal $P_1^{\oplus m} \oplus P_2^{\oplus n}$. By Krull–Schmidt uniqueness (or a simple componentwise rank comparison), each summand must be preserved, forcing in particular
		\[
		J P_1 \cong P_1 \quad \text{if } m>0.
		\]
		Since
		\[
		J P_1
		=
		\begin{pmatrix} I & R \\ I & R \end{pmatrix}
		\begin{pmatrix} R \\ I \end{pmatrix}
		=
		\begin{pmatrix} I\cdot R + R\cdot I \\ I\cdot R + R\cdot I \end{pmatrix}
		=
		\begin{pmatrix} I + I \\ I + I \end{pmatrix}
		=
		\begin{pmatrix} I \\ I \end{pmatrix}.
		\]
		and
		\[
		P_1=\begin{pmatrix} R \\ I \end{pmatrix}
		\quad \text{and} \quad
		J P_1=\begin{pmatrix} I \\ I \end{pmatrix},
		\]
		the equality $J P_1 = P_1$ would force $R = I$ in the first component, which leads to a contradiction because $I \subsetneqq R$. That proves $m=0$ and $P\in\mca$.
		
		Heuristically $\mathrm{quot}(\mca\to\mcb)$ behaves like the image of $\mathrm{Proj}(\Lambda)$ inside $\mathrm{Proj}(\Lambda/J)$; it misses the non-diagonal projectives of $\mathrm{Proj}(\Lambda/J)$. More specifically, one can  find that $$\mathrm{quot}(\mca\to\mcb)\simeq \op{Free}(\Lambda/J)\simeq \op{Free}(R/I),$$ because \mcb is generated by $P_1, P_2$ under finite sums and $
		\Lambda/J\cong \begin{pmatrix}R/I & 0\\ 0 & 0\end{pmatrix}.
		$
		Its Karoubi envelope $$\mcb/\mca:=\mathrm{quot}(\mca\to\mcb)^{\natural}\simeq \op{Proj}(R/I)$$ restores the missing (non-diagonal) projectives. Now  consider 
		\[
		\mathcal{E} :=  \op{Proj}(\mathbb{Z}),
		\]
		and the additive functor $\mce \to \mcb/\mca$ given by $\mathbb{Z}\mapsto L=(x,y)/I \in \op{Proj}(R/I)$, where $L$ corresponds with the canonical bundle  and is a non-free projective $R/I$-module. By design, 
		\mce is idempotent-complete and the additive functor $$\mce\times_{\mcb/\mca} \mcb\to \mce$$
		factors through the zero category due to the following pullback squares.
		$$\begin{tikzcd}
			\mce\times_{\mcb/\mca} \mcb \arrow[d] \arrow[r] & 0 \arrow[d] \arrow[r] & \mce \arrow[d]         \\
			\mcb \arrow[r]                                         & \mathrm{quot}(\mca\to\mcb) \arrow[r]   & \mathrm{quot}(\mca\to\mcb)^{\natural}
		\end{tikzcd}$$
		Therefore $\mce\times_{\mcb/\mca} \mcb\to \mce$ is no longer an additive Karoubi projection.
	\end{exam}
	
\section{Compact projectively assembled \infcats}\label{appendixB}
To support the analysis of \dualaddinfcats in the main text, we provide a self-contained exposition of compact projectively assembled \infcats. 
	\begin{de}
	We define \catsift to be the (very large) \infcat whose objects are large \infcats with small sifted colimits and whose morphisms are sifted-colimit-preserving functors. 
	\end{de}
		\begin{de}
		Let $\mcc\in\catsift$. 
		\begin{enumerate}[label=(\arabic*),font=\normalfont]
			\item We say that an object $x\in \mcc$ is (compact) projective if the functor $\mapp_{\mcc}(x,-):\mcc\to\mcs$ preserves geometric realizations (respectively, small sifted colimits).
			\item We say a map $f:x\to y$ in \mcc is compact projective if    for any small sifted diagram
			$z_\bullet : I \to \mcc$, there exists a diagonal filler in the following commutative square of anima:  
			
			$$
			\begin{tikzcd}
				{\colimit_{I}\mapp(y,z_i)} \arrow[d] \arrow[r] & {\mapp(y,\colimit_{I}z_i)} \arrow[d] \arrow[ld, dashed] \\
				{\colimit_{I}\mapp(x,z_i)} \arrow[r]           & {\mapp(x,\colimit_{I}z_i)}                             
			\end{tikzcd}$$
		\end{enumerate}
	\end{de}
	\begin{de}
		Let $\mcc\in \catsift$. We say $\mcc$ is compact projectively generated if $\mcc^{\op{cproj}}$ is small and $\mcc\simeq \mcp_{\Sigma}(\mcc^{\op{cproj}})=\mcp_{\emptyset}^{\op{sift}}(\mcc_0)$, i.e.  the sifted cocompletion. We say \mcc is compact projectively assembled if it is a retract of a compact projectively generated \infcat in \catsift.
	\end{de}
	\begin{rem}
		Let $\mcc\in \catsift$. (Compact) projective objects in \mcc are closed under retracts.
	\end{rem}
	\begin{rem}
		If $\mcc\simeq \mcp_{\Sigma}(\mcd)$ for a small \infcat \mcd, then for any $d\in \mcd$ the Yoneda image $h_d\in \mcp_{\Sigma}(\mcd)$ is compact projective.
	\end{rem}
	\begin{lem}[{cf. \cite[Proposition 10.1]{rezk2021generalizing}}]\label{siftrep}
		Let $\mathcal{C}_0$ be a small \infcat. If $F \in \mcc=\mathcal{P}_\Sigma(\mathcal{C}_0)$, then the \infcat of elements $\int F := \mcc_0\times_\mcc \mcc_{/F}$ is a small sifted \infcat.
	\end{lem}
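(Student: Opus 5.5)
The plan is to show directly that $\int F$ is small and then that it is sifted. Recall that sifted means nonempty and that the diagonal $\int F\to \int F\times\int F$ is cofinal.

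Smallness is formal. Objects of $\int F$ are pairs $(c,x)$ with $c\in\mcc_0$ and $x\in F(c)$. Since $\mcc_0$ is small and each $F(c)$ is a small anima, $\int F$ is the total space of the left fibration classified by $F\colon\mcc_0^{\opp}\to\mcs$. It is therefore small, and it is equivalent to $\mcc_0\times_{\mcc}\mcc_{/F}$ because the Yoneda embedding is fully faithful.

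Siftedness is the real content. My approach is to compare with the standard fact that $F\simeq\colim_{\int F}h_c$ in $\mcp(\mcc_0)$. Write $F$ as a sifted colimit $F\simeq\colim_{j\in J}h_{d_j}$ in $\mcp(\mcc_0)$ of representables. Here I use that $\mcp_\Sigma(\mcc_0)$ is generated under sifted colimits by representables (\cite[\S5.5.8]{htt}), together with the fact that the inclusion $\mcp_\Sigma(\mcc_0)\subset\mcp(\mcc_0)$ preserves sifted colimits.

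I would then show that the induced functor $J\to\int F$ is cofinal and conclude via a lemma. For cofinality, apply Quillen's Theorem A: for $(c,x)\in\int F$, the slice $J\times_{\int F}(\int F)_{(c,x)/}$ has contractible classifying space. Its colimit-computation is $\colim_{j}\Map(h_c,h_{d_j})$ fibered over $x\in F(c)=\colim_j\Map(h_c,h_{d_j})$, and the fiber of an identity-type map is contractible. This is exactly Lurie's identification that $\int F$ is the \emph{free} presentation. The lemma I need is that a category admitting a cofinal functor from a sifted category is sifted. That holds because a $K$-indexed colimit commutes with finite products in $\mcs$ iff $K$ is sifted (up to nonemptiness), and the property transfers along cofinal maps.

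A cleaner route I would try first avoids the presentation entirely. Siftedness of $K=\int F$ is equivalent to $\colim_K$ commuting with finite products of functors $K\to\mcs$, and it suffices to test on pairs of corepresentables. Concretely, for objects $(c,x),(c',x')$, the category of cospans $(c,x)\to(e,z)\leftarrow(c',x')$ must have contractible realization. That realization computes $\colim_{(e,z)\in\int F}\Map(h_c,h_e)\times\Map(h_{c'},h_e)$, fibered appropriately. Using $F\simeq\colim_{\int F}h_e$ and that $F$ preserves finite products, this colimit is $\Map(h_c\sqcup h_{c'},F)$-fibered at $(x,x')$. Here $h_c\sqcup h_{c'}$ denotes the coproduct in $\mcp_\Sigma$, and finite-product preservation of $F$ gives $F(c\sqcup c')$ even without coproducts in $\mcc_0$, by passing to the coproduct closure. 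So the fiber is a point, and nonemptiness needs $F(\varnothing)=*$.

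I expect the main obstacle to be that $\mcc_0$ need not have finite coproducts. In that case the identification of this colimit with the fiber of $F(c)\times F(c')$ must use that $\colim_{\int F}$ of $\Map(h_c,h_e)\times\Map(h_{c'},h_e)$ equals $\colim$ over $\int F$ of $(h_c\times h_{c'})$-type terms. I would handle this by replacing $\mcc_0$ with its finite-coproduct completion $\mcc_0^{\sqcup}$, which does not change $\mcp_\Sigma$. I would then check that $\int F$ over $\mcc_0$ is cofinal in $\int F$ over $\mcc_0^{\sqcup}$; this again follows from Theorem A, since each $\sqcup$-object maps to representables only through its summands.
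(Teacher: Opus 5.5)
Your cofinality computation for a presentation $J\to\int F$ is correct. When $\mathcal{C}_0$ has finite coproducts your second route is also fine, since $\int F$ then simply has finite coproducts (cf.\ \cref{siftrep2}). But in the general case, which is the real content of the lemma, both routes assume what is to be proved.

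In the first route, being \emph{generated under sifted colimits} by representables only gives \emph{iterated} sifted colimits. A one-step presentation $F\simeq\operatorname{colim}_{j\in J}h_{d_j}$ with $J$ sifted is, by your own cofinality argument, equivalent to $\int F$ being sifted. HTT \S5.5.8 supplies such a presentation only when $\mathcal{C}_0$ has finite coproducts.

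In the second route the reduction fails. If $\mathcal{C}_0^{\sqcup}$ is the free finite-coproduct completion, then $\mathcal{P}_\Sigma(\mathcal{C}_0^{\sqcup})\simeq\mathcal{P}(\mathcal{C}_0)$ rather than $\mathcal{P}_\Sigma(\mathcal{C}_0)$; for $\mathcal{C}_0=*$ the former is $\mathcal{S}$ and the latter is contractible. Under either reading of $\mathcal{C}_0^{\sqcup}$, Theorem A for $\int_{\mathcal{C}_0}F\to\int_{\mathcal{C}_0^{\sqcup}}F$ at the object $(c\sqcup c',(x,x'))$ asks for contractibility of exactly the cospan category under $(c,x)$ and $(c',x')$. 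That is the statement you were trying to reduce. Your justification (\emph{each $\sqcup$-object maps to representables only through its summands}) never uses $F\in\mathcal{P}_\Sigma(\mathcal{C}_0)$. The claimed cofinality in fact fails for general presheaves: for $\mathcal{C}_0=*$ and $F$ a two-point set $\{p,q\}$, the comma category over $(*\sqcup *,(p,q))$ with $p\neq q$ is empty.

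The missing idea is an induction over the sifted closure. For $A,B\in\mathcal{C}_0$ consider the natural map
\[
\theta_F\colon\operatorname*{colim}_{(X,x)\in\int F}\bigl(\operatorname{Map}(A,X)\times\operatorname{Map}(B,X)\bigr)\longrightarrow F(A)\times F(B).
\]
Its source is $\mathrm{Lan}_Y(F)(h^A\times h^B)$, with $Y$ the Yoneda embedding of $\mathcal{C}_0^{\mathrm{op}}$, so it commutes with all colimits in $F$. Its target commutes with sifted colimits in $F$. For $F=h_D$ the map is an equivalence, because $\int h_D\simeq(\mathcal{C}_0)_{/D}$ has a terminal object. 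Hence $\theta_F$ is an equivalence for every $F\in\mathcal{P}_\Sigma(\mathcal{C}_0)$.

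Now take fibres over $(a,b)$. Use universality of colimits in $\mathcal{S}$ and the right-fibration identification $\operatorname{fib}_a\bigl(\operatorname{Map}(A,X)\to F(A)\bigr)\simeq\operatorname{Map}_{\int F}((A,a),(X,x))$. This identifies the realization of the cospan category with $\operatorname{fib}_{(a,b)}(\theta_F)\simeq *$, which is the paper's argument; nonemptiness follows by the same induction.

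Alternatively, you could repair your first route by proving directly that one-step sifted presentations are closed under sifted colimits, for instance via the cocartesian fibration classifying $k\mapsto\int F_k$. Some such argument is indispensable.
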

	
	\begin{proof}
		Let $Y:\mcc_0^{\opp}\to \mcp(\mcc_0^{\opp})$ denote the Yoneda embedding of $\mcc_0^{\opp}$, and let $h^A=\mapp_{\mcc_0}(A,-)\in \mcp(\mcc_0^{\opp})$ denote the corepresentable functor for any $A\in\mcc_0$. Let $\op{Lan}_Y(-):\funct(\mcc_0^\opp,\mcs)\to \funct(\mcp(\mcc_0^{\opp}),\mcs)$ denote the left Kan extension along $Y$. For any objects $A, B \in \mathcal{C}_0$, we claim that the canonical map
		\[
		\begin{tikzcd}
			\op{Lan}_Y(F)(h^A\times h^B) \arrow[d, Rightarrow, no head] \arrow[r]                                                            & \op{Lan}_Y(F)(h^A)\times\op{Lan}_Y(F)(h^B) \arrow[d, Rightarrow, no head] \\
			{\colim_{(X, x) \in \int F} \left( \mapp_{\mathcal{C}_0}(A, X) \times \mapp_{\mathcal{C}_0}(B, X) \right)} \arrow[r, "\theta_F"] & F(A)\times F(B)                                                          
		\end{tikzcd}
		\]
		is an equivalence in \mcs, since the corresponding statement holds when $F$ is a representable presheaf and the class of $F$ satisfying this property is closed under sifted colimits.
		
		To prove that $\int F$ is sifted, it suffices by the Joyal-Lurie criterion to show that for any pair of objects $(A, a)$ and $(B, b)$ in $\int F$, the \infcat of cospans under them, denoted by
		\[
		\mathcal{D} = (\int F)_{(A,a)/} \times_{\int F} (\int F)_{(B,b)/},
		\]
		is weakly contractible, meaning its classifying anima $|\mathcal{D}|$ is equivalent to a point $\ast$.
		
		Fix a base point $(a, b) \in F(A) \times F(B)$. Since $\theta_F$ is an equivalence of anima, its homotopy fiber at $(a, b)$ is contractible:
		$
		\fib_{(a,b)}(\theta_F) \simeq \ast.
		$
		In the topos $\mathcal{S}$, colimits are universal and thus commute with pullbacks (and consequently, with homotopy fibers). Therefore, we can compute the homotopy fiber of the colimit by taking the colimit of the object-wise homotopy fibers:
		\[
		\ast \simeq \colim_{(X, x) \in \int F} \fib_{(a,b)}\left( \eta_{(X,x)} \right),
		\]
		where $$\eta_{(X,x)}\colon \Map_{\mathcal{C}_0}(A, X) \times \Map_{\mathcal{C}_0}(B, X) \to F(A) \times F(B)$$ is the canonical map induced by the Yoneda extension.
		
		Recall that the canonical projection $\int F \to \mathcal{C}_0$ is a right fibration. A fundamental property of right fibrations implies that the homotopy fiber of the map $\Map_{\mathcal{C}_0}(A, X) \xrightarrow{u \mapsto F(u)(x)} F(A)$ over the point $a \in F(A)$ is canonically equivalent to the mapping anima $\Map_{\int F}((A, a), (X, x))$. Since the homotopy fiber of a product anima is canonically equivalent to the product of the homotopy fibers, we obtain:
		\[
		\fib_{(a,b)}(\eta_{(X,x)}) \simeq \Map_{\int F}((A, a), (X, x)) \times \Map_{\int F}((B, b), (X, x)).
		\]
		Substituting this equivalence back into our colimit expression, we have:
		\[
		|\mcd|\simeq \colim_{(X, x) \in \int F} \left( \Map_{\int F}((A, a), (X, x)) \times \Map_{\int F}((B, b), (X, x)) \right)\simeq \colim_{(X, x) \in \int F} \fib_{(a,b)}\left( \eta_{(X,x)} \right)\simeq *.
		\]
	\end{proof}
	\begin{rem}
		From the proof we can see that a presheaf $F\in \mcp(\mcc_0)$ lies in the sifted cocompletion $\mcp_{\Sigma}(\mcc_0)$ if and only if the natural map 	$$\op{Lan}_Y(F)(h^A\times h^B) \xrightarrow{\theta_F^{(A,B)}}\op{Lan}_Y(F)(h^A)\times\op{Lan}_Y(F)(h^B)$$ is an equivalence for all $A,B\in\mcc_0$.
		In particular, $\mcp_\Sigma(\mcc_0)$ can be identified with the following pullback:
		$$\begin{tikzcd}[row sep=2em, column sep=8em]
			\mcp_\Sigma(\mcc_0) \arrow[d, hook] \arrow[r]                      & {\prod_{(A,B)\in \mcc_0^2}\op{Iso}(\Delta^1,\mcs)} \arrow[d, hook] \\
			\mcp(\mcc_0) \arrow[r, "{F\mapsto \prod_{(A,B)}\theta_F^{(A,B)}}"] & {\prod_{(A,B)\in \mcc_0^2}\funct(\Delta^1,\mcs)}                  
		\end{tikzcd}$$
		
		Furthermore, since the bottom and right arrows above are accessible (actually sifted-colimit-preserving) functors between accessible \infcats, it follows that $\mcp_{\Sigma}(\mcc_0)$ is accessible for any small \infcat $\mcc_0$ (cf. \cite[Proposition 14.1]{rezk2021generalizing}).
	\end{rem}
	\begin{rem}\label{siftrep2}
		If $\mcc_0$ has finite coproducts, the proof of \cref{siftrep} becomes easier. In this case  $\mcc_0\times_\mcc \mcc_{/F}$ admits finite coproducts too,  so its diagonal is a right adjoint and hence cofinal, which  deduces that $\mcc_0\times_\mcc \mcc_{/F}$ is sifted.
	\end{rem}
	\begin{prop}
Let $\mcc\in \catsift$. Then $\mcc$ is compact projectively generated if and only if it is locally small and is generated under small sifted colimits by a small set $S$ of compact projective objects. Moreover, in this case, $\mcc^{\op{cproj}}$ can be identified with the idempotent completion of $S$.
	\end{prop}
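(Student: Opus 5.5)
The forward direction is essentially by definition; the converse is a density argument comparing $\mcc$ with $\mcp_\Sigma(S')$, where $S'$ is the full subcategory on $S$.

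For the forward direction, suppose $\mcc\simeq\mcp_\Sigma(\mcc^{\op{cproj}})$ with $\mcc^{\op{cproj}}$ small. The presheaf category is locally small. It is generated under small sifted colimits by the representables, by \cref{siftrep}: each $F$ is the colimit of the representables over the sifted category $\int F$. The representables are compact projective, so we may take $S$ to be the image of $\mcc^{\op{cproj}}$.

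For the converse, suppose $\mcc$ is locally small and generated under sifted colimits by a small set $S$ of compact projectives. Let $S'\subset\mcc$ be the full subcategory on $S$, which is small because $\mcc$ is locally small. Since $\mcc$ admits sifted colimits, the universal property of $\mcp_\Sigma(S')=\mcp_{\emptyset}^{\op{sift}}(S')$ (cf. \cite[\S5.5.8]{htt}) gives a sifted-colimit-preserving extension $G\colon\mcp_\Sigma(S')\to\mcc$ of the inclusion.

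\begin{itemize}
\item \textbf{Fully faithful.} Mapping out of a representable $h_s$ commutes with sifted colimits on both sides, because $h_s$ is compact projective in $\mcp_\Sigma(S')$ and $s$ is compact projective in $\mcc$. On representables the map on mapping anima is a Yoneda equivalence. Now write an arbitrary source as a sifted colimit of representables (\cref{siftrep}) and an arbitrary target as a sifted colimit of representables as well. The comparison map on mapping anima is then a limit of colimits of equivalences, hence an equivalence.
\item \textbf{Essentially surjective.} The essential image contains $S$ and is closed under sifted colimits, since $G$ is fully faithful and preserves them. As $\mcc$ is generated by $S$ under sifted colimits, the image is all of $\mcc$.
\end{itemize}

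Hence $\mcc\simeq\mcp_\Sigma(S')$.

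It remains to identify $\mcc^{\op{cproj}}$ with the idempotent completion of $S'$; this also shows $\mcc^{\op{cproj}}$ is small. Retracts of compact projectives are compact projective, so the idempotent completion is contained in $\mcc^{\op{cproj}}$. For the reverse inclusion, let $x$ be compact projective and write $x\simeq\colim_{\int x}h_s$ over the sifted category $\int x$, using \cref{siftrep}. Compact projectivity gives
\[
\mapp(x,x)\simeq\colim\mapp(x,h_s),
\]
so $\id_x$ factors through some $h_s$, and $x$ is a retract of $h_s$.

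The delicate point is this last step. In $\infty$-categories a retract diagram must be coherent, but the homotopy-level factorization $x\to h_s\to x$ of $\id_x$ suffices by \cite[Lemma 5.4.2.4 / Warning 1.2.4.8]{htt}-type arguments: the idempotent on $h_s$ then splits to $x$ in the idempotent-complete category $\mcc$.
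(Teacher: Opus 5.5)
Your proposal is correct and follows the paper's route. You spell out the density argument that the paper compresses into ``by assumption we can show $\mcp_\Sigma(S)\simeq\mcc$'', and you identify $\mcc^{\op{cproj}}$ with retracts of objects of $S$ exactly as the paper does, via \cref{siftrep} and a factorization of $\id_x$ through a term of the sifted colimit. Two small points: the definition asks for $\mcc\simeq\mcp_\Sigma(\mcc^{\op{cproj}})$, and this follows by running your full-faithfulness/essential-surjectivity argument again on the small subcategory $\mcc^{\op{cproj}}\supseteq S'$, which is the paper's final line; also, your closing coherence worry is unnecessary, since a retract in an $\infty$-category only needs a $2$-simplex witnessing $r\circ i\simeq\id_x$.
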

\begin{proof}
	The ``only if'' direction is clear. For the ``if'' direction, by assumption we can show that $$\mcp_{\Sigma}(S)\simeq \mcc.$$ Given a compact projective object $x\in \mcc^{\op{cproj}}$. By \cref{siftrep}, $x$ can be written as a small sifted colimit  $x\simeq \colimit_I x_i$ such that each $x_i\in S$. Therefore $\op{id}:x\to x$ factors through some $x_i\to x$, and hence $x$ is a retract of some $x_i$. Consequently, $\mcc^{\op{cproj}}$ can be identified with the idempotent completion of $S$ and hence small. Since we have the inclusions $\mcp_{\Sigma}(S)\hookrightarrow \mcp_{\Sigma}(\mcc^{\op{cproj}})\hookrightarrow \mcc$, we see that $\mcp_{\Sigma}(\mcc^{\op{cproj}})\simeq \mcc$.
\end{proof}
Now we start to investigate compact projectively assembled \infcats.
\begin{prop}\label{cpasscriterion1}
	Let $\mathcal{C}\in\catsift$. Then \mcc is compact projectively assembled if and only if  \mcc is accessible and the colimit functor 
	$$c_\kappa:\mcp_{\Sigma}(\mcc^\kappa)\to\mcc$$ admits a left adjoint for some (large enough) regular cardinal $\kappa$.
\end{prop}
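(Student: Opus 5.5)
We prove the two implications separately, in the same way as the compactly assembled analogue in \cite{ramzi2024dualizable}, with filtered colimits replaced by sifted colimits and $\Ind$ replaced by $\mcp_\Sigma$.

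For the ``if'' direction, suppose $\mcc$ is accessible and $c_\kappa\colon\mcp_\Sigma(\mcc^\kappa)\to\mcc$ has a left adjoint $\hat{h}$. First we check that $c_\kappa\circ\hat{h}\simeq\id$.

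\begin{itemize}
\item Since $\mcc$ is accessible and admits sifted colimits, it admits filtered colimits. We choose $\kappa$ so large that $\mcc$ is $\kappa$-accessible; then every object is a $\kappa$-filtered colimit of objects of $\mcc^\kappa$.
\item Consequently $c_\kappa$ admits a fully faithful right adjoint $x\mapsto \Map(-,x)|_{\mcc^\kappa}$, landing in $\mcp_\Sigma(\mcc^\kappa)$ because it preserves products in the source variable. In particular $c_\kappa$ is a localization.
\item From the triple $\hat{h}\dashv c_\kappa\dashv (\text{fully faithful})$, the left adjoint $\hat{h}$ is also fully faithful. Hence $c_\kappa\hat{h}\simeq\id_\mcc$.
\end{itemize}

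Both $\hat{h}$ (a left adjoint) and $c_\kappa$ (a colimit functor) preserve sifted colimits. So $\mcc$ is a retract in $\catsift$ of $\mcp_\Sigma(\mcc^\kappa)$, which is compact projectively generated with $\mcc^\kappa$ small. Thus $\mcc$ is compact projectively assembled.

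For the ``only if'' direction, let $\mcc\xrightarrow{i}\mcp_\Sigma(\mcd)\xrightarrow{r}\mcc$ be a retraction in $\catsift$ with $\mcd$ small. We proceed in three steps.

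\begin{itemize}
\item \emph{Accessibility.} $\mcc$ is accessible because accessible categories are closed under retracts along filtered-colimit-preserving functors (idempotent splitting, \cite[\S5.4]{htt}).
\item \emph{Choice of $\kappa$.} We choose $\kappa$ so that $\mcc$ is $\kappa$-accessible, $r$ maps $\mcd$ into $\mcc^\kappa$, and $i$ preserves $\kappa$-compact objects.
\item \emph{Candidate left adjoint.} We set $\hat{h}:=\mcp_\Sigma(r|_{\mcd})\circ i\colon\mcc\to\mcp_\Sigma(\mcc^\kappa)$. Here $\mcp_\Sigma(r|_\mcd)$ is the sifted-colimit extension of $\mcd\xrightarrow{r}\mcc^\kappa\hookrightarrow\mcp_\Sigma(\mcc^\kappa)$.
\end{itemize}

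It remains to verify the adjunction $\hat{h}\dashv c_\kappa$. Both sides of the required equivalence
\[
\Map(\hat{h}x,F)\simeq\Map(x,c_\kappa F)
\]
should commute with sifted colimits in $F$, so we may reduce to representables $F=h_y$ with $y\in\mcc^\kappa$.

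In $\mcp_\Sigma(\mcd)$, the object $i(x)$ is the sifted colimit over $\int i(x)$ of representables $d$ (\cref{siftrep}). Hence
\[
\Map(\hat{h}x,h_y)\simeq\lim_{d\to i(x)}\Map(r(d),y).
\]
We want to identify this limit with $\Map(x,y)\simeq\Map(ri(x),y)$.

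This identification is the \textbf{main obstacle}: $r$ need not be a left adjoint, and $\Map(-,y)$ need not send sifted colimits to limits, so the naive comparison fails. Our first attempt is to imitate Lurie's proof for compactly assembled categories. We would show that a sifted-colimit-preserving retraction of a compact projectively generated category makes the map $\colim_{\int i(x)} h_{r(d)}\to h_x$, computed in $\mcp_\Sigma(\mcc^\kappa)$, a universal arrow. Concretely, we would exploit that $i$ lands in a category where each $h_d$ is compact projective, so maps $h_d\to\colim_I z_i$ factor through some $z_i$, and then transport this factorization through $r$.

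If this fails, the fallback is to define $\hat{h}$ by left Kan extension, $\hat{h}(x):=\Map_\mcc(-,x)$ restricted appropriately. We would then prove that $\hat{h}$ preserves sifted colimits by writing $x$ as $ri(x)$ and using that $i$ preserves sifted colimits. That argument also yields the existence of the left adjoint.
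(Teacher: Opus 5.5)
Your ``if'' direction is fine. One small correction: $\mcc$ need not have finite coproducts, so $\Map(-,x)|_{\mcc^\kappa}$ lies in $\mcp_\Sigma(\mcc^\kappa)$ because it is a $\kappa$-filtered colimit of representables, not because it preserves products.

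The gap is in the ``only if'' direction, where you verify $\hat h\dashv c_\kappa$. Your reduction to representables is invalid. Neither $F\mapsto\Map(\hat h x,F)\simeq\lim_{\int i(x)}F(rh_d)$ nor $F\mapsto\Map(x,c_\kappa F)$ commutes with sifted colimits, since neither $\hat h x$ nor $x$ is compact projective. Conversely, the representable case that you call the main obstacle is immediate. Since $\int i(x)$ is sifted (\cref{siftrep}) and $r$ preserves sifted colimits, $\lim_{\int i(x)}\Map(rh_d,y)\simeq\Map(\colim_{\int i(x)}rh_d,y)\simeq\Map(ri(x),y)\simeq\Map(x,y)$; recall that $\Map(-,y)$ turns every colimit into a limit. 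So the real content is never proved: for $F=\colim_J y(z_j)$, one must compare $\lim_{\int i(x)}\colim_J\Map(rh_d,z_j)$ with $\lim_{\int i(x)}\Map(rh_d,\colim_J z_j)$. Your fallback does not help either: $\Map_\mcc(-,x)$ restricted to $\mcc^\kappa$ is the \emph{right} adjoint of $c_\kappa$, not a left adjoint.

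The missing idea is that you need not show $\hat h(x)$ itself corepresents $\Map(x,c_\kappa(-))$. It suffices to exhibit that functor as a natural retract of the corepresentable $\Map(\hat h x,-)$. Since $\mcp_\Sigma(\mcc^\kappa)$ is idempotent complete, retracts of corepresentables are corepresentable.

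Your ``first attempt'' already contains the ingredient for the section. Because $h_d$ is compact projective and $ic_\kappa$ preserves sifted colimits, $F\mapsto\Map(h_d,ic_\kappa F)$ is the sifted extension of $z\mapsto\Map(h_d,iz)$. Applying $r$ (with $rh_d\in\mcc^\kappa$) then gives maps $\Map(h_d,ic_\kappa F)\to F(rh_d)$, natural in $F$ and in $(d,\phi)\in\int i(x)$. This yields
\[
\psi_F\colon \Map(x,c_\kappa F)\xrightarrow{\;i\;}\lim_{\int i(x)}\Map(h_d,ic_\kappa F)\longrightarrow\lim_{\int i(x)}F(rh_d)\simeq\Map(\hat h x,F).
\]
Compose it with the canonical map $\theta_F\colon\Map(\hat h x,F)\to\Map(c_\kappa\hat h x,c_\kappa F)\simeq\Map(x,c_\kappa F)$. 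Componentwise the composite is $f\mapsto r(i(f)\circ\phi)\simeq f\circ r(\phi)$, which is the identity because $ri\simeq\id$.

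Transporting factorizations through $r$ only produces this one-sided inverse, which is why your direct attempt stalls. Splitting the resulting idempotent closes the argument and gives the left adjoint pointwise. This is the sifted analogue of the compactly assembled argument that the paper invokes via \cite[Theorem 21.1.2.10]{sag}.
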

\begin{proof}
	This follows from the same argument as in \cite[Theorem 21.1.2.10]{sag}.
\end{proof}
\begin{de}
		Let $\mathcal{C}\in\catsift$. We say an object $x\in\mcc$ is \textbf{compact projectively exhaustible} if $x\simeq \colim_{n}x_n$ can be written as a sequential colimit such that each transition map $x_n\to x_{n+1}$ is a compact projective map.
\end{de}
\begin{rem}\label{preserveprojmap1}
	 If $F: \mathcal{C} \rightarrow \mathcal{D} \in\catsift$  is fully faithful, then it reflects  compact projective maps. If $F: \mathcal{C} \rightarrow \mathcal{D}\in\catsift$ has a sifted-colimit-preserving right adjoint, then $F$ preserves compact projective maps and compact projectively exhaustible objects.
\end{rem}
	\begin{prop}\label{a13}
   Let $\mathcal{C}\in\catsift$ such that \mcc is $\kappa$-accessible for a regular cardinal $\kappa$. Let $x\simeq\colim_n x_n$ be a compact projective exhaustion such that each $x_n$ is $\kappa$-compact. For any $Y\in\mcp_\Sigma(\mcc^\kappa)$ the following is an equivalence
		$$\mapp_{\mcp_\Sigma(\mcc^\kappa)}(X, Y) \to \mapp_{\mcc}(x, c(Y))$$
		where $c:\mcp_\Sigma(\mcc^\kappa)\to \mcc$ is the colimit functor and $X\simeq \colim_n h_{x_n}$.
		
	\end{prop}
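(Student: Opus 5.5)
The plan is to compute both sides as sequential limits and compare them termwise, using the compact projective transition maps to interleave the two towers.

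On the left, since $X\simeq\colim_n h_{x_n}$ in $\mcp_\Sigma(\mcc^\kappa)$ and $h_{x_n}$ is representable, the Yoneda lemma gives
\[
\mapp_{\mcp_\Sigma(\mcc^\kappa)}(X,Y)\simeq \lim_n \mapp_{\mcp_\Sigma(\mcc^\kappa)}(h_{x_n},Y)\simeq \lim_n Y(x_n).
\]
On the right, $\mapp_{\mcc}(x,c(Y))\simeq \lim_n \mapp_{\mcc}(x_n,c(Y))$.

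Next I write $Y$ as a small sifted colimit of representables. By \cref{siftrep}, $Y\simeq \colim_{i\in I} h_{y_i}$ with $I=\int Y$ sifted and $y_i\in\mcc^\kappa$. Because $c$ preserves small colimits, $c(Y)\simeq \colim_I y_i$ in $\mcc$. Moreover $Y(x_n)\simeq \colim_I \mapp_{\mcc}(x_n,y_i)$, since evaluation at $x_n$ on $\mcp_\Sigma(\mcc^\kappa)$ commutes with sifted colimits. The comparison map at stage $n$ is therefore the canonical map
\[
\alpha_n\colon \colim_I \mapp(x_n,y_i)\to \mapp(x_n,\colim_I y_i),
\]
and these maps are natural in $n$.

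Now I use that each transition map $x_n\to x_{n+1}$ is a compact projective map. Applied to the sifted diagram $y_\bullet$, the definition gives a diagonal filler
\[
\beta_n\colon \mapp(x_{n+1},\colim_I y_i)\to \colim_I\mapp(x_n,y_i),
\]
which makes both triangles commute: $\beta_n\circ\alpha_{n+1}$ is the restriction on the source side, and $\alpha_n\circ\beta_n$ is the restriction on the target side. The towers $\{\colim_I\mapp(x_n,y_i)\}_n$ and $\{\mapp(x_n,c(Y))\}_n$ are thus interleaved by the $\alpha$'s and $\beta$'s, and hence pro-equivalent. So the induced map on limits $\lim_n\alpha_n$ is an equivalence, with inverse induced by the $\beta$'s via a cofinality argument on the zigzag tower.

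The main obstacle is coherence. The filler in the definition is only asserted to exist for each square, not chosen functorially in $n$, so one must check that the interleaving assembles into a genuine map of towers in $\mathrm{Pro}(\mcs)$ and not merely a levelwise homotopy-commutative one. To handle this I will not need the $\beta_n$ to be compatible with one another. It suffices that for each $n$ the map $\alpha_{n+1}$ composed with restriction is isomorphic in the arrow category to restriction through $\beta_n$. A map of towers $\{A_n\}\to\{B_n\}$ in which each $A_{n+1}\to A_n$ factors through $B_{n+1}\to B_n$ compatibly (with homotopies as given by the two triangles) is an isomorphism in $\mathrm{Pro}(\mcs)$, because $\mathrm{Pro}$ of a sequential index only requires levelwise factorizations up to reindexing. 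That is the standard criterion I would invoke, and the equivalence on $\lim$ follows.
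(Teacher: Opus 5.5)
Your proposal is correct and follows the paper's proof. You write $Y$ as a sifted colimit of representables via \cref{siftrep}, reduce to the map of towers $\lim_n\colim_I\mapp(x_n,y_i)\to\lim_n\mapp(x_n,\colim_I y_i)$, and conclude by interleaving with the fillers supplied by the compact projective transition maps; the paper simply cites \cite[Lemma 2.24]{ramzi2024dualizable} for this last step. The coherence you worry about is supplied by reading the filler, as is standard, as a filler of the commutative naturality square (a $3$-simplex whose two outer triangles recover that square), which is exactly the compatibility your zigzag criterion needs; also, only preservation of sifted colimits by $c$ is used, not of all colimits.
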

	\begin{proof}
		By \cref{siftrep} we can find a small sifted diagram $y_\bullet:I\to \mcc^\kappa$ such that $Y\simeq \colimit_I h_{y_i}$. Thus the map in question is the canonical map
		$$
		\lim_{n} \operatorname{colim}_I \operatorname{Map}_\mcc\left(x_n, y_i\right) \rightarrow \lim_{n} \operatorname{Map}_\mcc\left(x_n, \operatorname{colim}_I y_i\right) .
		$$
		It is indeed an equivalence by the filler property of a compact projective map, see \cite[Lemma 2.24]{ramzi2024dualizable} for a similar argument.
	\end{proof}
\begin{rem}\label{partadj}
	Let $\mathcal{C}\in\catsift$ such that \mcc is $\kappa$-accessible for some (large enough) regular cardinal $\kappa$. Recall that the colimit functor $c:\mcp_\Sigma(\mcc^\kappa)\to\mcc$ is  a left adjoint. However, \cref{a13} indicates that with respect to any compact projective exhaustible object $x \in \mcc$, the functor $c$ also behaves like a \emph{right} adjoint. In categorical terms, $c$ admits a (partial) left adjoint evaluated at $x$, with the corresponding lifted object $X \simeq \colim_n h_{x_n}$ serving as this left adjoint value. 
\end{rem}
	
	\begin{prop}\label{cpasscriterion}
	 Let $\mcc\in\catsift$. If  \mcc is accessible and is generated by compact projectively exhaustible objects under small sifted colimits, then \mcc is compact projectively assembled.
	\end{prop}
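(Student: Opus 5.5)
We plan to verify the criterion of \cref{cpasscriterion1}: since $\mcc$ is assumed accessible, it suffices to produce, for some large regular cardinal $\kappa$, a left adjoint to the colimit functor $c\colon \mcp_\Sigma(\mcc^\kappa)\to \mcc$. We fix $\kappa$ so that $\mcc$ is $\kappa$-accessible and so that the set $S$ of compact projectively exhaustible objects generating $\mcc$ under sifted colimits can be taken to consist of objects admitting exhaustions $x\simeq \colim_n x_n$ with each $x_n\in\mcc^\kappa$. This is possible by enlarging $\kappa$: each exhaustible object is a countable colimit, and we only need a small generating set (or we replace $S$ by the essentially small subcollection of $\kappa$-compact exhaustible objects whose exhaustions have $\kappa$-compact terms, after first checking that this still generates). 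Existence of the left adjoint will be shown pointwise, i.e.\ we show that the class $\mathcal{L}\subset \mcc$ of objects $x$ for which $\mapp_{\mcc}(x,c(-))$ is corepresentable on $\mcp_\Sigma(\mcc^\kappa)$ equals all of $\mcc$.

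First, by \cref{a13} (see \cref{partadj}), every $x\in S$ lies in $\mathcal{L}$, with partial left adjoint value $\hat{h}(x)=\colim_n h_{x_n}$. Second, we show that $\mathcal{L}$ is closed under small sifted colimits in $\mcc$. Given a sifted diagram $x_\bullet\colon I\to \mathcal{L}$, the partial left adjoint is functorial on $\mathcal{L}$, so we form $\colim_I \hat{h}(x_i)$ in $\mcp_\Sigma(\mcc^\kappa)$. Then
\[
\mapp(\colim_I \hat{h}(x_i),Y)\simeq \lim_I \mapp(\hat h(x_i),Y)\simeq \lim_I\mapp(x_i,c(Y))\simeq \mapp(\colim_I x_i, c(Y)),
\]
so $\colim_I x_i\in\mathcal{L}$. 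Since $S$ generates $\mcc$ under small sifted colimits, $\mathcal{L}=\mcc$, and the pointwise left adjoints assemble to a functor $\hat h\colon \mcc\to \mcp_\Sigma(\mcc^\kappa)$ left adjoint to $c$. By \cref{cpasscriterion1}, $\mcc$ is compact projectively assembled. Alternatively, and more explicitly, $c\hat h\simeq \id$ follows because $c$ preserves the colimits above and $c(\hat h(x))\simeq \colim_n x_n\simeq x$ on generators; this exhibits $\mcc$ as a retract of $\mcp_\Sigma(\mcc^\kappa)$ in $\catsift$. Here $\hat h$ preserves sifted colimits as a left adjoint, and $c$ preserves them by construction.

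The main obstacle is set-theoretic. We must ensure that a single $\kappa$ works, namely that the generating exhaustible objects can be chosen with $\kappa$-compact stages, and that $\mcp_\Sigma(\mcc^\kappa)$ is a legitimate (locally small) target. Since $\mcc$ is accessible and generation is by a small set after choosing representatives, we take $\kappa$ larger than the compactness of all stages of the chosen exhaustions. If generation is only stated for the large class of all exhaustible objects, we first argue that every exhaustible object is a $\kappa$-filtered (hence sifted) colimit of exhaustible ones with $\kappa$-compact stages. A secondary point is the functoriality of the partial adjoint on $\mathcal{L}$; we handle it by the standard fact that a functor admits a left adjoint on the full subcategory where $\mapp(x,c(-))$ is corepresentable.
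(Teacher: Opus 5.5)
Your proposal is correct and is essentially the paper's proof: choose $\kappa$ so that the exhaustion stages of the generators are $\kappa$-compact, use \cref{a13} to get the partial left adjoint of $c_\kappa$ at each generator, extend it along small sifted colimits, and conclude by \cref{cpasscriterion1}. You are more careful than the paper in requiring the stages, and not just the generators, to be $\kappa$-compact, which is what \cref{a13} actually needs. Your fallback claim about $\kappa$-filtered colimits is unnecessary, because accessibility plus the union-of-small-closures argument from \cref{kproj} always gives a small generating subset of exhaustible objects.
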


	\begin{proof}
		Because $\mcc$ is accessible, we can choose a regular cardinal $\kappa$ large enough such that all objects in $\mathcal{E}$ are $\kappa$-compact (i.e., $\mathcal{E} \subset \mcc^\kappa$). For each generator $e \in \mathcal{E}$, \cref{a13} ensures that the functor $$\operatorname{Map}_\mcc(e, c_\kappa(-)): \mcp_\Sigma(\mcc^\kappa) \to \mathcal{S}$$ is corepresentable, which means the colimit functor $c_\kappa$ admits a (partial) left adjoint at $e$. Because $\mathcal{E}$ generates $\mcc$ under small sifted colimits, we conclude that $\operatorname{Map}_\mcc(x, c_\kappa(-))$ is corepresentable for any $x\in\mcc$, as desired.
	\end{proof}
	\begin{quest}
	Is the converse of	\cref{cpasscriterion} true?
	\end{quest}
		\subsection{Projective maps}
Inspired by 
the atomic maps defined in \cite{ramzi2024locally}, we investigate relations between dualizability and projective maps in this section. We refer the reader to \cite[\textsection2]{ramzi2024locally} for atomic maps in general enriched context.

\begin{prop}[{\cite[Proposition 2.3]{ramzi2024locally}}]
	Let $\mathcal{M}, \mathcal{N}$ be  presentable additive \infcats. Then the inclusion $\operatorname{Fun}^L(\mathcal{M}, \mathcal{N}) \rightarrow \operatorname{Fun}^{\op{add}}(\mathcal{M}, \mathcal{N})$ admits a right adjoint.
\end{prop}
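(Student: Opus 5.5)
The plan is to imitate the stable argument of \cite[Proposition 2.3]{ramzi2024locally}, using the adjoint functor theorem on the large category $\operatorname{Fun}^{\op{add}}(\mathcal{M},\mathcal{N})$. Concretely, I would produce the right adjoint $G\mapsto G^{\circ}$ as a ``colimit-preserving core'' of an additive functor $G$. The intended formula is
\[
G^{\circ}=\operatorname{Lan}_{j}(G\circ j)\longrightarrow G,
\]
where $\kappa$ is chosen so that $\mathcal{M}$ is $\kappa$-presentable and $j\colon\mathcal{M}^{\kappa}\hookrightarrow\mathcal{M}$ is the inclusion. The problem with this formula is that $G^{\circ}$ preserves $\kappa$-filtered colimits but need not preserve all colimits, and it depends on $\kappa$. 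So the formula only serves as a first approximation, and I would replace it with a more invariant argument.

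The invariant argument runs as follows.

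First, I observe that the inclusion $\iota\colon\operatorname{Fun}^L(\mathcal{M},\mathcal{N})\to\operatorname{Fun}^{\op{add}}(\mathcal{M},\mathcal{N})$ preserves all small colimits. Colimits in both categories are computed pointwise, and pointwise colimits of colimit-preserving (respectively additive) functors remain colimit-preserving (respectively additive), since colimits commute with colimits and finite biproducts are colimits.

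Second, the source $\operatorname{Fun}^L(\mathcal{M},\mathcal{N})\simeq\mathcal{M}^\vee\otimes\mathcal{N}$ (more precisely the internal hom in $\prlad$) is presentable.

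Third, the target is not presentable, but it is locally small and cocomplete. To conclude, I would therefore not invoke the adjoint functor theorem directly. Instead I would show that for each additive $G$ the presheaf
\[
F\mapsto \operatorname{Map}(\iota F,G)
\]
on $\operatorname{Fun}^L(\mathcal{M},\mathcal{N})$ is representable. Since the source is presentable and $\iota$ preserves colimits, this presheaf sends colimits to limits. By the representability criterion for presentable categories \cite[Proposition 5.5.2.2]{htt}, it is then representable, provided it is a small-valued presheaf. Local smallness of $\operatorname{Fun}^{\op{add}}(\mathcal{M},\mathcal{N})$ needs care because $\mathcal{M}$ is large. It holds after restricting along the dense subcategory $\mathcal{M}^{\kappa}$, because $\operatorname{Map}(\iota F,G)\simeq\operatorname{Map}(F|_{\mathcal{M}^\kappa},G|_{\mathcal{M}^\kappa})$, using that $\iota F$ is left Kan extended from $\mathcal{M}^\kappa$. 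This last identification is the step I expect to be the main obstacle.

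To settle it, I would write $\mathcal{M}\simeq\mathcal{P}_\Sigma$-style as $\operatorname{Ind}_\kappa(\mathcal{M}^\kappa)$, so that $F\simeq\operatorname{Lan}_j(Fj)$. The universal property of left Kan extension then gives
\[
\operatorname{Map}_{\operatorname{Fun}(\mathcal{M},\mathcal{N})}(\operatorname{Lan}_j(Fj),G)\simeq\operatorname{Map}_{\operatorname{Fun}(\mathcal{M}^\kappa,\mathcal{N})}(Fj,Gj).
\]
This holds for arbitrary $G$, with no colimit preservation needed, and the right-hand side is small. Additivity is a property, so the full subcategory inclusions do not change mapping anima. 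Combining these, the representing object is the desired $G^{\circ}$, and representability for every $G$ assembles into the right adjoint.
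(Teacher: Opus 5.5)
Your proof is correct. The paper gives no proof of its own here; it only cites Ramzi. Your route is the standard one:
\begin{itemize}
\item $\operatorname{Fun}^L(\mathcal{M},\mathcal{N})$ is presentable.
\item $\iota$ preserves small colimits, because $\operatorname{Fun}^L$ and $\operatorname{Fun}^{\op{add}}$ are both closed under pointwise colimits in $\operatorname{Fun}(\mathcal{M},\mathcal{N})$.
\item The only set-theoretic input needed for \cite[Proposition 5.5.2.2]{htt} is that $\operatorname{Map}(\iota F,G)\simeq\operatorname{Map}(Fj,Gj)$ is small. You correctly deduce this from $F\simeq\operatorname{Lan}_j(Fj)$ and $\operatorname{Lan}_j\dashv j^*$.
\end{itemize}
The identification of $\operatorname{Fun}^L(\mathcal{M},\mathcal{N})$ with $\mathcal{M}^\vee\otimes\mathcal{N}$ would require $\mathcal{M}$ to be dualizable, but you never use it.

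One sentence in your write-up is false and should be deleted: $\operatorname{Fun}^{\op{add}}(\mathcal{M},\mathcal{N})$ is \emph{not} locally small in general. Take $\mathcal{M}=\mathcal{N}$ to be abelian groups, and for an infinite cardinal $\lambda$ let $H_\lambda(A)$ be $\operatorname{Hom}(\mathbb{Z}^{(\lambda)},A)$ modulo the maps with $<\lambda$-generated image.
\begin{itemize}
\item Each $H_\lambda$ is a nonzero additive functor.
\item $H_\lambda(A)=0$ once $\lambda>|A|$, so $G=\bigoplus_\lambda H_\lambda$ is an additive endofunctor.
\item The projections onto sub-sums indexed by arbitrary classes of cardinals give a proper class of natural endomorphisms of $G$.
\end{itemize}
This does not affect your argument, since you only need maps out of $\iota F$, with $F$ colimit-preserving, to be small, and that is exactly what you prove.

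Two further remarks:
\begin{itemize}
\item Additivity plays no role. The same argument gives a right adjoint to $\operatorname{Fun}^L(\mathcal{M},\mathcal{N})\subset\operatorname{Fun}(\mathcal{M},\mathcal{N})$, and restricting it to the full subcategory $\operatorname{Fun}^{\op{add}}$ yields the statement.
\item The argument can also be packaged as a composite of two adjunctions. Your discarded formula $G\mapsto\operatorname{Lan}_j(Gj)$ is precisely the right adjoint of $\operatorname{Fun}(\mathcal{M}^\kappa,\mathcal{N})\simeq\operatorname{Fun}^{\kappa\text{-fil}}(\mathcal{M},\mathcal{N})\subset\operatorname{Fun}(\mathcal{M},\mathcal{N})$. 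Composing it with the right adjoint of the colimit-preserving functor $\operatorname{Fun}^L(\mathcal{M},\mathcal{N})\to\operatorname{Fun}(\mathcal{M}^\kappa,\mathcal{N})$ between presentable categories produces $G^\circ$.
\end{itemize}
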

\begin{de}
	Let $\mathcal{M}\in\prlad$, and $x \in \mathcal{M}$. We let  $\underline{\op{at}}_{\mathcal{M}}(x,-)$ denote the image of $\underline{\operatorname{Map}}_{\mathcal{M}}(x,-): \mathcal{M} \rightarrow \spgeq$ under the right adjoint to the inclusion $\operatorname{Fun}^L(\mathcal{M}, \spgeq) \rightarrow \operatorname{Fun}^{\op{add}}(\mathcal{M}, \spgeq)$; i.e. $$\underline{\op{at}}_{\mathcal{M}}(x,-) \rightarrow \underline{\operatorname{Map}}_{\mathcal{M}}(x,-)$$ is the terminal colimit-preserving functor with a map to $\underline{\operatorname{Map}}_{\mathcal{M}}(x,-)$. We also let  $\op{at}_{\mathcal{M}}(x,-):= \Omega^\infty\underline{\op{at}}_{\mathcal{M}}(x,-)$ denote the underlying functor to anima.
	
	We say a morphism $f: x \rightarrow y$ in $\mathcal{M}$ is $\spgeq$-atomic, if the classifying morphism $\mathbb{S} \rightarrow \unmap_{\mathcal{M}}(x, y)$ admits a lift to  $\unat_{\mathcal{M}}(x, y)$ along the canonical map  $\unat_{\mathcal{M}}(x, y) \rightarrow \unmap_{\mathcal{M}}(x, y)$.
	
\end{de}
\begin{prop}
	Let $\mathcal{M}\in\prlad$ and let $f$ be a morphism in \mcm. Then $f$ is \spgeq-atomic if and only if the  map between corepresentable functors $h^y\to h^x \in \funct(\mcm,\mcs)$ factors through a functor $C:\mcm\to\mcs$ which preserves sifted colimits and finite products. 
\end{prop}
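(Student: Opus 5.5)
Throughout, write $h^z=\mapp_{\mcm}(z,-)$. The plan is to unwind the definition of $\unat_{\mcm}(x,-)$ as the colimit-preserving coreflection of $\unmap_{\mcm}(x,-)$ in $\funct^{\op{add}}(\mcm,\spgeq)$. Then I will pass between $\spgeq$-valued additive functors and $\mcs$-valued functors that preserve finite products, using that $\Omega^\infty$ induces an equivalence between additive functors $\mcm\to\spgeq$ and finite-product-preserving functors $\mcm\to\mcs$ (as $\mcm$ is additive). Under this equivalence, colimit-preserving functors $\mcm\to\spgeq$ correspond exactly to finite-product-preserving functors $\mcm\to\mcs$ that also preserve sifted colimits. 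This is the same reasoning as in \cref{atcproj}: an additive functor between additive presentable categories preserves all colimits iff it preserves sifted ones, and $\Omega^\infty$ is conservative and preserves sifted colimits.

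For the forward direction, suppose $f$ is $\spgeq$-atomic. The lift $\mathbb S\to\unat_{\mcm}(x,y)$ corresponds, by Yoneda for the colimit-preserving $\spgeq$-valued functor $\unat_{\mcm}(x,-)$, to a natural transformation $\unmap_{\mcm}(y,-)\to\unat_{\mcm}(x,-)$. This needs care: $\unmap(y,-)$ is the free additive functor on $y$, so maps out of it into any additive functor $G$ are $\Omega^\infty G(y)$. Composing with the counit $\unat_{\mcm}(x,-)\to\unmap_{\mcm}(x,-)$ recovers the map induced by $f$. Applying $\Omega^\infty$ gives the factorization
\[
h^y\to C:=\op{at}_{\mcm}(x,-)\to h^x,
\]
and $C$ preserves sifted colimits and finite products by the preceding paragraph.

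Conversely, suppose $h^y\to C\to h^x$ with $C$ preserving sifted colimits and finite products. Then $C$ lifts uniquely to an additive functor $\underline C\colon\mcm\to\spgeq$, and $\underline C$ preserves all colimits. The map $C\to h^x$ lifts to an additive natural transformation $\underline C\to\unmap_{\mcm}(x,-)$, since $\Omega^\infty$ is an equivalence on additive functors. By the universal property of the coreflection, this transformation factors through $\unat_{\mcm}(x,-)$. The map $h^y\to C$ corresponds by Yoneda to a point of $C(y)=\Omega^\infty\underline C(y)$, i.e.\ a map $\mathbb S\to\underline C(y)$. Pushing this point to $\unat_{\mcm}(x,y)$ yields the desired lift of the class of $f$. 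Compatibility with $f$ follows because the composite $h^y\to h^x$ is $f^*$, which Yoneda identifies with the point $f\in h^x(y)$.

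The main obstacle is bookkeeping: correctly identifying Yoneda points for $\spgeq$-valued additive functors versus $\mcs$-valued ones, and checking that the equivalence $\Omega^\infty\colon\funct^{\op{add}}(\mcm,\spgeq)\simeq\funct^{\times}(\mcm,\mcs)$ is compatible with the coreflection. I would handle this by working entirely on the additive side and using that $\Omega^\infty$ is fully faithful there.
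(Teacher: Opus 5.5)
Your proof is correct and follows essentially the same route as the paper. The paper's proof is the one-line observation $\funct^L(\mcm,\spgeq)\simeq\funct^{\op{sift},\times}(\mcm,\spgeq)\simeq\funct^{\op{sift},\times}(\mcm,\mcs)$, combined with the coreflection defining $\unat_{\mcm}(x,-)$; you have simply written out the Yoneda bookkeeping (points of $C(y)$ versus maps $\mathbb S\to\underline C(y)$, and compatibility with $f$) that the paper leaves implicit.
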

\begin{proof}
	It suffices to observe that $\funct^L(\mcm,\spgeq)\simeq\funct^{\op{sift,\times}}(\mcm,\spgeq)\simeq\funct^{\op{sift,\times}}(\mcm,\mcs)$.
\end{proof}
\begin{de}\label{atomicmaps}
	Let $\mathcal{M}\in\prlad$ and let $f$ be a morphism in \mcm. 
	\begin{enumerate}[label=(\arabic*),font=\normalfont]
		\item We say $f$ is strongly compact projective if $h^y\to h^x \in \funct(\mcm,\mcs)$ factors through a functor $C:\mcm\to\mcs$ which preserves sifted colimits.
		\item We say $f$ is compact projective if    for any small sifted diagram
		$z_\bullet : I \to \mcm$, there exists a diagonal filler in the following commutative square of anima:  
		
		$$
		\begin{tikzcd}
			{\colimit_{I}\mapp(y,z_i)} \arrow[d] \arrow[r] & {\mapp(y,\colimit_{I}z_i)} \arrow[d] \arrow[ld, dashed] \\
			{\colimit_{I}\mapp(x,z_i)} \arrow[r]           & {\mapp(x,\colimit_{I}z_i)}                             
		\end{tikzcd}$$
		We say $f$ is projective if the above filler exists when $I=\Delta^\opp$.

		\item We say $f: x \rightarrow y$  is $\spgeq$-atomically presentable if $x$ can be written as a weighted colimit $\operatorname{colim}_I^W f$ for some small $\spgeq$-enriched category $I$, weight $W$ and diagram $f$ so that the map $\alpha: \operatorname{colim}_I^W f \rightarrow y$ corresponds, under adjunction, to a map of weights $W \rightarrow \unmap_{\mathcal{M}}(f(-), y)$ that factors through  $\unat_{\mathcal{M}}(f(-), y)$.
	\end{enumerate}
\end{de}
\begin{de}\label{atomicexhau}
	Let $\mathcal{M}\in\prlad$.
	We say an object $x\in\mcm$ is compact projectively (resp. projectively, 
	 resp. \spgeq-atomically, resp. \spgeq-atomically presentable) exhaustible if $x$ can be written as a sequential colimit such that each transition map is compact projective (resp. projective,
	 resp. \spgeq-atomic, resp. \spgeq-atomically presentable).
	
\end{de}
\begin{rem}\label{presreflcpmap}
	Let $\mathcal{M}\in\prlad$.
	\enu{

		\item It is clear that any \spgeq-atomic map is strongly compact projective, and that any strongly compact projective map is compact projective.  
		
		\item The collections of (strongly) compact projective maps and \spgeq-atomic maps  are 2-sided ideals: if $f$ is a (strongly) compact projective map (resp. \spgeq-atomic map), then for any $g, h$ for which it makes sense, so is $g f h$. In particular, any map factoring through a compact projective object is \spgeq-atomic.
		\item If $F: \mathcal{M} \rightarrow \mathcal{N} \in\prlad$  is fully faithful, then it reflects (strongly) compact projective maps and \spgeq-atomic maps. If $F: \mathcal{M} \rightarrow \mathcal{N}\in\prlad$ is an internal left adjoint, then it preserves (strongly) compact projective maps and \spgeq-atomic maps.
	}
\end{rem}
\begin{prop}[{\cite[Corollary 2.13]{ramzi2024locally}}]
	Let $\mathcal{M}$ be a \dualaddinfcat and $x \in \mathcal{M}^\kappa$ for some uncountable regular $\kappa$, with $\hat{h}: \mathcal{M} \rightarrow \mathcal{P}_{\Sigma}\left(\mathcal{M}^\kappa\right)$ the left adjoint to the colimit functor $c: \mathcal{P}_{\Sigma}\left(\mathcal{M}^\kappa\right) \rightarrow \mathcal{M}$, and let $h$ be the right adjoint to this canonical functor. Then we have the identification
	$$
	\unat_{\mathcal{M}}(x,-) \simeq \unmap_{\mathcal{P}_{\Sigma}\left(\mathcal{M}^\kappa\right)}(h(x), \hat{h}(-))
	$$
	and the map $\unat_{\mathcal{M}}(x,-)  \rightarrow \unmap_{\mathcal{M}}(x,-)$ is given by applying $c$ and using $c \circ \hat{h} \xrightarrow{\sim} c \circ h \xrightarrow{\sim} \operatorname{id}_{\mathcal{M}}$.
\end{prop}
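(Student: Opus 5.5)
We follow the strategy of Ramzi's Corollary 2.13 in the stable/enriched setting, specialized to the base $\spgeq$. Since $\mcm$ is dualizable additive, \cref{ram149} and \cref{rmkcptassemb} give the adjunctions $\hat h\dashv c\dashv h$, with $\hat h$ fully faithful and $c\circ\hat h\simeq \id_\mcm$. The plan is to show directly that
\[
G(-):=\unmap_{\mcp_\Sigma(\mcm^\kappa)}\bigl(h(x),\hat h(-)\bigr)
\]
has the universal property defining $\unat_\mcm(x,-)$. That is, $G$ should be the terminal colimit-preserving functor equipped with a map to $\unmap_\mcm(x,-)$.

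First, $G$ preserves colimits. The functor $\hat h$ is a left adjoint. Moreover, $h(x)$ is the representable $h_x$, since $x\in\mcm^\kappa$, and it is therefore compact projective, hence $\spgeq$-atomic by \cref{atcproj}. So $\unmap(h(x),-)$ preserves colimits, and hence so does $G$.

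The comparison map $G\to\unmap_\mcm(x,-)$ is obtained by applying $c$, giving $\unmap(c h x, c\hat h(-))$, and then using the equivalences $c h\simeq\id$ and $c\hat h\simeq\id$.

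For terminality, let $F\in\funct^L(\mcm,\spgeq)$ be equipped with a map $F\to\unmap_\mcm(x,-)$. Precomposing with $c$ gives an additive functor $F\circ c$ on $\mcp_\Sigma(\mcm^\kappa)$ with a map to $\unmap_\mcm(x,c(-))\simeq\unmap_{\mcp_\Sigma}(h(x),-)$, using the adjunction $c\dashv h$. This target is colimit-preserving, and $F\circ c$ is colimit-preserving as well. Maps from $F\circ c$ into a corepresentable functor correspond, by Yoneda, to elements of $F(c(h_x))=F(x)$. Precomposing the resulting map with $\hat h$ yields $F\simeq F c\hat h\to\unmap(h(x),\hat h(-))=G$, using $c\hat h\simeq\id$.

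It remains to check that this gives an inverse equivalence on mapping spectra $\Map(F,G)\simeq\Map(F,\unmap_\mcm(x,-))$. The cleanest way to organize this is to observe that restriction along $\hat h$ and precomposition with $c$ form an adjunction $c^*\dashv\hat h^*$ on colimit-preserving functors. Then, for $F$ colimit-preserving,
\[
\Map(F,\hat h^*\unmap(h x,-))\simeq\Map(c^*F,\unmap(hx,-))\simeq\Map(c^*F,c^*\text{-adjoint}),
\]
and this reduces to the identification $\unmap(hx,-)\simeq\unmap_\mcm(x,c(-))$ together with the fully faithfulness of $c^*$, which holds because $c$ is a localization with $c\hat h\simeq\id$.

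The main obstacle is making the step ``maps out of the colimit-preserving $F$ into the non-colimit-preserving $\unmap_\mcm(x,-)$ are the same as maps into $G$'' precise. Concretely, one must show that the coreflection onto $\funct^L(\mcm,\spgeq)$ of $\unmap_\mcm(x,-)$ is computed as $\hat h^*$ applied to the coreflection of $c^*\unmap_\mcm(x,-)=\unmap(hx,-)$, and that the latter functor is already colimit-preserving. We will try to prove the adjunction $c^*\dashv\hat h^*$ on $\funct^{\op{add}}$ restricted to colimit-preserving sources, via the unit $\id\to\hat h^*c^*$, which is an equivalence because $c\hat h\simeq\id$. With that in hand, the rest is formal. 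Tracing the comparison map through these identifications recovers the stated description via $c$.
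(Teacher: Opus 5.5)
Your skeleton is right. You have $c^*\dashv\hat h^*$ on additive functors, $c^*$ is fully faithful because $c\hat h\simeq\id$, and so the coreflection of $\unmap_\mcm(x,-)$ is $\hat h^*$ applied to the coreflection of $K:=c^*\unmap_\mcm(x,-)=\unmap_\mcm(x,c(-))$ on $\mcp_\Sigma(\mcm^\kappa)$.

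\textbf{The gap.} You identify $\unmap_\mcm(x,c(-))\simeq\unmap_{\mcp_\Sigma(\mcm^\kappa)}(h(x),-)$ and attribute this to $c\dashv h$. That adjunction computes maps \emph{into} $h(x)$, not out of it. Write $P\simeq\colim_i h(m_i)$ as a sifted colimit of representables. Then the comparison ``apply $c$'' is the assembly map $\colim_i\unmap_\mcm(x,m_i)\to\unmap_\mcm(x,\colim_i m_i)$. This is an equivalence for all $P$ only when $x$ is compact projective. If your identification held, your chain would give $G\simeq\hat h^*K\simeq\unmap_\mcm(x,c\hat h(-))\simeq\unmap_\mcm(x,-)$. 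That is, $\unat_\mcm(x,-)\simeq\unmap_\mcm(x,-)$ for every $\kappa$-compact $x$, which would force $\mcm$ to be compact projectively generated. So $K$ is not ``already colimit-preserving''. Your Yoneda remark is also backwards: Yoneda computes maps out of a corepresentable functor, not into one.

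\textbf{The missing step.} You need to compute the coreflection of $K$ using the universal property of $\mcp_\Sigma$. Let $y\colon\mcm^\kappa\to\mcp_\Sigma(\mcm^\kappa)$ be the Yoneda embedding. Restriction along $y$ identifies $\funct^L(\mcp_\Sigma(\mcm^\kappa),\spgeq)$ with additive functors on $\mcm^\kappa$, with inverse $\op{Lan}_y$. Hence for colimit-preserving $F'$ and any additive $E$,
\[
\mapp(F',E)\simeq\mapp(F'y,Ey)\simeq\mapp(F',\op{Lan}_y(Ey)).
\]
Since $cy$ is the inclusion, $Ky\simeq\unmap_\mcm(x,-)|_{\mcm^\kappa}\simeq\unmap(y(x),y(-))$. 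Since $y(x)=h(x)$ is compact projective, $\op{Lan}_y(Ky)\simeq\unmap(h(x),-)$, and the counit is given by applying $c$.

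Only with this does your chain go through:
\[
\mapp(F,\unmap_\mcm(x,-))\simeq\mapp(c^*F,K)\simeq\mapp(c^*F,\unmap(h(x),-))\simeq\mapp(F,\hat h^*\unmap(h(x),-)).
\]
Equivalently, for dualizable $\mcm$ the coreflection of any additive $E\colon\mcm\to\spgeq$ is $\op{Lan}_y(E|_{\mcm^\kappa})\circ\hat h$. Specializing to $E=\unmap_\mcm(x,-)$ gives the statement the paper cites from Ramzi.
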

\begin{prop}\label{dualaddcprojmaps}
	Let $\mcm\in\prlad$. Then the following are equivalent:
	\enu{
		
		\item  $\mcm$ is a \dualaddinfcat.
		\item  $\mcm$ is compact projectively assembled.
		\item \mcm is generated by \spgeq-atomically exhaustible objects under small colimits.
		\item  \mcm is generated by compact projective exhaustible objects under small colimits.

	}
\end{prop}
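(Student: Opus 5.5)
We will prove the cycle $(1)\Rightarrow(3)\Rightarrow(4)\Rightarrow(2)\Rightarrow(1)$, using \cref{ram149}, the flatness results of \cref{section4}, and the criteria of this appendix.

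For $(1)\Rightarrow(3)$, let $\mcm$ be dualizable additive. By \cref{flatgenerate}, $\mcm$ is generated under small colimits by its $\omega_1$-flat objects. By \cref{flatcpexhau}, every $\omega_1$-flat object is $\spgeq$-atomically exhaustible, so $(3)$ follows.

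For $(3)\Rightarrow(4)$, we use \cref{presreflcpmap}(1): every $\spgeq$-atomic map is compact projective. Hence an atomic exhaustion is also a compact projective exhaustion. Generation under small colimits can be upgraded to generation under small sifted colimits by closing the generating set under finite sums, since $\mcm$ is additive. Finite sums of compact projectively exhaustible objects remain exhaustible: take the levelwise sum of the sequences, and note that compact projective maps are closed under finite sums because mapping anima out of a finite sum form a product, and filtered colimits of anima commute with finite products.

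For $(4)\Rightarrow(2)$, we apply \cref{cpasscriterion}. Since $\mcm$ is presentable it is accessible, and by the remark above it is generated under small sifted colimits by compact projectively exhaustible objects. Hence $\mcm$ is compact projectively assembled.

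For $(2)\Rightarrow(1)$, suppose $\mcm$ is a retract in $\catsift$ of some $\mcp_\Sigma(\mcd)$. We want to apply \cref{ram149}(2), which requires a retract in $\prlad$. By \cref{cpasscriterion1}, the colimit functor $c_\kappa:\mcp_\Sigma(\mcm^\kappa)\to\mcm$ admits a left adjoint $\hat h$. Both $c_\kappa$ and $\hat h$ preserve all small colimits, and $c_\kappa\hat h\simeq\id$ because $\hat h$ is fully faithful. We check the latter: $\mcm$ is a retract of $\mcp_\Sigma$ of something, so $\hat h$ restricts to the partial adjoint of \cref{a13}. Since $\mcp_\Sigma(\mcm^\kappa)$ is compact projectively generated and additive, this exhibits $\mcm$ as a retract in $\prlad$, and \cref{ram149}(2) gives dualizability.

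We expect the main obstacle to be precisely this last verification: showing that a left adjoint to $c_\kappa$ is automatically fully faithful, or at least that $c_\kappa\hat h\simeq\id$. We will first attempt to deduce it from the identification $c\circ\hat h\simeq c\circ h\simeq \id$ in the proposition above quoting \cite[Corollary 2.13]{ramzi2024locally}, using that $c\circ h\simeq\id$ and the unit/counit triangle identities. If that fails, we will instead pass through \cref{ram149}(4) directly: \cref{cpasscriterion1} already supplies the left adjoint $\hat h$ demanded there for $\mu=\kappa$, which makes $(2)\Rightarrow(1)$ immediate.
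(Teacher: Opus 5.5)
Your proposal is correct and follows essentially the paper's route. The paper argues $(1)\Rightarrow(3)$ by \cref{flatgenerate} and \cref{flatcpexhau}, $(3)\Rightarrow(4)$ because $\spgeq$-atomic maps are compact projective, $(4)\Rightarrow(2)$ by the argument of \cref{cpasscriterion}, and $(2)\Rightarrow(1)$ by \cref{ram149}. Your fallback for the last step, feeding the left adjoint supplied by \cref{cpasscriterion1} into \cref{ram149}(4), is exactly the bridge that citation needs.

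Two parts of your write-up are more than necessary. Your first attempt at $(2)\Rightarrow(1)$ is superfluous, and \cref{a13} is not the reason $\hat h$ is fully faithful. That follows from the adjoint triple $\hat h\dashv c_\kappa\dashv h_\kappa$ together with full faithfulness of $h_\kappa$. Your upgrade to sifted generation in $(3)\Rightarrow(4)$ is harmless but avoidable. The objects of $\mcm$ at which $c_\kappa$ admits a partial left adjoint already form a class closed under all small colimits of $\mcm$, so generation under small colimits suffices.
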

\begin{proof}
	$(1)\Longleftrightarrow  (2)$: This follows from \cref{ram149}.
	
	$(1)\implies (3)$: This follows by combining \cref{flatgenerate} and \cref{flatcpexhau}.
	
	$(3)\implies (4)$: Trivial.
	
	$(4)\implies (2)$: This follows from the same argument as \cref{cpasscriterion}.
\end{proof}
\begin{lem}\label{a25}
	In a \dualaddinfcat \mcm, any compact projective map is an \spgeq-atomic map.
\end{lem}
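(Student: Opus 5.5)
Let $f\colon x\to y$ be a compact projective map in the dualizable additive category $\mcm$. The plan is to use the hat Yoneda functor to pass to a compact projectively generated category, where atomic maps are easy to describe. Choose an uncountable regular $\kappa$ with $x,y\in\mcm^\kappa$. By \cref{ram149} and \cref{rmkcptassemb}, the colimit functor $c\colon\mcp_\Sigma(\mcm^\kappa)\to\mcm$ has a left adjoint $\hat h$ with $c\hat h\simeq\id$. We also write $h$ for the Yoneda embedding.

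By the identification of \cite[Corollary 2.13]{ramzi2024locally} recalled above,
\[
\unat_{\mcm}(x,-)\simeq\unmap_{\mcp_\Sigma(\mcm^\kappa)}(h(x),\hat h(-)).
\]
The comparison map to $\unmap_\mcm(x,-)$ is given by applying $c$. So it suffices to lift $f$ to a point of $\Omega^\infty\unmap(h(x),\hat h(y))$, that is, to a map $h(x)\to\hat h(y)$ whose image under $c$ is $f$.

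To build the lift, write $\hat h(y)$ as a small sifted colimit of representables, $\hat h(y)\simeq\colim_I h(z_i)$ with $z_i\in\mcm^\kappa$, using \cref{siftrep}. Applying $c$ gives $y\simeq c\hat h(y)\simeq\colim_I z_i$, and this colimit is sifted. Now use the defining filler property of the compact projective map $f$ for the diagram $z_\bullet$. The identity point of $\mapp(y,\colim_I z_i)$ lifts along the diagonal to an element of $\colim_I\mapp(x,z_i)$. This element is represented by some index $i$ and a map $x\to z_i$; the filler's commutativity says this element agrees with $f$ in $\mapp(x,\colim_I z_i)$.

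The element of $\colim_I\mapp(x,z_i)=\colim_I\mapp(h(x),h(z_i))$ maps to $\mapp(h(x),\hat h(y))$. Its image under $c$ is the composite $x\to z_i\to\colim_I z_i\simeq y$, which is $f$ by the previous step. This produces the desired lift, so $f$ is $\spgeq$-atomic.

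The main obstacle is checking that the comparison map $\unat\to\unmap$ really corresponds to applying $c$ compatibly with the identification $c\hat h\simeq\id$. In particular, one must check that the equivalence $\colim_I z_i\simeq y$ used in the filler is the one induced by $c$ from $\hat h(y)\simeq\colim_I h(z_i)$. I expect this to follow directly from the description in \cite[Corollary 2.13]{ramzi2024locally}.
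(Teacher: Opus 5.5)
Your proposal is correct, and it takes a slightly more hands-on route than the paper.

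\textbf{The paper's route.} The paper argues structurally. The functor $\hat h\colon\mcm\hookrightarrow\mcp_\Sigma(\mcm^{\omega_1})$ is a fully faithful internal left adjoint, so by \cref{presreflcpmap} it preserves compact projective maps and reflects $\spgeq$-atomic maps. In the compact projectively generated target, the compact projective map $\hat h(f)$ factors through a representable. This comes from the same sifted presentation of $\hat h(y)$ plus the filler. Hence $\hat h(f)$ is atomic, since atomic maps form an ideal containing identities of compact projective objects, and so $f$ is atomic.

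\textbf{Your route.} You instead plug in Ramzi's formula $\unat_\mcm(x,-)\simeq\unmap(h(x),\hat h(-))$ directly. You then apply the filler of $f$ itself to $z_\bullet=c(h(z_\bullet))$. This is exactly what the filler of $\hat h(f)$ against $h(z_\bullet)$ unwinds to under $\hat h\dashv c$.

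\textbf{What each buys.} Since $h(x)$ is compact projective, your computation shows $\Omega^\infty\unat_\mcm(x,y)\simeq\colim_I\mapp_\mcm(x,z_i)$. The comparison to $\mapp_\mcm(x,y)$ is then the assembly map. So atomicity of $f$ literally means lifting along the assembly map, which the filler provides, and the lemma becomes almost tautological. The cost is that you rely on the explicit identification of $\unat$ and its comparison map. The paper instead uses only general permanence properties, which it needs anyway for the next characterization of internal left adjoints, and it avoids coherence bookkeeping.

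\textbf{The obstacle you flag is harmless.} The filler exists for every point of $\mapp(y,\colim_I z_i)$. So you may define the equivalence fed into it as the composite $y\simeq c\hat h(y)\simeq c(\colim_I h(z_i))\simeq\colim_I z_i$. Here the first step is the same identification $c\hat h\simeq\id$ that the comparison map uses. Ramzi's $c\hat h\to ch\to\id$ equals the inverse of the unit of $\hat h\dashv c$ by a triangle identity. The remaining compatibility is just naturality of the counit $ch\simeq\id$.
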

\begin{proof}
Let $f\in\mcm$ be a compact projective map.	Since the hat Yoneda $\hat{h}:\mcm\hookrightarrow \mcp_{\Sigma}(\mcm^{\omega_1})$ is a fully faithful internal left adjoint, it preserves and reflects both compact projective maps and \spgeq-atomic maps. Since $\hat{h}(f)$ is compact projective, it factors through a compact projective object. Therefore $\hat{h}(f)$ is an \spgeq-atomic map and so is $f$.
\end{proof}
\begin{prop}
	Let $F:\mcm\to\mcn \in\prlad$. Assume \mcm, \mcn are dualizable additive. Then the following are equivalent:
	\enu{
		
		\item  $F$ is an internal left adjoint.
		\item $F$ preserves \spgeq-atomic maps.
		\item  $F$ preserves compact projective maps.

	}
\end{prop}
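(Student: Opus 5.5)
We prove the cycle of implications $(1)\Rightarrow(3)\Rightarrow(2)\Rightarrow(1)$. By \cref{a25}, compact projective maps and $\spgeq$-atomic maps coincide in both $\mcm$ and $\mcn$. Hence $(2)\Leftrightarrow(3)$ is immediate, and it suffices to show $(1)\Rightarrow(3)$ and $(3)\Rightarrow(1)$.

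For $(1)\Rightarrow(3)$ we invoke \cref{presreflcpmap}(3): an internal left adjoint preserves compact projective maps. Concretely, let $f:x\to y$ be compact projective and let $z_\bullet$ be a sifted diagram in $\mcn$. Since $F^R$ preserves sifted colimits, the lifting square for $F(f)$ against $z_\bullet$ is identified, by adjunction, with the lifting square for $f$ against $F^R z_\bullet$. The required filler therefore exists.

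For $(3)\Rightarrow(1)$ we must show that $F^R$ preserves small colimits. Since $F^R$ is additive, it suffices to show that it preserves sifted colimits. By \cref{dualaddcprojmaps}, $\mcm$ is generated under small colimits by compact projectively exhaustible objects $x\simeq\colim_n x_n$, whose transition maps $x_n\to x_{n+1}$ are compact projective. The mapping anima out of such objects are jointly conservative on $\mcm$: an object $m$ with $\Map(x,m)\simeq\ast$ for all such $x$ is zero, because the class of objects $x$ with $\Map(x,m)\simeq\ast$ is closed under colimits and hence contains all of $\mcm$. It is therefore enough to check that, for a sifted diagram $z_i$ in $\mcn$, the comparison map
\[
\Map(x,\colim_I F^R z_i)\longrightarrow \Map(x, F^R\colim_I z_i)\simeq \Map(F x,\colim_I z_i)
\]
is an equivalence for each compact projectively exhaustible $x$.

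To compute both sides, we write each side as a sequential limit over $n$ and use the interleaving argument of \cref{a13}. Because the maps $x_n\to x_{n+1}$ are compact projective, the diagonal fillers identify
\[
\Map\bigl(x,\colim_I F^R z_i\bigr)\simeq \lim_n \colim_I \Map(x_n,F^R z_i)\simeq \lim_n\colim_I \Map(Fx_n,z_i).
\]
By hypothesis the maps $Fx_n\to Fx_{n+1}$ are also compact projective in $\mcn$, so the same argument in $\mcn$ gives
\[
\lim_n\colim_I \Map(Fx_n,z_i)\simeq \lim_n\Map(Fx_n,\colim_I z_i)\simeq \Map(Fx,\colim_I z_i).
\]
The main obstacle is this interleaving: one must check that the filler property alone, without an actual factorization of $x_n\to x_{n+1}$ through a compact projective object, makes the pro-systems $\{\colim_I\Map(x_n,-)\}$ and $\{\Map(x_n,\colim_I -)\}$ pro-isomorphic. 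The plan is to follow \cite[Lemma 2.24]{ramzi2024dualizable}.
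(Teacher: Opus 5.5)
Your treatment of $(1)\Rightarrow(3)$ and $(2)\Leftrightarrow(3)$ matches the paper's. One small correction: \cref{a25} only gives compact projective $\Rightarrow$ $\spgeq$-atomic in a dualizable additive \infcat; the converse implication is \cref{presreflcpmap}(1).

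For $(3)\Rightarrow(1)$ you take a genuinely different, more hands-on route.
\begin{itemize}
\item The paper defers to the argument of \cite[Corollary 2.43]{ramzi2024dualizable}. One compares $F$ with the induced functor between the targets of the hat-Yoneda embeddings $\hat{h}_{\mcm}$ and $\hat{h}_{\mcn}$. One then checks that the Beck--Chevalley map is an equivalence on compact projectively exhaustible objects, where $\hat{h}(x)\simeq\colim_n h_{x_n}$ by \cref{a13}. Finally one reads off that $F^R$ is a composite of colimit-preserving functors.
\item You instead test the comparison $\colim_I F^R z_i\to F^R\colim_I z_i$ directly against $\Map(x,-)$ for exhaustible $x$. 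You run the interleaving of \cref{a13} once in $\mcm$, against the sifted diagram $F^R z_\bullet$, and once in $\mcn$, against $z_\bullet$. Naturality of the comparison maps makes the resulting chain of equivalences the canonical map.
\end{itemize}
Your route avoids the hat-Yoneda machinery altogether. Since dualizability of $\mcn$ is never used, it in fact proves $(3)\Rightarrow(1)$ for $\mcn$ an arbitrary presentable additive \infcat. The paper's route instead yields the structural commutation of $F$ with the hat-Yoneda embeddings, which is the form reused elsewhere (compare the treatment of flat objects in \cref{main3}).

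One step needs repair. Your stated reason for joint conservativity is that an object $m$ with $\Map(x,m)\simeq\ast$ for all exhaustible $x$ is zero. That only detects zero objects, and in a prestable \infcat it does not give conservativity. For example, the map $\mathbb{Z}\to\mathbb{Q}$ in $\spgeq$ has zero fiber in $\spgeq$ but is not an equivalence.

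The correct argument is just as short. For a fixed map $g$ in $\mcm$, the class of objects $x$ for which $\Map(x,g)$ is an equivalence is closed under small colimits. Hence it is all of $\mcm$, and $g$ is an equivalence by the Yoneda lemma. With this replacement your proof goes through. The coherence issue you flag in the interleaving step is handled at the same level of detail as in the paper's \cref{a13}, via \cite[Lemma 2.24]{ramzi2024dualizable}.
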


\begin{proof}
	$(1)\implies (2):$	 This follows from \cref{presreflcpmap}.
	
	$(2)\implies (3):$ This follows from \cref{a25}.

	$(3)\implies (1):$ This follows by the same argument as \cite[Corollary 2.43]{ramzi2024dualizable}.
\end{proof}
\section{$\kappa$-projective generation}\label{kappaprojgeneration}
In this appendix, we study projective generation, rather than merely compact projective generation, and  large weight structures.

Throughout, we fix a regular cardinal $\kappa$.
\begin{de}
	Let $\mcc$ be a small \infcat which admits $\kappa$-small coproducts. We let $\mcp_{\Sigma_\kappa}(\mcc)$ denote the full subcategory of $\mcp(\mcc)$ spanned by those
	functors $\mcc^\opp \to\mcs$ which preserve $\kappa$-small products.
\end{de}
\begin{thm}\label{thma2}
	Let $\mathcal{C}$ be a small \infcat which admits $\kappa$-small coproducts. Then
	\enu{
		\item  The \infcat $\mathcal{P}_{\Sigma_\kappa}(\mathcal{C})$ is an accessible localization of $\mathcal{P}(\mathcal{C})$.
		\item The Yoneda embedding $j: \mathcal{C} \rightarrow \mathcal{P}(\mathcal{C})$ factors through $\mathcal{P}_{\Sigma_\kappa}(\mathcal{C})$. Moreover, $j$ carries $\kappa$-small coproducts in $\mathcal{C}$ to $\kappa$-small coproducts in $\mathcal{P}_{\Sigma_\kappa}(\mathcal{C})$.
		\item Let $\mc{D}$ be a presentable \infcat and let
		$$
		\mathcal{P}(\mathcal{C}) \underset{G}{\stackrel{F}{\rightleftarrows}} \mathcal{D}
		$$
		be a pair of adjoint functors. Then $G$ factors through $\mathcal{P}_{\Sigma_\kappa}(\mathcal{C})$ if and only if $f=F \circ j: \mathcal{C} \rightarrow \mathcal{D}$ preserves $\kappa$-small coproducts.
		\item The full subcategory $\mathcal{P}_{\Sigma_\kappa}(\mathcal{C}) \subset \mathcal{P}(\mathcal{C})$ is closed under small $\kappa$-filtered colimits.
		\item The \infcat $\mathcal{P}_{\Sigma_\kappa}(\mathcal{C})$ is $\kappa$-compactly generated.
		\item 
		If $\mcc$ is \emph{additive}, then the full subcategory $\mathcal{P}_{\Sigma_\kappa}(\mathcal{C}) \subset \mathcal{P}(\mathcal{C})$ is also closed under geometric realizations.
	}
	
\end{thm}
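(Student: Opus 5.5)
Parts (1)--(5) are the $\kappa$-version of \cite[Proposition 5.5.8.10]{htt}, and I would follow Lurie's proof with ``finite'' replaced by ``$\kappa$-small'' throughout.

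For (1), I would write $\mathcal{P}_{\Sigma_\kappa}(\mathcal{C})$ as the full subcategory of $S$-local objects in $\mathcal{P}(\mathcal{C})$. Here $S$ is the small set of maps $\coprod_{i} j(c_i)\to j(\coprod_i c_i)$, indexed by $\kappa$-small families in $\mathcal{C}$ (and chosen up to equivalence). Mapping out of this set tests exactly the preservation of $\kappa$-small products by a presheaf, so \cite[Proposition 5.5.4.15]{htt} gives an accessible localization.

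For (2), representables send $\kappa$-small coproducts to products, so they are $S$-local. The map $\coprod j(c_i)\to j(\coprod c_i)$ is an $S$-equivalence, hence becomes an equivalence after localization. This is exactly the statement that $j$ carries $\kappa$-small coproducts to coproducts in $\mathcal{P}_{\Sigma_\kappa}(\mathcal{C})$.

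For (3), I would unwind the adjunction: $G(d)=\mathrm{Map}_{\mathcal{D}}(f(-),d)$. This presheaf preserves $\kappa$-small products for every $d$ if and only if each comparison map $\coprod f(c_i)\to f(\coprod c_i)$ is an equivalence, by Yoneda in $\mathcal{D}$.

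For (4), I would use that in $\mathcal{S}$, $\kappa$-filtered colimits commute with $\kappa$-small limits \cite[Proposition 5.3.3.3]{htt}, and that colimits in $\mathcal{P}(\mathcal{C})$ are computed pointwise.

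For (5), (4) and (2) give that representables are $\kappa$-compact in $\mathcal{P}_{\Sigma_\kappa}(\mathcal{C})$, because mapping out of them is evaluation. They also generate under colimits, since the localization $L$ is essentially surjective and $\mathcal{P}(\mathcal{C})$ is generated by representables. Presentability then gives $\kappa$-compact generation.

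Part (6) is the only genuinely new point and the main obstacle. A presheaf on an additive $\mathcal{C}$ that preserves $\kappa$-small products preserves in particular finite products. By \cite[Corollary 2.4]{ggn} it therefore lifts uniquely to a functor $\mathcal{C}^{\opp}\to\spgeq$ that preserves $\kappa$-small products, with $\Omega^\infty$ recovering the original functor. Products in $\spgeq$ are computed in $\opsp$ and truncated by $\tau_{\geq0}$.

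Given a simplicial object $F_\bullet$ in $\mathcal{P}_{\Sigma_\kappa}(\mathcal{C})$, I would first note that each $F_n$ preserves finite products and that $\mathcal{P}_\Sigma(\mathcal{C})$ is closed under sifted colimits. So $|F_\bullet|$, computed pointwise, is again additive, and its connective lift is the pointwise geometric realization $|\tilde F_\bullet|$ in $\spgeq$. Geometric realizations of connective spectra agree with those in $\opsp$ (via $\Omega^\infty$ for simplicial objects valued in $\spgeq$, \cite[Proposition 1.4.3.9]{ha}). It therefore suffices to show that geometric realization commutes with $\kappa$-small products in $\spgeq$. This holds because $\spgeq$ satisfies $\mathrm{AB4}^*$ and is separated, so \cref{prodcommutegeom} applies.

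Hence $|\tilde F_\bullet|(\coprod c_i)\simeq|\prod_i\tilde F_\bullet(c_i)|\simeq\prod_i|\tilde F_\bullet|(c_i)$, and applying $\Omega^\infty$ gives the claim. The delicate step is making sure the pointwise realization in $\mathcal{S}$ matches $\Omega^\infty$ of the realization in $\spgeq$. This uses the grouplike structure coming from additivity, and it fails for general $\mathcal{C}$, which is why (6) needs the additivity hypothesis.
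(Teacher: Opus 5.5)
Your proposal is correct and follows the paper's proof. For (1)--(4) the paper simply cites \cite[Propositions 5.3.6.2 and 5.3.3.3]{htt}, and your $S$-localization argument is exactly what underlies those results; for (6) you make the same reduction as the paper, lifting product-preserving presheaves to $\spgeq$ and using that geometric realizations commute with small products in $\spgeq$ by \cref{prodcommutegeom}. The differences are minor: for (5) the paper identifies $\mathcal{P}_{\Sigma_\kappa}(\mathcal{C})$ with $\mathrm{Ind}_\kappa$ of the free $\kappa$-small-coproduct completion rather than exhibiting the representables as $\kappa$-compact generators, and your closing remark misplaces the difficulty, since $\Omega^\infty\colon \spgeq\to\mathcal{S}$ preserves geometric realizations of every simplicial connective spectrum, whereas grouplikeness is genuinely needed for the commutation of small products with geometric realizations, which holds in $\spgeq$ but fails in $\mathcal{S}$.
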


\begin{warning}
	Beware that if \mcc is not additive, then $\mathcal{P}_{\Sigma_\kappa}(\mathcal{C}) \subset \mathcal{P}(\mathcal{C})$ is not necessarily closed under geometric realizations. 
\end{warning}
\begin{proof}
	(1)-(3) follow from the construction of the formal completion; see \cite[Proposition 5.3.6.2]{htt}. 
	
	For (4), this follows because $\kappa$-small products commute with $\kappa$-filtered colimits in \mcs; see \cite[Proposition 5.3.3.3]{htt}.
	
	For (5), it suffices to observe that $\mathcal{P}_{\Sigma_\kappa}(\mathcal{C})=\mcp^{\op{small}}_{\kappa\text{-}\sqcup}(\mcc)\simeq \op{Ind}_\kappa(\mcp^{\kappa\text{-}\op{small}}_{\kappa\text{-}\sqcup}(\mcc))$.
	
	For (6), it suffices to observe that in $\spgeq$ infinite products commute with geometric realizations  by \cref{prodcommutegeom}.
\end{proof}
\begin{lem}[{\cite[Lemma 5.5.8.13]{htt}}]
	Let $\mathcal{C}$ be a small \infcat and let $X$ be an object of $\mathcal{P}(\mathcal{C})$. Then there exists a simplicial object $Y_{\bullet}: \mathrm{N}(\boldsymbol{\Delta})^{o p} \rightarrow \mathcal{P}(\mathcal{C})$ with the following properties:
	\enu{
		\item The colimit of $Y_{\bullet}$ is equivalent to $X$.
		\item  For each $n \geq 0$, the object $Y_n \in \mathcal{P}(\mathcal{C})$ is equivalent to a small coproduct of objects lying in the image of the Yoneda embedding $j: \mathcal{C} \rightarrow \mathcal{P}(\mathcal{C})$.
	}
	
\end{lem}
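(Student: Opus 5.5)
We would prove this by the standard bar construction, essentially following \cite[Lemma 5.5.8.13]{htt}. The key input is the adjunction between $\mathcal{P}(\mathcal{C})$ and the \infcat of presheaves on the underlying set of objects. Let $\mathcal{C}_0 \subset \mathcal{C}$ be the discrete subcategory (the maximal sub-$\infty$-groupoid restricted to a set of representatives, or simply the underlying space $\mathcal{C}^{\simeq}$). Restriction $\mathcal{P}(\mathcal{C}) \to \mathcal{P}(\mathcal{C}^{\simeq}) \simeq \mathcal{S}_{/\mathcal{C}^{\simeq}}$ admits a left adjoint $L$ given by left Kan extension. Since $L$ sends the representable point at $c$ to $j(c)$ and preserves colimits, every object in the image of $L$ is a small coproduct of representables: an object of $\mathcal{P}(\mathcal{C}^{\simeq})$ is a colimit of points over a space, and a colimit over a space of representables decomposes, componentwise, as a coproduct.

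The plan is then as follows. Let $T = L \circ R$ be the comonad on $\mathcal{P}(\mathcal{C})$, where $R$ is the restriction. Form the comonadic (bar) resolution $Y_\bullet$ with $Y_n = T^{n+1}X$, an augmented simplicial object over $X$. Each $Y_n$ lies in the image of $L$, so each is a small coproduct of representables, which proves (2).

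For (1), the point is that $R$ is conservative and preserves all colimits; it is computed pointwise, and $\mathcal{C}^{\simeq} \to \mathcal{C}$ is essentially surjective. After applying $R$, the augmented simplicial object $R(Y_\bullet) \to R(X)$ acquires an extra degeneracy from the unit of the adjunction, so it is split. By \cite[Lemma 6.1.3.16]{htt}, a split augmented simplicial object is a colimit diagram. Since $R$ preserves and reflects geometric realizations, $|Y_\bullet| \simeq X$.

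The main obstacle is purely technical: the comonad resolution must be set up coherently as a simplicial object in the $\infty$-categorical setting. We would take this from \cite[\S 4.7]{ha}, applying the bar construction for the adjunction $L \dashv R$. Checking that the image of $L$ consists of coproducts of representables is routine, since every space is a coproduct of its connected components and $j$ carries the point of each component appropriately. Alternatively, one could avoid the comonad and argue directly by induction, as in \emph{loc.\ cit.}
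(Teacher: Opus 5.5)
Your step (2) fails, so the proposal has a genuine gap. The essential image of $L$ does not consist of coproducts of representables. For the discrete choice of $\mathcal{C}_0$ one has $LR(X)\simeq\coprod_{c}X(c)\times j(c)$, where the space $X(c)$ is regarded as a constant presheaf. Splitting $X(c)$ into connected components only reduces this to terms $E\times j(c)$ with $E$ connected, and these are in general not coproducts of representables. With $\mathcal{C}^{\simeq}$ it is worse, since you also pick up homotopy orbits under automorphism groups.

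The simplest counterexample is $\mathcal{C}=\Delta^0$. For either choice, $R$ and $L$ are the identity of $\mathcal{S}$, so your $Y_\bullet$ is the constant simplicial object at $X$. For $X=S^1$, condition (2) fails, because the small coproducts of representables in $\mathcal{S}$ are exactly the discrete spaces. Your argument for (1) is fine: the extra degeneracy appears after applying the conservative, colimit-preserving $R$. But it only proves the weaker statement that $X$ is a geometric realization of objects $\coprod_c E_c\times j(c)$ with $E_c\in\mathcal{S}$ arbitrary spaces.

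The missing idea is a source of discreteness. Already for $\mathcal{C}=\Delta^0$, the lemma says that every space is the realization of a simplicial set. Nothing in a resolution built from $\mathcal{S}$-valued presheaves on $\mathcal{C}_0$ or $\mathcal{C}^{\simeq}$ forces the relevant spaces to be discrete.

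The paper gives no proof beyond citing \cite[Lemma 5.5.8.13]{htt}. The argument there takes the discreteness from sets of simplices. First write $X\simeq\operatorname{colim}(j\circ p)$ for a diagram $p\colon K\to\mathcal{C}$ indexed by a small simplicial set $K$. For instance, take a small model of the $\infty$-category of elements $\mathcal{C}_{/X}$, over which every presheaf is the colimit of representables. Then use the decomposition of a $K$-indexed colimit (\cite[\S4.2.3]{htt}) as the geometric realization of the simplicial object $[n]\mapsto\coprod_{\sigma\in K_n}j(p(\sigma(0)))$. Since each $K_n$ is a set, this gives (1) and (2) at once.

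If you want to keep the bar construction, you would have to resolve each space $E_c$ further by a simplicial set. You would then need to assemble a coherent bisimplicial object before passing to the diagonal, which is considerably more work than the direct argument.
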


\begin{prop}\label{propa5}
	Let $\mathcal{C}$ be a small \infcat which admits $\kappa$-small coproducts and let $X \in \mathcal{P}_{\Sigma_\kappa}(\mathcal{C})$. Then there exists a simplicial object $U_{\bullet}: \mathrm{N}(\boldsymbol{\Delta})^{o p} \rightarrow \operatorname{Ind}_\kappa(\mathcal{C})$ whose colimit\footnote{Beware such colimit is not necessarily preserved by inclusion $\mathcal{P}_{\Sigma_\kappa}(\mathcal{C}) \subset \mathcal{P}(\mathcal{C})$, unless $\kappa=\omega$ or \mcc is additive.} in $\mathcal{P}_{\Sigma_\kappa}(\mathcal{C})$ is $X$.
	
\end{prop}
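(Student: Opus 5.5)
We adapt the proof of \cite[Lemma 5.5.8.14]{htt} (the case $\kappa=\omega$), replacing finite coproducts of representables by $\kappa$-filtered colimits of representables, i.e.\ by objects of $\operatorname{Ind}_\kappa(\mathcal{C})\subset\mathcal{P}_{\Sigma_\kappa}(\mathcal{C})$.

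Start with $X\in\mathcal{P}_{\Sigma_\kappa}(\mathcal{C})\subset\mathcal{P}(\mathcal{C})$ and apply the preceding lemma to obtain a simplicial object $Y_\bullet$ of $\mathcal{P}(\mathcal{C})$ with $|Y_\bullet|\simeq X$ in $\mathcal{P}(\mathcal{C})$. Each $Y_n$ is a small coproduct $\coprod_{s\in S_n} j(c_s)$. We rewrite it as a $\kappa$-filtered colimit: the poset of $\kappa$-small subsets $T\subset S_n$ is $\kappa$-filtered, and
\[
Y_n\simeq\colim_{T}\coprod_{s\in T}j(c_s).
\]
Let $L\colon\mathcal{P}(\mathcal{C})\to\mathcal{P}_{\Sigma_\kappa}(\mathcal{C})$ be the localization of \cref{thma2}(1). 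By \cref{thma2}(2), $L$ carries $\coprod_{s\in T}j(c_s)$ to $j(\coprod_{s\in T}c_s)$. Since $L$ preserves colimits, $L(Y_n)$ is a $\kappa$-filtered colimit of representables computed in $\mathcal{P}_{\Sigma_\kappa}(\mathcal{C})$. By \cref{thma2}(4) this colimit agrees with the one computed in $\mathcal{P}(\mathcal{C})$. Hence $L(Y_n)$ lies in the full subcategory $\operatorname{Ind}_\kappa(\mathcal{C})\subset\mathcal{P}_{\Sigma_\kappa}(\mathcal{C})$ of $\kappa$-filtered colimits of representables. Indeed, $\operatorname{Ind}_\kappa(\mathcal{C})$ embeds fully faithfully into $\mathcal{P}(\mathcal{C})$ as those presheaves that are $\kappa$-filtered colimits of representables, because representables are completely compact in $\mathcal{P}(\mathcal{C})$.

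Set $U_\bullet:=L\circ Y_\bullet$, which is a simplicial object of $\operatorname{Ind}_\kappa(\mathcal{C})$. Since $L$ is a left adjoint and $X$ is already local, the colimit of $U_\bullet$ computed in $\mathcal{P}_{\Sigma_\kappa}(\mathcal{C})$ is
\[
L(|Y_\bullet|)\simeq L(X)\simeq X.
\]
The colimit is taken in $\mathcal{P}_{\Sigma_\kappa}(\mathcal{C})$, not in $\mathcal{P}(\mathcal{C})$, which matches the footnote's warning.

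The main subtlety is the claim that $L(Y_n)$ is genuinely an object of $\operatorname{Ind}_\kappa(\mathcal{C})$. This requires identifying the $\kappa$-filtered colimit in $\mathcal{P}_{\Sigma_\kappa}(\mathcal{C})$ with the one in $\mathcal{P}(\mathcal{C})$, and that identification is exactly \cref{thma2}(4). A secondary point is that the simplicial structure maps of $U_\bullet$ need not be induced by maps of the indexing posets. This causes no problem, since $U_\bullet$ is just a functor into the full subcategory $\operatorname{Ind}_\kappa(\mathcal{C})$.
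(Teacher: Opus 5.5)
Your proof is correct and takes the same approach as the paper, which simply says that the argument of HTT Lemma 5.5.8.14 works for any regular cardinal $\kappa$. What you wrote is exactly that adaptation: write each $Y_n$ as the $\kappa$-filtered colimit of its $\kappa$-small sub-coproducts, apply the localization $L$, and use that $\mathcal{P}_{\Sigma_\kappa}(\mathcal{C})$ is closed under $\kappa$-filtered colimits to conclude that each $L(Y_n)$ lies in $\operatorname{Ind}_\kappa(\mathcal{C})$.
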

\begin{proof}
	The proof of \cite[Lemma 5.5.8.14]{htt} also works for arbitrary regular cardinal $\kappa$.
\end{proof}
\begin{rem}
	Let $\mathcal{C}$ be a small \infcat which admits $\kappa$-small coproducts. We have a canonical equivalence $$\op{Ind}_\kappa(\mcc) \simeq \mcp_{\kappa\text{-}\sqcup}^{\kappa\text{-}\op{fil},\kappa\text{-}\sqcup}(\mcc).$$
	
	To see this, recall that
	in the \infcat $\mathcal{S}$ of anima , $\kappa$-filtered colimits commute with $\kappa$-small limits (and thus with $\kappa$-small products). Because limits and colimits in presheaf categories are computed pointwise, $\mcp_{\Sigma_\kappa}(\mcc)$ is closed under $\kappa$-filtered colimits in $\mcp(\mcc)$. Therefore, the $\kappa$-filtered colimit completion of $\mcc$, denoted $\op{Ind}_\kappa(\mcc)$, sits entirely inside $\mcp_{\Sigma_\kappa}(\mcc)$.
	
	Furthermore, the subcategory $\op{Ind}_\kappa(\mcc) \subset \mcp_{\Sigma_\kappa}(\mcc)$ is  closed under $\kappa$-small coproducts. Consequently, it exactly satisfies the universal property of the joint free completion $\mcp_{\kappa\text{-}\sqcup}^{\kappa\text{-}\op{fil},\kappa\text{-}\sqcup}(\mcc)$.
\end{rem}
\begin{cor}
	Let $\mathcal{C}$ be a small \infcat which admits $\kappa$-small coproducts.
	If  \mcc is additive, then we have a natural identification  $$\mathcal{P}_{\Sigma_\kappa}(\mathcal{C})\simeq \mathcal{P}_{\emptyset}^{\kappa\text{-}\mathrm{fil},\Delta^\opp}(\mathcal{C}).$$
\end{cor}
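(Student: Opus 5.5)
The plan is to compare both sides with the same geometric-realization description. In the inclusion $\mathcal{P}_{\Sigma_\kappa}(\mathcal{C})\subset\mathcal{P}(\mathcal{C})$, \cref{thma2}(4) and (6) say that, since $\mathcal{C}$ is additive, the subcategory is closed under $\kappa$-filtered colimits and under geometric realizations. By the preceding remark it contains $\op{Ind}_\kappa(\mathcal{C})\simeq\mcp_{\kappa\text{-}\sqcup}^{\kappa\text{-}\op{fil},\kappa\text{-}\sqcup}(\mathcal{C})$. Let $\mathcal{P}'\subset\mathcal{P}_{\Sigma_\kappa}(\mathcal{C})$ be the smallest full subcategory containing $\mathcal{C}$ and closed under $\kappa$-filtered colimits and geometric realizations. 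The first step is to show $\mathcal{P}'$ is everything. \cref{propa5} writes any $X$ as the colimit in $\mathcal{P}_{\Sigma_\kappa}(\mathcal{C})$ of a simplicial object in $\op{Ind}_\kappa(\mathcal{C})$. Additivity together with \cref{thma2}(6) shows this colimit agrees with the one in $\mathcal{P}(\mathcal{C})$, so it lies in $\mathcal{P}'$, because $\op{Ind}_\kappa(\mathcal{C})\subset\mathcal{P}'$.

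Next, identify $\mathcal{P}_{\Sigma_\kappa}(\mathcal{C})$ with the free cocompletion $\mathcal{P}_{\emptyset}^{\kappa\text{-}\mathrm{fil},\Delta^\opp}(\mathcal{C})$. The latter is realized, via \cite[5.3.6.2]{htt}, as the closure of $\mathcal{C}$ inside $\mathcal{P}(\mathcal{C})$ under $\kappa$-filtered colimits and geometric realizations. Because these colimits in $\mathcal{P}_{\Sigma_\kappa}(\mathcal{C})$ are computed in $\mathcal{P}(\mathcal{C})$, that closure is exactly $\mathcal{P}'$, which the first step showed equals $\mathcal{P}_{\Sigma_\kappa}(\mathcal{C})$. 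The identification is then natural, since it is induced by the Yoneda embedding.

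The main obstacle is a universal-property check. One needs that a functor out of $\mathcal{P}_{\Sigma_\kappa}(\mathcal{C})$ preserving only $\kappa$-filtered colimits and geometric realizations is determined by its restriction to $\mathcal{C}$, and that every functor on $\mathcal{C}$ extends. Closure-as-generation gives uniqueness. For existence, I would use the description via the htt closure: the left Kan extension along $\mathcal{C}\to\mathcal{P}'$ preserves the relevant colimits. This holds because that closure is by definition the free cocompletion, with no relations imposed, since no coproducts are required to be preserved.
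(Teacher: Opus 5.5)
Your argument is correct and is essentially the paper's: model $\mathcal{P}_{\emptyset}^{\kappa\text{-}\mathrm{fil},\Delta^\opp}(\mathcal{C})$ as the closure of the representables inside $\mathcal{P}(\mathcal{C})$ (HTT 5.3.6.2), get one inclusion from \cref{thma2}(4),(6) using additivity, and get the other from \cref{propa5}. Your final paragraph is superfluous: once $\mathcal{P}'$ is identified with HTT's model, the universal property comes with 5.3.6.2, so nothing remains to check.
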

\begin{proof}
	Since \mcc is additive, the inclusion $\mathcal{P}_{\Sigma_\kappa}(\mathcal{C})\subset \mathcal{P}(\mathcal{C})$ is closed under $\kappa$-filtered colimits and geometric realizations. Therefore, the following inclusion holds inside $\mcp(\mcc)$ $$\mathcal{P}_{\emptyset}^{\kappa\text{-}\mathrm{fil},\Delta^\opp}(\mathcal{C})\subset\mathcal{P}_{\Sigma_\kappa}(\mathcal{C}).$$
	On the other hand, the converse inclusion follows from \cref{propa5}.
\end{proof}
\begin{de}
	Let $\mathcal{C}$ be a \infcat which admits geometric realizations of simplicial objects. We will say that an object $P \in \mathcal{C}$ is \textbf{projective} if the functor $\mathcal{C} \rightarrow \mathcal{S}$ corepresented by $P$ commutes with geometric realizations.
	
	We say an object $X\in\mcc$ is $\kappa$-projective if it is both $\kappa$-compact and projective.
\end{de}
\begin{prop}
	Let \mca be a small additive \infcat which admits $\kappa$-small coproducts. For any $a\in \mca$, the representable presheaf $h_a\in \mcp_{\Sigma_\kappa}(\mca)$ is $\kappa$-projective. 
	Furthermore, $\mcp_{\Sigma_\kappa}(\mca)^{\kappa\text{-}\op{proj}}$ can be identified with the idempotent completion of $\mca$.
\end{prop}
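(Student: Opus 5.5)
We first show that each $h_a$ is $\kappa$-projective, and then identify all $\kappa$-projectives as retracts of representables.

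For $\kappa$-compactness of $h_a$, we use the Yoneda lemma: $\Map(h_a,-)$ is evaluation at $a$. By \cref{thma2}(4), $\kappa$-filtered colimits in $\mcp_{\Sigma_\kappa}(\mca)$ are computed in $\mcp(\mca)$, where evaluation commutes with all colimits. For projectivity, \cref{thma2}(6) applies since $\mca$ is additive: geometric realizations in $\mcp_{\Sigma_\kappa}(\mca)$ are computed pointwise in $\mcp(\mca)$. Hence evaluation at $a$ commutes with them, and $h_a$ is $\kappa$-projective. Retracts of $\kappa$-projective objects are again $\kappa$-projective, since both conditions are closed under retracts. Thus the idempotent completion of $\mca$, taken inside $\mcp_{\Sigma_\kappa}(\mca)$ (which is idempotent complete as a presentable category), lies in $\mcp_{\Sigma_\kappa}(\mca)^{\kappa\text{-}\op{proj}}$. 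The Yoneda embedding is fully faithful, so this is a genuine identification of $\mca^\natural$ with a full subcategory.

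For the converse, let $X$ be $\kappa$-projective. By \cref{propa5}, or by the corollary $\mcp_{\Sigma_\kappa}(\mca)\simeq\mcp_\emptyset^{\kappa\text{-}\op{fil},\Delta^\opp}(\mca)$, we write $X\simeq|U_\bullet|$ with each $U_n\in\op{Ind}_\kappa(\mca)$. Since $X$ is projective, $\Map(X,|U_\bullet|)\simeq|\Map(X,U_\bullet)|$, so $\pi_0\Map(X,U_0)\to\pi_0\Map(X,X)$ is surjective; indeed $\pi_0$ of a geometric realization is a quotient of $\pi_0$ of the zeroth term. Hence $\op{id}_X$ factors through $U_0$, and $X$ is a retract of $U_0\simeq\colim_J h_{a_j}$, a $\kappa$-filtered colimit of representables. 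Since $X$ is $\kappa$-compact, the section $X\to U_0$ factors through some $h_{a_j}$. Therefore $X$ is a retract of $h_{a_j}$, which proves the reverse inclusion.

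The step needing most care is the surjectivity claim for $\pi_0$ under geometric realization. This is standard: $|U_\bullet|$ receives an effective epimorphism from $U_0$ in $\mcs$. The other point to check is that the colimit in \cref{propa5} is the one in $\mcp_{\Sigma_\kappa}(\mca)$, and this agrees with the pointwise colimit precisely because of additivity (\cref{thma2}(6)). No other obstacles are expected.
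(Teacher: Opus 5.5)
Your proposal is correct and follows essentially the same route as the paper. Projectivity of $h_a$ comes from \cref{thma2}(6). For the converse, you write a $\kappa$-projective object as a geometric realization of objects of $\op{Ind}_\kappa(\mca)$ via \cref{propa5}, use projectivity to exhibit it as a retract of one term, and use $\kappa$-compactness to factor that retraction through a representable. You additionally justify the $\kappa$-compactness of $h_a$ via \cref{thma2}(4) and the Yoneda lemma, which the paper leaves implicit.
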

\begin{proof}
	The first statement follows from \cref{thma2}(6).
	
	For the last statement, let $P\in \mcp_{\Sigma_\kappa}(\mca)$ be a $\kappa$-projective object. By \cref{propa5},  there exists a simplicial object $U_{\bullet}: \mathrm{N}(\boldsymbol{\Delta})^{o p} \rightarrow \operatorname{Ind}_\kappa(\mathcal{A})$ whose colimit in $\mathcal{P}_{\Sigma_\kappa}(\mathcal{A})$ is $P$, therefore  $\id: P\to P$ factors through some  $U_k$. Since $U_k\simeq \colim_\alpha h_{a_\alpha}$ is a $\kappa$-filtered colimit of representable objects. The map $P\to U_k$ factors through some $h_{a_\alpha}$, hence $P$ is a retract of representable object, as desired.
\end{proof}
\begin{prop}\label{coprodproj}
	If \mcc is an additive \infcat which admits geometric realizations, then projective objects in \mcc are closed under any coproduct which exists in \mcc.
\end{prop}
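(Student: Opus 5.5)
The statement to prove is \cref{coprodproj}: in an additive \infcat admitting geometric realizations, any existing coproduct of projective objects is projective. Let $\{P_i\}_{i\in I}$ be projective objects of $\mcc$ whose coproduct $P=\bigoplus_i P_i$ exists. We must show that $\mapp_{\mcc}(P,-)$ commutes with geometric realizations.

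Since $\mcc$ is additive, I would first lift everything to connective spectra. Each $\mapp(P_i,-)$ refines to $\unmap(P_i,-)\colon\mcc\to\spgeq$. A functor to $\spgeq$ commutes with geometric realizations if and only if its composite with $\Omega^\infty$ does, because $\Omega^\infty\colon\spgeq\to\mcs$ is conservative and preserves sifted colimits \cite[Proposition 1.4.3.9]{ha}. So each $\unmap(P_i,-)$ preserves geometric realizations. By the universal property of the coproduct,
\[
\unmap(P,-)\simeq\prod_{i\in I}\unmap(P_i,-).
\]

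The key input is that in $\spgeq$, products commute with geometric realizations. This holds because $\spgeq$ is separated, complete, and satisfies $\mathrm{AB4}^*$, so \cref{prodcommutegeom} applies. Equivalently, one can argue directly: products in $\spgeq$ are computed in $\opsp$ and stay connective, and geometric realization in $\spgeq$ agrees with the colimit in $\opsp$, which is computed by a skeletal filtration. On each fixed homotopy group only a finite skeleton contributes, and products are exact.

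Given a simplicial object $z_\bullet$ in $\mcc$ with colimit $z$, we then compute
\[
\prod_i\unmap(P_i,z)\simeq\prod_i\big|\unmap(P_i,z_\bullet)\big|\simeq\Big|\prod_i\unmap(P_i,z_\bullet)\Big|.
\]
Applying $\Omega^\infty$ gives the claim.

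The main subtlety is the commutation of infinite products with geometric realizations in $\spgeq$, and that is exactly what \cref{prodcommutegeom} supplies. The rest of the argument is formal.
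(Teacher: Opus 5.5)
Your proof is correct and follows the paper's argument. The paper's proof is the one-line observation that infinite products commute with geometric realizations in $\spgeq$ by \cref{prodcommutegeom}. You have spelled out the surrounding steps: the identification $\unmap(\coprod_i P_i,-)\simeq\prod_i\unmap(P_i,-)$, and the use of conservativity of $\Omega^\infty$ on $\spgeq$ to pass between anima-valued and $\spgeq$-valued mapping functors.
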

\begin{proof}
	This is because  infinite products commute with geometric realizations in $\spgeq$ by \cref{prodcommutegeom}.
\end{proof}
\begin{prop}\label{kcprojgen}
	Let $\mathcal{A}$ be a small additive \infcat which admits $\kappa$-small coproducts, $\mathcal{D}$ a \infcat which admits $\kappa$-filtered colimits and geometric realizations, and $F: \mathcal{P}_{\Sigma_\kappa}(\mathcal{A}) \rightarrow \mathcal{D}$ a functor obtained  from  the left Kan extension of $f=F \circ j: \mathcal{A} \rightarrow \mathcal{D}$, where $j: \mathcal{A} \rightarrow \mathcal{P}_{\Sigma_\kappa}(\mathcal{A})$ denotes the Yoneda embedding. Consider the following conditions:
	\enu{
		\item  The functor $f$ is fully faithful.
		\item The essential image of $f$ consists of  $\kappa$-projective objects of $\mc{D}$.
		\item The \infcat $\mc{D}$ is generated by the essential image of $f$ under $\kappa$-filtered colimits and geometric realizations.
	}
	If (1) and (2) are satisfied, then $F$ is fully faithful. Moreover, $F$ is an equivalence if and only if (1), (2), and (3) are satisfied.
	
\end{prop}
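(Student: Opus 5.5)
This is the $\kappa$-projective analogue of \cite[Proposition 5.5.8.22]{htt}, and I would follow Lurie's argument, replacing sifted colimits by the pair ``$\kappa$-filtered colimits and geometric realizations''. The basic input is the preceding corollary. Since $\mca$ is additive, it gives $\mcp_{\Sigma_\kappa}(\mca)\simeq \mcp_{\emptyset}^{\kappa\text{-}\mathrm{fil},\Delta^\opp}(\mca)$, and this identification is compatible with the inclusion into $\mcp(\mca)$, since by \cref{thma2}(4),(6) that inclusion is closed under both classes of colimits. So $F$ is the unique functor extending $f$ that preserves $\kappa$-filtered colimits and geometric realizations. Conversely, every object of $\mcp_{\Sigma_\kappa}(\mca)$ is obtained from representables by iterating these two operations.

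For full faithfulness under (1) and (2), I would fix $a\in\mca$ and consider the full subcategory of those $Y\in\mcp_{\Sigma_\kappa}(\mca)$ for which
\[
\mapp(j(a),Y)\to\mapp(f(a),F(Y))
\]
is an equivalence. By (1) it contains the representables. By (2), $\mapp_{\mcd}(f(a),-)$ preserves $\kappa$-filtered colimits and geometric realizations, and $\mapp(j(a),-)$ does the same by \cref{thma2}. Since $F$ also preserves these colimits, the subcategory is closed under them, and hence is everything. Next, fix $Y$ and vary $X$. The class of $X$ for which $\mapp(X,Y)\to\mapp(F X,F Y)$ is an equivalence contains the representables by the previous step. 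It is closed under $\kappa$-filtered colimits and geometric realizations, because both sides turn such colimits in $X$ into limits ($F$ preserves them). Therefore $F$ is fully faithful.

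For the equivalence statement, assume first (1)--(3). Then $F$ is fully faithful, and its essential image contains $f(\mca)$ and is closed under $\kappa$-filtered colimits and geometric realizations, since $F$ preserves them and $\mcp_{\Sigma_\kappa}(\mca)$ has them. So by (3) the image is all of $\mcd$. Conversely, suppose $F$ is an equivalence. Then (1) holds because $j$ is fully faithful. Condition (2) holds because $j(a)$ is $\kappa$-projective by the preceding proposition, and these properties transfer along the equivalence $F$. Condition (3) follows from the corollary's description of $\mcp_{\Sigma_\kappa}(\mca)$ as generated by representables under these two operations.

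I expect the main obstacle to be the claim that $F$ preserves geometric realizations. This is what lets the closure argument pass through $F$, and it is less formal than for $\kappa$-filtered colimits. By \cref{propa5}, colimits of simplicial objects in $\operatorname{Ind}_\kappa(\mca)$, computed in $\mcp_{\Sigma_\kappa}(\mca)$, agree with those in $\mcp(\mca)$ because $\mca$ is additive. The left Kan extension $F$ along $\mca\hookrightarrow\mcp_{\Sigma_\kappa}(\mca)$ should therefore be computed by first extending to $\operatorname{Ind}_\kappa(\mca)$ and then taking geometric realizations of such resolutions. I would check independence of the chosen resolution by using that the identification $\mcp_{\Sigma_\kappa}(\mca)\simeq \mcp_{\emptyset}^{\kappa\text{-}\mathrm{fil},\Delta^\opp}(\mca)$ holds as free cocompletions, so that $F$ is the unique extension preserving both colimit types.
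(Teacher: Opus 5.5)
Your proposal is correct and follows the same route as the paper, which just invokes the argument of \cite[Proposition 5.3.5.11]{htt}, the $\kappa$-compact counterpart of the 5.5.8.22 argument you adapt: the comparison maps are equivalences on representables, and this propagates under $\kappa$-filtered colimits and geometric realizations, first in the target variable using $\kappa$-projectivity and then in the source variable. The point you flag, that the given left Kan extension $F$ preserves these colimits, is left implicit in the paper; your resolution-based sketch still asserts rather than proves that the left Kan extension is computed by resolutions, and a quicker way to settle it is to compose with the fully faithful co-Yoneda embedding of $\mathcal{D}$ into a cocomplete category (which preserves and reflects all colimits existing in $\mathcal{D}$), under which $F$ becomes the restriction to $\mathcal{P}_{\Sigma_\kappa}(\mathcal{A})$ of the colimit-preserving extension of $f$ to $\mathcal{P}(\mathcal{A})$, and $\mathcal{P}_{\Sigma_\kappa}(\mathcal{A})\subset\mathcal{P}(\mathcal{A})$ is closed under both colimit types by \cref{thma2}(4),(6).
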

\begin{proof}
	It basically follows from the same argument as \cite[Proposition 5.3.5.11]{htt}.
\end{proof}

\begin{prop}
	Let $\mathcal{A}$ be a small additive \infcat which admits $\kappa$-small coproducts. Then  $\mathcal{P}_{\Sigma_\kappa}(\mathcal{A})$ is a complete presentable prestable \infcat. We also have $\mathcal{P}_{\kappa\text{-}\sqcup}^{\op{small}\text{-}\sqcup,\op{idem}}(\mca)\simeq \mathcal{P}_{\Sigma_\kappa}(\mca)^{\op{proj}}$.
\end{prop}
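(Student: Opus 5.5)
The statement has three parts: $\mathcal{P}_{\Sigma_\kappa}(\mathcal{A})$ is presentable, it is prestable and complete, and its projective objects are the idempotent-completed closure of $\mathcal{A}$ under small coproducts. For presentability and prestability, \cref{thma2}(1),(5) already give that $\mathcal{P}_{\Sigma_\kappa}(\mathcal{A})$ is presentable. Since $\mathcal{A}$ is additive, every finite-product-preserving presheaf lifts uniquely to $\spgeq$. Hence
\[
\mathcal{P}_{\Sigma_\kappa}(\mathcal{A})\simeq \funct^{\kappa\text{-}\times}(\mathcal{A}^\opp,\spgeq)\subset \funct^{\kappa\text{-}\times}(\mathcal{A}^\opp,\opsp)=:\mathcal{T}.
\]
The category $\mathcal{T}$ is stable: $\kappa$-small products in $\opsp$ are exact, so $\mathcal{T}$ is closed under finite limits and colimits in $\funct(\mathcal{A}^\opp,\opsp)$. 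I then show that the essential image of $\mathcal{P}_{\Sigma_\kappa}(\mathcal{A})$ in $\mathcal{T}$ is closed under finite colimits and extensions. Extensions are checked pointwise in $\spgeq$. For cofibers, a pointwise cofiber of connective functors is connective, and it still preserves $\kappa$-small products because products of connective spectra computed in $\opsp$ stay connective (AB4$^*$ for $\spgeq$). By \cref{prestcriterion}, this gives prestability.

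For completeness, the same description shows that $\mathcal{P}_{\Sigma_\kappa}(\mathcal{A})$ is closed under all small products in $\mathcal{T}$, so it satisfies AB4$^*$. It is also separated: an $\infty$-connective object $X$ has every value $X(a)\in\spgeq$ $\infty$-connective, because truncations are computed pointwise. To justify this, note that $\tau_{\le n}$ of a connective spectrum commutes with products, so pointwise truncation preserves $\kappa$-product-preservation. Then $X(a)=0$ for all $a$, so $X=0$. \cref{sepab4*iscomp} then gives completeness.

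For the projectives, \cref{thma2}(6) and the preceding proposition show that each $h_a$ is projective. Projective objects are closed under arbitrary coproducts by \cref{coprodproj} and under retracts, so $\mathcal{P}_{\kappa\text{-}\sqcup}^{\op{small}\text{-}\sqcup,\op{idem}}(\mathcal{A})\subset \mathcal{P}_{\Sigma_\kappa}(\mathcal{A})^{\op{proj}}$. Here I use that the closure of $\mathcal{A}$ under small coproducts and retracts inside $\mathcal{P}_{\Sigma_\kappa}(\mathcal{A})$ realizes this free completion: the Yoneda embedding preserves $\kappa$-small coproducts, and maps out of coproducts of representables are computed by products of values. For the converse, let $P$ be projective. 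By \cref{propa5}, $P\simeq|U_\bullet|$ with each $U_n\in\operatorname{Ind}_\kappa(\mathcal{A})$. The identity of $P$ lifts through $U_0$ by projectivity: $\pi_0\Map(P,|U_\bullet|)$ is a quotient of $\pi_0\Map(P,U_0)$, since geometric realization of anima is surjective on $\pi_0$ from the $0$-th term. So $P$ is a retract of $U_0$.

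The main obstacle is that $U_0$ is a $\kappa$-filtered colimit of representables rather than a coproduct. To handle it, I would instead cover $P$ by the coproduct $Q=\bigoplus_{(a,s)} h_a$ over all $a\in\mathcal{A}$ and $s\in\pi_0P(a)$. The map $Q\to P$ is an effective epimorphism, since it is surjective on $\pi_0$ of every value. Its Čech nerve has geometric realization $P$, which holds in the prestable category (cf. the Stefanich result recalled in \cref{section4}). Projectivity of $P$ then splits $Q\to P$, so $P$ is a retract of a small coproduct of representables, completing the identification.
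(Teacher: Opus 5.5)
Your proof is correct and follows the paper's route: prestability via the embedding into $\funct^{\kappa\text{-}\times}(\mathcal{A}^\opp,\opsp)$, one inclusion via \cref{coprodproj}, and the converse by splitting a $\pi_0$-epimorphism from a small coproduct of representables using projectivity. Your separatedness-plus-$\mathrm{AB4}^*$ argument through \cref{sepab4*iscomp} also proves the completeness claim, which the paper's own proof leaves unaddressed.
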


\begin{proof}
	Since the $\funct^{\kappa\text{-}\times}(\mca^\opp,\mcs)\simeq\funct^{\kappa\text{-}\times}(\mca^\opp,\spgeq)\hookrightarrow\funct^{\kappa\text{-}\times}(\mca^\opp,\opsp)$ is closed under finite colimits and extensions, we see that $\mathcal{P}_{\Sigma_\kappa}(\mca)$ is prestable. By \cref{coprodproj}, we obtain the inclusion $\mathcal{P}_{\kappa\text{-}\sqcup}^{\op{small}\text{-}\sqcup,\op{idem}}(\mca)\subset \mathcal{P}_{\Sigma_\kappa}(\mca)^{\op{proj}}$. For the converse inclusion, given a projective object $X\in\mathcal{P}_{\Sigma_\kappa}(\mca)^{\op{proj}}$. Then there exists a $\pi_0$-epimorphism $\bigsqcup_\alpha P_\alpha\to X$ from a small coproduct of $\kappa$-projective objects, which makes $X$ be a retract of $\bigsqcup_\alpha P_\alpha$.
\end{proof}

\begin{rem}
	Note that $\mathcal{P}_{\Sigma_\kappa}(\mca)$ is not necessarily Grothendieck and hence not dualizable additive.
\end{rem}
\begin{prop}\label{kproj}
	Let $\mcc$ be a presentable additive \infcat. If $\mcc$ is generated by projectives under small colimits, then $\mcc\simeq \mathcal{P}_{\Sigma_\kappa}(\mcc^{\kappa\text{-}\op{proj}})$ for some regular cardinal $\kappa$.
\end{prop}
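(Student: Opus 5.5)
The plan is to choose $\kappa$ so large that a small set of projective generators consists of $\kappa$-compact objects, and then to invoke the recognition criterion \cref{kcprojgen} with $\mca=\mcc^{\kappa\text{-}\op{proj}}$ and $\mcd=\mcc$.

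\textbf{Choosing $\kappa$.} Since $\mcc$ is presentable and generated under small colimits by projectives, a small set $S$ of projective objects already generates $\mcc$ under small colimits: choose $\lambda$ with $\mcc$ $\lambda$-compactly generated, and note that each $\lambda$-compact object is a colimit of projectives, so only a set of them is needed. We take $\kappa\geq\lambda$ regular and large enough that every object of $S$ is $\kappa$-compact and $\mcc^{\kappa}$ is closed under $\kappa$-small coproducts. Set $\mca:=\mcc^{\kappa\text{-}\op{proj}}$. This is essentially small, idempotent complete, closed under $\kappa$-small coproducts (using \cref{coprodproj} together with the closure of $\kappa$-compacts), and additive. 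Let $F\colon\mcp_{\Sigma_\kappa}(\mca)\to\mcc$ be the left Kan extension of the inclusion $f$.

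\textbf{Checking the hypotheses of \cref{kcprojgen}.} Conditions (1) and (2) hold by construction: $f$ is fully faithful and its image consists of $\kappa$-projective objects. It remains to verify (3), namely that $\mcc$ is generated by $\mca$ under $\kappa$-filtered colimits and geometric realizations.

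\textbf{Proving condition (3).} Every small colimit can be written using coproducts and geometric realizations, since a colimit is the geometric realization of the simplicial bar construction built from coproducts. An arbitrary coproduct is a $\kappa$-filtered colimit of $\kappa$-small coproducts, and $\kappa$-small coproducts of objects of $\mca$ stay in $\mca$. Hence the closure $\mcc'$ of $\mca$ under $\kappa$-filtered colimits and geometric realizations should contain all colimits of $S$.

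The main obstacle is the following circularity. The bar construction for a general diagram in $\mcc'$ requires coproducts of objects of $\mcc'$, not just of objects of $\mca$, so it is not immediate that $\mcc'$ is closed under coproducts. To handle this, I would argue via resolutions, imitating the proof of \cref{propa5}. Any object $X$ admits a $\pi_0$-surjection from a coproduct of objects of $S$, because $S$ generates $\mcc$ under colimits; note that $\mcc$ is prestable, being a retract-type localization argument applied through \cref{prestcriterion}, and this prestability should be checked first. Iterating on \v{C}ech-type kernels produces a simplicial resolution $U_\bullet\to X$ with each $U_n$ a coproduct of objects of $S$, hence $U_n\in\op{Ind}_\kappa(\mca)$. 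The colimit $|U_\bullet|\simeq X$ holds because the augmentation is $\pi_0$-surjective levelwise in the prestable sense and geometric realizations of such hypercovers in a prestable category are detected on homotopy; this uses that projectives $\mapp(P,-)$ detect equivalences.

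If prestability turns out to be unavailable, the fallback is to show directly that $\mcc'$ is closed under coproducts. Here one writes a coproduct of geometric realizations as the geometric realization of levelwise coproducts, using that coproducts commute with geometric realizations, and then inducts.
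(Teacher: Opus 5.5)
Your architecture is the paper's: extract a small set $S$ of projective generators, take $\kappa$ large enough that $S\subset\mcc^{\kappa}$, and apply \cref{kcprojgen} to $\mca=\mcc^{\kappa\text{-}\op{proj}}$. The paper invokes \cref{kcprojgen} directly once $S$ is found, and the observation below is what makes that legitimate.

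Two steps need repair. The first is the small-set step. The hypothesis does not say that each ($\lambda$-compact) object is a colimit of projectives. It only says that the colimit closure of the projectives is all of $\mcc$. The paper's fix is as follows. Take the union, over small sets $T$ of projectives, of the colimit closures of $T$. This union contains all projectives and is closed under small colimits, since a small diagram involves only a small union of such $T$. Hence it equals $\mcc$. So each member of a small generating set lies in the closure of some small set of projectives, and the union of these small sets is the desired $S$.

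The real gap is condition (3). Your main route assumes $\mcc$ is prestable. The justification offered (``a retract-type localization argument through \cref{prestcriterion}'') is not an argument: there is no retract or localization in sight, and prestability of $\mcc$ is really an \emph{output} of the proposition. Your fallback, a transfinite induction on construction depth, is not routine either for infinite coproducts. The summands can have unbounded depth, so decomposing them all simultaneously need not decrease any well-founded measure. You also cannot pass to finite sub-sums, because only $\kappa$-filtered colimits are available.

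None of this is needed:
\begin{itemize}
\item By the first half of \cref{kcprojgen}, conditions (1) and (2) alone already make $F$ fully faithful.
\item Since $\mca\subset\mcc$ is closed under $\kappa$-small coproducts, $F$ preserves all small colimits, by the universal property of $\mathcal{P}_{\Sigma_\kappa}(\mca)$ (\cref{thma2}).
\item The essential image of a fully faithful colimit-preserving functor is closed under small colimits: lift the diagram and take its colimit in the source.
\item This essential image contains $S$, so it is all of $\mcc$.
\end{itemize}
Thus $F$ is an equivalence, condition (3) holds a posteriori, and prestability of $\mcc$ comes for free.

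Your resolution idea can also be rescued without prestability. Build a split simplicial object $U_\bullet$, each $U_n$ a coproduct of objects of $S$, such that $\mapp(P,U_\bullet)\to\mapp(P,X)$ is a hypercover of anima for every $P\in S$. To do this, at each stage map $\bigoplus_{P\in S,\,\alpha\in\pi_0\mapp(P,M_nU)}P$ to the matching object $M_nU$. Then use projectivity of each $P$ and the joint conservativity of $\{\mapp(P,-)\}_{P\in S}$. The argument above is much shorter.
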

\begin{proof}
	Let $\mcp$ denote the class of all projective objects in $\mcc$. By assumption, $\mcc$ is the smallest full subcategory of itself containing $\mcp$ and closed under all small colimits. We denote the colimit closure of any class $\mathcal{A} \subset \mcc$ as $\closure{\mathcal{A}}$. Thus, our assumption is $\closure{\mcp} = \mcc$.
	
	Since $\mcc$ is a \textit{presentable} \infcat, there exists a small set of objects $G = \{X_i\}_{i \in I}$ (where $I$ is a small indexing set) such that $G$ generates $\mcc$ under small colimits, i.e., $\closure{G} = \mcc$.

	Define a subcategory $\mathcal{U}$ as the union of colimit closures of all small subsets of $\mcp$:
	\[ \mathcal{U} = \bigcup_{S \subset \mcp, \, S \text{ is small}} \closure{S} \]
	We claim that $\mathcal{U}$ is closed under small colimits. Let $D: J \to \mathcal{U}$ be a small diagram. Since $J$ is essentially small, for each vertex $j \in J$, the object $D(j)$ belongs to $\closure{S_j}$ for some small subset $S_j \subset \mcp$. Define $S^* = \bigcup_{j \in J} S_j$. Since $J$ is a small set and each $S_j$ is a small set, $S^*$ is also a small subset of $\mcp$. 
	Clearly, $D(j) \in \closure{S_j} \subset \closure{S^*}$ for all $j$. Since $\closure{S^*}$ is closed under small colimits, $\colim D \in \closure{S^*} \subset \mathcal{U}$. 
	Thus, $\mathcal{U}$ is closed under small colimits and contains $\mcp$. By the minimality of the closure, we have $\mathcal{U} = \closure{\mcp} = \mcc$.
	
	Since $G \subset \mcc = \mathcal{U}$, for each generator $X_i \in G$, there exists a small subset $S_i \subset \mcp$ such that $X_i \in \closure{S_i}$. Let:
	\[ \Sigma = \bigcup_{i \in I} S_i \]
	Since $I$ is a small set and each $S_i$ is small, $\Sigma$ is a \textit{small set} of projective objects. Furthermore, $G \subset \closure{\Sigma}$. Since $\closure{G} = \mcc$, it follows that $\closure{\Sigma} = \mcc$. Thus, $\mcc$ is generated by a small set of projectives $\Sigma$.
	
	In a presentable \infcat, every object is $\kappa$-compact for some regular cardinal $\kappa$. For each $P \in \Sigma$, let $\kappa_P$ be a regular cardinal such that $P$ is $\kappa_P$-compact. Since $\Sigma$ is a small set, we can define a successor cardinal:
	\[ \kappa = \left( \sup_{P \in \Sigma} \kappa_P \right)^+ \]
	Then $\kappa$ is a regular cardinal, and every $P \in \Sigma$ is $\kappa$-compact. Therefore, $\Sigma$ is a small set of $\kappa$-compact projectives that generates $\mcc$ under small colimits. By \cref{kcprojgen}, we get $\mcc\simeq \mathcal{P}_{\Sigma_\kappa}(\mcc^{\kappa\text{-}\op{proj}})$.
\end{proof}
\subsection{Large weight structures}
We briefly study presentable weight structures on large, non-compactly generated stable \infcats. We prove that a presentable $t$-category generated by projectives naturally forms a hypercomplete presentable weight structure.
\begin{de}
	Let $\mcc\in\prlst$. We say that an accessible $t$-structure $(\mcc,\mcc_{\geq0})$ is a \textbf{presentable weight structure} if it forms  the connective part of a weight structure\footnote{Note that once such a weight structure exists, then it is unique.} (see \cref{wstructure}).
\end{de}

\begin{lem}\label{wheartdiscribe}
	Let $(\mcc,\mcc_{\geq0})$ be a stable \infcat with a $t$-structure such that $\mcc_{\geq0}$ admits geometric realizations. If $(\mcc,\mcc_{\geq0})$ forms  a weight structure, then $\mcc^{w\text{-}\heartsuit}$ can be  identified with $\mcc_{\geq0}^{\op{proj}}$, the full subcategory of projective objects in $\mcc_{\geq0}$.
\end{lem}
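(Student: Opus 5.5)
We prove the two inclusions $\mcc^{w\text{-}\heartsuit}\subset\mcc_{\geq0}^{\op{proj}}$ and $\mcc_{\geq0}^{\op{proj}}\subset\mcc^{w\text{-}\heartsuit}$, using throughout that $\mcc_{w\geq0}=\mcc_{\geq0}$. Since the weight structure is determined by its connective part, we also have
\[
\mcc_{w\leq0}=\{x\mid \unmap(x,y)\text{ is connective for all }y\in\mcc_{\geq0}\}.
\]
Equivalently, $x\in\mcc_{w\leq0}$ if and only if $\pi_0\Map(x,\Sigma y)=0$ for all $y\in\mcc_{\geq0}$.

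For the first inclusion, let $x\in\mcc^{w\text{-}\heartsuit}$, so $x\in\mcc_{\geq0}$ and $\unmap(x,y)$ is connective for every $y\in\mcc_{\geq0}$. We show that $\Map(x,-)$ preserves geometric realizations in $\mcc_{\geq0}$. On $\mcc_{\geq0}$ the functor $\unmap(x,-)$ takes values in $\spgeq$ and preserves finite colimits: cofiber sequences in $\mcc_{\geq0}$ are cofiber sequences in $\mcc$, and these go to fiber and cofiber sequences of spectra, which stay in $\spgeq$ because all terms are connective. In particular it is right exact. Now let $y_\bullet$ be a simplicial object of $\mcc_{\geq0}$ with colimit $|y_\bullet|$. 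This realization is the colimit of its skeletal filtration, and its colimit in $\mcc_{\geq0}$ agrees with the colimit in $\mcc$. I would argue through \cref{lemma4.3}(2) applied to $\unmap(x,-)\colon\mcc_{\geq0}\to\spgeq$, since $\spgeq$ has a left complete $t$-structure; this would show that preserving finite colimits implies preserving geometric realizations. Composing with $\Omega^\infty$, which preserves sifted colimits, gives projectivity of $x$.

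\textbf{Main obstacle.} \cref{lemma4.3}(2) requires $\mcc_{\geq0}$ to admit geometric realizations, which holds by hypothesis, and the direction "finite colimits $\Rightarrow$ geometric realizations" needs only left completeness of the target. So the obstacle is only to check that the lemma's proof really uses nothing more about the source. If it does, I would argue directly with the truncations $\tau_{\leq n}$ via \cref{lemma4.3}(1): $\tau_{\leq n}|y_\bullet|$ is computed by the finite skeleton $\tau_{\leq n}|\mathrm{sk}_{n+1}y_\bullet|$, and a finite colimit is preserved.

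For the converse, let $P\in\mcc_{\geq0}$ be projective; we need $P\in\mcc_{w\leq0}$, i.e.\ $\pi_0\Map(P,\Sigma y)=0$ for all $y\in\mcc_{\geq0}$. Consider $\Sigma y$ as the geometric realization of the simplicial bar construction $[n]\mapsto y^{\oplus n}$ in $\mcc_{\geq0}$; this colimit is computed in $\mcc$ and lies in $\mcc_{\geq0}$. Projectivity gives
\[
\Map(P,\Sigma y)\simeq|\Map(P,y)^{\times\bullet}|\simeq B\Map(P,y),
\]
which is connected. Hence $\pi_0\Map(P,\Sigma y)=0$, and $P$ lies in the heart.
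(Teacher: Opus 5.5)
Your proof is correct and is essentially the paper's argument. The paper's one-line proof implicitly uses the same description of $\mcc^{w\text{-}\heartsuit}$ as the objects $x\in\mcc_{\geq0}$ with $\unmap(x,y)$ connective for all $y\in\mcc_{\geq0}$, and cites \cite[Proposition 3.7]{hattt} for the fact that these are exactly the projectives of $\mcc_{\geq0}$. You prove that fact by hand: via \cref{lemma4.3}(2) in one direction, and via the bar construction of $\Sigma y$, whose $0$-th term is $0$, in the other. Your ``main obstacle'' worry is unnecessary, because \cref{lemma4.3}(2) as stated in the paper already assumes only geometric realizations in the source and left completeness of the target.
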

\begin{proof}
	It follows from the description of projective objects in terms of $t$-structure; see \cite[Proposition 3.7]{hattt}.
\end{proof}
\begin{prop}\label{hycompleteimpliespsigamak}
	Let $(\mcc,\mcc_{\geq0})$ be a presentable stable \infcat equipped with a presentable  weight structure. Then:
	\enu{
		\item The inclusion $\mcc_{\geq0}\hookrightarrow\mcc$ is closed under small products, i.e. the $t$-structure satisfies $\mathrm{AB4}^*$.
		\item 
		If $\mcc_{\geq0}$ is hypercomplete, then	$\mcc_{\geq0}$ is generated by projectives under small colimits, and hence $\mcc_{\geq0}\simeq \mcp_{\Sigma_\kappa}(\mcc_{\geq0}^{\kappa\text{-}\op{proj}})$ for some $\kappa$ by \cref{kproj}. 
	} 
\end{prop}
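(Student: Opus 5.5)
For (1), I will use the weight-structure orthogonality to describe $\mcc_{\geq0}$ as a right orthogonal. By definition of a weight structure, $\mcc_{w\geq0}=\mcc_{\geq0}$ consists of those $y$ with $\unmap(x,y)$ connective for all $x\in\mcc_{w\leq0}$. One inclusion is axiom (2). For the converse, take $y$ with $\unmap(x,y)$ connective for all $x\in\mcc_{w\leq 0}$ and choose a weight truncation $y_{w\leq -1}\to y\to y_{w\geq0}$. Since $\Sigma^{-1}$-shifts of $w\leq -1$ objects lie in $w\leq 0$, the map $y_{w\leq-1}\to y$ is null on $\pi_0$ after the shift. This exhibits $y$ as a retract of $y_{w\geq0}$, and idempotent-closedness then gives $y\in\mcc_{\geq0}$; this is the standard argument. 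With this description, $\unmap(x,\prod_i y_i)\simeq\prod_i\unmap(x,y_i)$ is a product of connective spectra, hence connective. So $\mcc_{\geq0}$ is closed under products in $\mcc$, which is exactly $\mathrm{AB4}^*$.

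For (2), let $\mcc'\subset\mcc_{\geq0}$ be the closure of $\mcc^{w\text{-}\heartsuit}\simeq\mcc_{\geq0}^{\op{proj}}$ (by \cref{wheartdiscribe}) under small colimits. I will show that every $x\in\mcc_{\geq0}$ is a colimit of projectives by building a "weight-Postnikov" resolution. Write $x=x_0$ and take a weight truncation $x_{w\leq0}\to x\to x_{w\geq1}$. Since $x\in\mcc_{w\geq0}$ and $\mcc_{w\geq0}$ is extension-closed, the first term $x_{w\leq0}$ lies in the heart, so it is projective. The cofiber $x_1:=x_{w\geq1}$ lies in $\Sigma\mcc_{\geq0}$. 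Iterating on $\Sigma^{-1}x_1$ gives a sequence $x\to x_1\to x_2\to\cdots$ with $x_n\in\Sigma^n\mcc_{\geq0}$, whose successive fibers are shifts of projectives. Inductively, each $\fib(x\to x_n)$ is built from projectives by finitely many extensions. In $\mcc_{\geq0}$ the relevant extensions are cofibers of maps $\Sigma^{k-1}P\to(\text{previous stage})$, so every finite stage lies in $\mcc'$.

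Set $F_n=\fib(x\to x_n)\in\mcc'$. Then $\colim_n F_n\to x$ has cofiber $\colim_n x_n$, which is $\infty$-connective. I expect this to be the main obstacle: in general the $t$-structure need not be compatible with filtered colimits, so I would argue $\tau_{\leq m}$-wise instead. The truncation $\tau_{\leq m}$ is a left adjoint and commutes with colimits, and $\tau_{\leq m}x_n=0$ for $n>m$, so $\tau_{\leq m}(\colim x_n)=\colim\tau_{\leq m}x_n=0$. Hence the cofiber is $\infty$-connective and vanishes by hypercompleteness, giving $x\simeq\colim F_n\in\mcc'$.

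Finally, projective generation together with \cref{kproj} yields $\mcc_{\geq0}\simeq\mcp_{\Sigma_\kappa}(\mcc_{\geq0}^{\kappa\text{-}\op{proj}})$. If one wants to avoid the extension step, the iteration can instead be organized as a simplicial projective resolution, which \cref{lemma4.3} lets one compare.
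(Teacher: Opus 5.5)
Your proposal is correct and follows the paper's route. Part (1) is exactly the description of $\mcc_{\geq0}=\mcc_{w\geq0}$ as the class of $y$ with $\unmap(x,y)\in\spgeq$ for all $x\in\mcc_{w\leq0}$, combined with closure of $\spgeq$ under products. Part (2) identifies the weight heart with $\mcc_{\geq0}^{\op{proj}}$ via \cref{wheartdiscribe} and then uses hypercompleteness to show the heart generates; the paper cites \cite[Proposition 1.17]{levy2025c} for this last step, whereas you prove it directly with the weight-Postnikov tower.

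Two small remarks. In (1), the relevant fact is that $\Sigma y_{w\leq-1}\in\mcc_{w\leq0}$, so $\pi_0\unmap(y_{w\leq-1},y)=\pi_{-1}\unmap(\Sigma y_{w\leq-1},y)=0$. In (2), no workaround via $\tau_{\leq m}$ is needed: each $\mcc_{\geq m}$ is closed under all colimits in $\mcc$, so $\colim_n x_n\in\bigcap_m\mcc_{\geq m}$ directly.
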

\begin{proof}
	(1) It follows from $\mathrm{AB4}^*$ of \spgeq.\\
	(2) Since by \cref{wheartdiscribe} the w-heart of \mcc can be identified with $\mcc_{\geq0}^{\op{proj}}$, the result follows from 	\cite[Proposition 1.17]{levy2025c}.
\end{proof}
\begin{prop}\label{tcatiswcat}
	Let $(\mcc,\mcc_{\geq0})$ be a presentable stable \infcat with an accessible $t$-structure. If projectives in $\mcc_{\geq0}$ generate \mcc as a localizing subcategory, then $(\mcc,\mcc_{\geq0})$ forms a hypercomplete  presentable weight structure.
\end{prop}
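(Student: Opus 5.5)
We must show: if projectives in $\mcc_{\geq0}$ generate $\mcc$ as a localizing subcategory, then $\mcc_{\geq0}$ is the connective part of a weight structure, and $\mcc_{\geq0}$ is hypercomplete (separated).

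The plan is to build the weight structure by the same transfinite procedure as in the proof of \cref{cgwstructure}, with compactness replaced by accessibility. First I fix a small set $S$ of projectives in $\mcc_{\geq0}$ generating $\mcc$ as a localizing subcategory. This is possible by the argument of \cref{kproj}: $\mcc$ is presentable, so it is generated by a small set of objects, and each of these lies in the localizing closure of a small set of projectives. Then I choose $\kappa$ with every $P\in S$ being $\kappa$-compact. Projectivity of $P$ in $\mcc_{\geq0}$ gives $\unmap(P,\Sigma^n y)$ connective for $y\in\mcc_{\geq0}$ and $n\ge 0$, and $\pi_0\Map(P,\Sigma y)=0$. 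Equivalently, $\unmap(P,y)$ is connective for all $y\in\mcc_{\geq0}$, so $\Sigma^{-n}P$ is left orthogonal to $\mcc_{\geq 1}$ in the weight sense.

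Next I identify $\mcc_{\geq0}$ with $\{y : \unmap(P,y) \text{ connective for all } P\in S\}$. One inclusion was just shown. For the other, take $y$ with all $\unmap(P,y)$ connective. Then $\unmap(P,\tau_{\le -1}y)=0$, since the $t$-structure fibre sequence $\tau_{\ge0}y\to y\to\tau_{\le-1}y$ together with the connectivity of $\unmap(P,\tau_{\geq 0} y)$ and the $\pi_0$-surjectivity onto it gives this. Because $S$ generates $\mcc$ as a localizing subcategory, $\tau_{\le-1}y=0$. I expect this generation-implies-detection step to be the main obstacle: localizing generation gives joint conservativity of $\unmap(P,-)$, $P\in S$, only because the objects $z$ with $\unmap(z,w)=0$ form a localizing subcategory. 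That is fine here and yields $y\in\mcc_{\ge0}$.

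Now I run the construction of \cref{cgwstructure} for the orthogonality class $\mcc_{w\le0}$, the left orthogonal of $\mcc_{\geq 1}$. Starting from $x_0$, let $y_i\to x_i$ be the sum over all maps from $\Sigma^{-n}P$ ($P\in S$, $n\ge0$) and set $x_{i+1}=\cofib$. I iterate $\kappa$ times rather than $\omega$ times, so that $\kappa$-compactness of $P$ makes $\colim x_i$ receive only null maps from the $\Sigma^{-n}P$, i.e. lie in $\Sigma\mcc_{\geq0}$ by the previous paragraph. The fibre of $x_0\to\colim x_i$ is an iterated extension and colimit of sums of $\Sigma^{-n}P$, hence in $\mcc_{w\le0}$. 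This gives axiom (3). Axioms (1) and (2) are formal, and $\mcc_{\ge0}$ is idempotent closed. So $(\mcc,\mcc_{\ge0})$ is a presentable weight structure, with $\mcc_{w\ge0}=\mcc_{\ge0}$.

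Finally, for hypercompleteness (separatedness by \cref{comple}), let $y$ be $\infty$-connective. Then $y\in\mcc_{\ge n}$ for all $n$. For $P\in S$, $\pi_k\unmap(P,y)=\pi_0\Map(P,\Sigma^{-k}y)$, and this vanishes because $\Sigma^{-k}y\in\mcc_{\ge1}$ while $P$ is projective, so $\pi_0\Map(P,-)$ factors through $\pi_0$ of the $t$-structure, which is zero. Therefore all $\unmap(P,y)=0$, and detection gives $y=0$.
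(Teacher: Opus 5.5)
Your argument is correct in outline. It shares the paper's skeleton:
\begin{itemize}
\item choose a small set of $\kappa$-compact projectives;
\item identify $\mcc_{\geq0}$ with $\{y : \unmap(P,y)\ \text{connective for all } P\}$ using localizing generation;
\item deduce hypercompleteness from the same detection argument.
\end{itemize}
Your $\tau_{\le -1}y$ argument fills in the paper's one-line ``by the generation assumption'', and that detection step is routine, not the real obstacle. You differ from the paper on the existence of weight truncations. The paper cites \cite[Theorem 6.3]{nikolaus2026unbounded} for the weight structure generated by a set of $\kappa$-compact objects. You instead reprove it with a $\kappa$-step transfinite version of \cref{cgwstructure}. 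That makes the proof self-contained, and the $\kappa$-compactness argument showing $\Sigma^{-1}x_\kappa\in\mcc_{\geq0}$ is fine.

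There is one genuine gap, and it sits exactly where the content of the cited theorem lies. You write that the fibre of $x_0\to\colim x_i$ is ``an iterated extension and colimit of sums of $\Sigma^{-n}P$, hence in $\mcc_{w\le0}$''. This does not follow as stated, because $\mcc_{w\le0}$ is not closed under colimits: $\Sigma P$ is a pushout of objects of $\mcc_{w\le0}$, yet it lies in $\mcc_{\geq1}$ and is not in $\mcc_{w\le0}$. What you need is closure under continuous transfinite composites whose successive cofibres lie in $\mcc_{w\le0}$.

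Concretely, set $F_i=\fib(x_0\to x_i)$ and fix $v\in\mcc_{\geq0}$. The spectra $T_i=\unmap(F_i,v)$ form a continuous tower indexed by $\kappa^{\opp}$, with $T_0=0$ and connective successive fibres $\unmap(y_i,v)$. You must show that $\lim_{i<\kappa}T_i$ is connective.
\begin{itemize}
\item At limit stages of countable cofinality, the Milnor sequence together with $\pi_0$-surjectivity of the transition maps suffices.
\item Your construction necessarily also has limit stages of uncountable cofinality, at least $\kappa$ itself. This is unavoidable, since maps from a $\kappa$-compact $P$ into an $\omega$-indexed colimit need not factor through a finite stage.
\item At those stages you need a transfinite induction showing that every map $T_\mu\to T_i$ stays $\pi_0$-surjective with connective fibre.
\end{itemize}
For the last point you can use either the vanishing of higher derived limits for continuous well-ordered towers of epimorphisms, or a transfinite lifting argument on a fibrant model of $\Omega^{\infty}\Sigma^m T_\bullet$. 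With this lemma added, your proof is complete and gives a self-contained replacement for the citation.
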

\begin{proof}
	By the same argument as \cref{kproj}, there exists a $\kappa$ such that $\mcc_{\geq0}^{\kappa\text{-}\op{proj}}$ generates \mcc as a localizing subcategory. 
	By \cite[Theorem 6.3]{nikolaus2026unbounded}, there exists a weight structure on $\mcc$ such that $$\mathcal{C}_{w \geq 0}=\{y \in \mathcal{C}\mid \unmap(x, y) \text{ is connective for all } x \in \mcc_{\geq0}^{\kappa\text{-}\op{proj}}\}.$$
	By the generation assumption, we conclude that $\mcc_{w\geq0}=\mcc_{\geq0}$.
	
	We now claim it is hypercomplete. Let $x\in\bigcap_{n}\mcc_{\geq n}$. Since each projective object lies in the w-heart, there is no non-zero map from shifts of ($\kappa$-)projective objects to $x$, therefore $x=0$ by the generation assumption.
\end{proof}
\begin{rem}
	Note that \cref{tcatiswcat} is not an ``if and only if'' statement in general. 
	
	Indeed, let $\mca$ be a small additive \infcat. Let $\mcb=\opsp(\mcp_{\Sigma}(\mca))$ equipped with a presentable weight structure whose connective part is $\mathcal{B}_{\geq0}=\mcp_{\Sigma}(\mca)$, and let $\mathcal{E}$ be a nonzero presentable stable \infcat. Set
	$
	\mcc:=\mathcal{B}\times \mathcal{E}.
	$
	Equip $\mcc$ with the accessible $t$-structure
	\[
	\mcc_{\geq0}:=\mathcal{B}_{\geq0}\times 0,
	\qquad
	\mcc_{\leq0}:=\mathcal{B}_{\leq0}\times \mathcal{E}.
	\]
	Then this $t$-structure is again the connective part of a presentable weight structure, namely
	\[
	\mcc_{w\geq0}:=\mathcal{B}_{\geq0}\times 0,
	\qquad
	\mcc_{w\leq0}:=\mathcal{B}_{w\leq0}\times \mathcal{E}.
	\]
	Since the $t$-structure on $\mathcal{B}$ is hypercomplete, the $t$-structure on $\mcc$ is also hypercomplete, since
	\[
	\bigcap_{n\in \mathbb{Z}}\mcc_{\geq n}
	=
	\left(\bigcap_{n\in \mathbb{Z}}\mathcal{B}_{\geq n}\right)\times 0
	=
	0.
	\]
	Moreover,
	$
	\mcc_{\geq0}^{\op{proj}}
	\simeq
	\mathcal{B}_{\geq0}^{\op{proj}}\times 0.
	$
	Thus, 
	$
	\op{Loc}_{\mcc}\bigl<\mcc_{\geq0}^{\op{proj}}\bigr>
	=
	\mathcal{B}\times 0
	\subsetneq
	\mcc,
	$
	because $\mathcal{E}\neq 0$. Hence the projective objects of $\mcc_{\geq0}$ need not generate $\mcc$ as a localizing subcategory, even though $(\mcc,\mcc_{\geq0})$ is a hypercomplete presentable weight structure.
\end{rem}
\begin{quest}
	Let $(\mcc,\mcc_{\geq0})$ be a presentable stable \infcat equipped with an accessible $t$-structure. Under what precise categorical condition does it form a (hypercomplete) weight structure?
\end{quest}
\section{Complete proof of Efimov's formula}\label{appen3}
The goal of this section is to provide a complete proof of the following formula.

\begin{prop}\label{appenc1}
	Let $R$ be an adic \einfring, and let $\mcd=\op{Stab}(\op{Proj}_R^{\op{cpl},\omega_1})$. Let $\nucr\xhookrightarrow{i}\ind(\mcd)$ be the canonical inclusion. Let $P=(\bigoplus_{\mathbb{N}} R)^{\wedge}\in \op{Proj}_R^{\op{cpl},\omega_1}$. Then there exists an equivalence from the mapping spectrum $$\unmap_{\ind(\mcd)}(P,ii^R(P))\simeq \big((\prod_{\mathbb{N}} R)\otimes_R (\bigoplus_{\mathbb{N}} R)\big)^\wedge.$$
\end{prop}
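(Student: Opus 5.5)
The plan is to compute $ii^R(P)$ by means of Efimov's trace formula, already used in the proof of \cref{addrigcriterion}. Once we know that $\ind(\mcd)$ satisfies the hypothesis of \cite[Proposition 1.30]{efimov2025localizinginvariantsinverselimits} (via \cref{popveenuclear}, as in \cref{nucadrig}), we have $\nucr=\ind(\mcd)^{\op{rig}}$ and
\[
ii^R(c)\simeq c^{\op{tr}}=\unmap_{\ind(\mcd)}(\mb{1},(-)^\vee\otimes c)\in \ind(\mcd).
\]
Concretely, $c^{\op{tr}}$ is the ind-object $d\mapsto \unmap(\mb{1}, d^\vee\otimes c)$ on $\mcd^\opp$. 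Evaluating at $d=P$ should give
\[
\unmap_{\ind(\mcd)}(P,ii^R(P))\simeq \unmap_{\ind(\mcd)}(\mb{1},P^\vee\otimes P),
\]
where $P^\vee=\underline{\Hom}(P,\mb{1})$ is the internal dual in $\ind(\mcd)$. The first step is therefore to justify this evaluation formula, i.e. to identify $\unmap(P,c^{\op{tr}})$ with $\unmap(\mb{1},P^\vee\otimes c)$ for compact $P$. This follows from the ind-object description of $c^{\op{tr}}$ together with the Yoneda lemma.

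The second step is to identify $P^\vee$ with $\prod_{\mathbb N}R$, viewed as an object of $\ind(\mcd)$. The key input is that $\underline{\Hom}$ out of a compact object into $\mb{1}$ is computed in $\mcd\subset\modrcpl$, so we can use the mapping spectrum $\unmap_R((\bigoplus R)^\wedge,R)\simeq\prod_{\mathbb N}R$. Here $\prod R$ is complete and lies in $\op{Proj}^{\op{cpl},\omega_1}_R$ only up to the appropriate ind-presentation. Since $\prod_{\mathbb N} R$ is not itself $\omega_1$-projective complete in general, I would instead write $P^\vee$ as a formal (ind-)object. The cleanest route is to present $P=\colim_n R^n$ in $\modrcpl$, giving $P^\vee=\lim_n R^n$, which should be realized in $\ind(\mcd)$ by the compact object $\prod_{\mathbb N} R$ if the latter is $\omega_1$-projective complete (for Noetherian-type adic rings it is a completed free module by Bergman/Nöbeling-type results). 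I expect this identification to be the main obstacle, and it is exactly where the German reference \cite[Satz 3.8]{andreychev2023k} is invoked. I would try to prove directly that $\prod_{\mathbb N} R$ is a retract of $(\bigoplus_{\mathbb N} R)^\wedge$ modulo $I$, reducing via completeness and the long exact sequences mod $I^n$ to the discrete case over $\pi_0R/I$. There the statement becomes that $\prod_{\mathbb N}$ of a ring is flat and Mittag-Leffler in the relevant sense.

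The final step is to compute the tensor product in $\ind(\mcd)$. Since both $P^\vee$ and $P$ lie in $\mcd$, the tensor $P^\vee\otimes P$ is the completed tensor product $\big((\prod R)\otimes_R(\bigoplus R)\big)^\wedge$, and $\unmap(\mb{1},-)$ of a compact object is its underlying complete module. This yields
\[
\unmap_{\ind(\mcd)}(P,ii^R(P))\simeq \big((\prod_{\mathbb{N}} R)\otimes_R (\bigoplus_{\mathbb{N}} R)\big)^\wedge.
\]
Two checks remain: that the symmetric monoidal structure on $\mcd$ agrees with $\widehat\otimes_R$ (immediate from \cref{modrcpldsubsetstab} and full faithfulness of $\mcd\to\modrcpl$), and that completion commutes with the colimit presenting $\bigoplus R$, which is standard.
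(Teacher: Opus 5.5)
Your first step agrees with the paper: Efimov's Proposition 1.30, whose hypothesis holds by \cref{popveenuclear}, gives $\unmap_{\ind(\mcd)}(P,ii^R(P))\simeq\unmap_{\ind(\mcd)}(\mb{1},P^\vee\otimes P)$. The gap is in your last step, which assumes $P^\vee\in\mcd$, i.e.\ that $P^\vee$ is the compact object $\prod_{\mathbb N}R$. That is false.

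The internal dual in $\ind(\mcd)$ is $P^\vee\simeq h(\prod_{\mathbb N}R)$. Here $h\colon\modrcpl\to\ind(\mcd)$ is the restricted Yoneda functor $M\mapsto\unmap(-,M)|_{\mcd}$; equivalently, $P^\vee$ is $\lim_n R^n$ computed in $\ind(\mcd)$. The module $\prod_{\mathbb N}R$ does not lie in $\mcd$, for two reasons.
\begin{itemize}
\item It is not $\omega_1$-compact in $\modrcpl$. Already for $R=\mathbb{Z}_p$ it is a completed free module on $2^{\aleph_0}$ generators, and $(\prod_{\mathbb N}\mathbb{Z}_p)/p=\prod_{\mathbb N}\mathbb{F}_p$ is uncountable. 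By contrast, $\omega_1$-compact complete $\mathbb{Z}_p$-modules have countable homotopy mod $p$.
\item For general adic $R$, even Noetherian ones, it need not be projective. For $R=\mathbb{Z}[[t]]$ one gets $(\prod_{\mathbb N}R)/t\simeq\prod_{\mathbb N}\mathbb{Z}$, which is not free by Baer--Specker. A projective complete module must be projective modulo $t$.
\end{itemize}
So your expected completed-freeness fails. The retract of $(\bigoplus_{\mathbb N}R)^\wedge$ you propose to construct is also impossible for cardinality reasons alone.

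Hence $P^\vee$ is a genuinely non-compact ind-object. Writing $h(\prod_{\mathbb N}R)=\colim_{(d\to\prod R)\in\mcd_{/\prod R}}d$ and using that the unit is compact, the quantity you actually need is $\colim_{d\to\prod R}(\bigoplus_{\mathbb N}d)^\wedge_I$, a filtered colimit in $\opsp$ of complete modules. Identifying it with $((\prod_{\mathbb N}R)\otimes_R(\bigoplus_{\mathbb N}R))^\wedge$ means commuting $I$-completion past this filtered colimit. That is the whole content of the proposition, and it is exactly what your closing remark ``completion commutes with the colimit, which is standard'' glosses over; it fails in general.

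Your computation can be repaired when $\prod_{\mathbb N}R$ happens to be an $\omega_1$-filtered colimit of objects of $\mcd$, as for $R=\mathbb{Z}_p$, because $\omega_1$-filtered colimits of complete modules are complete. That is not available for a general adic $R$.

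The paper sidesteps the problem by applying Efimov's Proposition 1.31 to pass to $\unmap_{\ind(\mcd^\opp)}(\mb{1},P^\opp\otimes P^{\opp,\vee})$.
\begin{enumerate}
\item In $\ind(\mcd^\opp)$ the dual is computable: $P^{\opp,\vee}\simeq\lim_n\bigoplus_{\mathbb N}R^\opp/I^n$ by \cref{formulapopvee}.
\item The limit--colimit interchange \cref{formulamain} rewrites this as $\colim_{f\in F}\prod_i I^{f(i)}R^\opp$, which commutes with tensoring by $P^\opp=\prod_{\mathbb N}R^\opp$.
\item Converting back yields $\lim_n\bigoplus_{\mathbb N}\prod_{\mathbb N}R^\opp/I^n$. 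Its mapping spectrum from the unit is then evaluated levelwise via $\unmap_{\modrcpl}(R,R/I^n)$.
\end{enumerate}
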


\begin{lem}[{\cite[Proposition A.1]{efimov2025localizinginvariantsinverselimits}}]\label{efiA1}
 Let $p: \mathcal{I} \rightarrow \mathbb{N}^{o p}$ be a cocartesian fibration such that the fibers $\mathcal{I}_n$ are directed posets, $n \in \mathbb{N}$. Denote by $f_{m, n}: \mathcal{I}_m \rightarrow \mathcal{I}_n$ the transition maps, $m \geq n$. Suppose
that the maps $f_{n+1, n}$ are cofinal for all $n \geq 0$. We write the objects of $\mathcal{I}$ as pairs $(n, i_n)$, where $n \in \mathbb{N}, \, i_n \in \mathcal{I}_n$.

Let $\mathcal{C}$ be a presentable \infcat which satisfies  $\mathrm{AB5}$ (i.e. filtered colimits commute with finite limits) and $\mathrm{AB6}$ for countable products (for example, $\mathcal{C}$ can be any compactly assembled presentable \infcat). Let $G: \mathcal{I} \rightarrow \mathcal{C}$ be a functor. Then we have the following isomorphism:
$$
 \underset{\varphi: \mathbb{N} \rightarrow \mathcal{I}^{\vee}}{\colim }\, \underset{n \leq m}{\lim} G\big(n, f_{m, n}(\varphi(m))\big)\xrightarrow{\sim}\varprojlim_n \underset{i_n \in \mathcal{I}_n}{\colim}\, G(n, i_n)  .
$$
Here $\varphi$ runs through the directed poset of sections of the cartesian fibration $p^{\vee}: \mathcal{I}^{\vee} \rightarrow \mathbb{N}$.
\end{lem}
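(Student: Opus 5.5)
The plan is to reduce both sides to filtered colimits of countable products and finite limits, and then use $\mathrm{AB6}$ for countable products together with $\mathrm{AB5}$. First, rewrite the limit of the tower $X_n:=\colim_{i_n\in\mathcal I_n}G(n,i_n)$ as a finite limit of countable products. Concretely, $\varprojlim_n X_n$ is the pullback
\[
\prod_n X_n\times_{\prod_n X_n\times\prod_n X_n}\prod_n X_n
\]
of the diagonal along $(\id,\ \mathrm{shift})$, where the shift map is built from the transition maps $X_{n+1}\to X_n$. These transition maps are induced by the cocartesian edges $(n+1,i_{n+1})\to(n,f_{n+1,n}(i_{n+1}))$ of $\mathcal I$. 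In the same way, for a fixed section $\varphi$, the limit $\lim_{n\le m}G(n,f_{m,n}(\varphi(m)))$ is computed by the analogous finite limit of countable products. Here we use that the inclusion of the ``zigzag'' $\{n\}\cup\{n\le n+1\}$ into the poset of pairs $n\le m$ is limit-cofinal.

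Next, apply $\mathrm{AB6}$ for countable products to get
\[
\prod_n X_n\simeq\colim_{(i_n)\in\prod_n\mathcal I_n}\prod_n G(n,i_n).
\]
The shift map, however, is not a single filtered colimit of levelwise maps over $\prod_n\mathcal I_n$, because it uses the index $f_{n+1,n}(i_{n+1})$ in slot $n$. This is exactly why sections of $p^\vee$ enter. A section $\varphi$ consists of elements $\varphi(m)\in\mathcal I_m$ together with the relations $f_{m+1,m}(\varphi(m+1))\le\varphi(m)$ coming from the morphisms of $\mathcal I^\vee$. Over the directed poset of sections, both the identity and the shift are therefore given by levelwise maps of countable products. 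Once I know that the map $\{\text{sections}\}\to\prod_n\mathcal I_n$, $\varphi\mapsto(\varphi(n))_n$, is cofinal, the countable products can be rewritten as colimits over sections. Since the colimit over sections is filtered, $\mathrm{AB5}$ lets it commute past the finite limit. The resulting expression is exactly the left-hand side, and the comparison map is the canonical one.

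The main obstacle is this combinatorial step: the poset of sections is directed, and it is cofinal in $\prod_n\mathcal I_n$, i.e.\ every tuple $(i_n)_n$ is dominated by some section. This is where cofinality of each $f_{n+1,n}$ is used. One cannot simply build $\varphi$ by downward recursion, since there is no top level. Instead, I would build $\varphi$ upward. Choose $\varphi(0)\ge i_0$. Having chosen $\varphi(m)$, cofinality of $f_{m+1,m}$ lets me choose $\varphi(m+1)\ge i_{m+1}$ such that the compatibility with $\varphi(m)$ holds, possibly after enlarging the earlier choices in a controlled, finitely-supported way. If the inequality in $\mathcal I^\vee$ goes the other way than this construction needs, I would first try to use directedness of the fibers to absorb the discrepancy at each finite stage. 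Directedness of the sections themselves follows by the same inductive argument applied to a pair of sections. If cofinality into the full product fails, I would instead prove directly that the colimit over sections of $\prod_n G(n,\varphi(n))$ computes $\prod_n X_n$. For this, $\mathrm{AB6}$ is applied to the cofinal diagonal-type subdiagram, and one checks on each factor that the images $\{\varphi(n)\}$ exhaust a cofinal subset of $\mathcal I_n$. That last fact is again guaranteed by surjectivity up to cofinality of the $f_{m,n}$.
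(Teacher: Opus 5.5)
The paper does not prove this lemma; it quotes it from Efimov, so there is no in-paper argument to compare with. Your argument is correct, and it uses each hypothesis where it is needed:
\begin{itemize}
\item The limit over pairs $n\le m$ is a limit over a twisted-arrow category. It is therefore computed by the zigzag, i.e.\ by an equalizer of two maps between countable products.
\item Countable $\mathrm{AB6}$ identifies $\operatorname{colim}_{(i_n)\in\prod_n\mathcal I_n}\prod_n G(n,i_n)$ with $\prod_n X_n$.
\item $\mathrm{AB5}$ moves the filtered colimit over sections through the equalizer.
\end{itemize}
After passing to colimits, the map induced by the levelwise inequalities becomes the identity of $\prod_n X_n$, and the shift becomes the shift. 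Both follow from naturality of the colimit cones.

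The step you flag as the main obstacle is routine, and the inequality goes the favourable way, so you can delete the hedging.
\begin{itemize}
\item \emph{What a section is.} A morphism $(n,x)\to(m,y)$ of $\mathcal I^\vee$ over $n\le m$ is an inequality $x\le f_{m,n}(y)$ in $\mathcal I_n$. Since the fibres are posets, a section is just a sequence with $\varphi(m)\le f_{m+1,m}(\varphi(m+1))$, and sections are ordered componentwise.
\item \emph{Construction.} Given $(i_n)_n$, set $\varphi(0)=i_0$. Given $\varphi(m)$, cofinality of $f_{m+1,m}$ provides $k\in\mathcal I_{m+1}$ with $f_{m+1,m}(k)\ge\varphi(m)$. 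Directedness of $\mathcal I_{m+1}$ then provides $\varphi(m+1)\ge k,\,i_{m+1}$. By monotonicity, $f_{m+1,m}(\varphi(m+1))\ge\varphi(m)$. No earlier choice is ever revised.
\item \emph{Directedness.} Applying the construction to componentwise upper bounds of two sections shows the poset of sections is directed.
\item \emph{Second reindexing.} You should also record cofinality of $\varphi\mapsto(f_{n+1,n}(\varphi(n+1)))_n$, which indexes the target of your two parallel maps. It follows from the first cofinality, because $f_{n+1,n}(\varphi(n+1))\ge\varphi(n)$.
\end{itemize}

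Finally, drop the proposed fallback, because it would not work. Cofinality of each projection onto $\mathcal I_n$ is not enough to invoke $\mathrm{AB6}$. For example, the diagonal $\mathbb N\to\prod_{\mathbb N}\mathbb N$ has cofinal projections, yet a colimit indexed by it can produce $\bigoplus$ instead of $\prod$. Since full cofinality into $\prod_n\mathcal I_n$ holds, you never need the fallback.
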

\begin{thm}\label{formulamain}
	Let $\mcc$ be a presentable stable\footnote{Note that any presentable stable \infcat trivially satisfies AB5.} 
	\infcat satisfying $\mathrm{AB6}$ (i.e. a dualizable stable \infcat). Let 
	$M\simeq \lim_n M_n$ be the inverse limit of some tower with $M_0=0$. Then we have a natural 
	identification $$\lim_{n}(\bigoplus_{\mathbb{N}} M_n)\simeq \colim_{f\in F} \prod_{i\in \mathbb{N}} I_{f(i)}, $$ 
	where $I_k=\fib(M\to M_k)$ and $$F = \{f \in \mathrm{Fun}(\mathbb{N}, \mathbb{N})^{op} \mid f(i) \to \infty\}$$ 
	denotes the filtered poset consisting of those increasing functions which converge to infinity, under the decreasing pointwise order.
\end{thm}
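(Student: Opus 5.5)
The plan is to deduce the formula from \cref{efiA1} applied to a suitable cocartesian fibration over $\mathbb{N}^{\opp}$. Since $\mcc$ is presentable stable and satisfies $\mathrm{AB6}$, both hypotheses of \cref{efiA1} ($\mathrm{AB5}$ and countable $\mathrm{AB6}$) hold.

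First, for each fixed $n$ we present the countable sum as a filtered colimit of finite sums:
\[
\bigoplus_{\mathbb{N}} M_n\simeq \colim_{k\in\mathbb{N}} \bigoplus_{i<k} M_n \simeq \colim_{k\in \mathbb{N}}\prod_{i<k} M_n .
\]
We take the constant cocartesian fibration $\mathcal{I}=\mathbb{N}^{\opp}\times\mathbb{N}\to\mathbb{N}^{\opp}$. Its fibers $\mathcal{I}_n=\mathbb{N}$ are directed, and all transition maps $f_{m,n}=\id$ are trivially cofinal. Define $G(n,k)=\prod_{i<k}M_n$, functorial in $n$ through the tower maps $M_m\to M_n$ and in $k$ through the inclusions of finite sums. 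Then the right-hand side of \cref{efiA1} is exactly $\lim_n\bigoplus_{\mathbb{N}}M_n$. The left-hand side becomes a filtered colimit, over sections $\varphi\colon\mathbb{N}\to\mathbb{N}$, of the limit $\lim_{n\le m}\prod_{i<\varphi(m)}M_n$.

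Second, we simplify this inner limit. Up to cofinality in the poset of sections, we may replace $\varphi$ by nondecreasing functions tending to infinity: sections that stay bounded contribute nothing new after passing to the colimit. For such $\varphi$, the limit over pairs $n\le m$ reduces to a limit over the diagonal. Commuting the finite products past the limit, we then regroup coordinatewise. For each coordinate $i$, we get the limit of a tower which is $0$ in the range of $n$ where $i\ge\varphi(\cdot)$ and is $M_n$ in the complementary range. This step is where $M_0=0$ enters: it is what lets the unused coordinates be filled by zero consistently at the bottom of the tower.

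Third, and this is the step I expect to be the main obstacle, we identify each coordinate's tower-limit with $I_{f(i)}=\fib(M\to M_{f(i)})$. Here $f$ is the "inverse" of $\varphi$, namely $f(i)=\min\{n:\varphi(n)>i\}$, which is increasing and tends to infinity. I would first try to orient the indexing so that coordinate $i$ is \emph{killed} for $n\le f(i)$ rather than beyond it. The limit of the tower $M_n$ truncated to zero below $f(i)$ should then compute exactly $\fib(M\to M_{f(i)})$. If the direct orientation instead yields $M$ itself, I would rewrite $\bigoplus_{\mathbb{N}}M_n$ via the cofiber sequences $\bigoplus I_n\to\bigoplus M\to\bigoplus M_n$ and apply the argument to each term, as a fallback.

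Finally, we check that the assignment $\varphi\mapsto f$ is cofinal and order-compatible between sections of $p^\vee$ and the poset $F$ with its decreasing pointwise order, and that all identifications are natural in the tower. Combining these gives the stated equivalence $\lim_n\bigoplus_{\mathbb{N}}M_n\simeq\colim_{f\in F}\prod_i I_{f(i)}$.
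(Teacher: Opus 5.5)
Your constant fibration $\mathbb{N}^{\opp}\times\mathbb{N}\to\mathbb{N}^{\opp}$ with $G(n,k)=\bigoplus_{i<k}M_n$ is a legitimate input to \cref{efiA1}, different from the paper's, and you identify the right-hand side correctly. The gap is the step that is supposed to produce the fibers $I_k$, which is wrong as stated.

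In \cref{efiA1} a section gives cospans $G(n,\varphi(n))\to G(n,f_{m,n}\varphi(m))\leftarrow G(m,\varphi(m))$. So $\lim_{n\le m}$ is a limit over intervals $[n,m]$ ordered by inclusion (an end), not a tower. Since the intervals of length $\le 1$ form an initial subposet, it is the limit of the zigzag
\[
\lim_{n\le m}G(n,\varphi(m))\simeq\lim\bigl(G(0,\varphi(0))\to G(0,\varphi(1))\leftarrow G(1,\varphi(1))\to G(1,\varphi(2))\leftarrow\cdots\bigr).
\]
This collapses to a tower along the diagonal when $\varphi$ is cartesian. In your fibration, however, the cartesian sections are the constant $\varphi$, which are exactly the bounded ones you discard. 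So your claim that the limit "reduces to the diagonal" fails: the diagram contains no maps $G(n+1,\varphi(n+1))\to G(n,\varphi(n))$. If you insert projections, each coordinate becomes a tower that is $0$ up to some index and $M_n$ afterwards, and its limit is $M$, not a fiber — precisely the outcome you feared in your third step. As written, steps two and three do not yield the formula, and the fallback via $\bigoplus I_n\to\bigoplus M\to\bigoplus M_n$ is not an argument.

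The missing computation is the coordinatewise evaluation of the zigzag. Write $D[n,m]=\bigoplus_{i<\varphi(m)}M_n=\prod_{i\in\mathbb{N}}D_i[n,m]$ and $k_i=\min\{n:\varphi(n)>i\}$. The $i$-th coordinate vanishes at $[n,n]$ for $n<k_i$ and at $[n,n+1]$ for $n<k_i-1$, and it equals $M_n$ at all remaining intervals. So its zigzag ends in $0\to M_{k_i-1}\leftarrow M_{k_i}\to M_{k_i}\leftarrow M_{k_i+1}\to\cdots$, whose limit is $0\times_{M_{k_i-1}}M\simeq I_{k_i-1}$. When $k_i=0$ one gets $M=I_0$, and this is the only place $M_0=0$ is used.

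Thus the fiber is created by the off-diagonal interval $[k_i-1,k_i]$, and the correct index is $\max(k_i-1,0)$, not your $k_i$. To finish:
\begin{itemize}
\item $\varphi\mapsto(k_i)_i$ is an order-isomorphism from unbounded sections onto $F$, with inverse $\varphi(n)=\#\{i:k_i\le n\}$, and it is compatible with the transition maps.
\item $f\mapsto\max(f-1,0)$ is cofinal in $F$, since $g\ge\max(g-1,0)$ pointwise.
\end{itemize}
Together these give the theorem.

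Compare the paper's route. It uses fibers of increasing sequences $c\colon\mathbb{N}\to\{0,\dots,n\}$ with truncation maps and builds the fibers into $G(n,c)=\bigoplus_i\fib(M_n\to M_{c(i)})$. Then, along cartesian sections (which are indexed by $F$), the end is an honest tower limit. Your repaired route replaces this with an explicit end computation. In exchange, it evaluates the end on every section directly, so it never has to compare general sections with cartesian ones. That comparison is not automatic even in the paper's fibration: for example, the section $\varphi(n)(i)=0$ for $i<n^2$ and $\varphi(n)(i)=n$ otherwise is dominated by no cartesian section.
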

\begin{proof}
 We proceed by constructing a suitable cocartesian fibration $p: \mci \to \mathbb{N}^{op}$ and a functor $G: \mci \to \mathcal{C}$.
	For each $n \in \mathbb{N}$, define the poset $\mci_n$ as the set of increasing sequences:
	$$\mci_n = \{c: \mathbb{N} \to \{0, 1, \dots, n\} \mid c(i) = n \text{ for } i \gg 0\}.$$
	We equip $\mci_n$ with the reverse pointwise partial order: $c \leq d$ if and only if $c(i) \geq d(i)$ for all $i \in \mathbb{N}$. This condition ensures $\mci_n$ is a directed poset.
	For $m \geq n$, define the transition maps $f_{m,n}: \mci_m \to \mci_n$ by truncation:
	$$f_{m,n}(c)(i) = \min(n, c(i)).$$
	These transition maps are clearly surjective, which guarantees that the maps $f_{n+1,n}$ are cofinal, satisfying the assumption of \cref{efiA1}.
	
	Next, we define the functor $G: \mci \to \mathcal{C}$ on objects $(n, c)$ by:
	$$G(n, c) = \prod_{i \in \mathbb{N}} \mathrm{fib}(M_n \to M_{c(i)}).$$
	Observe that since $c \in \mci_n$, we have $c(i) = n$ for $i \gg 0$. Consequently, the fiber $\mathrm{fib}(M_n \to M_{c(i)}) \simeq 0$ for sufficiently large $i$. Note that a countable product with only finitely many non-zero terms is equivalent to a direct sum. Thus, we have the equivalence:
	$$G(n, c) \simeq \bigoplus_{i \in \mathbb{N}} \mathrm{fib}(M_n \to M_{c(i)}).$$
	
	Now we evaluate the right-hand side  of \cref{efiA1}:
	\[
	 \lim_n \mathrm{colim}_{c \in \mci_n} G(n, c).
	\]
	Taking the filtered colimit over \(\mci_n\) corresponds to \(c(i)\) eventually stabilizing to \(0\), giving:
	\[
	\mathrm{colim}_{c \in \mci_n} G(n, c)
	\simeq
	\bigoplus_{i \in \mathbb{N}}
	\mathrm{colim}_{c \in \mci_n}
	\mathrm{fib}(M_n \to M_{c(i)})
	\simeq
	\bigoplus_{i \in \mathbb{N}}
	\mathrm{fib}(M_n \to 0)
	\simeq
	\bigoplus_{i \in \mathbb{N}} M_n.
	\]
	Taking the inverse limit over \(n\) yields the desired right-hand side of our main theorem:
	\[
\lim_n \mathrm{colim}_{c \in \mci_n} G(n, c) \simeq \lim_n \left( \bigoplus_{i \in \mathbb{N}} M_n \right).
	\]
	
	To evaluate the left-hand side, we analyze the directed poset of sections
	\(\phi: \mathbb{N} \to \mci^\vee\). We first note that the full subcategory of Cartesian sections is cofinal to all sections. However, $$\funct^{\op{Car}}_{/\mathbb{N}}(\mathbb{N},\mci^\vee)\simeq\lim_n \mci_n\simeq 	F =
	\{f \in \mathrm{Fun}(\mathbb{N}, \mathbb{N})^{op} \mid f(i) \to \infty\}.
	$$
	Substituting this into the left-hand side of \cref{efiA1}, we obtain:
	\[
	\mathrm{colim}_{f \in F}
	\lim_n G(n, \min(n, f)).
	\]
	Expanding the definition of the functor \(G\):
	\[
	\mathrm{colim}_{f \in F}
	\lim_n
	\prod_{i \in \mathbb{N}}
	\mathrm{fib}(M_n \to M_{\min(n, f(i))}).
	\]
 For a fixed \(i\) and sufficiently large \(n \geq f(i)\), the term
	\(\min(n, f(i))\) stabilizes exactly to \(f(i)\). Therefore, the inverse limit of the fibers computes as:
	\[
	\lim_n
	\mathrm{fib}(M_n \to M_{\min(n, f(i))})
	\simeq
	\mathrm{fib}\left(\lim_n M_n \to \lim_n M_{f(i)}\right)
	\simeq
	\mathrm{fib}(M \to M_{f(i)}).
	\]
	By definition, this fiber is exactly \(I_{f(i)}\). Thus, the left-hand side evaluates to:
	\[
		\mathrm{colim}_{f \in F}
	\lim_n G(n, \min(n, f))
	\simeq
	\mathrm{colim}_{f \in F}
	\prod_{i \in \mathbb{N}} I_{f(i)},
	\]
	as desired.
\end{proof}
We thank Yifan Jin for help with the following lemma.
\begin{lem}\label{formulapopvee}
	Let $R$ be an adic \einfring. Let  $\mcd=\op{Stab}(\op{Proj}_R^{\op{cpl},\omega_1})$. Let $P=(\bigoplus_{\mathbb{N}} R)^{\wedge}\in \op{Proj}_R^{\op{cpl},\omega_1}$. Then $P^{\opp,\vee}\in \ind(\mcd^\opp)$ can be identified with\footnote{Note that this  coproduct is taken in $\ind(\mcd^\opp)$ instead of internally in $\mcd^\opp$.} $$P^{\opp,\vee}\simeq \lim_n (\bigoplus_{\mathbb{N}} R^\opp/I^n),$$
	where $R^\opp/I^n=\big(\bigotimes_{1\leq i\leq k}\fib(R\xrightarrow{x_i^n}R)\big)^\opp.$
\end{lem}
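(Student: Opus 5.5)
Throughout, $P^{\opp,\vee}$ is the object of $\ind(\mcd^\opp)$ corresponding to $P$ under the duality $\ind(\mcd)^\vee\simeq\ind(\mcd^\opp)$; concretely it is the functor $\mcd\to\opsp$, $d\mapsto \unmap_{\mcd}(P,d)$, viewed as an (exact) ind-object of $\mcd^\opp$. The plan is to compare this functor with the one corepresented by the proposed ind-object, evaluating both on the generators $d\in\op{Proj}_R^{\op{cpl},\omega_1}$. This suffices because both are exact functors out of $\mcd=\stab(\op{Proj}_R^{\op{cpl},\omega_1})$, which is generated by that subcategory under finite limits, colimits and retracts.

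\textbf{Step 1: the ind-object side.} An object $\colim_\alpha a_\alpha$ of $\ind(\mcd^\opp)$ pairs with $d$ as $\colim_\alpha \unmap_{\mcd}(a_\alpha,d)$. For each fixed $n$, the summands $R/I^n$ are compact in $\mcd$ and lie in it: $R\in\mcd$ by \cref{modrcpldsubsetstab}, and $R/I^n$ is built from $R$ by finitely many fibers. The infinite direct sum is formed in $\ind(\mcd^\opp)$ as a filtered colimit of finite sums, so
\[
\bigl(\textstyle\bigoplus_{\mathbb{N}}R^\opp/I^n\bigr)(d)\simeq \bigoplus_{\mathbb{N}}\unmap_R(R/I^n,d)\simeq\bigoplus_{\mathbb{N}} d/I^n
\]
up to a shift. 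Here $\unmap_R(R/I^n,-)$ is a finite limit of copies of the identity (a Koszul-type dual), which is why $R/I^n$ is defined via fibers. The limit over $n$ is taken in $\ind(\mcd^\opp)$, which is dualizable and in particular satisfies AB6. I would compute this limit using \cref{formulamain} with $M_n=\unmap(R/I^n,d)$ and $M_0=0$.

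\textbf{Step 2: the left side.} The other side is
\[
\unmap_{\mcd}(P,d)\simeq \prod_{\mathbb{N}} d
\]
for complete $d$, since $P$ is the completion of $\bigoplus_{\mathbb{N}}R$ and $d$ is complete. I would then show that $\lim_n\bigoplus_{\mathbb N} d/I^n$, with the limit computed pointwise for each fixed $d$, recovers $\prod_{\mathbb N}d$. This rests on completeness of $d$ in the form $d\simeq\lim_n d/I^n$, together with Mittag-Leffler/Milnor arguments.

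\textbf{Main obstacle.} The real issue is that limits in $\ind(\mcd^\opp)$ are not computed pointwise on $d$. Here I would invoke \cref{formulamain}: it rewrites the limit as a filtered colimit $\colim_{f\in F}\prod_i I_{f(i)}$ of countable products, and countable products of $\omega_1$-compact objects behave well in $\ind(\mcd^\opp)$ by AB6. I would then check that this colimit evaluates on $d$ to $\prod_{\mathbb N} d$. This amounts to the statement that any map $P\to d$ has components tending $I$-adically to zero in the appropriate sense, which is exactly the compatibility encoded by the poset $F$ of functions tending to infinity.

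I would carry out this evaluation first for $d=R$ and $d=P$, and then extend by exactness and retracts. The delicate point I expect is verifying that the fibers $I_k=\fib(M\to M_k)$ pair with $d$ to give ``$I^k$-divisible'' parts whose product over the colimit reassembles $\prod d$. This step requires careful use of completeness and the Koszul description.
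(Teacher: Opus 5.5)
Your proposal has a genuine gap at the first step: you identify $P^{\opp,\vee}$ with the wrong object. The functor $d\mapsto\unmap_{\mcd}(P,d)$ is the Yoneda image of $P^{\opp}$ itself; it is what the duality $\ind(\mcd)^\vee\simeq\ind(\mcd^\opp)$ attaches to the compact object $P$. Since $\unmap_{\mcd}(P,d)\simeq\prod_{\mathbb{N}}d$, this object is $P^\opp\simeq\prod_{\mathbb{N}}R^\opp$, which is exactly how $P^\opp$ is used in the proof of \cref{appenc1}.

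The $\vee$ in the statement is instead the monoidal dual in $\ind(\mcd^\opp)$, i.e.\ $P^{\opp,\vee}=\underline{\Hom}(P^\opp,\mathbf{1})$. Writing $\mathbf{1}=R^\wedge_I$ for the unit, its pairing with $d\in\mcd$ is
\[
\unmap_{\ind(\mcd^\opp)}(d^\opp,P^{\opp,\vee})\simeq\unmap_{\mcd^\opp}\bigl(d^\opp\otimes P^\opp,R^{\wedge,\opp}_I\bigr)\simeq\unmap_{\mcd}(R^\wedge_I,d\otimes P)\simeq\Bigl(\bigoplus_{\mathbb{N}}d\Bigr)^\wedge_I .
\]
With your identification the lemma would assert $\prod_{\mathbb{N}}d\simeq\lim_n\bigoplus_{\mathbb{N}}d/I^n$. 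This is false already for $d=R=\mathbb{Z}_p$, $I=(p)$: the right-hand side is the module of null sequences $(\bigoplus_{\mathbb{N}}\mathbb{Z}_p)^\wedge_p$, which does not contain $(1,1,1,\dots)$. So your Step~2 cannot be carried out. Nor can the claim that every map $P\to d$ has components tending $I$-adically to $0$, since $\unmap_{\mcd}(P,R)\simeq\prod_{\mathbb{N}}R$ contains all sequences.

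Your ``main obstacle'' is also not real. We have $\ind(\mcd^\opp)\simeq\Fun^{\mathrm{ex}}(\mcd,\opsp)$, and limits there are computed pointwise, because mapping out of the compact object $d^\opp$ is evaluation at $d$ and commutes with all limits. So \cref{formulamain} plays no role here; the paper needs it only in \cref{popveenuclear}, to move $\otimes$ past the limit.

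Once $P^{\opp,\vee}$ is correctly identified, the argument is a chain of equivalences natural in $d$. This naturality also supplies the comparison map that your objectwise check on generators lacks. The chain runs as follows.
\begin{itemize}
\item Use $R^\wedge_I\simeq(\operatorname{colim}_n(R/I^n)^\vee)^\wedge_I$ to get $\unmap_{\mcd}(R^\wedge_I,x)\simeq\lim_n\unmap((R/I^n)^\vee,x)$ for $x=d\otimes P$.
\item Use that $(R/I^n)^\vee$ is compact and $I$-torsion, so mapping out of it ignores the completion of $\bigoplus_{\mathbb{N}}d$ and commutes with the direct sum.
\item This gives $\lim_n\bigoplus_{\mathbb{N}}\unmap((R/I^n)^\vee,d)\simeq\lim_n\unmap_{\ind(\mcd^\opp)}(d^\opp,\bigoplus_{\mathbb{N}}R^\opp/I^n)$, and the limit can be pulled inside because limits are pointwise.
\end{itemize}
Your Step~1 computation of the right-hand side is correct and is precisely the last part of this chain.
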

\begin{proof}
Let $I=(x_1,\dots,x_k)\subset \pi_0R$ be a finitely generated ideal of definition. Let us denote 
\[
R/I^n := \operatorname{Kos}(R;x_1^n,\ldots,x_k^n)=\bigotimes_{1\leq i\leq k} R/x_i^n
\]
and \[
(R/I^n)^\vee := \underline{\operatorname{Hom}}_R(R/I^n,R).
\]
Note that we have $R^\opp/I^n=(R/I^n)^{\vee,\opp}$.

Now given $d \in \mcd$. Then the result follows from the following computation of mapping spectra.
\[
\begin{aligned}
	\underline{\operatorname{Map}}_{\operatorname{Ind}(\mathcal{D}^{\mathrm{op}})}
	\left(d^{\mathrm{op}},P^{\mathrm{op},\vee}\right)
	&=
	\underline{\operatorname{Map}}_{\mathcal{D}^{\mathrm{op}}}
	\left(d^{\mathrm{op}}\otimes P^{\mathrm{op}},R_I^{\wedge,\mathrm{op}}\right) \\[4pt]
	&\simeq
	\underline{\operatorname{Map}}_{\mathcal{D}^{\mathrm{op}}}
	\left(
	d^{\mathrm{op}}\otimes P^{\mathrm{op}},
	\lim_n R^{\mathrm{op}}/I^n
	\right) \\[4pt]
	&\simeq
	\lim_n
	\underline{\operatorname{Map}}_{\mathcal{D}^{\mathrm{op}}}
	\left(
	d^{\mathrm{op}}\otimes P^{\mathrm{op}},
	R^{\mathrm{op}}/I^n
	\right) \\[4pt]
	&\simeq
	\lim_n
	\underline{\operatorname{Map}}_{\mcd}
	\left(
	\left(R/I^n\right)^\vee,
	d\otimes P
	\right) \\[4pt]
	&\simeq
	\lim_n
	\underline{\operatorname{Map}}_{\modrcpl}
	\left(
	\left(R/I^n\right)^\vee,
	\big(\bigoplus_{\mathbb N} d\big)_{I}^{\wedge}
	\right) \\[4pt]
	&\simeq
	\lim_n
	\bigoplus_{\mathbb N}
	\underline{\operatorname{Map}}_{\modrcpl}
	\left(
	\left(R/I^n\right)^\vee,
	d
	\right) \\[4pt]
	&\simeq
	\lim_n
	\bigoplus_{\mathbb N}
	\underline{\operatorname{Map}}_{\mcd^{\mathrm{op}}}
	\left(
	d^{\mathrm{op}},
	R^{\mathrm{op}}/I^n
	\right) \\[4pt]
	&\simeq
	\lim_n
	\underline{\operatorname{Map}}_{\operatorname{Ind}(\mathcal{D}^{\mathrm{op}})}
	\left(
	d^{\mathrm{op}},
	\bigoplus_{\mathbb N} R^{\mathrm{op}}/I^n
	\right) \\[4pt]
	&\simeq
	\underline{\operatorname{Map}}_{\operatorname{Ind}(\mathcal{D}^{\mathrm{op}})}
	\left(
	d^{\mathrm{op}},
	\big(\bigoplus_{\mathbb N} R^{\mathrm{op}}\big)_I^{\wedge}
	\right).
\end{aligned}
\]
Note that the first equivalence follows from the formula $$R^\wedge_I\simeq \big(\colim_n (R/I^n)^\vee\big)^\wedge_I$$ in \modrcpl.
\end{proof}
\begin{rem}
	Note that 
	 $\stab(\mca^\opp)\simeq\stab(\mca)^\opp$, which follows from the fact that $\opsp\otimes-:\prlad\to \prlst$ preserves dual data.
\end{rem}

\begin{prop}\label{popveenuclear}
	Let $R$ be an adic \einfring. Let  $\mcd=\op{Stab}(\op{Proj}_R^{\op{cpl},\omega_1})$. Then for any $d\in \mcd$, we have $d^{\opp,\vee}$ is a nuclear object in $\op{Ind}(\mcd^\opp)$.
\end{prop}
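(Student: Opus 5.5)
Here is the plan. First reduce to $d=P$, then exhibit $P^{\opp,\vee}$ as a sequential colimit with trace class transition maps. Nuclear objects are closed under colimits, and every object of $\op{Proj}_R^{\op{cpl},\omega_1}$ is a retract of a finite sum of copies of $P$. Hence the class of $d\in\mcd$ with $d^{\opp,\vee}$ nuclear contains $\op{Proj}_R^{\op{cpl},\omega_1}$. It is also closed under shifts, cofibers and retracts, because $(-)^{\opp,\vee}$ is exact. By \cite[Lemma 6.6]{winges2024localisation} (which the paper already uses to say $\mcd$ is generated as a stable subcategory by $\op{Proj}_R^{\op{cpl},\omega_1}$), it then suffices to treat $d=P$.

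For $d=P$, I start from \cref{formulapopvee}, which gives
\[
P^{\opp,\vee}\simeq \lim_n \Big(\bigoplus_{\mathbb N} R^{\opp}/I^n\Big)
\]
in $\ind(\mcd^\opp)$. This category is compactly generated, hence dualizable stable, so it satisfies $\mathrm{AB6}$. I apply \cref{formulamain} to the tower $M_n=\bigoplus_{\mathbb N}R^\opp/I^n$, after reindexing so that $M_0=0$. Writing $I_k=\fib(M\to M_k)$, this gives
\[
P^{\opp,\vee}\simeq \colim_{f\in F}\prod_{i\in\mathbb N}\fib\big(M\to R^\opp/I^{f(i)}\big).
\]
The products here are taken in $\ind(\mcd^\opp)$. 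The index poset $F$ has a cofinal countable (sequential) subsystem: the slowly growing $f$ form a countable cofinal chain under the reverse pointwise order, in the usual diagonal way. So I rewrite $P^{\opp,\vee}$ as a sequential colimit $\colim_k Q_{f_k}$, where $Q_f=\prod_i I_{f(i)}$ and the maps $Q_{f_k}\to Q_{f_{k+1}}$ are induced by $f_{k+1}\le f_k$ pointwise.

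The key step, and the main obstacle, is to show that after passing to a subsequence each transition map $Q_{f}\to Q_{g}$ (with $g$ growing strictly slower than $f$) is trace class in $\ind(\mcd^\opp)$. Then $P^{\opp,\vee}$ is basic nuclear, hence nuclear. The map is a product over $i$ of the maps $\fib(M\to R^\opp/I^{f(i)})\to\fib(M\to R^\opp/I^{g(i)})$, which are multiplication-by-$I^{f(i)-g(i)}$ type maps. My first attempt is to factor each of them through a compact object of $\mcd^\opp$: I would use a Koszul-type factorization of $R^\wedge\to R^\wedge$ through $R^\opp/I^m$, whose components become ``small'' as $f(i)-g(i)\to\infty$. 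I then need to verify that a countable product of such uniformly factored maps is still trace class, i.e. that the classifying map lifts through $\underline{\Hom}(Q_f,\mathbf 1)\otimes Q_g$. For this I would use that the duals of the compact pieces assemble into a countable \emph{direct sum} in $\ind(\mcd^\opp)$. I would also use the $\mathrm{AB6}$ exchange of products with filtered colimits, which makes $\prod_i$ of trace class data with $I$-adically decaying norms again trace class. This is the analogue of the classical fact that diagonal maps with coefficients tending to zero are nuclear, and it is where I expect the real work to lie. If the direct argument stalls, I would fall back on the description of trace class maps via \cref{traceclassfactorization}, factoring through $\prod_i R^\opp/I^{f(i)}$, and on the compact-exhaustion technique from the proof of \cref{flatacc}.
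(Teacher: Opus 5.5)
Your reduction to $d=P$ is fine: the class of $d$ with $d^{\opp,\vee}$ nuclear is thick, since $(-)^{\opp,\vee}$ is exact and $\nuc$ is localizing. The presentation $P^{\opp,\vee}\simeq\colim_{f\in F}\prod_i I^{f(i)}R^{\opp}$ is also correct, though \cref{formulamain} should be applied to the tower $M_n=R^\opp/I^n$ with $M=R^\opp$, not to $\bigoplus_{\mathbb N}R^\opp/I^n$.

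The plan breaks at the next step. The poset $F$ has \emph{no} countable cofinal subset. Given $f_1,f_2,\dots\in F$, pick $i_1<i_2<\cdots$ with $f_j(i)\geq m+1$ for all $j\leq m$ and all $i\geq i_m$. Set $g(i)=m$ for $i_m\leq i<i_{m+1}$, and $g=0$ before $i_1$. Then $g\in F$ and $g(i_k)=k<f_k(i_k)$, so $f_k\not\leq g$ pointwise for every $k$, i.e.\ no $f_k$ lies above $g$ in $F$. The diagonal argument works against you here.

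Worse, no other sequential presentation can rescue the approach, because $P^{\opp,\vee}$ is in general not basic nuclear at all:
\begin{itemize}
\item The unit of $\ind(\mcd^\opp)$ is compact, so every trace class map factors through a compact object: factor $\mathbf 1\to y\otimes d$ through compact pieces and apply \cref{traceclassfactorization}.
\item Hence a basic nuclear object is a sequential colimit of compact objects $e_k$.
\item Since $F$ is filtered, each $e_k\to P^{\opp,\vee}$ factors through some $\prod_i I^{f_k(i)}R^\opp$.
\end{itemize}
Now take $R=\mathbb Z_p$ and apply $\pi_0\unmap(\mathbf 1,-)$, which commutes with the sequential colimit. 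By \cref{formulapopvee}, $\pi_0\unmap(\mathbf 1,P^{\opp,\vee})$ is the module $c_0$ of null sequences in $\mathbb Z_p$. The argument above would exhibit $c_0$ as the countable union of the subsets $\prod_i p^{f_k(i)}\mathbb Z_p$. The sequence $(p^{g(i)})_i$, with $g$ as above, lies in $c_0$ but in none of them. So $P^{\opp,\vee}$ is not even $\omega_1$-compact.

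Your local intuition is sound, but it has to be used differently. Nuclear objects are closed under \emph{filtered} colimits. A filtered colimit in which every term maps by a trace class map to a later term is nuclear: a trace class transition $Y\to Y'$ yields a map $\underline{\Hom}(Q,Y)\to Q^\vee\otimes Y'$ compatible with the comparison maps, which forces $Q^\vee\otimes X\to\underline{\Hom}(Q,X)$ to be an equivalence on the colimit $X$. So it would suffice to show that $\prod_i I^{f(i)}R^\opp\to\prod_i I^{g(i)}R^\opp$ is trace class whenever $f-g\to\infty$. That is precisely the step you only sketch, and it is where all the content lies.

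The paper bypasses it. By \cite[Proposition 1.33]{efimov2025localizinginvariantsinverselimits}, it suffices that $P^{\opp,\vee}\otimes P^{\opp,\vee}\to(P\otimes P)^{\opp,\vee}$ is an equivalence. The paper verifies this by writing each factor as $\colim_{f\in F}\prod_i I^{f(i)}R^\opp$ and moving the tensor product inside the filtered colimits and the products. It then applies \cref{formulamain} in reverse to land on $\big(\bigoplus_{\mathbb N}R^\opp\otimes\bigoplus_{\mathbb N}R^\opp\big)^\wedge_I$.
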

\begin{proof}
By \cite[Proposition 1.33]{efimov2025localizinginvariantsinverselimits}, it suffices to show that for any $c\in \mcd$, the following comparison is an equivalence in $\ind(\mcd^\opp)$
$$
c^{\opp,\vee}\otimes d^{\opp,\vee}\to (c\otimes d)^{\opp,\vee} .
$$
Since $P=(\bigoplus_{\mathbb{N}} R)^{\wedge}\in \op{Proj}_R^{\op{cpl},\omega_1}$ generates \mcd as a stable subcategory, it suffices to show that  $$
P^{\opp,\vee}\otimes P^{\opp,\vee}\to (P\otimes P)^{\opp,\vee} 
$$ is an equivalence. 

By \cref{formulapopvee}, we can identify $$P^{\opp,\vee}\simeq \lim_n (\bigoplus_{\mathbb{N}} R^\opp/I^n).$$ By virtue of \cref{withoutlosscomplete}, we may assume that $R$ is complete without loss of generality. Let $I^nR^\opp$ denote the fiber of the projection $R^\opp=R^{\wedge,\opp}_I\to R^\opp/I^n$. By \cref{formulamain}, we have
\[
\begin{aligned}
	P^{\opp,\vee}\otimes P^{\opp,\vee}
	&\simeq 
	\left(\lim_n \bigoplus_{\mathbb{N}} R^\opp/I^n\right)
	\otimes
	\left(\lim_n \bigoplus_{\mathbb{N}} R^\opp/I^n\right) \\[4pt]
	&\simeq
	\left(
	\operatorname*{colim}_{f\in F}
	\prod_{i\in \mathbb{N}} I^{f(i)}R^\opp
	\right)
	\otimes
	\left(
	\operatorname*{colim}_{f\in F}
	\prod_{i\in \mathbb{N}} I^{f(i)}R^\opp
	\right) \\[4pt]
	&\simeq
	\operatorname*{colim}_{(f,g)\in F\times F}
	\left(
	\prod_{i\in \mathbb{N}} I^{f(i)}R^\opp
	\otimes
	\prod_{j\in \mathbb{N}} I^{g(j)}R^\opp
	\right) \\[4pt]
	&\simeq
	\operatorname*{colim}_{(f,g)\in F\times F}
	\prod_{i\in \mathbb{N}}
	\prod_{j\in \mathbb{N}}
	\left(
	I^{f(i)}R^\opp\otimes I^{g(j)}R^\opp
	\right) \\[4pt]
	&\simeq
	\lim_{m,n}
	\left(
	\bigoplus_{\mathbb{N}} R^\opp/I^m
	\otimes
	\bigoplus_{\mathbb{N}} R^\opp/I^n
	\right) \\[4pt]
	&\simeq
	\left(
	\bigoplus_{\mathbb{N}} R^\opp
	\otimes
	\bigoplus_{\mathbb{N}} R^\opp
	\right)^\wedge_I
	\\[4pt]
	&=(P\otimes P)^{\opp,\vee} .
\end{aligned}
\]
as desired.
\end{proof}

\begin{proof}[Proof of \cref{appenc1}]
	By \cref{popveenuclear}, we see that $\mcd$ satisfies the  condition in \cite[Proposition 1.33]{efimov2025localizinginvariantsinverselimits}, and hence it satisfies the  condition in \cite[Proposition 1.30]{efimov2025localizinginvariantsinverselimits}. Therefore, we have $$\unmap_{\ind(\mcd)}(P,ii^R(P))\simeq \unmap_{\ind(\mcd)}(\mb{1},P^\vee\otimes P).$$
	By \cite[Proposition 1.31]{efimov2025localizinginvariantsinverselimits}, there is a natural identification
	$$\unmap_{\ind(\mcd)}(\mb{1},P^\vee\otimes P)\simeq\unmap_{\ind(\mcd^\opp)}(\mb{1},P^\opp\otimes P^{\opp,\vee}).$$
	
By virtue of \cref{withoutlosscomplete}, we may assume that $R$ is complete without loss of generality. 
By \cref{formulamain} and \cref{formulapopvee}, we obtain
\[
\begin{aligned}
	P^\opp\otimes P^{\opp,\vee}
	&\simeq
	\left(\prod_{\mathbb N} R^\opp\right)
	\otimes
	\operatorname*{colim}_{f\in F}
	\prod_{i\in \mathbb{N}} I^{f(i)}R^\opp \\[4pt]
	&\simeq
	\operatorname*{colim}_{f\in F}
	\left(
	\prod_{\mathbb N}
	\prod_{i\in \mathbb{N}} I^{f(i)}R^\opp
	\right) \\[4pt]
	&\simeq
	\operatorname*{colim}_{f\in F}
	\left(
	\prod_{i\in \mathbb{N}}
	\left(
	I^{f(i)}\prod_{\mathbb N} R^\opp
	\right)
	\right) \\[4pt]
	&\simeq
	\lim_n
	\left(
	\bigoplus_{\mathbb{N}}
	\prod_{\mathbb{N}} R^\opp/I^n
	\right).
\end{aligned}
\]
Because $$\unmap_{\ind(\mcd^\opp)}\left(\mb{1},R^\opp/I^n\right)=\unmap_{\mcd^\opp}\left(R^\opp,(R/I^n)^{\vee,\opp}\right)=\unmap_{\modrcpl}\left((R/I^n)^{\vee},R\right)\simeq \unmap_{\modrcpl}\left(R,R/I^n\right),$$
we get the desired equivalence of spectra
\[
\unmap_{\ind(\mcd^\opp)}
\left(\mb{1},P^\opp\otimes P^{\opp,\vee}\right)
\simeq
\big((\prod_{\mathbb{N}} R)\otimes_R (\bigoplus_{\mathbb{N}} R)\big)^\wedge.
\]

\end{proof}

\printbibliography
\bigskip

\textsc{Ishan Levy, Department of Mathematics, Institute for Advanced Study, USA}

\emph{Email address:} \href{mailto:ishanl@ias.edu}{\texttt{ishanl@ias.edu}}
\bigskip

\textsc{Jiacheng Liang, Department of Mathematics, Johns Hopkins University, USA}

\emph{Email address:} \href{mailto:jliang66@jhu.edu}{\texttt{jliang66@jhu.edu}}

\bigskip
\textsc{Vladimir Sosnilo, RIKEN iTHEMS, Wako, Saitama, 351-0198, Japan}

\emph{Email address:} \href{mailto:vsosnilo@gmail.com}{\texttt{vsosnilo@gmail.com}}\;\;\;\href{mailto:vladimir.sosnilo@riken.com}{\texttt{vladimir.sosnilo@riken.com}}
\end{document}